\title{Algebraic laminations for free products and arational trees}
\author{Vincent Guirardel and Camille Horbez}
\newtheorem{de}{Definition} [section]
\newtheorem{thmbis}{Theorem} 
\newtheorem{propbis}[thmbis]{Proposition} 
\newtheorem{theo}[de]{Theorem} 
\newtheorem*{theo*}{Theorem} 
\newtheorem{prop}[de]{Proposition}
\newtheorem{lemma}[de]{Lemma}
\newtheorem{cor}[de]{Corollary}
\theoremstyle{remark}
\newtheorem{rk}[de]{Remark}
\newtheorem{ex}[de]{Example}
\newcommand{\Out}{\mathrm{Out}} 
\newcommand{\ie}{i.~e. }
\newcommand{\diam}{\mathrm{diam}}
\newcommand{\imp}{\Rightarrow}
\newcommand{\ra}{\rightarrow}
\newcommand{\m}{^{-1}}
\newcommand{\dunion}{\sqcup}
\newcommand{\eps}{\varepsilon}
\renewcommand{\epsilon}{\varepsilon}
\newcommand{\calf}{\mathcal{F}}
\newcommand{\calg}{\mathcal{G}}
\newcommand{\cald}{\mathcal{D}}
\newcommand{\calc}{\mathcal{C}}
\newcommand{\cali}{\mathcal{I}}
\newcommand{\caly}{\mathcal{Y}}
\newcommand{\calz}{\mathcal{Z}}
\newcommand{\calo}{\mathcal{O}}
\newcommand{\calb}{\mathcal{B}}
\newcommand{\calp}{\mathcal{P}}
\newcommand{\calk}{\mathcal{K}}
\newcommand{\calq}{\mathcal{Q}}
\newcommand{\call}{\mathcal{L}}
\newcommand{\calh}{\mathcal{H}}
\newcommand{\Axis}{\mathrm{Axis}}
\newcommand{\Lip}{\mathrm{Lip}}
\newcommand{\vol}{\mathrm{vol}}
\newcommand{\Term}{\mathrm{Term}}
\newcommand{\Z}{\mathcal{Z}}
\newcommand{\bbR}{\mathbb{R}}
\newcommand{\bbH}{\mathbb{H}}
\newcommand{\bbN}{\mathbb{N}}
\newcommand{\bbZ}{\mathbb{Z}}
\newcommand{\actson}{\curvearrowright}
\newcommand{\es}{\emptyset}
\newcommand{\grp}[1]{\langle #1 \rangle}
\newcommand{\dg}{\dagger}
\newcommand{\ol}{\overline}
\newcommand{\ul}{\underline}
\edef\@tempa#1#2{\def#1{\mathaccent\string"\noexpand\accentclass@#2 }}
\@tempa\rond{017}
\begin{document}
\maketitle

\begin{abstract}
This work is the first step towards a description of the Gromov boundary of the free factor graph of a free product, with applications to subgroup classification for outer automorphisms.

We extend the theory of algebraic laminations dual to trees,
as developed by Coulbois, Hilion, Lustig and Reynolds, to the context of free products; this also gives us an opportunity to give a unified account of this theory. We first show that any $\bbR$-tree with dense orbits in the boundary of the corresponding outer space can be reconstructed as a quotient of the boundary of the group by its dual lamination. We then describe the dual lamination in terms of a band complex on compact $\bbR$-trees
(generalizing Coulbois--Hilion--Lustig's compact heart),  and we analyze this band complex using versions of the Rips machine and of the Rauzy--Veech induction.
An important output of the theory is that the above map from the boundary of the group to the $\bbR$-tree is 2-to-1 almost everywhere.

A key point for our intended application is a unique duality result for arational trees.
It says that if two trees have a leaf in common in their dual laminations, and if one of the trees is arational and relatively free, then they are equivariantly homeomorphic.

This statement is an analogue of a result in the free group
saying that if two trees are dual to a common current
and one of the trees is free arational, then the two trees are equivariantly homeomorphic.
However, we notice that in the setting of free products,
 the continuity of the pairing between trees and currents fails.
For this reason, in all this paper, we work with laminations rather than with currents.


\end{abstract}

\setcounter{tocdepth}{1}
\tableofcontents

\section*{Introduction}

In analogy to curve complexes used to study mapping class groups of surfaces, the free factor graph of a free group $F_N$ has recently turned to be fruitful in the study of $\text{Out}(F_N)$. It is Gromov hyperbolic, as was proved by Bestvina--Feighn \cite{BF14} and the action of an automorphism of $F_N$ is
loxodromic if and only if it is fully irreducible. Its Gromov boundary was described by Bestvina--Reynolds \cite{BR13} and Hamenstädt \cite{Ham13} as the set of equivalence classes of \emph{arational} trees.

Our goal in the present paper and its sequel \cite{GH15} is to extend this description of the boundary 
of the free factor graph to the context of free products, with a view towards obtaining classification results for subgroups of their automorphism groups.

 In the case of free groups, this description of the Gromov boundary relies on the theory of dual laminations developed by Coulbois, Hilion, Lustig and Reynolds, which is used to establish a crucial unique duality result between arational trees and geodesic currents. The main goal of the present paper is to extend the theory of laminations to the context of free products, and state a unique duality result for arational trees in this context. However, geodesic currents are not well-adapted to free products, because there  cannot be any natural continuous intersection pairing between $\bbR$-trees and currents in this broader context. To bypass this difficulty, the duality statement will be phrased purely in terms of dual laminations.

\paragraph*{Free products.}
Consider a free product $G=G_1*\dots * G_k* F_N$, where all groups $G_i$ are countable, and $F_N$ is a free group. A subgroup of $G$ is called \emph{peripheral} if it is conjugate into some $G_i$; we denote by $\calf$ the finite collection of all conjugacy classes of the groups $G_i$. 
In this context, one considers actions on trees relative to $\calf$, or $(G,\calf)$-trees: 
these are trees with an action of $G$ in which every $G_i$ fixes a point.
The Bass--Serre tree $R_0$ of the graph of groups decomposition 
$$\includegraphics{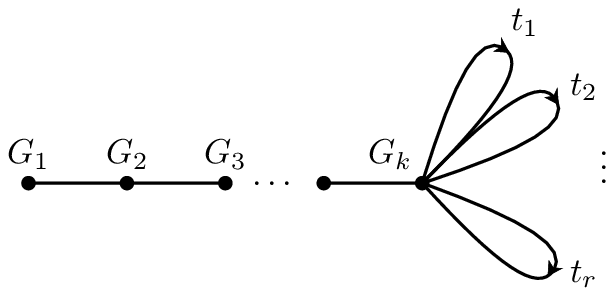}$$
(where one replaces the free product with $F_r$ above by  $N$ successive HNN extensions over the trivial group) 
is part of an outer space $\calo$ generalizing Culler--Vogtmann's outer space \cite{CuVo_moduli,GL07}.
This outer space $\calo$ is the space of all $G$-equivariant isometry classes of
\emph{Grushko trees} of $G$ relative to $\calf$, i.e.\  minimal, isometric $G$-actions on
simplicial metric trees with trivial edge stabilizers and whose non-trivial vertex stabilizers are precisely the groups whose conjugacy class is in $\calf$.

For example, the free product above can be an absolute Grushko decomposition of a finitely generated group $G$ (in which case $G_i$ are freely indecomposable and not cyclic), but other cases are important: for example, in the case where $G=F_N$, and $\calf$ is a system of free factors, 
this relative outer space $\calo$ is often more suitable to study subgroups of $\Out(F_N)$ stabilizing $\calf$.

\paragraph*{Boundaries and laminations.}
 Laminations for free groups were first introduced as an analogue of geodesic laminations on hyperbolic surfaces by Bestvina--Feighn--Handel, who constructed in \cite{BFH97} the attractive and repulsive laminations of a fully irreducible automorphism with a view towards proving the Tits alternative for $\Out(F_N)$ in \cite{BFH00,BFH05}. A comprehensive study of laminations in free groups was made by Coulbois, Hilion and Lustig in \cite{CHL08-1,CHL08-2,CHL08-3}.

A geodesic lamination on a hyperbolic surface can be lifted to its universal cover $\bbH^2$, and each leaf can be identified with its pair of endpoints in the circle at infinity. By analogy, an \emph{algebraic leaf} in the free group $F_N$ is defined as a pair of distinct points in its Gromov boundary, i.e.\ an element of 
$\partial^2 F_N:=(\partial F_N \times \partial F_N)\setminus \Delta$ where $\Delta$ is the diagonal.
If one identifies $F_N$ with the fundamental group of a graph $\Gamma$, such a leaf can be represented by a bi-infinite line in the universal cover $\Tilde \Gamma$.
A \emph{lamination} in $F_N$ is a closed $F_N$-invariant subset of $\partial^2 F_N$ which is flip invariant (i.e.\ invariant under the involution $(\alpha,\omega)\mapsto (\omega,\alpha)$). Laminations are related to geodesic currents on $F_N$ (introduced in \cite{Bon91}, see also \cite{Kap05,Kap06}): a current is a Radon measure on $\partial^2 F_N$, and its support is a lamination.

In the case of a free product $(G,\calf)$, the Bass--Serre tree $R_0$ of the splitting depicted above plays the role of $\Tilde \Gamma$.
Since $R_0$ is not locally compact (unless all $G_i$ are finite), the Gromov boundary $\partial_\infty R_0$ is not compact,
but can be naturally compactified into $\partial R_0:=\partial_\infty R_0\dunion V_\infty(R_0)$, 
where $V_\infty(R_0)$ is the set of vertices of infinite valence in $R_0$. The topology on $R_0\cup\partial R_0$ is the topology generated by open half-trees (the observers' topology).

Given any other tree $R$ in the outer space $\calo$, there is an equivariant quasi-isometry between $R_0$ and $R$ 
which allows to identify canonically 
$\partial R$ with $\partial R_0$. 
This allows to define $\partial (G,\calf)$ without reference to a particular tree.

In this context, we define an \emph{algebraic leaf} as a pair of distinct points in $\partial (G,\calf)$. Given any $R\in \calo$, it corresponds to a non-degenerate line segment in $R$ (either finite, semi-infinite or bi-infinite) with endpoints in $\partial R$. For example, every non-peripheral element $g\in G$ determines an algebraic leaf $(g^{-\infty},g^{+\infty})$, defined as the endpoints of the axis of $g$ in $R$. One then defines a \emph{lamination} as a closed, $G$-invariant, flip-invariant subset of 
$$\partial^2(G,\calf):=\left (\partial (G,\calf)\times \partial (G,\calf)\right) \setminus \Delta $$
where $\Delta$ is the diagonal.


\paragraph*{Theory of dual laminations.}
An overwhelming theme in the study of $\Out(F_N)$ or more generally outer automorphisms of free products is to analyze these groups through their actions on spaces of $\mathbb{R}$-trees. There is a particularly nice duality between $\mathbb{R}$-trees and laminations. In the context of free groups, the theory of dual laminations was developed by Coulbois, Hilion, Lustig \cite{CHL08-1,CHL08-2,CHL08-3,CHL07,CHL09,CH14} and Reynolds \cite{CHR11}, building on the Levitt--Lustig map introduced in \cite{LL03}. A major part of the work of the present paper, required by our intended applications, is to carry this theory to the context of free products. This gives us the opportunity to make a 
unified account of these results while putting them in a more general context.



Let $T$ be an $\bbR$-tree in the compactified outer space $\ol\calo$, i.e.\ endowed with a very small $(G,\calf)$-action \cite{Hor14-1}. The \emph{dual lamination} $L^2(T)$ is defined
by first looking at the closure of all algebraic leaves of the form $(g^{-\infty},g^{+\infty})$, where $g\in G$ is a non-peripheral element whose translation length in $T$ is at most $\eps$,
and then taking the intersection over all $\eps>0$ (see Definition~\ref{dfn_dual_lamination}).
One easily checks that the dual lamination of $T$ is empty if and only if $T$ is a Grushko tree (i.e.\ lies in $\calo$, as opposed to $\partial\calo:=\overline{\calo}\setminus\calo$).

 From now on, we will make the additional assumption that $T\in \ol\calo$ is an $\bbR$-tree on which the $G$-action has dense orbits. 
In this case, there is a well defined Levitt--Lustig continuous surjective map $\calq:\partial (G,\calf)\ra \hat T$, where $\hat T=\ol T\cup \partial_\infty T$ is the metric completion of $T$ together with its Gromov boundary,
 endowed with the observers' topology that makes it compact (see Proposition~\ref{prop_defQ}, generalizing \cite{LL03}). The map $\calq$ is the unique continuous extension of any $G$-equivariant map $R_0\to T$ that is linear on edges. One then shows that fibers of $\calq$ correspond to the dual lamination of $T$ (Proposition~\ref{equality-Q}, or \cite{CHL08-2,CHL07}):
 an algebraic leaf $(\alpha,\omega)$ lies in $L^2(T)$ if and only if $\calq(\alpha)=\calq(\omega)$.
This implies in particular that $L^2(T)$ completely determines $\hat T$ up to equivariant homeomorphism as the quotient $$\hat T\simeq \partial(G,\calf)/L^2(T)$$
(see Corollary~\ref{quotient-lamination}, or \cite{CHL08-2,CHL07} for $F_N$).

\paragraph*{Band complexes.} The somewhat abstract algebraic lamination $L^2(T)$ can also be realized as a very concrete geometric object.

Given a tree $R\in \calo$ (or a Cayley graph of the free group associated to a chosen basis in \cite{CHL09}),
one constructs a foliated \emph{band complex} $\Sigma(T,R)$, whose bands are of the form $K_e\times e$
where $e$ is an edge of $R$, and $K_e$ is a compact subtree of $\ol T$, and are foliated by $\{*\}\times e$ (see Section~\ref{sec-def-band} for precise definitions). The compact trees $K_e$ coincide essentially with the \emph{compact heart} introduced in \cite{CHL09} in the free group case
(to be more precise, the compact heart is the union of the trees $K_e$ over all edges $e$ in the Cayley graph joining $1_{F_n}$ to the chosen basis).
The projections $K_e\ra e$ extend to a natural map $p_R:\Sigma(T,R)\to R$, and the inclusions $K_e\subseteq \ol T$
extend to a map $p_T:\Sigma(T,R)\ra \ol T$.
Every leaf $\call$ of $\Sigma(T,R)$ has a natural structure of a graph, and this graph naturally embeds in $R$ under the projection $\Sigma(T,R)\to R$ (in particular $\call$ is a tree).
Outside this introduction, leaves of $\Sigma(T,R)$ are called \emph{complete $\Sigma(T,R)$-leaves} to avoid confusions with algebraic leaves.
Then one essentially \footnote{$\ol T\setminus T$ consists only of terminal points of $\ol T$ and the space of leaves of $\Sigma(T,R)$ is equivariantly isometric to a $G$-invariant tree sandwiched between $T$ and $\ol T$.} recovers $T$ 
as the space of leaves of $\Sigma(T,R)$:
the fibers of the map $\Sigma(T,R)\ra \ol T$ are precisely the leaves of $\Sigma(T,R)$.

 The algebraic lamination $L^2(T)$ dual to $T$ is then readable from the leaves of $\Sigma(T,R)$ in the following way (Lemma~\ref{lem_L2}). If $\alpha\neq \omega\in \partial_\infty R$, then $(\alpha,\omega)$ is an algebraic leaf in $L^2(T)$ if and only if there is a geometric leaf $\call$ in $\Sigma(T,R)$ whose projection to $R$ contains the bi-infinite line joining $\alpha$ to $\omega$. 
Assume now that $\alpha\in V_\infty(R)$ is a vertex
 with infinite stabilizer $G_\alpha$, and that $\omega$ lies in the Gromov boundary $\partial_\infty R$. Then $(\alpha,\omega)$ lies in $L^2(T)$ if and only if the geometric leaf $\call$ of $\Sigma(T,R)$ containing the unique point of $\Sigma(T,R)$ fixed by $G_\alpha$ is such that $p_R(\call)$ contains the semi-infinite line of $R$ joining $\alpha$ to $\omega$. If both $\alpha$ and $\omega$ belong to $V_\infty(R)$, then $(\alpha,\omega)$ lies in $L^2(T)$ if and only if the points of $\Sigma(T,R)$ fixed by $G_\alpha$ and $G_\omega$ belong to the same leaf of $\Sigma(T,R)$.  
 


It is important to have in mind that the trees $K_e$ used to construct the band complex $\Sigma(T,R)$ are compact $\bbR$-trees (for the metric topology), but they may have infinitely many branch points (those may be dense in $K_e$).
This is unlike in Rips theory (\cite{BF_stable,GLP94,GL95,Gui_actions}) where one works with band complexes whose bands are of the form $K\times e$ where $K$ is an interval or more generally a finite tree (i.e.\ the convex hull of finitely many points).
Although more complicated, the more general band complexes introduced by Coulbois--Hilion--Lustig and used in the present paper have the advantage that one recovers $T$ (plus some extra terminal points) as the space of leaves, and not a \emph{geometric approximation} of $T$ as in \cite{LP}.

We provide an alternative useful description of $\Sigma(T,R)$: it can be identified with a subset of the product $\ol T\times R$,
and more precisely with the \emph{core} of $\ol T\times R$ as introduced in \cite{Gui_coeur} (see Proposition \ref{prop_coeur}).
With this description, the natural maps to $\ol T$ and to $R$ are just the two projections.

\paragraph*{Preimages of $\calq$.} An important result that we will need is that almost every leaf in $\Sigma(T,R)$ has at most 2 ends, or equivalently, that 
the map $\calq:\partial(G,\calf)\ra \hat T$ is 2-to-1 almost everywhere.
\begin{thmbis}[see Theorem~\ref{Q-preimage} and Proposition~\ref{prop_3ends}, generalizing \cite{CH14}] \label{intro_finiteness}
  Let $T\in \ol\calo$ be a tree with dense orbits. Then the following equivalent statements hold:
  \begin{itemize}
  \item For all but finitely many orbits of points $x\in \hat T$, $\calq\m(\{x\})$ contains at most 2 points.
  \item There are only finitely many orbits of leaves in $\Sigma(T,R)$ with at least 3 ends.
  \end{itemize}
\end{thmbis}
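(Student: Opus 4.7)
The plan is first to derive the equivalence of the two bullets from the description of $L^2(T)$ in terms of $\Sigma(T,R)$, and then to prove the geometric statement about leaves with at least three ends.

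For the equivalence, fix $x\in\hat T$ and consider the leaf $\call_x:=p_T\m(\{x\})$, which is a tree embedded in $R$ via $p_R$. Its ends come in two flavours: rays whose image in $R$ converges to a point of $\partial_\infty R$, and vertices of infinite valence $\alpha\in V_\infty(R)$ whose stabilizer $G_\alpha$ fixes a point of $\call_x$. Using the characterization of $L^2(T)$ in Lemma~\ref{lem_L2} together with Proposition~\ref{equality-Q}, each such end corresponds to a point of $\calq\m(\{x\})$ and, conversely, every point of $\calq\m(\{x\})$ arises in this way. This produces a natural bijection between the ends of $\call_x$ and $\calq\m(\{x\})$, from which the equivalence of the two bullets follows immediately.

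To prove the geometric bullet, the strategy is to analyze the band complex $\Sigma(T,R)$ by means of a Rips-style decomposition combined with a Rauzy--Veech induction on the bands, generalizing the compact-heart machinery of \cite{CHL09,CH14}. A leaf $\call$ has at least three ends exactly when it contains a branch point that is the source of three unbounded half-leaves; such branching arises at points of a base $K_e$ lying in the image of several band gluings. The aim is to show that only finitely many $G$-orbits of such configurations give rise to unbounded branching. The dense-orbits hypothesis on $T$ is crucial at this stage: it forbids a non-trivial simplicial piece in the Rips decomposition, leaving only surface, axial, and thin (Levitt) components, in each of which the generic leaf is a bi-infinite line and the orbits of exceptional multi-ended leaves can be enumerated and shown to be finite.

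The main obstacle is that the bases $K_e$ are compact $\bbR$-trees whose branch points may be infinite in number and even dense, so the classical Rips machine does not apply directly. The substitute is to iterate Rauzy--Veech moves on the band complex and to argue that, at each step, the collection of $G$-orbits of branching configurations with three unbounded directions either stabilizes into a controlled finite family, or else produces a persistent pattern in $T$ that would be incompatible with density of $G$-orbits. Running this induction carefully and then translating the resulting finiteness back to $\calq$-preimages through the bijection of ends above should yield the theorem.
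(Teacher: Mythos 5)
Your outline of the equivalence is roughly right but glosses over two subtleties. First, a $v\in V_\infty(G,\calf)$ with $\calq(v)=x$ corresponds to a special point of the leaf $\call_x$, which has infinite valence inside its leaf — so it contributes infinitely many ends, not a single one; the correspondence between ends and preimages is therefore not literally a bijection, though it still gives the implication you need in both directions. Second, to produce a $\Sigma$-leaf semi-line in $\call_x$ with extremity a given $\omega\in\calq\m(\{x\})$, one pairs $\omega$ with another preimage and applies Lemma~\ref{lem_L2}; this requires $|\calq\m(\{x\})|\ge 2$, which is the case of interest, so no harm done, but the "bijection" claim should be qualified.

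The real gap is in the geometric statement, which you only sketch as a program. Your plan invokes a Rips decomposition of $T$ into surface, axial and thin components together with Rauzy--Veech moves, but neither of these is the actual engine, and neither can be used in the way you describe. A Rips-style classification of the bands is unavailable here: the bases $K_e$ are compact $\bbR$-trees whose branch points may be dense, so the classical Rips machine (whose bands have finite-tree bases) does not apply — the paper stresses exactly this obstruction. Rauzy--Veech (the "splitting process" of Section~\ref{sec-splitting}) does appear in the paper, but for a different purpose: analysis of mixing trees and reconstruction of $L^2(T)$ from a single leaf. The finiteness of three-ended leaves is instead proved by the \emph{pruning} process: one iteratively deletes terminal leaf-segments, obtaining a nested sequence $\Sigma^{(i)}=\Sigma(T,R^{(i)})$ whose intersection is $\Omega$; one shows the incidence graph $\cali$ of $\Omega$ is a forest whose branch-vertex orbits are uniformly bounded by a stabilization argument over the $R^{(i)}$; one isolates the "thick" part $\Omega_{int}$, whose components are themselves band complexes $\Sigma(T_\Lambda,R_\Lambda)$ of quadratic type; and finally one bounds branching inside a quadratic-type component by a measure-theoretic estimate — approximating the bases by the finite trees $K^{\dg\eps}$ turns the complex into a finite system of partial isometries with independent generators, so the Gaboriau--Levitt--Paulin inequality forces the set of three-band points to be finite modulo $G$. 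Your proposed alternative ("either stabilizes into a controlled finite family, or else produces a persistent pattern incompatible with density of orbits") is not argued for and has no analogue in the proof; without the independent-generators estimate there is no mechanism producing the required finiteness.
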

In fact, Coulbois and Hilion prove more in \cite{CH14} in the context of free groups: they define an index (called the \emph{$\calq$-index}) that counts the number of orbits of \emph{extra} ends of leaves, and show that this index
is bounded by $2N-2$ where $N$ is the rank of the free group $F_N$.
We will not need such a refinement for our intended applications.

The main tool to prove Theorem~\ref{intro_finiteness} is the pruning process, an extension of a process of the Rips machine.
It takes as input the band complex $\Sigma(T,R)$, and produces a new band complex $\Sigma'\subseteq\Sigma(T,R)$ by removing all terminal segments in all leaves.
From $\Sigma'$, one can construct a new tree $R'\in \calo$ such that $\Sigma'=\Sigma(T,R')$.
Iterating this process yields a sequence of nested band complexes whose intersection is the union of bi-infinite lines contained in leaves of $\Sigma(T,R)$, 
thus corresponding to algebraic leaves in $L^2(T)$.
If this process stops in finite time, we say that the band complex is of \emph{quadratic type} \footnote{\cite[Section 4.3]{CH14} use the terminology \emph{pseudo-surface} and \cite[Definition~3.2]{CHR11} the terminology \emph{surface type}.} in reference to quadratic systems
of generalized equations in Makanin's algorithm.
Controlling the complexity of the band complexes produced by the pruning process is the main tool to prove Theorem~\ref{intro_finiteness} above.


\paragraph*{Reconstructing the lamination of an arational tree from a single leaf.}
One says that a nonsimplicial $\bbR$-tree $T\in\partial\calo$ is \emph{arational} if every proper relative free factor of $G$ acts freely and discretely on $T$
(a proper relative free factor $A$ is a nonperipheral factor of a free product decomposition $G=A*B$ where $A,B\neq \{1\}$, whose Bass--Serre tree is relative to $\calf$). Every orbit in an arational tree is dense \cite{Hor14-3}.
 We say that $T$ is \emph{relatively free} if every point stabilizer is peripheral.
Arational trees which are not relatively free are very special as they are essentially dual to arational laminations on $2$-orbifolds (\cite{Rey12,Hor14-3}, see Definition \ref{dfn_arat-surf}). 

An important result of this paper says that 
one can reconstruct the full dual lamination $L^2(T)$ of a relatively free arational tree $T$ from a single leaf $l\in L^2(T)$ (see \cite{CHR11} in the free group case).

The dual lamination of  any tree $T\in\overline{\calo}$ with dense $G$-orbits satisfies additional properties. 
First, since $L^2(T)$ coincides with the fibers of the map $\calq$, it is \emph{transitively closed}: 
$$ (\alpha,\beta),(\beta,\gamma)\in L\Rightarrow (\alpha,\gamma)\in L\text{ if $\alpha\neq \gamma$.} $$
Moreover, for any vertex $v\in V_\infty(R)$ with stabilizer $G_v$,
$$ (\alpha,g\alpha)\in L,\ g\in G_v \Rightarrow (\alpha,v)\in L.$$
because $\calq(\alpha)$ is the unique fixed point of $G_v$ in $T$, and so is $v$. 
When $v$ is a vertex with finite but nontrivial stabilizer $G_v$, the above property does not make sense since $v\notin V_\infty(R)$.
In all cases, it can be reformulated as $$(\alpha,g\alpha)\in L,(\beta,h\beta)\in L,\ g,h\in G_v,\ \alpha\neq\beta\Rightarrow (\alpha,\beta)\in L.$$  
We thus say that a lamination $L$ is \emph{peritransitively closed} (see Definition~\ref{dfn_peritransitively_closed}) if it is transitively closed and closed under the latter operation
(recall also that a lamination is required to be $G$-invariant, flip-invariant and topologically closed in $\partial^2(G,\calf)$ by definition).
As  just noted, for any tree $T$ with dense orbits, the dual lamination of $T$ is peritransitively closed (Lemma \ref{lem_peritransitively}).

For simplicity, we state the following theorem in the case where $T$ is relatively free action, see Theorem~\ref{lamination-arational} for a full statement. 

\begin{thmbis}[Theorem~\ref{lamination-arational}, see \cite{CHR11} in $F_N$] \label{intro-peritransitive} Let $T\in\overline{\calo}$ be arational and relatively free and let $l_0\in L^2(T)$ be an algebraic leaf.
Then $L^2(T)$ is the smallest peritransitively closed lamination containing $l_0$.
\end{thmbis}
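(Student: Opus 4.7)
\emph{Plan.} Denote by $L$ the smallest peritransitively closed lamination containing $l_0$. Since $L^2(T)$ itself is peritransitively closed (Lemma~\ref{lem_peritransitively}) and contains $l_0$, the inclusion $L\subseteq L^2(T)$ is immediate, and intersecting $L$ with $L^2(T)$ preserves peritransitive closedness, so one may assume $L\subseteq L^2(T)$; the goal is the reverse inclusion $L^2(T)\subseteq L$. Using Proposition~\ref{equality-Q}, which identifies $L^2(T)$ with the non-diagonal fiber-pairs of the Levitt--Lustig map $\calq\colon\partial(G,\calf)\to\hat T$, this reduces to showing that for every $x\in\hat T$ and all distinct $\alpha,\omega\in\calq\m(x)$, the pair $(\alpha,\omega)$ belongs to $L$.

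The engine of the proof is a density-and-approximation argument. Set $x_0=\calq(\alpha_0)=\calq(\omega_0)$, where $l_0=(\alpha_0,\omega_0)$. Arationality of $T$ forces the $G$-orbit of $x_0$ to be dense in $\hat T$. Given a target $x\in\hat T$, pick $g_n\in G$ with $g_nx_0\to x$; by compactness of $\partial(G,\calf)$ in the observers' topology, extract a subsequence with $g_n\alpha_0\to\alpha^\ast$ and $g_n\omega_0\to\omega^\ast$. Continuity of $\calq$ gives $\calq(\alpha^\ast)=\calq(\omega^\ast)=x$, and since $L$ is closed and $G$-invariant, the limit pair $(\alpha^\ast,\omega^\ast)$ lies in $L$ as soon as $\alpha^\ast\neq\omega^\ast$. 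The challenge is to organize such limits so as to cover every fiber pair of $\calq$.

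To convert density into full control of fibers I would invoke Theorem~\ref{intro_finiteness}: outside finitely many $G$-orbits, the fiber $\calq\m(x)$ has at most two points, so the unique non-diagonal pair of such a fiber is automatically captured by the construction above, and $G$-invariance plus closedness of $L$ propagate this to all generic $x$. The finitely many exceptional orbits are concentrated at points of $\hat T$ with non-trivial point-stabilizers; by relative freeness of $T$ these stabilizers are peripheral, fixing some $v\in V_\infty(R)$ with stabilizer $G_v$. The peritransitive closure axiom is tailor-made for this situation: once $L$ contains some $(\alpha,g\alpha)$ with $g\in G_v$ (which it does for any $\alpha\in\calq\m(x)$ reached by the density argument), the axiom links every such $\alpha$ to every other $\beta\in\calq\m(x)$ satisfying a symmetric relation, and transitive closure finishes off the remaining pairs in $\calq\m(x)^2\setminus\Delta$.

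The main obstacle I expect is securing the non-degeneracy $\alpha^\ast\neq\omega^\ast$ of the approximating limits at \emph{every} $x$, together with the passage from pointwise fiber control at a dense set of $x$ to complete control of every $G$-orbit of fibers. For generic $x$ the 2-to-1 finiteness of Theorem~\ref{intro_finiteness} makes non-degeneracy essentially automatic, since the two preimages cannot collide in the limit. At the finitely many exceptional peripheral points, however, one must genuinely use both arationality (to exclude a proper $G$-invariant closed sublamination of $L^2(T)$ in which the limits could be trapped) and relative freeness (to ensure the exceptional fibers are exactly those handled by the peritransitive axiom). Carefully interleaving the density argument, the 2-to-1 theorem, transitive closure, and the peripheral axiom—most naturally via the band-complex description of $\calq$ recalled in the introduction—should then yield $L=L^2(T)$.
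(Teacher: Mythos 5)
The proposal has a genuine gap: the non-degeneracy of the limit pairs $(\alpha^\ast,\omega^\ast)$ is simply asserted, not established, and I do not believe it can be salvaged by the density argument alone. If $g_n x_0 \to x$ in $\hat T$ and $\calq\m(x)=\{\alpha,\omega\}$, then continuity of $\calq$ forces $\alpha^\ast,\omega^\ast\in\{\alpha,\omega\}$, but there is nothing to prevent both subsequential limits from landing on the \emph{same} point of the fiber, giving a degenerate (diagonal) limit that contributes nothing to $L$. The claim that the $2$-to-$1$ finiteness ``makes non-degeneracy essentially automatic, since the two preimages cannot collide in the limit'' conflates two distinct things: $\alpha$ and $\omega$ are distinct, but $\alpha^\ast$ and $\omega^\ast$ are \emph{limits of translates of the endpoints of $l_0$}, and the $2$-to-$1$ property says nothing about which element of the two-point fiber each of them selects. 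To force $\alpha^\ast\neq\omega^\ast$ you would need to know that the segments $g_n[\alpha_0,\omega_0]_R$ all pass through a fixed edge of $R$, and convergence of $g_n x_0$ in $\hat T$ (observers' topology) gives no control of this kind.

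More fundamentally, the approach misidentifies what the transitive-closure axiom is for. The content of the theorem is precisely that the topological closure $\overline{G.l_0}$ is \emph{not} enough: a generic leaf $(\alpha,\omega)\in L^2(T)$ is typically not a limit of a single sequence $g_n.l_0$, but rather the endpoint of a finite \emph{chain} of such limits, linked by transitivity, and it is the Whitehead-graph argument (Lemma~\ref{Whitehead}) that produces this chain. That lemma is where arationality does its real work (via Lemma~\ref{arat-lam}, to rule out a proper free factor carrying the lamination), and it has no analogue in the density sketch. The paper's actual proof organizes this via the band complex: after pruning/splitting (Lemma~\ref{systole}, on the systole of $R^{(i)}$ going to infinity, and the cleanness construction of Section~\ref{sec-splitting}), one shows that the Whitehead graph of $l_1$ around any bounded subtree $F$ is connected, which yields basic chains or basic loops of translates of $l_1$ approximating $(\alpha,\omega)$; transitive closure (resp. the peripheral axiom) then assembles these. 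None of this machinery is reproduced or substituted in your sketch, and I do not see how density plus $2$-to-$1$ plus the peripheral axiom can replace it. The peripheral axiom in particular needs \emph{both} $(\alpha,g\alpha)$ and $(\beta,g'\beta)$ in $L$ before it can be invoked; producing these is exactly what the basic-loop case of Lemma~\ref{lem_unsplit2} does, and your sketch offers no mechanism for it.

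A smaller point: the density of $G.x_0$ in $\hat T$ follows from $T$ having dense orbits, which is weaker than arationality; citing arationality here underuses the hypothesis. Arationality's leverage is elsewhere — Lemma~\ref{arat-lam}, Lemma~\ref{systole}, and the ruling-out of sub-free-factors carrying the lamination. Finally, even granting non-degenerate limits at a dense set of $x$, the passage to \emph{all} of $L^2(T)$ is not immediate: preimages of dense sets under the continuous map $\calq^2$ need not be dense, so ``generic plus finitely many exceptions'' is not the right dichotomy.
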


The proof of this theorem relies on the analysis of the band complex associated to $T$. In particular, in the case where the pruning process described above stops, we use a second process, reminiscent of Rauzy--Veech induction and introduced in \cite{CHR11}, called the \emph{splitting process}. 

\paragraph*{A unique duality statement for relatively free arational trees.}
 In the sequel \cite{GH15} of this paper, we describe the boundary of the free factor graph of $(G,\calf)$ in terms of arational trees, thus extending \cite{BR13,Ham13}. 
An important tool used by Bestvina--Reynolds and Hamenstädt in their proof is a unique duality result,
stated in terms of the pairing between currents and trees introduced in \cite{KL09}.
This pairing is a continuous map that associates a non-negative number to a tree in $\calo$ and a current.
Their unique duality statement is the following, in which $T'\approx T$ means that $\hat T'$ and $\hat T$ are equivariantly homeomorphic (for the observers' topology).
    \begin{thmbis}[Bestvina--Reynolds \cite{BR13}, Hamenstädt \cite{Ham13}]\label{duality-free}
Let $T$ and $T'$ be two very small $F_N$-trees, with $T$ free and arational. Assume that there exists a current $\eta\neq 0$
    such that $\grp{T,\eta}=0=\grp{T',\eta}$.
        \\ Then $T'$ is arational and $T'\approx T$.
    \end{thmbis}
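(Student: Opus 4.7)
}
The plan is to reduce the duality statement about currents to a duality statement about laminations, then invoke Theorem~\ref{intro-peritransitive} and Corollary~\ref{quotient-lamination}. First I would show that the support of $\eta$ is contained in $L^2(T)\cap L^2(T')$. Rational currents $\mu_g$ (supported on the axis of a single $g$) are dense in the space of projectivized currents, and the pairing satisfies $\grp{T,\mu_g}=\|g\|_T$ for every tree; since the pairing is continuous in the current, any current $\eta$ with $\grp{T,\eta}=0$ is a limit of $\mu_{g_n}$ with $\|g_n\|_T\to 0$, so that $\mathrm{supp}(\eta)$ consists of limits of leaves $(g_n^{-\infty},g_n^{+\infty})$ and hence lies in $L^2(T)$ by the very definition of the dual lamination. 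Applied symmetrically to $T'$, this gives $\mathrm{supp}(\eta)\subseteq L^2(T)\cap L^2(T')$, and since $\eta\neq 0$ we can pick a common leaf $l_0$.

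Next I would promote this single common leaf into an inclusion of laminations. Since $l_0\in L^2(T')$, the tree $T'$ is not in $\calo$, hence $T'\in\partial\calo$. Using the structure theorem for very small $F_N$-trees, I would collapse the simplicial pieces of $T'$ to obtain a tree $T''\in\ol\calo$ with dense orbits whose dual lamination still contains $l_0$. Then $L^2(T'')$ is peritransitively closed, so by Theorem~\ref{intro-peritransitive} applied to the free arational tree $T$ we obtain $L^2(T)\subseteq L^2(T'')\subseteq L^2(T')$. By Corollary~\ref{quotient-lamination}, this inclusion descends to an $F_N$-equivariant continuous surjection $\pi:\hat T\onto \hat T''$ (essentially the Levitt--Lustig map $\calq_T$ factored through the coarser equivalence relation induced by $L^2(T'')$).

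The heart of the argument is to promote the surjection $\pi$ to a homeomorphism, equivalently to prove that the inclusion $L^2(T)\subseteq L^2(T'')$ is an equality. Suppose for contradiction that the inclusion is strict: then $\pi$ has a nondegenerate fiber, and there exist $\alpha\neq\omega\in\partial F_N$ with $\calq_T(\alpha)\neq\calq_T(\omega)$ but $\calq_{T''}(\alpha)=\calq_{T''}(\omega)$. Considering the stabilizer in $F_N$ of the collapsed fiber in $\hat T''$, one extracts either an element $g$ that is hyperbolic in $T$ but elliptic in $T''$, or a nontrivial subgroup $H$ that fixes a point of $\hat T''$ but acts freely with dense orbits on $T$. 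The subgroup generated by such elliptic subgroups of $T''$ should be shown to be contained in a proper relative free factor $A$ of $F_N$, using the classification of very small trees as a graph of actions. But then $A$ does not act freely and discretely on $T$, contradicting the fact that $T$ is free and arational. Hence $L^2(T)=L^2(T'')$, and in particular $T'' = T'$ has no collapsed simplicial part and $T'$ itself has dense orbits.

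With $L^2(T)=L^2(T')$ established, Corollary~\ref{quotient-lamination} immediately gives
\[
\hat T\simeq \partial(G,\calf)/L^2(T)=\partial(G,\calf)/L^2(T')\simeq \hat T'
\]
equivariantly for the observers' topology, which is exactly $T'\approx T$. Arationality of $T'$ is then automatic: any proper free factor failing to act freely and discretely on $T'$ would, through the equivariant homeomorphism, also fail to do so on $T$. The main obstacle in this plan is the free-factor extraction step: producing a proper free factor from a strict inclusion $L^2(T)\subsetneq L^2(T')$ is the delicate combinatorial point where freeness and arationality of $T$ are both essentially used, and where the absence of a continuous intersection pairing in the free product setting (noted in the abstract) will force a purely lamination-theoretic argument in the sequel \cite{GH15}.
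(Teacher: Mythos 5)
Your opening step---using the Kapovich--Lustig characterization (equivalently, approximation by rational currents together with continuity of the pairing) to show that $\mathrm{supp}(\eta)\subseteq L^2(T)\cap L^2(T')$ and hence pick a common leaf $l_0$---is exactly how the paper reduces Theorem~\ref{duality-free} to its purely lamination-theoretic counterpart Theorem~\refintrobidual. Likewise, invoking $L^2(T)=\ol\calp(l_0)$ (Theorem~\ref{intro-peritransitive}), the peritransitive closedness of $L^2(T')$, and Corollary~\ref{quotient-lamination} to extract a continuous equivariant surjection $p:\hat T\onto\hat T'$ is the paper's route as well. (Your detour through a collapsed tree $T''$ is not quite well posed---collapsing the simplicial part of the Levitt decomposition does not in general produce a single dense-orbit $(G,\calf)$-tree---but the underlying goal, to show $T'$ already has dense orbits, is the correct one, and the paper achieves it directly via Corollary~\ref{carry-gen-l2} and Proposition~\ref{arat-Z}.)

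The genuine gap is in the heart of the argument: promoting the surjection $p$ to a homeomorphism. You propose to consider ``the stabilizer in $F_N$ of the collapsed fiber'' and extract from it either a hyperbolic-in-$T$, elliptic-in-$T''$ element or a subgroup fixing a point, and then to place these inside a proper free factor. This fails at the very first step: when $T$ is free and $T'$ has dense orbits, the stabilizer of a generic nondegenerate fiber $p^{-1}(x)\subseteq\hat T$ is typically \emph{trivial}, so there is no elliptic subgroup or point stabilizer to feed into an arationality argument, and the ``classification as a graph of actions'' gives you nothing to bite on. Moreover, even when some stabilizer is nontrivial, there is no mechanism that forces it into a proper free factor. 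The paper's proof of Theorem~\ref{bidual-free} instead runs through a counting argument that makes no reference to stabilizers: since $T$ is mixing, \cite[Proposition~3.2]{BR13} shows a non-injective continuous equivariant map $p:\hat T\to\hat T'$ produces \emph{uncountably many} points of $\hat T'$ with $p$-preimage of cardinality at least three; since $\calq_T$ is surjective, these are points with $|\calq_{T'}^{-1}(x)|\geq 3$, contradicting the ``2-to-1 almost everywhere'' result (Theorem~\ref{Q-preimage}), which is itself the main output of the whole pruning-process machinery of Sections~\ref{sec-band-complex}--\ref{sec-pruning}. This is the key analytic input you are missing: some finiteness statement about triple points of $\calq$ is needed, and it cannot be replaced by a free-factor extraction from fiber stabilizers.
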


In the context of free products (including $G=F_N$ with $\calf\neq \es$), there is no natural continuous pairing between $\ol \calo$ and
relative currents (see below).
This is why we need the following substitute that avoids mentioning currents.

\addtocounter{thmbis}{-1}
\hypertarget{introbiduallink}{
 \renewcommand{\thethmbis}{\arabic{thmbis}'}
\begin{thmbis}[see Theorem~\ref{bidual-free}] \label{introbidual}
Let $T\in\partial\calo$ be a relatively free arational tree, and $T'\in\overline{\calo}$ be any tree.\\ 
Assume that there exists a leaf $l_0\in L^2(T)$ that is also contained in $L^2(T')$.
\\ Then $T'$ is arational and $T'\approx T$.
\end{thmbis}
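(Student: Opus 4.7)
The plan is to propagate the single shared leaf $l_0$ to an inclusion $L^2(T)\subseteq L^2(T')$ of the full dual laminations, factor the Levitt--Lustig map of $T'$ through $\hat T$, and then upgrade the resulting quotient map to an equivariant homeomorphism using the indecomposability of arational trees.

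First I would handle the degenerate cases for $T'$. If $T'\in\calo$ then $L^2(T')=\emptyset$, contradicting the hypothesis $l_0\in L^2(T')$, so $T'\in\partial\calo$. If the action on $T'$ had a non-trivial simplicial part, a direct-sum decomposition of very small $(G,\calf)$-trees into a simplicial part and a dense-orbit part would let us replace $T'$ by its dense-orbit sub-action: the leaf $l_0$ must be supported there, since leaves arising from a purely simplicial splitting are combinatorial and cannot be produced as peritransitive closures starting from a leaf of an arational tree. So assume $T'$ has dense orbits. Then Lemma~\ref{lem_peritransitively} ensures that $L^2(T')$ is peritransitively closed, and it contains $l_0$ by hypothesis. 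Since $T$ is arational and relatively free, Theorem~\ref{intro-peritransitive} asserts that $L^2(T)$ is the \emph{smallest} peritransitively closed lamination containing $l_0$, so $L^2(T)\subseteq L^2(T')$.

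By Proposition~\ref{equality-Q}, the fibers of $\calq_{T'}\colon\partial(G,\calf)\to\hat{T'}$ are exactly the $L^2(T')$-classes, so the inclusion makes $\calq_{T'}$ constant on $L^2(T)$-classes. Combined with Corollary~\ref{quotient-lamination} identifying $\hat T$ with $\partial(G,\calf)/L^2(T)$, this gives a continuous $G$-equivariant surjection $\pi\colon\hat T\to\hat{T'}$. Now I would invoke that arational trees are indecomposable: any two non-degenerate arcs $I,J\subseteq T$ admit a covering $J\subseteq\bigcup_{i=1}^{k}g_iI$ with consecutive non-degenerate overlaps. If $\pi$ were not injective, it would collapse some non-degenerate arc $I_0$; the indecomposability covering together with the $G$-equivariance of $\pi$ and overlap compatibility then forces $\pi$ to be constant on every arc of the minimal subtree of $T$, hence on all of $\hat T$ --- contradicting surjectivity onto the non-trivial $\hat{T'}$. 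Therefore $\pi$ is a continuous equivariant bijection; being a continuous bijection between compact Hausdorff spaces (for the observers' topology), it is a homeomorphism, giving $T'\approx T$. Arationality is invariant under equivariant homeomorphism, so $T'$ is arational as well.

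The main obstacle is the last step: making the indecomposability argument actually bite on the observers'-topology map $\pi$. One must check that the arc joining two identified points is genuinely collapsed (not just mapped to a connected subset that could be larger in the metric sense), and that the collapse on one arc propagates cleanly through overlapping translates. An alternative, perhaps cleaner, route is to bypass $\pi$ entirely and prove $L^2(T)=L^2(T')$ directly: any extra leaf in $L^2(T')\setminus L^2(T)$ would add identifications between fibers of $\calq_T$ that are incompatible with the 2-to-1 almost-everywhere bound provided by Theorem~\ref{intro_finiteness} (equivalently, with the finiteness of orbits of leaves with at least three ends in the band complex $\Sigma(T,R)$).
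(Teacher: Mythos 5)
Your proof has the right skeleton — get $L^2(T)\subseteq L^2(T')$ via peritransitivity, build the quotient map $p:\hat T\to\hat{T'}$, then show $p$ is a homeomorphism — and that is indeed the paper's strategy. But there are two genuine gaps.

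First, the reduction to $T'$ having dense orbits is not carried out. The sentence ``leaves arising from a purely simplicial splitting are combinatorial and cannot be produced as peritransitive closures starting from a leaf of an arational tree'' is not a fact you can appeal to; it is roughly what must be proved. Moreover, ``replace $T'$ by its dense-orbit sub-action'' is not a well-defined operation for a mixed tree. What the paper does instead: if $T'$ does not have dense orbits, then by Corollary~\ref{carry-gen-l2} the shared leaf $l_0$ is carried by a vertex group $B$ of the Levitt decomposition of $T'$, hence $B$ is a $\calz$-factor of $(G,\calf)$. Proposition~\ref{arat-Z} then forces $B$ to act simplicially on its minimal subtree $T_B\subseteq T$, and Lemma~\ref{l2-carry} applied to $T_B$ shows $l_0$ is carried by a subgroup elliptic in $T$. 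Since $T$ is relatively free, that subgroup is peripheral, and a peripheral group carries no algebraic leaf at all --- contradiction. None of this is recoverable from your phrasing.

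Second, the indecomposability argument for injectivity of $p$ does not bite, for exactly the reason you flag yourself: $p$ is a continuous map for the observers' topology, not an alignment-preserving or metric collapse map, so identifying the two endpoints of an arc gives no license to conclude the arc is collapsed. Your ``alternative route'' (derive a contradiction with the $2$-to-$1$ bound of Theorem~\ref{intro_finiteness}) is in fact the paper's route, and it is the correct instinct. But the missing ingredient is the propagation step: a single extra identification does not by itself produce many triple points. The paper invokes \cite[Proposition~3.2]{BR13}, which uses the mixing property of $T$ to show that if $p$ is not injective, then uncountably many points of $\hat{T'}$ have $p$-fiber (hence $\calq_{T'}$-fiber, since $\calq_T$ is onto) of cardinality at least $3$, which is what contradicts Theorem~\ref{Q-preimage}. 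Without this lemma, or a replacement argument exploiting mixing to propagate a single identification, your proof does not close.
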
}
\renewcommand{\thethmbis}{\arabic{thmbis}}
\newcommand{\refintrobidual}{\hyperlink{introbiduallink}{\ref*{introbidual}}}

In fact, a similar result is already a main step of  Bestvina--Reynolds's and Hamenstädt's proofs of Theorem~\ref{duality-free} in the case of free groups.
Indeed, a result by Kapovich--Lustig shows that for free groups, one has $\grp{T,\eta}=0$ if and only if the support of the current $\eta$ is contained in the lamination $L^2(T)$ dual to $T$ \cite{KL09}. Theorem~\ref{duality-free} follows from this result together with Theorem~\refintrobidual.

Let us sketch the proof of Theorem \refintrobidual\ from Theorem \ref{intro-peritransitive} saying that $L^2(T)$ is the peritransitive closure of any of its leaves.
One first shows that $T'$ has dense orbits, so that $L^2(T')$ is peritransitively closed and contains the peritransitive closure of $l_0$. 
By Theorem \ref{intro-peritransitive}, one gets that $L^2(T')\supset L^2(T)$.
Since $\hat T$ is the quotient of $\partial(G,\calf)$ by $L^2(T)$ and similarly for $\hat T'$, this means that there is a continuous equivariant map
$p:\hat T\ra \hat T'$ such that $\calq_{T'}=p\circ \calq_T$ (where $\calq_T,\calq_{T'}$ are the Levitt--Lustig maps from $\partial (G,\calf)$ to $\hat T$ and $\hat T'$ respectively). 
If $p$ is not a homeomorphism, then an argument from \cite{BR13} shows that there are uncountably many points
$x\in T'$ with $\#p\m(\{x\})\geq 3$, so $\#\calq_{T'}\m(\{x\})\geq 3$ since $\calq_T$ is surjective.
This contradicts Theorem \ref{intro_finiteness} saying that $\calq_{T'}$ is $2$-to-$1$ almost everywhere.

\paragraph*{Limits in the free factor graph.}
A convenient definition of the graph of free factors is the following: its vertex set is the set of non-trivial actions of $(G,\calf)$
 on simplicial trees with trivial edge stabilizers,
and two trees are joined by an edge if they are compatible or  
there exists a non-peripheral element of $G$ fixing a point in both trees.\footnote{A discussion of the equivalence, up to quasi-isometry, of various standard definitions of the free factor graph, can be found in \cite{GH15}.}
The following result analyzes a sequence of pairs of trees $T_n,T'_n$ joined by an edge of the second type.

\begin{thmbis}\label{main-intro}
Let $(T_n)_{n\in\mathbb{N}},(T'_n)_{n\in\mathbb{N}}\in\ol{\calo}^{\mathbb{N}}$ be sequences of trees such that for all $n\in\mathbb{N}$, there exists a nonperipheral element $g_n\in G$ that is elliptic in both $T_n$ and $T'_n$. 
\\ If $(T_n)_{n\in\mathbb{N}},(T'_n)_{n\in\mathbb{N}}\in\ol{\calo}^{\mathbb{N}}$ converge respectively to $T$ and $T'$ in $\ol{\calo}$,
and if $T$ is relatively free and arational, then so is $T'$, and $T'\approx T$.
\end{thmbis}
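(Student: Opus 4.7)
The plan is to apply Theorem~\refintrobidual: it suffices to exhibit a single algebraic leaf common to $L^2(T)$ and $L^2(T')$. Fix a Grushko tree $R_0 \in \calo$, so that $\partial(G,\calf) = \partial R_0$. For each $n$, the non-peripheral element $g_n$ is hyperbolic in $R_0$ and determines a non-degenerate algebraic leaf $l_n := (g_n^{-\infty}, g_n^{+\infty}) \in \partial^2(G,\calf)$. Because $g_n$ is elliptic in $T_n$, one has $\|g_n\|_{T_n} = 0$, so $l_n \in L^2(T_n)$ by the very definition of the dual lamination; similarly $l_n \in L^2(T'_n)$.

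To ensure a non-degenerate limit, I fix an edge $e$ of $R_0$ and extract a subsequence so that the axis of $g_n$ in $R_0$ projects to a loop in the finite graph of groups $R_0/G$ that traverses the image of $e$ (possible by pigeonhole, since each axis projects to a non-trivial loop). After conjugating each $g_n$ by a suitable element of $G$, I may arrange that the axis actually crosses a fixed interior point $* \in e$. This conjugation preserves all the hypotheses and, by $G$-invariance of the dual laminations, still produces leaves in $L^2(T_n) \cap L^2(T'_n)$. Since $g_n^{-\infty}$ and $g_n^{+\infty}$ then lie in the two components of $R_0 \setminus \{*\}$, whose boundary subsets of $\partial(G,\calf)$ are disjoint in the observer's topology, any subsequential limit $l_\infty = (\alpha,\omega)$ of $(l_n)$ in $\partial(G,\calf) \times \partial(G,\calf)$ is automatically non-degenerate, hence lies in $\partial^2(G,\calf)$.

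The crux of the proof is to show $l_\infty \in L^2(T) \cap L^2(T')$: a form of upper semi-continuity for the dual lamination along convergent sequences in $\overline\calo$. Using the characterization of leaves as fibers of the Levitt--Lustig map, this means establishing $\calq_T(\alpha) = \calq_T(\omega)$, knowing that $\calq_{T_n}$ identifies the endpoints of $l_n$ (both map to the fixed point $x_n$ of $g_n$ in $T_n$). The natural route is through the band complex framework developed in the paper: the equivariant maps $R_0 \to T_n$ underlying $\Sigma(T_n,R_0)$ converge (in an appropriate equivariant Gromov--Hausdorff sense, using the observer's topology on $\hat T$) to the map $R_0 \to T$, and the $g_n$-invariant identifications persist in the limit. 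The same reasoning applied to $(T'_n)$ gives $l_\infty \in L^2(T')$, and Theorem~\refintrobidual\ then delivers $T' \approx T$ and $T'$ arational; relative freeness transfers from $T$ to $T'$ through the equivariant homeomorphism $\hat T \approx \hat T'$, since point stabilizers are preserved. The main obstacle is precisely this upper semi-continuity: while $T_n \to T$ gives pointwise convergence of translation lengths of \emph{fixed} elements of $G$, our leaves come from varying elements $g_n$, so the argument requires a finer geometric analysis rather than a direct appeal to continuity.
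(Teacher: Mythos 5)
Your overall strategy is the right one: normalize the leaves $l_n = (g_n^{-\infty}, g_n^{+\infty})$ by conjugation so a nondegenerate subsequential limit $l_\infty = (\alpha, \omega)$ exists, try to show this limit is a common leaf of $L^2(T)$ and $L^2(T')$, and then apply the unique duality Theorem~\refintrobidual. The normalization step via pigeonhole on edge orbits is correct and is exactly what the paper does (taking ``a limit of translates of $(\alpha_n,\omega_n)$'' using the compact sets $C_E$). The gap you flag as ``the main obstacle'' is real, but its resolution is not a finer version of the convergence argument you sketch: the claim ``$l_\infty \in L^2(T')$'' is simply false in general, and this is the crux of the whole theorem.

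Indeed, Remark~\ref{rk-bad} (following Proposition~\ref{prop-bounded-closed}) gives explicit examples where $T'_n \in \partial\calo$ converge to a Grushko tree $T'$ with $L^2(T') = \emptyset$ while $L^2(T'_n) \neq \emptyset$: no amount of equivariant Gromov--Hausdorff convergence of the band complexes $\Sigma(T'_n, R_0)$ can push the leaf into $L^2(T')$ when $T'$ does not have dense orbits, which is not yet known at this stage of the argument. Proposition~\ref{prop-bounded-closed} gives the conclusion $l_\infty \in L^2(T)$ for the side where $T$ has dense orbits (arationality of $T$ supplies this), but on the $T'$ side it only yields the weaker conclusion $\alpha, \omega \in L^1(T') \cup V_\infty(G,\calf)$, where $L^1(T')$ is the one-sided dual lamination. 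The missing idea is precisely to route through $L^1$: arationality of $T$ forbids both $\alpha$ and $\omega$ from lying in $V_\infty(G,\calf)$ (Corollary~\ref{feuille-commune} phrases this as the condition that no pair of peripheral groups generates an elliptic subgroup of $T$), so one endpoint, say $\omega$, lies in $\partial_\infty(G,\calf)$ and hence in $L^1(T) \cap L^1(T')$. One then invokes Proposition~\ref{l1-2} and Lemma~\ref{limit_nonempty}: the limit set $\Lambda^2(\omega)$ is a nonempty subset of $L^2(T) \cap L^2(T')$. Only at that point can Theorem~\refintrobidual\ (i.e.\ Theorem~\ref{bidual-free}) be applied; the hypothesis about the leaf not being carried by the special cyclic subgroup is vacuous here because $T$ is relatively free. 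Without the detour through $L^1$ and $\Lambda^2$, the argument cannot close.
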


This result is the basis of a Kobayashi-type argument showing that arational trees are indeed at infinity of the free factor graph, in the sense that any sequence of trees in $\calo$ converging to an arational tree in $\ol{\calo}$ has unbounded image in the free factor graph. We keep this argument for \cite{GH15}.

 To prove Theorem~\ref{main-intro} in the context of the free group \cite{BR13,Ham13}, one makes the following argument. The current $\eta_{g_n}$ associated to $g_n$ is dual to $T_n$ and $T'_n$ (i.e.\ the pairings $\langle T_n,\eta_{g_n}\rangle=||g_n||_T$ and $\langle T'_n,\eta_{g_n}\rangle=||g_n||_{T'}$ vanish).
The continuity of the pairing together with the compactness of the space of projective currents
implies that $T$ and $T'$ are dual to a common current. If $T$ is arational, one can then use the unique duality statement to deduce $T\approx T'$.

To make this argument work without reference to currents, 
consider $(\alpha_n,\omega_n)=(g_n^{-\infty},g_n^{+\infty})$, which lies in $L^2(T_n)\cap L^2(T'_n)$.
By cocompactness, one can assume that $(\alpha_n,\omega_n)$ converge to some algebraic leaf $(\alpha,\omega)$, and apply
the following substitute to the continuity of the pairing.
It involves the \emph{one-sided lamination}  $L^1(T)\subseteq \partial_\infty (G,\calf)$ dual to $T$:
this is defined as the set of points $\xi\in \partial_\infty (G,\calf)$ such that for any equivariant map
$f:R_0\ra T$ and any ray $\rho\subseteq T$ converging to $\xi$, $f(\rho)$ is bounded (\cite{CHL08-2} or Definition~\ref{dfn_l1}). 

\begin{propbis}[see Proposition \ref{prop-bounded-closed}]\label{intro-cor-bounded-closed}
Let $(T_n)_{n\in\mathbb{N}}\in\overline{\calo}^{\mathbb{N}}$ be a sequence that converges to $T\in \ol\calo$. For all $n\in\mathbb{N}$, let $(\alpha_n,\omega_n)\in L^2(T_n)$, and assume that $((\alpha_n,\omega_n))_{n\in\mathbb{N}}$ converges to some $(\alpha,\omega)\in\partial^2(G,\calf)$.
\\ If $T$ has dense orbits, then $(\alpha,\omega)\in L^2(T)$.
\\ Without assuming that $T$ has dense orbits we get that $(\alpha,\omega)\in (L^1(T)\cup V_\infty(G,\calf))^2$.
\end{propbis}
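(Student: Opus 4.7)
The plan is a limit argument based on the characterization of $L^2$ via short axes, combined with a Gromov-type continuity analysis of the Levitt--Lustig maps as $T$ varies in $\ol\calo$.

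Step 1 (Extraction of short approximating elements). Unfolding
$$L^2(T_n)=\bigcap_{\varepsilon>0}\overline{\{(g^{-\infty},g^{+\infty}): g\in G\text{ nonperipheral},\ \|g\|_{T_n}\le\varepsilon\}},$$
and applying a diagonal argument over $n$, I extract nonperipheral elements $g_n\in G$ with $\|g_n\|_{T_n}\to 0$ and $(g_n^{-\infty},g_n^{+\infty})\to(\alpha,\omega)$ in $\partial^2(G,\calf)$. Next, fix $R_0\in\calo$ and, using $T_n\to T$ in the equivariant Gromov topology, choose $G$-equivariant maps $f_n:R_0\to T_n$ and $f:R_0\to T$ that are linear on edges, with uniformly bounded Lipschitz constants and $f_n\to f$ pointwise on $R_0$.

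Step 2 (Dense-orbit case). Assuming $T$ has dense orbits, Proposition~\ref{equality-Q} identifies $L^2(T)$ with the set of pairs $(\xi,\eta)$ where $\calq_T(\xi)=\calq_T(\eta)$, so it suffices to show $\calq_T(\alpha)=\calq_T(\omega)$. I would argue by contradiction: if $\calq_T(\alpha)\neq\calq_T(\omega)$, then in the observers' topology on $\hat T$ there is a half-tree $D\subset T$ separating them. Choosing rays $\rho^\alpha,\rho^\omega\subset R_0$ converging to $\alpha,\omega$, their $f$-images eventually lie on opposite sides of $D$. Using $f_n\to f$ and the Gromov identification between $\hat T_n$ and $\hat T$, the $f_n$-images of rays in $R_0$ to $g_n^{-\infty}$ and $g_n^{+\infty}$ lie, for large $n$, on opposite sides of approximating half-trees $D_n\subset T_n$. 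On the other hand, the short-axis approximation of $(\alpha_n,\omega_n)$ by $(g_n^{\pm\infty})$ in $L^2(T_n)$, combined with $\|g_n\|_{T_n}\to 0$, forces the $f_n$-images of these rays to converge to a common region of $\hat T_n$ in the observers' sense — contradicting their separation by $D_n$. Hence $\calq_T(\alpha)=\calq_T(\omega)$.

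Step 3 (General case). Without density of orbits on $T$, the Levitt--Lustig map is only defined on $L^1(T)\cup V_\infty(G,\calf)$, but the same Gromov-type comparison gives boundedness of the relevant $f$-images. If $\alpha\notin V_\infty$ and some ray $\rho\subset R_0$ to $\alpha$ had unbounded $f$-image in $T$, a half-tree $D\subset T$ separating $f(\rho)$ from a fixed basepoint would, via Gromov convergence and $f_n\to f$, produce half-trees $D_n\subset T_n$ separating $f_n$-images of approximating rays to $g_n^{-\infty}$ from those to $g_n^{+\infty}$, contradicting the near-coincidence forced by $(\alpha_n,\omega_n)\in L^2(T_n)$ and $\|g_n\|_{T_n}\to 0$. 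Thus $\alpha\in L^1(T)\cup V_\infty$, and symmetrically for $\omega$.

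The main technical obstacle is Step 2: making precise the ``near-coincidence in $\hat T_n$'' of $\calq_{T_n}(\alpha_n)$ and $\calq_{T_n}(\omega_n)$ in the observers' topology (which is subtle, since the metric distance between axis endpoints at infinity of a short axis need not be small), and transferring this statement to $\hat T$ under $f_n\to f$. This requires a careful use of the characterization of the observers' topology via separating half-trees and of how these half-trees are identified under the Gromov convergence $T_n\to T$; the shortness $\|g_n\|_{T_n}\to 0$ of the approximating elements is what rules out persistent separation in $\hat T_n$.
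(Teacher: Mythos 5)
Your Step~1 is sound: a diagonal extraction produces nonperipheral $g_n$ with $\|g_n\|_{T_n}\to 0$ and $g_n^{\pm\infty}\to(\alpha,\omega)$, and one can choose $f_n:R_0\to T_n$ converging pointwise to $f:R_0\to T$ with $\Lip(f_n)\to\Lip(f)$. But there are two genuine gaps in Steps~2--3.

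First, the claim that $\|g_n\|_{T_n}\to 0$ ``forces the $f_n$-images of rays to $g_n^{-\infty}$ and $g_n^{+\infty}$ to converge to a common region of $\widehat{T}_n$'' is precisely the non-trivial content that requires a quantitative \emph{bounded backtracking} estimate, and you never invoke one. Since $f_n$ is not an isometry, the $f_n$-image of a ray in $R_0$ pointing to $g_n^{+\infty}$ can oscillate arbitrarily far from the characteristic set of $g_n$ in $T_n$ unless backtracking is controlled. The paper makes this precise in Lemma~\ref{lem_Q}(3): if $(\alpha_n,\omega_n)\in L^2(T_n)$ and $f_n$ is $C_n$-BBT, then $\diam\bigl(f_n([\alpha_n,\omega_n]_R)\bigr)\le 20C_n$, and $C_n$ is uniformly bounded because $\Lip(f_n)\to\Lip(f)$. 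Without such an estimate you have no uniform bound, and the ``near-coincidence'' step simply does not follow.

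Second, transferring a separating half-tree $D\subset T$ to ``approximating half-trees $D_n\subset T_n$'' is not something equivariant Gromov--Hausdorff convergence provides: there is no canonical embedding or identification of $T_n$ with $T$, only convergence of finitely many distances, so an infinite object like a half-tree does not carry over. You flag this as the main obstacle, but it is not a technicality to be smoothed out; the contradiction strategy built on it needs a different idea. The paper avoids the problem by working in the opposite direction: establish a uniform diameter bound on $f_n([\alpha_n,\omega_n]_R)$ first (via BBT), then pass it to the limit through a compact-exhaustion argument (Lemma~\ref{bounded-closed}), which only requires convergence of distances between finitely many points in $R$. This immediately gives $\diam\bigl(f([\alpha,\omega]_R)\bigr)\le 40C$, hence $(\alpha,\omega)\in(L^1(T)\cup V_\infty(G,\calf))^2$. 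In the dense-orbits case one then shrinks the BBT constant: by Lemma~\ref{bbt} there is $R_\varepsilon\in\calo$ with a $1$-Lipschitz $\varepsilon$-BBT map $f_\varepsilon:R_\varepsilon\to T$, the same computation gives $\diam\bigl(f_\varepsilon([\alpha,\omega]_{R_\varepsilon})\bigr)\le 40\varepsilon$, and Lemma~\ref{criterion-l2} converts this bound into $(\alpha,\omega)\in L^2(T)$. I would reorganize your argument around these three ingredients; they replace both the half-tree contradiction and the unquantified ``near-coincidence'' step.
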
  

Back to the proof of Theorem~\ref{main-intro}, assuming moreover that $T'$ has dense orbits, we get that $L^2(T)$ and $L^2(T')$ share
a common leaf, and one concludes the proof of Theorem \ref{main-intro} using the unique duality statement (Theorem~\refintrobidual).

When $T'$ does not have dense orbits the argument is more involved. First, arationality of $T$ can be used to show that either $\alpha$ or $\omega$ (say $\alpha$) belong to $\partial_\infty(G,\calf)$, so $\alpha\in L^1(T)\cap L^1(T')$. 
Now, to any element $\xi\in \partial_\infty(G,\calf)$, we associate a \emph{limit set} $\Lambda^2(\xi)\subseteq \partial^2(G,\calf)$, by taking all limits of translates of a ray joining a basepoint to $\xi$ in $R_0$, and one proves that $\xi\in L^1(T')$ implies $\Lambda^2(\xi)\subseteq L^2(T')$
(\cite{CHL08-2} or Proposition~\ref{l1-2}).
This additional argument yields 
a common leaf in $\Lambda^2(\xi)\subseteq L^2(T)\cap L^2(T')$. Theorem~\refintrobidual then enables us to conclude that $T'$ is arational, and $T'\approx T$.

\paragraph*{Simple leaves and unique duality statement for all arational trees.}

To state a version of Theorems~\refintrobidual\ and \ref{main-intro} that work for all arational trees (and not only those with a relatively free action), one can work with \emph{simple} leaves instead of considering all leaves in the dual lamination $L^2(T)$. These are defined in the following way: first, we say that an element $g\in G$ is \emph{simple} (relatively to $\calf$) if $g$ is contained in some proper $(G,\calf)$-free factor. There is a way to characterize simplicity of an element $g\in G$ in terms of a Whitehead graph associated to $g$ and to a Grushko tree $R$, analogous to classical work of Whitehead \cite{Whi36} for free groups (see Proposition~\ref{lem_simple}). We then say that an algebraic leaf is \emph{simple} if it is a limit of leaves of the form $(g_n^{-\infty},g_n^{+\infty})$ with $g_n$ simple. This allows to state the following version of Theorem \refintrobidual\ (see Theorem \ref{feng-luo-2} for the corresponding version of Theorem \ref{main-intro}).

\addtocounter{thmbis}{-3}
{\renewcommand{\thethmbis}{\arabic{thmbis}''}
\begin{thmbis}[see Theorem~\ref{bidual-free}]
Let $T\in\partial\calo$ be an arational tree, and $T'\in\overline{\calo}$ be any tree.\\ 
Assume that there exists a simple leaf $l_0\in L^2(T)$ that is also contained in $L^2(T')$.
\\ Then $T'$ is arational and $T'\approx T$.
\end{thmbis}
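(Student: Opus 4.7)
The plan is to follow the three-step strategy used in the sketch of Theorem~\refintrobidual, substituting the appropriate input that covers the non-relatively-free case. The key ingredient is the full version of Theorem~\ref{lamination-arational}: for an arbitrary arational tree $T\in\partial\calo$ and any \emph{simple} leaf $l_0\in L^2(T)$, $L^2(T)$ is the smallest peritransitively closed lamination containing $l_0$. Granted this, the argument proceeds as follows.

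First, I would show that $T'$ has dense orbits. If not, $T'$ decomposes nontrivially as a graph of actions, so leaves of $L^2(T')$ sit in the duals of proper subactions or of simplicial pieces with peripheral vertex stabilizers. Writing $l_0 = \lim_n (g_n^{-\infty}, g_n^{+\infty})$ with each $g_n\in G$ simple, the Whitehead-graph criterion (Proposition~\ref{lem_simple}) constrains the combinatorics of each $g_n$ and hence of $l_0$. Combined with $l_0\in L^2(T)$, where every proper free factor of $(G,\calf)$ acts freely and discretely on the arational tree $T$, these constraints force a contradiction, in the same spirit as the dense-orbits reduction in the proof of Theorem~\refintrobidual. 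Once $T'$ has dense orbits, $L^2(T')$ is peritransitively closed (Lemma~\ref{lem_peritransitively}) and contains $l_0$, so the full Theorem~\ref{lamination-arational} gives $L^2(T)\subseteq L^2(T')$.

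To conclude, I identify $\hat T$ and $\hat T'$ with $\partial(G,\calf)/L^2(T)$ and $\partial(G,\calf)/L^2(T')$ respectively via Corollary~\ref{quotient-lamination}. The inclusion $L^2(T)\subseteq L^2(T')$ induces a continuous equivariant surjection $p:\hat T\twoheadrightarrow\hat T'$ with $\calq_{T'}=p\circ\calq_T$. If $p$ were not a homeomorphism, the Bestvina--Reynolds argument from \cite{BR13} would produce uncountably many $x\in\hat T'$ with $\#p\m(\{x\})\geq 3$, whence $\#\calq_{T'}\m(\{x\})\geq 3$ since $\calq_T$ is surjective, contradicting the ``$2$-to-$1$ almost everywhere'' statement of Theorem~\ref{intro_finiteness}. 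Therefore $p$ is a homeomorphism, $T'\approx T$, and the arationality of $T'$ follows from that of $T$.

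The main obstacle I expect is the first step when $T$ is an arational tree that is \emph{not} relatively free, i.e.\ is essentially dual to an arational lamination on a $2$-orbifold. In that case, the orbifold structure underlying $L^2(T)$ has to be reconciled with a hypothetical graph-of-actions decomposition of $T'$, and one needs to use the simplicity of $l_0$ in a fairly delicate way through Proposition~\ref{lem_simple}; this is precisely the place where the hypothesis ``simple leaf'' rather than ``arbitrary leaf'' is essential, and where the argument genuinely goes beyond the proof sketch of Theorem~\refintrobidual.
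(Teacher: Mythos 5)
Your overall strategy — reduce to dense orbits, invoke Theorem~\ref{lamination-arational} to get $L^2(T)\subseteq L^2(T')$, identify $\hat T,\hat T'$ as quotients by $L^2$ via Corollary~\ref{quotient-lamination}, and apply the Bestvina--Reynolds argument plus Theorem~\ref{Q-preimage} — is the paper's strategy, and the last two steps you give are essentially the paper's proof of Theorem~\ref{bidual-free} verbatim.

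There is, however, a genuine gap in your dense-orbits step. You argue that if $T'$ does not have dense orbits, the constraints from ``proper free factors of $(G,\calf)$ act freely and discretely on $T$'' will force a contradiction. But the subgroup $B$ that carries $l_0$ (via Corollary~\ref{carry-gen-l2}) is a vertex group of the Levitt decomposition of $T'$, and this is a $\calz$-factor of $(G,\calf)$, not a free factor: the simplicial part of the Levitt decomposition has cyclic but possibly nontrivial edge stabilizers. Arationality of $T$ says nothing directly about how $\calz$-factors act on $T$. The paper needs the extra ingredient Proposition~\ref{arat-Z} — that every proper $\calz$-factor acts simplicially on an arational tree — whose proof is not automatic (it goes through Reynolds-style transverse covering arguments, Corollary~\ref{Reynolds-famille2}, and the skeleton of a transverse covering). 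Once one has $T_B$ simplicial, Lemma~\ref{l2-carry} forces $l_0$ to be carried by a subgroup elliptic in $T$, giving the contradiction. Without Proposition~\ref{arat-Z}, the dense-orbits reduction does not close.

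A smaller point: the hypothesis of Theorem~\ref{lamination-arational} is ``$l_0$ not carried by any subgroup elliptic in $T$,'' not ``$l_0$ simple.'' To use it you must check that a simple leaf satisfies this hypothesis. That is precisely Lemma~\ref{rational} (a simple leaf is never an endpoint-limit of a nonsimple element, hence cannot be carried by $\langle c\rangle$ where $c$ is the free boundary class of an arational surface tree, $c$ being nonsimple by arationality). You gesture at this via Proposition~\ref{lem_simple}, but you should cite Lemma~\ref{rational} explicitly, since it is the statement that does the work.
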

} 
\addtocounter{thmbis}{2}

\paragraph*{The trouble with currents.}

We define a  \emph{current} as a Radon measure on $\partial^2 (G,\calf)$ which is invariant under the flip and the action of $G$ (see also \cite{Gup} for relative currents in the free group). 
The space of currents, which we denote by $\text{Curr}$, is still projectively compact, but
the pairing $\grp{T,\eta_g}=||g||_T$ does not extend continously.

\begin{thmbis}[See Proposition \ref{trouble}]
Assume that $\calf$ contains an infinite group, and that $(G,\calf)$ is not of the form $G=G_1*G_2$.
\\ Then there does not exist any continuous pairing 
$$\langle \cdot\, ,\cdot\rangle :\overline{\calo}\times\text{Curr}\to\mathbb{R}$$ such that for all $T\in\calo$ and all nonperipheral $g\in G$, one has $\langle T,\eta_g \rangle=||g||_T$.
\end{thmbis}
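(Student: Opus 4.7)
The plan is to argue by contradiction. Suppose a continuous pairing
$\langle\cdot,\cdot\rangle:\overline{\calo}\times\text{Curr}\to\bbR$
extending translation length exists. For any sequence of non-peripheral elements $(g_n)$ and positive scalars $(c_n)$ with $\eta_{g_n}/c_n\to\eta$ in $\text{Curr}$, the continuity of the pairing forces $\langle T,\eta\rangle=\lim_n \|g_n\|_T/c_n$ for every $T\in\overline{\calo}$, and hence forces the pointwise limit $T\mapsto\lim_n\|g_n\|_T/c_n$ to be continuous on $\overline{\calo}$. It therefore suffices to exhibit one such sequence $(g_n,c_n)$ for which this limit function is not continuous.

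To construct it, I would use both ingredients of the hypothesis: pick an infinite-order element $a$ in an infinite factor $G_1\in\calf$, and use the extra free-product factor afforded by $(G,\calf)\neq G_1*G_2$ to select non-peripheral elements $b_1,b_2\in G$ whose axes in some Grushko tree $R_0\in\calo$ each meet the orbit of the infinite-valence vertex $v_1:=\text{Fix}(G_1)$ along inequivalent $G_1$-orbits of incident edges. A natural candidate for the sequence is $g_n:=\prod_{i=0}^{n-1}(a^i b_1 a^{-i})b_2$: since $a$ permutes freely the edges at $v_1$, a syllable count shows that $g_n$ is cyclically reduced with syllable length $\Theta(n)$ in $R_0$, and hence $\|g_n\|_{R_0}=\Theta(n)$. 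Set $c_n:=\|g_n\|_{R_0}$ and, using projective compactness of $\text{Curr}$, extract a subsequence along which $\eta_{g_n}/c_n$ converges in $\text{Curr}$ to some $\eta$.

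The main obstacle is to verify that $T\mapsto\lim_n\|g_n\|_T/c_n$ is indeed discontinuous. The candidate point of discontinuity is a tree $T_\infty\in\partial\calo$ obtained from $R_0$ by equivariantly collapsing an orbit of edges at $v_1$ so that $b_1$ becomes elliptic in $T_\infty$; in $T_\infty$, all the conjugates $a^i b_1 a^{-i}$ then share a common fixed point and the mechanism producing the linear growth of $\|g_n\|_{R_0}$ breaks down, yielding $\|g_n\|_{T_\infty}=o(n)$ and hence $\lim_n\|g_n\|_{T_\infty}/c_n=0$. Approaching $T_\infty$ through trees $T_k\in\calo$ in which the collapsed edge has length $\eps_k\to 0$, the linear growth estimate for $\|g_n\|_{T_k}$ persists with a growth coefficient bounded below uniformly in $k$, giving $\lim_n\|g_n\|_{T_k}/c_n\geq\delta>0$. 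This jump at $T_\infty$ contradicts the forced continuity and proves the proposition. The two delicate points are the convergence (not just the boundedness) of $\eta_{g_n}/c_n$ in $\text{Curr}$ and the uniform-in-$k$ lower bound on the growth coefficient as $\eps_k\to 0$; both rest on careful axis-tracking near the degenerating edge, in the spirit of the band-complex analysis behind Theorem~\ref{intro_finiteness} and the limit-of-leaves computations underlying Proposition~\ref{intro-cor-bounded-closed}.
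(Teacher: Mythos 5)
Your proposal has two genuine gaps; the paper's proof also takes a structurally different and much cleaner route, and the difference is not incidental — it precisely sidesteps both of your problems.

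First, and most fundamentally, you write that if $\eta_{g_n}/c_n\to\eta$ then continuity forces $\langle T,\eta\rangle=\lim_n\|g_n\|_T/c_n$. This step silently uses $\langle T,\eta_{g_n}/c_n\rangle=\|g_n\|_T/c_n$, i.e.\ homogeneity of the pairing in the current variable. But the hypothesis only pins down the value of the pairing on the rational currents $\eta_g$ themselves; nothing in the statement of the proposition requires the pairing to be homogeneous, linear, or even monotone in $\eta$. The paper avoids rescaling entirely: it exhibits a sequence $g_k$ for which $\eta_{g_k}$ converges in $\mathrm{Curr}$ \emph{without any normalization} (namely $g_k=ab_kcb_k^{-1}$ with $b_k$ a sequence of distinct elements of an infinite peripheral group $B$; the axes of the $g_k$ escape through the infinite-valence vertex fixed by $B$, which is precisely what the relative/observers' topology is designed to allow). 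This is the decisive use of the hypothesis that $\calf$ contains an infinite group, and it is what makes the argument immune to the homogeneity objection.

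Second, even granting homogeneity, your discontinuity estimate $\|g_n\|_{T_\infty}=o(n)$ looks unjustified and is very likely false. Taking your $g_n$ to mean $\prod_{i=0}^{n-1}\bigl((a^ib_1a^{-i})\,b_2\bigr)$, note that in $T_\infty$ all the conjugates $a^ib_1a^{-i}$ fix the common vertex $v_1$, but each factor $(a^ib_1a^{-i})b_2$ is still hyperbolic (assuming $b_2$ is), and $g_n$ is a product of $n$ such hyperbolic elements whose axes all pass near $v_1$. Generically this still gives $\|g_n\|_{T_\infty}=\Theta(n)$, so after dividing by $c_n=\Theta(n)$ both limits are finite and positive and there is no forced jump. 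The paper's construction again sidesteps this: it chooses $T_k$ so that $g_k$ is \emph{exactly elliptic} in $T_k$ (hence $\|g_k\|_{T_k}=0$ on the nose, not asymptotically small), while $\|g_{k+1}\|_{T_k}$ is the constant $4$; the contradiction $0=m>0$ is then immediate, with no growth-rate comparison and no limit of ratios. Finally, you invoke projective compactness to extract a convergent $\eta_{g_n}/c_n$; this only gives projective convergence, and the non-projective limit in $\mathrm{Curr}$ could be the zero current unless $c_n$ is tied to the current norm rather than to $\|g_n\|_{R_0}$ — one more thing that would need to be verified but that the paper's unrescaled construction renders moot.

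In short: the plan of ``find a rescaled sequence of currents along which the ratio of translation lengths jumps'' is the classical free-group heuristic, and the paper's remark after Proposition~\ref{trouble} explains exactly why rescaling defuses the contradiction there; the relative setting is special because $\eta_{g_k}$ can converge \emph{without} rescaling, and the paper's proof exploits precisely that. I would rework the argument around a sequence of nonperipheral elements lying in larger and larger conjugates by a fixed infinite peripheral group, arranged to be elliptic in trees $T_k\to T$, rather than around a linearly growing product and a collapse.
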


\begin{rk}
There is a natural way of defining $\grp{T,\eta}$ for any simplicial tree $T\in \overline{\calo}$ with trivial edge stabilizer and any current $\eta$, in such a way that $\grp{T,\eta_g}$ always coincides with $||g||_T$.
The argument shows that this definition is not continuous, and even that the set of pairs $(T,\eta)$ such that $\grp{T,\eta}=0$ is not closed (see Section \ref{sec_currents}).
\end{rk}

Here is an example showing this phenomenon.

\begin{figure}
\begin{center}
\includegraphics{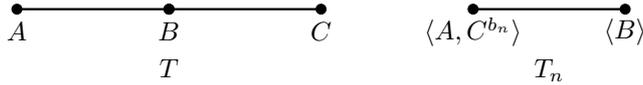}
\caption{An example showing the impossibility to extend continuously the natural intersection pairing in the relative setting.}
\label{fig-ex-not-continuous-intro}
\end{center}
\end{figure}

\begin{ex}\label{ex_non-continu}
Let $G=A*B*C$ with $B$ an infinite group, $\calf=\{A,B,C\}$, and $T$ be the Bass--Serre tree of this splitting (with all edge lengths set to be equal to $1$).
Let $b_n\in B$ be a sequence of distinct elements, and
 $T_n$ be the Bass--Serre tree of the splitting $G=\grp{A,b_n C b_n^{-1}}*B$, and $g_n=a b_n c b_n\m$.
Then $g_n$ is elliptic in $T_n$ so $||g_n||_{T_n}=0$, but $||g_n||_{T_{n+1}}=4$. Moreover, $T_n$ and $\eta_{g_n}$ (the rational current associated to $g_n$, see Section~\ref{sec_currents} for a definition) are convergent sequences in $\ol \calo$ and in the space of currents respectively.
This prevents the existence of a continuous extension (see Section~\ref{sec_currents} for details).
\end{ex}

\paragraph*{Organization of the paper.} In Section \ref{sec-1}, we give background and notations regarding free products, and we define a boundary for $(G,\calf)$ in Section~\ref{sec-bdy}. In Section \ref{sec_currents}, we prove that the natural  
pairing between trees and currents does not admit a continuous extension to the boundary in general. Algebraic laminations are introduced in Section~\ref{sec-4}, where Proposition~\ref{intro-cor-bounded-closed} is proven. In Section~\ref{sec-5}, we characterize simple elements of $G$ in terms of Whitehead graphs, and use this to study algebraic leaves that are obtained as limits of axes of simple elements: this is important when passing from the versions of our results for relatively free arational trees, to the case of arational trees that are not relatively free. In Section~\ref{sec-Q}, we introduce the map $\calq$ for trees with dense orbits, and establish its basic properties. Band complexes are defined in Section~\ref{sec-band-complex}, where it is established that they coincide with the core introduced in \cite{Gui_coeur}. This section also contains a few technical statements specific to the case of free products, which give finiteness results for the band complexes we work with, in spite of the lack of local compactness. 
The pruning process is introduced in Section \ref{sec-pruning}, where we prove that there are only finitely many orbits of points in $\hat{T}$ whose $\calq$-preimage contains at least three points (Theorem~\ref{intro_finiteness}). The splitting process is introduced in Section~\ref{sec-splitting}, and we then prove in Section~\ref{sec-10} that iterating the two processes eventually separate all leaves in the case where $T$ is mixing. Basic facts about arational trees are provided in Section~\ref{sec-arat}. We then analyse the dual lamination of an arational tree in Section~\ref{sec-11}, and establish Theorem~\ref{intro-peritransitive}. The proof of the unique duality theorem, and of Theorem~\ref{main-intro}, is completed in Section~\ref{sec-last}.

\subsection*{Acknowledgments}
The first author thanks Thierry Coulbois for discussions about the relation between the core as in \cite{Gui_coeur} and the heart defined in \cite{CHL09}. We would also like to thank the referee for their careful reading of the paper.

The first author acknowledges support from the Institut Universitaire de France and from the Centre Henri Lebesgue (Labex ANR-11-LABX-0020-01). 
The second author acknowledges support from the Agence Nationale de la Recherche under Grant ANR-16-CE40-0006.

\section{Background and notations}\label{sec-1}

Let $G_1,\dots,G_k$ be a finite collection of countable groups, let $F_N$ be a free group of rank $N$, and let $G:=G_1\ast\dots\ast G_k\ast F_N$. A subgroup of $G$ is \emph{peripheral} if it is conjugate into one of the groups $G_i$, and \emph{nonperipheral} otherwise. We denote by $\calf$ the collection of all conjugacy classes of maximal peripheral subgroups of $G$, i.e.\ $\calf=\{[G_1],\dots,[G_k]\}$, where $[G_i]$ denotes the conjugacy class of $G_i$. The \emph{Kurosh rank} of $(G,\calf)$ is defined as $\text{rk}_K(G,\calf):=k+N$.

\paragraph*{The outer space of a free product and its closure.} Recall that a $G$-action on a tree $T$ (either a simplicial tree or an $\mathbb{R}$-tree) is \emph{minimal} if $T$ does not contain any proper $G$-invariant subtree. It is \emph{relatively free} if all point stabilizers in $T$ are (trivial or) peripheral. A \emph{Grushko tree} is a simplicial metric tree $S$, equipped with a minimal, simplicial, relatively free isometric $G$-action, such that every peripheral subgroup of $G$ fixes a unique point in $S$. The \emph{unprojectivized outer space} $\calo$, introduced in \cite{GL07}, is the space of all $G$-equivariant isometry classes of Grushko trees. The space $\calo$ embeds into the space of all $G$-equivariant isometry classes of minimal isometric $G$-actions on $\mathbb{R}$-trees, equipped with the equivariant Gromov--Hausdorff topology introduced in \cite{Pau88}. The closure $\overline{\calo}$ was identified in \cite{Hor14-1} with the space of \emph{very small} trees, i.e. those trees in which arc stabilizers are either trivial, or else cyclic, root-closed and nonperipheral, and tripod stabilizers are trivial.

By \cite{Lev94} (see \cite[Theorem~4.16]{Hor14-1} for free products), every tree $T\in\overline{\calo}$ decomposes in a unique way as a graph of actions (in the sense of \cite{Lev94}) whose vertices correspond to orbits of connected components of the closure of the set of branch points and inversion points (i.e.\ points with stabilizer of order $2$). Two vertices are joined by an edge if there are connected components in the corresponding orbits which are adjacent in $T$. This decomposition of $T$ as a graph of actions is called the \emph{Levitt decomposition} of $T$. In particular, vertex groups of this decomposition act with dense orbits on the corresponding subtree (which may be reduced to a point). The Bass–-Serre tree of the underlying graph of groups is very small (it can be trivial). 

All maps $f:S\ra T$ with $S$ a simplicial $G$-tree and $T$ either a simplicial $G$-tree or an $\mathbb{R}$-tree will be assumed to be \emph{piecewise linear}: each edge can be subdivided into finitely many
arcs on which the map is linear, i.e.\ sends the arc to a geodesic segment with constant speed.
In the case where $T$ is simplicial, this is equivalent to saying that one can subdivide $S$ and $T$ so that $f$ maps each edge to an edge or to a point, linearly. We make the following easy observation.

\begin{lemma}\label{bdd-distance}
Let $S\in\calo$ and $T\in\overline{\calo}$, and let $f,g:S\to T$ be $G$-equivariant maps.
\\ Then there exists $C>0$ such that for all $x\in S$, one has $d_T(f(x),g(x))\le C$.
\qed
\end{lemma}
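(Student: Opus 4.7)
The plan is to exploit cocompactness of the $G$-action on $S$ together with $G$-equivariance of $f$ and $g$, so that the distance function $x \mapsto d_T(f(x),g(x))$ descends to a compact quotient.

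First I would observe that, since $S\in\calo$ is a Grushko tree, the quotient graph of groups $G\backslash S$ has finitely many vertices and edges. Consequently, there exists a finite (hence compact, for the metric topology) subtree $K\subseteq S$ such that $GK=S$. Concretely, one may take a lift of a spanning subgraph of $G\backslash S$ together with the (finitely many) fixed points of the peripheral vertex groups. Every $x\in S$ can then be written as $x=h\cdot y$ for some $h\in G$ and some $y\in K$.

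Next I would define $\varphi:S\to\bbR_{\geq 0}$ by $\varphi(x):=d_T(f(x),g(x))$ and verify that $\varphi$ is $G$-invariant: for $h\in G$, the $G$-equivariance of $f$ and $g$ together with the fact that $G$ acts by isometries on $T$ gives
\[
\varphi(hx)=d_T(f(hx),g(hx))=d_T(hf(x),hg(x))=d_T(f(x),g(x))=\varphi(x).
\]
It follows that $\sup_{x\in S}\varphi(x)=\sup_{y\in K}\varphi(y)$, so it suffices to bound $\varphi$ on the compact set $K$.

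Finally, I would use that $f$ and $g$ are piecewise linear by the standing convention (each edge of $S$ is subdivided into finitely many arcs on which the map is an affine-speed parametrization of a geodesic segment in $T$), so $f$ and $g$ are continuous. The distance function on $T$ is continuous for the metric topology, so $\varphi$ is continuous on $S$, and in particular on the compact set $K$. Hence $C:=\max_{y\in K}\varphi(y)$ is finite, and this $C$ works for all $x\in S$. No real obstacle arises; the only mild point of care is to produce a genuinely compact fundamental domain, which is immediate from the simpliciality of $S$ and finiteness of $G\backslash S$.
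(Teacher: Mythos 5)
The paper marks this lemma with $\qed$ and gives no proof (it is introduced as an ``easy observation''), so there is no argument of the authors to compare against. Your proof is correct and is the natural one: $\varphi(x)=d_T(f(x),g(x))$ is $G$-invariant by equivariance and continuous by the standing piecewise-linear convention (equivalently, $f$ and $g$ are Lipschitz since there are finitely many orbits of edges), and cocompactness of $S$ gives a compact $K$ with $GK=S$ on which $\varphi$ attains a finite maximum; your mention of adding ``the fixed points of the peripheral vertex groups'' is harmless but redundant, since any lift of a spanning subgraph of $S/G$ already contains a representative of every vertex orbit.
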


\paragraph*{Subgroups of a free product.} A \emph{$(G,\calf)$-free splitting} is a minimal, simplicial $G$-tree with trivial edge stabilizers, in which all peripheral subgroups are elliptic. A \emph{$(G,\calf)$-free factor} is a subgroup of $G$ that occurs as the stabilizer of a point in some $(G,\calf)$-free splitting.

Let $H\subseteq G$ be a subgroup. By considering the minimal subtree for the $H$-action on a Grushko tree,  one gets that $H$ splits as a free product $$H=(\ast_{i\in I}H_i)\ast F,$$ where each $H_i$ is peripheral  and $F$ is a  free group (notice that $I$ can be infinite and that $F$ can be infinitely generated). We say that $H$ \emph{has finite Kurosh rank} if $I$ is finite and $F$ is finitely generated. We then denote by $\calf_{|H}$ the collection of all maximal peripheral subgroups of $H$ (i.e. the $H$-conjugates of the $H_i$).


\section{Boundaries for $(G,\calf)$}\label{sec-bdy}

\paragraph{Boundary of a Grushko tree and boundary of $(G,\calf)$}
Given a Grushko $(G,\mathcal{F})$-tree $R$, we denote by $\partial_{\infty}R$ the Gromov boundary of $R$, by $V_{\infty}(R)$ the collection of vertices of $R$ of infinite valence,
and we let $\partial R:=\partial_{\infty} R\cup V_{\infty}(R)$. We let $\widehat{R}:=R\cup\partial_{\infty}R$. This is a compact space when equipped 
with the \emph{observers' topology}, which is the topology generated by the set of directions in $\widehat{R}$, see \cite[Proposition 1.13]{CHL07}. Recall  that a \emph{direction} based at a point $a$ in a tree $T$ is a connected component of $T\setminus\{a\}$. Thus, by definition, a sequence of points $x_n\in \widehat R$ converges to $x_\infty$ for the observers' topology
if for every $a\in R\setminus \{x_\infty\}$, the point $x_n$ lies in the connected component of $\widehat R\setminus \{a\}$ that contains $x_\infty$ for all $n$ large enough. For example, if $x_\infty\in V_{\infty}(R)$ is a vertex of infinite valence, and $(x_n)_{n\in\mathbb{N}}\in R^\mathbb{N}$ is a sequence of points that belong to pairwise distinct directions based at $x_\infty$, then $(x_n)_{n\in\mathbb{N}}$ converges to $x_\infty$ for the observers' topology.
Note that if $x_\infty\in\partial_\infty R$, then $x_n$ converges to $x_\infty$ for the observers' topology 
if and only if it does for the usual topology on $R\cup \partial_\infty R$ (coming from hyperbolicity of $R$). If $x\in R$ is a point with finite valence, then one can find a finite set of directions in $R$ whose intersection is an open neighbourhood of $x$ in $R$ which only contains points of finite valence. This shows that $R\setminus\partial R$ is open in $\widehat R$. Therefore $\partial R\subseteq \widehat{R}$ is a closed subset of $\widehat{R}$, hence it is compact. We also let $\partial^2R:=\partial R\times \partial R\setminus\Delta$, where $\Delta:=\{(x,x)|x\in\partial R\}$ is the diagonal subset, and we equip $\partial^2R$ with the topology inherited from the product topology on $\partial R\times \partial R$. 

\begin{lemma}\label{preimage-infinite}
Let $R$ and $R'$ be two Grushko $(G,\calf)$-trees. Then any $G$-equivariant piecewise linear map $f:R\to R'$ is continuous for the observers' topology. 
\end{lemma}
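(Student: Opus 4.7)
I will prove observers'-continuity of $f$ pointwise. First, $f$ is metric-continuous (as it is piecewise linear), and by $G$-equivariance with $R/G$ finite it descends to a piecewise linear map of finite graphs, so $f$ is $K$-Lipschitz with respect to the metrics on $R$ and $R'$ for some $K > 0$. At a point $x \in R$ of finite valence, any small metric ball around $x$ is a finite intersection of directions (one per edge at $x$), hence also observers'-open, and observers'-continuity of $f$ at $x$ follows from metric continuity. The remaining case is $v \in V_\infty(R)$.

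At such a $v$: $G_v$ is peripheral, and every peripheral subgroup fixes a unique vertex in any Grushko tree, so $f(v)$ must be the unique vertex of $R'$ fixed by $G_v$; in particular $f(v) \in V_\infty(R')$ with $G_{f(v)} = G_v$. It suffices to show that $f^{-1}(E')$ is observers'-open for every direction $E'$ at a point $a' \in R'$ with $f(v) \in E'$. Writing $H' := R' \setminus E'$, I will construct an observers'-open neighborhood $U$ of $v$ with $U \cap f^{-1}(H') = \emptyset$. The heart of the argument is a finiteness claim: \emph{only finitely many directions at $v$ meet $f^{-1}(H')$.} Granting this, let $D_{\alpha_1},\dots,D_{\alpha_n}$ be these offending directions, and pick $b_j$ in the interior of the first edge of $D_{\alpha_j}$ with $d_R(v, b_j) < d_{R'}(f(v), a')/K$. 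Let $U := \bigcap_{j=1}^n E_{b_j}$, where $E_{b_j}$ is the direction at $b_j$ containing $v$. Then $U$ is observers'-open and contains $v$; any $y \in U$ in a bad direction $D_{\alpha_j}$ lies in the open interval $(v, b_j)$ of the first edge, hence at distance less than $d_{R'}(f(v), a')/K$ from $v$, and so outside $f^{-1}(H')$ by the Lipschitz bound, while points of $U$ in other directions avoid $f^{-1}(H')$ by definition of the bad list.

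To establish the finiteness claim, I first observe that if $D_\alpha$ meets $f^{-1}(H')$, then any path from $v$ to a point of intersection maps to a path in $R'$ from $f(v) \notin H'$ to a point of $H'$, crossing the base point $a'$; hence $D_\alpha$ contains a preimage of $a'$. It thus suffices to bound the number of directions at $v$ meeting $f^{-1}(a')$. These fall into finitely many $G_v$-orbits since $\bar v$ has finite valence in $R/G$. Within one such orbit $\mathcal{O}_j = G_v \cdot D_{\alpha_0}$, the triviality of edge stabilizers in Grushko trees makes $G_v$ act freely on $\mathcal{O}_j$, and the count of directions in $\mathcal{O}_j$ meeting $f^{-1}(a')$ equals $|f(D_{\alpha_0}) \cap G_v \cdot a'|$. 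The same triviality applied in $R'$ shows that distinct elements of $G_v \cdot a'$ lie in distinct directions at $f(v)$, so this count is bounded by the number of directions at $f(v)$ entered by $f(D_{\alpha_0})$.

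The main obstacle is to bound this last quantity, since $f$ may a priori fold $D_{\alpha_0}$ through $f(v)$ into many directions of $R'$. The key is that $D_{\alpha_0} \cap f^{-1}(f(v))$ is finite: the preimage $f^{-1}(f(v))$ splits into finitely many $G_v$-orbits (by finiteness of the fibers of the induced map $\bar f \colon R/G \to R'/G$), and each such $G_v$-orbit meets $D_{\alpha_0}$ in at most one point (by triviality of $\mathrm{Stab}(D_{\alpha_0})$ once more). Each intersection point distinct from $v$ has trivial stabilizer (a peripheral stabilizer would force coincidence with $v$, the unique $G_v$-fixed vertex of $R$), so finite, uniformly bounded valence. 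Cutting $D_{\alpha_0}$ at this finite set therefore produces only finitely many connected components; each component avoids preimages of $f(v)$ and thus maps under $f$ into a connected subset of $R' \setminus \{f(v)\}$, hence into a single direction at $f(v)$. Summing over the finitely many $G_v$-orbits of directions at $v$ completes the proof.
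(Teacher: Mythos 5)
Your overall strategy---checking observers'-continuity pointwise, with the finite-valence case following from metric continuity and the infinite-valence case handled by a counting argument---is quite different from the paper's and, where correct, is more elaborate. However there is a genuine gap in the infinite-valence case, precisely at the step you flag as "the main obstacle." You assert that $D_{\alpha_0}\cap f^{-1}(f(v))$ is finite because "the fibers of the induced map $\bar f:R/G\to R'/G$ are finite." But the paper's notion of piecewise linear explicitly allows $f$ to collapse edges (it "sends each edge to an edge or a vertex"), and if an orbit of edges is collapsed to $f(v)$ then $\bar f^{-1}(\overline{f(v)})$ contains a whole edge of $R/G$ and is therefore uncountable; correspondingly $D_{\alpha_0}\cap f^{-1}(f(v))$ can contain an interval and is not a finite set. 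The fix is to cut $D_{\alpha_0}$ not at a finite set of points but at the finitely many connected components of $f^{-1}(f(v))$ that meet $D_{\alpha_0}$: after subdividing so that $f$ is cellular, $f^{-1}(f(v))$ is a subforest with finitely many $G_v$-orbits of cells, each component meeting $D_{\alpha_0}$ has trivial stabilizer (by the same arc-stabilizer argument you use for points) and is hence a finite subtree with finite valence everywhere, and removing these finitely many finite subtrees still produces finitely many components. With that replacement, the rest of your counting (freeness of $G_v$ on the set of directions at $v$, on the orbit of $a'$, and triviality of $G_v\cap G_{a'}$) is correct.

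For comparison, the paper's proof is considerably shorter and sidesteps this issue entirely: it subdivides so that $f$ is cellular, takes the edge $e$ of $R'$ inside $d$ adjacent to the basepoint $m$, and notes that $f^{-1}(e)$ is a finite union of open edges (two edges of $R$ in the same $G$-orbit mapping to $e$ would give $e$ a nontrivial stabilizer). Each component of $f^{-1}(d)$ has a boundary edge lying in $f^{-1}(e)$, so there are finitely many components and each is a finite intersection of directions. This proves directly that the preimage of every direction is open, using triviality of edge stabilizers once in $R'$ rather than repeatedly in $R$, and never needs to analyze $f^{-1}$ of a single point.
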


Recall that in this paper, all maps between two Grushko $(G,\calf)$-trees are assumed to be piecewise linear on edges.


\begin{proof}
It suffices to prove that the $f$-preimage of any direction $d\subseteq R'$ 
is a finite union of finite intersections of directions in $R$. Denote by $m\in R'$ the basepoint of $d$. Up to subdividing $R$ and $R'$, we can assume that $f$ sends each edge to either an edge or a vertex, and that $m$ is a vertex. Let $e$ be the open edge of $R'$ adjacent to $m$ contained in $d$. Then $f^{-1}(e)$ is a union of open edges in $R$. This union is finite, for otherwise $f\m(e)$ would contain infinitely many edges in the same orbit, and $e$ would have nontrivial (infinite) stabilizer. Each connected component of $f\m(d)$ is a connected component of $R\setminus f\m(\{m\})$ whose terminal edges are in $f\m(e)$. Since $f\m(e)$ is a finite union of edges, $f\m(d)$ has finitely many connected components, and each of these is a finite intersection of directions.
\end{proof}

\begin{lemma}\label{identification}
Let $R$ and $R'$ be two Grushko $(G,\calf)$-trees. 
Then any $G$-equivariant map $f:R\to R'$ has a unique continuous extension $\hat f:\widehat R\ra \widehat R'$. Moreover, 
the map $h:=\hat f_{|\partial R}$ is a homeomorphism $\partial R\ra \partial R'$ that does not depend on $f$, and $h(\partial_{\infty}R)=\partial_{\infty}R'$ and $h(V_{\infty}(R))=V_{\infty}(R')$.
\end{lemma}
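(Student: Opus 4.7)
The plan is to construct $\hat f : \widehat R \to \widehat{R'}$ by prescribing its values on the three pieces of $\widehat R = R \sqcup V_\infty(R) \sqcup \partial_\infty R$, verify continuity on $\widehat R$, and then use Lemma~\ref{bdd-distance} to show that $h := \hat f|_{\partial R}$ does not depend on $f$ and is a homeomorphism. On $R$ I simply set $\hat f = f$. For $v \in V_\infty(R)$, the stabilizer $G_v$ is an infinite peripheral subgroup of $G$, which fixes a unique point $v' \in R'$ by definition of a Grushko tree, so I set $\hat f(v) := v'$ (this is forced by equivariance, since $G_v$ must already fix $f(v)$). For $\xi \in \partial_\infty R$, I pick a geodesic ray $\rho_\xi : [0,\infty) \to R$ representing $\xi$ and set $\hat f(\xi)$ to be the observers' limit of $f(\rho_\xi(t))$ in $\widehat{R'}$.

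To see that this limit exists, I pick any $G$-equivariant piecewise-linear map $g : R' \to R$; Lemma~\ref{bdd-distance} applied to $g \circ f$ and $\mathrm{id}_R$ provides $C > 0$ with $d(g(f(x)), x) \le C$ for all $x \in R$. Since $\rho_\xi(t)$ exits every bounded set in $R$, this inequality forces $f(\rho_\xi(t))$ to exit every bounded set in $R'$. Moreover $f \circ \rho_\xi$ cannot accumulate at two distinct points of $\widehat{R'}$ in the observers' topology: two such accumulation points would be separated by a single point $m \in R'$, and continuity of $f \circ \rho_\xi$ would force the image to cross $m$ infinitely often, so $g(f(\rho_\xi(t)))$ would return to a bounded neighborhood of $g(m)$ infinitely often, contradicting the fact that $\rho_\xi(t)$ escapes every bounded set. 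Hence $f(\rho_\xi(t))$ converges in the observers' topology to a unique point of $\partial R'$. Independence of the chosen ray is immediate, since two rays representing $\xi$ coincide past a compact part.

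Continuity of $\hat f$ on $R$ is exactly Lemma~\ref{preimage-infinite}. Continuity at the boundary points is the main technical obstacle of the proof. It suffices to verify that $\hat f^{-1}(D')$ is open in $\widehat R$ for every direction $D'$ in $\widehat{R'}$, since directions generate the observers' topology; the intersection $\hat f^{-1}(D') \cap R = f^{-1}(D')$ is open by Lemma~\ref{preimage-infinite}. For $\xi \in \partial_\infty R$ with $\hat f(\xi) \in D'$, the ray $\rho_\xi$ eventually lies in $f^{-1}(D')$, and an analysis of how $\rho_\xi$ enters the connected components of $R \setminus f^{-1}(\{y'\})$ (where $y'$ is the basepoint of $D'$) yields a direction-based neighborhood of $\xi$ mapping into $D'$. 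The hardest case is $v \in V_\infty(R)$ with $v' = \hat f(v) \in D'$: I must produce a finite intersection of directions at points of $R$, each containing $v$, whose $\hat f$-image lies in $D'$. The strategy is to use piecewise linearity of $f$ to locate the $f$-preimages of $y'$ near $v$ and to exploit that there are only finitely many $G_v$-orbits of edges at $v$, combining finitely many directions so as to exclude any direction at $v$ whose image could leave $D'$.

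Finally, independence of $f$ follows from Lemma~\ref{bdd-distance}: for another equivariant map $g : R \to R'$, the definitions of $\hat f$ and $\hat g$ agree on $V_\infty(R)$ by the fixed-point characterization, and on $\partial_\infty R$ the bounded distance $d(f(\rho_\xi(t)), g(\rho_\xi(t))) \le C$ forces the observers' limits to coincide. For the homeomorphism statement, I apply the same construction to a $G$-equivariant map $g' : R' \to R$ to obtain $\hat{g'} : \widehat{R'} \to \widehat R$; since $g' \circ f$ is at bounded distance from $\mathrm{id}_R$, the limit arguments of the previous paragraph show that $\hat{g'} \circ \hat f$ agrees with the identity on $\partial R$, and symmetrically $\hat f \circ \hat{g'}$ agrees with the identity on $\partial R'$. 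Thus $h$ is a bijection $\partial R \to \partial R'$ with inverse $\hat{g'}|_{\partial R'}$; both are continuous restrictions of continuous maps between compact Hausdorff spaces, so $h$ is a homeomorphism, and the equalities $h(V_\infty(R)) = V_\infty(R')$ and $h(\partial_\infty R) = \partial_\infty R'$ hold because $\hat{g'}$ respects the same splitting by its construction.
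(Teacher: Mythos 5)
Your construction of $\hat f$ on $\partial_\infty R$ via limits of rays is a genuinely different route from the paper: you build the extension by hand, whereas the paper invokes Forester's result (\cite[Corollary~1.5]{For_deformation}) that $f$ is a quasi-isometry and therefore extends continuously to the Gromov boundary for the usual topology, then observes that the usual and observers' topologies agree at points of $\partial_\infty R$. Your verification that $\lim_{t\to\infty}f(\rho_\xi(t))$ exists — using $g\circ f$ at bounded distance from the identity to show escape from bounded sets and uniqueness of the accumulation point — is correct, and in fact the same argument, pushed a bit further, gives the key fact that $\hat f(\partial_\infty R)\subseteq\partial_\infty R'$ (if the limit were $a\in R'$ then $g(f(\rho_\xi(t)))\to g(a)\in R$ while also $g(f(\rho_\xi(t)))\to\xi\in\partial_\infty R$, since it stays within $C$ of $\rho_\xi(t)$). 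The price of the hands-on approach is that you have to carry out the continuity argument yourself, which is where the proof proposal has a real gap.

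Concretely: the continuity paragraph is left as a placeholder. The phrase ``an analysis of how $\rho_\xi$ enters the connected components of $R\setminus f^{-1}(\{y'\})$ yields a direction-based neighborhood'' names a step without carrying it out, and the ``hardest case'' at $v\in V_\infty(R)$ is asserted with a strategy but no execution. That case is real (contrary to your assertion that ``continuity on $R$ is exactly Lemma~\ref{preimage-infinite}''): $R$ is \emph{not} open in $\widehat R$, because a sequence of boundary points $\xi_n\in\partial_\infty R$ in distinct directions at $v\in V_\infty(R)$ converges to $v$, so openness of $f^{-1}(D')$ in $R$ does not give openness of $\hat f^{-1}(\widehat{D'})$ in $\widehat R$ at $v$. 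What is missing in both cases is the same statement: for any finite intersection $U=D_1\cap\dots\cap D_k$ of directions with $f(U)\subseteq D'$, a boundary point $\eta\in\widehat{D_1}\cap\dots\cap\widehat{D_k}$ has a ray eventually in $U$, so $\hat f(\eta)$ lies in the closure of $D'$ in $\widehat{R'}$, and one must rule out $\hat f(\eta)$ equalling the basepoint $m$ of $D'$; this is exactly where $\hat f(\partial_\infty R)\subseteq\partial_\infty R'$ is needed. Since you have not stated or proved that fact, the continuity portion of the proof does not close. The independence of $f$, the homeomorphism via the two-sided composition with $\hat{g'}$, and the preservation of the decomposition $\partial R=V_\infty(R)\cup\partial_\infty R$ are all fine once continuity is in hand.
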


\begin{proof}
Let $f:R\ra R'$ be an equivariant map. By Lemma \ref{preimage-infinite}, $f$ is continuous for the observers' topology. 
By \cite[Corollary 1.5]{For_deformation}, the map $f$ induces a quasi-isometry between $R$ and $R'$, hence extends to a continuous map $\hat f:R\cup \partial_\infty R\ra R'\cup \partial_\infty R'$  for the usual topology.
Since the usual topology on $R\cup\partial_\infty R$ agrees with the observers' topology at each point of $\partial_\infty R$,
it follows that $\hat f$ is continuous for the observers' topology. 
Moreover, if $f_1,f_2:R\ra R'$ are two equivariant maps, then they are at bounded distance from one another (Lemma~\ref{bdd-distance}), so they
induce the same map  between the Gromov boundaries of $R$ and $R'$.
Since they also agree on $V_\infty(R)$,  $\hat f_1$ and $\hat f_2$ restrict to the same continuous map 
$h=\hat f_{1|\partial R}:\partial R\ra \partial R'$.
Taking any equivariant map $f':R'\ra R$ yields an inverse for $\hat f$.
\end{proof}

The identifications between the boundaries of all $(G,\calf)$-Grushko trees given by Lemma \ref{identification}, allow us to define the 
space $\partial(G,\calf)$ as $\partial R$, without reference to a particular Grushko tree $R$.
Similarly, if $x\in R$ is any point with trivial stabilizer, the embedding $g\mapsto g.x$ of $G$ into $R$
gives a compact topology on $G\cup \partial(G,\calf)$. This topology does not depend on the choice of $x$ or $R$
because for any $x'\in R'$ with trivial stabilizer, there exists a $G$-equivariant map $f:R\ra R'$ sending $x$ to $x'$.
In plain words, we say that a sequence of elements $g_n\in G$ converges to $\omega\in\partial(G,\calf)$
if for some (equivalently any) $x\in R$ with trivial stabilizer, $g_n.x$ converges to $\omega\in \partial R$ (for the observers' topology on $R\cup\partial R$).

Similarly, there are well defined  subspaces $\partial_{\infty}(G,\calf)$, $V_{\infty}(G,\calf)\subseteq \partial (G,\calf)$,
and since $h:\partial R\to\partial R'$ also gives a natural identification between $\partial^2R$ and $\partial ^2R'$,
there is a well-defined space $\partial^2(G,\calf)$.

\paragraph{Algebraic leaves.}
Elements in $\partial^2(G,\calf)$ are called \emph{algebraic leaves}. Given a Grushko $(G,\calf)$-tree $R$ and an algebraic leaf $(\alpha,\omega)\in \partial^2(G,\calf)$, we denote by $[\alpha,\omega]_R$ the line segment in $R$ joining the points in $\partial R$ corresponding to $\alpha$ and $\omega$: this might be a line, a half-line, or a segment (depending on whether $\alpha$ and $\omega$ belong to $\partial_\infty(G,\calf)$ or to $V_\infty(G,\calf)$), but it is not reduced to a point.
This is the geometric representation of the algebraic leaf in $R$ (note that it completely determines the unordered pair $\{\alpha,\omega\}$).
More generally, given a point $a\in R$ and a point $\omega\in\partial R$, we denote by $[a,\omega]_R$ the line segment in $R$ joining  $a$ to $\omega$ (this can be reduced to a point if $\omega\in V_{\infty}(R)$ and $a=\omega$). The following fact is obvious.

\begin{lemma}\label{lem_approximating}
Let $(\alpha,\omega)\in \partial^2(G,\calf)$, and $R$ a Grushko tree.
Let $a_i\in R\setminus\{ \alpha\}, b_i\in R\setminus\{ \omega\}$ be two sequences of points
converging to $\alpha,\omega$ respectively. Assume that $(\alpha_i,\omega_i)\in \partial^2(G,\calf)$ is such that 
$[a_i,b_i]\subseteq [\alpha_i,\omega_i]_R$.
\\ Then $(\alpha_i,\omega_i)$ converges to $(\alpha,\omega)$.\qed
\end{lemma}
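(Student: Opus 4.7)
The plan is to use compactness: since $\partial R$ is compact in the observers' topology, any subsequence of $(\alpha_i,\omega_i)$ has a sub-subsequence convergent in $\partial R\times\partial R$ to some $(\alpha',\omega')$, and it suffices to show that each such limit equals $(\alpha,\omega)$. I write $C_z(y)$ for the component of $\widehat R\setminus\{y\}$ containing $z\in\partial R$ (for $y\in R\setminus\{z\}$).

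The main step is to show $\{\alpha',\omega'\}=\{\alpha,\omega\}$ as unordered subsets of $\partial R$. Suppose for contradiction $\alpha\notin\{\alpha',\omega'\}$. Since $\alpha\in\partial R$ and the interior of $L':=[\alpha',\omega']_R$ lies in $R$, the point $\alpha$ sits outside $L'\cup\{\alpha',\omega'\}$. Fix any $x_0\in L'$; the segment $[x_0,\alpha]_R$ exits $L'\cup\{\alpha',\omega'\}$ at a last point $p'$, and I choose $y$ in the open interior of the arc between $p'$ and $\alpha$, so $y\in R$ separates $\alpha$ from $L'\cup\{\alpha',\omega'\}$ in $\widehat R$. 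For $i$ large enough in the subsequence, $a_i\in C_\alpha(y)$ while $\alpha_i\in C_{\alpha'}(y)$ and $\omega_i\in C_{\omega'}(y)$ both lie in the opposite component from $\alpha$. Thus $y$ separates $a_i$ from both $\alpha_i$ and $\omega_i$, forcing $y$ to lie on $L_i:=[\alpha_i,\omega_i]_R$ in both subsegments $[a_i,\alpha_i]_{L_i}$ and $[a_i,\omega_i]_{L_i}$ simultaneously. These subsegments meet only at $a_i$, so $y=a_i$, contradicting $a_i\in C_\alpha(y)$. Hence $\alpha\in\{\alpha',\omega'\}$, and symmetrically $\omega\in\{\alpha',\omega'\}$, so in particular $\alpha'\neq\omega'$.

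It then remains to rule out the swap $(\alpha',\omega')=(\omega,\alpha)$, using the correspondence $a_i\leftrightarrow\alpha_i$, $b_i\leftrightarrow\omega_i$ built into the hypothesis: by passing to a subsequence I may assume $a_i$ lies on the $\alpha_i$-side of $b_i$ on $L_i$. If the swap held, picking $y$ in the interior of $[\alpha,\omega]_R$ would give $a_i,\omega_i\in C_\alpha(y)$ and $b_i,\alpha_i\in C_\omega(y)$ for $i$ large, whence $y\in [a_i,\alpha_i]_{L_i}\cap [b_i,\omega_i]_{L_i}$. But with the ordering $\alpha_i,a_i,b_i,\omega_i$ on $L_i$, these two subsegments are disjoint: contradiction. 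The main obstacle is the first step (the set equality), and in particular the careful choice of a separating point $y$ strictly inside an arc of $R$, so as to avoid the special vertices of $V_\infty(R)$ that may occur among $\alpha,\omega,\alpha',\omega'$.
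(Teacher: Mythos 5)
Your compactness setup and the ``triple-crossing'' contradiction are the right ideas, but the explicit argument has two gaps, both tied to the subtlety you flag at the end about $V_\infty(R)$. The assertion ``Since $\alpha\in\partial R$ and the interior of $L'$ lies in $R$, the point $\alpha$ sits outside $L'\cup\{\alpha',\omega'\}$'' is only valid when $\alpha\in\partial_\infty R$. If $\alpha=v\in V_\infty(R)$, then $\alpha$ \emph{is} a point of $R$ and may lie on the geodesic $L'$, in which case no separating $y$ exists. You acknowledge this concern as ``the main obstacle'' but the body of the proof never addresses it, and in fact it cannot be patched within your framework: take $\alpha=v\in V_\infty(R)$, fix a direction $d$ at $v$, let $a_i\to v$ \emph{metrically} along $d$ (this is observers' convergence), let $\xi\in\partial_\infty R$ be an endpoint of a ray in $d$ with $\xi\neq\omega$, and set $\alpha_i=\xi$, $\omega_i=\omega$ constantly. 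Then $[a_i,b_i]\subseteq[\xi,\omega]_R$ holds, yet $(\alpha_i,\omega_i)$ is constant and $\neq(\alpha,\omega)$. So the conclusion needs either $\alpha,\omega\in\partial_\infty(G,\calf)$, or the additional condition that the $a_i$ eventually escape into pairwise-distinct directions at $v$ (which does hold automatically whenever the $a_i$ range in a single $G$-orbit, as in every place the paper invokes the lemma).

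The second gap is the line ``by passing to a subsequence I may assume $a_i$ lies on the $\alpha_i$-side of $b_i$''. Passing to a subsequence makes one of the two sides hold for all $i$, but you cannot \emph{choose} which, and the two cases yield the distinct ordered limits $(\alpha,\omega)$ and $(\omega,\alpha)$. There is no ``correspondence $a_i\leftrightarrow\alpha_i$ built into the hypothesis''; the containment $[a_i,b_i]\subseteq[\alpha_i,\omega_i]_R$ is completely symmetric in $\alpha_i$ and $\omega_i$. What your argument actually establishes (once the first gap is repaired) is the unordered convergence, i.e.\ that every accumulation point of $(\alpha_i,\omega_i)$ is $(\alpha,\omega)$ or $(\omega,\alpha)$. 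That is all the paper uses; where the orientation matters, it is fixed by hand (e.g.\ replacing $g_i$ by $g_i^{-1}$). You should state the unordered conclusion rather than assert a WLOG you cannot justify.
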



\paragraph{Being carried by a subgroup.}
Let $A\subseteq G$ be a non-peripheral subgroup whose Kurosh rank is finite, and $R$ be a Grushko $(G,\calf)$-tree.
Let $R_A\subseteq R$ be the minimal $A$-invariant subtree, which is in particular a Grushko $(A,\calf_{|A})$-tree.
The inclusion $R_A\subseteq R$ induces a natural continuous embedding of $\partial(A,\calf_{|A})$ into $ \partial (G,\calf)$,
and one checks that it does not depend on the choice of $R$.
In the particular case where $A$ is cyclic, this means that any nonperipheral element $g\in G$ determines a pair of points $(g^{-\infty},g^{+\infty})\in\partial^2 (G,\calf)$.

\begin{de}[\textbf{\emph{Being carried by a subgroup}}]\label{dfn_carried}
 Given $\xi\in\partial_{\infty}(G,\calf)$ and a subgroup $A\subseteq G$ of finite Kurosh rank, we
  say that $\xi$ is \emph{carried} by $A$ if some translate of $\xi$
  belongs to $\partial_{\infty}(A,\calf_{|A})$. 
\\ Similarly, we say that a subset
  $L\subseteq \partial^2 (G,\calf)$ is \emph{carried} by $A$ if every
   algebraic leaf $(\alpha,\omega)\in L$ has a translate contained in
  $\partial^2 (A,\calf_{|A})$.
\end{de}

\begin{lemma}\label{lem_porte_ferme}
  Let $A\subseteq G$ be a subgroup of finite Kurosh rank. 
The set of algebraic leaves in $\partial^2(G,\calf)$ carried by $A$ is closed.
\end{lemma}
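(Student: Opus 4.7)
The plan is to take a convergent sequence $(\alpha_n,\omega_n)\to(\alpha,\omega)$ in $\partial^2(G,\calf)$ in which each $(\alpha_n,\omega_n)$ is carried by $A$, so there exist $g_n\in G$ with $g_n\cdot(\alpha_n,\omega_n)\in\partial^2(A,\calf_{|A})$, and to produce a \emph{single} element $g\in G$ with $g\cdot(\alpha,\omega)\in\partial^2(A,\calf_{|A})$. My strategy is to modify each $g_n$ by an element of $A$ to normalize its effect, then extract a constant subsequence, and finally pass to the limit using closedness of $\partial^2(A,\calf_{|A})$ in $\partial^2(G,\calf)$.

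Fix a Grushko $(G,\calf)$-tree $R$ and the minimal $A$-subtree $R_A\subseteq R$, itself a Grushko $(A,\calf_{|A})$-tree. Since $A$ has finite Kurosh rank, $A$ acts cocompactly on $R_A$, and I choose a \emph{finite} subtree $K\subseteq R_A$ (the convex hull of finitely many vertices, hence a union of only finitely many edges of $R$) satisfying $A\cdot K=R_A$. Next I pick a point $x$ interior to some edge of $R$ lying on the geodesic $[\alpha,\omega]_R$; such an $x$ exists because $[\alpha,\omega]_R$ is a nondegenerate arc, and it has trivial $G$-stabilizer since edges of a Grushko tree have trivial stabilizers. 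Because $x$ separates $\alpha$ from $\omega$ in $\widehat R$, convergence in the observers' topology gives $x\in[\alpha_n,\omega_n]_R$ for all $n$ large enough, so $g_n x\in[g_n\alpha_n,g_n\omega_n]_R\subseteq R_A$ (by convexity of $R_A$). Cocompactness then provides $a_n\in A$ with $a_n g_n x\in K$, and setting $h_n:=a_n g_n$ I obtain $h_n x\in K$, together with $h_n\cdot(\alpha_n,\omega_n)\in\partial^2(A,\calf_{|A})$ by $A$-invariance of the latter.

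The crucial finiteness step is that $\{h\in G\mid h x\in K\}$ is finite: $K$ contains only finitely many edges of $R$, and for each such edge there is at most one element of $G$ sending the edge containing $x$ onto it (since that edge has trivial stabilizer). After further extraction I may therefore assume $h_n=h$ is constant; continuity of the homeomorphism of $\partial(G,\calf)$ induced by $h$ (as a consequence of Lemma~\ref{identification}) then yields $h\cdot(\alpha_n,\omega_n)\to h\cdot(\alpha,\omega)$. Finally, $\partial^2(A,\calf_{|A})$ is closed in $\partial^2(G,\calf)$: the inclusion $\partial(A,\calf_{|A})\hookrightarrow\partial(G,\calf)$ is a continuous injection from a compact space into a Hausdorff one, so it has closed image, and squaring then intersecting with $\partial^2(G,\calf)=\partial(G,\calf)^2\setminus\Delta$ preserves this closedness. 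Passing to the limit gives $h\cdot(\alpha,\omega)\in\partial^2(A,\calf_{|A})$, so $(\alpha,\omega)$ is carried by $A$. The main subtle point is the choice of $K$: since $R$ is not locally compact at its vertices of infinite valence, a generic compact fundamental domain for $A\curvearrowright R_A$ could meet infinitely many edges of $R$ (by accumulating on such a vertex), which would wreck the finiteness of $\{h\mid hx\in K\}$; taking $K$ to be a finite subtree circumvents this.
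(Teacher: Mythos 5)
Your proof is correct and follows essentially the same strategy as the paper's: fix $R$ and $R_A$, pick something (a point $x$ interior to an edge, vs.\ the paper's edge $e$) on $[\alpha,\omega]_R$, use that it eventually lies on $[\alpha_n,\omega_n]_R$ to get $g_n x\in R_A$, normalize by elements of $A$ using cocompactness, and then use triviality of edge stabilizers to extract a constant subsequence before passing to the limit. The only cosmetic difference is that the paper avoids explicitly constructing the finite subtree $K$: it simply observes that there are finitely many $A$-orbits of edges in $R_A$, so after extraction $g_n e$ lies in a constant $A$-orbit, and triviality of edge stabilizers immediately pins down $a_n g_n$; your explicit $K$-construction and the finiteness of $\{h: hx\in K\}$ accomplish the same thing with slightly more bookkeeping. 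You also spell out the closedness of $\partial^2(A,\calf_{|A})$ in $\partial^2(G,\calf)$, which the paper leaves implicit in its final sentence, and your remark about why $K$ must be a \emph{finite} subtree (rather than a mere compact fundamental domain) correctly identifies the non-local-compactness subtlety.
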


\begin{proof}
Fix a Grushko $(G,\calf)$-tree $R$.
Let $R_A\subseteq R$ be the minimal $A$-invariant subtree.
Let $(\alpha,\omega)$ be a limit of leaves $(\alpha_i,\omega_i)$ carried by $A$.
Let $g_i$ be such that $g_i.(\alpha_i,\omega_i)\in \partial^2(A,\calf_{|A})$.
In particular, $g_i.[\alpha_i,\omega_i]_R\subseteq R_A$.

Fix an edge $e\subseteq [\alpha,\omega]_R$. For all $i$ large enough, we have $e\subseteq [\alpha_i,\omega_i]_R$, hence $g_i e\subseteq R_A$.
Since $A$ has finite Kurosh rank, $R_A/A$ is compact, so up to taking a subsequence,
we can assume that 
 $g_i e$ lies in a constant $A$-orbit of edges in $R_A$.  
This means that there exists $a_i\in A$ such that $a_ig_i e$ does not depend on $i$, and therefore neither does $g:=a_ig_i$. 
Since $g_i(\alpha_i,\omega_i)\in \partial^2(A,\calf_{|A})$, we have $a_ig_i(\alpha_i,\omega_i)=g(\alpha_i,\omega_i)\in \partial^2(A,\calf_{|A})$.
Hence $g.(\alpha,\omega)\in \partial^2(A,\calf_{|A})$ is carried by $A$ and so is $(\alpha,\omega)$.
\end{proof}

\section{Currents and  intersection pairing}\label{sec_currents}

We now define a notion of geodesic currents for free products. In the case of free groups, these were defined in \cite{Bon91}, and further studied in \cite{Kap05,Kap06}. We mention that another approach to relative currents in the case where $G$ is a free group has recently been proposed by Gupta in \cite{Gup}. We denote by $i:\partial^2(G,\calf)\to\partial^2(G,\calf)$ the involution defined by $i(\alpha,\omega)=(\omega,\alpha)$. A \emph{geodesic current} on $(G,\calf)$ is a $G$-invariant and $i$-invariant Radon measure $\nu$ on $\partial^2(G,\calf)$ (i.e. $\nu$ gives finite measure to compact subsets of $\partial^2(G,\calf)$). The space $\text{Curr}$ of geodesic currents on $(G,\calf)$ is equipped with the \emph{weak topology}: a sequence $(\nu_n)_{n\in\mathbb{N}}\in\text{Curr}^{\mathbb{N}}$ converges to a current $\nu\in\text{Curr}$ if $\nu_n(S\times S')$ converges to $\nu(S\times S')$ for all disjoint clopen subsets $S,S'\subseteq\partial^2(G,\calf)$.

Every nonperipheral element $g\in G$ 
determines a current 
$$\eta_g=\sum_{h\in G/\grp{g}}\delta_{h(g^{-\infty},g^{+\infty})}+\delta_{h(g^{+\infty},g^{-\infty})}$$
where for $(\alpha,\omega)\in \partial^2(G,\calf)$, $\delta_{(\alpha,\omega)}$ denotes the Dirac mass at $(\alpha,\omega)$.

Alternatively, one can define $\eta_g$ as follows.
Given  a Grushko $(G,\calf)$-tree $R$ and an oriented edge path $J\subseteq R$,
consider the cylinder $Cyl_J\subseteq \partial^2R$ consisting of all $(\alpha,\omega)\in \partial^2R$ such that $J$ is contained in  $[\alpha,\omega]_R$
and oriented towards  $\omega$.
Let $A_g\subseteq R$ be the axis of $g$, and $D\subseteq A_g$ a fundamental domain.
Then $\eta_g(Cyl_J)$ is the number of $h\in G$ such that $h.J\subseteq A_g$ (with no requirement about orientation) 
and the first edge of $h.J$ lies in $D$. Notice that if $\overline{J}$ denotes the segment $J$ with the opposite orientation, then $\eta_g(Cyl_{\overline{J}})=\eta_g(Cyl_J)$. Note also that the values of $\eta_g$ on the cylinders of this form completely determine $\eta_g$.

Similarly, if $\alpha,\omega\in V_\infty(G,\calf)$ are two distinct points of infinite valence, one can define a current 
$$\eta_{\alpha,\omega}=\sum_{h\in G}\delta_{h(\alpha,\omega)}+\delta_{h(\omega,\alpha)}$$
so that $\eta_{\alpha,\omega}(Cyl_J)$ is the number of $h\in G$ such that $h.J\subseteq [\alpha,\omega]_R$ (without requirement about orientation).

 Given $T\in \calo$ and any current $\eta$, there is a naturally defined pairing given by
 \begin{equation*}
\grp{T,\eta}=\sum_{e} l(e) \int_{Cyl_e} \eta \tag{*}
\end{equation*}
where the sum is taken on a set of representatives of orbits of edges of $T$.
One can check that $\grp{T,\eta_g}=||g||_T$ and $\grp{T,\eta_{\alpha,\omega}}=d_T(v_\alpha,v_\omega)$ 
(where $v_\alpha,v_\omega$ are the points in $T$ fixed by the peripheral subgroups corresponding to $\alpha,\omega\in V_\infty(G,\calf)$.

In contrast to \cite{KL09}, where the existence of a continuous pairing on $\overline{cv_N}\times\text{Curr}$ was established in the case where $G$ is a free group, the following proposition suggests that geodesic currents are not so well-adapted to the study of free products.

\begin{figure}
\begin{center}
\includegraphics[width=\linewidth]{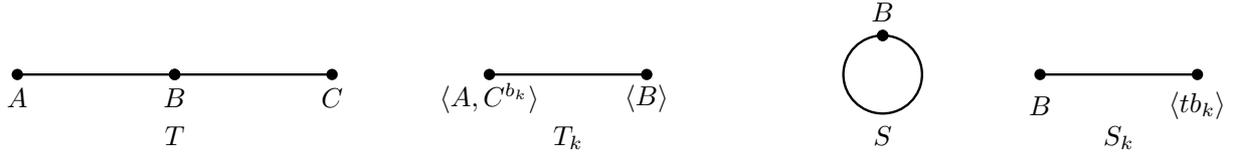}
\caption{Examples showing the impossibility to extend continuously the natural intersection pairing in the relative setting.}
\label{fig-ex-not-continuous}
\end{center}
\end{figure}

\begin{prop}\label{trouble}
Assume that $\calf$ contains an infinite group, and that either $\text{rk}_K(G,\calf)\ge 3$ or else that $\text{rk}_f(G,\calf)\ge 1$. Then there does not exist any continuous map  $$i:\overline{\calo}\times\text{Curr}\to\mathbb{R}$$ such that for all $T\in\calo$ and all nonperipheral $g\in G$, one has $\langle T,\eta_g \rangle=||g||_T$.
\end{prop}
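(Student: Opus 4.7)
The plan is to argue by contradiction, following the strategy of Example~\ref{ex_non-continu} and promoting it to the full generality of the rank hypothesis. Suppose a continuous pairing $\langle \cdot, \cdot\rangle$ existed. Using the hypothesis that $\calf$ contains an infinite group and either $\text{rk}_K(G,\calf)\ge 3$ or $\text{rk}_f(G,\calf)\ge 1$, I first extract a relative free product decomposition $G=A*B*H$ in which $B$ is an infinite peripheral factor, $A$ is a nontrivial peripheral factor, and $H$ is either another peripheral factor or an infinite cyclic factor peeled off the free part. Pick $a\in A\setminus\{1\}$, $c\in H\setminus\{1\}$, and a sequence $(b_n)_{n\in\mathbb{N}}$ of pairwise distinct elements of $B$. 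Set $g_n:=ab_ncb_n^{-1}$, which is nonperipheral.

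Let $T\in\overline\calo$ be the Bass--Serre tree of the three-factor splitting $G=A*B*H$ with unit edge lengths, and let $T_n\in\overline\calo$ be the Bass--Serre tree of the two-factor splitting $G=\langle A,b_nHb_n^{-1}\rangle*B$, with edge length chosen so that $T_n\to T$ in $\overline\calo$ (e.g. length~$2$, corresponding to the fact that one $T_n$-edge collapses two consecutive $T$-edges). By Paulin's criterion, verifying $T_n\to T$ reduces to showing $\|g\|_{T_n}\to\|g\|_T$ for every $g\in G$: for $n$ large enough, $b_n$ does not appear in the normal form of $g$ over $G=A*B*H$, so the axis of $g$ in $T_n$ has the same combinatorial structure as in $T$, with the prescribed edge length.

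Next I show that the sequence of rational currents $\eta_{g_n}$ converges in $\text{Curr}$ to some current $\eta$. It suffices to check convergence of $\eta_{g_n}(\mathrm{Cyl}_J)$ for each cylinder $\mathrm{Cyl}_J$ coming from a finite edge-path $J$ in a fixed Grushko tree $R_0$. The key observation is that the axes $\mathrm{Ax}(g_n)$ all share the same combinatorial period-$4$ pattern (an $A$-piece, a $B$-piece, an $H$-piece, another $B$-piece), differing only in the labels of the $B$-steps by $b_n^{\pm 1}$; once $n$ is large enough relative to $J$, the count of $G$-translates of $J$ that lie in a fundamental domain of $\mathrm{Ax}(g_n)$ stabilizes, giving convergence of $\eta_{g_n}(\mathrm{Cyl}_J)$.

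To finish, the element $g_n=ab_ncb_n^{-1}$ lies in the vertex stabilizer $\langle A,b_nHb_n^{-1}\rangle$ of $T_n$, so $\|g_n\|_{T_n}=0$. Rewriting
\[
g_n \;=\; a\cdot(b_nb_{n+1}^{-1})\cdot(b_{n+1}cb_{n+1}^{-1})\cdot(b_{n+1}b_n^{-1}),
\]
exhibits $g_n$ as a cyclically reduced element of length $4$ in the free product $G=\langle A,b_{n+1}Hb_{n+1}^{-1}\rangle*B$, so $\|g_n\|_{T_{n+1}}$ equals $4$ times the chosen edge length, and in particular is bounded away from $0$. Since both $(T_n,\eta_{g_n})$ and $(T_{n+1},\eta_{g_n})$ converge in $\overline\calo\times\text{Curr}$ to the same limit $(T,\eta)$, continuity of the putative pairing yields
\[
0 \;=\; \lim_n\langle T_n,\eta_{g_n}\rangle \;=\; \langle T,\eta\rangle \;=\; \lim_n\langle T_{n+1},\eta_{g_n}\rangle \;>\;0,
\]
a contradiction. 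I expect the main obstacle to be the convergence $\eta_{g_n}\to\eta$: the convergence $T_n\to T$ is a routine translation-length bookkeeping, but establishing convergence of the currents requires a careful cylinder-by-cylinder analysis, for which the crucial input is that $g_n$ differs from a fixed cyclically reduced word only by a single $B$-conjugation of its $H$-letter.
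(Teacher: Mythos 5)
Your proposal follows the same broad strategy as the paper's proof (this is in fact Example~\ref{ex_non-continu}), but the reduction at the start has a genuine gap that leaves one of the hypothesized cases uncovered.

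You claim to extract a decomposition $G=A*B*H$ in which $B$ is infinite peripheral and $A$ is ``a nontrivial peripheral factor.'' This is not always possible. If $\calf=\{[B]\}$ is a singleton and $G=B*F_N$, then $B$ is the \emph{only} peripheral factor, so $A$ cannot be chosen peripheral for any $N$. The peripherality of $A$ is in fact never used in the rest of your argument, so this part of the reduction can be repaired simply by requiring $A\neq\{1\}$ (which is exactly what the paper does: ``$G=A*B*C$ relative to $\calf$, with $B\in\calf$ infinite and $A,C\neq\{1\}$''). That repair covers all of $\text{rk}_K(G,\calf)\ge 3$. However, it does \emph{not} cover the remaining case $\text{rk}_K(G,\calf)=2$ with $\text{rk}_f(G,\calf)\ge 1$, i.e.\ $G=B*\langle t\rangle$ with $B\in\calf$ infinite and $t$ nonperipheral: by uniqueness of the Kurosh rank, $G$ admits no decomposition into three nontrivial factors relative to $\calf$, so your elements $g_n=ab_ncb_n^{-1}$ and the trees $T_n$ simply do not exist. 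The paper treats this case with a separate pair of constructions, taking $g_k:=tb_k$ and trees $S_k$ for which $\|tb_k\|_{S_k}=0$ but $\|tb_{k+1}\|_{S_k}=3$, with $S_k$ converging to a Grushko tree $S$ and $\eta_{tb_k}\to\eta_{\beta,t\beta}$.

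Two smaller points. First, the trees $T_n$ you construct lie in $\partial\calo$ rather than $\calo$ (the vertex group $\langle A,b_nHb_n^{-1}\rangle$ is not peripheral), whereas the hypothesis only grants $\langle T,\eta_g\rangle=\|g\|_T$ for $T\in\calo$; you need to record, as the paper does, that continuity together with density of $\calo$ in $\ol\calo$ forces the identity $\langle T,\eta_g\rangle=\|g\|_T$ to hold for all $T\in\ol\calo$ before you may evaluate $\langle T_n,\eta_{g_n}\rangle=0$ and $\langle T_{n+1},\eta_{g_n}\rangle=4\ell$. Second, the parenthetical ``e.g.\ length $2$, corresponding to the fact that one $T_n$-edge collapses two consecutive $T$-edges'' is dubious: the natural equivariant map from the Bass--Serre tree of $A*B*H$ to that of $\langle A,b_nHb_n^{-1}\rangle*B$ sends edge to edge (the two edges at $v_B$ going to $v_A$ and to $v_H$ fold together), so with unit edge lengths on both trees one already has $\|g\|_{T_n}\to\|g\|_T$; giving $T_n$ edges of length $2$ would make $T_n\to 2T$. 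This last point is cosmetic and does not affect the contradiction, but it suggests the translation-length bookkeeping is not as ``routine'' as you state and should be carried out once carefully.
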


\begin{rk}
The definition of the pairing $(\ast)$ makes sense for any simplicial trees $T$ with trivial edge stabilizers.
Any continuous extension of the pairing must satisfy this formula for such a tree $T$ 
(this amounts to let $l(e)$ go to $0$ for some edges of a tree in $\calo$).
The argument above then shows that with this extended definition of the pairing, 
the set of pairs $(T,\eta)\in \ol\calo\times Curr(G,\calf)$ such that $\grp{T,\eta}=0$ is not even closed. 
\end{rk}

\begin{proof}
Assume by contradiction that $i:\overline{\calo}\times\text{Curr}\to\mathbb{R}$ is such a continuous map.
Since the translation length functions are continuous on $\ol \calo$ and $\calo$ is dense on $\ol \calo$, 
one necessarily has $\langle T,\eta_g \rangle=||g||_T$ for all nonperipheral elements $g\in G$ and all $T\in\ol\calo$.
We will prove the proposition by exhibiting a sequence of trees $T_k\in \ol \calo$ having a well-defined limit $T$ in $\ol \calo$,
and a sequence of nonperipheral elements $g_k\in G$ such that $\eta_{g_k}$ have a well-defined limit $\eta$ in the space of (non-projective) currents,
and such that $||g_k||_{T_k}=0$ while  $||g_{k+1}||_{T_k}$ is a constant $m>0$ independent of $k$. 
Then one should simultaneously have $\langle T,\eta\rangle=0$ and $\langle T,\eta \rangle=m$, a contradiction.

Assume first that $\text{rk}_K(G,\calf)\ge 3$ so that $G$ can be written as $G=A*B*C$ relative to $\calf$, with $B\in \calf$ infinite and $A,C\neq\{1\}$.
Let $(b_k)_{k\in\bbN}$ be a sequence of distinct elements of $B$,  $a\in A\setminus\{1\},c\in C\setminus\{1\}$ and let $g_k:=ab_k c b_k\m$. 
 Take for $T_k$ the Bass--Serre tree of the graph of groups decomposition of $G$ represented on Figure~\ref{fig-ex-not-continuous},
where all edges have length $1$.
Hence  $\langle T_k,\eta_{g_k}\rangle =||g_k||_{T_k}=0$ and $\langle T_k,\eta_{g_{k+1}}\rangle=||g_{k+1}||_{T_k}=4$.
Moreover, $T_k$ converges to $T$ as $k\ra \infty$. 
Let $R$ be a Grushko $(G,\calf)$-tree, and let $\beta\in V_\infty(R)$ be the point fixed by $B$.
Then $\eta_{g_k}$ converges to 
$\eta_{\beta,a\beta}+\eta_{\beta,c\beta}$ (the proof of this fact is left as an exercise for the reader).

If $\text{rk}_f(G,\calf)\geq 1$ and $\text{rk}_K(G,\calf)= 2$, then $G=B*\grp{t}$ for some $B\in \calf$ infinite and $t$ nonperipheral.
Let $S_k,S$ be the trees showed on Figure~\ref{fig-ex-not-continuous}, where the edges of $S_k$ have length $1$, and the edges of $S$ have length $2$.
Then $||tb_k||_{S_k}=0$,  $||tb_{k+1}||_{S_k}=3$, and $S_k$ converges to $S$.
Note that $S$ is a Grushko $(G,\calf)$-tree, and denote by $\beta\in V_\infty(S)$ the vertex fixed by $B$.
Then $\eta_{tb_k}$ converges to $\eta_{\beta,t\beta}$.  
\end{proof}

\begin{rk}
If one works in $F_3=\grp{a,b,c}$ without peripheral structure, and consider usual currents in $\partial^2 F_3$ 
instead of $\partial^2 (F_3,\{\grp{a},\grp{b},\grp{c}\})$, then the sequence of currents associated to 
$g_k=ab^k c b^{-k}$ does not converge in the space of (non-relative) currents in $F_3$. 
On the other hand, rescaling it by $\frac{1}{k}$ yields a convergent sequence, but $\frac{1}{k+1}||g_{k+1}||_{T_k}=\frac{4}{k+1}$
has the same limit as $\frac{1}{k}||g_{k}||_{T_k}=0$, so there is no contradiction in this case.
\end{rk}

\section{Algebraic laminations}\label{sec-4}

In the present section, we introduce algebraic laminations for free products, and we define the algebraic lamination $L^2(T)$ dual to a very small tree $T\in\ol\calo$.
 We also define the one-sided lamination $L^1(T)$ dual to $T$, and relate it to $L^2(T)$ in Section \ref{sec-l12}. 
All these objects were introduced in \cite{CHL08-1,CHL08-2} in the context of free groups.
In Section \ref{sec-scs}, we prove that if $(T_n)_{n\in\mathbb{N}}\in\ol\calo^{\mathbb{N}}$ is a sequence that converges to a tree $T$ with dense orbits, and if $(\alpha_n,\omega_n)\in L^2(T_n)$ converges to $(\alpha,\omega)\in\partial^2(G,\calf)$, then $(\alpha,\omega)\in L^2(T)$. Though this fails to hold in general when $T$ is not assumed to have dense orbits, Proposition \ref{prop-bounded-closed} also provides a weaker (and crucial) statement valid for all trees $T\in\ol\calo$.

\subsection{General definition}

Recall that $i:\partial^2(G,\calf)\to\partial^2(G,\calf)$ is the involution defined by $i(\alpha,\omega)=(\omega,\alpha)$.
 
\begin{de}[\textbf{\emph{Algebraic lamination}}]
  A \emph{$(G,\mathcal{F})$-algebraic lamination} is a  closed $G$-invariant and $i$-invariant subset of
  $\partial^2 (G,\calf)$.
\end{de}

If $L$ is any subset of $\partial^2 (G,\calf)$, there is a  smallest lamination containing $L$, we call it the lamination \emph{generated} by $L$.

 Given a Grushko tree $R$, and a finite set $E$ of edges of $R$, we denote by $C_E\subseteq \partial^2 (G,\calf)$ the subspace made of all  pairs $(\alpha,\omega)\in \partial^2 (G,\calf)$ such that $[\alpha,\omega]_R$ 
 contains one of the edges in $E$: this is a compact set.
 In particular, if $E$ contains a representative of each orbit of edges, then any non-empty lamination $\Lambda$ has non-empty intersection with $C_E$,
and $\Lambda=G.(\Lambda\cap C_E)$. 

\subsection{Dual lamination of an $\mathbb{R}$-tree}

We will be mainly interested in algebraic laminations obtained by the following construction. 
Let $T\in\overline{\mathcal{O}}$. 
For all $\epsilon>0$, we let $L^2_{\epsilon}(T)$ be the closure in $\partial^2 (G,\calf)$ of the set of pairs $(g^{-\infty},g^{+\infty})$ where $g$ runs among the set of all 
non-peripheral elements with $||g||_T\le\epsilon$.
\begin{de}[\textbf{\emph{Algebraic lamination dual to a tree}}]\label{dfn_dual_lamination}
The algebraic lamination  \emph{dual} to the tree $T$ is
 $$L^2(T):=\bigcap_{\epsilon>0}L^2_{\epsilon}(T).$$
\end{de}

\begin{lemma}\label{lamination-nonempty}
A tree $T\in \overline{\mathcal{O}}$ is a Grushko $(G,\calf)$-tree if and only if its dual lamination is empty.
\end{lemma}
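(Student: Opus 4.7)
The plan is to handle the two implications separately, with the backward direction being substantially more involved. Throughout, I fix a Grushko tree $R$ and use that $\partial^2(G,\calf)$, while not compact, has a natural exhaustion by compact subsets $C_E$ consisting of leaves whose geometric representation in $R$ crosses some edge from the finite set $E$.

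For the forward direction, I would argue that if $T$ is a Grushko tree, then the metric simplicial structure of $T$ has only finitely many orbits of edges, so there is a positive lower bound $\ell_{\min}$ on edge lengths. Any nonperipheral element $g$ is hyperbolic in $T$ with axis containing at least one edge, so $\|g\|_T \geq \ell_{\min}$. Therefore $L^2_\epsilon(T) = \emptyset$ for $\epsilon < \ell_{\min}$, and $L^2(T) = \emptyset$.

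For the backward direction, I would first reduce to finding a sequence $(g_n)$ of nonperipheral elements with $\|g_n\|_T \to 0$: once such a sequence is found, I can conjugate each $g_n$ so that its axis in $R$ meets a fixed edge $e$ (using cocompactness of the $G$-action on edges of $R$ and invariance of translation length under conjugation), so that $(g_n^{-\infty}, g_n^{+\infty}) \in C_{\{e\}}$, which is compact. Extracting a convergent subsequence to $(\alpha,\omega)$, each $L^2_\epsilon(T)$ being closed and eventually containing the sequence forces $(\alpha,\omega) \in L^2(T)$. To produce the $g_n$, I distinguish two cases. If the action is not relatively free, some nonperipheral element $g$ fixes a point in $T$, so $\|g\|_T = 0$ and we are done by taking the constant sequence. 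Otherwise, the action is relatively free, and I would observe that in this setting arc stabilizers are automatically trivial (the intersection of two peripheral subgroups in a free product is trivial) and each peripheral factor fixes a unique point in $T$ (by very smallness); thus the only way $T$ can fail to lie in $\calo$ is that $T$ is not simplicial.

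In this last subcase, the Levitt decomposition of $T$ provides a nondegenerate vertex subtree $Y$ on which a subgroup $H$ acts with dense orbits, and $H$ must be nonperipheral since peripheral subgroups are elliptic. I would then invoke the standard fact that an isometric action on an $\bbR$-tree with dense orbits has hyperbolic elements of arbitrarily small translation length (proved by fixing $x \in Y$, using density to find $h \in H$ with $d(x, h.x)$ small, and combining commutators of nearly-fixing elements to produce hyperbolic ones when necessary); since hyperbolic in $T$ implies nonperipheral, this yields the required sequence.

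The main obstacle I expect is the final step of producing nonperipheral hyperbolic elements of arbitrarily small translation length in the dense-orbit setting: a priori the elements provided by density could all be elliptic, and elliptic elements in a relatively free very small action are peripheral, so one needs to leverage the fact that accumulating peripheral fixed points on a dense-orbit subtree force the existence of nearby hyperbolic elements (for instance by combining two peripherals fixing distinct nearby points). Everything else amounts to unpacking definitions and using the standard compactness properties of $C_E$ together with closedness of $L^2_\epsilon(T)$.
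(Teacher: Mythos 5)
Your proof is correct and follows essentially the same strategy as the paper's: the forward direction via a lower bound on $\|g\|_T$ coming from the shortest edge length, and the backward direction via case analysis (not relatively free vs.\ not simplicial) followed by a compactness argument in $C_E$ to extract a leaf from elements of small translation length. The paper concludes via nested nonempty compacts $L^2_\eps(T)\cap C_E$ while you extract a convergent subsequence after translating axes through a fixed edge; these are interchangeable. The one place where you diverge is in the non-simplicial case: the paper cites finiteness of orbits of directions (\cite[Corollary 4.8]{Hor14-1}) to directly produce elements of arbitrarily small translation length, whereas you go through the Levitt decomposition to find a nondegenerate vertex subtree with a dense-orbits subaction, then invoke (and sketch a proof of) the standard fact that dense-orbit actions on nondegenerate $\mathbb{R}$-trees have hyperbolic elements with arbitrarily small translation length. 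Your route is a little more self-contained and makes explicit the subtlety that density might initially only supply elliptics, which you resolve correctly by combining elliptics with distinct nearby fixed points; the paper's route is shorter but leans on a citation. Both are sound.
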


\begin{proof}
If $T\in\calo$, then $L^2_{\epsilon}(T)=\emptyset$ for every $\eps>0$ smaller than the length of the shortest edge of $T$, so $L^2(T)=\emptyset$. Conversely, if $T\in\partial\mathcal{O}$, then for all $\eps>0$, the lamination
  $L^2_{\epsilon}(T)$ is non-empty. Indeed, this is clear if $T$ is not relatively free, and if $T$ is not simplicial, the finiteness of orbits of directions \cite[Corollary 4.8]{Hor14-1} implies the existence of elements of arbitrarily small translation length,
so $L^2_{\epsilon}(T)\neq\emptyset$  for all $\epsilon>0$. Let $E$ be a finite collection of edges of $R$ that contains one representative in every $G$-orbit of edges.
Then $L^2_{\epsilon}(T)\cap C_E$ is non-empty for every $\eps>0$.
Since the intersections $L^2_{\epsilon}(T)\cap C_E$ form a decreasing collection of nonempty compact sets as $\epsilon$ goes to $0$, we get that $L^2(T)\neq \es$.
\end{proof}

\begin{lemma}\label{l2-carry}
Let $T\in\overline{\calo}$ is a simplicial $(G,\calf)$-tree. Then all algebraic leaves in $L^2(T)$ are carried by a vertex stabilizer in $T$. 
\end{lemma}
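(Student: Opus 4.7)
The plan is to combine a discreteness argument in the simplicial tree $T$ with the cocompactness of its action, and then conclude via Lemma~\ref{lem_porte_ferme}.

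First, since $T\in\ol\calo$ is simplicial, the quotient graph of groups $T/G$ is finite, so $T$ has only finitely many orbits of edges, and in particular the set of edge lengths is finite. Let $\epsilon_0>0$ be the minimum edge length. For any hyperbolic $g\in G$, the axis of $g$ is a bi-infinite line in $T$ that is a union of edges, and a fundamental domain of $\grp{g}$ on this axis consists of a union of entire edges of total length $||g||_T$; therefore $||g||_T\ge\epsilon_0$. Consequently, for every $\epsilon<\epsilon_0$, the condition $||g||_T\le\epsilon$ forces $g$ to be elliptic, so $L^2_\epsilon(T)$ coincides with the closure in $\partial^2(G,\calf)$ of the set of leaves $(g^{-\infty},g^{+\infty})$ with $g$ nonperipheral and elliptic in $T$. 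Taking the intersection over $\epsilon>0$, the same description holds for $L^2(T)$.

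Next, $T/G$ has only finitely many orbits of vertices. Pick representatives $v_1,\dots,v_k$ and set $H_j:=G_{v_j}$. Each $H_j$ is a vertex group of the finite graph of groups $T/G$ and has finite Kurosh rank relative to $\calf_{|H_j}$; this is a standard consequence of the structure of vertex groups of very small simplicial $(G,\calf)$-trees, which are finitely generated relative to their peripheral subgroups. Now for any nonperipheral $g\in G$ that is elliptic in $T$, some $G$-translate of the vertex fixed by $g$ is one of the $v_j$; hence a conjugate of $g$ belongs to $H_j$, so that the leaf $(g^{-\infty},g^{+\infty})$ is carried by $H_j$ in the sense of Definition~\ref{dfn_carried}.

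Finally, by Lemma~\ref{lem_porte_ferme}, for each $j$ the set of algebraic leaves in $\partial^2(G,\calf)$ carried by $H_j$ is closed. A finite union of closed sets being closed, the set of leaves carried by \emph{some} $H_j$ is also closed in $\partial^2(G,\calf)$. By the first step, this closed set contains the generating family of $L^2(T)$, hence contains $L^2(T)$ itself. Every leaf of $L^2(T)$ is therefore carried by some $H_j$, i.e.\ by a vertex stabilizer of $T$, as claimed. The only nontrivial input is the finite Kurosh rank of the $H_j$; granted this, the statement reduces to the discreteness of positive translation lengths in a simplicial tree together with Lemma~\ref{lem_porte_ferme}.
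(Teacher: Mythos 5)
Your proof is correct, and it routes through a different lemma than the paper does. Both begin by noting that once $\eps$ is below the shortest edge length, $L^2_\eps(T)$ is the closure of the set of leaves $(g^{-\infty},g^{+\infty})$ with $g$ elliptic and nonperipheral. The paper then argues directly on the approximating sequence: it picks a Grushko tree $\widehat T$ collapsing onto $T$, observes that the minimal subtrees in $\widehat T$ of the distinct vertex stabilizers of $T$ are pairwise disjoint, and concludes that the elliptic $g_n$ approximating a given leaf must eventually lie in a single vertex stabilizer $G_v$. You instead observe that each elliptic leaf is carried by one of finitely many vertex-stabilizer representatives $H_1,\dots,H_k$, each of finite Kurosh rank, and appeal to Lemma~\ref{lem_porte_ferme} to deduce that the set of leaves carried by some $H_j$ is a finite union of closed sets, hence closed, and so contains $L^2(T)$. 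Your approach is more modular, outsourcing the convergence argument to Lemma~\ref{lem_porte_ferme} rather than redoing it; the paper's is self-contained and pins down slightly more, namely that the whole approximating sequence eventually lives in one vertex group. The underlying mechanism is the same in both cases (cocompactness of the vertex groups' minimal subtrees in a Grushko tree), and your reliance on $T$ having finitely many orbits of edges with positive minimum length, and on its vertex stabilizers having finite Kurosh rank, is justified for very small simplicial $(G,\calf)$-trees.
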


\begin{proof}
Let $(\alpha,\omega)\in L^2(T)$. Let $\eps>0$ be smaller than the length of the shortest edge of $T$. Since $(\alpha,\omega)\in L^2_{\epsilon}(T)$, we can find a sequence $((g_n^{-\infty},g_n^{+\infty}))_{n\in\mathbb{N}}$ that converges to $(\alpha,\omega)$ in $\partial^2(G,\calf)$, with $||g_n||_T\le\epsilon$ for all $n\in\mathbb{N}$. By definition of $\eps$, all elements $g_n$ are contained in a vertex stabilizer of $T$. Let now $\widehat{T}$ be a Grushko tree that collapses onto $T$. Since the minimal subtrees in $\widehat{T}$ of the vertex stabilizers of $T$ are pairwise disjoint, convergence of $(g_n^{-\infty},g_n^{+\infty})$ implies that all $g_n$ are contained in the same vertex stabilizer $G_v$ of $T$. Therefore $(\alpha,\omega)$ is carried by $G_v$.  
\end{proof}

If $T\in\overline{\calo}$ does not have dense orbits, then its Levitt decomposition yields a non-trivial action on a simplicial tree $S$, with a $1$-Lipschitz collapse map $T\ra S$.
 
\begin{cor}\label{carry-gen-l2}
Let $T\in\overline{\calo}$. Then every algebraic leaf in $L^2(T)$ is carried by a vertex group of the Levitt decomposition of $T$.
\end{cor}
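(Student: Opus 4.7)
The plan is to reduce the statement to Lemma \ref{l2-carry}, which treats the simplicial case, by collapsing $T$ onto the Bass--Serre tree of its Levitt decomposition. If $T$ has dense orbits, the Levitt decomposition is trivial: there is a single vertex action with vertex group equal to $G$, so every leaf is tautologically carried by a vertex group and there is nothing to prove. I therefore concentrate on the case where $T$ does not have dense orbits, and let $\pi : T \to S$ be the $G$-equivariant $1$-Lipschitz collapse map onto the Bass--Serre tree $S$ of the Levitt decomposition. Recall that $S\in \overline{\calo}$ is simplicial, and that the vertex stabilizers of $S$ are exactly the vertex groups of the Levitt decomposition of $T$.

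The key step is the inclusion $L^2(T)\subseteq L^2(S)$. Since $\pi$ is $1$-Lipschitz and $G$-equivariant, for every nonperipheral $g\in G$ one has $\|g\|_S \leq \|g\|_T$. Consequently, for every $\epsilon > 0$, the set of pairs $(g^{-\infty},g^{+\infty})$ with $\|g\|_T\leq \epsilon$ is contained in the analogous set for $S$, and taking closures in $\partial^2(G,\calf)$ yields $L^2_\epsilon(T)\subseteq L^2_\epsilon(S)$. Intersecting over $\epsilon > 0$ gives $L^2(T)\subseteq L^2(S)$.

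Now I apply Lemma \ref{l2-carry} to $S$: every leaf of $L^2(S)$ is carried by a vertex stabilizer of $S$, i.e.\ by a vertex group of the Levitt decomposition of $T$. Combining with the previous inclusion, every leaf of $L^2(T)$ is carried by such a vertex group, which is the desired conclusion.

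There is no real obstacle here; the only point requiring a little care is to notice that the laminations $L^2(T)$ and $L^2(S)$ are defined inside the common space $\partial^2(G,\calf)$ (which does not depend on the tree, via the identifications of Lemma \ref{identification}), so that the inclusion of closures makes sense without any need to compare boundaries of $T$ and $S$ directly.
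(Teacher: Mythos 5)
Your proof is correct and follows essentially the same route as the paper: collapse $T$ onto the Bass--Serre tree $S$ of its Levitt decomposition, use the $1$-Lipschitz property to get $L^2(T)\subseteq L^2(S)$, then invoke Lemma~\ref{l2-carry}. The only difference is that you spell out the trivial dense-orbits case and the translation-length argument for the inclusion, both of which the paper leaves implicit.
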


\begin{proof}
Let $S$ be the Levitt decomposition of $T$. Since there is a $1$-Lipschitz collapse map $T\ra S$, we have $L^2(T)\subseteq L^2(S)$. It then follows from Lemma \ref{l2-carry} that all leaves in $L^2(T)$ are contained in a vertex group of $S$. 
\end{proof}

\subsection{One-sided dual laminations}

We now introduce the \emph{one-sided lamination} $L^1(T)$ dual to a tree $T\in\ol\calo$, see \cite[Section 5]{CHL08-2} in the context of free groups. The relation between $L^1(T)$ and $L^2(T)$ will be discussed in Section~\ref{sec-l12} below.

\begin{de}[\textbf{\emph{One-sided dual lamination}}]\label{dfn_l1}
Given $T\in\overline{\calo}$, we define  the \emph{one-sided lamination} $L^1(T)\subseteq \partial_\infty (G,\calf)$  as the set of points $\xi\in \partial_\infty (G,\calf)$ 
such that for every Grushko tree $R$, every $x_0\in R$, and every $G$-equivariant map $f:R\to T$, 
$f([x_0,\xi]_R)$ is a bounded subset of $T$.
\end{de}

Note that $\xi$ is assumed  in the above definition to be at infinity, not in $V_\infty(G,\calf)$. Also note that the condition in the above definition depends neither on the choice of the Grushko tree $R$, nor on the choice of the basepoint $x_0$, nor on the choice of the map $f$ (see Lemma~\ref{bdd-distance}). The following lemma is the analogue of Lemma \ref{l2-carry} for the one-sided lamination $L^1(T)$.

\begin{lemma}\label{carry-subgp}
Let $S\in\overline{\calo}$ be a simplicial tree, and let $\xi\in L^1(S)$. Then $\xi$ 
is carried by  a vertex stabilizer of $S$.
\end{lemma}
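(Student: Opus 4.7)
The plan is to fix a Grushko $(G,\calf)$-tree $R$ and a $G$-equivariant piecewise linear map $f\colon R\to S$; after subdividing $R$ we may assume that $f$ sends each edge of $R$ either to an edge or to a vertex of $S$, and we choose the basepoint $x_0$ to be a vertex of $R$ so that $y_0:=f(x_0)$ is a vertex of $S$. Setting $\rho:=[x_0,\xi]_R$, the hypothesis $\xi\in L^1(S)$ is exactly that $f(\rho)$ is bounded in $S$.

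We would then pick a sequence $h_n\in G$ with $h_n x_0$ lying on $\rho$ and converging to $\xi$ in $R\cup\partial R$. The points $h_n y_0 = f(h_n x_0)$ are then vertices of $S$ in the orbit $G\cdot y_0$ which lie in the bounded set $f(\rho)$. Using that $S$ is simplicial with finitely many $G$-orbits of vertices and edges (as it lies in $\ol\calo$), the orbit $G\cdot y_0$ is discrete in $S$. When $f(\rho)\cap G\cdot y_0$ is finite, a direct pigeonhole gives a subsequence along which $h_n y_0=v^{\ast}$ is constant, so $h_n$ lies in a fixed right coset of $G_{y_0}$. When it is infinite it must accumulate, in the observers' topology, at some infinite-valence vertex $v^{\ast\ast}$ of $S$; a further pigeonhole using the finiteness of the $G_{v^{\ast\ast}}$-orbits of directions at $v^{\ast\ast}$ then produces a subsequence of $h_n$ lying in a fixed right coset of $G_{v^{\ast\ast}}$.

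In either case we obtain a vertex $v$ of $S$ and an element $t\in G$ such that, along a subsequence, $h_n = g_n t$ with $g_n\in G_v$. From $h_n x_0\to\xi$ we then get $g_n(tx_0)\to\xi$ in $R\cup\partial R$. Projecting $tx_0$ to the minimal $G_v$-invariant subtree $R_v\subseteq R$---which is a Grushko $(G_v,\calf_{|G_v})$-tree, using that vertex stabilizers of simplicial trees in $\ol\calo$ have finite Kurosh rank---we see that $t^{-1}\xi$ arises as a limit of $G_v$-translates in $R_v$ and hence lies in $\partial(G_v,\calf_{|G_v})$. Since $\xi\in\partial_\infty(G,\calf)$, this forces $t^{-1}\xi\in\partial_\infty(G_v,\calf_{|G_v})$, proving that $\xi$ is carried by the vertex stabilizer $G_v$ of $S$.

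The main delicate step will be the pigeonhole when $f(\rho)\cap G\cdot y_0$ is infinite: a naive finiteness argument breaks down because of clustering of the $y_0$-orbit near a vertex of infinite valence, and handling this is precisely what forces the secondary pigeonhole through the finitely many $G_{v^{\ast\ast}}$-orbits of directions at such a vertex.
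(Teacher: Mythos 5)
Your high-level strategy matches the paper's: push $\rho=[x_0,\xi]_R$ into $S$ via an equivariant map $f$, use that $f(\rho)$ is bounded, pigeonhole to place the $h_n$ inside a vertex stabilizer $G_v$, and conclude via the minimal $G_v$-subtree $R_v\subseteq R$. But you run the two pigeonholes in the opposite order from the paper, and the resulting ``infinite'' case contains a genuine gap. Having fixed $x_0$ and a sequence $h_n$ with $h_nx_0\in\rho$, you claim that when $f(\rho)\cap G\cdot y_0$ is infinite, a single pigeonhole over the finitely many $G_{v^{**}}$-orbits of directions at an infinite-valence vertex $v^{**}$ produces a subsequence with $h_n\in G_{v^{**}}t$ for a fixed $t$. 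This does not follow: the pigeonhole only gives $g_n\in G_{v^{**}}$ with $g_n^{-1}h_ny_0$ lying in a single direction $d_0$ at $v^{**}$; it does not make $g_n^{-1}h_ny_0$ constant, and $d_0$ may itself contain further infinite-valence vertices, so the pigeonhole must be iterated. Iterating produces $h_n$ as a product $g_n^{(0)}g_n^{(1)}\cdots t_n$ with the factors $g_n^{(j)}$ in \emph{different} vertex stabilizers, which is not a single coset $G_vt$ and does not feed into your projection-to-$R_v$ endgame. The abstract statement you appear to be invoking---that infinitely many points of a vertex orbit in a bounded ball of $S$ must lie in a common coset of a vertex stabilizer---is in fact false: for $G=A*B*C$ with $\calf=\{A,B,C\}$ and $S$ the Bass--Serre tree with trivial-stabilizer vertex $w_0$, the points $a_nb_n\cdot w_0$ (with $a_n\in A\setminus\{1\}$, $b_n\in B\setminus\{1\}$ generic and pairwise distinct) all lie at distance $4$ from $w_0$, yet belong to no common coset $G_vt$. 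What saves the lemma is the ray structure of $\rho$, which your sketch never exploits. (There is also a small gap at the start: the $G$-orbit of the $x_0$ you chose so that $f(x_0)$ is a vertex need not meet $\rho$ in an unbounded set; you must first pick a vertex orbit of $R$ that does.)

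The paper's proof avoids all of this by swapping the two pigeonholes. Arrange $f$ to send edges of $R$ to edges of $S$; since $f(\rho)$ is bounded, a standard tree argument (returning to the liminf of $d(y_0,\cdot)$ along the edge path $f|_\rho$, using that in a tree a path in the complement of an open ball that joins two sphere points must return through a single ``gate'') produces a \emph{single} vertex $v\in S$ with $f^{-1}(v)\cap\rho$ infinite. These preimages are vertices of $R$ and fall into finitely many $G$-orbits; choosing $x_1\in f^{-1}(v)\cap\rho$ so that infinitely many $x_n=h_nx_1$ lie in $f^{-1}(v)\cap\rho$, equivariance gives $h_nf(x_1)=f(x_n)=v=f(x_1)$, i.e.\ $h_n\in G_v$ outright---no dichotomy, no infinite-valence bookkeeping. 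Your final step, projecting $x_1$ to $R_v$ and identifying $\partial_\infty R_v$ with $\partial_\infty(G_v,\calf_{|G_v})$, then applies unchanged with $t=1$.
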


\begin{proof} 
One can find a Grushko $(G,\calf)$-tree $R$ and an equivariant map $f:R\ra S$ sending edge to edge.
Indeed, starting from any Grushko $(G,\calf)$-tree $R$, and from an equivariant map sending vertex to vertex, one can subdivide $R$ 
so that $f$ sends each edge to an edge or a vertex.
If some edge $e$ is collapsed to a vertex, then subdividing $e$ and sending its midpoint to a distinct vertex allows to assume
that $f$ sends edge to edge.

Since $f([x_0,\xi]_R)$ is bounded, there exists a  vertex $v\in S$ such that $f\m(v)\cap [x_0,\xi]_R$ is infinite. 
Since $f\m(v)$ consists only of vertices, and there are only finitely many orbits of them,
there is sequence of points $x_n\in [x_0,\xi]_R\cap f\m(v)$ converging to $\xi$ lying in the same orbit, say $x_n=h_nx_1$.
In particular, denoting by $G_v$ the stabilizer of $v$, we have $h_n\in G_v$, and  $\xi$ is carried by $G_v$.
\end{proof}
 
As in the previous section, we deduce the following statement. 
 
\begin{cor}\label{carry-gen}
Let $T\in\overline{\calo}$, and let $\xi\in L^1(T)$. Then $\xi$ 
is carried by a vertex group of the Levitt decomposition of $T$.
\qed
\end{cor}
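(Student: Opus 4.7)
The plan is to mimic the proof of Corollary~\ref{carry-gen-l2}, reducing the statement to its simplicial analogue (Lemma~\ref{carry-subgp}) via the collapse map to the Levitt decomposition of $T$. If $T$ has dense $G$-orbits, the Levitt decomposition is trivial (a single vertex with vertex group $G$ itself), and there is nothing to prove: every $\xi\in L^1(T)\subseteq\partial_\infty(G,\calf)$ is tautologically carried by $G$. So from now on assume that $T$ does not have dense orbits, and let $S$ be the Bass--Serre tree of its Levitt decomposition, equipped with the $G$-equivariant $1$-Lipschitz collapse map $\pi:T\to S$ described in the paragraph preceding Corollary~\ref{carry-gen-l2}.

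The key step is the inclusion $L^1(T)\subseteq L^1(S)$. Pick a Grushko $(G,\calf)$-tree $R$, a basepoint $x_0\in R$, and a $G$-equivariant map $f:R\to T$; then $\pi\circ f:R\to S$ is again $G$-equivariant. For any $\xi\in L^1(T)$, the set $f([x_0,\xi]_R)$ is bounded in $T$ by definition of $L^1(T)$, and since $\pi$ is $1$-Lipschitz, its image $(\pi\circ f)([x_0,\xi]_R)$ is bounded in $S$. This shows $\xi\in L^1(S)$, as desired.

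Applying Lemma~\ref{carry-subgp} to the simplicial tree $S$, we deduce that $\xi$ is carried by a vertex stabilizer of $S$. Since the vertex stabilizers of the Bass--Serre tree $S$ are precisely the vertex groups of the Levitt decomposition of $T$, this concludes the proof. There is no real obstacle here: the substantive content has already been handled in Lemma~\ref{carry-subgp}, and the role of the collapse map $\pi$ is merely to transfer boundedness of the images of rays from $T$ to $S$.
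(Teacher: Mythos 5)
Your argument is correct and is precisely the one the paper intends (it leaves the proof implicit, signaling "As in the previous section" and closing with \qed): you transfer boundedness through the $1$-Lipschitz collapse map $\pi:T\to S$ to get $L^1(T)\subseteq L^1(S)$, then invoke Lemma~\ref{carry-subgp}, exactly mirroring how Corollary~\ref{carry-gen-l2} is deduced from Lemma~\ref{l2-carry}.
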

 
\subsection{Limit set of a point in $\partial_\infty (G,\calf)$} 

The following notion will be useful for relating the laminations $L^1(T)$ and $L^2(T)$ associated to a tree $T\in\overline{\calo}$.

\begin{de}[\textbf{\emph{Limit set}}] 
Given $\xi\in\partial_\infty (G,\calf)$, the \emph{limit set} of $\xi$
is the algebraic lamination $\Lambda^2(\xi)\subseteq\partial^2 (G,\calf)$ defined as follows:
\\ we have $(\alpha,\omega)\in\Lambda^2(\xi)$ if up to exchanging the roles of $\alpha$ and $\omega$,
there exists a sequence $(g_n)_{n\in\mathbb{N}}\in G^{\mathbb{N}}$  converging to $\alpha$ 
such that $g_n.\xi$ converges to $\omega$ as $n\ra \infty$.  
\end{de}

\begin{rk}
If one fixes a Grushko tree $R$ and a basepoint $x_0$, we have $(\alpha,\omega)\in\Lambda^2(\xi)$ if and only if $[\alpha,\omega]_R$ can be obtained as a limit of translates of the ray $[x_0,\xi]_R$. An easy example is that if $\xi$ is an endpoint of a nonperipheral element $g\in G$, then $\Lambda^2(\xi)$ is equal to the lamination made of all translates of $(g^{-\infty},g^{+\infty})$.  
\end{rk}

\begin{lemma}\label{limit_nonempty}
For all   $\xi\in\partial_{\infty}(G,\calf)$,
the limit set $\Lambda^2(\xi)$ is non-empty.
\end{lemma}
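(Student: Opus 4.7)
The plan is to use cocompactness of the $G$-action on a Grushko tree to translate the ray $[x_0,\xi]_R$ so that a fixed edge is repeatedly crossed, and then extract a limit. First, I would fix a Grushko $(G,\calf)$-tree $R$ and a basepoint $x_0\in R$ with trivial stabilizer (any interior point of an edge works). Since $\xi\in\partial_\infty(G,\calf)=\partial_\infty R$, the ray $[x_0,\xi]_R$ crosses infinitely many edges; as $R$ has only finitely many $G$-orbits of edges, a pigeonhole argument yields a sequence $e_n\subseteq[x_0,\xi]_R$ of edges all in a single orbit, say $e_n=g_n e_1$ with $d_R(x_0,e_n)\to\infty$. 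Setting $h_n:=g_n^{-1}$, each translated ray $[h_nx_0,h_n\xi]_R$ contains the fixed edge $e_1$.

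Next, by compactness of $\widehat R$ in the observers' topology I would extract a subsequence with $h_nx_0\to\alpha$ and $h_n\xi\to\omega$ in $\widehat R$. Since $h_n$ is an isometry, $d_R(h_nx_0,e_1)=d_R(x_0,e_n)\to\infty$; because observer-topology neighborhoods of any finite-valence point contain a small metric neighborhood, $\alpha$ cannot be an ordinary point of $R$, so $\alpha\in\partial R=V_\infty(R)\cup\partial_\infty R$, and the same holds for $\omega$. Since $x_0$ has trivial stabilizer, the convergence $h_nx_0\to\alpha$ is by definition the convergence $h_n\to\alpha$ in $G\cup\partial(G,\calf)$ from Section~\ref{sec-bdy}.

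The core of the argument is to show $\alpha\neq\omega$. Writing $e_1=[p,q]$ and letting $D_p$ and $D_q$ denote the components of $R\setminus\{p\}$ and $R\setminus\{q\}$ lying on opposite sides of $e_1$, the inclusion $e_1\subseteq[h_nx_0,h_n\xi]_R$ places $h_nx_0\in D_p$ and $h_n\xi\in D_q$. Since $\widehat R$ is a tree, $\widehat R\setminus\{p,q\}$ splits into connected components, with $D_p$ and $D_q$ in distinct ones; unwinding the observers'-topology definition shows that the closures $\overline{D_p}$ and $\overline{D_q}$ inside $\widehat R$ are disjoint (a limit of a sequence in $D_q$, seen by the observer at $q$, must lie in the component of $\widehat R\setminus\{q\}$ containing $D_q$, which excludes $p$, the interior of $e_1$, and any point of $\overline{D_p}$). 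Hence $\alpha\in\overline{D_p}$ and $\omega\in\overline{D_q}$ are distinct, and $(\alpha,\omega)\in\Lambda^2(\xi)$. The main obstacle throughout is that $R$ fails to be locally compact at vertices of infinite valence, so every convergence assertion—in particular the disjointness of $\overline{D_p}$ and $\overline{D_q}$, and the passage $\alpha\in\partial R$—must be justified through the component-based definition of the observers' topology rather than by metric intuition.
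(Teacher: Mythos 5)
Your proof is correct and follows essentially the same strategy as the paper's: translate a repeating edge of the ray $[x_0,\xi]_R$ back to a fixed edge, extract a convergent subsequence of $(g_n^{-1}x_0, g_n^{-1}\xi)$ in the compact space $\partial R \times \partial R$, and observe that the two limits are separated by the fixed edge, hence distinct. You spell out the details of why the limits lie in $\partial R$ rather than in $R$ and why the closures of the two half-trees are disjoint in the observers' topology, both of which the paper leaves implicit, but the argument is the same.
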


\begin{proof}
   Let $[x_0,\xi]_R$ be a ray in a Grushko  $(G,\calf)$-tree $R$ based at
  a point $x_0$ with trivial stabilizer, and let $e_n$ be a sequence of edges
  going to infinity in this ray, and oriented towards $\xi$.  Up to
  passing to a subsequence, we can assume that $e_n=g_n e$ for some
  fixed edge $e$ of $R$. Up to passing to a further subsequence,
  $g_n\m x_0$ converges to some $\alpha\in \partial R$, and $g_n\m \xi$
  to some $\omega\in \partial R$, and $\alpha\neq \omega$ because they
  are separated by the edge $e$.
\end{proof}

\begin{lemma}\label{lem_l1generated}
Let $(\omega,\omega')\in\partial^2(G,\calf)$, with $\omega\in \partial_\infty(G,\calf)$.
\\ Then $\Lambda^2(\omega)$ is contained in the lamination generated by $(\omega,\omega')$.
\end{lemma}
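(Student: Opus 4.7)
The plan is to realize any $(\alpha,\omega'')\in\Lambda^2(\omega)$ as a limit in $\partial^2(G,\calf)$ of a sequence of $G$-translates of the flip $(\omega',\omega)$, which will immediately place it in the lamination generated by $(\omega,\omega')$. By the definition of $\Lambda^2(\omega)$, after possibly exchanging $\alpha$ and $\omega''$, there is a sequence $(g_n)_{n\in\bbN}$ in $G$ with $g_n\to\alpha$ in $G\cup\partial(G,\calf)$ and $g_n\omega\to\omega''$ in $\partial(G,\calf)$. I would take $(g_n\omega',g_n\omega)$ as the approximating sequence; its second coordinate already converges to $\omega''$ by hypothesis.

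To verify convergence of the pair, I would apply Lemma~\ref{lem_approximating}. Fix a Grushko tree $R$ and choose $x_0$ in the interior of an edge of the line $[\omega',\omega]_R$, so that $x_0$ has trivial stabilizer. Since the topology on $G\cup\partial(G,\calf)$ does not depend on the chosen basepoint with trivial stabilizer, we have $g_nx_0\to\alpha$ in the observers' topology on $\widehat R$. The points $a_n:=g_nx_0$ sit on $[g_n\omega',g_n\omega]_R$ by equivariance, and converge to $\alpha$, giving the first input sequence of the lemma. It remains to produce a sequence $b_n\in R$ on the ray $[a_n,g_n\omega)_R\subseteq[g_n\omega',g_n\omega]_R$ converging to $\omega''$.

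The construction of $b_n$ is the main technical step, and the hypothesis $\omega\in\partial_\infty(G,\calf)$ enters precisely here: it guarantees that the rays $[a_n,g_n\omega)_R$ are genuinely infinite, so that points can be placed arbitrarily far along them. I would exhaust $R\setminus\{\omega''\}$ by an increasing sequence $(K_m)$ of finite subtrees not containing $\omega''$ and, using $g_n\omega\to\omega''$ in the observers' topology, find $m(n)\to\infty$ such that $g_n\omega$ lies in the $\omega''$-component of $\widehat R\setminus K_{m(n)}$. A tail of the ray $[a_n,g_n\omega)_R$ near $g_n\omega$ then lies in the same component, and I would pick $b_n\in R\setminus\{\omega''\}$ on the ray inside this component (possibly moving it slightly if $\omega''\in V_\infty$). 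A diagonal argument then yields $b_n\to\omega''$ for the observers' topology, since any basic neighborhood of $\omega''$ is the $\omega''$-component of $\widehat R\setminus K$ for some finite $K\subseteq K_{m_0}$.

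Once $b_n$ is in place, Lemma~\ref{lem_approximating} applied to $[a_n,b_n]_R\subseteq[g_n\omega',g_n\omega]_R$ gives $(g_n\omega',g_n\omega)\to(\alpha,\omega'')$ in $\partial^2(G,\calf)$. Since each $(g_n\omega',g_n\omega)=g_n\cdot(\omega',\omega)$ belongs to the $G$-orbit of the flip of $(\omega,\omega')$, and the lamination generated by $(\omega,\omega')$ is closed and flip-invariant, the limit $(\alpha,\omega'')$ lies in it. The main obstacle I expect is the observers'-topology construction of $b_n\to\omega''$ in the delicate case $\omega''\in V_\infty(G,\calf)$, where convergence is not metric; the case $\omega''\in\partial_\infty(G,\calf)$ reduces to a bounded-distance argument inside a Gromov hyperbolic tree and is essentially automatic.
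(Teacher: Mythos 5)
Your argument is correct and follows essentially the same route as the paper: both show that the $G$-translates $g_n(\omega',\omega)$ converge to the given leaf of $\Lambda^2(\omega)$ by producing sub-segments of $g_n[\omega',\omega]_R$ whose endpoints converge appropriately, and then invoking Lemma~\ref{lem_approximating}. The paper packages this via the intervals $I_n=\Tilde I_n\cup e_n\cup e'_n$ (adding terminal edges when $\alpha$ or $\omega''$ lie in $V_\infty$), whereas you construct $a_n=g_nx_0$ and $b_n$ explicitly; this is a presentational difference, not a different proof.
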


\begin{proof}
  Let $(\alpha,\beta)$ be a leaf in $\Lambda^2(\omega)$.
Let $R$ be a  Grushko tree, and choose a base point $x_0$ in $[\omega,\omega']_R$.
By definition of $\Lambda^2(\omega)$ there exists $g_n\in G$ such that
$g_n.x_0\ra \alpha$ and $g_n.\omega \ra \beta$.

We claim that $g_n\omega'\ra \alpha$.
Let $\Tilde I_n=g_n.[x_0,\omega]_R\cap [\alpha,\beta]_R$.
If $\alpha$ (resp. $\beta$) lies in $V_\infty(G,\calf)$, then for $n$ large enough, $g_n.[x_0,\omega]_R$ contains $\alpha$ (resp. $\beta$)
together with an edge $e_n$ (resp. $e'_n$) incident on $\alpha$ (resp. $\beta$), and not contained in $[\alpha,\beta]_R$. If $\alpha$ (resp. $\beta$) lies in $\partial_\infty(G,\calf)$, then we take the convention that $e_n$ (resp. $e'_n$) is empty. We then let  $I_n:=\Tilde I_n\cup e_n\cup e'_n$.
As $g_n.[x_0,\omega]_R$ contains $\tilde{I}_n$ and the edges $e_n$ and $e'_n$, we have $I_n\subseteq g_n.[x_0,\omega]_R$. Since $[x_0,\omega]_R\subseteq [\omega',\omega]_R$, we deduce that $g_n.[\omega,\omega']_R$ contains $I_n$, so $g_n(\omega',\omega)$ converges to $(\alpha,\beta)$.
\end{proof}

\begin{lemma}\label{carry-carry}
Let $\xi\in\partial_{\infty}(G,\calf)$, and let $A\subseteq G$ be a subgroup of finite Kurosh rank. 
\\ If $\xi$ is carried by $A$, then $\Lambda^2(\xi)$ is carried by $A$. 
\end{lemma}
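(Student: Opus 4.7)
The plan is to reduce to the case where $\xi$ itself lies in $\partial_{\infty}(A,\calf_{|A})$ and then argue as in Lemma \ref{lem_porte_ferme}, using the finite Kurosh rank of $A$ to promote convergence of translates of a ray based at $x_0\in R_A$ to a statement about the limit leaves sitting inside $\partial^2(A,\calf_{|A})$ after translation.

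First, I would observe that the limit set is invariant under the $G$-action on $\xi$, i.e., $\Lambda^2(g\xi)=\Lambda^2(\xi)$ for any $g\in G$. Indeed, if $(\alpha,\omega)\in\Lambda^2(\xi)$ is witnessed by $g_n\to\alpha$ with $g_n\xi\to\omega$, then setting $g_n':=g_ng\m$ one has $g_n'\to\alpha$ (evaluated at any base point with trivial stabilizer) and $g_n'(g\xi)=g_n\xi\to\omega$, so $(\alpha,\omega)\in\Lambda^2(g\xi)$. Consequently, replacing $\xi$ by a suitable translate, I may assume without loss of generality that $\xi\in\partial_{\infty}(A,\calf_{|A})$.

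Now fix a Grushko $(G,\calf)$-tree $R$ and let $R_A\subseteq R$ be the minimal $A$-invariant subtree, which is a Grushko $(A,\calf_{|A})$-tree. Choose a base point $x_0\in R_A$ with trivial stabilizer. Since $\xi\in\partial_{\infty}(A,\calf_{|A})$, the ray $[x_0,\xi]_R$ is contained in $R_A$. Let $(\alpha,\omega)\in\Lambda^2(\xi)$ and pick $g_n\in G$ with $g_nx_0\to\alpha$ and $g_n\xi\to\omega$. Fix any edge $e\subseteq[\alpha,\omega]_R$. Using Lemma~\ref{lem_approximating} (or directly the observers' topology), for all $n$ large enough one has $e\subseteq g_n\cdot[x_0,\xi]_R\subseteq g_nR_A$, so $g_n\m e\subseteq R_A$. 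Since $A$ has finite Kurosh rank, $R_A/A$ has only finitely many orbits of edges. Up to extracting a subsequence, there is thus a fixed edge $e_0\subseteq R_A$ and elements $a_n\in A$ with $a_ng_n\m e=e_0$. Because $R$ is Grushko, edge stabilizers are trivial, so $h:=a_ng_n\m$ is independent of $n$.

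It remains to check that $h\cdot(\alpha,\omega)\in\partial^2(A,\calf_{|A})$. On the one hand, $hg_nx_0=a_nx_0\in R_A$ and $hg_nx_0\to h\alpha$ by continuity of $h$ on $\widehat R$ (Lemma~\ref{identification}), so $h\alpha$ is a limit in the observers' topology of points of $R_A$; since $R_A$ is an $A$-invariant subtree of $R$, such a limit lies in $R_A\cup\partial R_A$, and as $h\alpha\in\partial R$, one has $h\alpha\in\partial(A,\calf_{|A})$. On the other hand, $hg_n\xi=a_n\xi\in\partial_{\infty}(A,\calf_{|A})$ converges to $h\omega$, and $\partial(A,\calf_{|A})\subseteq\partial R$ is closed in the observers' topology, so $h\omega\in\partial(A,\calf_{|A})$. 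Therefore $h\cdot(\alpha,\omega)\in\partial^2(A,\calf_{|A})$, proving that $(\alpha,\omega)$ is carried by $A$.

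The main delicate point is the closure statement used in the last paragraph: one needs that $\partial(A,\calf_{|A})$ sits as a closed subset of $\partial(G,\calf)$ under the natural inclusion induced by $R_A\subseteq R$. This is where some care with the observers' topology is required, but it ultimately follows from the fact that any observer $a\in R\setminus R_A$ produces a direction that fully contains $R_A$, so limits of sequences in $R_A$ cannot escape $R_A\cup\partial R_A$.
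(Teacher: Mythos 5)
Your proof is correct, but it takes a genuinely more hands-on route than the paper's. The paper dispatches the lemma in three lines: since $\xi$ is carried by $A$, pick $\alpha$ with $(\alpha,\xi)$ carried by $A$; then $\Lambda^2(\xi)$ lies in the lamination generated by $(\alpha,\xi)$ by Lemma~\ref{lem_l1generated}, and that lamination is carried by $A$ because the set of leaves carried by $A$ is closed (Lemma~\ref{lem_porte_ferme}). You instead give a self-contained argument: normalize to $\xi\in\partial_\infty(A,\calf_{|A})$ via the $G$-equivariance identity $\Lambda^2(g\xi)=\Lambda^2(\xi)$ (a clean reduction the paper does not need), then redo the edge-tracking/cocompactness step from the proof of Lemma~\ref{lem_porte_ferme} directly with the ray $[x_0,\xi]_R\subseteq R_A$ rather than with a limiting line. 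Both proofs are correct; the paper's is shorter precisely because it reuses two lemmas already proved, while yours has the merit of making explicit where finite Kurosh rank and trivial edge stabilizers actually enter.

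One remark on the closedness point you flag at the end: your sketch only shows that $h\alpha$ lies in $\overline{R_A}\cap\partial R$. This set is a priori larger than $\partial(A,\calf_{|A})=\partial_\infty R_A\cup V_\infty(R_A)$ viewed inside $\partial R$, because it could contain a vertex $v\in R_A\cap V_\infty(R)$ at which $A\cap G_v$ is finite (so $v\notin V_\infty(R_A)$). To rule this out you need an extra observation: in your proof the approximating points $a_nx_0$ lie in the discrete orbit $Ax_0\subseteq R_A$, so if the limit $v$ had finite valence in $R_A$, the points would have to converge metrically (they lie in only finitely many directions of $R$ at $v$), which is impossible for a nonstationary sequence in a discrete orbit. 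This same closedness is used implicitly by the paper at the end of the proof of Lemma~\ref{lem_porte_ferme}, so you are not held to a higher standard — but the sketch as written would benefit from this one additional sentence.
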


\begin{proof}
Let $\alpha\in \partial_{\infty}(G,\calf)$ be such that $(\alpha,\xi)$ is carried by $A$.
By Lemma \ref{lem_l1generated}, $\Lambda^2(\xi)$ is contained in the lamination $L$ generated by $(\alpha,\xi)$.
By Lemma \ref{lem_porte_ferme}, every leaf of $L$ is carried by $A$, hence so is $\Lambda^2(\xi)$.
\end{proof} 

The following lemma gives a converse to Lemma \ref{carry-carry} in the case where $A\subseteq G$ is a non-peripheral cyclic subgroup.

\begin{lemma}\label{carry}
Let $A\subseteq G$ be a non-peripheral cyclic subgroup, 
and let $\xi\in\partial_\infty (G,\calf)$ be a point that is not carried by $A$. Then $\Lambda^2(\xi)$ contains a leaf that is not carried by $A$. 
\end{lemma}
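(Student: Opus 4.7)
The plan is to work in a Grushko $(G,\calf)$-tree $R$ with a basepoint $x_0$ of trivial stabilizer and analyze the geodesic ray $\rho:=[x_0,\xi]_R$ relative to the $G$-translates of the axis $A_g\subseteq R$ of $g$. Since $A=\grp{g}$ is cyclic and non-peripheral, all powers of $g$ are hyperbolic in $R$, so $\calf_{|A}=\es$ and $\partial^2(A,\calf_{|A})=\{(g^{-\infty},g^{+\infty}),(g^{+\infty},g^{-\infty})\}$; hence a leaf is carried by $A$ if and only if its realization in $R$ is a $G$-translate of $A_g$. The hypothesis that $\xi$ is not carried by $A$ amounts to $\rho$ not being eventually contained in any translate $hA_g$, for otherwise $\xi$ would be an endpoint of $hA_g$ at infinity and hence a translate of $g^{\pm\infty}$. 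A basic finiteness I will use repeatedly is that only finitely many $G$-translates of $A_g$ can pass through a given edge or vertex of $R$, since $\grp{g}$ stabilizes $A_g$ and $A_g/\grp{g}$ is finite.

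I would first dispatch the easy case, in which $\rho$ contains infinitely many edges $e_n$ not lying in any $G$-translate of $A_g$. Using the finite number of edge orbits in $R$, pass to a subsequence so that $e_n=g_ne$ for a fixed edge $e$; then extract further subsequences so that $g_n\m x_0$ and $g_n\m\xi$ converge to distinct points $\alpha,\omega\in\partial R$, separated by $e$, exactly as in the proof of Lemma~\ref{limit_nonempty}. The resulting leaf $(\alpha,\omega)\in\Lambda^2(\xi)$ cannot be carried by $A$: if $[\alpha,\omega]_R=hA_g$ for some $h$, then $e\subseteq hA_g$ would yield $e_n=g_ne\subseteq g_nhA_g$, contradicting the choice of $e_n$.

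The harder case is when every sufficiently far edge of $\rho$ lies in some translate of $A_g$, yet $\rho$ never stays in one translate forever. Then $\rho$ has infinitely many \emph{transition vertices} $v_n$, with the edge $e_n^-$ preceding $v_n$ in a translate $L_n^-$ and the edge $e_n^+$ following $v_n$ in a translate $L_n^+$, chosen so that no single translate contains both $e_n^-$ and $e_n^+$. Using cocompactness together with the finiteness of translates of $A_g$ through a vertex, pass to a subsequence so that $g_n\m v_n=v$, $g_n\m e_n^\pm=e^\pm$, and $g_n\m L_n^\pm=L^\pm$ are all constant, with $L^-\neq L^+$, $e^-\not\subseteq L^+$, and $e^+\not\subseteq L^-$. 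The limit leaf $(\alpha,\omega)\in\Lambda^2(\xi)$ satisfies $e^-\cup\{v\}\cup e^+\subseteq[\alpha,\omega]_R$, so if it were carried by $A$, the line $[\alpha,\omega]_R=kA_g$ would be a third translate of $A_g$ through $v$ distinct from $L^\pm$. In the subcase where, along some subsequence, the distance $d(v_{n-1},v_n)$ tends to infinity, the normalized ray from $v$ in direction $e^-$ fills $L^-$ to infinity, so a direct observers' topology computation forces $\alpha$ to equal the endpoint of $L^-$ at infinity in direction $e^-$; but $\alpha$ is also an endpoint of $kA_g$, so the two distinct axes $kA_g$ and $L^-$ of conjugates of $g$ would share an endpoint at infinity. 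This is impossible in a Grushko tree, because edge stabilizers are trivial and hence the stabilizer of an end is cyclic, forcing any two hyperbolic elements fixing a common end to share their axis.

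The main obstacle I foresee is the remaining subcase in which the distances $d(v_{n-1},v_n)$ and $d(v_n,v_{n+1})$ remain uniformly bounded, where the endpoint-sharing argument no longer localizes $\alpha$ onto $\partial_\infty L^-$. To handle this, I plan to simultaneously normalize a long portion of the zigzag by a diagonal extraction, fixing consecutive transition vertices $\ldots,u^{--},u^-,v,u^+,u^{++},\ldots$ and the translates $\ldots,L^{(-2)},L^{(u^-)},L^-,L^+,L^{(u^+)},L^{(+2)},\ldots$ joining them. The hypothetical axis $kA_g$ would then share bounded segments with each of these translates and would be translated along itself by $kgk\m$ by the fixed amount $||g||_R$. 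Combining the finite number of translates of $A_g$ through any vertex with this periodicity should produce a repetition in the zigzag data that ultimately forces $kA_g$ to coincide with one of the $L^{(\cdot)}$, contradicting the transition hypothesis. Carrying out this combinatorial periodicity argument cleanly is where I expect the bulk of the technical work to lie.
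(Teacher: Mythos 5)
Your ``easy case'' is correct and tracks the paper's argument: if infinitely many edges of $\rho=[x_0,\xi]_R$ lie outside every translate of the axis $L$, then normalizing one such edge and taking a limit produces a leaf whose realization contains a fixed edge not in any translate, and that leaf cannot be carried by $A$ (you should also note that if one endpoint of the limit leaf lands in $V_\infty(G,\calf)$ you are done immediately, since $\partial(A,\calf_{|A})$ has only the two points $a^{\pm\infty}$). The gap is in the ``harder case.'' You assert that if every far edge of $\rho$ lies in some translate of $L$ while $\rho$ is not eventually trapped in a single translate, then there are infinitely many \emph{transition vertices} $v_n$ whose incoming and outgoing edges are contained in no common translate. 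That assertion is not justified and is in general false. Two distinct translates $L_1\neq L_2$ of $L$ can share a segment of any length up to $l:=\|a\|_R$, so the ray may transition from $L_1$ to $L_2$ gradually over several edges, with every consecutive pair of edges covered by some translate, and yet never settle into one: take a sequence $L_1,L_2,\dots$ of pairwise distinct translates with $|L_i\cap L_{i+1}|$ positive but $\leq l$, and a ray that travels along each in turn. In this scenario your transition vertices do not exist, and the remainder of your argument (the normalization-and-periodicity discussion, which you already flag as the hard part) does not get off the ground.

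The missing observation, which the paper exploits, is the calibration of scale to the translation length of $a$: if $|gL\cap L|>l$ then one of $gag^{-1}a^{\pm1}$ fixes a nondegenerate arc of $R$, forcing $gag^{-1}=a^{\pm1}$ and hence $gL=L$, since $R$ has trivial arc stabilizers. Replacing your edge-by-edge analysis with a tiling of $\rho$ by intervals $I_i$ of length $l+2$ overlapping in length $\geq l+1$, this observation immediately shows that if all $I_i$ were eventually contained in translates $g_iL$, then $g_iL=g_{i+1}L$ and $\rho$ would be eventually trapped in one translate, contradicting the hypothesis on $\xi$. Thus infinitely many of the intervals $I_i$ lie outside every translate of $L$, and your ``easy case'' mechanism applies verbatim with $I_i$ in place of a single edge; the hard case never arises. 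Working at the scale of single edges is strictly too fine to detect the translates of $L$, which have period $l$, and that is exactly what produces the unresolvable wandering in your second case.

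As a secondary remark, even in the (non-generic) situation where transition vertices do exist, your subsequent subcase analysis is unnecessary: after normalizing so that $g_n^{-1}e_n^\pm=e^\pm$ are constant, the full property ``no translate of $A_g$ contains both $e^-$ and $e^+$'' is inherited, so if the limit leaf satisfied $[\alpha,\omega]_R=kA_g$ one would already have a contradiction (since $e^-\cup e^+\subseteq[\alpha,\omega]_R$). You only recorded the weaker statement that $L^\pm$ do not each contain both edges, which is why you felt you had to exclude a ``third translate.''
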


\begin{proof}
Fix a Grushko $(G,\calf)$-tree $R$ whose edges have length $1$.
Let $a$ be a generator of $A$, let
$L$ be the axis of $a$ in $R$, and let $l$ be its translation length.

We claim that if $|gL\cap L|>l$ for some $g\in G$, then $gL=L$. Indeed, if $|gL\cap L|>l$, then one of the elements $gag^{-1}a$ or $gag^{-1}a^{-1}$ (depending on the relative orientations of the axes of $a$ and $gag^{-1}$) fixes a nondegenerate arc in $gL\cap L$. Since $R$ has trivial arc stabilizers, we deduce that $gag\m=a^{\pm 1}$, so $gL=L$. 

Fix $x_0\in R$, and write $[x_0,\xi]_R$ as a union of intervals $I_i$ of length $l+2$, which are unions of edges, and such that 
$|I_i\cap I_{i+1}|\geq l+1$ for all $i\in\mathbb{N}$.
We claim that for infinitely many $i$,  the interval $I_i$ is not contained in any translate of $L$.
Indeed, otherwise, there exists $i_0$ such that  for all $i\geq i_0$, there exists $g_i\in G$ such that $I_i$ is contained in $g_iL$.
Since $|I_i\cap I_{i+1}|>l$, we get that $g_i L=g_{i+1}L$. Thus,
 a subray of $[x_0,\xi]_{R}$ is contained in $g_{i_0}L$, contradicting that $\xi$ is not carried by $A$. 

So consider $i_k\ra \infty$ such that for all $k$, the interval $I_{i_k}$ is not contained in any translate of $L$. By cocompactness, and up to passing to a subsequence, there exists $h_k\in G$ such that  $h_k I_{i_k}$ 
contains a fixed edge $e$.
Up to passing to a further subsequence, we can  thus assume that $h_k(x_0,\xi)$ converges to $(\alpha,\omega)\in \Lambda^2(\xi)$.
If $\alpha$ or $\omega$ lies in $V_\infty(G,\calf)$, then 
$(\alpha,\omega)$ is clearly not carried by the cyclic group $A$.
Otherwise, up  to passing to a further subsequence, we can assume that the segment $J=h_k I_{i_k}$ does not depend on $k$.
Since $J\subseteq [\alpha,\omega]_R$ is not contained in any translate of $L$, we deduce that $(\alpha,\omega)$ is not carried by $A$.
\end{proof}

\subsection{Consequences of bounded backtracking}

Given two $G$-trees $T,T'\in\overline{\mathcal{O}}$ and a $G$-equivariant map $f:T\to T'$, 
one says that $C\geq 0$ is a \emph{bounded backtracking (BBT) constant} for $f$, or that $f$ is a $C$-BBT map,
if for any segment $[x,y]\subseteq T$ and any $z\in [x,y]$, one has $d_{T'}(f(z),[f(x),f(y)])\le C$.
We define $BBT(f)$ as the minimal BBT constant of $f$.

\begin{lemma}\label{lem_Q}
Let $R$ be a Grushko $(G,\calf)$-tree, let $T\in \ol\calo$, and let $f:R\ra T$ be a $C$-BBT map.
\begin{enumerate}
\item[1.] For every $g\in G$ elliptic in $T$, we have $\text{diam}(Fix_T(g))\le 2C$.
\item[2.] For all $a\in R$ and all $\omega\in \partial R$, if $f([a,\omega]_R)$ is bounded,
then there is a subray $[b,\omega]_R \subseteq [a,\omega]_R$ such that $f([b,\omega]_R)$ is contained in a ball of radius $3C$.
\item[3.] For all $(\alpha,\omega)\in L^2(T)$, $f([\alpha,\omega]_R)$ has diameter at most $20C$.
\end{enumerate}
\end{lemma}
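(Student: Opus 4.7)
The plan is to show that for every $y\in\mathrm{Fix}_T(g)$ there is a single point $p^\ast\in R$, depending only on $g$, with $d_T(f(p^\ast),y)\le C$; the triangle inequality then yields $\diam(\mathrm{Fix}_T(g))\le 2C$. By minimality of the $G$-action, $f(R)$ is dense in $T$, so we approximate $y$ by $f(p_n)$ for some $p_n\in R$; equivariance forces $f(gp_n)=gf(p_n)\to gy=y$, and BBT applied to $[p_n,gp_n]_R$ confines $f$ of the entire segment to $B(y,C+o(1))$. Since arc stabilizers in a Grushko tree are trivial, for $g\neq 1$ either $g$ is peripheral with a unique fixed vertex $v$ (and $g$ fixes no direction at $v$, as that would stabilize an adjacent edge), in which case the midpoint of $[p_n,gp_n]_R$ equals $v$ and we set $p^\ast=v$; or $g$ is non-peripheral, hence hyperbolic in $R$ with axis $A_g$, and the midpoint lies on $A_g$. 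In the hyperbolic case, cocompactness of $A_g/\langle g\rangle$ lets us translate by an appropriate power of $g$ and extract a convergent subsequence, producing $p^\ast\in A_g$ that works uniformly in $y$.

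\textbf{Part 2.} When $\omega\in V_\infty(R)$ the ray is finite and the piecewise linearity of $f$ makes $f([b,\omega])$ arbitrarily small as $b\to\omega$. When $\omega\in\partial_\infty R$, parametrize the ray by arc length as $p_t$, set $\phi(t)=f(p_t)$, and aim to prove $\Psi_\infty:=\lim_{t\to\infty}\diam(\phi([t,\infty)))\le 3C$; choosing $b=p_{T_0}$ for $T_0$ large then yields $f([b,\omega])\subseteq B(z,3C)$. Suppose for contradiction $\Psi_\infty>L$ for some $L>3C$; interleaving gives times $s_1<s'_1<s_2<s'_2<\cdots\to\infty$ with $d_T(\phi(s_n),\phi(s'_n))>L$. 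Two successive BBT/tripod estimates---first on $(\phi(s_n),\phi(s'_n),\phi(s_{n+1}))$, which yields $d_T(\phi(s_n),\phi(s_{n+1}))\ge L-C$, then on $(\phi(s_1),\phi(s_{n-1}),\phi(s_n))$, which yields $d_T(\phi(s_1),\phi(s_n))\ge d_T(\phi(s_1),\phi(s_{n-1}))+(L-3C)$---force $d_T(\phi(s_1),\phi(s_n))$ to grow linearly in $n$, contradicting boundedness of the image.

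\textbf{Part 3.} Using the definition of $L^2(T)$, write $(\alpha,\omega)=\lim(g_n^{-\infty},g_n^{+\infty})$ with $g_n$ non-peripheral and $||g_n||_T\to 0$; for $n$ large, any interior point $m\in[\alpha,\omega]_R$ lies on the axis $A_n$ of $g_n$ in $R$. The plan has three steps: (i) show $\alpha,\omega\in L^1(T)\cup V_\infty(G,\calf)$, so that each half-ray $[m,\alpha]_R$ and $[m,\omega]_R$ has bounded $f$-image---this comes from confining $f(A_n)$ to a controlled neighborhood of the characteristic set of $g_n$ in $T$ via a small-displacement variant of the Part 1 argument; (ii) apply Part 2 to each half-ray to obtain $b,b'\in[\alpha,\omega]_R$ with $f([b,\omega])\subseteq B(z_1,3C)$ and $f([\alpha,b'])\subseteq B(z_2,3C)$; (iii) bound $\diam(f([b',b]))$ using BBT on $[b',b]\subseteq A_n$ combined with the small-displacement behavior of $g_n$ on the axis. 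Assembling the three pieces via the triangle inequality then yields the $20C$ bound.

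The principal obstacle is step (iii) of Part 3: turning the qualitative $||g_n||_T\to 0$ into a quantitative bound on $\diam(f([b',b]))$ that is independent of the length of $[b',b]$. One controls a single fundamental domain of $\langle g_n\rangle$ on $A_n$ by a Part 1-style argument, then uses the tree structure of $T$ together with BBT to prevent contributions from successive fundamental domains from accumulating; the specific constant $20C$ emerges from this accounting combined with the two applications of Part 2 in step (ii).
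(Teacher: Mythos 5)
Your Part 2 is correct and takes a genuinely different route: the paper chooses $b$ to nearly realize $\sup_{x\in[a,\omega]_R}d_T(f(a),f(x))$ and applies BBT once, whereas you run an interleaving/contradiction argument to bound the tail diameter by $3C$. Both work and yield the stated constant.

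Part 1 has a gap. Your plan requires a single $p^\ast$ with $d_T(f(p^\ast),y)\le C$ for \emph{every} $y\in Fix_T(g)$, but the point you produce is a subsequential limit of midpoints $m_n$ of $[p_n,gp_n]_R$, where $p_n$ is chosen so that $f(p_n)\to y$; the sequence $p_n$, the midpoints, and the extracted subsequence all depend on $y$, and the claim that the resulting limit "works uniformly in $y$" is not justified. If one tries to repair this by translating $p^\ast_y$ and $p^\ast_{y'}$ into a common fundamental domain and using BBT along $[g^{-1}p^\ast_y,gp^\ast_y]$ (whose $f$-image lies within $2C$ of $y$, both endpoints mapping within $C$ of $y$), one only obtains $d_T(y,y')\le 3C$, not the claimed $2C$. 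The paper instead works directly with two fixed points $u,v$ and preimages $\tilde u,\tilde v$: since $f(g^k\tilde u)=g^ku=u$, BBT gives $f([\tilde u,g^k\tilde u])\subseteq B(u,C)$, and choosing $k$ so that $[\tilde u,g^k\tilde u]$ meets $[\tilde v,g\tilde v]$ gives a common image point, whence $d_T(u,v)\le 2C$.

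Part 3 has a gap at precisely the step you yourself flag as the "principal obstacle." The mechanism you propose---"prevent contributions from successive fundamental domains from accumulating"---does not actually work: if $I=[b',b]$ spanned many fundamental domains of $g_n$ on $\Axis_R(g_n)$, the $f$-image could drift by roughly (number of domains)$\times||g_n||_T$, which is uncontrolled. The missing idea is that for $n$ large, $I$ lies in a \emph{single} fundamental domain: since $||g_n||_T\to 0$ the hyperbolic $g_n$ are eventually pairwise distinct, and only finitely many elements of $G$ send an edge of the fixed segment $I$ into $I$; so eventually $g_nI\cap I$ contains no edge and $I\subseteq[a,g_na]$. Then, using that $f(\Axis_R(g_n))$ lies in the $C$-neighbourhood of $\Axis_T(g_n)$, one gets $d_T(f(a),g_nf(a))\le||g_n||_T+2C$ and $\diam(f(I))\le||g_n||_T+4C\to 4C$; the elliptic case is covered directly by Part 1. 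Your step (i), attempting to show $\alpha,\omega\in L^1(T)\cup V_\infty(G,\calf)$ by confining $f(\Axis_R(g_n))$ near the characteristic set, is also problematic as a justification of boundedness: when $g_n$ is hyperbolic in $T$ the characteristic set is an unbounded line, so this confinement does not bound the half-ray images by itself.
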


\begin{proof} 1. Let $u,v\in T$ be fixed by $g$. 
We can assume that $g$ is non-peripheral since otherwise it has a unique fixed point in $T$.
Let $\Tilde u,\Tilde v\in R$ be preimages of $u$ and $v$. 
For any $k\in \bbZ$, we have $f(g^k\Tilde u)=f(\Tilde u)$, and the definition of the BBT then implies that $\diam(f([\Tilde u,g^k\Tilde u]))\leq C$.
Now choose $k\in \bbZ$ such that $I=[\Tilde u, g^k \Tilde u]$ intersects  $J=[\Tilde v, g \Tilde v]$.
Then $f(I)\cap f(J)\neq \es$ and since both have diameter at most $C$, it follows that $d_T(u,v)\leq 2C$.

2. 
Let $\eta:=\sup_{x\in [a,\omega]_R}d_T(f(a),f(x))$, and let $b\in [a,\omega]_R$ be a point that satisfies $d_T(f(a),f(b))\ge \eta-C$. 
Fix $y\in [b,\omega]_R$, and denote by $A,B,Y\in T$ the $f$-images of $a,b,y$. 
Let $P$ be the projection of $B$ on $[A,Y]$, so $PB\leq C$ because $f$ is $C$-BBT.
Then $BY=2PB+AY-AB\leq 2C+\eta-(\eta-C)\leq 3C$. 

3.  By Assertion 2, there is a segment $I=[a,b]\subseteq [\alpha,\omega]_R$ such that the sets $X=f([a,\alpha]_R)$ and $Y=f([b,\omega]_R)$ both have diameter at most $6C$. 
By definition of $L^2(T)$, there exists a sequence of nonperipheral elements $g_i\in G$ such that $(g_i^{-\infty},g_i^{+\infty})$ converges to $(\alpha,\omega)$, and $||g_i||_T\ra 0$.  
For $i$ large enough, we have $I\subseteq \Axis_R(g_i)$. Since $BBT(f)\leq C$, the image $f(\Axis_R(g_i))$ is contained in the $C$-neighbourhood of the characteristic set of $g_i$ in $T$. Since $f(\Axis_R(g_i))$ intersects $X$ and $Y$, we are done if $g_i$ is elliptic in $T$ by Assertion 1.
We can therefore assume that $g_i$ is hyperbolic in $T$, 
and since $||g_i||_T$ tends to $0$, there are infinitely many distinct elements $g_i$. Since there are only finitely many elements of $G$ sending some edge of $I$ into $I$, we can assume that $I\subseteq [a,g_i.a]$ for all $i\in\mathbb{N}$. Since $f(\Axis_R(g_i))$ is contained in the $C$-neighbourhood of $\Axis_T(g_i)$, we have $d_T(f(a),g_if(a))\le ||g_i||_T+2C$. Thus, $f(I)\subseteq f([a,g_i.a])$ has diameter at most $||g_i||_T+4C$. Since $I$ intersects both $[a,\alpha]_R$ and $[b,\omega]_R$, the image $f([\alpha,\omega]_R)$ has diameter at most $||g_i||_T+16C$. The claim follows.
\end{proof}

\subsection{A criterion for checking that an algebraic leaf is dual to a tree}

The following lemma gives a criterion for checking that a pair $(\alpha,\omega)\in\partial^2(G,\calf)$ belongs to the dual lamination of a tree $T\in\ol\calo$.

\begin{lemma}\label{criterion-l2}
Let $T\in\ol\calo$, and let $(\alpha,\omega)\in\partial^2(G,\calf)$. \\
Assume that there exists $K>0$ such that for all $\eps>0$, there exists a Grushko $(G,\calf)$-tree $R_{\eps}$ of covolume smaller than $\eps$ with a $1$-Lipschitz $G$-equivariant map $f_{\eps}:R_\eps\ra T$, such that $f_{\eps}([\alpha,\omega]_{R_\eps})$ has diameter at most $K\eps$.
\\ Then $(\alpha,\omega)\in L^2(T)$. 
\end{lemma}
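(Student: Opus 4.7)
The plan is to show $(\alpha,\omega) \in L^2_\delta(T)$ for every $\delta > 0$, which amounts to producing, for each neighborhood $U$ of $(\alpha,\omega)$ in $\partial^2(G,\calf)$, a nonperipheral $g \in G$ satisfying $||g||_T \leq \delta$ and $(g^{-\infty}, g^{+\infty}) \in U$. I first describe the argument in the generic case $\alpha, \omega \in \partial_\infty(G,\calf)$, and then indicate the adaptations needed when $\alpha$ or $\omega$ lies in $V_\infty(G,\calf)$.

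Fix $\delta > 0$ and $\eps > 0$ (to be chosen later), and apply the hypothesis to obtain $R_\eps$ of covolume smaller than $\eps$ together with a $1$-Lipschitz map $f_\eps : R_\eps \to T$ such that $f_\eps([\alpha,\omega]_{R_\eps})$ has diameter at most $K\eps$. Since $[\alpha,\omega]_{R_\eps}$ is a bi-infinite geodesic in the Grushko tree $R_\eps$, it projects to a bi-infinite non-backtracking path in the finite quotient graph $R_\eps/G$. Pigeonhole therefore guarantees that some oriented edge $\bar e_\omega$ in $R_\eps/G$ is visited at positions accumulating at $\omega$; for two lifts $e, e'$ of $\bar e_\omega$ deep enough in the $\omega$-tail of $[\alpha,\omega]_{R_\eps}$ (with matching orientation), the unique element $g_\omega \in G$ with $g_\omega \cdot e = e'$ exists (edge stabilizers in a Grushko tree are trivial) and is hyperbolic, hence nonperipheral. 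Its axis contains the segment $[e, e']$, and one checks that $g_\omega^{+\infty} \in \partial R_\eps$ is close to $\omega$ in the observer's topology: from any fixed observer $p$ on $[\alpha, \omega]_{R_\eps}$ lying before $e$, the ray from $p$ to $g_\omega^{+\infty}$ must first cross $[e, e']$, hence emanates from $p$ in the direction of $\omega$. Moreover, since $e$ and $g_\omega \cdot e = e'$ both lie in $[\alpha,\omega]_{R_\eps}$ and $f_\eps$ is $1$-Lipschitz equivariant, we have $||g_\omega||_T \leq d_T(f_\eps(e), f_\eps(e')) \leq K\eps$. Symmetrically, one constructs $g_\alpha \in G$ nonperipheral with $g_\alpha^{-\infty}$ close to $\alpha$ and $||g_\alpha||_T \leq K\eps$.

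Having obtained $g_\alpha$ and $g_\omega$, we combine them by a standard ping-pong argument on $\partial R_\eps$. Assuming $g_\alpha^{\pm\infty}, g_\omega^{\pm\infty}$ are pairwise distinct (which holds for sufficiently refined choices of the lifts), the element $g_N := g_\omega^N g_\alpha^N$ is hyperbolic for $N$ large, with $g_N^{+\infty} \to g_\omega^{+\infty}$ and $g_N^{-\infty} \to g_\alpha^{-\infty}$ as $N \to \infty$ by north--south dynamics. Points on the axis $A_{g_N}$ approaching these fixed points therefore approach points arbitrarily close to $\omega$ and $\alpha$ respectively, so Lemma~\ref{lem_approximating} gives $(g_N^{-\infty}, g_N^{+\infty}) \in U$ for $N$ large (and $\eps$ small enough that $g_\omega^{+\infty}, g_\alpha^{-\infty}$ lie in suitable neighborhoods of $\omega, \alpha$). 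The translation length satisfies $||g_N||_T \leq 2NK\eps$, so by choosing $\eps \leq \delta/(2NK)$ at the outset one obtains $||g_N||_T \leq \delta$, completing the argument in this case.

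The main obstacle is the case where $\alpha$ or $\omega$, say $\alpha$, lies in $V_\infty(G,\calf)$: then $\alpha = v_\alpha$ is a vertex of $R_\eps$, the $\alpha$-tail of $[\alpha,\omega]_{R_\eps}$ is trivial, and the pigeonhole construction of $g_\alpha$ breaks down. I would substitute the missing $g_\alpha$ by nontrivial elements of the infinite peripheral stabilizer $G_{v_\alpha}$: such an element $h$ is elliptic in $R_\eps$ (fixing $v_\alpha$) and also elliptic in $T$ (fixing $f_\eps(v_\alpha)$, so $||h||_T = 0$), and distinct nontrivial elements of $G_{v_\alpha}$ send any edge $e_0$ incident to $v_\alpha$ into infinitely many distinct directions at $v_\alpha$ (since edges have trivial stabilizers). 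Combining $g_\omega$ with carefully chosen elements of $G_{v_\alpha}$ yields nonperipheral hyperbolic elements whose axes pass near $v_\alpha$ in varying directions while retaining $+\infty$-endpoints close to $\omega$; the $-\infty$-endpoints then converge to $v_\alpha$ in the observer's topology by exhausting directions at $v_\alpha$. The translation length in $T$ remains controlled because the peripheral ingredients contribute nothing, and the case $\omega \in V_\infty(G,\calf)$ is symmetric.
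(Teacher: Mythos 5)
Your overall strategy is to produce a single test element $g_N$ with controlled translation length in $T$ and endpoints close to $(\alpha,\omega)$, which is the right target, but your route to $g_N$ differs from the paper's and introduces a real gap. The paper constructs, from vertices $x_i,y_i$ near $\alpha,\omega$ on $[\alpha,\omega]_{R_\eps}$, a \emph{single} hyperbolic element $g_i$ whose axis contains $[x_i,y_i]$ and with $d_{R_\eps}(y_i,g_ix_i)\le 2\eps$ (one pigeonhole in $R_\eps/G$, using that its volume is $\le\eps$); then $\|g_i\|_T\le d_T(f_\eps(x_i),g_if_\eps(x_i))$ is read off directly from the diameter bound and $1$-Lipschitzness. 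You instead build two elements $g_\alpha,g_\omega$ and combine them.

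The gap is the assertion $\|g_N\|_T=\|g_\omega^Ng_\alpha^N\|_T\le 2NK\eps$. Translation length is not subadditive in an $\bbR$-tree: for hyperbolic $a,b$ with disjoint characteristic sets one has $\|ab\|=\|a\|+\|b\|+2\,d(\mathrm{char}(a),\mathrm{char}(b))$, so individual bounds $\|g_\alpha\|_T,\|g_\omega\|_T\le K\eps$ alone give nothing. This can be salvaged, but only by explicitly invoking the diameter hypothesis to show that the characteristic sets of $g_\alpha$ and $g_\omega$ in $T$ are within $O(K\eps)$ of each other (since $f_\eps(e)$ and $f_\eps(e_\alpha)$ are both within $K\eps/2$ of the respective characteristic sets and within $K\eps$ of each other), yielding $\|g_N\|_T\le 2NK\eps+O(K\eps)$. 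You never state this, and as written the bound is unjustified.

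There is also a quantifier issue: you want to choose $\eps\le\delta/(2NK)$ "at the outset", but $N$ is chosen after $\eps$ (it depends on the ping-pong dynamics of $g_\alpha,g_\omega$, which depend on $R_\eps$ and on the choice of lifts). You would need either to argue that $N=1$ suffices — which is true, since if the axes of $g_\alpha$ and $g_\omega$ in $R_\eps$ are disjoint the axis of $g_\omega g_\alpha$ already contains the bridge between them, a long subsegment of $[\alpha,\omega]_{R_\eps}$, so Lemma~\ref{lem_approximating} applies directly without north--south dynamics — or to decouple the choice of $N$ from $\eps$. Finally, the handling of the case $\alpha\in V_\infty(G,\calf)$ is too sketchy: "the translation length in $T$ remains controlled because the peripheral ingredients contribute nothing" is again a product-translation-length claim that needs justification, whereas the paper handles this case uniformly by allowing $x_i$ to be vertices at distance $\le\eps$ from $[\alpha,\omega]_{R_\eps}$ lying in distinct directions at $\alpha$.
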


\begin{rk}\label{rk-criterion}
As follows from the proof, the condition that $\text{diam}(f_{\eps}([\alpha,\omega]_{R_\eps}))\le K\eps$ can be replaced by the \emph{a priori} weaker assumption that there exist $a,b\in [\alpha,\omega]_{R_\eps}$, and a subset $Y_\eps\subseteq T$ of diameter at most $K\eps$, such that both $f_\eps([a,\alpha]_{R_\eps})$ and $f_\eps([b,\omega]_{R_\eps})$ are contained in $Y_\eps$.
\end{rk}

\begin{proof}
We will prove that for every $\eps>0$, we have $(\alpha,\omega)\in L^2_{(K+4)\eps}(T)$. Consider nonstationary sequences of vertices $x_i,y_i\in R_\eps$ 
at distance at most $\eps$ from $[\alpha,\omega]_{R_\eps}$ and converging to $\alpha,\omega$ respectively  (notice that one can take $x_i,y_i\in [\alpha,\omega]_{R_\eps}$ if $\alpha$ and $\omega$ belong to $\partial_\infty(G,\calf)$).
Since $R_\eps$ has covolume at most $\eps$, there exists an element $g_i\in G$ such that $[x_i,y_i]\subseteq \Axis_{R_\eps}(g_i)$ and $d_{R_{\eps}}(y_i,g_ix_i)\leq 2\eps$. In particular, the pairs $(g_i^{-\infty},g_i^{+\infty})$ converge to $(\alpha,\omega)$.
Since $f_\eps$ is $1$-Lipschitz, we have $d_{T}(f_\eps(y_i),g_i f_\eps(x_i))\leq 2\eps$. On the other hand, since $x_i$ and $y_i$ lie at distance at most $\eps$ from $[\alpha,\omega]_{R_\eps}$, using the fact that $f_{\epsilon}([\alpha,\omega]_{R_{\epsilon}})$ has diameter at most $K\epsilon$, we get that $d_{T}(f_\eps(x_i),f_\eps(y_i))\leq (K+2)\eps$.
It follows that $d_{T}(f_\eps(x_i),g_if_\eps(x_i))\leq (K+4)\eps$, and hence $||g_i||_T\leq (K+4)\eps$. Therefore $(\alpha,\omega)\in L^2_{(K+4)\eps}(T)$. 
\end{proof}

The existence of Grushko trees admitting $1$-Lipschitz $\epsilon$-BBT maps towards $T$ is guaranteed in the case where $T$ has dense orbits by the following lemma.

\begin{lemma}\label{bbt}
For all $T\in\overline{\mathcal{O}}$ with dense orbits, and all $\epsilon>0$, there exists a Grushko $(G,\mathcal{F})$-tree $R_{\epsilon}$ with covolume at most $\eps$, and a $1$-Lipschitz $G$-equivariant map $f:R_\eps\ra T$ with $BBT(f)\le\epsilon$.  
\end{lemma}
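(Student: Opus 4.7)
The plan is to construct $R_\eps$ by iteratively folding a Grushko tree that resolves $T$, using the density of orbits in $T$ to drive the covolume to zero.

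First, I would choose an arbitrary Grushko tree $R_0 \in \calo$ and a $G$-equivariant morphism $h_0 \colon R_0 \to T$ --- that is, a $1$-Lipschitz map that is isometric on each edge after subdivision. Such a morphism exists: start with any equivariant map (Lemma~\ref{bdd-distance}-style existence) and rescale the edges of $R_0$ so that it becomes $1$-Lipschitz. I would then repeatedly perform folds: whenever two edges incident to some vertex $v \in R_n$ have $h_n$-images sharing an initial segment of positive length, I fold a maximal such common segment to obtain a new Grushko tree $R_{n+1}$ together with an induced morphism $h_{n+1}\colon R_{n+1} \to T$ of strictly smaller covolume. The fact that $T$ is very small with dense orbits --- hence has trivial arc stabilizers --- ensures that the folded tree retains trivial edge stabilizers; peripheral non-trivial vertex stabilizers persist because peripheral subgroups fix points in $T$.

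Second, I would argue that $\operatorname{covol}(R_n) \to 0$. If the covolumes were bounded below by some $V > 0$, a subsequence of $(R_n, h_n)$ would subconverge (by compactness of $\overline{\calo}$ under the equivariant Gromov--Hausdorff topology) to a simplicial limit $R_\infty$ of positive covolume equipped with a morphism $h_\infty \colon R_\infty \to T$ having no illegal turns. Such a ``train track'' morphism from a minimal simplicial tree of positive covolume to $T$ would force $T$ to be simplicial, contradicting the density of $G$-orbits.

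Third, I would relate the BBT of $h_n$ to $\operatorname{covol}(R_n)$. For a morphism whose maximal overlap length at any vertex is bounded by $\delta$, the BBT is controlled by a constant multiple of $\delta$, since backtracking along an edge-path only arises from successive overlaps at its vertices. As each folding step strictly reduces the maximum overlap (itself bounded by the covolume), both $\operatorname{covol}(R_n)$ and $\mathrm{BBT}(h_n)$ tend to zero; choosing $n$ large enough that both are at most $\eps$ yields $R_\eps$. The main obstacle will be the rigorous control of the BBT in terms of the combinatorial structure of $R_n/G$: one must prevent the accumulation of many small backtrackings along a long geodesic, which requires a bound phrased in terms of the diameter of $R_n/G$ --- itself controlled by $\operatorname{covol}(R_n)$ --- rather than just the maximal overlap at a single vertex.
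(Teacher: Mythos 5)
Your proposal is in the same spirit as the paper's proof — approximate $T$ by Grushko trees with $1$-Lipschitz maps, drive covolume to zero, and then bound $\mathrm{BBT}$ in terms of covolume — but the paper's argument is much shorter because both nontrivial ingredients are cited from the literature, and both of the corresponding steps in your proposal contain gaps.

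The paper proceeds as follows: trivial arc stabilizers follow from dense orbits by \cite[Proposition~4.17]{Hor14-1}; the existence of Grushko trees $S_n$ with covolume $\to 0$ admitting $1$-Lipschitz equivariant maps $f_n\colon S_n\to T$ is \cite[Theorem~5.3]{Hor14-1}; and the inequality $\mathrm{BBT}(f)\le \vol(S/G)\cdot\Lip(f)$ is \cite[Lemma~4.1]{BFH97} (or \cite[Proposition~3.12]{Hor14-2} for free products). This last inequality is precisely the clean quantitative bound you were looking for in your third step, and it immediately finishes the proof; you correctly identified that a bound in terms of covolume rather than maximal overlap at a vertex is what is needed, but you did not establish it.

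Your second step also has a genuine gap. The assertion that if $\mathrm{covol}(R_n)\ge V>0$ then a subsequence would subconverge to a \emph{simplicial} limit of positive covolume with a morphism to $T$ having no illegal turns is not justified. First, the limit of simplicial trees in $\overline{\calo}$ need not be simplicial. Second, and more seriously, the folding process can decrease covolume by an amount tending to $0$ while $\mathrm{covol}(R_n)$ converges to a strictly positive limit; nothing in your argument rules out that the local fold at stage $n$ has length going to $0$ while the cumulative progress stalls. This is precisely the subtlety that makes the approximation result \cite[Theorem~5.3]{Hor14-1} a theorem rather than an observation, and your compactness argument does not resolve it.
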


\begin{proof}
Since $T$ has dense orbits, arc stabilizers in $T$ are trivial \cite[Proposition 4.17]{Hor14-1}.
By \cite[Theorem 5.3]{Hor14-1}, we can find a sequence of Grushko $(G,\calf)$-trees $S_n$ converging non-projectively to $T$, admitting $1$-Lipschitz $G$-equivariant maps $f_n:S_n\ra T$, and such that the covolumes of the trees $S_n$ converge to $0$. By \cite[Lemma 4.1]{BFH97} (see \cite[Proposition~3.12]{Hor14-2} for free products),
we have that $BBT(f_n)\leq  \text{vol}(S_n/G)\Lip(f_n)<\eps$ for $n$ large enough, which concludes the proof.
\end{proof}

\subsection{From $L^1(T)$ to $L^2(T)$}\label{sec-l12}

Given $T\in\overline\calo$, the goal of the present section is to relate $L^1(T)$ to $L^2(T)$.

\begin{prop}\label{l1-2}
For all $T\in\overline{\calo}$, and all $\xi\in L^1(T)$, one has $\Lambda^2(\xi)\subseteq L^2(T)$.
\end{prop}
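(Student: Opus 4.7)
The plan is to apply the criterion of Lemma \ref{criterion-l2}, in its weakened form given by Remark \ref{rk-criterion}: given $(\alpha,\omega)\in\Lambda^2(\xi)$, for every $\epsilon>0$ I will produce a Grushko tree $R_\epsilon$ of covolume at most $\epsilon$ and a $1$-Lipschitz equivariant map $f_\epsilon:R_\epsilon\to T$ such that two terminal subrays of $[\alpha,\omega]_{R_\epsilon}$, one near $\alpha$ and one near $\omega$, are sent into a common ball of $T$ of radius $O(\epsilon)$. The two inputs are the sequence $g_n\in G$ coming from the definition of $\Lambda^2(\xi)$, which satisfies $g_n.x_0\to\alpha$ and $g_n.\xi\to\omega$, and the hypothesis $\xi\in L^1(T)$, which provides bounded images of rays towards $\xi$.

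Assume first that $T$ has dense $G$-orbits. For each $\epsilon>0$, Lemma \ref{bbt} provides $R_\epsilon$ of covolume at most $\epsilon$ and a $1$-Lipschitz map $f_\epsilon:R_\epsilon\to T$ with $BBT(f_\epsilon)\le\epsilon$. Since $f_\epsilon([x_0,\xi]_{R_\epsilon})$ is bounded, Lemma \ref{lem_Q}(2) produces a point $b_\epsilon\in[x_0,\xi]_{R_\epsilon}$, which may be taken in the interior of an edge so that its stabilizer is trivial, such that $f_\epsilon([b_\epsilon,\xi]_{R_\epsilon})$ sits in a ball $Y_\epsilon\subseteq T$ of radius $3\epsilon$; equivariance gives $f_\epsilon([g_n b_\epsilon,g_n\xi]_{R_\epsilon})\subseteq g_n Y_\epsilon$. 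Since $b_\epsilon$ has trivial stabilizer, the convergence $g_n.x_0\to\alpha$ in $G\cup\partial(G,\calf)$ translates to $g_n b_\epsilon\to\alpha$ in $R_\epsilon\cup\partial R_\epsilon$. Combined with $g_n\xi\to\omega$, this implies that any interior subsegment $J\subseteq[\alpha,\omega]_{R_\epsilon}$ containing a prescribed edge $e$ lies in $[g_n b_\epsilon,g_n\xi]_{R_\epsilon}$ for $n$ large, so $f_\epsilon(J)\subseteq g_n Y_\epsilon$. Fixing any $p\in f_\epsilon(e)$, each such ball $g_n Y_\epsilon$ contains $p$ and is therefore contained in $B(p,6\epsilon)$; letting $J$ exhaust $[\alpha,\omega]_{R_\epsilon}$ yields $f_\epsilon([\alpha,\omega]_{R_\epsilon})\subseteq B(p,6\epsilon)$, of diameter at most $12\epsilon$. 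Lemma \ref{criterion-l2} then gives $(\alpha,\omega)\in L^2(T)$.

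For general $T\in\overline{\calo}$, I reduce to the dense-orbit case via the Levitt decomposition, which writes $T$ as a graph of actions over a simplicial $(G,\calf)$-tree $S$ with vertex trees $T_v$ on which the stabilizers $G_v$ act with dense orbits (possibly trivially). Corollary \ref{carry-gen} says $\xi$ is carried by some $G_v$, and translating one may assume $\xi\in\partial_\infty(G_v,\calf_{|G_v})$; a direct check gives $\xi\in L^1(T_v)$ as a one-sided leaf for the $(G_v,\calf_{|G_v})$-action on $T_v$. By Lemma \ref{carry-carry}, every leaf of $\Lambda^2(\xi)$ is carried by $G_v$. When $T_v$ is a point, $G_v$ is a vertex stabilizer of $T$, every nonperipheral $g\in G_v$ is elliptic in $T$, and the corresponding axis endpoints are dense in $\partial^2(G_v,\calf_{|G_v})$, so the whole of $\partial^2(G_v,\calf_{|G_v})$ lies in $L^2(T)$. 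When $T_v$ has dense orbits, the previous paragraph applied to $(G_v,\calf_{|G_v})$ and $T_v$ gives $\Lambda^2_{G_v}(\xi)\subseteq L^2_{G_v}(T_v)$; one then argues that every leaf of $\Lambda^2(\xi)$ carried by $G_v$ lies in $G\cdot\Lambda^2_{G_v}(\xi)$, by relocating the approximating sequence $g_n$ inside $G_v$ after a suitable $G$-translation, and that translation lengths in $T_v$ and in $T$ agree for elements of $G_v$, so that $L^2_{G_v}(T_v)\subseteq L^2(T)$.

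The main obstacle I anticipate is the careful bookkeeping of the convergence in $R_\epsilon$ when $\alpha$ or $\omega$ is in $V_\infty(G,\calf)$, and, in the general case, the need to show that the limit leaves carried by $G_v$ really come from approximating sequences living in $G_v$ rather than $G$; justifying this uses the observation that distinct $G$-translates of the $G_v$-minimal subtree of a Grushko tree meet in at most a vertex, which lets one redirect the $g_n$ into $G_v$ after translating the leaf into $\partial^2(G_v,\calf_{|G_v})$.
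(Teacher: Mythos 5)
Your argument in the dense-orbits case is correct and is essentially the paper's argument: both use Lemma~\ref{bbt} to get an $\epsilon$-BBT map $f_\epsilon:R_\epsilon\to T$ and Lemma~\ref{lem_Q}(2) to find a subray of $[x_0,\xi]_{R_\epsilon}$ with small image, then transport this along the $g_n$; the paper builds the small-translation-length approximating elements $h_n$ directly (this is the content of Lemma~\ref{criterion-l2}), whereas you invoke Lemma~\ref{criterion-l2}/Remark~\ref{rk-criterion}. That is a matter of packaging, not substance.

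The problem is in your reduction to the dense-orbits case, and specifically in the claim you flag yourself. You assert that ``distinct $G$-translates of the $G_v$-minimal subtree of a Grushko tree meet in at most a vertex,'' and use this to push the approximating sequence $g_n$ into $G_v$. This observation is true when $G_v$ is a $(G,\calf)$-free factor, but the vertex groups of the Levitt decomposition need not be free factors, because the skeleton may have nontrivial (cyclic) edge stabilizers. For a concrete failure: with $G=F_2=\langle a,b\rangle$, take the very small tree $T$ whose skeleton is the Bass--Serre tree of the HNN splitting $F_2 = B \ast_{\langle a\rangle}$ with $B=\langle a,\,b^{-1}a^{-1}b\rangle$, and let $T_B$ be a dense-orbits $B$-tree dual to a foliation on a thrice-punctured sphere with $a$ and $b^{-1}a^{-1}b$ elliptic. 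Then $b\notin B$, yet in a Grushko ($=$ Grushko$(F_2,\emptyset)$) tree $R$ both $R_B$ and $bR_B$ contain the axis of $a$ (since $a\in B$ and $b(b^{-1}a^{-1}b)b^{-1}=a^{-1}\in bBb^{-1}$), so $R_B\cap bR_B$ is nondegenerate while $bR_B\neq R_B$. So the step that redirects $g_n$ into $G_v$ cannot rest on disjointness of translates of $R_{G_v}$.

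The fix is available inside the paper and is the same mechanism as in the proof of Lemma~\ref{lem_porte_ferme}: after translating $(\alpha,\omega)$ into $\partial^2(G_v,\calf_{|G_v})$, fix an edge $e\subseteq[\alpha,\omega]_R\subseteq R_{G_v}$; for large $n$ one has $e\subseteq g_n[x_0,\xi]_R$, hence $g_n^{-1}e\subseteq R_{G_v}$, and since $R_{G_v}/G_v$ is cocompact (because $G_v$ has finite Kurosh rank), a subsequence has $g_n^{-1}e$ in a single $G_v$-orbit. By triviality of edge stabilizers this forces $g_n=c\,a_n$ with $a_n\in G_v$ and $c$ independent of $n$, and then $a_n x_0\to c^{-1}\alpha$ and $a_n\xi\to c^{-1}\omega$ with $c^{-1}\alpha,c^{-1}\omega\in\partial R_{G_v}$ (as limits of points of $R_{G_v}$, resp.\ of $\partial R_{G_v}$). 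This places $(c^{-1}\alpha,c^{-1}\omega)$ in $\Lambda^2_{G_v}(\xi)$, which is what you need before applying the dense-orbits case to $(G_v,\calf_{|G_v},T_{G_v})$. With this replacement your reduction goes through, and the rest of your argument (the point-stabilizer case and the translation-length comparison $\|g\|_{T_{G_v}}=\|g\|_T$ for $g\in G_v$) is fine.
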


\begin{proof}
By Corollary \ref{carry-gen}, $\xi$ is carried by a  vertex group $A$ of the Levitt decomposition of $T$,
and $A$ carries $\Lambda^2(\xi)$ by Lemma \ref{carry-carry}.
This way, the proof reduces to the case where $T$ has dense orbits.

Let $(\alpha,\omega)\in \Lambda^2(\xi)$.
Fix $\eps>0$, and consider a Grushko $(G,\calf)$-tree $R_{\eps}$ of covolume at most $\eps$, with a $1$-Lipschitz $\eps$-BBT $G$-equivariant map 
$f_\eps:R_\eps\ra T$ (this exists in view of Lemma~\ref{bbt}). 
By Assertion~2 in Lemma~\ref{lem_Q}, there exists a geodesic ray $\gamma=[x_0,\xi]_{R_\eps}$ in $R_{\eps}$, such that $f(\gamma)$ has diameter at most $6\eps$ in $T$. Without loss of generality, we can assume $x_0$ has trivial stabilizer. 

Let $(g_n)_{n\in\mathbb{N}}\in G^{\mathbb{N}}$ be a sequence of elements such that $g_nx_0\ra \alpha$ 
and $g_n\xi\ra \omega$. Let $[x_0,x_n]\subseteq \gamma$ be an initial segment such that $g_n.x_n$ converges to $\omega$.
For each $n$, consider $h_n\in G$ whose axis has a fundamental domain $D_n$ containing $[x_0,x_n]$, and such that 
$|D_n\setminus [x_0,x_n]|\leq 2\vol(R_\eps/G)\leq 2\eps$. 
Then $g_n.(h_n^{-\infty},h_n^{+\infty})$ converges to $(\alpha,\omega)$ in $\partial^2 (G,\calf)$. 
In addition  $||h_n||_T\leq 2\eps+6\eps$, so
 $(\alpha,\omega)\in L^2_{8\eps}(T)$. Since this holds for every $\eps$, we have
$\Lambda^2(\xi)\subseteq L^2(T)$.
\end{proof}

\begin{rk}
In \cite[Section 5]{CHL08-2}, Coulbois--Hilion--Lustig actually prove that in the context of free groups (with $\calf=\emptyset$), if $T$ is a tree with dense $F_N$-orbits, then $L^2(T)=\Lambda^2(L^1(T))$. Proposition~\ref{l1-2} above only shows one inclusion. Though we believe that equality still holds in the context of free products, we will not prove the reverse inclusion, as we will not need it in the sequel of the present paper. 
\end{rk}

\begin{cor}\label{l1l2}
Let $T,T'\in\overline{\calo}$. 

\begin{enumerate}
\item If $L^1(T)\cap L^1(T')\neq\emptyset$, then
  $L^2(T)\cap L^2(T')\neq\emptyset$.

\item 
  If $L^1(T)\cap L^1(T')$ contains a point carried by a
  nonperipheral subgroup $A\subseteq G$ of finite Kurosh rank, then
  $L^2(T)\cap L^2(T')$ contains a leaf that is carried by $A$.

\item 
  If $L^1(T)\cap L^1(T')$ contains a point $\xi$ that is not carried
  by some nonperipheral cyclic subgroup $A\subseteq G$, then
  $L^2(T)\cap L^2(T')$ contains a leaf that is not carried by $A$.
\end{enumerate}
\end{cor}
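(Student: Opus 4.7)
The three assertions are direct consequences of Proposition~\ref{l1-2}, together with the three basic facts established about limit sets of points in $\partial_\infty(G,\calf)$: non-emptiness (Lemma~\ref{limit_nonempty}), preservation of ``being carried'' (Lemma~\ref{carry-carry}), and preservation of ``not being carried by a cyclic subgroup'' (Lemma~\ref{carry}). So the plan is simply to push a point $\xi$ lying in $L^1(T)\cap L^1(T')$ through the operation $\Lambda^2$ and observe that the resulting leaves are automatically in $L^2(T)\cap L^2(T')$.

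More precisely, for (1), I take any $\xi\in L^1(T)\cap L^1(T')\subseteq\partial_\infty(G,\calf)$. By Proposition~\ref{l1-2} applied to both $T$ and $T'$, we have $\Lambda^2(\xi)\subseteq L^2(T)\cap L^2(T')$. Lemma~\ref{limit_nonempty} ensures $\Lambda^2(\xi)\neq\emptyset$, so picking any leaf in $\Lambda^2(\xi)$ gives a leaf in $L^2(T)\cap L^2(T')$.

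For (2), take $\xi\in L^1(T)\cap L^1(T')$ carried by $A$. The inclusion $\Lambda^2(\xi)\subseteq L^2(T)\cap L^2(T')$ still holds by Proposition~\ref{l1-2}, and Lemma~\ref{carry-carry} says every leaf of $\Lambda^2(\xi)$ is carried by $A$. Since $\Lambda^2(\xi)\neq\emptyset$, any leaf in it provides the required one. For (3), take $\xi\in L^1(T)\cap L^1(T')$ not carried by the nonperipheral cyclic subgroup $A$. Lemma~\ref{carry} furnishes a leaf $(\alpha,\omega)\in\Lambda^2(\xi)$ that is not carried by $A$, and again by Proposition~\ref{l1-2} this leaf lies in $L^2(T)\cap L^2(T')$.

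There is essentially no obstacle here: the entire content of the statement has already been absorbed by Proposition~\ref{l1-2} and the three lemmas on $\Lambda^2$, and the corollary amounts to combining them in the obvious way.
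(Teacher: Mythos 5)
Your proof is correct and follows exactly the same route as the paper: apply Proposition~\ref{l1-2} to both $T$ and $T'$ to get $\Lambda^2(\xi)\subseteq L^2(T)\cap L^2(T')$, then invoke Lemma~\ref{limit_nonempty}, Lemma~\ref{carry-carry}, and Lemma~\ref{carry} respectively for the three assertions.
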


\begin{proof}
If $\xi\in L^1(T)\cap L^1(T')$, then Proposition~\ref{l1-2} implies that $L^2(T)\cap L^2(T')\supseteq \Lambda^2(\xi)$
which is non-empty by Lemma~\ref{limit_nonempty}.
The second assertion then follows from Lemma~\ref{carry-carry}, and the last follows from Lemma~\ref{carry}.
\end{proof}

\subsection{Varying the tree}\label{sec-scs}

We now study the behaviour of the lamination $L^2(T)$ as $T$ varies in $\ol \calo$.
Our main result is Proposition \ref{prop-bounded-closed} which says in particular that if
$T_n$ converges to a tree $T\in \ol\calo$ with dense orbits, and if $(\alpha_n,\omega_n)\in L^2(T_n)$ converges to $(\alpha,\omega)\in \partial^2 (G,\calf)$ then $(\alpha,\omega)\in L^2(T)$. Here, the condition that $T$ has dense orbits is crucial, as shown by
Remark \ref{rk-bad} below. However, Proposition \ref{prop-bounded-closed} also provides a statement that is valid for all trees in $\ol\calo$: this is our substitute for the lack of continuity of Kapovich--Lustig's intersection form in the context of free products.

Given trees $T_n,T\in\ol\calo$, and equivariant maps $f_n:R\ra T_n$, $f:R\ra T$ that are linear on edges of $R$, we say that $f_n$ \emph{converges} to $f$ if $T_n$ converges to $T$,
and if for every point $v\in R$, $f_n(v)$ converges to $f(v)$. By definition of the equivariant Gromov--Hausdorff topology, this means that for all $g\in G$, 
$d_{T_n}(f_n(v),gf_n(v))$ converges to $d_T(f(v),gf(v))$, and this implies that $d_{T_n}(f_n(x),f_n(y))$ converges to 
 $d_{T}(f(x),f(y))$ for all $x,y\in R$.
Note that this implies that $\Lip(f_n)$ converges to $\Lip(f)$: this follows from the fact that if $\{e_1=[v_1^1,v_1^2],\dots,e_k=[v_k^1,v_k^2]\}$ is a finite set of representatives of the $G$-orbits of edges in $R$, then we have $$\Lip(f)=\max_{i\in\{1,\dots,k\}}\frac{d_T(f(v_i^1),f(v_i^2))}{d_R(v_i^1,v_i^2)}$$ (and we have the similar expressions for $\Lip(f_n)$).
Also note that if $T_n$ converges to $T$, then for every Grushko $(G,\calf)$-tree $R$ and every $f:R\ra T$ linear on edges, 
there exist maps $f_n:R\ra T_n$ converging to $f$: these are defined by mapping a finite set $\{v_1,\dots,v_k\}$ of representatives of the $G$-orbits of vertices in $R$ to approximations of $f(v_i)$ in $T_n$, and extending equivariantly on vertices and linearly on edges.

\begin{lemma}\label{bounded-closed}
Let $R$ be a Grushko $(G,\calf)$-tree, let $T\in\overline{\calo}$, and let $f_n:R\ra T_n$ be a sequence of maps converging to $f:R\ra T$
for some $T_n\in \ol\calo$ converging to $T$.
Consider a sequence of algebraic leaves $(\alpha_n,\omega_n)\in \partial^2(G,\calf)$ converging to $(\alpha,\omega)\in\partial^2(G,\calf)$. 
\\ If $\diam(f_n([\alpha_n,\omega_n]_R))\leq K$ 
for all $n\in\mathbb{N}$, then $\diam(f([\alpha,\omega]_R))\leq K$.
\end{lemma}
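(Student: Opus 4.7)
The plan is to bound $d_T(f(x), f(y))$ uniformly by $K$ for every pair $x, y \in [\alpha, \omega]_R$, and then take the supremum. I will first treat interior points of $[\alpha, \omega]_R$, and then extend by continuity to the (at most two) endpoints of $[\alpha, \omega]_R$ that happen to lie in $R$, namely those belonging to $V_\infty(R)$; only these endpoints are actual points in the domain of $f$.

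For any interior point $a$ of $[\alpha, \omega]_R$, the two components of $\widehat{R} \setminus \{a\}$ containing $\alpha$ and $\omega$ respectively are distinct directions, so by definition of the observers' topology the convergence $(\alpha_n, \omega_n) \to (\alpha, \omega)$ forces $\alpha_n$ and $\omega_n$ to lie in these respective directions for $n$ large; hence $a \in [\alpha_n, \omega_n]_R$ eventually. Applying this simultaneously to two interior points $x, y \in [\alpha, \omega]_R$, I conclude that $x, y \in [\alpha_n, \omega_n]_R$ for $n$ large, and the hypothesis gives $d_{T_n}(f_n(x), f_n(y)) \le K$. Using the remark stated in the paragraph preceding the lemma, namely that $f_n \to f$ implies $d_{T_n}(f_n(x), f_n(y)) \to d_T(f(x), f(y))$, I pass to the limit to obtain $d_T(f(x), f(y)) \le K$ for all interior $x, y$.

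To handle the endpoints, if $x = \alpha \in V_\infty(R)$ I approximate $x$ by a sequence $x_k$ of interior points of $[\alpha, \omega]_R$ chosen on an edge of $R$ incident to $\alpha$, with $x_k \to \alpha$ in the metric of $R$. Since $f$ is piecewise linear it is continuous on $R$, so $f(x_k) \to f(\alpha)$; combining with the bound $d_T(f(x_k), f(y)) \le K$ from the previous step yields $d_T(f(\alpha), f(y)) \le K$ by continuity of $d_T$. The symmetric argument treats the case $y = \omega \in V_\infty(R)$, and both endpoints together if necessary. Taking the supremum over $x, y \in [\alpha, \omega]_R$ gives $\diam(f([\alpha, \omega]_R)) \le K$, as desired.

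The only real technical point is the observers'-topology separation argument showing that an interior point $a$ belongs to $[\alpha_n, \omega_n]_R$ for $n$ large; this is a direct consequence of the topology, relying on the fact that $a$ is strictly between $\alpha$ and $\omega$. Everything else is a routine application of the definition of $f_n \to f$ and of continuity of $f$ on $R$.
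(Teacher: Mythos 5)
Your proof is correct and follows essentially the same strategy as the paper's: fix a pair of points (equivalently, a compact subsegment) in $[\alpha,\omega]_R$, observe that the observers'-topology convergence $(\alpha_n,\omega_n)\to(\alpha,\omega)$ forces those points to lie in $[\alpha_n,\omega_n]_R$ for $n$ large, and then pass to the limit using $d_{T_n}(f_n(x),f_n(y))\to d_T(f(x),f(y))$. The only cosmetic difference is in the treatment of an endpoint $\alpha\in V_\infty(R)$: you approximate $\alpha$ by interior points and invoke continuity of $f$, whereas the paper simply allows the compact subsegment $I$ to contain $\alpha$. The latter also works, because convergence $\alpha_n\to\alpha$ in the observers' topology forces $\alpha_n$ to eventually lie outside the direction at $\alpha$ containing the first edge of $[\alpha,\omega]_R$ (a boundary point cannot lie in an open half-edge), so $\alpha\in[\alpha_n,\omega_n]_R$ for $n$ large. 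Your extra continuity step is therefore not strictly necessary, but it is a valid and harmless alternative to making that observation.
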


\begin{proof}
Let $I\subseteq [\alpha,\omega]_R$ be a compact segment. Then for all $n\in\mathbb{N}$ sufficiently large, the segment $I$ is contained in $[\alpha_n,\omega_n]_R$. 
Therefore, the diameter of $f_n(I)$ is at most $K$. This implies that the diameter of $f(I)$ is at most $K$. As this is true for any subsegment $I\subseteq [\alpha,\omega]_R$, the diameter of $f([\alpha,\omega]_R)$ is at most $K$.
\end{proof}

\begin{prop}\label{prop-bounded-closed}
Let $T\in\overline{\calo}$, and let $(T_n)_{n\in\mathbb{N}}\in\overline{\calo}^{\mathbb{N}}$ be a sequence that converges to $T$. 
Consider a sequence of algebraic leaves $(\alpha_n,\omega_n)\in L^2(T_n)$ converging to $(\alpha,\omega)\in\partial^2(G,\calf)$. \\
Then $(\alpha,\omega)\in (L^1(T)\cup V_{\infty}(G,\calf))^2$.
\\ If in addition $T$ has dense orbits, then $(\alpha,\omega)\in L^2(T)$.
\end{prop}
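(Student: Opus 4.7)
The strategy is to transfer the hypothesis $(\alpha_n,\omega_n)\in L^2(T_n)$ into a uniform diameter bound for $f_n([\alpha_n,\omega_n]_R)$ via Lemma~\ref{lem_Q}(3), pass this bound to the limit by Lemma~\ref{bounded-closed}, and then conclude either by reading off boundedness of a ray-image (for the general part) or by invoking the criterion of Lemma~\ref{criterion-l2} (for the dense-orbits refinement). The key quantitative input is the estimate $\mathrm{BBT}(f_n)\le \mathrm{vol}(R/G)\cdot \Lip(f_n)$ from \cite[Lemma~4.1]{BFH97}, which already appears in the proof of Lemma~\ref{bbt}: it lets me control the BBT constants of the approximating maps uniformly, without any hypothesis on $T$.

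For the general statement, fix any Grushko $(G,\calf)$-tree $R$, any piecewise linear equivariant $f:R\to T$, and a sequence $f_n:R\to T_n$ converging to $f$. Since $\Lip(f_n)\to\Lip(f)<\infty$ and $\mathrm{vol}(R/G)$ is fixed, the BFH estimate yields a constant $C_0$ with $\mathrm{BBT}(f_n)\le C_0$ for all large $n$. Lemma~\ref{lem_Q}(3) applied to $f_n$ and $(\alpha_n,\omega_n)\in L^2(T_n)$ then gives $\diam(f_n([\alpha_n,\omega_n]_R))\le 20 C_0$ uniformly, and Lemma~\ref{bounded-closed} yields $\diam(f([\alpha,\omega]_R))\le 20 C_0$. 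If $\alpha\in\partial_\infty(G,\calf)$, choose $x_0$ in the (nonempty) interior of $[\alpha,\omega]_R$: then $[x_0,\alpha]_R\subseteq[\alpha,\omega]_R$, so $f([x_0,\alpha]_R)$ is bounded, whence $\alpha\in L^1(T)$. The same argument applies to $\omega$, yielding $(\alpha,\omega)\in(L^1(T)\cup V_\infty(G,\calf))^2$.

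For the dense-orbits refinement I run the same argument but exploit the extra freedom provided by Lemma~\ref{bbt}: for each $\eps>0$ pick a Grushko tree $R_\eps$ of covolume $\le\eps$ together with a $1$-Lipschitz $\eps$-BBT equivariant map $f:R_\eps\to T$. Approximating by $f_n:R_\eps\to T_n$, the convergence $\Lip(f_n)\to 1$ ensures $\Lip(f_n)\le 2$ eventually, so the BFH estimate now gives the \emph{small} bound $\mathrm{BBT}(f_n)\le 2\eps$. Lemma~\ref{lem_Q}(3) produces $\diam(f_n([\alpha_n,\omega_n]_{R_\eps}))\le 40\eps$, which passes to the limit by Lemma~\ref{bounded-closed} as $\diam(f([\alpha,\omega]_{R_\eps}))\le 40\eps$. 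Feeding the family $\{(R_\eps,f)\}_{\eps>0}$ into Lemma~\ref{criterion-l2} (with $K=40$) concludes that $(\alpha,\omega)\in L^2(T)$.

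The only subtle point I anticipate is that without dense orbits on $T$ one cannot expect to shrink $\mathrm{BBT}(f_n)$ below a fixed constant, so the general statement is genuinely weaker than $L^2(T)$-membership and really needs the volume-times-Lipschitz estimate rather than Lemma~\ref{bbt}. Everything else is a careful bookkeeping of inequalities and a routine appeal to the basic tree-geometric fact that sub-rays of $[\alpha,\omega]_R$ issued from an interior basepoint lie inside $[\alpha,\omega]_R$.
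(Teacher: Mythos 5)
Your proof is correct and follows essentially the same route as the paper: you establish a uniform BBT bound for the approximating maps $f_n$ via the volume-times-Lipschitz estimate, apply Lemma~\ref{lem_Q}(3) and Lemma~\ref{bounded-closed} to pass the diameter bound to the limit, then read off $L^1$-membership for the general case and invoke Lemma~\ref{bbt} together with Lemma~\ref{criterion-l2} for the dense-orbits case. The only cosmetic difference is that you spell out the $L^1(T)$-membership step and the BFH estimate a bit more explicitly than the paper does.
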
  

\begin{rk}\label{rk-bad}
Notice however that if $T$ does not have dense orbits, then $l$ may fail to belong to $L^2(T)$. It may even happen that a sequence of trees $T_n$ in $\partial\calo$ converges to a Grushko tree $T$ (as in the examples from the proof of Proposition \ref{trouble}), in which case $L^2(T_n)\neq\emptyset$ while $L^2(T)=\emptyset$. 
\end{rk}

\begin{proof}
Let $R$ be a Grushko $(G,\calf)$-tree. Let $f:R\to T$ be a $G$-equivariant map, and let $f_n:R\to T_n$ be $G$-equivariant maps that converge to $f$.
Let $C:=\vol(R/G)\cdot \Lip(f)$ so that $f$ is $C$-BBT. Since $\Lip(f_n)$ converges to $\Lip(f)$,  the map 
$f_n$ is $2C$-BBT for $n$ large enough. 
Since $(\alpha_n,\omega_n)\in L^2(T_n)$, Lemma \ref{lem_Q} implies that $\diam(f_n([\alpha_n,\omega_n]_R))\leq 40C$. 
By Lemma \ref{bounded-closed}, we have $\diam(f([\alpha,\omega]_R))\leq 40C$, so $(\alpha,\omega)\in (L^1(T)\cup V_\infty(G,\calf))^2$. 

In the case where $T$ is further assumed to have dense orbits, then for all $\eps>0$, Lemma~\ref{bbt} provides a Grushko $(G,\calf)$-tree $R_{\eps}$ of covolume smaller than $\eps$ with a $1$-Lipschitz $G$-equivariant map $f_{\eps}:R_\eps\ra T$. The argument from the previous paragraph then shows that 
$f_{\eps}([\alpha,\omega]_{R_\eps})$ has diameter at most $40\eps$. Lemma \ref{criterion-l2} implies that $(\alpha,\omega)\in L^2(T)$.
\end{proof}

 The following corollary will be crucial for proving the version of our unique duality statement for arational trees given in Corollary \ref{feng-luo} below. 

\begin{cor}\label{feuille-commune}
Let $(T_n)_{n\in\mathbb{N}},(T'_n)_{n\in\mathbb{N}}\in\overline{\calo}^{\mathbb{N}}$ be two sequences of trees that converge to $T,T'$ respectively. 
\\ Assume that $L^2(T_n)\cap L^2(T'_n)\neq\emptyset$ for every $n\in \bbN$. 
Assume moreover that $T$ has dense orbits, and that for all $v\neq v'\in V_\infty(G,\calf)$, the group $\langle G_v,G_{v'}\rangle$ is not elliptic in $T$.
\\ Then $L^1(T)\cap L^1(T')\neq\emptyset$, and hence $L^2(T)\cap L^2(T')\neq\emptyset$.
\end{cor}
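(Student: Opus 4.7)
The plan is to extract, by a compactness argument on $\partial^2(G,\calf)$, an algebraic leaf $(\alpha,\omega)$ lying in $L^2(T)$ which is ``almost'' dual to $T'$ as well; the hypothesis on vertex stabilisers in $T$ will then force at least one of $\alpha,\omega$ into $\partial_\infty(G,\calf)$, which is exactly what is needed.

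Fix a Grushko tree $R$ and a finite set $E$ of edges meeting every $G$-orbit, and let $C_E\subseteq\partial^2(G,\calf)$ be the associated compact set. By $G$-invariance of the laminations, I can select $(\alpha_n,\omega_n)\in L^2(T_n)\cap L^2(T'_n)\cap C_E$ for each $n$. Compactness gives, after extraction, $(\alpha_n,\omega_n)\to(\alpha,\omega)\in\partial^2(G,\calf)$. Applying Proposition~\ref{prop-bounded-closed} to the sequence $(T_n)$ and using that $T$ has dense orbits yields $(\alpha,\omega)\in L^2(T)$; applying it to $(T'_n)$ yields $(\alpha,\omega)\in(L^1(T')\cup V_\infty(G,\calf))^2$.

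The crux is to rule out that both $\alpha,\omega$ lie in $V_\infty(G,\calf)$, and this is where the hypothesis on $T$ enters; I expect this to be the main obstacle. Suppose, toward a contradiction, that $\alpha=v$ and $\omega=v'$ are distinct elements of $V_\infty(G,\calf)$. For each $\eps>0$, Lemma~\ref{bbt} supplies a Grushko tree $R_\eps$ and a $1$-Lipschitz equivariant map $f_\eps\colon R_\eps\to T$ with $BBT(f_\eps)\leq\eps$. Since $(v,v')\in L^2(T)$, Lemma~\ref{lem_Q}(3) gives $\mathrm{diam}\, f_\eps([v,v']_{R_\eps})\leq 20\eps$. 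Now $f_\eps(v)$ is fixed by the infinite peripheral subgroup $G_v$, and $f_\eps(v')$ by $G_{v'}$; since $T$ is very small with dense orbits, no infinite peripheral subgroup can fix a nondegenerate arc (arc stabilisers are cyclic and nonperipheral), so each of $G_v,G_{v'}$ fixes a unique point $x_v,x_{v'}\in T$, independent of $\eps$. The bound becomes $d_T(x_v,x_{v'})\leq 20\eps$ for every $\eps>0$, forcing $x_v=x_{v'}$. Hence $\langle G_v,G_{v'}\rangle$ fixes a point of $T$, contradicting the hypothesis.

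Therefore, up to swapping, $\alpha\in\partial_\infty(G,\calf)$. From $(\alpha,\omega)\in L^2(T)$ and Lemma~\ref{lem_Q}(3), any equivariant $f\colon R\to T$ has $f([\alpha,\omega]_R)$ of bounded diameter, so its restriction to any ray $[a,\alpha]_R\subseteq [\alpha,\omega]_R$ is bounded, giving $\alpha\in L^1(T)$. The conclusion of the second paragraph, combined with $\alpha\in\partial_\infty(G,\calf)$, also yields $\alpha\in L^1(T')$. This establishes $L^1(T)\cap L^1(T')\neq\emptyset$, and Corollary~\ref{l1l2} then provides $L^2(T)\cap L^2(T')\neq\emptyset$, completing the proof.
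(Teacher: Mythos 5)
Your proof is correct and follows essentially the same route as the paper: extract a convergent sequence of common leaves, apply Proposition~\ref{prop-bounded-closed} twice, and use the hypothesis on vertex stabilisers to place one endpoint in $\partial_\infty(G,\calf)$. The only difference is cosmetic: where the paper notes tersely that the hypothesis forbids a leaf of $L^2(T)$ with both endpoints in $V_\infty(G,\calf)$ (which is immediate from $\calq(v)$ being the unique fixed point of $G_v$ and Proposition~\ref{equality-Q}), you re-derive this directly from BBT estimates, which is fine but slightly longer.
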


\begin{proof}
For all $n\in\mathbb{N}$, let $(\alpha_n,\omega_n)\in L^2(T_n)\cap L^2(T'_n)$, and let $(\alpha,\omega)\in\partial^2(G,\calf)$ be a limit of translates of $(\alpha_n,\omega_n)$. Since $T$ has dense orbits, Proposition \ref{prop-bounded-closed} implies that  $(\alpha,\omega)\in L^2(T)$, and that $\alpha,\omega\in L^1(T')\cup V_{\infty}(G,\calf)$. The hypothesis made on $T$ means that $L^2(T)$ does not contain any leaf with both endpoints in  $V_{\infty}(G,\calf)$, so up to exchanging the roles of $\alpha$ and $\omega$, we can assume that  $\omega\in\partial_{\infty}(G,\calf)$. Then $\omega\in L^1(T)\cap L^1(T')$, showing that $L^1(T)\cap L^1(T')\neq\emptyset$. The fact that $L^2(T)\cap L^2(T')\neq\emptyset$ then follows from Corollary \ref{l1l2}. 
\end{proof}

We will also need the following variation on Proposition~\ref{prop-bounded-closed}. 

\begin{prop}\label{prop-bounded-closed-v2}
Let $T\in\ol\calo$ be a tree with dense orbits, and let $(T_n)_{n\in\mathbb{N}}\in\ol\calo^{\mathbb{N}}$ be a sequence that converges to $T$. Let $(g_n)_{n\in\mathbb{N}}\in G^{\mathbb{N}}$ be a sequence of nonperipheral elements such that $||g_n||_{T_n}$ converges to $0$. Let $(\alpha,\omega)\in\partial^2(G,\calf)$ be an accumulation point of $(g_n^{-\infty},g_n^{+\infty})$.
\\ Then $(\alpha,\omega)\in L^2(T)$.
\end{prop}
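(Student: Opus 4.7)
The plan is to apply the criterion of Lemma~\ref{criterion-l2}, adapting the strategy of Proposition~\ref{prop-bounded-closed}. The main difficulty is that $(g_n^{-\infty}, g_n^{+\infty})$ is only known to lie in $L^2_{\|g_n\|_{T_n}}(T_n)$ and not in $L^2(T_n)$ itself, so one cannot directly invoke Lemma~\ref{lem_Q}(3); when $g_n$ is hyperbolic in $T_n$, the image of the full axis of $g_n$ under the approximating map from $R_\eps$ to $T_n$ may well be unbounded.

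Fix $\eps > 0$. Since $T$ has dense orbits, Lemma~\ref{bbt} yields a Grushko $(G,\calf)$-tree $R_\eps$ of covolume at most $\eps$ and a $G$-equivariant $1$-Lipschitz $\eps$-BBT map $f_\eps : R_\eps \to T$. Approximate $f_\eps$ by $G$-equivariant maps $f_{\eps,n} : R_\eps \to T_n$ converging to $f_\eps$; for $n$ large enough, $\Lip(f_{\eps,n}) \leq 2$ and hence $BBT(f_{\eps,n}) \leq 2\eps$. Let $A_n \subseteq R_\eps$ be the axis of $g_n$, and let $m_\eps > 0$ be the minimum translation length of nonperipheral elements in $R_\eps$, which is positive because $R_\eps/G$ is a finite graph with positive edge lengths.

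The key estimate concerns compact subsegments of $[\alpha, \omega]_{R_\eps}$. Given any compact segment $I \subseteq [\alpha, \omega]_{R_\eps}$, for $n$ large enough $I \subseteq A_n$; we then choose $k_n \leq |I|/m_\eps + 1$ and $a_n \in A_n$ with $I \subseteq [a_n, g_n^{k_n} a_n]$. A standard bounded-backtracking argument, using that the iterates $g_n^j a_n$ all lie on $A_n$, shows that $f_{\eps,n}(a_n)$ is within $BBT(f_{\eps,n}) \leq 2\eps$ of the characteristic set of $g_n$ in $T_n$, whence
\begin{equation*}
d_{T_n}(f_{\eps,n}(a_n), g_n^{k_n} f_{\eps,n}(a_n)) \leq k_n \|g_n\|_{T_n} + 4\eps,
\end{equation*}
and applying BBT once more gives $\diam(f_{\eps,n}(I)) \leq \diam(f_{\eps,n}([a_n, g_n^{k_n} a_n])) \leq k_n \|g_n\|_{T_n} + 8\eps$. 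Since $k_n$ is uniformly bounded in $n$ (by $|I|/m_\eps + 1$) while $\|g_n\|_{T_n} \to 0$, the term $k_n \|g_n\|_{T_n}$ vanishes in the limit; passing to $n \to \infty$ via pointwise convergence of $f_{\eps,n}$ to $f_\eps$ yields $\diam(f_\eps(I)) \leq 8\eps$. Taking the supremum over compact $I$ gives $\diam(f_\eps([\alpha, \omega]_{R_\eps})) \leq 8\eps$, and Lemma~\ref{criterion-l2} then yields $(\alpha, \omega) \in L^2_{12\eps}(T)$ for every $\eps > 0$, so $(\alpha, \omega) \in L^2(T)$.

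The hard part is the uniform bound in the third paragraph: covering an arbitrary compact subsegment $I$ by a uniformly bounded number $k_n$ of fundamental domains of $g_n$ --- possible precisely because $\|g_n\|_{R_\eps} \geq m_\eps > 0$ --- keeps the image small despite the fact that the image of the full axis of $g_n$ in $T_n$ may be unbounded when $g_n$ is hyperbolic in $T_n$. This is the replacement for the direct appeal to Lemma~\ref{lem_Q}(3) that one would make if $(g_n^{-\infty}, g_n^{+\infty})$ were known to lie in $L^2(T_n)$.
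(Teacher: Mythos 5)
Your proof is correct and follows the same overall strategy as the paper's: fix $\eps>0$, obtain $R_\eps$ and a $1$-Lipschitz $\eps$-BBT map $f_\eps$ via Lemma~\ref{bbt}, approximate by maps $f_{\eps,n}:R_\eps\to T_n$ converging to $f_\eps$, bound $\diam(f_{\eps,n}(I))$ for compact $I\subseteq[\alpha,\omega]_{R_\eps}$ by exploiting the BBT property together with the smallness of $\|g_n\|_{T_n}$, pass to the limit via Lemma~\ref{bounded-closed}-type pointwise convergence, and conclude with Lemma~\ref{criterion-l2}. The point of departure is how you control $f_{\eps,n}$ on the axis of $g_n$. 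The paper uses a \emph{single} fundamental domain of length $\|g_n\|_{R_\eps}$ whose endpoints tend to $\alpha,\omega$, which only works when $\|g_n\|_{R_\eps}\to\infty$; it therefore splits into a second case ($\|g_n\|_{R_\eps}$ bounded), handled by an algebraic argument showing the $g_n$ are eventually powers of a common root $h$ elliptic in $T$, plus a separate discussion of endpoints in $V_\infty(G,\calf)$. You instead cover a compact $I$ by $k_n$ fundamental domains with $k_n\leq |I|/m_\eps+1$ uniformly bounded (using the positive systole $m_\eps$ of $R_\eps$), and observe that $k_n\|g_n\|_{T_n}\to 0$. This unifies the cases, eliminating the commutator/common-root argument and the separate treatment of $V_\infty$ endpoints (which is absorbed into the claim that $I\subseteq A_n$ for $n$ large --- you should just flag why this still holds when $\alpha$ or $\omega$ is a vertex, namely because convergence of $g_n^{\pm\infty}$ to $\alpha,\omega$ in the observers' topology forces $A_n$ to eventually pass through those vertices). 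The gain is a shorter, case-free argument; the cost is the slightly more intricate estimate $k_n\|g_n\|_{T_n}+8\eps$ in place of the paper's cruder $5\eps$ bound.
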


\begin{proof}
Fix $\eps>0$. By Lemma~\ref{bbt}, there exists a tree $R\in\calo$ of covolume at most $\eps$ which admits a $1$-Lipschitz equivariant map $f_\eps:R\ra T$.
Let $f_n:R\ra T_n$ be a sequence of maps converging to $f$. Since $\Lip(f_n)$ converges to $\Lip(f)$,
the map $f_n$ is $2\eps$-BBT for $n$ large enough.

Assume first that $\alpha,\omega$ both lie in $\partial_\infty(G,\calf)$.
Let $I_n:=\Axis_{R}(g_n)\cap [\alpha,\omega]_{R}$. The endpoints of $I_n$ converge to $\alpha$ and $\omega$. 

If $||g_n||_{R}$ is bounded, then for $n,n'$ large enough, the axes of $g_n$ and $g_{n'}$
have an overlap that is large compared to their translation length, so the commutator $[g_n,g_{n'}]$
fixes an edge in $R$, hence is trivial. In this case, for $n$ large enough $g_n$ is a power of
some fixed element $h$, and $h$ is elliptic in $T$. Then $(\alpha,\omega)$ are the endpoints of the axis of $h$, and the result is clear.

If $||g_n||_R$ is unbounded, one can choose a fundamental domain $J_n$ of the axis of $g_n$ 
whose endpoints converge to $\alpha$ and $\omega$. 
Since $f_n$ is $2\eps$-BBT and $||g_n||_{T_n}$ tends to zero, $f_n(J_n)$ has diameter
at most $5\eps$ for $n$ large enough. 
In particular for any fixed interval $J\subset [\alpha,\omega]_R$, $f_n(J)$ has diameter at most
$5\eps$ for $n$ large enough.
Since $f_n$ converges to $f$, it follows that $f(J)$ has diameter at most $5\eps$. As this is true for every subsegment $J\subseteq [\alpha,\omega]_R$, we deduce that $f([\alpha,\omega])$ has diameter at most $5\eps$. As this is true for every $\eps>0$, Lemma~\ref{criterion-l2} ensures that $(\alpha,\omega)\in L^2(T)$.

If $\alpha$ or $\omega$ lies in $V_\infty(G,\calf)$, the argument is similar. If both $\alpha$ and $\omega$ lie in $V_\infty(G,\calf)$, then either $g_n$ is eventually constant and the result is clear, or $||g_n||_R\geq d(\alpha,\omega)$ for $n$ large enough.
In all cases, one then finds a fundamental domain $J_n$ for the axis of $g_n$ containing larger and larger segments $J_n\subset [\alpha,\omega]_R$ (with $\alpha\in J_n$ or $\omega\in J_n$ if  $\alpha\in V_\infty(G,\calf)$ or $\omega\in V_\infty(G,\calf)$), 
and one concludes as above.
\end{proof}

\section{Simple elements and simple leaves}\label{sec-5}

The goal of the present section is to introduce the notions of simple elements of $(G,\calf)$ and simple leaves of an algebraic lamination, and prove an analogue of Corollary~\ref{feuille-commune} for those. 

\subsection{Simple elements}

An element $g\in G$ is \emph{simple} (relative to $\calf$) 
if it is non-peripheral and  contained in a proper free factor of $(G,\calf)$. The goal of the present section is to give a criterion for characterizing simple elements in terms of so-called \emph{Whitehead graphs}, which were first introduced in \cite{Whi36} in the context of free groups.

Let $S$ be a Grushko tree. Let $v$ be a vertex in the quotient graph $S/G$. Let $\widetilde{v}$ be the lift of $v$ in $S$, we denote its stabilizer by $G_v$. 
Let $E_v$ be the set of oriented edges of $S/G$ with origin $v$.
For each  $e\in E_v$, we fix a lift $\widetilde{e}$ in $S$ originating at $\widetilde{v}$. Let $g\in G$ be a nonperipheral element. The \emph{Whitehead graph} $\text{Wh}_S(g,v)$ is the labeled graph with vertex set $E_v$, two directions corresponding to oriented edges $e_1$ and $e_2$ being joined by an edge labeled by an element $h\in G_v$ if the axis of $g$ in $S$ crosses a turn in the $G$-orbit of $(\widetilde{e_1},h\widetilde{e_2})$. Notice that there may be several edges joining a given pair of vertices if the labels are distinct; the labels on $\text{Wh}_S(g,v)$ depend on the choice of the lifts of the edges in $E_v$.

Given a connected subgraph $A\subseteq\text{Wh}_S(g,v)$, and a point $x\in A$, we define the \emph{monodromy} $\text{Mon}(A,x)$ of $A$ based at $x$ as the subgroup of $G_v$ made of those elements which label closed loops in $A$ based at $x$. Connectedness of $A$ implies that the conjugacy class of $\text{Mon}(A,x)$ is independent of $x$, so we denote it by $\text{Mon}(A)$.

We note that the conjugacy class $\text{Mon}(A)$ does not depend on the choice of the lifts of the edges in $E_v$. Indeed, if one changes the choices of lifts $\Tilde e_i$  of $e_i$, to $\Tilde e'_i=g_i\Tilde e_i$ (with $g_i\in G_v$), then the label $h$ of each edge $\text{Wh}_S(g,v)$ joining $e_i$ to $e_j$ is changed to $g_i h g_j^{-1}$.

Moreover, if $\tau$ is a maximal subtree of $\text{Wh}_S(g,v)$, then one can change the choices of lifts of the edges adjacent to $v$ so that each edge in $\tau$ has trivial label. 
If $A$ is a connected subgraph of $\text{Wh}_S(g,v)$ with trivial monodromy, one can therefore choose the lifts of the edges adjacent to $v$ so that all edges in $A$ have trivial label.

A vertex $x\in\text{Wh}_S(g,v)$ is an \emph{admissible cutpoint} if there exists a decomposition of the form $\text{Wh}_S(g,v)=A\cup B$, where $A$ is a connected subgraph of $\text{Wh}_S(g,v)$ with $\text{Mon}(A)=\{1\}$, and $A\cap B=\{x\}$. We say that $\text{Wh}_S(g,v)$ has an \emph{admissible cut} if it either has an admissible cutpoint, or is a disjoint union $\text{Wh}_S(g,v)=A\sqcup B$, where $A$ is a connected subgraph with $\text{Mon}(A)=\{1\}$.

We aim to prove the following characterization of simple elements.

\begin{prop}\label{lem_simple}
A nonperipheral element $g\in G$ is simple if and only if for every Grushko tree $S$, there exists a vertex $v\in S/G$ such that $\text{Wh}_S(g,v)$ has an admissible cut.
\end{prop}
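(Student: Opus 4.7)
The plan is to prove both implications by translating between proper $(G,\calf)$-free splittings in which $g$ is elliptic and admissible cuts of $\text{Wh}_S(g,v)$. This is a direct generalization of Whitehead's classical argument~\cite{Whi36}: when every peripheral group is trivial, every label in the Whitehead graph is forced to be trivial, the monodromy conditions become vacuous, and the notion of admissible cut collapses to Whitehead's classical cut-vertex/disconnection criterion.

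For the forward direction, suppose $g$ lies in a proper $(G,\calf)$-free factor; this produces a one-edge $(G,\calf)$-free splitting $U$ in which $g$ is elliptic. Fix a Grushko tree $S$ and choose a $G$-equivariant simplicial map $f:S\to U$ that sends each edge of $S$ either to a vertex of $U$ or linearly onto the edge of $U$. Let $M:=f^{-1}(m)$, where $m$ is the midpoint of the edge of $U$: this is a $G$-invariant set of midpoints of edges of $S$, and at every vertex of $S$ it partitions the adjacent edges into \emph{stay} edges (mapped to the same vertex of $U$) and \emph{cross} edges. Because $M$ is $G$-invariant, this descends to a partition $E_v = E_v^{\mathrm{stay}} \sqcup E_v^{\mathrm{cross}}$ for every $v \in S/G$. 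Since the edge of $U$ has trivial stabilizer, $g$ has a unique fixed vertex $v_g \in U$, hence $f(\Axis_S(g))=\{v_g\}$ and both edges of $\Axis_S(g)$ at any of its vertices are stay edges; so every edge of $\text{Wh}_S(g,v)$ has both endpoints in $E_v^{\mathrm{stay}}$. Moreover, any connected subgraph supported in $E_v^{\mathrm{cross}}$ has trivial monodromy, as a nontrivial loop would produce a nontrivial element of $G_{\tilde v}$ that also fixes the edge of $U$. Minimality of $S$ together with nontriviality of $U$ force the existence of $v \in S/G$ with both $E_v^{\mathrm{cross}}$ and $E_v^{\mathrm{stay}}$ non-empty; taking $A$ to be any single vertex of $E_v^{\mathrm{cross}}$ then produces the required admissible cut (by disconnection).

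For the converse, fix a Grushko tree $S$ and a vertex $v$ of $S/G$ such that $\text{Wh}_S(g,v) = A \cup B$ is an admissible decomposition, with $A$ connected, $\mathrm{Mon}(A)=\{1\}$, and $A \cap B \subseteq \{x\}$. Using $\mathrm{Mon}(A)=\{1\}$, I would first reselect the lifts of the edges in $E_v$ so that every edge of $A$ carries the trivial label, and then perform an equivariant blow-up at the orbit of $\tilde v$: replace each lift of $v$ by a length-one edge $[\tilde v_A, \tilde v_B]$ of trivial stabilizer, reattach the directions in $A\setminus\{x\}$ to $\tilde v_A$ and those in $B\setminus\{x\}$ to $\tilde v_B$, and use the trivial monodromy to spread these assignments consistently over the $G_{\tilde v}$-orbit of each $e \in E_v$. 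Collapsing everything except the orbit of the new edge then produces a one-edge $(G,\calf)$-free splitting $U$. Admissibility of the cut means no edge of $\text{Wh}_S(g,v)$ crosses from $A\setminus\{x\}$ to $B\setminus\{x\}$, so no turn of $\Axis_S(g)$ at $\tilde v$ uses the newly inserted edge and $g$ remains elliptic in $U$, exhibiting $g$ as an element of a proper $(G,\calf)$-free factor.

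The main obstacle is the equivariant blow-up in the converse direction: making the reattachment at $\tilde v_A$ compatible with the full $G_{\tilde v}$-action requires tracking how $G_{\tilde v}$ permutes the lifts of each $e \in E_v$, and the monodromy hypothesis is precisely what guarantees that this tracking is consistent and that the new edge really has trivial stabilizer. On the forward side, the subtlety is to exhibit a single vertex orbit $v\in S/G$ on which both the axis of $g$ and the cut are simultaneously visible, which ultimately comes from minimality of $S$ together with a connectedness argument on $S/G$.
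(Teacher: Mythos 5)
Your converse argument has a genuine gap: the single-step blow-up you describe only works in the \emph{disjoint union} case of the admissible cut, not in the admissible \emph{cutpoint} case. When $A\cap B=\{x\}$, the cut vertex $x$ lies in both subgraphs, and the Whitehead graph may well contain edges from $x$ into $A\setminus\{x\}$ \emph{and} edges from $x$ into $B\setminus\{x\}$ (both types can be in $L^2(T)$'s Whitehead graph since $A\cup B = \text{Wh}_S(g,v)$ and only the single vertex $x$ is shared). You never say which side of the new edge $[\tilde v_A,\tilde v_B]$ the direction $\tilde x$ is reattached to — and there is no good choice: if $\tilde x$ goes to $\tilde v_B$, any turn $\{\tilde x,\tilde e\}$ with $e\in A\setminus\{x\}$ crossed by $\Axis_S(g)$ now crosses the newly inserted edge, and symmetrically if $\tilde x$ goes to $\tilde v_A$. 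So $g$ need not be elliptic in your one-edge splitting. The move that an admissible cutpoint really produces is a \emph{fold} (the preimage of $x$ in the unfolded tree splits into two copies, one on each side), and after a fold the axis typically crosses both copies, so one cannot just collapse everything else to get a free splitting. This is precisely why the paper's proof first proves Lemma~\ref{lemma-cutpoint} — the equivalence of an admissible cut with the existence of some Grushko tree $S'$ and a non-injective cellular map $S'\to S$ preserving $\|g\|$ — and then, in the proof of Proposition~\ref{lem_simple}, \emph{iterates} this lemma to build a sequence $(S_i)$ with $\|g\|_{S_i}=\|g\|_S$ and $\vol(S_i/G)\to\infty$, until some natural edge exceeds $\|g\|_S$ in length, forcing the axis of $g$ to miss that orbit. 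Your proposal misses the iteration entirely.

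There is also a smaller gap in the forward direction. You write that, since $g$ is elliptic in $U$ with a unique fixed vertex $v_g$, \textquotedblleft hence $f(\Axis_S(g))=\{v_g\}$\textquotedblright{} — but this does not follow for an arbitrary $G$-equivariant cellular map $f\colon S\to U$. The image $f(\Axis_S(g))$ is only a $g$-invariant connected subtree of $U$ containing $v_g$, and it can easily be strictly larger (e.g.\ if $f$ does not send the $A$-minimal subtree $S_A\subseteq S$ into $\{v_g\}$, which nothing in your set-up forces). To run your argument you must explicitly \emph{choose} $f$ to be constant on $S_A$, i.e.\ $f(S_A)=\{v_g\}$, and then extend equivariantly and cellularly; otherwise the axis may cross edges you have labelled \textquotedblleft cross,\textquotedblright{} and the cross vertices are no longer isolated in $\text{Wh}_S(g,v)$. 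The paper sidesteps this by going in the opposite direction: it blows up the free splitting $A*B$ into a Grushko tree $R$ containing $S_A$ as a subtree and maps $R\to S$ by a cellular map restricting to the identity on $S_A$, which makes the control over the axis of $g$ automatic. Once this choice of $f$ is made explicit, your observation that cross vertices are isolated is a clean and slightly more direct route to the admissible cut than the paper's invocation of Lemma~\ref{lemma-cutpoint}, so the forward direction is salvageable.
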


We will start by proving the following lemma.

\begin{lemma}\label{lemma-cutpoint}
Let $S$ be a Grushko tree, and let $g\in G$ be a nonperipheral element. 
The following conditions are equivalent. 
\begin{itemize}
\item[$(i)$] There exists a vertex $v\in S/G$ such that $\text{Wh}_S(g,v)$ has an admissible cut.
\item[$(ii)$] There exist a vertex $\tilde v\in S$, two sets $\tilde A, \tilde B$ of edges originating at $\tilde v$ such that 
\begin{itemize}
\item $\tilde A\cap h\tilde A=\es$ for all $h\in G_v\setminus \{1\}$,
\item $\tilde B$ is $G_v$-invariant,
\item $(G_v.\tilde A)\cup \tilde B$ contains all edges originating at $\tilde v$,
\item $\tilde A\cap\tilde B$ contains at most one edge, 
\item if $\{e,e'\}$ is a turn in the axis of $g$ in $S$, then there exists $h\in G$ such that $h.\{e,e'\}\subseteq\tilde  A$ or $h.\{e,e'\}\subseteq\tilde  B$.
\end{itemize}
\item[$(iii)$] There exists a Grushko tree $S'$, and a non-injective cellular map (i.e. sending edge to edge or vertex) from $S'$ to $S$, such that $||g||_{S'}=||g||_S$.
\end{itemize}
\end{lemma}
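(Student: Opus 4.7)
The plan is to close the cycle $(i)\Rightarrow(ii)\Rightarrow(iii)\Rightarrow(i)$. The step $(i)\Leftrightarrow(ii)$ will be a dictionary translating the Whitehead-graph data at $v$ into geometric data at a lift $\tilde v$ of $v$ in $S$, while the passage through $(iii)$ will be realized geometrically by blowing up (resp.\ collapsing) $\tilde v$ along the partition of its link into $\tilde A$ and $\tilde B$.

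\textbf{From $(i)$ to $(ii)$ to $(iii)$.} For $(i)\Rightarrow(ii)$, starting from an admissible cut $\mathrm{Wh}_S(g,v)=A\cup B$ with $\mathrm{Mon}(A)=\{1\}$, I would use the observation recalled in the text (a connected subgraph with trivial monodromy can be arranged to carry only trivial labels after re-choosing the lifts) to re-choose the lifts at $\tilde v$ so that every edge of $A$ has trivial label, then set $\tilde A$ to be the new chosen lifts of the vertices of $A$ and $\tilde B$ to be the full $G_v$-orbit of the chosen lifts of the vertices of $B$. The four set-theoretic conditions of (ii) follow by inspection, and each turn of the axis at $\tilde v$ projects to an edge of $\mathrm{Wh}_S(g,v)$ in $A$ or $B$, which lifts back---via the trivial label in the first case, via a $G_v$-translation in the second---to a turn in $\tilde A$ or $\tilde B$. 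For $(ii)\Rightarrow(iii)$, I would construct $S'$ by an equivariant blow-up at every $G$-translate of $\tilde v$: replace $\tilde v$ by a vertex $\tilde v_B$ with stabilizer $G_v$ carrying the edges of $\tilde B$, together with a $G_v$-orbit $\{h\tilde v_A\}_{h\in G_v}$ of trivially-stabilized vertices (triviality coming from $\tilde A\cap h\tilde A=\emptyset$) where $h\tilde v_A$ carries the edges of $h\tilde A$, and glue $\tilde v_B$ to each $h\tilde v_A$ by a new edge. The resulting $S'$ is a Grushko tree, and the collapse of the new edges defines a non-injective cellular map $f\colon S'\to S$. The equality $\|g\|_{S'}=\|g\|_S$ reduces to showing no new edge is traversed by the axis: by hypothesis each turn of the axis at $\tilde v$ in $S$ has a $G$-translate in $\tilde A$ or $\tilde B$, the translator must lie in $G_v$ since $\tilde A,\tilde B$ consist of edges at $\tilde v$, and in $S'$ the two edges of the translated turn share a common vertex ($h\tilde v_A$ or $\tilde v_B$).

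\textbf{From $(iii)$ back to $(i)$, and the main obstacle.} Conversely, given non-injective cellular $f\colon S'\to S$ with $\|g\|_{S'}=\|g\|_S$, I would first reduce to the case where $f$ is a collapse of a subforest by factoring $f$ through the collapse of the maximal $f$-collapsed subforest of $S'$ (the length equality survives because collapse only decreases translation lengths). Then some fiber $F_v:=f^{-1}(\tilde v)$ is a non-trivial subtree of $S'$ on which $G_v$ acts; if $G_v$ is non-trivial, it is peripheral and hence fixes a unique vertex $w_B\in F_v$. I would choose an edge $e'\subseteq F_v$ so that one component $F^A$ of $F_v\setminus e'$ avoids $w_B$ (hence has trivial $G_v$-stabilizer), let $F^B$ be the other component, and define $\tilde A$ (resp.\ $\tilde B$) as the $f$-images of the non-collapsed $S'$-edges adjacent to $F^A$ (resp.\ $F^B$). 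Conditions (ii) then hold with $\tilde A\cap \tilde B=\emptyset$; the turn condition in particular follows because the axis of $g$ in $S'$, by $\|g\|_{S'}=\|g\|_S$, crosses no collapsed edge and so at each passage through $F_v$ sits at a single vertex on one side of $e'$. Translating back through the Whitehead graph, the trivial $G_v$-stabilizer of $F^A$ permits a choice of lifts so that the corresponding subgraph $A\subseteq\mathrm{Wh}_S(g,v)$ has all edges trivially labeled, giving $\mathrm{Mon}(A)=\{1\}$ and hence an admissible cut. The delicate point---the main obstacle---is precisely this choice of $e'$ guided by the $G_v$-action on $F_v$: picking an edge that leaves $w_B$ on the $A$-side would destroy the trivial-monodromy condition, so one must use peripherality of $G_v$ to locate its unique fixed vertex and split away from it.
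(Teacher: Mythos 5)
Your proposal contains a genuine gap: it implicitly treats every non-injective cellular map as arising from collapsing a subforest, and thereby misses the \emph{fold} case entirely. This is visible at two places. First, in $(ii)\Rightarrow(iii)$ your blow-up construction (a new vertex $\tilde v_B$ joined by new edges to the vertices $h\tilde v_A$) produces a \emph{collapse} map $S'\to S$; this works only if $\tilde A\cap\tilde B=\emptyset$. When $\tilde A\cap\tilde B=\{\tilde e\}$ (the cutpoint case, which is allowed by the definition of an admissible cut), the edge $\tilde e$ would have to originate at both $\tilde v_A$ and $\tilde v_B$ in $S'$, which is impossible; if you assign $\tilde e$ to only one of them, a turn of the axis of $g$ in $S$ of the form $\{\tilde e, e'\}$ with $e'$ on the other side will be forced to cross a new edge in $S'$, so $\|g\|_{S'}>\|g\|_S$. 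The correct construction in this case is a \emph{fold} (two edges $\tilde e_1,\tilde e_2$ in $S'$ identified to $\tilde e$ in $S$), as in the paper's Figure for $(ii)\Leftrightarrow(iii)$, not a collapse. Second, and symmetrically, your $(iii)\Rightarrow(i)$ begins by ``reducing to a collapse of a subforest.'' But a non-injective cellular map $f:S'\to S$ with $\|g\|_{S'}=\|g\|_S$ may collapse no edge at all (a pure fold); in that case the maximal $f$-collapsed subforest is trivial, each fiber $f^{-1}(\tilde v)$ is a single point, and the argument about the subtree $F_v$, the edge $e'$, and the components $F^A$, $F^B$ has nothing to bite on. Since a general cellular map factors as a sequence of folds and collapses, and any one of them could be the non-injective step, both cases must be handled.

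Aside from this, the route differs from the paper's in two minor ways. The paper proves $(i)\Leftrightarrow(ii)$ directly (as a dictionary) and then $(ii)\Leftrightarrow(iii)$, rather than closing the cycle via $(iii)\Rightarrow(i)$; your $(iii)\Rightarrow(i)$ step has to re-translate geometric data into Whitehead-graph data, which is extra work the paper avoids. Also, in $(iii)\Rightarrow(ii)$ the paper reduces to the case where $f$ is a \emph{single} fold or a \emph{single} collapse (by picking the last non-injective map in a factorization, the later maps being isomorphisms), and then in both cases the fiber of the interesting vertex of $S$ is a single edge; it never needs to deal with an arbitrary subtree $F_v$ and the choice of a ``good'' edge $e'$ inside it. Your observation about locating the unique $G_v$-fixed vertex $w_B$ in $F_v$ and splitting away from it is correct as far as it goes, but it solves a harder sub-problem than necessary while leaving the essential fold case unaddressed.
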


\begin{rk}
In $(iii)$, in the situation where there is a cellular morphism $f:S'\to S$ (which is a map sending edge to edge), the conclusion says that $g$ is legal for the train-track structure defined by $f$. Notice here that the Grushko trees $S$ and $S'$ can have vertices of valence $2$.
When $f$ is a collapse map, the conclusion says that the axis of $g$ in $S'$ does not contain any collapsed edge. In all cases, the tree $S'$ has more orbits of edges than $S$, but $||g||_{S'}=||g||_S$.
\end{rk}

\begin{proof}
We first prove that $(i)\Leftrightarrow(ii)$. Assume that $(i)$ holds, and consider a decomposition $\text{Wh}_S(g,v)=A\cup B$, where $A$ is connected and has trivial monodromy, and $A\cap B$ contains at most one point. Let $\tilde A$ be a set consisting of exactly one lift of each edge corresponding to a vertex of the subgraph $A\subseteq\text{Wh}_S(g,v)$, where these lifts are chosen so that all labels of edges in $A$ are trivial. Let $\Tilde B$ be the full preimage of $B$ in $S$. Then for these choices of lifts, Assertion $(ii)$ holds.

Conversely, let $\tilde A,\tilde B$ as in $(ii)$, and $A,B$ their image in $S/G$. 
The construction of the Whitehead graph requires to choose lifts of the edges in $E_v$, and we make sure that the lifts of the edges in $A$ are chosen in $\Tilde A$.
We view $A$ and $B$ as induced subgraphs of $\text{Wh}_S(g,v)$.
It is clear that $A\cup B$ contains all vertices of  $\text{Wh}_S(g,v)$ and that $A\cap B$ contains at most one vertex.
The last hypothesis from $(ii)$ ensures that every edge in $\text{Wh}_S(g,v)$ is contained either in $A$ or in $B$, and that all edges in $A$ have trivial label. 
The subgraph $A$ might fail to be connected; however, up to replacing $A$ by one of its connected components $A_0$, and replacing $B$ by $B\cup (A\setminus A_0)$, we get a decomposition of $\text{Wh}_S(g,v)$ as in the definition of an admissible cut, showing that Assertion $(i)$ holds.
\\

\begin{figure}[htb]
\begin{center}
\includegraphics[width=.8\textwidth]{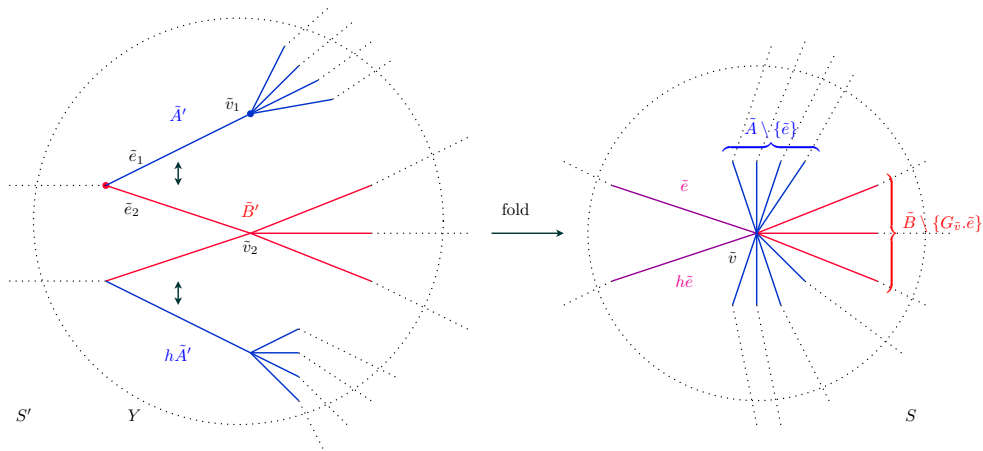}
\caption{$(ii)\Leftrightarrow(iii)$ in the case of a fold}
\label{fig-geom-class}
\end{center}
\end{figure}

We now prove that $(iii)\imp(ii)$. Let $f:S'\ra S$ be a non-injective cellular map.
Since any such map is a composition of collapses and folds, up to changing $S'$ we can assume that $f:S'\ra S$ is a fold or a collapse.
We first consider the case where $f$ is the fold defined by a pair of edges $\tilde e_1,\tilde e_2\subseteq S'$ with the same origin (see Figure \ref{fig-geom-class}). 
Since $S$ has trivial edge stabilizers, the edges $\tilde e_1$ and $\tilde e_2 $ are not in the same orbit. Since $S$ and $S'$ have the same vertex stabilizers, the non-common endpoints $\tilde v_1,\tilde v_2$ of $\tilde e_1,\tilde e_2$ are not in the same $G$-orbit, and either $\tilde v_1$ or $\tilde v_2$ has trivial stabilizer. By symmetry, we assume that $G_{\tilde v_1}=\{1\}$. Let $\tilde{e}:=f(\tilde e_1 )=f(\tilde e_2 )$, and $\tilde{v}:=f(\tilde v_1)=f(\tilde v_2)$. We then have $G_{\tilde v}=G_{\tilde v_2}$.

We define $\Tilde A'$ as the set of edges  of $S'$ incident on $\tilde v_1$, and $\Tilde B'$ as the set of edges incident on $\tilde v_2$. Then $\tilde A'\cap h.\tilde A'=\es$ for every $h\in G_{\tilde v}\setminus\{1\}$, 
 while $\tilde B'$ is $G_{\tilde v}$-invariant. Let $\Tilde A:=f(\Tilde A')$ and $\Tilde B:=f(\Tilde B')$, and note that $\Tilde A\cap \Tilde B=\{\Tilde e\}$. Therefore $\tilde A$ and $\tilde B$ satisfy the first four assertions from $(ii)$.
Since $g$ is legal, any turn in the axis of $g$ in $S$ is the image under $f$ of a turn in the axis of $g$ in $S'$. It follows that the last requirement from $(ii)$ is also satisfied.
\\

\begin{figure}[htb]
\begin{center} 
\includegraphics[width=.6\textwidth]{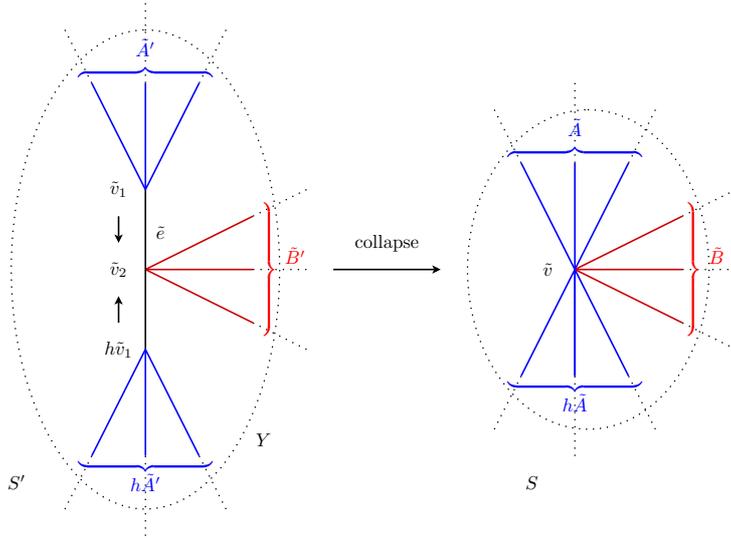}
\caption{$(ii)\Leftrightarrow(iii)$ in the case of a collapse}
\label{fig-geom-class-2}
\end{center}
\end{figure}

\indent We now assume that there exists a Grushko tree $S'$, and a collapse map $f:S'\to S$, with $||g||_{S'}=||g||_S$ (see Figure \ref{fig-geom-class-2}). Let $\tilde e$ be an edge in $S'$ that is collapsed to a point by $f$, and let $\tilde v_1$ and $\tilde v_2$ be the extremities of $\tilde e$. Again, since $S$ and $S'$ have the same vertex stabilizers, the vertices $\tilde v_1$ and $\tilde v_2$ belong to distinct $G$-orbits, and either $\tilde v_1$ or $\tilde v_2$ (say $\tilde v_1$) has trivial stabilizer.  

Let $\Tilde{A}'$ (resp. $\Tilde{B}'$) be the set of edges of $S'$ which are incident on $\tilde v_1$ (resp. $\tilde v_2$), and not in the $G$-orbit of $\tilde e$. Let $\Tilde{A}:=f(\Tilde{A}')$ and let $\Tilde{B}:=f(\Tilde{B}')$. Note that $\tilde A\cap\tilde B=\emptyset$, and that $\tilde B$ is $G_v$-invariant. 
Since $||g||_{S'}=||g||_S$, the axis of $g$ in $S'$ does not cross any edge in the $G$-orbit of $\tilde e$; $(ii)$ follows.  
\\

We prove that $(ii)\imp (iii)$. 
We first assume that $\tilde A\cap \tilde B=\{\tilde e\}$ is non-empty (see Figure \ref{fig-geom-class}).
Consider $$Y=\left(\Big(\coprod_{h\in G_{\tilde v}} h.\Tilde A\Big) \dunion \Tilde B\right)\Bigg/{\sim}$$ where $\sim$ is the equivalence relation
identifying the endpoint of $h.\Tilde e$ in $h.\Tilde A$ with its copy in $\Tilde B$.
There is a natural non-injective cellular map from $Y$ to the star of $\tilde v$.
This is a (bounded) tree with an action of $G_{\tilde v}$, and one defines $S'$ by replacing equivariantly the star of $\tilde v$ in $S$ by $Y$.
Since each vertex at distance $1$ from $\tilde v$ has a unique preimage in $Y$, there is a natural way to attach back the edges of $S$ to $Y$,
thus defining a $G$-tree $S'$, with a non-injective map $S'\ra S$ induced by the map above.
Let $l$ be the axis of $g$ in $S$.
Our assumption on $g$ implies that every turn of $l$ lifts to $S'$.  
The lift $l'$ of $l$ in $S'$ is therefore the axis of $g$ in $S'$, and it embeds isometrically in $S'$ so $||g||_{S'}=||g||_S$.

\indent We now assume that $\tilde A\cap \tilde B=\emptyset$  (see Figure \ref{fig-geom-class-2}). 
We view $\Tilde A$ and $\tilde B$ as subtrees of $S$ and we denote by $\tilde v_1$ (resp. $\tilde v_2$) the copy of $\tilde v$ in $\Tilde A$ (resp. $\Tilde B$). Let $Y$ be the set obtained from 
 $$\left(\Big(\coprod_{h\in G_{\tilde v}} h.\Tilde A\Big) \dunion \Tilde B\right)$$
by attaching an edge $\tilde e$ from $v_B$ to $v_A$ and extending by equivariance. Then there is a natural collapse map from $Y$ to the star of $v$ in $S$. As above, there is a natural way to attach back the edges of $S$ to $Y$, which defines the desired tree $S'$. Since $\tilde A\cap \tilde B=\emptyset$, the axis of $g$ lifts to $S'$ and does not cross any of the newly added edges, so $||g||_{S'}=||g||_S$. 
\end{proof}

\begin{proof}[Proof of Proposition \ref{lem_simple}]
We first assume that $g\in G$ is simple, i.e. there exists a $(G,\calf)$-free splitting $G=A\ast B$, with $g\in A$. Let $S$ be a Grushko tree, and let $S_A$ be the minimal $A$-invariant subtree of $S$, which is a Grushko $(A,\calf_{|A})$-tree. Let $S_B$ be a Grushko $(B,\calf_{|B})$-tree, and let $R$ be a Grushko $(G,\calf)$-tree obtained from the splitting $A\ast B$ by blowing up $A$ (resp. $B$) into $S_A$ (resp. $S_B$). Then up to rescaling and subdividing the edges of $S$, there exists a  cellular map $f:R\to S$, which is the identity when restricted to $S_A$: in particular $||g||_{S}=||g||_{R}$.
If $f$ is an isometry, then the axis of $g$ misses some orbit of edges, hence $\text{Wh}_S(g,v)$ has an isolated vertex.
If $f$ is not injective, the existence of a vertex $v\in S/G$ such that $\text{Wh}_S(g,v)$ has an admissible cut thus follows from Lemma \ref{lemma-cutpoint}.

Conversely, assume that for every Grushko $(G,\calf)$-tree $S$, there exists a vertex $v\in S/G$ such that $\text{Wh}_S(g,v)$ has an admissible cut. 
Let $S$ be a Grushko tree with all edges of length 1. An iterative application of Lemma \ref{lemma-cutpoint} shows that we can find a sequence of Grushko trees $(S_i)_{i\in\mathbb{N}}$ with all edges of length $1$, and with $\text{vol}(S_i/G)=\text{vol}(S/G)+i$, such that $||g||_{S_i}=||g||_S$ for all $i\in\mathbb{N}$. 
Define a \emph{natural edge} of $S_i$ as a connected component of the complement of the set of branch points in $S_i/G$. Since there is a bound on the number of natural edges in $S_i/G$, for all sufficiently large $i\in\mathbb{N}$, one of the natural edges $e$ of $S_i$ has length greater than $||g||_S$. This implies that the axis of $g$ in $S_i$ does not cross any edge in the orbit of $e$, and therefore $g$ is elliptic in the $(G,\calf)$-free splitting obtained from $S_i$ by collapsing all natural edges outside of the orbit of $e$ to points. Therefore $g$ is simple. 
\end{proof}

%


\subsection{Simple algebraic leaves}
\label{sec_simple}

An algebraic leaf is \emph{simple} if it is the limit of a sequence of pairs $(g_n^{-\infty},g_n^{+\infty})$ where for all $n\in\bbN$, $g_n$ is a simple 
element.


\begin{lemma}\label{rational} Let $g\in G$ which is not simple. Then $g^{+\infty}$ is not the endpoint of any simple leaf.
\end{lemma}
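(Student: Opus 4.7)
The plan is to derive a contradiction from the assumption that $g^{+\infty}$ is an endpoint of some simple algebraic leaf, by exploiting the Whitehead graph characterization of simple elements from Proposition~\ref{lem_simple}. Suppose $(\alpha, g^{+\infty}) \in \partial^2(G,\calf)$ is a simple leaf, obtained as a limit of pairs $(g_n^{-\infty}, g_n^{+\infty})$ with each $g_n \in G$ simple. By flip invariance and after passing to a subsequence, I may assume $g_n^{+\infty} \to g^{+\infty}$ and $g_n^{-\infty} \to \alpha$ in any Grushko tree.

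Since $g$ is not simple, Proposition~\ref{lem_simple} yields a Grushko tree $S$ such that $\text{Wh}_S(g, v)$ has no admissible cut for any vertex $v \in S/G$. Because $\alpha \neq g^{+\infty}$, the geodesic $[\alpha, g^{+\infty}]_S$ shares a subray towards $g^{+\infty}$ with $\Axis_S(g) = [g^{-\infty}, g^{+\infty}]_S$. Observer-topology convergence then forces, for every $N$ and all $n$ large enough, that $\Axis_S(g_n) = [g_n^{-\infty}, g_n^{+\infty}]_S$ contains a subsegment of $\Axis_S(g)$ covering $N$ consecutive fundamental domains for the $\grp{g}$-action. In particular, every $G$-orbit of turns crossed by $\Axis_S(g)$ is also crossed by $\Axis_S(g_n)$, so with consistent choices of lifts of edges based at each $v$, we obtain an inclusion $\text{Wh}_S(g, v) \subseteq \text{Wh}_S(g_n, v)$ of labeled graphs on the common vertex set $E_v$.

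The heart of the argument is then a monotonicity lemma: if $W_1 \subseteq W_2$ are labeled graphs on the same vertex set, any admissible cut of $W_2$ induces one of $W_1$. Given $W_2 = A_2 \cup B_2$ with $A_2$ connected, $\text{Mon}(A_2) = \{1\}$, and $A_2 \cap B_2$ either empty or a single vertex $x$, each edge of $W_1$ lies entirely in $A_2$ or entirely in $B_2$. If $W_1$ has an isolated vertex, it admits a disjoint-union cut trivially. Otherwise, I pick any connected component $A_0$ of the subgraph of $W_1$ spanned by the edges contained in $A_2$: its monodromy is trivial as a subgraph of $A_2$, and $V(A_0) \cap V(B_2) \subseteq \{x\}$. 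Then $A_0$ and the complementary subgraph in $W_1$ form an admissible cut of $W_1$ (a disjoint union if $x \notin V(A_0)$, otherwise a cutpoint decomposition at $x$).

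Applying this monotonicity with $W_1 = \text{Wh}_S(g, v)$ and $W_2 = \text{Wh}_S(g_n, v)$ at every $v \in S/G$ shows that $\text{Wh}_S(g_n, v)$ has no admissible cut at any vertex, which by Proposition~\ref{lem_simple} contradicts the simplicity of $g_n$. The main obstacle is establishing the monotonicity lemma itself: one must carefully track labeled subgraphs, monodromies of components, and handle edge cases (isolated vertices in $W_1$, whether the cutpoint $x$ lies in $V(A_0)$, and the possibility that $V(B_2) \subseteq \{x\}$ so that the cut of $W_2$ is degenerate). The transfer of properties from $\Axis_S(g)$ to $\Axis_S(g_n)$ via observer-topology convergence is then a routine step.
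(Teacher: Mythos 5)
Your proof is correct and follows the same route as the paper's: fix a Grushko tree $S$ (via Proposition~\ref{lem_simple}) where every $\text{Wh}_S(g,v)$ has no admissible cut, show that $\text{Wh}_S(g_n,v)\supseteq\text{Wh}_S(g,v)$ for $n$ large by comparing axes, and conclude from Proposition~\ref{lem_simple} that $g_n$ is nonsimple. The only point where you add detail is the monotonicity observation---that an admissible cut of a labeled graph restricts to one of any subgraph on the same vertex set---which the paper asserts without proof; your explicit case analysis there is a valid and careful filling-in of that step.
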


\begin{rk}
  This lemma implies that an element $g\in G$ is simple if and only if the leaf $(g^{-\infty},g^{+\infty})$ is simple.
\end{rk}

\begin{proof}
Let $(g_n)_{n\in\mathbb{N}}\in G^{\mathbb{N}}$ be a sequence of elements such that $(g_n^{-\infty},g_n^{+\infty})$ converges to $(\alpha,g^{+\infty})$ for some $\alpha\in \partial (G,\calf)$. We aim to show that $g_n$ is nonsimple for all sufficiently large $n\in\mathbb{N}$.
 
By Lemma \ref{lem_simple}, 
there exists a Grushko tree $S$ so that for all vertices $v\in S/G$, the Whitehead graph $\text{Wh}_S(g,v)$ has no admissible cut. Notice that every compact subset $K$ of the axis of $g$ in $S$ has a translate $g^{i_0}K$ that is contained in $[\alpha,g^{+\infty}]_S$, so for $n$ large enough 
$g^{i_0}K\subseteq [g_n^{-\infty},g_n^{+\infty}]_S$. 
This implies that for $n$ large enough, and for each vertex $v\in S/G$, the Whitehead graph of $g_n$ at $v$ contains the Whitehead graph of $g$ at $v$, with the same labels. In particular, for all vertices $v\in S/G$, the Whitehead graph $\text{Wh}_S(g_n,v)$ has no admissible cut. 
Lemma \ref{lem_simple} 
then implies that $g_n$ is nonsimple.
\end{proof}

\begin{lemma}\label{lem_l1_simple}
  Let $(\omega',\omega)$ be a simple leaf, with $\omega\in \partial_\infty(G,\calf)$.

Then any leaf in $\Lambda^2(\omega)$ is simple.
\end{lemma}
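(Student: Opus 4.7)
The strategy is to observe that the set of simple leaves is itself a lamination, and then deduce the conclusion formally from Lemma~\ref{lem_l1generated}.

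First, I would check that the set $L_{\mathrm{simple}} \subseteq \partial^2(G,\calf)$ of simple algebraic leaves is a lamination, i.e.\ closed, $G$-invariant and flip-invariant. Closedness is immediate because $L_{\mathrm{simple}}$ is defined as a closure. For $G$-invariance, note that if $h \in G$ is simple (so contained in some proper $(G,\calf)$-free factor $A$) and $g_0 \in G$, then $g_0 h g_0^{-1}$ is contained in the proper $(G,\calf)$-free factor $g_0 A g_0^{-1}$, hence is again simple; since $g_0 \cdot (h^{-\infty},h^{+\infty}) = ((g_0 h g_0^{-1})^{-\infty},(g_0 h g_0^{-1})^{+\infty})$, the $G$-action preserves the generating set, and hence its closure. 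Flip-invariance follows similarly from the fact that $h$ is simple if and only if $h^{-1}$ is.

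Next, the hypothesis that $(\omega',\omega)$ is a simple leaf means precisely that $(\omega',\omega) \in L_{\mathrm{simple}}$. Therefore the lamination $L_{\mathrm{simple}}$ contains the smallest lamination $L_0$ containing $(\omega',\omega)$, namely the lamination generated by $(\omega',\omega)$.

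Finally, since $\omega \in \partial_\infty(G,\calf)$, Lemma~\ref{lem_l1generated} applies and yields $\Lambda^2(\omega) \subseteq L_0$. Combining these two inclusions gives $\Lambda^2(\omega) \subseteq L_{\mathrm{simple}}$, which is exactly the claim. There is no serious obstacle here; the only thing to verify with care is that conjugation preserves simplicity, which is transparent from the definition in terms of proper free factors.
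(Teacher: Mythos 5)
Your proof is correct and follows essentially the same route as the paper: the paper's own argument consists of observing that the lamination generated by $(\omega,\omega')$ consists of simple leaves and then invoking Lemma~\ref{lem_l1generated}, which is precisely what you do, with the added (and justified) verification that the set of simple leaves is indeed a lamination.
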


\begin{proof}
Let $L$ be the lamination generated by $(\omega,\omega')$, which consists of simple leaves.
By Lemma \ref{lem_l1generated}, $\Lambda^2(\omega)\subseteq L$.
\end{proof}

Given $T\in\overline{\calo}$, we denote by $L^2_{simple}(T)$ the sublamination made of all simple leaves in $L^2(T)$ (this set is closed). 

\begin{cor}\label{rat}
Let $T,T'\in\overline{\calo}$, and let $(T_n)_{n\in\mathbb{N}},(T'_n)_{n\in\mathbb{N}}\in\overline{\calo}^{\mathbb{N}}$ be two sequences of trees that converge to $T,T'$ respectively. 
 Assume moreover that $T$ has dense orbits, and that for all  $v\neq v'\in V_{\infty}(G,\calf)$, the group $\langle G_v,G_{v'}\rangle$ is not elliptic in $T$.
\\ If $L^2_{simple}(T_n)\cap L^2_{simple}(T'_n)\neq\emptyset$ for every $n\in \bbN$, then $L^2_{simple}(T)\cap L^2_{simple}(T')\neq\emptyset$.
\end{cor}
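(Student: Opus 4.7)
The plan is to adapt the argument of Corollary \ref{feuille-commune} while carefully tracking simplicity. For each $n$, pick a simple leaf $(\alpha_n,\omega_n)\in L^2_{simple}(T_n)\cap L^2_{simple}(T'_n)$. Fix a Grushko tree $R$ and a finite set $E$ of edges meeting every $G$-orbit; after translating by suitable elements of $G$ (which preserves both $L^2$'s and simplicity), we may assume each $[\alpha_n,\omega_n]_R$ contains an edge of $E$, so the $(\alpha_n,\omega_n)$ lie in the compact set $C_E$. Passing to a subsequence, $(\alpha_n,\omega_n)$ converges to some $(\alpha,\omega)\in\partial^2(G,\calf)$, and this limit is simple because the set of simple leaves, being the closure of $\{(g^{-\infty},g^{+\infty}):g\text{ simple}\}$, is closed in $\partial^2(G,\calf)$. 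Proposition \ref{prop-bounded-closed} applied to $T$ (which has dense orbits) yields $(\alpha,\omega)\in L^2(T)$, while applied to $T'$ it gives $\alpha,\omega\in L^1(T')\cup V_\infty(G,\calf)$. As in the proof of Corollary~\ref{feuille-commune}, the assumption on $T$ prevents $L^2(T)$ from containing a leaf with both endpoints in $V_\infty(G,\calf)$, so up to swapping $\alpha$ and $\omega$ we may assume $\omega\in\partial_\infty(G,\calf)$, whence $\omega\in L^1(T')$.

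Next, I would check that $\omega\in L^1(T)$ as well. Fix any $G$-equivariant map $f\colon R\to T$; since $(\alpha,\omega)\in L^2(T)$, Lemma~\ref{lem_Q} gives that $f([\alpha,\omega]_R)$ has bounded diameter. As $\omega\in\partial_\infty(G,\calf)$, for any basepoint $x_0\in R$ the ray $[x_0,\omega]_R$ eventually coincides with a subray of $[\alpha,\omega]_R$, so $f([x_0,\omega]_R)$ is bounded, proving $\omega\in L^1(T)$. Thus $\omega\in L^1(T)\cap L^1(T')$, and Proposition~\ref{l1-2} yields
\[
\Lambda^2(\omega)\subseteq L^2(T)\cap L^2(T').
\]

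It remains to upgrade this to a common \emph{simple} leaf, which is the step that uses the hypothesis we worked to preserve. Since $(\alpha,\omega)$ is a simple leaf whose endpoint $\omega$ lies in $\partial_\infty(G,\calf)$, Lemma~\ref{lem_l1_simple} ensures that every leaf of $\Lambda^2(\omega)$ is simple. Combined with the inclusion above, this gives $\Lambda^2(\omega)\subseteq L^2_{simple}(T)\cap L^2_{simple}(T')$, and Lemma~\ref{limit_nonempty} guarantees $\Lambda^2(\omega)\neq\emptyset$, completing the proof. The only mild subtlety, which is the main point to keep in mind, is preserving simplicity throughout: closedness of the set of simple leaves handles the limit step, and Lemma~\ref{lem_l1_simple} handles the passage from $(\alpha,\omega)$ to its limit set, so no essentially new ingredient beyond Corollary~\ref{feuille-commune} is required.
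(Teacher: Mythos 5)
Your proof is correct and follows essentially the same route as the paper: take a convergent (sub)sequence of translates of the common simple leaves $l_n$, use closedness of $L^2_{simple}$ to keep the limit $l=(\alpha,\omega)$ simple, apply Proposition~\ref{prop-bounded-closed} to place $l$ in $L^2(T)$ and $\alpha,\omega$ in $L^1(T')\cup V_\infty(G,\calf)$, use the hypothesis on $T$ to arrange $\omega\in\partial_\infty(G,\calf)$, then conclude via Proposition~\ref{l1-2}, Lemma~\ref{lem_l1_simple}, and Lemma~\ref{limit_nonempty}. The only superfluous step is your separate verification that $\omega\in L^1(T)$ using Lemma~\ref{lem_Q}: the first conclusion of Proposition~\ref{prop-bounded-closed} already gives $(\alpha,\omega)\in(L^1(T)\cup V_\infty(G,\calf))^2$ when applied to $T$ (that conclusion does not require dense orbits), so $\omega\in\partial_\infty(G,\calf)$ immediately yields $\omega\in L^1(T)$ without a further argument.
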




\begin{proof}
Consider $l_n\in L^2_{simple}(T_n)\cap L^2_{simple}(T'_n)$, and $l=(\alpha,\omega)\in\partial^2(G,\calf)$ be an algebraic leaf obtained as a limit of translates of $l_n$.
Then $l$ is simple and  by Proposition \ref{prop-bounded-closed} we have $l\in L^2(T)$ and $\alpha,\omega\in L^1(T')\cup V_{\infty}(G,\calf)$.
As in the proof of Corollary \ref{feuille-commune}, the hypothesis on $T$ tells us that up to exchanging $\alpha$ and $\omega$, 
 we can assume $\omega\in \partial_\infty(G,\calf)$.
In particular, $\omega\in L^1(T)\cap L^1(T')$. 
By Proposition \ref{l1-2}, $\Lambda^2(\omega)\subseteq L^2(T)\cap L^2(T')$.
By Lemma \ref{lem_l1_simple}, $\Lambda^2(\omega)$ is simple. Since it is non-empty (Lemma \ref{limit_nonempty}) this concludes the proof.
\end{proof}

We finish this section by mentioning the following fact.

\begin{prop}\label{prop:l2simple}
For every tree $T\in\partial\calo$, one has $L^2_{simple}(T)\neq\emptyset$.
\end{prop}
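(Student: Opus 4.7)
The plan is to construct, for any $T\in\partial\calo$, a sequence of non-peripheral simple elements $g_n\in G$ with $\|g_n\|_T\to 0$. Once this sequence is built, fix a Grushko $(G,\calf)$-tree $R$ and an edge $e\subseteq R$. After replacing each $g_n$ by a conjugate (which preserves both simplicity and translation length in $T$), I may arrange that each axis $\Axis_R(g_n)$ crosses $e$. Compactness of the cylinder $C_e\subseteq\partial^2(G,\calf)$ then yields a convergent subsequence of the pairs $(g_n^{-\infty},g_n^{+\infty})$, and by the definition $L^2(T)=\bigcap_{\eps>0}L^2_\eps(T)$ together with $\|g_n\|_T\to 0$, any limit $(\alpha,\omega)$ lies in $L^2(T)$. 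Since the $g_n$ are simple, this limit is by construction a simple leaf.

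When $T$ has dense orbits, Lemma~\ref{bbt} supplies Grushko trees $R_\eps$ with $\vol(R_\eps/G)\le\eps$ together with $1$-Lipschitz equivariant maps $f_\eps:R_\eps\to T$. The graph of groups $R_\eps/G$ always contains a short simple element: if the free rank $N$ of $(G,\calf)$ is positive, then $R_\eps/G$ contains a non-separating edge, and the HNN stable letter associated to this edge (relative to a choice of spanning tree) generates a rank-one free factor of $(G,\calf)$ and has translation length in $R_\eps$ bounded by the length of the corresponding loop, hence by $\eps$; if instead $N=0$ and $k\ge 3$, the bound on the covolume gives two peripheral vertices $v_1,v_2$ of $R_\eps$ at distance at most $\eps$, and the element $a_1a_2$ with $a_j$ a non-trivial element of the stabilizer $G_{i_j}$ of $v_j$ lies in the proper free factor $G_{i_1}\ast G_{i_2}$ of $(G,\calf)$ and has translation length at most $2\eps$. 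Composing with $f_\eps$ yields $\|g_\eps\|_T\le 2\eps$. (The case $k=2$, $N=0$ with both factors peripheral admits no proper free factor, but then $\partial\calo$ is degenerate and the proposition is vacuous.)

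When $T$ does not have dense orbits, I consider its Levitt decomposition $S$, a nontrivial simplicial $(G,\calf)$-tree. If a vertex $v$ of $S$ supports a nondegenerate subtree $Y_v$ on which $G_v$ acts with dense orbits, I apply the dense-orbits construction to $(G_v,\calf_{|G_v},Y_v)$, using that $G_v$ is contained in a proper free factor of $(G,\calf)$ (which is automatic when the edges of $S$ adjacent to $v$ have trivial stabilizers, since then $S$ restricts to a free splitting near $v$), so that simple elements of $(G_v,\calf_{|G_v})$ remain simple in $(G,\calf)$. Otherwise $T=S$ is simplicial, and Lemma~\ref{l2-carry} implies that every leaf of $L^2(T)$ is carried by a vertex stabilizer; I would then exhibit a non-peripheral simple element elliptic in $T$ by locating a vertex group of $S$ that contains an element of a proper free factor of $(G,\calf)$, so that this element has translation length zero in $T$ and directly gives a simple leaf $(g^{-\infty},g^{+\infty})\in L^2(T)$. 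The main obstacle I anticipate is precisely this last step when $S$ carries nontrivial cyclic edge stabilizers: vertex groups of such splittings may fail to be proper free factors of $(G,\calf)$, and a careful analysis relying on Proposition~\ref{lem_simple} (the Whitehead-graph criterion) will be needed to locate a simple element inside one of them.
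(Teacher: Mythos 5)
Your argument for the dense-orbits case is essentially the paper's argument, and it is correct (the paper routes the conclusion through Proposition~\ref{prop-bounded-closed-v2} rather than through the cylinder $C_e$, but the content is the same; the explicit construction of a short simple element from a short non-separating edge or a short pair of peripheral vertices is a reasonable way to justify the paper's one-line assertion that such $g_n$ exist). The problem is with the non-dense-orbits case, where you correctly identify the danger but leave a genuine gap.

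Specifically, when the Levitt decomposition $S$ carries a nontrivial (cyclic, nonperipheral) edge stabilizer, you propose passing to a vertex group $G_v$ and doing a Whitehead-graph analysis, and you flag this as incomplete. You do not need any of that. The key observation the paper uses is that a nontrivial edge stabilizer $\langle g\rangle$ of a very small $(G,\calf)$-tree is \emph{itself} a simple, nonperipheral element: an edge stabilizer of a $\calz$-splitting is always contained in a proper $(G,\calf)$-free factor (this is \cite[Lemma~5.11]{Hor14-1}, used in the same way in the proof of Proposition~\ref{arat-Z}). Since $g$ fixes an edge of $T$, it is elliptic in $T$, so $(g^{-\infty},g^{+\infty})\in L^2(T)$, and this constant sequence witnesses that the leaf is simple. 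That disposes of the troublesome case in one step, with no need to enter a vertex group or invoke Proposition~\ref{lem_simple}. In the complementary case where every simplicial edge of $T$ has trivial stabilizer, the Levitt decomposition is a $(G,\calf)$-free splitting, so its vertex groups are proper free factors, and Corollary~\ref{carry-gen-l2} already tells you that \emph{every} leaf of $L^2(T)$ is carried by such a vertex group; any leaf carried by a proper free factor is a limit of axes of elements of that factor, hence simple. (Note also that your intermediate subcase --- a vertex of $S$ with a dense-orbit minimal subtree --- is not needed: Corollary~\ref{carry-gen-l2} handles the simplicial and non-simplicial vertex pieces uniformly once the free-splitting/nontrivial-edge dichotomy is in place.)
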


\begin{proof}
We first assume that $T$ has dense orbits. Let $(T_n)_{n\in\mathbb{N}}\in\calo^{\mathbb{N}}$ be a sequence that converges to $T$. Then $\vol(T_n/G)$ converges to $0$. In particular, we can find a sequence of nonperipheral simple elements $g_n$ such that $||g_n||_{T_n}$ converges to $0$. Proposition~\ref{prop-bounded-closed-v2} ensures that every accumulation point of $(g_n^{-\infty},g_n^{+\infty})$ in $\partial^2(G,\calf)$ is a simple leaf in $L^2(T)$.

We now assume that $T$ does not have dense orbits. If $T$ has a simplicial edge with nontrivial stabilizer, then this stabilizer is simple and nonperipheral, so the conclusion follows. If all simplicial edges in $T$ have trivial stabilizer, the conclusion follows from Corollary~\ref{carry-gen-l2}. 
\end{proof}

\section{The Levitt--Lustig map $\calq$ for trees with dense orbits}\label{sec-Q}

In all this section, $T\in \ol\calo$ is a tree with dense orbits.  We denote by $\overline{T}$ the metric completion of $T$, and by $\partial_{\infty}T$ the Gromov boundary of $T$.
In this section, we first extend Levitt--Lustig's map $\mathcal{Q}:\partial(G,\calf)\ra \ol T\cup\partial_\infty T$ (see \cite{LL03}) to the context of free products.
This map identifies the two endpoints of every algebraic leaf in $L^2(T)$  and maps them to $\ol T$ (and not to $\partial_\infty T$), so it induces a map $\calq^2:L^2(T)\ra \ol T$, which  is continuous for the metric topology on $\ol T$ (this extends \cite[Proposition 8.3]{CHL07}).
In Section~\ref{sec_Qcontinu},
we show that $\calq$ is continuous for the observers' topology (see \cite{CHL07} in the free group).
In Section~\ref{sec_QL2}, we prove that the dual lamination $L^2(T)$ coincides with the fibers of $\calq$ (this extends \cite[Proposition 8.5]{CHL08-2}),
which implies that $\ol T\cup \partial_\infty T$ can be identified with the quotient $\partial (G,\calf)/L^2(T)$.


\subsection{Definition of the maps $\calq$ and $\calq^2$}\label{sec_defq}

Lemma \ref{lem_Q} can be used to extend Levitt--Lustig's map $\mathcal{Q}$ (see \cite{LL03}) to the context of free products: this is the content of Proposition~\ref{prop_defQ} below. We will first need the following lemma.

\begin{lemma}\label{quant-f1f2}
Consider $T\in\overline{\calo}$, 
$R_1$ and $R_2$ two Grushko trees, and  $f_1:R_1\to T$ and $f_2:R_2\to T$ two $G$-equivariant maps. 
Then there exists a $G$-equivariant map $g:R_2\to R_1$ such that for all $x\in R_2$, one has $d_T(f_1\circ g(x),f_2(x))\le 2BBT(f_1)$.\\ 
In particular, for all $\omega\in\partial (G,\calf)$, there exist $a_1\in R_1$ and $a_2\in R_2$ such that $f_1([a_1,\omega]_{R_1})$ is contained in the $2BBT(f_1)$-neighbourhood of $f_2([a_2,\omega]_{R_2})$.
\end{lemma}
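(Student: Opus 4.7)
The plan is to construct $g$ equivariantly on vertices of $R_2$ using surjectivity of $f_1$ (which follows from minimality of $T$), extend it piecewise linearly on edges, and verify the BBT bound. For the second assertion, I would then exploit that $g$ is a quasi-isometry to align rays converging to $\omega$.

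First I would observe that $f_1(R_1)$ is a non-empty, connected, $G$-invariant subset of the $\bbR$-tree $T$, hence a $G$-invariant subtree, which must equal $T$ by minimality. So $f_1$ is surjective onto $T$. For each $G$-orbit of vertices of $R_2$, I pick a representative $v$ and a preimage $g(v)\in f_1^{-1}(f_2(v))$. If $v$ has non-trivial stabilizer $H$, then $H$ is peripheral (since $R_2$ is Grushko), and $H$ fixes a unique point both in $R_1$ and in $T$, so $g(v)$ is forced to be the $H$-fixed point of $R_1$, which $f_1$ sends to the $H$-fixed point of $T$, namely $f_2(v)$. Extending equivariantly to all vertices and piecewise linearly on edges yields a $G$-equivariant map $g:R_2\to R_1$.

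For the bound, consider $x$ on an edge $e=[v,w]$ of $R_2$. The point $g(x)$ lies on $[g(v),g(w)]\subseteq R_1$, so $f_1(g(x))\in f_1([g(v),g(w)])$, and the BBT hypothesis on $f_1$ places this image within $BBT(f_1)$ of the geodesic $[f_2(v),f_2(w)]$. Since $f_2(x)$ itself lies on this geodesic, a projection and parameter-comparison argument (one factor of $BBT(f_1)$ controls the perpendicular distance from $f_1(g(x))$ to the geodesic, a second factor controls how much the arc-length parametrization of $[g(v),g(w)]$ by $g$ can differ from the projected parametrization of $[f_2(v),f_2(w)]$) yields $d_T(f_1\circ g(x),f_2(x))\leq 2BBT(f_1)$.

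For the second assertion, I fix any $a_2\in R_2$ and set $a_1:=g(a_2)$. By Forester's result (already used in the proof of Lemma~\ref{identification}), $g$ is a $G$-equivariant quasi-isometry; in particular it is surjective (its image is a $G$-invariant subtree of the minimal tree $R_1$) and extends continuously to the canonical identification of boundaries, sending $\omega\in\partial R_2$ to $\omega\in\partial R_1$. Choosing $a_1$ far enough along $[g(a_2),\omega]_{R_1}$, this boundary compatibility ensures that every $y\in[a_1,\omega]_{R_1}$ can be written as $g(y')$ for some $y'\in[a_2,\omega]_{R_2}$, and then $d_T(f_1(y),f_2(y'))=d_T(f_1\circ g(y'),f_2(y'))\leq 2BBT(f_1)$ by the first part, giving the required inclusion.

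The main obstacle will be obtaining the precise constant $2BBT(f_1)$ rather than the less controlled bound provided by Lemma~\ref{bdd-distance}. A cleaner alternative, which avoids the parameter-comparison subtlety altogether, is to redefine $g$ on each edge so that $f_1\circ g=f_2$ holds exactly, using that $f_1([g(v),g(w)])\supseteq[f_2(v),f_2(w)]$ by connectedness of the image; this makes the inequality trivial, at the cost of a more technical definition of $g$ on edges.
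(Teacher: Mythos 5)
Your construction of $g$ on vertices (picking $f_1$-preimages, forcing the fixed point at vertices with nontrivial stabilizer) is correct and matches the paper. The issue is the bound on an edge, and it is a genuine gap. You claim that beyond the perpendicular $BBT(f_1)$-term there is a second $BBT(f_1)$-term ``controlling how much the arc-length parametrization of $[g(v),g(w)]$ by $g$ can differ from the projected parametrization of $[f_2(v),f_2(w)]$'', but nothing you have set up controls this parametrization mismatch. Concretely: $f_1$ restricted to $[g(v),g(w)]$ is an arbitrary piecewise-linear path from $f_2(v)$ to $f_2(w)$ that stays within $BBT(f_1)$ of $[f_2(v),f_2(w)]$; it can, for instance, traverse essentially the whole geodesic $[f_2(v),f_2(w)]$ during the first half of $[g(v),g(w)]$ and then backtrack and re-traverse it, so that at the midpoint $x$ of $[v,w]$ one has $f_1(g(x))$ near $f_2(w)$ while $f_2(x)$ is the midpoint of $[f_2(v),f_2(w)]$. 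If $d_T(f_2(v),f_2(w))$ is large compared to $BBT(f_1)$, the claimed bound fails.

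Your proposed ``cleaner alternative''---redefining $g$ on each edge so that $f_1\circ g=f_2$ exactly---does not obviously repair this either: although $f_1([g(v),g(w)])\supseteq[f_2(v),f_2(w)]$ by connectedness, $f_1$ restricted to $[g(v),g(w)]$ may fail to be injective (it may backtrack), so choosing a preimage of $f_2(x)$ for each $x$ in a \emph{continuous} way runs into exactly the kind of branch/jump problem that backtracking produces.

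What makes the argument work in the paper, and what is missing from your proposal, is a preliminary \emph{subdivision of $R_2$}: subdivide the edges of $R_2$ so that the $f_2$-image of each edge has diameter at most $C\leq BBT(f_1)$. Then on an edge $[u,v]$, the point $f_1(g(x))$ lies within $BBT(f_1)$ of $[f_1(g(u)),f_1(g(v))]=[f_2(u),f_2(v)]$, and the latter segment has diameter at most $C$; since $f_2(x)$ also lies on this segment, $d_T(f_1(g(x)),f_2(x))\leq BBT(f_1)+C\leq 2BBT(f_1)$. No parametrization-matching claim is needed. Your treatment of the second assertion is essentially right (and in fact one can simply take $a_1=g(a_2)$: $g([a_2,\omega]_{R_2})$ is a connected set containing $g(a_2)$ whose closure contains $\omega$, hence it contains the ray $[g(a_2),\omega]_{R_1}$), but the first part needs the subdivision step to close the gap.
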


\begin{proof}
The last assertion in the lemma follows from the first by choosing $a_1$ and $a_2$ such that $[a_1,\omega]_{R_1}\subseteq g([a_2,\omega]_{R_2})$. 
We now prove the first assertion.

Fix $C\leq BBT(f_1)$, and subdivide the edges of $R_2$, so that the  $f_2$-image of each edge of $R_2$ has diameter at most $C$. 
Let  $g:R_2\to R_1$ be any $G$-equivariant map that is linear on edges, that sends any vertex with non-trivial stabilizer
to its fix point in $R_1$, and  sends any other vertex $v_2\in R_2$ to an $f_1$-preimage of $f_2(v_2)$.
Then $f_1\circ g$ and $f_2$ coincide on the vertices of $R_2$.
If $x$ is contained in an edge $[u,v]$ in $R_2$, 
then $g(x)\in [g(u),g(v)]$, so $f_1(g(x))$ lies at distance at most $BBT(f_1)$ from 
$[f_1(g(u)),f_1(g(v))]=[f_2(u),f_2(v)]$. Since the diameter of $f_2([u,v])$ is at most $C$, we get
$d_T(f_2(x),f_1(g(x)))\leq BBT(f_1)+C\leq 2BBT(f_1)$.
\end{proof}

\renewcommand{\theenumi}{\roman{enumi}}
\renewcommand{\labelenumi}{(\theenumi)}

\begin{prop}\label{prop_defQ}
Let $T\in\overline{\mathcal{O}}$ be a tree with dense orbits. There exists a unique $G$-equivariant map $\mathcal{Q}:\partial (G,\calf)\to\overline{T}\cup\partial_{\infty} T$ such that the following holds.
For every $\epsilon>0$, and every Grushko tree $R_\eps$ admitting an $\eps$-BBT map $f_\eps:R_\eps\ra T$
and for all $\omega\in \partial (G,\calf)$, 
\begin{enumerate}
\item \label{it_1} if $\omega\in V_\infty(G,\calf)$, then $\calq(\omega)=f_\eps(\omega)$;
\item \label{it_2} if $\omega\in \partial_\infty(G,\calf)$ and $f_{\eps|[x_0,\omega]_{R_\eps}}$ is unbounded (for some base point $x_0\in R_\eps$), 
then $\calq(\omega)\in \partial_\infty T$ and
 $f_{\eps|[x_0,\omega]_{R_\eps}}$ converges to $\calq(\omega)$;
  \item \label{it_3}
if $\omega\in \partial_\infty(G,\calf)$ and $f_{\eps|[x_0,\omega]_{R_\eps}}$ is bounded (for some base point $x_0\in R_\eps$), 
then $\calq(\omega)\in \overline{T}$, 
and there exists $y_0\in [x_0,\omega]_{R_\eps}$ such that
 $f_\eps([y_0,\omega]_{R_\eps})\subseteq B_{10\eps}(\calq(\omega))$.
\end{enumerate}
\end{prop}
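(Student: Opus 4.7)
The plan is to define $\calq$ by a limit construction using the $\eps$-BBT, $1$-Lipschitz equivariant maps $f_\eps:R_\eps\to T$ produced by Lemma~\ref{bbt}, and then verify that conditions (i)--(iii) hold and force uniqueness. For $\omega\in V_\infty(G,\calf)$, the group $G_\omega$ is infinite and peripheral; since $T$ has dense orbits its arc stabilizers are trivial (\cite[Proposition~4.17]{Hor14-1}), so $G_\omega$ fixes a unique point of $T$. I declare $\calq(\omega)$ to be that point. For any equivariant map $f:R\to T$, the image $f(\omega)$ is a $G_\omega$-fixed point, hence equal to $\calq(\omega)$, which establishes (i) and is in fact forced by equivariance.

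For $\omega\in\partial_\infty(G,\calf)$, fix a basepoint $x_0$ in some Grushko tree. The basic dichotomy is whether $f_\eps([x_0,\omega]_{R_\eps})$ is bounded in $T$, and Lemma~\ref{quant-f1f2} shows that this dichotomy is independent of the choices of $R_\eps$, $f_\eps$ and $x_0$. In the unbounded case, for any sequence $y_i\to\omega$ along the ray, the BBT property forces $[f_\eps(x_0),f_\eps(y_i)]$ to lie in the $\eps$-neighbourhood of $f_\eps([x_0,y_i]_{R_\eps})$; hence for $i<j$ the geodesics $[f_\eps(x_0),f_\eps(y_i)]$ and $[f_\eps(x_0),f_\eps(y_j)]$ share long common initial subsegments and the Gromov products $(f_\eps(y_i)\mid f_\eps(y_j))_{f_\eps(x_0)}$ tend to infinity. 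Unboundedness places the limit $\calq(\omega)$ in $\partial_\infty T$, giving (ii).

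In the bounded case, Lemma~\ref{lem_Q}(2) produces, for each $\eps$, a point $c_\eps\in T$ and a subray $[y_\eps,\omega]_{R_\eps}$ with $f_\eps([y_\eps,\omega]_{R_\eps})\subseteq B_{3\eps}(c_\eps)$. For $\eps'<\eps$, Lemma~\ref{quant-f1f2} allows us to place the $f_\eps$- and $f_{\eps'}$-images of suitable rays to $\omega$ within $2\eps$ of one another; choosing a point far enough along both rays so that each image lies in the corresponding ball around $c_\eps$ (resp.\ $c_{\eps'}$) yields $d_T(c_\eps,c_{\eps'})\le 5\eps+3\eps'$. Hence $c_\eps$ tends to a limit $\calq(\omega)\in\overline{T}$ as $\eps\to 0$; letting $\eps'\to 0$ in the inequality also gives $d_T(c_\eps,\calq(\omega))\le 5\eps$, so $f_\eps([y_\eps,\omega]_{R_\eps})\subseteq B_{8\eps}(\calq(\omega))\subseteq B_{10\eps}(\calq(\omega))$, establishing (iii). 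Independence under alternative choices (different tree, map, basepoint) follows from the same comparison, and equivariance of $\calq$ is inherited from the $f_\eps$.

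Uniqueness of $\calq$ is built into the three conditions: they pin down $\calq(\omega)$ up to an error of size $O(\eps)$ for every $\eps>0$. The main technical obstacle is the bounded case, specifically securing the Cauchy estimate $d_T(c_\eps,c_{\eps'})\le 5\eps+3\eps'$ by carefully combining Lemmas~\ref{quant-f1f2} and~\ref{lem_Q}(2), and then bookkeeping constants sharply enough to land within the required bound $10\eps$ in (iii). The unbounded case is standard Gromov-hyperbolic reasoning once BBT is in hand.
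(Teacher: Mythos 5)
Your proposal is correct and follows essentially the same route as the paper's proof: define $\calq(\omega)$ as the unique $G_\omega$-fixed point when $\omega\in V_\infty(G,\calf)$, as the Gromov boundary limit of $f_\eps$ along the ray in the unbounded case, and in the bounded case use Lemma~\ref{lem_Q}(2) to get approximating centers $c_\eps$ with the identical Cauchy estimate $d_T(c_\eps,c_{\eps'})\le 5\eps+3\eps'$ derived from Lemma~\ref{quant-f1f2}, yielding $f_\eps([y_\eps,\omega])\subseteq B_{8\eps}(\calq(\omega))\subseteq B_{10\eps}(\calq(\omega))$. The only cosmetic difference is that for the unbounded case you spell out a Gromov-product argument where the paper simply invokes finiteness of the BBT constant.
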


\begin{rk}
  An important point is that the point $\calq(\omega)$ does not depend on the
  map $f_\eps$.
\end{rk}


\begin{proof}
We follow Levitt--Lustig \cite[Proposition 3.1]{LL03}. Uniqueness of the map $\calq$ is obvious (using the fact that there exist Grushko trees with $1$-Lipschitz $\eps$-BBT maps towards $T$ for arbitrary small values of $\eps$, see Lemma \ref{bbt}).
We fix a Grushko tree $R$, an equivariant map $f:R\ra T$, and a basepoint $a\in R$.
Fix $\omega\in \partial (G,\calf)$. 


If $\omega\in V_{\infty}(G,\calf)$, then we define $\calq(\omega)$  as the unique point in $T$ which is fixed by the stabilizer of $\omega$. 
This definition clearly satisfies $(i)$ for every map $f_\eps$.

Assume now that $\omega\in\partial_{\infty} (G,\calf)$, and that $f([a,\omega]_R)$ is unbounded. 
We have $BBT(f)<\infty$, so $f_{|[a,\omega]_R}$ converges to a point in $\partial_\infty T$, which we define
as $\calq(\omega)$. It then easily follows from Lemma \ref{quant-f1f2} that for any $G$-equivariant map $f':R'\ra T$ from another Grushko tree to $T$, and any $a'\in R'$, 
$f'_{|[a',\omega]_{R'}}$  converges to $\calq(\omega)$, showing that $(ii)$ holds. 

We now consider the case where  $f([a,\omega]_R)$ is bounded. Then given any equivariant $C$-BBT map $f':R'\ra T$ from a Grushko tree $R'$ to $T$, 
and any base point $b\in R'$, $f'([b,\omega]_{R'})$ is bounded.
Assertion~2 from Lemma \ref{lem_Q} enables us to choose a point $\calq_{f'}(\omega)\in T$ (depending on $f'$), so that there exists $a'\in R'$ such that $f'([a',\omega]_{R'})\subseteq B_{3C}(\calq_{f'}(\omega))$. 

We now claim that if $f_1:R_1\to T$ and $f_2:R_2\to T$ are $G$-equivariant maps, then $d_{T}(\calq_{f_1}(\omega),\calq_{f_2}(\omega))\le 5BBT(f_1)+3BBT(f_2)$. Property $(iii)$ follows from this claim, by defining $\calq(\omega)$ as the limit of any sequence $\calq_{f_n}(\omega)$ (necessarily Cauchy) with $BBT(f_n)$ converging to $0$. 

We now prove the above claim.
By the last statement in Lemma \ref{quant-f1f2}, there exist $a_1\in R_1$ and $a_2\in R_2$ such that $f_1([a_1,\omega]_{R_1})$ is contained in 
the $2BBT(f_1)$-neighbourhood of $f_2([a_2,\omega]_{R_2})$.
Since by construction, $f_i([a_i,\omega]_{R_i})$ is eventually contained in the ball of radius $3BBT(f_i)$ around $\calq_{f_i}(\omega)$ for all $i\in\{1,2\}$,
we get that $d_{T}(\calq_{f_1}(\omega),\calq_{f_2}(\omega))\leq 3BBT(f_1)+3BBT(f_2)+2BBT(f_1)$. This proves our claim, and finishes the proof of Proposition \ref{prop_defQ}. 
\end{proof}

The following lemma shows that leaves of $L^2(T)$ are contained in fibers of $\calq$. A converse inclusion
will be proved in Proposition \ref{equality-Q}.

\begin{lemma}\label{fibreQ}
  For every algebraic leaf $(\alpha,\omega)\in L^2(T)$, we have $\calq(\alpha)=\calq(\omega)$ and this point lies in $\ol T$ (not in $\partial_\infty T$).
\end{lemma}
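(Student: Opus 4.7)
The plan is to apply Lemma~\ref{bbt} to produce, for every $\eps>0$, a Grushko tree $R_\eps$ together with a $1$-Lipschitz $G$-equivariant map $f_\eps:R_\eps\to T$ satisfying $BBT(f_\eps)\leq\eps$. Then Lemma~\ref{lem_Q}(3) applied to $(\alpha,\omega)\in L^2(T)$ gives
\[
\mathrm{diam}\bigl(f_\eps([\alpha,\omega]_{R_\eps})\bigr)\leq 20\eps.
\]
In particular, this image is bounded, so if $\alpha\in\partial_\infty(G,\calf)$ then $f_\eps$ restricted to $[x_0,\alpha]_{R_\eps}$ is bounded for any basepoint $x_0$ chosen on $[\alpha,\omega]_{R_\eps}$, and the same holds for $\omega$. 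By Proposition~\ref{prop_defQ}(ii)--(iii), this boundedness forces $\calq(\alpha),\calq(\omega)\in\overline T$ (and not in $\partial_\infty T$).

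To show that the two points coincide, I would locate in each case a point in $[\alpha,\omega]_{R_\eps}$ whose $f_\eps$-image approximates $\calq(\alpha)$, and similarly for $\omega$. More precisely, if $\alpha\in V_\infty(G,\calf)$ then by Proposition~\ref{prop_defQ}(i), $\calq(\alpha)=f_\eps(\alpha)$ is itself in the image $f_\eps([\alpha,\omega]_{R_\eps})$; if instead $\alpha\in\partial_\infty(G,\calf)$, then Proposition~\ref{prop_defQ}(iii) provides a point $y_\alpha\in[\alpha,\omega]_{R_\eps}$ (near the $\alpha$-end) with $f_\eps(y_\alpha)\in B_{10\eps}(\calq(\alpha))$. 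Choose analogously a point $y_\omega\in[\alpha,\omega]_{R_\eps}$ with $f_\eps(y_\omega)\in B_{10\eps}(\calq(\omega))$. Since $f_\eps(y_\alpha)$ and $f_\eps(y_\omega)$ both lie in the image $f_\eps([\alpha,\omega]_{R_\eps})$ of diameter at most $20\eps$, the triangle inequality yields
\[
d_T(\calq(\alpha),\calq(\omega))\leq 10\eps+20\eps+10\eps=40\eps.
\]
Since the points $\calq(\alpha)$ and $\calq(\omega)$ do not depend on $\eps$ and this estimate holds for every $\eps>0$, we conclude $\calq(\alpha)=\calq(\omega)$.

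There is really no substantial obstacle: the proof is essentially a packaging of Lemma~\ref{lem_Q} together with the definition of $\calq$ from Proposition~\ref{prop_defQ}. The only mild care needed is the case distinction between $V_\infty(G,\calf)$ and $\partial_\infty(G,\calf)$ endpoints, and the observation that even in the latter case the diameter bound along $[\alpha,\omega]_{R_\eps}$ prevents $\calq$ from sending the endpoints into $\partial_\infty T$.
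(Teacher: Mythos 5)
Your proposal is correct and follows essentially the same route as the paper's own proof: apply Lemma~\ref{bbt}, invoke Lemma~\ref{lem_Q}(3) for the $20\eps$ diameter bound, then use Proposition~\ref{prop_defQ} to place $\calq(\alpha)$ and $\calq(\omega)$ within $10\eps$ of the image, yielding $d_{\ol T}(\calq(\alpha),\calq(\omega))\le 40\eps$ for all $\eps>0$. The explicit case distinction between $V_\infty(G,\calf)$ and $\partial_\infty(G,\calf)$ endpoints is left slightly implicit in the paper but is exactly what you spell out.
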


\begin{proof}
Let $\eps>0$ and let $f:R_\eps\ra T$ be an $\eps$-BBT map given by Lemma \ref{bbt}. Assertion~3 from Lemma~\ref{lem_Q} shows that $f([\alpha,\omega]_{R_\eps})$ has diameter at most $20\eps$.
Proposition \ref{prop_defQ} thus implies that $\calq(\alpha)$ and $\calq(\omega)$ lie in $\ol T$, and that one extremity of the line $[\alpha,\omega]_R$ is eventually mapped in a ball of radius $10\eps$ centered at $\calq(\alpha)$, and the other extremity is eventually mapped in a ball of radius $10\eps$ centered at $\calq(\omega)$. Hence $d_{\overline{T}}(\calq(\alpha),\calq(\omega))\le 40\eps$. As this is true for all $\eps>0$, we have $\calq(\alpha)=\calq(\omega)$. 
\end{proof}

\begin{de}
We define $\calq^2:L^2(T)\ra \ol T$ by $\calq^2(\alpha,\omega):=\calq(\alpha)=\calq(\omega)$.
\end{de}

\begin{rk}
The map $\calq^2$ is not surjective in general. 
This will be clearer later when $L^2(T)$ will be identified with pairs of endpoints of leaves in a band complex:
points of $T$ whose leaves are one-ended are not in the image of $\calq^2$. Trees that are dual to a Levitt-type system of isometries yield examples
(see \cite{Gaboriau_bouts,Levitt_exotic}).
On the other hand, the image of $\calq^2$ is dense in $T$, because every orbit in $T$ is dense.
\end{rk}

The map $\calq^2$ has a strong continuity property. This is an adaptation of \cite[Proposition~8.3]{CHL08-2}.

\begin{prop}\label{q2}
The map $\mathcal{Q}^2:L^2(T)\to\overline{T}$ is continuous for the metric topology on $\overline{T}$. 
\end{prop}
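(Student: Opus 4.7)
The plan is to combine the BBT diameter bound for leaves of $L^2(T)$ (Lemma~\ref{lem_Q}(3)) with the approximation property of $\calq$ (Proposition~\ref{prop_defQ}) and the existence of Grushko trees carrying maps to $T$ with arbitrarily small BBT constant (Lemma~\ref{bbt}). The metric topology is finer than the observers' topology, so one really needs a quantitative estimate, not just a continuity argument.

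Given $\eps>0$, let $R_\eps\in\calo$ together with a $1$-Lipschitz $G$-equivariant map $f_\eps\colon R_\eps\to T$ that is $\eps$-BBT, provided by Lemma~\ref{bbt}. The first key step is to show that for every leaf $(\alpha,\omega)\in L^2(T)$,
\[
f_\eps\bigl([\alpha,\omega]_{R_\eps}\bigr)\subseteq B_{30\eps}\bigl(\calq^2(\alpha,\omega)\bigr).
\]
Indeed, Lemma~\ref{lem_Q}(3) gives that this image has diameter at most $20\eps$, and some point of it lies within $10\eps$ of $\calq^2(\alpha,\omega)$: if an endpoint of the leaf belongs to $V_\infty(G,\calf)$ this is immediate from Proposition~\ref{prop_defQ}(i) since $\calq$ of that endpoint equals $f_\eps$ of it; if both endpoints are in $\partial_\infty(G,\calf)$ one instead uses Proposition~\ref{prop_defQ}(iii), which (applying Lemma~\ref{fibreQ} to know that $\calq(\omega)\in\ol T$) gives a subray of $[\alpha,\omega]_{R_\eps}$ toward $\omega$ whose $f_\eps$-image lies within $10\eps$ of $\calq^2(\alpha,\omega)$. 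The triangle inequality through the diameter bound then gives $30\eps$ in every case.

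Second, take a sequence $(\alpha_n,\omega_n)\in L^2(T)$ converging to $(\alpha,\omega)\in L^2(T)$ in $\partial^2(G,\calf)$, and pick any interior point $p$ of the non-degenerate line segment $[\alpha,\omega]_{R_\eps}$. Because the two components of $\widehat{R_\eps}\setminus\{p\}$ separating $\alpha$ from $\omega$ are directions, the convergence in the observers' topology implies that for $n$ large one has $\alpha_n$ in the direction containing $\alpha$ and $\omega_n$ in the direction containing $\omega$, so $p\in[\alpha_n,\omega_n]_{R_\eps}$. Applying the first step to both leaves and to the common point $p$,
\[
d_{\overline{T}}\bigl(\calq^2(\alpha_n,\omega_n),\calq^2(\alpha,\omega)\bigr)\le d_{\overline{T}}\bigl(\calq^2(\alpha_n,\omega_n),f_\eps(p)\bigr)+d_{\overline{T}}\bigl(f_\eps(p),\calq^2(\alpha,\omega)\bigr)\le 60\eps
\]
for all sufficiently large $n$. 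Letting $\eps\to 0$ (with $R_\eps$ varying) yields continuity of $\calq^2$ at $(\alpha,\omega)$ for the metric topology.

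The only subtlety is verifying the uniform $30\eps$ bound in the first step across the different cases for the endpoints of the leaf (in $V_\infty(G,\calf)$ versus $\partial_\infty(G,\calf)$); this is a direct combination of Proposition~\ref{prop_defQ}(i)--(iii) with Lemma~\ref{lem_Q}(3) and Lemma~\ref{fibreQ} (which ensures $\calq^2(\alpha,\omega)\in\overline{T}$, not in $\partial_\infty T$), so it is essentially bookkeeping rather than a substantive obstacle.
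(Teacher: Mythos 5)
Your proof follows the same route as the paper's: fix a $1$-Lipschitz $\eps$-BBT map $f_\eps\colon R_\eps\to T$, use Lemma~\ref{lem_Q}(3) for the $20\eps$ diameter bound and Proposition~\ref{prop_defQ} to place the image in a $30\eps$-ball around $\calq^2(\alpha,\omega)$, then observe that for $n$ large the segments $[\alpha_n,\omega_n]_{R_\eps}$ and $[\alpha,\omega]_{R_\eps}$ share a point, giving the $60\eps$ estimate. You merely spell out in more detail the case analysis (endpoint in $V_\infty$ versus $\partial_\infty$) and the elementary argument for the common point, both of which the paper leaves implicit; the argument is correct.
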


\begin{proof}
Let $\eps>0$ and let $f:R_\eps\ra T$ be an $\eps$-BBT map given by Lemma \ref{bbt}.
Let $(\alpha,\omega)\in L^2(T)$, and let $((\alpha_n,\omega_n))_{n\in\mathbb{N}}\in L^2(T)^{\mathbb{N}}$ be a sequence converging to $(\alpha,\omega)$. 
By Lemma \ref{lem_Q}, $f([\alpha,\omega]_{R_\eps})$  has diameter at most $20\eps$,
and is therefore contained in the $30\eps$-neighbourhood of $\calq(\omega)=\calq^2(\alpha,\omega)$ by Proposition \ref{prop_defQ}.
Similarly, $f([\alpha_n,\omega_n]_{R_\eps})$ in contained in the $30\eps$-neighbourhood of $\calq^2(\alpha_n,\omega_n)$ for all $n$.
For all sufficiently large $n\in\mathbb{N}$, one has $[\alpha_n,\omega_n]_{R_\eps}\cap [\alpha,\omega]_{R_\eps}\neq\emptyset$ 
so $\calq^2(\alpha_n,\omega_n)$ is at distance at most $60\eps$ from $\calq^2(\alpha,\omega)$.
\end{proof}

\subsection{Continuity of $\calq$ for the observers' topology} \label{sec_Qcontinu}

\begin{lemma}\label{lem_continuite}
Let $T\in\overline{\calo}$ be a tree with dense orbits. 
\\ Then for all $x\in\ol T$, the orbit map $G\to\overline{T},g\mapsto gx$ admits a unique continuous extension $G\cup\partial(G,\calf)\to\overline{T}\cup\partial_{\infty}T$ for the observers' topology on $\overline{T}\cup\partial_{\infty}T$, and this extension coincides with $\calq$ on $\partial(G,\calf)$.
\end{lemma}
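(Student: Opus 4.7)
Fix a Grushko $(G,\calf)$-tree $R$ with a basepoint $x_0 \in R$ of trivial stabilizer, so that $g \mapsto g x_0$ embeds $G$ into $R$ and identifies $G \cup \partial(G,\calf)$ with $Gx_0 \cup \partial R \subseteq \widehat R$ in the observers' topology. Choose a $G$-equivariant piecewise-linear map $f : R \to T$ with $f(x_0) = x$, so that the orbit map becomes $g \mapsto f(gx_0)$, and define the candidate extension $\tilde f$ to be $\calq$ on $\partial(G,\calf)$. Uniqueness follows from density of $Gx_0$ in $Gx_0 \cup \partial R$ for the observers' topology: for $\omega \in \partial_\infty R$ this is standard, and for $\omega \in V_\infty(R)$ it is enough to observe that the infinite peripheral group $G_\omega$ sends $x_0$ through infinitely many distinct directions at $\omega$ (since the edges at $\omega$ form finitely many free $G_\omega$-orbits), so every observers' neighborhood of $\omega$ contains orbit points.

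For continuity, points of $Gx_0$ are isolated in $Gx_0 \cup \partial R$, so one need only check continuity at each $\omega \in \partial R$. For $\omega \in \partial_\infty R$ the observers' topology at $\omega$ locally agrees with the standard topology of $R \cup \partial_\infty R$, so $g_n x_0 \to \omega$ metrically along the ray $[x_0,\omega]_R$. I would then apply Lemma~\ref{bbt} to pick, for each $\eps > 0$, a Grushko tree $R_\eps$ with a $1$-Lipschitz $\eps$-BBT map $f_\eps : R_\eps \to T$ and a basepoint $y_\eps \in R_\eps$ satisfying $d_T(f_\eps(y_\eps), x) \leq \eps$, so that $d_T(g_n x, f_\eps(g_n y_\eps)) \leq \eps$; Proposition~\ref{prop_defQ}(ii)--(iii) then yields that $g_n x$ converges to $\calq(\omega)$ in the observers' topology, and even metrically when $\calq(\omega) \in \overline T$.

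The main case is $\omega \in V_\infty(R)$, where $\calq(\omega) = f(\omega)$ is the unique fixed point of the infinite peripheral group $G_\omega$ in $T$. Given $g_n x_0 \to \omega$ in the observers' topology, up to a subsequence either $g_n x_0 \to \omega$ metrically (in which case continuity of $f$ gives $f(g_n x_0) \to f(\omega)$ metrically), or the initial edges $e_n$ of $[\omega, g_n x_0]_R$ are pairwise distinct. In the latter case, using that the edges at $\omega$ split into finitely many free $G_\omega$-orbits, I would write $e_n = h_n e_0$ with $h_n \in G_\omega$ pairwise distinct and $e_0$ a fixed edge, and decompose $g_n = h_n s_n$ with $s_n x_0$ lying in the fixed branch of $R$ past $e_0$. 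A further extraction, using compactness of $\overline T \cup \partial_\infty T$ in the observers' topology, ensures $\tilde f(s_n x_0) \to q$ for some $q \in \overline T \cup \partial_\infty T$.

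The argument would conclude with the following geometric input: $G_\omega$ acts freely on $(\overline T \cup \partial_\infty T) \setminus \{f(\omega)\}$, for any nontrivial element fixing a second point of $\overline T$ would fix the joining arc, contradicting that arc stabilizers in a very small tree are trivial or cyclic nonperipheral. Consequently, for any fixed $q \neq f(\omega)$ and any $a' \neq f(\omega)$, the condition $a' \in [f(\omega), h_n q]$ forces $h_n^{-1} a'$ to be the unique point of the compact segment $[f(\omega), q]$ at distance $d_T(a', f(\omega))$ from $f(\omega)$, which occurs for at most one $n$ since the points $h_n^{-1} a'$ are pairwise distinct; hence $h_n q \to f(\omega)$ in the observers' topology. \emph{The main technical obstacle} is to extend this from a fixed target $q$ to the approximating sequence $q_n = \tilde f(s_n x_0) \to q$: when $q \neq f(\omega)$ the segments $[f(\omega), q_n]$ eventually agree with $[f(\omega), q]$ on any bounded initial portion, so the uniform version of the argument applies, while the subcase $q = f(\omega)$ requires a finer diagonal analysis combining the observers' convergences $h_n^{-1} a' \to f(\omega)$ and $q_n \to f(\omega)$.
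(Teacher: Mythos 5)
Your route is genuinely different from the paper's for the case $\omega\in V_\infty(G,\calf)$, and there is a gap precisely where you flag it. The paper avoids the group-theoretic decomposition $g_n=h_ns_n$ entirely: it argues by contradiction, picks a direction $d\subseteq T$ with basepoint $u$ containing $\calq(\omega)$ and missing all $g_n.x$, takes a $1$-Lipschitz $\eps$-BBT map $f_\eps:R_\eps\to T$ with $f_\eps(a)=x$, and uses the observation that observers' convergence $g_n.a\to\omega$ yields indices $i,j$ with $\omega\in[g_ia,g_ja]_{R_\eps}$; the BBT inequality then forces $f_\eps(\omega)=\calq(\omega)$ to lie within $\eps$ of $u$, contradicting the choice of $\eps$. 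This single estimate dispatches the vertex case uniformly and needs no control of the sequence $(s_n)$ or of the limit $q$.

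In your approach, by contrast, the subcase $q=f(\omega)$ is not merely ``a finer diagonal analysis'': the obstruction is structural. You need $h_nq_n\to f(\omega)$ where $q_n\to f(\omega)$ (observers') and $h_n\in G_\omega$ are pairwise distinct. Fix a point $a\neq f(\omega)$ and a direction $d$ at $a$ containing $f(\omega)$; you must show $h_nq_n\in d$ eventually, i.e.\ $q_n\in h_n^{-1}d$. But $h_n^{-1}d$ is a direction based at the point $h_n^{-1}a$, which changes with $n$, while observers' convergence of $(q_n)$ only controls membership in directions with a fixed basepoint. There is no obvious uniform control over the family $\{h_n^{-1}a\}$ — they all lie at the same distance from $f(\omega)$ but are otherwise unconstrained — so the diagonal really does not close from what you have. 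Some BBT-type estimate (linking closeness in $R_\eps$ to closeness in $T$) seems unavoidable here, which is essentially what the paper's argument supplies.

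Two smaller issues: (1) a $G$-equivariant map $f:R\to T$ need only have image containing $T$, not $\overline T$, so ``choose $f$ with $f(x_0)=x$'' is not available for $x\in\overline T\setminus T$; you need the paper's preliminary reduction to $x\in T$ (the short argument with nested directions $d'\subsetneq d$). (2) For $\omega\in V_\infty(R)$, the alternative ``$g_nx_0\to\omega$ metrically'' is vacuous: since $R$ is a simplicial metric tree with finitely many orbits of edges and $x_0$ has trivial stabilizer while $\omega$ does not, the distances $d_R(g_nx_0,\omega)$ lie in a discrete set bounded away from $0$, so metric convergence cannot happen. Your dichotomy thus collapses to the single branch where the gap occurs.
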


\begin{proof} 
Since the observers' topology on $ \overline T\cup \partial_\infty T$ is first countable, it is enough to prove that for all $x\in\overline{T}$, if $(g_n)_{n\in\mathbb{N}}\in G^{\mathbb{N}}$ is a sequence that converges to $\omega\in\partial (G,\calf)$, then $(g_n.x)_{n\in\mathbb{N}}$ converges to $\calq(\omega)$ (for the observers' topology on $ \overline T\cup \partial_\infty T$). 

We can assume without loss of generality that $x\in T$. Indeed, assume that the result has been proved for all $x\in T$, and let $x\in\overline{T}$. Let $(g_n)_{n\in\mathbb{N}}\in G^\mathbb{N}$ be a sequence that converges to $\omega$. If $(g_n.x)_{n\in\mathbb{N}}$ does not converge to $\calq(\omega)$, then there exists a direction $d$ in $T$ containing $\calq(\omega)$, and such that $g_i.x\notin d$ for infinitely many $i$. Let $d'$ be another direction in $T$ that is stritcly contained in $d$ and also contains $\calq(\omega)$. Then for every point $x'\in T$ that is close enough to $x$, we have $g_i.x'\notin d'$ for infinitely many $i$. Therefore $(g_n.x')_{n\in\mathbb{N}}$, does not converge to $\calq(\omega)$, a contradiction.

We therefore let $x\in T$, and assume towards a contradiction that there exists a direction $d\subseteq T$ containing $\calq(\omega)$ and such that $g_i.x\notin d$ for all $i$ (up to passing to a subsequence). We denote by $u\in  T$ be the base point of $d$.

We first distinguish the case where $\calq(\omega)\in \partial_\infty T$.
Let $f:R\ra T$ be a $1$-Lipschitz map, with BBT constant $C$, and $a\in R$ be a preimage of $x$ (this exists by minimality because $x\in T$).
The ray $[a,\omega]_R$ contains a subray $[b,\omega]_R$ such that $f([b,\omega]_R)\subseteq d$ and stays at distance at least $2C$ from $u$.
Since $g_n.a$ converges to $\omega$, there exists $i,j\in \bbN$ such that $[g_ia, g_ja] \cap [b,\omega]_R$ contains a point $c$. Since by assumption,
$g_nx=f(g_na)\notin d$ for all $n\in \bbN$, $f(c)$ is at distance at least $2C$ from $[g_ix,g_jx]$, contradicting that $BBT(f)\leq C$.

We now consider the case  where $\calq(\omega)\in \ol T$.
Let $\eps>0$ be such that the ball  $B_{20\eps}(\calq(\omega))$ is contained in $d$.
Let $R_\eps$ be a Grushko tree and $f:R_\eps\ra T$ a $1$-Lipschitz map with $BBT(f)\leq \eps$.
Let $a\in R_\eps$ be such that $f(a)=x$. 

Assume first that $\omega\in \partial_\infty (G,\calf)$.
By definition of $\calq$, there exists $b\in [a,\omega]_{R_\eps}$ such that $f([b,\omega]_{R_\eps})\subseteq B_{10\eps}(\calq(\omega))\subseteq d$. Since by hypothesis $g_i. a$ converges to $\omega$, there exist $i,j$ so that the segment $[g_i.a,g_j.a]_{R_\eps}$ 
contains a point $c$ in $[b,\omega]_{R_\eps}$. 
Since $BBT(f)\leq \eps$, we have $d_{T}(f(c),[g_i.x,g_j.x])\leq \eps$, but since $g_i.x,g_j.x\notin d$ and $f(c)\in d$, 
we get that $d_T(f(c),u)\leq \eps$, so  $d_{\overline{T}}(u,\calq(\omega))\leq 11\eps$, contradicting our choice of $\eps$.

If $\omega\in V_{\infty}(R_\eps)$,
we can  find $i,j$ 
 so that $\omega \in [g_i.a,g_j.a]_{R_\eps}$. 
Since $BBT(f)\le\epsilon$,  we get $d_{T}(f(\omega),u)\leq \eps$, so $d_{T}(\calq(\omega),u)\le\eps$ 
because  $f(\omega)=\calq(\omega)$.
This is a contradiction. 
\end{proof}

\begin{cor}\label{cor_compact}
  If $K\subseteq \overline T$ is compact for the metric topology, if $(g_i)_{i\in\mathbb{N}}\in G^{\mathbb{N}}$ is a sequence of elements converging to $\omega\in \partial (G,\calf)$, and if $x\in\overline{T}$ is such that $g_i\m.x\in K$ for all $i\in\mathbb{N}$, then $\calq(\omega)=x$.
\end{cor}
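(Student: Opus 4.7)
\bigskip

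\noindent\textbf{Proof plan for Corollary~\ref{cor_compact}.}

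The plan is to reduce to a direct application of Lemma~\ref{lem_continuite}, using the fact that the $G$-action on $\overline{T}$ is isometric so that compactness in the metric topology produces a usable convergent subsequence. The key point is that $\hat T=\overline T\cup\partial_\infty T$ is compact Hausdorff for the observers' topology, so that limits in that topology are unique.

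First, I would use compactness of $K$ in the metric topology to extract a subsequence $(g_{i_k})_{k\in\mathbb{N}}$ such that $g_{i_k}\m.x$ converges in $\overline{T}$ (metric topology) to some point $y\in K$. Since $G$ acts on $\overline T$ by isometries, we then have
\[
d_{\overline{T}}(g_{i_k}.y,\, x)\;=\;d_{\overline{T}}(y,\, g_{i_k}\m.x)\xrightarrow[k\to\infty]{}0,
\]
so $g_{i_k}.y$ converges to $x$ in the metric topology on $\overline{T}$. A routine point-set verification shows that metric convergence in $\overline T$ implies convergence in the observers' topology on $\hat T$: for any direction $d\subseteq T$ containing $x$ with base point $u\neq x$, the bridge $[u,g_{i_k}.y]\cap[u,x]$ has length tending to $d_{\overline T}(u,x)>0$, hence the points $g_{i_k}.y$ eventually lie in $d$. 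So $g_{i_k}.y\to x$ in the observers' topology on $\hat T$.

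On the other hand, since $(g_{i_k})$ is a subsequence of $(g_i)$, it still converges to $\omega\in\partial(G,\calf)$. Applying Lemma~\ref{lem_continuite} to the point $y\in\overline{T}$, we obtain that $g_{i_k}.y$ converges to $\calq(\omega)$ in the observers' topology on $\hat T$. By uniqueness of limits in the Hausdorff space $\hat T$, we conclude that $\calq(\omega)=x$.

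There is essentially no obstacle beyond bookkeeping: the whole statement is a formal consequence of Lemma~\ref{lem_continuite}, the isometric nature of the $G$-action, and the fact that metric convergence in $\overline T$ refines convergence in the observers' topology. The only mildly subtle point is the last comparison of topologies, but it is a one-line argument using the $\bbR$-tree geometry of $T$.
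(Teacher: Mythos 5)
Your proof is correct and follows essentially the same route as the paper's: extract a subsequence so that $g_{i_k}^{-1}.x$ converges metrically to some $y\in K$, translate by the isometric action to get $g_{i_k}.y\to x$ metrically, note this implies convergence in the observers' topology, and conclude via Lemma~\ref{lem_continuite} together with uniqueness of limits in the compact Hausdorff space $\hat T$. The only additional content you supply is the short verification that metric convergence in $\overline T$ implies observers'-topology convergence, which the paper asserts without proof; your argument for it is correct (with the minor remark that one should allow the base point $u$ to range over $\overline T$, not just $T$, since directions in $\hat T$ are based at points of $\overline T$).
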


\begin{proof}
  Up to a subsequence, we can assume that $g_i\m.x$ converges to $y\in K$.
This means that  $d_{\overline{T}}(x,g_i y)$ tends to $0$, which implies that $g_i y$ converges to $x$ in the observers' topology.
By Lemma \ref{lem_continuite}, we have $\calq(\omega)=x$.
\end{proof}

\subsection{The map $\calq$ and $L^2(T)$}\label{sec_QL2}

We have already seen in Lemma \ref{fibreQ} that the endpoints of every algebraic leaf in $L^2(T)$ have the same image under $\calq$. The following lemma (see \cite[Proposition 8.5]{CHL08-2} for free groups) proves the converse.

\begin{prop}\label{equality-Q}
Let $T\in\overline{\mathcal{O}}$ be a tree with dense orbits.
\\ Then for $\alpha,\omega\in \partial (G,\calf)$, one has $\mathcal{Q}(\alpha)=\mathcal{Q}(\omega)$
if and only if $\alpha=\omega$ or $(\alpha,\omega)\in L^2(T)$.
\end{prop}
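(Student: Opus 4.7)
The plan is to use Lemma~\ref{fibreQ} for one direction and Lemma~\ref{criterion-l2} (in the strengthened form of Remark~\ref{rk-criterion}) for the other. The direction ``$(\alpha,\omega)\in L^2(T) \Rightarrow \calq(\alpha)=\calq(\omega)$'' is exactly Lemma~\ref{fibreQ}, so the work lies in the converse: assuming $\alpha\neq\omega$ and $\calq(\alpha)=\calq(\omega)=x$, I will produce $(\alpha,\omega)\in L^2(T)$. Note that Lemma~\ref{fibreQ} also forces $x\in\overline T$ (not $\partial_\infty T$) whenever $(\alpha,\omega)\in L^2(T)$, so a by-product of the argument must be ruling out $x\in\partial_\infty T$.

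I first handle the main case $x\in\overline T$ via Remark~\ref{rk-criterion}. For each $\eps>0$, Lemma~\ref{bbt} provides a Grushko tree $R_\eps$ of covolume at most $\eps$ together with a $1$-Lipschitz equivariant map $f_\eps:R_\eps\to T$ with $BBT(f_\eps)\le\eps$. To each of $\alpha$ and $\omega$ I associate a subsegment of $[\alpha,\omega]_{R_\eps}$ whose $f_\eps$-image lies in $B_{10\eps}(x)$: if $\alpha\in V_\infty(G,\calf)$ I take the degenerate subsegment $\{\alpha\}$, whose image is $\{x\}$ by Proposition~\ref{prop_defQ}(i); if $\alpha\in\partial_\infty(G,\calf)$ then Proposition~\ref{prop_defQ}(iii) (applicable because the image of $[x_0,\alpha]_{R_\eps}$ is bounded, since $x\in\overline T$) supplies a terminal subray of $[x_0,\alpha]_{R_\eps}$ whose image lies in $B_{10\eps}(x)$, and intersecting it with $[\alpha,\omega]_{R_\eps}$ (which shares a terminal subray with $[x_0,\alpha]_{R_\eps}$ near $\alpha$) yields the desired subsegment of $[\alpha,\omega]_{R_\eps}$. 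Performing the same construction near $\omega$ and taking $Y_\eps:=B_{10\eps}(x)$ (of diameter at most $20\eps$), Remark~\ref{rk-criterion} places $(\alpha,\omega)$ in $L^2(T)$.

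To rule out $x\in\partial_\infty T$: in that case Proposition~\ref{prop_defQ}(i) forces $\alpha,\omega\in\partial_\infty(G,\calf)$. Fix any Grushko tree $R$, a basepoint $x_0\in R$, and a $1$-Lipschitz equivariant map $f:R\to T$ with $C:=BBT(f)<\infty$. Let $m\in R$ be the median of $x_0$, $\alpha$, $\omega$, let $r:[0,\infty)\to T$ be the geodesic ray from $f(x_0)$ to $x$, and denote by $r(u)$ the projection of $f(m)$ onto $r$, where $u$ is a fixed finite parameter. Choosing sequences $a_n\in[m,\alpha)_R$ and $b_n\in[m,\omega)_R$ with $a_n\to\alpha$ and $b_n\to\omega$, I have $m\in[a_n,b_n]_R$, so BBT gives $d_T(f(m),[f(a_n),f(b_n)])\le C$. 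Proposition~\ref{prop_defQ}(ii) ensures $f(a_n),f(b_n)\to x$, so the projections of $f(a_n)$ and $f(b_n)$ onto $r$ occur at parameters $t_n^a,t_n^b\to\infty$. Tree geometry then forces the geodesic $[f(a_n),f(b_n)]$ to cross $r$ along the subsegment $[r(\min(t_n^a,t_n^b)),r(\max(t_n^a,t_n^b))]$; for $n$ large enough that $u<\min(t_n^a,t_n^b)$, the closest point of this segment to $f(m)$ is $r(\min(t_n^a,t_n^b))$, at distance at least $\min(t_n^a,t_n^b)-u\to\infty$, contradicting the BBT bound. The hard part is this final tree-geometric step, where one must verify the position of $[f(a_n),f(b_n)]$ relative to $r$ by a case analysis on the directions of $f(m)$, $f(a_n)$, $f(b_n)$ at their respective projections onto $r$; everything works out cleanly once $u$ is surpassed by both $t_n^a$ and $t_n^b$.
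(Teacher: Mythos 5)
Your proof is correct. For the main implication ($\calq(\alpha)=\calq(\omega)\in\ol T\Rightarrow(\alpha,\omega)\in L^2(T)$) you take exactly the paper's route: approximate by $R_\eps$ with a $1$-Lipschitz $\eps$-BBT map $f_\eps$, apply Proposition~\ref{prop_defQ} to find terminal subsegments of $[\alpha,\omega]_{R_\eps}$ near each end mapping into $B_{10\eps}(\calq(\alpha))$, and conclude via Remark~\ref{rk-criterion}; your handling of the degenerate case $\alpha\in V_\infty(G,\calf)$ matches the paper's parenthetical remark. Where you diverge is in ruling out $\calq(\alpha)\in\partial_\infty T$: the paper observes that $[f(a),\calq(\omega)]_T\subseteq f([a,\alpha]_R)\cap f([a,\omega]_R)$ (a connectedness argument), extracts points $x_n\in[a,\alpha]_R$, $y_n\in[a,\omega]_R$ converging to $\alpha,\omega$ with $f(x_n)=f(y_n)\ra\calq(\omega)$, and immediately contradicts $BBT(f)<\infty$; you instead project $f(m)$ (median of $x_0,\alpha,\omega$) and $f(a_n),f(b_n)$ onto the ray $r$ toward $\calq(\omega)$ and argue that the geodesic $[f(a_n),f(b_n)]$ recedes along $r$, so $d_T(f(m),[f(a_n),f(b_n)])\ge\min(t_n^a,t_n^b)-u\ra\infty$, against the same BBT bound. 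Both are valid; the paper's is shorter because it avoids the projection bookkeeping, whereas yours is more explicit. One minor imprecision in your writeup: when $t_n^a=t_n^b$, the geodesic $[f(a_n),f(b_n)]$ need not cross $r$ (it may branch off $r$ at $r(t_n^a)$ along a common initial segment before separating), so the phrase ``cross $r$ along $[r(\min),r(\max)]$'' is not literally correct; however the median of $f(m),f(a_n),f(b_n)$ still lies at distance at least $t_n^a-u$ from $f(m)$, so your conclusion is unaffected.
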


\begin{proof}
In view of Lemma \ref{fibreQ}, we only need to prove that if $\alpha\neq\omega$ have the same image under $\calq$, then $(\alpha,\omega)\in L^2(T)$. 

Let us first prove that $\calq(\alpha)=\calq(\omega)$ cannot lie in $\partial_\infty T$.
Assume otherwise and fix a Grushko tree $R$ with a map $f:R\ra T$, 
and let  $a\in R$ be a base point. Since the ray $[f(a),\calq(\omega)]_T$ is contained in $f([a,\alpha]_{R})$ and in $f([a,\omega]_{R})$,
there exist two sequences $x_n\in [a,\alpha]_{R}$ and $y_n\in [a,\omega]_{R}$ converging to $\alpha$ and $\omega$ respectively,
and such that $f(x_n)=f(y_n)$ converges to $\calq(\omega)$. This contradicts that $BBT(f)<\infty$.

Let $R_\eps$ be a Grushko tree of covolume smaller than $\eps$, with a $1$-Lipschitz map $f_\eps:R_\eps\ra T$ with $BBT(f_\eps)\leq \eps$. By Proposition \ref{prop_defQ}, there exist $x,y\in [\alpha,\omega]_{R_\eps}$ such that $f_\eps([x,\alpha]_R)$ and $f_\eps([y,\omega]_R)$ are contained in $B_{10\eps}(\calq(\alpha))$ (this also obviously holds if $\alpha$ or $\omega$ lies in $V_\infty(R)$). 
Lemma \ref{criterion-l2} (in the form provided by Remark \ref{rk-criterion}) then implies that $(\alpha,\omega)\in L^2(T)$.
\end{proof}

Proposition \ref{equality-Q} says that $L^2(T)$ essentially coincides with the equivalence relation on $\partial (G,\calf)$ given by the fibers of $\calq:\partial (G,\calf)\ra \overline{T}\cup \partial_{\infty} T$,
the only difference being the diagonal $\Delta\subseteq \partial (G,\calf)\times\partial (G,\calf)$.
Slightly abusing notations, we denote by $\partial (G,\calf)/L^2(T)$ the quotient of $\partial (G,\calf)$ by this equivalence relation. The map $\calq$ is onto because its image is compact and every orbit of $\overline{T}\cup \partial_{\infty} T$ is dense (this easily follows from the fact that
every orbit is dense in $T$ for the metric topology).
Being a continuous map between compact spaces, the map $\calq$ induces a homeomorphism on the quotient (this is \cite[Corollary 2.6]{CHL07} in the context of free groups).

\begin{cor}\label{quotient-lamination}
The map induced by $\calq$ from $\partial (G,\calf)/L^2(T)$ to $\overline{T}\cup\partial_{\infty}T$ is a homeomorphism for the observers' topology
on $\overline{T}\cup\partial_{\infty}T$.\qed
\end{cor}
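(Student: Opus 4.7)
The plan is to piece together the results already established: Proposition~\ref{equality-Q} identifies the equivalence relation on $\partial(G,\calf)$ given by $L^2(T)\cup\Delta$ with the fibers of $\calq$, and Lemma~\ref{lem_continuite} provides the continuity of $\calq$ for the observers' topology. Together these guarantee that $\calq$ factors as a continuous injection $\bar\calq\colon \partial(G,\calf)/L^2(T) \to \overline T\cup\partial_\infty T$, and the universal property of the quotient topology gives continuity of $\bar\calq$. Since $\partial(G,\calf)$ is compact and Hausdorff, the quotient $\partial(G,\calf)/L^2(T)$ is compact, and the codomain $\overline T\cup\partial_\infty T$ is Hausdorff for the observers' topology. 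So once surjectivity of $\calq$ is established, the classical fact that a continuous bijection from a compact space to a Hausdorff space is a homeomorphism will conclude the proof.

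The remaining step is surjectivity of $\calq$. I would argue as follows. The image $\Im(\calq)$ is compact, hence closed in $\overline T\cup\partial_\infty T$ for the observers' topology. It is also $G$-equivariantly defined and nonempty, so $G\cdot\Im(\calq)\subseteq \Im(\calq)$. Pick any $\omega_0\in\partial(G,\calf)$; then $x_0:=\calq(\omega_0)\in\Im(\calq)$ and the whole $G$-orbit $G.x_0$ lies in $\Im(\calq)$. Since $T$ has dense orbits (for the metric topology on $T$), the set $G.x_0$ is dense in $T$, hence dense in $\overline T\cup\partial_\infty T$ for the observers' topology (every open neighbourhood for that topology meets $T$). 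Being closed and containing a dense subset, $\Im(\calq)$ equals $\overline T\cup\partial_\infty T$.

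I anticipate that the main conceptual subtlety, although not really difficult, is checking that density of $G.x_0$ in $T$ for the metric topology does translate into density in $\overline T\cup\partial_\infty T$ for the observers' topology; this reduces to the observation that every basic open set in the observers' topology is a direction, and any such direction contains points of $T$ (indeed, of every dense orbit). The rest of the argument is formal: the induced $\bar\calq$ is a continuous bijection from a compact space to a Hausdorff space, so it is a homeomorphism, which is exactly the statement of the corollary.
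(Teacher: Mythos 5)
Your proposal is correct and follows essentially the same route as the paper: continuity of $\calq$ for the observers' topology (Lemma~\ref{lem_continuite}), the identification of fibers with the $L^2(T)$-equivalence relation (Proposition~\ref{equality-Q}), surjectivity via compactness of the image and density of orbits, and the standard compact-to-Hausdorff argument. The paper packages the surjectivity argument the same way ("its image is compact and every orbit of $\overline T\cup\partial_\infty T$ is dense"), so there is nothing substantive to add.
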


\begin{cor}\label{cor_partition}  
There is no partition $\partial (G,\calf)=A\dunion B$ with $A,B$ non-empty closed subsets of $\partial(G,\calf)$ that are saturate under the equivalence relation given by $L^2(T)$.
\end{cor}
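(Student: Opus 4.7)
The plan is to deduce this from Corollary~\ref{quotient-lamination}, which identifies $\partial(G,\calf)/L^2(T)$ with $\ol T\cup \partial_\infty T$ equipped with the observers' topology. The corollary will follow once we show that $\ol T\cup \partial_\infty T$ is connected for the observers' topology: indeed, given a hypothetical partition $\partial(G,\calf)=A\dunion B$ into non-empty closed saturated subsets, saturation ensures that $\calq(A)$ and $\calq(B)$ are disjoint, surjectivity of $\calq$ ensures they cover $\ol T\cup\partial_\infty T$, and since $\calq$ is a continuous surjection from a compact space to a Hausdorff space (the observers' topology is Hausdorff, as two distinct points are separated by a pair of opposite directions based at a point strictly between them), it is a closed map, so $\calq(A)$ and $\calq(B)$ are closed. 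This would produce a disconnection of $\ol T\cup\partial_\infty T$.

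To establish connectedness, I would first note that $T$ is path-connected for the observers' topology. Any two points of $T$ are joined by a geodesic $\gamma:[0,L]\to T$, and $\gamma$ is continuous for the observers' topology: for a direction $d$ based at a point $c\in T$, either $c\notin \gamma([0,L])$, in which case $\gamma^{-1}(d)$ equals $[0,L]$ or $\es$; or $c=\gamma(t_0)$, in which case $\gamma^{-1}(d)$ is one or both of the open halves $[0,t_0)$ and $(t_0,L]$. In all cases $\gamma^{-1}(d)$ is open, so $\gamma$ is continuous.

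Next, I would argue that $\ol T\cup\partial_\infty T$ is the closure of $T$ for the observers' topology. Points of $\ol T\setminus T$ are metric limits of points of $T$, and since the observers' topology on $\ol T$ is coarser than the metric topology (each direction is metrically open), they are also observers' limits of points of $T$. Points of $\partial_\infty T$ are limits (for the observers' topology) of points along any geodesic ray representing them. Since the closure of a connected subset is connected, $\ol T\cup\partial_\infty T$ is connected, which completes the argument.

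The only delicate verification is the continuity claim for geodesics and the Hausdorff property of the observers' topology; both are routine once one unpacks the definition in terms of directions, and no genuinely hard step is expected.
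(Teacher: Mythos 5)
Your proof is correct and takes essentially the same route as the paper: the paper's proof is a one-liner that cites Corollary~\ref{quotient-lamination} and the connectedness of $\overline{T}\cup\partial_\infty T$ for the observers' topology, and you simply supply the details the paper leaves implicit (that $\calq$ is a closed map and that $\widehat{T}$ is connected because $T$ is observers'-path-connected and observers'-dense in $\widehat{T}$).
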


\begin{proof}
This follows from Corollary \ref{quotient-lamination}, together with the connectedness of $\overline{T}\cup\partial_{\infty}T$ (for the observers' topology).
\end{proof}

\section{Band complexes}\label{sec-band-complex}
  
In this section, we construct band complexes generalizing those by Coulbois--Hilion--Lustig \cite{CHL09} based on the compact heart they introduced. 
These are not simplicial complexes in general, as bands are homeomorphic to $K\times [0,1]$ for some compact $\bbR$-tree $K$ (whose set of branch points might be dense).

In all this section, we fix $T\in\overline{\mathcal{O}}$ a tree with dense orbits, and $R$ a Grushko tree.

\subsection{Definitions}\label{sec-def-band}

\paragraph{Compact trees.}

Given a point $u\in R$, we define $L^2_u(T)$ as the subspace of $L^2(T)$ made of those pairs $(\alpha,\omega)\in\partial^2(G,\calf)$ such that $u\in [\alpha,\omega]_R$. We then let $\Omega_u:=\calq^2(L^2_u(T))$, and we let $K_u$ be the convex hull in $\overline{T}$ of $\Omega_u$. 
We also notice that when $u\in R$ belongs to the interior of an edge $e\subseteq R$, the sets $L^2_u(T)$, $\Omega_u$ and $K_u$ only depend on $e$: we will also denote them by $L^2_e(T)$, $\Omega_e$ and $K_e$ in this case. 
Notice that when $u\in R$ lies in the interior of an edge or is a vertex of finite valence, the set $L^2_u(T)$ is compact. Continuity of $\calq^2$ (Proposition \ref{q2}) then implies that $\Omega_u$ and $K_u$ are also compact (for the metric topology on $\ol T$) in this case. 
If $u$ has infinite valence, since there are finitely many $G_u$-orbits of edges incident on $u$, it follows that $\Omega_u$ and $K_u$ are $G_u$-cocompact.

\begin{lemma} \label{lem_non-empty}
For every $u\in R$, the set $\Omega_u$ is non-empty.
\end{lemma}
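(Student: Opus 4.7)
The plan is to reduce to showing $L^2_u(T) \neq \emptyset$, from which $\Omega_u = \calq^2(L^2_u(T)) \neq \emptyset$ follows at once. I will first handle the case where $u$ lies in the interior of an edge of $R$ via a partition argument, and then reduce the vertex case to this one.

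Assume first that $u$ is interior to an edge of $R$. Then $\widehat{R} \setminus \{u\}$ has exactly two connected components (directions) $D_1$ and $D_2$, each open in the observers' topology. Hence the sets $A_i := D_i \cap \partial R$ form a partition of $\partial(G,\calf)$ into two clopen subsets, both non-empty since $R$ is minimal, so each half-tree at $u$ is unbounded and meets $\partial R$. A pair $(\alpha,\omega) \in L^2(T)$ lies in $L^2_u(T)$ exactly when $\alpha$ and $\omega$ lie in distinct $A_i$'s. If $L^2_u(T)$ were empty, then $A_1$ and $A_2$ would both be saturated under the equivalence relation defined by $L^2(T)$, contradicting Corollary \ref{cor_partition} (which applies since $L^2(T) \neq \emptyset$ by Lemma \ref{lamination-nonempty}, because $T$ has dense orbits so $T \notin \calo$). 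Hence $L^2_u(T) \neq \emptyset$.

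When $u$ is a vertex of $R$ (of finite or infinite valence), I pick any edge $e$ of $R$ incident to $u$ and an interior point $u' \in e$. By the previous step there exists $(\alpha,\omega) \in L^2_{u'}(T)$, so $u' \in [\alpha,\omega]_R$. Since $u'$ is interior to $e$ while $\alpha, \omega \in \partial R$ cannot lie inside the edge $e$, the geodesic $[\alpha,\omega]_R$ passing through $u'$ must traverse all of $e$, and in particular pass through its other endpoint $u$. Therefore $(\alpha,\omega) \in L^2_u(T)$.

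The only substantive point is the partition argument in the edge case. The anticipated subtlety is that when $u \in V_\infty(R)$, a single direction $D_i \cap \partial R$ at $u$ fails to be closed in the observers' topology (its closure contains $u$), so the clopen partition is unavailable directly at $u$. This obstacle is circumvented by first handling the edge case and then reducing the vertex case to it via the adjacent-edge trick.
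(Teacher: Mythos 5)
Your proof is correct and matches the paper's argument: the edge case is handled by the same clopen-partition argument against Corollary~\ref{cor_partition}, and the vertex case reduces to the edge case by noting that a geodesic through an interior point of an edge must traverse the whole edge (the paper phrases this by observing $\Omega_u=\bigcup_e\Omega_e$ over edges incident on $u$, which is the same fact). Your remark about why the partition argument cannot be run directly at a vertex of infinite valence is a fair observation; the paper sidesteps this silently by stating the edge case first and then passing to the union.
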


\begin{proof}
If $u$ belongs to the interior of an edge, then the two connected components of $R\setminus \{u\}$ yield a partition of $\partial (G,\calf)$ into two closed subsets. If $\Omega_u=\es$, then these two sets are saturate for $L^2(T)$, contradicting Corollary \ref{cor_partition}. The result also follows when $u$ is a vertex of $R$, because the set $\Omega_u$ is then the union of the sets $\Omega_e$ over all edges $e\subseteq R$ that are incident on $u$. 
\end{proof}

\paragraph{The band complex.}

\begin{de}[\emph{\textbf{The band complex $\Sigma(T,R)$}}]
We let $$\Sigma(T,R):=\{(x,u)\in\overline{T}\times R|x\in K_u\}.$$
\end{de}

When $T$ and $R$ are clear from the context, we will simply write $\Sigma$ instead of $\Sigma(T,R)$. This is a closed $G$-invariant subset of $\overline{T}\times R$ (for the metric topology). The diagonal action of $G$ on $\overline{T}\times R$ induces a $G$-action on $\Sigma$. Denote by $p_T:\Sigma\to\overline{T}$ and $p_R:\Sigma\to R$ the projections, which are $G$-equivariant. We view $\Sigma$ as a foliated complex, where the leaves are the connected components of the fibers of $p_T$ (actually, Proposition \ref{fibers-connected} below will say that these fibers are connected). 
\\

\begin{figure}[htbp]
\begin{center}
\includegraphics{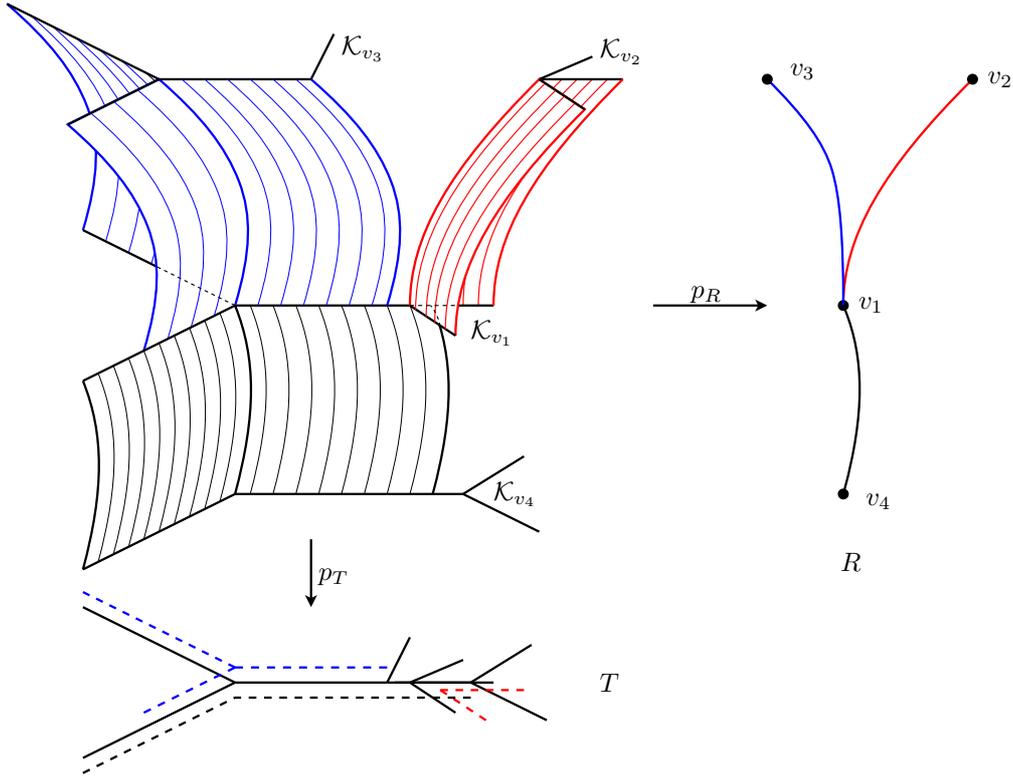}
\caption{A band complex.}
\label{fig-band-complex}
\end{center}
\end{figure}

\indent Here is an alternative description of $\Sigma$, illustrated on Figure~\ref{fig-band-complex}. Given an edge $e\subseteq R$, we let $B_e:=K_e\times e\subseteq \Sigma$, which we call a \emph{band} of $\Sigma$. We also call $K_e\times\mathring{e}$ an \emph{open band}. Given a point $v\in R$, we let $\calk_v:=K_v\times\{v\}\subseteq\Sigma$, which we call a \emph{base tree} when $v$ is a vertex. Then $\Sigma$ is also the band complex obtained from $$\Sigma_V:=\coprod_{v\in V(R)}\calk_v\text{ and }\calb:=\coprod_{e\in E(R)}B_e$$ as $$\Sigma=(\Sigma_V\dunion \calb)/{\sim}$$ where, for each edge $e=[u,v]$ of $R$, we identify $K_e\times \{v\}\subseteq B_e=K_e\times [u,v]$ with the corresponding subset of $\calk_v$ (and similarly for $u$).

The space $\Sigma$ is the generalization of the universal covering of the band complex considered
by \cite{CHL09} in the case of the free group, where $R$ is a rose. Since in this case, $R$ has a unique orbit of vertices,
there is a single base tree $K_v$ up to the group action, and this tree is compact since $R$ is locally finite.
This compact tree $K_v$ is Coulbois--Hilion--Lustig's compact heart \cite{CHL09}.
In the present section (see Proposition \ref{prop_coeur} below), we will see that this set also coincides with the core of $\overline{T}\times R$, as introduced by the first author in \cite{Gui_coeur}.

\paragraph{Special vertices.} We make the following observation.

\begin{lemma}
For every $v\in V_\infty(R)$, there exists a unique point in $\calk_v$ which is fixed by $G_v$.
\end{lemma}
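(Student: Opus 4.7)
Set $x_v := \calq(v) \in T$, which by Proposition~\ref{prop_defQ} is the unique point of $T$ fixed by $G_v$. Since $G_v$ fixes $v$, its action on $\calk_v = K_v \times \{v\}$ reduces to its action on the first factor $K_v \subseteq \ol T$, and the lemma amounts to showing that $x_v$ is the unique $G_v$-fixed point of $K_v$. For uniqueness, I would first upgrade this statement from $T$ to $\ol T$: if some $y \in \ol T \setminus \{x_v\}$ were fixed by a non-trivial $g \in G_v$, then $g$ would fix the arc $[x_v,y]$ pointwise; since $\ol T \setminus T$ consists only of terminal (valence-$1$) points, as recalled in the introduction, the interior of $[x_v,y]$ lies in $T$, producing a non-degenerate arc in $T$ fixed by a non-trivial peripheral element, which contradicts the very-small hypothesis (arc stabilizers are trivial or cyclic and non-peripheral). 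Hence $\mathrm{Fix}_{\ol T}(G_v) = \{x_v\}$, which a fortiori gives uniqueness of a $G_v$-fixed point of $K_v$.

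\paragraph*{Existence.} The set $K_v$ is $G_v$-invariant, since $G_v$ fixes $v$ and therefore preserves $L^2_v(T)$, the image $\Omega_v = \calq^2(L^2_v(T))$, and its convex hull $K_v$. Pick $y \in \Omega_v \subseteq K_v$ (non-empty by Lemma~\ref{lem_non-empty}); we may assume $y \neq x_v$, since otherwise there is nothing to prove. Let $e_y$ be the direction of $\ol T$ at $x_v$ containing $y$, that is, the connected component of $\ol T \setminus \{x_v\}$ in which $y$ lies. If I can find a non-trivial $g \in G_v$ with $g.e_y \neq e_y$, then $y$ and $g.y$ lie in distinct components of $\ol T \setminus \{x_v\}$, so $x_v \in [y,g.y]$, and by convexity and $G_v$-invariance $[y,g.y] \subseteq K_v$, giving $x_v \in K_v$.

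\paragraph*{Main obstacle.} The hard step is producing such a $g$. The plan is to argue by contradiction: if every non-trivial $g \in G_v$ preserved $e_y$, then $G_v$ would act by isometries on the subtree $\{x_v\} \cup e_y \subseteq \ol T$, in which $x_v$ has valence $1$ (by connectedness of $e_y$). An isometry fixing $x_v$ and preserving its unique local direction is forced, by distance preservation, to fix pointwise the initial straight sub-arc from $x_v$, and successive branch points are uniquely identified by their distance from $x_v$ (using the previously established fact that $\mathrm{Fix}_{\ol T}(g) = \{x_v\}$ for non-trivial $g$, which rules out two branch points at the same distance being exchanged), hence are fixed too; inductively one recovers a non-degenerate fixed sub-arc of $\{x_v\}\cup e_y$. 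By the terminal-points description of $\ol T \setminus T$ invoked once more, this arc meets $T$ in a non-degenerate sub-arc, again contradicting very smallness. The principal technical care is needed when branch points of $e_y$ accumulate at $x_v$, where the previous argument must be refined to ensure that the straight inter-branch segments, forced to be pointwise fixed, still assemble into an arc of positive length.
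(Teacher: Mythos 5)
Your uniqueness argument is correct, and the reduction of existence to finding some nontrivial $g\in G_v$ with $g.e_y\neq e_y$ (equivalently, with $x_v\in[y,g.y]$) is a sound plan. But the ``main obstacle'' paragraph has a genuine gap, which you yourself flag at the end: the contradiction you try to derive from ``every nontrivial $g\in G_v$ preserves $e_y$'' rests on the existence of an \emph{initial straight sub-arc} from $x_v$ into $e_y$, and no such arc need exist, since branch points of $\ol T$ can perfectly well accumulate at $x_v$ from inside $e_y$. You announce that the argument ``must be refined'' in this case but do not supply the refinement, so the proof is incomplete. (As a side remark, the intermediate claim that branch points are ``uniquely identified by their distance from $x_v$'' is false in general and also unnecessary: the moment $g$ is seen to fix any nondegenerate arc pointwise, very-smallness already gives the contradiction, so the subsequent induction over branch points serves no purpose.)

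The paper's proof is much shorter and sidesteps the whole direction analysis. For any nontrivial $g\in G_v$ and any $y\in\Omega_v$, the standard fact about elliptic isometries of $\bbR$-trees is that the midpoint of $[y,g.y]$ is the projection of $y$ onto $\mathrm{Fix}_{\ol T}(g)$; since you have already shown $\mathrm{Fix}_{\ol T}(g)=\{x_v\}$, this midpoint is $x_v$, and convexity of $K_v$ immediately gives $x_v\in[y,g.y]\subseteq K_v$. No cases, no assumption on where branch points lie. If you want to salvage your own route instead, the ingredient you are missing is the tripod observation: for any two points $z,z'$ in the direction $e_y$, the median $c$ of $\{x_v,z,z'\}$ lies in $e_y$ (because $[z,z']\subseteq e_y$), so $[x_v,z]$ and $[x_v,z']$ always share a nondegenerate initial segment $[x_v,c]$; applying this with $z'=g.z$ forces $g$ (which fixes $x_v$ and preserves distances to it) to fix $[x_v,c]$ pointwise, whatever the branch structure near $x_v$ looks like.
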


\begin{proof}
Let $x\in\Omega_v$. By $G$-equivariance of $\calq^2$, for all $g\in G_v$, we have $gx\in\Omega_v$. As $K_v$ is convex, this implies that the unique point of $T$ fixed by $G_v$ belongs to $K_v$.   
\end{proof}

When $v\in V_{\infty}(R)$, we denote by $v_{\Sigma}$ the unique point in $\calk_v$ fixed by $G_v$, which we call a \emph{special vertex} of $\calk_v$ (if $G_v$ is finite, then $\calk_v$ has no special vertex). Sometimes, a point in $V_\infty(R)$ is viewed as a point in the boundary $\partial R$ and is denoted by $\omega$ instead of $v$,
we then use the notation $\omega_\Sigma$ for the corresponding special vertex.
The special vertices of $\Sigma$ are exactly the points of $\Sigma$ whose isotropy group is infinite.

\paragraph{Leaves of the band complex and the set $\Omega$.}
To make the distinction between algebraic leaves (i.e. elements of $\partial^2(G,\calf)$) and leaves of $\Sigma$ explicit, the latter will be called \emph{complete $\Sigma$-leaves}.
Given $x\in\Sigma$, we denote by $\call_x$ the complete $\Sigma$-leaf containing $x$.
Note that the restriction of the projection $p_R$ to $\call_x$ is injective so each complete $\Sigma$-leaf is a tree.
Any segment 
contained in a complete $\Sigma$-leaf will be called a \emph{$\Sigma$-leaf segment}.
If $x,y$ are in the same complete $\Sigma$-leaf $\call$, we denote by $[x,y]_\call$ the unique $\Sigma$-leaf segment joining them.
A bi-infinite $\Sigma$-leaf segment $l$ is called a \emph{$\Sigma$-leaf line}.
By extension, we also call a \emph{$\Sigma$-leaf line} any leaf segment $l$
joining two special vertices in $\Sigma$, or a simply-infinite leaf segment whose base point is a special vertex. 
In particular, a $\Sigma$-leaf line has two well-defined endpoints in $\partial (G,\calf)$: these are the endpoints of $p_R(l)$.
Similarly, we define a \emph{$\Sigma$-leaf semi-line} as an oriented, semi-infinite leaf segment, or a finite leaf segment whose extremity is a special vertex. A $\Sigma$-leaf semi-line has a well-defined extremity in $\partial (G,\calf)$, and a base point in $\Sigma$.

The following set will be of particular importance in later sections in the study of the band complex $\Sigma$.

\begin{de}[\emph{\textbf{The set $\Omega$}}]\label{def_omega}
  We denote by $\Omega\subseteq \Sigma$ the union of all $\Sigma$-leaf
  lines (whose endpoints, by definition,  can belong to either
  $V_\infty(G,\calf)$ or to $\partial_\infty (G,\calf)$).
\end{de}

\subsection{Simple connectedness}

\begin{lemma}\label{thm_cell-like}
  Let $f:X\ra Y$ be a continuous surjective map between compact, metrizable, finite-dimensional, locally contractible spaces. 
  Assume that for all $y\in Y$, $f\m(y)$ is contractible.
  \\ Then $f$ is a homotopy equivalence.
\end{lemma}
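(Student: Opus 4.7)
The plan is to invoke a classical result from shape theory due to Lacher: every cell-like surjection between compact ANRs is a homotopy equivalence. The proof thus reduces to two verifications, plus the citation of this theorem.

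First, I would check that both $X$ and $Y$ are compact ANRs. This is the Borsuk--Hanner theorem: a compact, metrizable, finite-dimensional, locally contractible space is an absolute neighbourhood retract. One obtains local retractions from the local contractibility hypothesis and patches them together using a fine open cover whose nerve has dimension controlled by the covering dimension. Both $X$ and $Y$ satisfy these hypotheses by assumption, so both are compact ANRs.

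Second, I would check that $f$ is cell-like. A continuous surjection between compact metrizable spaces is by definition cell-like when each fiber has the shape of a point. Since a contractible compactum is trivially shape-equivalent to a point, the contractibility hypothesis on the fibers $f^{-1}(y)$ gives exactly this property. One then applies Lacher's theorem (Trans.\ Amer.\ Math.\ Soc., 1969; see also Mardešić--Segal's monograph on shape theory): every cell-like map between compact ANRs is a (simple) homotopy equivalence.

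The main obstacle here is really bookkeeping --- matching the hypotheses stated in the lemma with the hypotheses used in the literature. The least obvious ingredient is the passage from ``compact metric $+$ finite-dimensional $+$ locally contractible'' to ``ANR,'' but this is a standard theorem. If one wished to avoid Lacher's theorem, an alternative route would be to prove directly that $f$ is a weak homotopy equivalence via a Vietoris-type argument (the contractibility of the fibers kills all relative homotopy classes of maps from a disk into $X$ which project to a nullhomotopic map into $Y$), and then invoke Whitehead's theorem together with the fact that compact ANRs have the homotopy type of CW complexes (West). In any event, no ingredient specific to our free product setting enters: the statement is purely topological.
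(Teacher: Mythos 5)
Your proposal follows exactly the route of the paper's proof: first invoke the Borsuk--Hanner characterization (cited there as \cite[Theorem~V.7.1]{Hu}) to see that $X$ and $Y$ are compact ANRs, then observe $f$ is cell-like, and conclude via the cell-like mapping theorem (the paper cites Haver's theorem via \cite[Theorem~7.1.6]{vM} and the same reference \cite{Lac} you mention). This is correct and essentially identical to the paper's argument; the alternative Vietoris/Whitehead route you sketch at the end is a valid but unnecessary digression.
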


\begin{proof}
By \cite[Theorem~V.7.1]{Hu}, the spaces $X$ and $Y$ are absolute neighbourhood retracts. Then $f$ is a cell-like map, so by Haver's theorem (see \cite[Theorem~7.1.6]{vM} or \cite[Theorem~p.511]{Lac}), $f$ is a homotopy equivalence.
\end{proof}

\begin{lemma}\label{contractible}
The space $\Sigma$ is simply connected. Moreover, for all compact subtrees $K_R\subseteq R$, there exists a compact subtree $K_T^0\subseteq\ol T$ such that for every compact subtree $K_T$ containing $K_T^0$, the space $\Sigma\cap (K_R\times K_T)$ is simply connected.
\end{lemma}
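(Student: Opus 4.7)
The plan is to apply the cell-like map lemma (Lemma~\ref{thm_cell-like}) to the projection $p_R$, working with compact pieces since $R$ is non-compact. For any pair of compact subtrees $K_R \subseteq R$ and $K_T \subseteq \overline T$, set $\Sigma_{K_R, K_T} := \Sigma \cap (K_R \times K_T)$. The fibers of the restricted projection $p_R : \Sigma_{K_R, K_T} \to K_R$ are of the form $(K_u \cap K_T) \times \{u\}$ for $u \in K_R$; these are intersections of subtrees of $\overline T$, hence themselves subtrees and thus contractible whenever non-empty. To ensure surjectivity of $p_R$ on $\Sigma_{K_R, K_T}$, I take $K_T^0$ to be the convex hull in $\overline T$ of $\bigcup_{e \in E(K_R)} K_e$, which is compact because each $K_e$ is compact, $K_R$ has finitely many edges, and convex hulls of compact sets in $\mathbb{R}$-trees are compact. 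For any compact subtree $K_T \supseteq K_T^0$ and any $u \in K_R$, picking an edge $e \subseteq K_R$ with $u$ in its closure yields $K_e \subseteq K_u$ (since $\Omega_e \subseteq \Omega_u$) and $K_e \subseteq K_T$, so $K_u \cap K_T \supseteq K_e$, which is non-empty by Lemma~\ref{lem_non-empty}.

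Next I verify that $\Sigma_{K_R, K_T}$ satisfies the hypotheses of Lemma~\ref{thm_cell-like}: it is compact (closed in the compact product $K_T \times K_R$), metrizable, and at most two-dimensional. The main point needing care is local contractibility. Away from vertices of $K_R$, a small neighborhood of a point $(x,u) \in \Sigma_{K_R, K_T}$ is a product of a small subtree of $K_T$ with a small subtree of $K_R$, hence contractible. Near a vertex $v \in K_R$, a neighborhood is a union of such rectangular pieces indexed by the finitely many edges $e \subseteq K_R$ adjacent to $v$, glued along the subtree of the form $(B_\epsilon(x) \cap K_v \cap K_T) \times \{v\}$; one can deformation retract each band onto its trace at $v$ (using $K_e \subseteq K_v$), yielding a subtree of $K_T$, which is contractible. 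Applying Lemma~\ref{thm_cell-like}, the projection $p_R : \Sigma_{K_R, K_T} \to K_R$ is a homotopy equivalence, and since $K_R$ is contractible, so is $\Sigma_{K_R, K_T}$. This proves the second assertion.

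For the first assertion, any loop $\gamma : S^1 \to \Sigma$ has compact image, and since compact subsets of $\overline T$ and of $R$ lie in compact subtrees, this image is contained in some $\Sigma_{K_R, K_T}$; after enlarging $K_T$ to include $K_T^0(K_R)$, the loop lies in a contractible subspace and is therefore null-homotopic in $\Sigma$. The main delicate point of this plan is local contractibility of $\Sigma_{K_R, K_T}$, because the compact trees $K_e$ may have dense branch points and $\Sigma$ is not a simplicial complex; the argument nonetheless works because balls in $\mathbb{R}$-trees are subtrees, and the band structure locally exhibits $\Sigma_{K_R, K_T}$ as a union of products of such subtrees.
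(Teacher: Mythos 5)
Your proof follows essentially the same route as the paper's own: apply the cell-like map lemma (Lemma~\ref{thm_cell-like}) to the projection $p_R$ restricted to a compact piece $\Sigma\cap(K_T\times K_R)$, check that fibers are compact subtrees (hence contractible) and that the space is locally contractible, and then exhaust $\Sigma$ by such pieces. The verification of local contractibility (rectangular neighbourhoods in the interior of bands, deformation retraction onto the base tree near vertices) matches the paper's, and the compactness/surjectivity bookkeeping is the same.

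One small point worth flagging: your explicit construction $K_T^0=\mathrm{conv}\bigl(\bigcup_{e\in E(K_R)}K_e\bigr)$ breaks down in the degenerate case where $K_R$ is reduced to a point, since then $E(K_R)=\emptyset$ and you get $K_T^0=\emptyset$, which does not force the fibers to be non-empty (recall Lemma~\ref{thm_cell-like} requires fibers to be contractible, hence non-empty). The paper sidesteps this by leaving $K_T^0$ implicit and only invoking Lemma~\ref{lem_non-empty} to ensure $p_R(\Sigma\cap(K_T^0\times K_R))=K_R$; in your framework, the fix is to throw into $K_T^0$ a point of $\Omega_u$ for each vertex $u\in K_R$ (there are finitely many such vertices, and Lemma~\ref{lem_non-empty} gives such points). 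With that harmless patch your argument is complete and identical in spirit to the paper's.
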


\begin{proof}
We first explain how to deduce simple connectedness from the second assertion. We first show that $\Sigma$ is connected. If not, let $C_1$ and $C_2$ be two distinct connected components of $\Sigma$. Let $K_R$ be a compact subtree of $R$ that intersects both $p_R(C_1)$ and $p_R(C_2)$. Then for every compact subtree $K_T\subseteq T$, the space $\Sigma\cap (K_T\times K_R)$ is disconnected, a contradiction. This shows that $\Sigma$ is connected. Moreover, if $\gamma:S^1\to\Sigma$ is any loop, then $\gamma(S^1)$ is contained in $\Sigma\cap (K_T\times K_R)$ for some compact subtrees $K_T\subseteq\ol T$ and $K_R\subseteq R$, so the second assertion implies that $\Sigma$ is simply connected. 

We now prove the second assertion. Let $K_R\subseteq R$ be a compact subtree. Using Lemma~\ref{lem_non-empty}, we can find a compact subtree $K_T^0\subseteq \ol T$ such that $p_R(\Sigma\cap(K_T^0\times K_R))=K_R$; notice then that every compact subtree $K_T\subseteq \ol T$ that contains $K_T^0$ also satisfies this property. We will apply Lemma~\ref{thm_cell-like} to the compact set $C:=\Sigma\cap (K_T\times K_R)$ and the continuous surjective map $p_R:C\to K_R$. The space $K_R$ is compact, metrizable, finite-dimensional and locally contractible, being a compact subtree of $R$. The space $C$ is metrizable and finite-dimensional, being a subspace of a product of two trees. It is compact because $K_T$ and $K_R$ are compact, and $\Sigma$ is a closed subspace of $\overline{T}\times R$. Point preimages of $p_R$ are compact subtrees of $\ol T$, hence contractible, so we only need to show that $C$ is locally contractible. 
If $x\in C$ is such that $p_R(x)$ is contained in the interior of an edge $e$ of $R$, then $x$ has a neighbourhood homeomorphic to 
 $K_e\times \rond e$. If $p_R(x)$ is a vertex $v\in R$, then for some neighbourhood $V$ of $v$ in $R$, $p_R\m(V)$ retracts by deformation onto $p_R\m(\{v\})$.
It follows that $C$ is locally contractible, which concludes the proof.
\end{proof}

\begin{rk}
Since $\Sigma$ is connected, it follows that $p_T(\Sigma)$ is a connected subspace of $\overline{T}$. In particular, by minimality of $T$, we have $T\subseteq p_T(\Sigma)$. 
One can show that $p_T$ is never surjective on $\ol T$: indeed, the complete metric space $\ol T$ would be covered by countably many compact trees $K_e$, 
and each of them has empty interior (in the metric topology), contradicting the Baire property.
\end{rk}

%
%

\subsection{Comparing leaves of $\Sigma$ with leaves of $L^2(T)$}

\begin{lemma}\label{lem_pT}
Let $l$ be a $\Sigma$-leaf semi-line with extremity $\omega\in \partial_{\infty} (G,\calf)$. Then $p_T(l)=\{\calq(\omega)\}$. 
\\
Similarly, if $\omega\in V_{\infty}(G,\calf)$ and $\call_{\omega_\Sigma}$ is the complete $\Sigma$-leaf through the special point $\omega_{\Sigma}$,
then $p_T(\call_{\omega_\Sigma})=\{\calq(\omega)\}$. 
\end{lemma}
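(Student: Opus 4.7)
The first observation is essentially tautological: since the complete $\Sigma$-leaves are defined as the connected components of the fibers of $p_T$, the projection $p_T$ is constant on any connected subset of a leaf, in particular on $l$ (respectively on $\call_{\omega_\Sigma}$). So in each case the image $p_T(l)$ is a single point $x\in\ol T$, and the whole content of the lemma is to identify $x$ with $\calq(\omega)$.

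The case $\omega\in V_\infty(G,\calf)$ is immediate from equivariance: by definition $\omega_\Sigma=(y,\omega)$ with $y$ the unique $G_\omega$-fixed point of $K_\omega$, so $p_T(\omega_\Sigma)=y$ is fixed by $G_\omega$ in $\ol T$. But $\calq(\omega)$ is characterized as the unique $G_\omega$-fixed point of $T$ (Proposition~\ref{prop_defQ}(i), using minimality to rule out other fixed points), so $y=\calq(\omega)$ and hence $p_T(\call_{\omega_\Sigma})=\{\calq(\omega)\}$.

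The core of the argument is therefore the case $\omega\in\partial_\infty(G,\calf)$. The plan is to prove the following sublemma: for every $\eps>0$, if $f_\eps:R\to T$ is a $1$-Lipschitz $\eps$-BBT $G$-equivariant map (which exists by Lemma~\ref{bbt}), then for every $u\in R$ and every $y\in K_u$ one has $d_{\ol T}(y,f_\eps(u))\le 30\eps$. To see this, note that by Lemma~\ref{lem_Q}(3), whenever $(\alpha,\omega')\in L^2_u(T)$, the set $f_\eps([\alpha,\omega']_R)$ has diameter at most $20\eps$; combining with Proposition~\ref{prop_defQ} (parts (i)--(iii) according to whether $\alpha,\omega'$ lie in $V_\infty$ or $\partial_\infty$) gives $d_{\ol T}(\calq(\alpha),f_\eps(u))\le 30\eps$, so $\Omega_u\subseteq B_{30\eps}(f_\eps(u))$ and hence by convexity $K_u\subseteq B_{30\eps}(f_\eps(u))$.

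Now fix $a\in l$ and let $x:=p_T(l)$. The projection $p_R(l)$ is a ray of $R$ converging to $\omega$. For every $u\in p_R(l)$, the sublemma gives $d_{\ol T}(x,f_\eps(u))\le 30\eps$, so $f_\eps([p_R(a),\omega]_R)$ is bounded (this incidentally shows $\omega\in L^1(T)$); by Proposition~\ref{prop_defQ}(iii) we have $\calq(\omega)\in\ol T$ and there exists $y_0$ on the ray with $f_\eps([y_0,\omega]_R)\subseteq B_{10\eps}(\calq(\omega))$, so $d_{\ol T}(x,\calq(\omega))\le 40\eps$. Letting $\eps\to 0$ yields $x=\calq(\omega)$. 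The only step with any subtlety is the sublemma, which really just repackages Lemma~\ref{lem_Q} and the characterization of $\calq$ in Proposition~\ref{prop_defQ}.
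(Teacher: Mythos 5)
Your observation that $p_T$ is constant on a $\Sigma$-leaf semi-line is correct (leaves are connected components of $p_T$-fibers), and the $V_\infty$ case is fine. But the core case $\omega\in\partial_\infty(G,\calf)$ has a genuine gap: the $1$-Lipschitz $\eps$-BBT maps furnished by Lemma~\ref{bbt} live on a \emph{new} Grushko tree $R_\eps$ of covolume at most $\eps$, not on the fixed $R$ for which $\Sigma=\Sigma(T,R)$ was built. With $R$ fixed, the BBT constant of a $1$-Lipschitz equivariant map $f:R\to T$ is a fixed number $C>0$ bounded below by the geometry of $R$ (it cannot tend to $0$). Your sublemma is correct in this regime and gives $K_u\subseteq B_{30C}(f(u))$, whence $d(x,\calq(\omega))\le 40C$, but ``letting $\eps\to 0$'' is not available: to shrink $\eps$ you must change the tree to $R_\eps$, and then the objects $l$, $p_R(l)$ and $K_u$ no longer make sense, since they are defined in terms of $\Sigma(T,R)$. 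You would have to transport the constraint ``$x\in K_u^R$ for $u\in p_R(l)$ near $\omega$'' to a constraint ``$x\in K_{u'}^{R_\eps}$ for $u'\in R_\eps$ near $\omega$'', and this is not straightforward (in particular you cannot invoke Proposition~\ref{fibers-connected} to produce such a $u'$: that proposition is proved \emph{after} this lemma and uses it).

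The paper closes the argument differently, by compactness. Write a subray of $p_R(l)$ as $g_i.m$ for a fixed edge midpoint $m$ and $g_i\to\omega$. By $G$-invariance of $\Sigma$, the relation $(x,g_i.m)\in\Sigma$ gives $g_i^{-1}.x\in K_e$, a compact subset of $\overline{T}$ for the metric topology. Corollary~\ref{cor_compact} — which packages the continuity of $\calq$ for the observers' topology (Lemma~\ref{lem_continuite}) — then gives $\calq(\omega)=x$ at once, with no quantitative BBT estimate and no need to vary $R$. If you want a quantitative route via BBT maps, you would first need an independent lemma transporting the relevant leaf data between band complexes $\Sigma(T,R)$ and $\Sigma(T,R_\eps)$, which is more work than the compactness shortcut.
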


\begin{proof}
The second assertion is obvious by definition of $\calq$ on $V_\infty(G,\calf)$. To prove the first, let $m$ be the midpoint of an edge $e$ of $R$  
such that there exists a sequence
of elements $g_i\in G$ converging to $\omega$, with $g_i.m\in p_R(l)$ for all $i\in\mathbb{N}$.
Let $x:=p_T(l)$. Then for all $i\in\mathbb{N}$, we have $(x,g_i.m)\in\Sigma$, and hence $(g_i^{-1}.x,m)\in\Sigma$. This implies that $g_i\m.x$ lies in the compact subset  $K_e\subseteq \overline T$ for all $i\in\mathbb{N}$, so $\calq(\omega)=x$ by Corollary \ref{cor_compact}.
\end{proof}

\begin{lemma}\label{lem_L2}
A subset of $\overline{T}\times R$ is a $\Sigma$-leaf line if and only if it is of the form $\{\calq(\alpha)\}\times [\alpha,\omega]_R$ with $(\alpha,\omega)\in L^2(T)$.\\
In particular, for all $(\alpha,\omega)\in\partial^2 (G,\calf)$, one has $(\alpha,\omega)\in L^2(T)$ if and only if there exists a $\Sigma$-leaf line with endpoints $\alpha$ and $\omega$, and in this situation such a $\Sigma$-leaf line is unique. 
\end{lemma}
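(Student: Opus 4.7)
The plan is to derive both directions of the equivalence from the description of complete $\Sigma$-leaves as connected components of the fibers of $p_T$, combined with the identification of $L^2(T)$ with the non-trivial fibers of $\calq$ given by Proposition~\ref{equality-Q}. The key inputs are Lemmas~\ref{lem_pT} and \ref{fibreQ}, together with the elementary observation that $p_R$ is injective on each complete $\Sigma$-leaf.

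For the ``only if'' direction, I let $l$ be a $\Sigma$-leaf line. Since $p_R$ is injective on the complete leaf containing $l$, the image $p_R(l)$ is a segment whose two endpoints $\alpha\neq\omega$ lie in $\partial(G,\calf)$ by the very definition of a $\Sigma$-leaf line, so $p_R(l)=[\alpha,\omega]_R$. Moreover $l$ lies in a single fiber of $p_T$, so $p_T(l)$ is a single point $z\in\overline{T}$. If $\omega\in\partial_\infty(G,\calf)$, applying Lemma~\ref{lem_pT} to a semi-infinite subray of $l$ converging to $\omega$ gives $z=\calq(\omega)$; if $\omega\in V_\infty(G,\calf)$, the corresponding endpoint of $l$ is by definition the special vertex $\omega_\Sigma$ and $z=p_T(\omega_\Sigma)=\calq(\omega)$ directly. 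The same reasoning applied to $\alpha$ gives $z=\calq(\alpha)=\calq(\omega)$, so $(\alpha,\omega)\in L^2(T)$ by Proposition~\ref{equality-Q}, and $l=\{\calq(\alpha)\}\times [\alpha,\omega]_R$.

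For the converse, given $(\alpha,\omega)\in L^2(T)$ I set $x:=\calq(\alpha)=\calq(\omega)$, which lies in $\overline{T}$ by Lemma~\ref{fibreQ}. For every $u\in[\alpha,\omega]_R$ (including the endpoints when they lie in $V_\infty(R)$) we have $(\alpha,\omega)\in L^2_u(T)$ directly from the definition, whence $x=\calq^2(\alpha,\omega)\in\Omega_u\subseteq K_u$; this shows $\{x\}\times[\alpha,\omega]_R\subseteq \Sigma$. Being homeomorphic to $[\alpha,\omega]_R$, this set is connected and contained in the fiber $p_T\m(x)$, so it sits inside a single complete $\Sigma$-leaf $\call$; injectivity of $p_R|_\call$ then forces $\{x\}\times[\alpha,\omega]_R$ to be exactly a $\Sigma$-leaf segment projecting onto $[\alpha,\omega]_R$. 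A short case analysis on whether each endpoint lies in $\partial_\infty(G,\calf)$ or in $V_\infty(G,\calf)$ (in the latter case the endpoint of the segment is the special vertex $\alpha_\Sigma$, resp.\ $\omega_\Sigma$, because $x=\calq(\alpha)$ is the unique $G_\alpha$-fixed point) confirms that this set fits the definition of a $\Sigma$-leaf line from Section~\ref{sec-def-band}.

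Uniqueness is now immediate: any other $\Sigma$-leaf line $l'$ with endpoints $\alpha,\omega$ satisfies $p_R(l')=[\alpha,\omega]_R$ by injectivity of $p_R$ on leaves, and $p_T(l')=\{\calq(\alpha)\}$ by the same argument as in the ``only if'' direction, forcing $l'=\{x\}\times[\alpha,\omega]_R$. The only delicate bookkeeping throughout is matching the two kinds of endpoints in $\partial(G,\calf)$ to the corresponding clauses in the definition of a $\Sigma$-leaf line and to the appropriate case of Lemma~\ref{lem_pT} or the definition of $\calq$ on $V_\infty(G,\calf)$; there is no real obstacle.
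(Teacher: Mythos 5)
Your proof is correct and follows essentially the same approach as the paper's: both directions hinge on Lemma~\ref{lem_pT}, the definition of $\Sigma$ (via $\Omega_u$ and $K_u$), and the identification $L^2(T)=\{\calq(\alpha)=\calq(\omega)\}$ from Lemma~\ref{fibreQ}/Proposition~\ref{equality-Q}, with uniqueness obtained from $p_T(l)=\{\calq(\alpha)\}$ and $p_R(l)=[\alpha,\omega]_R$. You spell out the endpoint case analysis and the verification that $\{\calq(\alpha)\}\times[\alpha,\omega]_R$ is genuinely a $\Sigma$-leaf segment in more detail than the paper does, but the argument is the same.
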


\begin{proof}
For all $(\alpha,\omega)\in L^2(T)$, we have $\calq(\alpha)=\calq(\omega)$ (Proposition \ref{equality-Q}), and it then follows from the definition of $\Sigma$ that $\{\calq(\alpha)\}\times [\alpha,\omega]_R\subseteq\Sigma$. 
Conversely, if $l\subseteq \Sigma$ is a $\Sigma$-leaf line whose endpoints are $\alpha,\omega\in \partial (G,\calf)$, then Lemma \ref{lem_pT} shows that $\calq(\alpha)=\calq(\omega)$, and hence $(\alpha,\omega)\in L^2(T)$.
In this case, we have $p_T(l)=\{\calq(\alpha)\}$ (Lemma \ref{lem_pT}), so $l=\{\calq(\alpha)\}\times p_R(l)=\{\calq(\alpha)\}\times [\alpha,\omega]_R$.
\end{proof}

\begin{cor}\label{cor_omega}
For all $u\in R$, and all $x\in K_u$, one has $x\in\Omega_u$ if and only if there is a $\Sigma$-leaf line that contains $(x,u)$.
%
\end{cor}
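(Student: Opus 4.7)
The plan is to deduce this directly from Lemma \ref{lem_L2} together with the definitions of $L^2_u(T)$ and $\Omega_u=\calq^2(L^2_u(T))$; there is no real obstacle, as the statement essentially unpacks those definitions.

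For the forward direction, I would start from a point $x\in\Omega_u$ and pick $(\alpha,\omega)\in L^2_u(T)$ with $\calq^2(\alpha,\omega)=x$. By Proposition~\ref{equality-Q}, we have $\calq(\alpha)=\calq(\omega)=x$. Lemma~\ref{lem_L2} then gives that $\{x\}\times[\alpha,\omega]_R$ is a $\Sigma$-leaf line, and since by the definition of $L^2_u(T)$ we have $u\in[\alpha,\omega]_R$, this $\Sigma$-leaf line contains the point $(x,u)$.

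For the converse, suppose $(x,u)$ lies on a $\Sigma$-leaf line $l$. By Lemma~\ref{lem_L2}, we can write $l=\{\calq(\alpha)\}\times[\alpha,\omega]_R$ for some $(\alpha,\omega)\in L^2(T)$. The fact that $(x,u)\in l$ forces $x=\calq(\alpha)=\calq^2(\alpha,\omega)$ and $u\in[\alpha,\omega]_R$. The latter says exactly that $(\alpha,\omega)\in L^2_u(T)$, so $x=\calq^2(\alpha,\omega)\in\Omega_u$, as required.

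The hypothesis $x\in K_u$ does not really play an active role in either direction beyond ensuring that the statement is being made about points of the relevant base tree; in the forward direction $x\in\Omega_u\subseteq K_u$ automatically, and in the backward direction $p_T(l)=\{x\}$ lies in $\Omega_u\subseteq K_u$ by Lemma~\ref{lem_pT} applied to either endpoint of $l$. So the whole argument is a two-line application of Lemma~\ref{lem_L2}, and I do not foresee any technical difficulty.
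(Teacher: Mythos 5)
Your proof is correct and follows the same route as the paper's: both directions are just Lemma~\ref{lem_L2} together with unwinding the definitions of $L^2_u(T)$, $\Omega_u$, and $\calq^2$. (The appeal to Proposition~\ref{equality-Q} in the forward direction is harmless but unnecessary, since $\calq(\alpha)=\calq(\omega)$ is already part of the definition of $\calq^2$ via Lemma~\ref{fibreQ}.)
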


\begin{proof}
If $x\in\Omega_u$, 
then there exists $(\alpha,\omega)\in L^2_u(T)$ such that $x=\calq(\alpha)=\calq(\omega)$.
This means in particular that $u\in [\alpha,\omega]_R$, so $\{x\}\times [\alpha,\omega]_R$ is a $\Sigma$-leaf line that contains $(x,u)$. 
%
Conversely, assume that $l\subseteq \Sigma$ is a $\Sigma$-leaf line that contains $(x,u)$, then $l$ is of the form $\{x\}\times [\alpha,\omega]_R$ for some $(\alpha,\omega)\in L^2(T)$ with $\calq(\alpha)=x$. Since
$u\in p_R(l)$, we have $(\alpha,\omega)\in L^2_u(T)$, so $x\in\Omega_u$. 
\end{proof}


\begin{rk}\label{rk_omega}
As a consequence of Corollary~\ref{cor_omega}, the set $\Omega$ from Definition~\ref{def_omega} is also the union over all edges $e$ of $\Omega_e\times e$. 
In particular, let $B_e=K_e\times e$ be a band of  $\Sigma$. Since $K_e$ is the convex hull of $\Omega_e$,
every terminal point $x\in K_e$ belongs to $\Omega_e$ and therefore 
the  $\Sigma$-leaf segment $\{x\}\times e$ is contained in $\Omega$.
\end{rk}

\subsection{Finiteness properties}\label{sec-finiteness}

When working in the context of free products, one new phenomenon arises that did not occur in the context of free groups: in our case, the trees $\calk_v$ are no longer compact when $v\in V_{\infty}(R)$, and there are infinitely many bands attached to $\calk_v$ (there are finitely many $G$-orbits of them, though, but for example there are infinitely many orbits of pairs of bands attached to $\calk_v$). In the present section, we present some finiteness results that will be key for extending the work in \cite{CHL07,CH14,CHR11} to our context.

\begin{lemma}\label{lem_finitude_0}
For any vertex $v\in R$, and any point $x\in \calk_v$ which is not a special vertex, there is a neighbourhood $V$ of $x$  in $\calk_v$ which intersects only finitely many bands.
\end{lemma}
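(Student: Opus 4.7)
The plan is to reduce the lemma to a compactness argument using the continuity of the orbit map established in Corollary~\ref{cor_compact}. First, if $v$ has finite valence in $R$, there are only finitely many edges of $R$ incident to $v$, hence only finitely many bands meet $\calk_v$ at all, and the statement is trivial. So assume $v\in V_\infty(R)$, and recall that the special vertex $v_\Sigma$ is $\calq(v)$, the unique point of $K_v$ fixed by $G_v$. By hypothesis $x\neq v_\Sigma$.

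Next, I observe that a band $B_e=K_e\times e$ meets $\calk_v$ only if the edge $e$ is incident to $v$, and in that case $B_e\cap \calk_v$ corresponds to $K_e\subseteq K_v$ via the gluing identifications. Suppose by contradiction that no neighbourhood of $x$ in $K_v$ has the required property. Then there exists a sequence of pairwise distinct edges $e_n$ incident to $v$, together with points $x_n\in K_{e_n}\subseteq K_v$, such that $x_n\to x$ in the metric topology on $\overline{T}$. Since $R/G$ is finite, there are only finitely many $G_v$-orbits of edges of $R$ incident to $v$, so after extracting a subsequence I can write $e_n=g_n e_0$ for a fixed edge $e_0$ incident to $v$ and some elements $g_n\in G_v$. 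By $G$-equivariance of the construction, $K_{e_n}=g_n K_{e_0}$, so $g_n^{-1}x_n\in K_{e_0}$.

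The key point is now that since the $e_n$ are pairwise distinct, the elements $g_n\in G_v$ are pairwise distinct, and so the sequence $g_n$ converges to $v\in V_\infty(G,\calf)$ in $G\cup \partial(G,\calf)$: indeed, picking any basepoint $a\in R$ with trivial stabilizer, the points $g_n.a$ lie in pairwise distinct directions based at $v$ in $R$, and thus converge to $v$ for the observers' topology. Since $K_{e_0}$ is compact in the metric topology (because $e_0$ lies in the interior of an edge), and $g_n^{-1}x_n\in K_{e_0}$ with $g_n\to v$, Corollary~\ref{cor_compact} applies and yields $\calq(v)=\lim_n x_n=x$, i.e. $x=v_\Sigma$. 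This contradicts the assumption that $x$ is not a special vertex, and completes the proof.

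The only subtle step is justifying that a sequence of distinct elements of $G_v$ converges to $v$ in the boundary; this uses that $v$ is a vertex of infinite valence and that $G_v$ permutes the directions at $v$ freely enough that distinct elements send any trivial-stabilizer basepoint into distinct directions. Everything else is a direct application of compactness of $K_{e_0}$ and of Corollary~\ref{cor_compact}.
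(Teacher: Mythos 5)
Your proof is essentially correct and takes a genuinely different route from the paper's. The paper also reduces to a single $G_v$-orbit of bands $g_iB_e$ meeting a small ball around $x$, but it then argues directly at the level of the metric on $\ol T$: after extracting a convergent subsequence of the points of $K_e$ witnessing $d_{\ol T}(g_i^{-1}x,K_e)\le d/2$ (where $d=d_{\ol T}(x,v_\Sigma)$), one finds indices $i\neq j$ for which the segments $[v_\Sigma,g_i^{-1}x]$ and $[v_\Sigma,g_j^{-1}x]$ overlap along a nondegenerate arc, so $g_ig_j^{-1}$ fixes an arc, contradicting triviality of arc stabilizers. Your version instead routes the contradiction through the $\calq$-machinery of Section~\ref{sec-Q}: you push $g_n^{-1}x_n$ into the compact tree $K_{e_0}$, show $g_n\to v$ in $\partial(G,\calf)$, and conclude $\calq(v)=x$, contradicting $x\neq v_\Sigma$. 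Both rest on compactness of $K_e$; the paper's version is more self-contained while yours is shorter once Corollary~\ref{cor_compact} is available.

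One step needs care, though. Corollary~\ref{cor_compact} is stated for a \emph{fixed} $x\in\ol T$ with $g_i^{-1}x\in K$ for all $i$, whereas you are feeding it $g_n^{-1}x_n\in K_{e_0}$ with a \emph{varying} sequence $x_n\to x$. This is not a literal application of the corollary, so you should either state the slight variant you need or adapt the corollary's proof in place: extract a subsequence so that $g_n^{-1}x_n\to y\in K_{e_0}$ for the metric topology; then $d_{\ol T}(x_n,g_ny)=d_{\ol T}(g_n^{-1}x_n,y)\to 0$, and since $x_n\to x$ metrically this gives $g_ny\to x$ metrically, hence for the observers' topology; Lemma~\ref{lem_continuite} then yields $\calq(v)=x$. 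Also, your parenthetical justification that $K_{e_0}$ is compact ``because $e_0$ lies in the interior of an edge'' is garbled ($e_0$ is an edge, not a point): the correct statement, recorded in Section~\ref{sec-def-band}, is that $K_u$ is compact for $u$ in the interior of an edge, and $K_{e_0}$ is defined to be $K_u$ for any such $u\in e_0$. The rest of the argument, in particular the reduction to a single $G_v$-orbit of edges and the fact that $g_n\to v$ (which uses that $G_v$ acts freely on the edges of $R$ at $v$, by triviality of edge stabilizers), is correct.
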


\begin{proof}
If $\calk_v$ has no special point (\ie if $G_v$ is finite), then there are only finitely many bands incident on $\calk_v$, so the lemma is trivial.
Otherwise, denote by $v_{\Sigma}$ the special point of $\calk_v$, and by $d$ the distance from $x$ to $v_{\Sigma}$.
If infinitely many bands intersect the ball of radius $d/2$ around $x$ in $\calk_v$, 
then since there are only finitely many $G_v$-orbits of bands intersecting $\calk_v$, 
we can assume that these bands are in the $G_v$-orbit of a band $B_e$. Thus, 
there are infinitely many elements $g_i\in G_v$ 
such that 
$d_{\overline T}(x,g_i K_e)\leq d/2$.
Thus $K_e$ contains a point at distance less than $d/2$ from $g_i\m x$, contradicting compactness of $K_e$ for the metric topology.
\end{proof}

From Lemma \ref{lem_finitude_0}, one immediately deduces the following facts.

\begin{cor}\label{cor_loc_fini}
1) If $C\subseteq \calk_v$ is a compact subset avoiding the special vertex of $\calk_v$, then $C$ intersects only finitely many bands.
\\
2) Any complete $\Sigma$-leaf $\call\subseteq \Sigma$ is a locally finite tree, except at points $x\in \call$ with infinite stabilizer.
\qed
\end{cor}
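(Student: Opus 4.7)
The plan is to deduce both assertions from Lemma~\ref{lem_finitude_0} by essentially routine arguments. For assertion (1), since the compact set $C$ avoids the special vertex of $\calk_v$, Lemma~\ref{lem_finitude_0} provides, at each $x \in C$, an open neighbourhood $V_x$ in $\calk_v$ meeting only finitely many bands. The family $\{V_x\}_{x \in C}$ is an open cover of $C$, so by compactness I extract a finite subcover; the union of the (finitely many) bands met by each element of this subcover contains all bands met by $C$, yielding finiteness.

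For assertion (2), I would fix $x \in \call$ with finite $G$-stabilizer and bound the valence of $\call$ at $x$. If $p_R(x)$ lies in the interior of an edge $e$, a neighbourhood of $x$ in $\Sigma$ is homeomorphic to $K_e \times I$ for an open interval $I \subseteq e$, and $\call$ locally equals $\{p_T(x)\} \times I$, so it is an arc at $x$. Otherwise $x = (y,v)$ for some vertex $v$ of $R$, and the valence of $\call$ at $x$ equals the number of edges $e$ incident on $v$ with $y \in K_e$. If $v \notin V_\infty(R)$ only finitely many edges of $R$ are incident on $v$, and the bound is trivial. If $v \in V_\infty(R)$, I claim that $y$ cannot be the special vertex $v_\Sigma$: otherwise $G_v$ would fix $y$, so $G_x \supseteq G_v$ would be infinite, contradicting our hypothesis. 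Hence Lemma~\ref{lem_finitude_0} supplies a neighbourhood of $y$ in $\calk_v$ meeting only finitely many bands, so in particular $y$ lies in only finitely many base trees $K_e$.

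The only genuine point to verify, and the only step that is not purely formal, is the identification of the special vertex of $\calk_v$ (defined intrinsically as the unique $G_v$-fixed point in $K_v$) with the set of points of $\Sigma$ of infinite $G$-isotropy. For $v \in V_\infty(R)$, this uses the fact that the peripheral group $G_v$ has a unique fixed point in $\overline{T}$: nontrivial elements of $G_v$ cannot stabilize an arc of $T$, because arc stabilizers of a very small tree are cyclic, root-closed and nonperipheral. This is the only tree-theoretic input beyond Lemma~\ref{lem_finitude_0}, and it is what allows the finite-stabilizer hypothesis of the corollary to translate into the non-special-vertex hypothesis of that lemma.
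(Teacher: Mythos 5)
Your proposal is correct and fills in exactly the routine deductions the paper leaves implicit: part (1) is compactness plus Lemma~\ref{lem_finitude_0}, and part (2) reduces the local finiteness of $\call$ at a finite-isotropy point $x$ to the non-specialness of the $\calk_v$-coordinate of $x$, invoking the identification the paper records just before the corollary ("the special vertices of $\Sigma$ are exactly the points of $\Sigma$ whose isotropy group is infinite"). This is the same argument the paper intends when it states the corollary with \qed immediately after Lemma~\ref{lem_finitude_0}.
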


\begin{rk}
Note however that the valence of points with trivial stabilizer in a given leaf might \emph{a priori} still be unbounded.
\end{rk}

\begin{cor}\label{finitude-1}
For any vertex $v\in R$, and any point $x\in \calk_v$, there exists an open neighbourhood $V$ of $x$ in $\calk_v$ such that every band which intersects $V$ contains $x$.
\end{cor}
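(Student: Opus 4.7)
The plan is to split the argument according to whether $x$ is a special vertex of $\calk_v$. In the non-special case, Lemma~\ref{lem_finitude_0} already provides a neighborhood $V_0$ of $x$ in $\calk_v$ intersecting only finitely many bands, say $B_{e_1},\dots,B_{e_n}$. For each $i$ with $x\notin K_{e_i}$, the compactness (hence closedness in $\overline T$ for the metric topology) of $K_{e_i}$ gives an open neighborhood $U_i$ of $x$ in $\calk_v$ disjoint from $K_{e_i}$. Intersecting $V_0$ with these finitely many $U_i$ yields the required $V$: any band intersecting $V$ must equal some $B_{e_i}$, and the only such $B_{e_i}$ that can meet $V$ are those for which $x\in K_{e_i}$, i.e.\ those which contain $x$.

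The remaining and more delicate case is when $x$ equals the special vertex $v_\Sigma$ at a vertex $v\in V_\infty(R)$; here Lemma~\ref{lem_finitude_0} is inapplicable and bands may genuinely accumulate at $v_\Sigma$. The key point will be to exploit $G_v$-symmetry: since $G_v$ acts by isometries on $\overline T$, fixes $v_\Sigma$, and sends $K_e$ to $K_{ge}$, the metric distance $d_{\overline T}(v_\Sigma,K_e)$ depends only on the $G_v$-orbit of $e$ among the edges of $R$ incident on $v$. There are only finitely many such orbits since $R/G$ is finite.

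I would then let $c$ be the minimum of the strictly positive distances $d_{\overline T}(v_\Sigma,K_{e_O})$ as $O$ ranges over the (finitely many) $G_v$-orbits of edges incident on $v$ for which $v_\Sigma\notin K_{e_O}$, with the convention $c=+\infty$ if no such orbit exists (in which case $V=\calk_v$ trivially works). The set $V:=\{y\in\calk_v\mid d_{\overline T}(y,v_\Sigma)<c/2\}$ is then an open neighborhood of $v_\Sigma$ in $\calk_v$ for the metric topology. For any edge $e$ incident on $v$ with $v_\Sigma\notin K_e$, $G_v$-invariance gives $d_{\overline T}(v_\Sigma,K_e)\geq c$, so the triangle inequality forces $V\cap K_e=\emptyset$; hence any band $B_e$ intersecting $V$ must satisfy $v_\Sigma\in K_e$, i.e.\ contain $x$. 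The main obstacle is precisely the absence of the local finiteness provided by Lemma~\ref{lem_finitude_0}, and the reduction to finitely many orbit-distances via $G_v$-invariance is what overcomes it: bands may accumulate on $v_\Sigma$, but only by containing it.
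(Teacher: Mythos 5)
Your proof is correct and takes essentially the same approach as the paper: in the non-special case you invoke Lemma~\ref{lem_finitude_0} and then shrink the neighbourhood to exclude the finitely many bands not containing $x$, while in the special case you use the fact that there are only finitely many $G_v$-orbits of bands incident on $\calk_v$, together with $G_v$-equivariance, to get a uniform positive lower bound $d$ on the distance from $v_\Sigma$ to any band not containing it, and then take a ball of radius $d/2$. The paper's proof is terser but identical in substance; your spelling out of the $G_v$-invariance of $d_{\overline T}(v_\Sigma,K_e)$ is exactly the reason the lower bound exists.
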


\begin{proof}
If $x$ is not a special point, then Corollary \ref{finitude-1} follows from Lemma \ref{lem_finitude_0} by possibly passing to a smaller neighbourhood of $x$. If $x$ is a special point, then there are only finitely many orbits of bands attached to $\calk_v$ that do not contain $x$, so there is a lower bound $d$ on the distance from $x$ to a band that does not contain $x$. In this case, we choose for $V$ an open ball centered at $x$ of radius $d/2$.
\end{proof}


\begin{cor}\label{cor_limit_leaves}
  Let $x_i\in \calk_v$ be a sequence converging to $x\in \calk_v$, where $x$ is not the special vertex of $\calk_v$. Let $l_i$ be a $\Sigma$-leaf semi-line starting from $x_i$.
\\ Then there is a $\Sigma$-leaf semi-line $l$ starting from $x$, such that for each finite initial subsegment $I$ of $l$,
there are infinitely many indices $i$ such that $p_R(I)\subseteq p_R(l_i)$. 
\\ In particular,  given any band $B$ in $\Sigma$, if for every $i$, the semi-line $l_i$ starts with an edge in $B$, then so does $l$. If for every $i$, the semi-line $l_i$ starts with an edge that is not in $B$, then so does $l$.
\end{cor}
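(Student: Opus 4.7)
The plan is to construct $l$ by an inductive diagonal extraction, producing at each step a subsequence of $(l_i)$ together with a $\Sigma$-leaf segment that extends the previous one by one more edge. The key inputs are the two finiteness results at the end of Section~\ref{sec-finiteness}: Lemma~\ref{lem_finitude_0} gives that only finitely many bands of $\Sigma$ are adjacent to any non-special point $y$ in a base tree, and Corollary~\ref{finitude-1} supplies a neighbourhood of $y$ every band of which passes through $y$. I will also repeatedly use the fact that the $\overline{T}$-coordinate is constant along each $\Sigma$-leaf, hence along $l_i$ is equal to the $\overline{T}$-coordinate of $x_i$.

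Inductively I produce a decreasing sequence of infinite subsets $I_0\supseteq I_1\supseteq\cdots$ of $\mathbb{N}$ and $\Sigma$-leaf segments $l^{(0)}\subseteq l^{(1)}\subseteq\cdots$ of combinatorial lengths $0,1,2,\dots$ starting at $x$, with the property that $p_R(l^{(n)})$ is an initial subsegment of $p_R(l_i)$ for every $i\in I_n$. Assume $l^{(n)}$ ends at a point $y_n\in\calk_{v_n}$, necessarily sharing its $\overline{T}$-coordinate with $x$. If $y_n$ is the special vertex of $\calk_{v_n}$, I stop and set $l:=l^{(n)}$; this is a valid (finite) $\Sigma$-leaf semi-line ending at a special vertex. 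Otherwise, Corollary~\ref{finitude-1} together with Lemma~\ref{lem_finitude_0} produce a neighbourhood $U_n$ of $y_n$ meeting only finitely many bands $B_1,\dots,B_r$, each containing $y_n$. For $i\in I_n$ large, the leaf $l_i$ passes through the point $y_n^i\in\calk_{v_n}$ with the $\overline{T}$-coordinate of $x_i$; since $y_n^i\to y_n$ we may assume $y_n^i\in U_n$, and $y_n^i$ is not the special vertex of $\calk_{v_n}$, because by uniqueness of the special point this would force $x_i$ and hence $x$ to coincide with it. So $l_i$ continues past $y_n^i$ via an edge in one of $B_1,\dots,B_r$; pigeonhole on this finite set yields an infinite $I_{n+1}\subseteq I_n$ and a common next band $B_{j_{n+1}}$. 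Then $l^{(n+1)}$ is $l^{(n)}$ extended by the edge of $B_{j_{n+1}}$ above $y_n$: this really lies in $\Sigma$ because the compact tree $K_{B_{j_{n+1}}}$ is closed in $\overline{T}$ and contains the $\overline{T}$-coordinates of $x_i$ for all $i\in I_{n+1}$, hence also their limit.

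If the induction never stops, I set $l:=\bigcup_n l^{(n)}$, a semi-infinite $\Sigma$-leaf semi-line starting at $x$. In both cases, any finite initial subsegment $I\subseteq l$ sits inside some $l^{(n)}$, and for every $i\in I_n$ (infinitely many) one has $p_R(I)\subseteq p_R(l^{(n)})\subseteq p_R(l_i)$. The two particular consequences follow by inspecting the first inductive step: if every $l_i$ begins with an edge in the band $B$, then by Corollary~\ref{finitude-1} $B$ contains $x$ for $i$ large, and the pigeonhole may be taken to select $B_{j_1}=B$; conversely, if no $l_i$ begins with an edge in $B$, the pigeonhole selects some $B_{j_1}\neq B$, so $l$ starts with an edge not in $B$. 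The only real subtlety I anticipate is that a priori the $l_i$ might terminate before step $n$, but the argument above using uniqueness of the special vertex in each base tree rules this out for $i$ large in $I_n$; the rest is pure bookkeeping of a diagonal extraction.
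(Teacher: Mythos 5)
Your proof is the same diagonal extraction as the paper's and is essentially correct, but the justification that $y_n^i$ is not the special vertex of $\calk_{v_n}$ is wrong. You assert that $y_n^i$ being special ``would force $x_i$ and hence $x$ to coincide with it,'' but this does not follow: $x_i$ lies in $\calk_v$ and $y_n^i$ in $\calk_{v_n}$, and for $v\neq v_n$ one has $G_v\cap G_{v_n}=\{1\}$ (distinct vertices of the Grushko tree $R$ have disjoint stabilizers, since $R$ has trivial edge stabilizers). So the fact that $p_T(y_n^i)=p_T(x_i)$ is fixed by the infinite group $G_{v_n}$ gives no information about whether it is fixed by $G_v$, i.e., about whether $x_i$ is the special vertex of $\calk_v$. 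The claim is genuinely needed — at a special vertex infinitely many bands are incident on the base tree (Lemma~\ref{lem_finitude_0} fails there), so your pigeonhole step would stall — but the correct argument is immediate and is what the paper's proof implicitly uses: $y_n^i=(p_T(x_i),v_n)\to(p_T(x),v_n)=y_n$ since $p_T(x_i)\to p_T(x)$, and by the standing case hypothesis $y_n$ is not the unique special vertex of $\calk_{v_n}$, so $y_n^i$ differs from that vertex for all $i$ large. Everything else in your proposal — the use of Lemma~\ref{lem_finitude_0} and Corollary~\ref{finitude-1}, the closedness of $K_e$ to ensure the limit stays in $\Sigma$, and the two applications at the end — tracks the paper's argument.
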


\begin{rk}\label{rk_limit_leaves}
   The corollary still holds with the same proof if $x$ is a special point, assuming that the initial edges of $l_i$ all lie in a common band $B$.
\end{rk}

\begin{rk}
  It may happen that the extremity of $l$ is a special vertex of $\Sigma$ even if the extremity of every $l_i$  belongs to $\partial_\infty (G,\calf)$.
\end{rk}

\begin{proof}
By   Lemma \ref{lem_finitude_0}, up to extracting a subsequence, we can assume that the first edges $[x_i,y_i]$ of $l_i$
lie in a common band $B_1$. Let $y$ be the limit of $y_i$. If $y$ is a special point, we can take $l=[x,y]$ and we are done.
If not, one can apply the same argument to the sequence $y_i$: up to extracting a further subsequence, we can assume that
the second edges $[y_i,z_i]$ of $l_i$ lie in a common band $B_2$. Iterating this argument proves the corollary.
\end{proof}

\begin{lemma}\label{cor_voisinage_fini}
Let $x\in \calk_v$, and let $c\subseteq \call_x$ be a connected component of $\call_x\setminus \{x\}$. Denote by $B$ the band that contains the initial edge of $c$. If $c$ is finite, then $x$ is not extremal in $B\cap\calk_v$, and there exists an open ball $U$ around $x$ in $\calk_v$ such that for all $x'\in U$, 
the unique connected component $c_{x'}$ of $\call_{x'}\setminus\{x'\}$ that intersects $B$ is finite, and $p_R(c_{x'})\subseteq p_R(c)$.
\end{lemma}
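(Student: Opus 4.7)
I argue the two assertions separately. For the first assertion, I proceed by contradiction. Write $B = B_e$ where $e$ is an edge of $R$ with endpoints $v$ and some $v'$, so that $B\cap \calk_v = K_e\times\{v\}$. If $x$ were extremal in $B\cap\calk_v$, i.e., a terminal point of $K_e$, then Remark~\ref{rk_omega} would give $x\in\Omega_e$, so by Lemma~\ref{lem_L2} there is a $\Sigma$-leaf line of the form $\{x\}\times[\alpha,\omega]_R$ with $(\alpha,\omega)\in L^2(T)$ and $e\subseteq[\alpha,\omega]_R$. Since $v$ is an endpoint of $e$, this leaf line passes through $(x,v)$, and its sub-segment on the $v'$-side of $(x,v)$ is contained in $c$. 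By the definition of a $\Sigma$-leaf line, this sub-segment is either a semi-infinite leaf segment (endpoint in $\partial_\infty(G,\calf)$) or else a finite leaf segment terminating at a special vertex of $\Sigma$. In either case the finiteness of $c$ (read as: finitely many edges, with no endpoint at a special vertex of $\Sigma$) is violated, giving the contradiction.

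For the second assertion, I first identify the forbidden data. Since $c$ is finite, $p_R(c)$ is a finite subtree of $R$ with vertex set $\{v\}\cup\{v_*^{(i)}\}_i$, and at each vertex $v_*$ of $p_R(c)$ the point $x\in K_{v_*}$ must satisfy: $(a)$ $x$ is not the special vertex of $K_{v_*}$ (else $c$ would end at a special vertex of $\Sigma$); and $(b)$ $x\notin K_{e'}$ for every edge $e'$ incident on $v_*$ that is not in $p_R(c)$ (since $c$ is a maximal connected component of $\call_x\setminus\{x\}$ in this direction, the leaf does not cross $e'$). The list of edges in $(b)$ is potentially infinite when $v_*\in V_\infty(R)$, but Lemma~\ref{lem_finitude_0} guarantees that only finitely many of the closed sets $K_{e'}$ meet a sufficiently small metric ball around $x$ in $\overline T$. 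Choosing such a ball small enough to also be disjoint from these finitely many $K_{e'}$ and to miss the finitely many special points appearing in $(a)$, I obtain an open ball $U$ around $x$ in $\calk_v$ such that for every $x'\in U$, the point $x'$ lies in no forbidden $K_{e'}$ and is no special vertex of any $K_{v_*^{(i)}}$.

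For such an $x'$, I trace the leaf $\call_{x'}$ starting from $(x',v)$ into $B$ and continue inductively. At each vertex $(x',v_*)$ reached along the trace, the leaf crosses an edge $e''$ at $v_*$ exactly when $x'\in K_{e''}$; by the choice of $U$, this forces $e''\in p_R(c)$. Hence $p_R(c_{x'})\subseteq p_R(c)$. Since $p_R$ restricts injectively to each complete $\Sigma$-leaf and $p_R(c)$ has finitely many edges, $c_{x'}$ is a finite subtree, and the avoidance of special points in $U$ ensures that it has no special-vertex endpoints.

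\textbf{Main obstacle.} The crux of the argument is the lack of local finiteness of $R$ at vertices in $V_\infty(R)$: at such a vertex along $c$, there are infinitely many potential forbidden edges $e'$, and one cannot directly intersect infinitely many closed-set avoidance conditions to build an open ball $U$. This is exactly what Lemma~\ref{lem_finitude_0} controls: only finitely many of the trees $K_{e'}$ can accumulate near a non-special point of $K_{v_*}$, so after passing to a small enough ball all but finitely many avoidance conditions are automatic, making the construction of $U$ possible. A related (purely linguistic) point is that the intended meaning of ``$c$ finite'' is \emph{finitely many edges and no special-vertex endpoints}: both conditions are needed, since the first-assertion argument shows that a type~(b) leaf line through $(x,e)$ would otherwise allow a finite-edge $c$ ending at a special vertex.
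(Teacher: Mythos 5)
Your first assertion argument is the same as the paper's: extremality of $x$ in $B\cap\calk_v$ would give $p_T(x)\in\Omega_e$ via Remark~\ref{rk_omega}, hence a $\Sigma$-leaf line through $(x,e)$ whose ray on the far side of $v$ sits inside $c$ and is infinite.

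For the second assertion, your approach is a direct construction of $U$, whereas the paper argues by contradiction with sequences. The paper assumes no such $U$ exists, extracts a sequence $x_i\to x$ on which either $c_{x_i}$ is infinite (handled by Corollary~\ref{cor_limit_leaves}, producing a $\Sigma$-leaf semi-line at $x$ inside $c$) or $p_R(c_{x_i})\not\subseteq p_R(c)$ (handled by applying Corollary~\ref{finitude-1} at the first point $y$ along $c$ where a new edge appears). You instead note that $p_R(c)$ has only finitely many vertices, and at each such vertex $p_T(x)$ is non-special with only finitely many bands near it (Lemma~\ref{lem_finitude_0}, really Corollary~\ref{finitude-1}); intersecting these finitely many avoidance conditions produces $U$, and then the inclusion $p_R(c_{x'})\subseteq p_R(c)$ follows by tracing the leaf of $x'$ and seeing that at each vertex it can only cross bands already crossed by $\call_x$. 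Both proofs rest on the same local finiteness of bands away from special points; your version is more explicit, handles finiteness of $c_{x'}$ as a consequence of the inclusion $p_R(c_{x'})\subseteq p_R(c)$ together with avoidance of special vertices, and avoids invoking Corollary~\ref{cor_limit_leaves} altogether. The paper's version is shorter but needs the extraction of subsequences and the limit-of-leaves machinery.

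One remark: your closing ``linguistic point'' about the meaning of ``finite'' is unnecessary. If a subtree $c$ of a complete $\Sigma$-leaf has finitely many edges, it automatically contains no special vertex (a special vertex has infinitely many incident edges in $\call_x$, all of which would lie in $\overline c$), so ``finitely many edges'' already excludes special-vertex endpoints; there is no need to add that condition separately.
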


\begin{rk}\label{rk_voisinage_fini} In particular, if $\call_x$ is finite, then there exists a neighbourhood $U$ of $x$ such that $\mathcal{L}_{x'}$ is finite for all $x'\in U$, and $p_R(\call_{x'})\subseteq p_R(\call_x)$. Indeed, consider a neighbourhood $V$ of $x$ in $\calk_v$ such that every band intersecting $V$ contains $x$ (Corollary \ref{finitude-1}). 
Applying Lemma \ref{cor_voisinage_fini} for each of the finitely many bands containing $x$ proves our claim. 
Using this lemma, we will actually prove in Proposition \ref{prop_infini} below that no complete $\Sigma$-leaf is finite. 
\end{rk}

\begin{proof}
Since $c$ is finite, it contains no special point, so $c$ does not contain any $\Sigma$-leaf semi-line. Notice also that $x$ is not extremal in $B\cap\calk_v$: otherwise, denoting by $e$ the $p_R$-image of the initial edge of $c$, we would have $p_T(x)\in\Omega_e$, and $c$ would be infinite.
 
If there is a sequence of points $x_i$ converging to $x$ with $c_{x_i}$ infinite, then one can
find $\Sigma$-leaf semi-lines $l_i$ starting at $x_i$ whose first edges are contained in the band $B$. Corollary \ref{cor_limit_leaves} then implies that $c$ contains a $\Sigma$-leaf semi-line, a contradiction.

If $p_R(c_{x_i})\not \subseteq p_R(c)$ for some sequence $x_i$ converging to $x$, 
then up to extracting a subsequence, we can assume that there exists a $\Sigma$-leaf segment $(x,y]\subseteq c$, and $\Sigma$-leaf segments $(x_i,z_i]\subseteq c_{x_i}$
that can be written $(x_i,z_i]=(x_i,y_i]\cup [y_i,z_i]$ with  $p_R([x_i,y_i])=p_R([x,y])$, with $y_i$ converging to $y$, and where $[y_i,z_i]$ is an edge of $\call_{x_i}$ with $p_R([y_i,z_i])\not\subseteq p_R(\call_x)$. Applying Corollary \ref{finitude-1} at the point $y$, we get a contradiction.
\end{proof}

Given $v\in V_{\infty}(R)$, there are only finitely many orbits of bands incident on  $\calk_v$.
However, the set of $G_v$-orbits of \emph{pairs of bands} incident on  $\calk_v$ is infinite. 
Given a band $B$ incident on a base tree $\calk_v$, we denote by $\ul B=B\cap \calk_v$.
The following lemma records another finiteness result that will be useful later.

\begin{lemma}\label{lem_finitude}
For all $v\in V_{\infty}(R)$, there are finitely many $G_v$-orbits of pairs of bands $(B,B')$  incident on $\calk_v$ such that $v_\Sigma \notin B\cup B'$, and $v_\Sigma$ does not separate $B$ from $B'$.
\\
There are also only finitely many $G_v$-orbits of pairs of bands $(B,B')$ incident on $\calk_v$ with $B\cap B'\neq\es$ and $v_{\Sigma}\notin B\cap B'$.

More generally, for $\eps>0$ small enough, there are only finitely many $G_v$-orbits of pairs of bands $(B,B')$ incident on $\calk_v$ 
with $v_{\Sigma}\notin B\cap B'$ and $d(\ul B,\ul B')\leq \eps$.
\end{lemma}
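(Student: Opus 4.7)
The plan is to combine two ingredients: local finiteness of bands away from the special vertex, as provided by Corollary~\ref{cor_loc_fini}, together with the very small structure of $T$ and the peripherality of $G_v$, which forbid non-trivial elements of $G_v$ from fixing any arc of $T$ (since arc stabilizers in $T$ are cyclic nonperipheral, and hence cannot contain a nontrivial peripheral element). To set things up, I pick representatives $B_1,\ldots,B_n$ of the finitely many $G_v$-orbits of bands incident on $\calk_v$ with $v_\Sigma\notin B_i$. Each $\ul B_i$ is compact and at positive distance from $v_\Sigma$, so there exists $\eps_0>0$ for which the closed $\eps_0$-neighborhood $N_{\eps_0}(\ul B_i)\subseteq\calk_v$ is compact and avoids $v_\Sigma$ for every $i$.

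For part 3, I take any $\eps\leq\eps_0$. Given a pair $(B,B')$ satisfying the hypotheses, I first swap the bands if needed so that $v_\Sigma\notin B$ (possible since $v_\Sigma\notin B\cap B'$), and then apply the $G_v$-action to arrange $B=B_i$. The bound $d(\ul B,\ul B')\leq\eps$ forces $\ul B'$ to meet the compact set $N_{\eps_0}(\ul B_i)$, which lies in $\calk_v\setminus\{v_\Sigma\}$. Corollary~\ref{cor_loc_fini} then gives finitely many possibilities for $B'$, and combined with the finite choice of $B_i$, finitely many $G_v$-orbits of pairs. Part 2 will follow as a direct consequence: for distinct bands $B$, $B'$ incident on $\calk_v$ with $B\cap B'\neq\es$, the fact that they correspond to distinct edges of $R$ meeting only at $v$ forces their intersection to lie in $\calk_v$, so $\ul B\cap\ul B'\neq\es$ and $d(\ul B,\ul B')=0$.

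For part 1, the central idea is the following \emph{key claim}: for every direction $\tau$ at $v_\Sigma$ in $\calk_v$ (that is, every connected component of $\calk_v\setminus\{v_\Sigma\}$), the setwise stabilizer $G_\tau:=\{g\in G_v:g\tau=\tau\}$ is trivial. Granting this, given a pair $(B,B')$ as in part 1, both $\ul B$ and $\ul B'$ lie in a common direction $\tau$ at $v_\Sigma$. Up to the $G_v$-action one arranges $B=B_i$, so $\tau$ equals the direction $\tau_i$ of $\ul B_i$. Writing $B'=gB_j$ for some $g\in G_v$ and $j\in\{1,\dots,n\}$, the condition $\ul B'\subseteq\tau_i$ translates into $g\tau_j=\tau_i$, cutting out a single coset $g_0 G_{\tau_j}$; the claim forces $g=g_0$ uniquely determined by $j$, yielding at most $n^2$ orbits.

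To prove the claim, I pick any $y\in\tau$ and observe that $gy\in\tau$ lies in the same direction, so the geodesics $[v_\Sigma,y]$ and $[v_\Sigma,gy]$ in $\calk_v$ share a common initial segment $[v_\Sigma,c]$ of positive length at their divergence point $c$. Since $g$ is an isometry fixing $v_\Sigma$, it sends $[v_\Sigma,y]$ to $[v_\Sigma,gy]$ by preserving distance to $v_\Sigma$; hence it acts by the identity on this shared initial segment and fixes $[v_\Sigma,c]$ pointwise. By the earlier observation that no nontrivial element of $G_v$ fixes an arc in $T$, this forces $g=1$. The hardest part is precisely this triviality of $G_\tau$: without the interplay between peripherality of $G_v$ and very smallness of $T$, natural permutation-type actions (for instance $\bbZ$ cyclically permuting the spokes of an infinite star) would make $G_\tau$ infinite and cause part 1 to fail.
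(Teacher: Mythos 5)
Your proof of the first assertion is correct and is essentially the same argument as the paper's, which invokes the fact that for bands $B,B'$ avoiding $v_\Sigma$, there is at most one $g\in G_v$ with $v_\Sigma$ not separating $B$ from $gB'$; you usefully make explicit the underlying reason, namely that the $G_v$-stabilizer of a direction at $v_\Sigma$ is trivial because a nontrivial element of $G_v$ fixing a direction would fix a nondegenerate arc in $\overline T$, which is impossible. (One could also argue more directly that arc stabilizers are trivial since $T$ has dense orbits, but your very-small argument works just as well.) Your reduction of the second assertion to the third is also fine, as in the paper.

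For the third assertion there is a genuine (if small) gap: you assert without justification that the closed $\eps_0$-neighbourhood $N_{\eps_0}(\ul B_i)$ in $\calk_v$ is compact, and then apply Corollary~\ref{cor_loc_fini}. This compactness is not automatic: $\calk_v=K_v\times\{v\}$ is only $G_v$-cocompact, not locally compact a priori, and a closed bounded subset of a complete $\bbR$-tree need not be compact. The statement is in fact true, but the natural justification (every point of $\calk_v$ lies in at least one band, hence small neighbourhoods of nonspecial points are covered by finitely many compact bases $\ul B$) rests on Remark~\ref{rk_one_band}, i.e. on Proposition~\ref{prop_infini}, which appears later in the paper. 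One can avoid the compactness claim entirely: cover the compact set $\ul B_i$ by finitely many balls $B(x_j,\delta_j)$ with each $B(x_j,2\delta_j)$ meeting only finitely many bands (Lemma~\ref{lem_finitude_0}), set $\eps_0=\min_j\delta_j$, and observe that any point of $N_{\eps_0}(\ul B_i)\cap\calk_v$ lies in some $B(x_j,2\delta_j)$, so that $N_{\eps_0}(\ul B_i)$ meets only finitely many bands; this is all you need. This is also essentially how the paper handles the case where $B'$ contains $v_\Sigma$ (projecting $v_\Sigma$ onto $\ul B_i$ and applying Lemma~\ref{lem_finitude_0} at that point), while its case where neither band contains $v_\Sigma$ is reduced to the first assertion. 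Your route, in contrast, treats both cases uniformly via the covering argument and proves the first assertion independently, which is a mild but real structural difference.
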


\begin{proof}
The first assertion follows from finiteness of the number of $G_v$-orbits of bands, together with the observation that for all bands $B,B'$ that do not contain $v_\Sigma$, there is at most one $g\in G_v$ such that $v_\Sigma$ does not separate $B$ from $gB'$. 

The second assertion follows from the third.

To derive the third assertion from the first, let $\eps_0>0$ be such that $d(\ul B,v_\Sigma)\geq \eps_0$ for all bands $B$ not containing $v_\Sigma$.
Such $\eps_0$ exists because there are only finitely many $G_v$-orbits of such bands. If $B,B'$ both avoid $v_\Sigma$, then there is at most one element $g\in G_v$
such that $d(\ul B,g\ul B')<2\eps_0$. 
If $B$ does not contains $v_\Sigma$, consider $p$ the projection of $v_\Sigma$ on $\ul B$ and $\eps_1$ be such that only finitely many bands
contain a point at distance at most $\eps_1$ from $p$ (Lemma \ref{lem_finitude_0}).
Then there are only finitely many bands containing $v_\Sigma$ and at distance at most $\eps_1$ from $B$.
This proves the lemma.
\end{proof}

\subsection{Complete $\Sigma$-leaves are unbounded.}

\begin{prop}\label{prop_infini}
For all $x\in \Sigma$, the complete $\Sigma$-leaf $\call_x$ contains a $\Sigma$-leaf semi-line whose extremity lies in $\partial_\infty (G,\calf)$ (in particular $\call_x$ is unbounded).
\end{prop}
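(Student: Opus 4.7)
The plan is to argue by contradiction, assuming $\call_x$ contains no $\Sigma$-leaf semi-line whose extremity lies in $\partial_\infty(G,\calf)$.

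First, I would show that under this assumption the subtree $\tau_x := p_R(\call_x)\subseteq R$ is bounded. Otherwise, an iterative argument (starting from a base point in $\tau_x$ of finite valence, selecting at each step a direction along which $\tau_x$ contains arbitrarily long segments, and repeating) produces an injective geodesic ray inside $\tau_x$; since $p_R$ is injective on $\call_x$, this lifts to a $\Sigma$-leaf semi-line in $\call_x$ whose $p_R$-image is a geodesic ray of infinite length in $R$, necessarily ending at a point of $\partial_\infty R$ (a geodesic of infinite length in a tree cannot terminate at a vertex), contradicting the hypothesis. Once $\tau_x$ is bounded it contains only finitely many special vertices, since $V_\infty(R)$ is discrete in $R$, and a further Koenig-style argument using local finiteness away from special vertices (Corollary~\ref{cor_loc_fini}) together with Lemma~\ref{lem_finitude_0} shows that $\call_x$ is then a finite subtree of $\Sigma$.

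Next, I would derive a contradiction from the finiteness of $\call_x$. By Remark~\ref{rk_voisinage_fini}, there is an open neighborhood $U$ of $x$ in $\Sigma$ such that for every $x'\in U$, the leaf $\call_{x'}$ is finite and $p_R(\call_{x'})\subseteq \tau_x$. On the other hand, fix any edge $e\subseteq \tau_x$ and any terminal point $y_T$ of the compact $\bbR$-tree $K_e$. By Remark~\ref{rk_omega}, $y_T\in\Omega_e$, so the leaf $\call_{(y_T,\mathrm{mid}(e))}$ contains a full $\Sigma$-leaf line $\{y_T\}\times[\alpha,\omega]_R$ for some algebraic leaf $(\alpha,\omega)\in L^2_e(T)$ with $\calq(\alpha)=\calq(\omega)=y_T$; in particular its $p_R$-image is unbounded in $R$ and escapes $\tau_x$. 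Using the density of $G$-orbits in $T$ together with the $G$-equivariance of $\Sigma$, I would then construct a sequence of points $x_i\to x$ in $\Sigma$ (obtained by transporting leaves through terminal points of $K_e$ via appropriate $G$-translates that bring them close to $x$) such that each $\call_{x_i}$ still contains a $\Sigma$-leaf segment that escapes $\tau_x$, contradicting the uniform containment $p_R(\call_{x'})\subseteq\tau_x$ for $x'\in U$.

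The principal technical difficulty is the last approximation step. A naive passage to the limit via Corollary~\ref{cor_limit_leaves} is not enough, because the extremity of the limit semi-line may migrate from $\partial_\infty(G,\calf)$ onto a special vertex (as noted in the remark following that corollary), which would undermine the escaping property we need. The resolution is to use Lemma~\ref{lem_finitude} to control accumulations of pairs of bands near any given special vertex: only finitely many $G_v$-orbits of such pairs can accumulate near $v_\Sigma$. By choosing the translates so as to avoid these finitely many degenerate configurations, one guarantees that the approximating leaves $\call_{x_i}$ genuinely contain $\Sigma$-leaf segments of large $p_R$-diameter escaping the fixed finite subtree $\tau_x$, producing the desired contradiction.
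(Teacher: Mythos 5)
Your proposed proof takes a genuinely different route from the paper's, but it has two serious gaps that I don't see how to repair.

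\textbf{Gap 1: the reduction to ``$\call_x$ finite.''} Your K\H{o}nig-type argument to conclude $\call_x$ is a finite tree is only valid when $\call_x$ contains no special vertex. If $\call_x$ contains a special vertex $v_\Sigma$ with infinite stabilizer $G_v$, then $\call_x$ is $G_v$-invariant, so $v_\Sigma$ has infinitely many edges of $\call_x$ incident on it; the leaf is therefore infinite as a tree even if every branch past $v_\Sigma$ is bounded. Your first paragraph claims $\tau_x$ bounded implies only finitely many special vertices, but then jumps to ``$\call_x$ is a finite subtree,'' which is false in the one-special-vertex case. The paper explicitly splits into the cases of zero and one special point (having first disposed of the case of two or more special points) and handles the latter by a separate argument that produces nearby leaves without special points to which the no-special-point case applies.

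\textbf{Gap 2: the approximation step cannot work.} You want points $x_i\to x$ with $x_i\in U\subseteq\calk_v$ and $p_R(\call_{x_i})\not\subseteq\tau_x$, in order to contradict the conclusion of Remark~\ref{rk_voisinage_fini}. But (a) translates $g_i.(y_T,\mathrm{mid}(e))$ have $p_R$-coordinate $g_i.\mathrm{mid}(e)$, which stays at fixed positive distance from the nearest vertex of $R$ and therefore never converges to $v$ in $R$; hence these points never converge to $x$ in $\Sigma$, let alone enter $U\subseteq\calk_v=p_R^{-1}(\{v\})$. And (b) if you instead look for points directly in $\calk_v$: since $x\notin\Omega_v$ and $\Omega_v$ is locally closed away from the special vertex (using Lemma~\ref{lem_finitude_0} and compactness of the $\Omega_e$'s), there is an open ball around $x$ in $\calk_v$ disjoint from $\Omega_v$, so nearby points of $\calk_v$ simply have no $\Sigma$-leaf line through them. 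In other words, Remark~\ref{rk_voisinage_fini} is a genuine unconditional theorem, and there is no density phenomenon that places points with escaping leaves arbitrarily close to $x$ inside $\calk_v$. The invocation of Lemma~\ref{lem_finitude} in your final paragraph does not touch this obstruction: the problem is not degenerate band configurations near a special vertex, but the plain fact that $\Omega_v$ avoids a neighbourhood of $x$.

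The paper's actual contradiction is topological rather than metric: it shows that removing the (compact, leaf-saturated, special-vertex-free) set $\hat U$ of leaves through $U$ disconnects $\Sigma$, then exhibits a pair $a\neq b\in\Omega_v$ with $x\in[a,b]_{\calk_v}$ (possibly far from $x$) lying in distinct components, and converts this disconnection into a partition of $\partial(G,\calf)$ into two closed, $L^2(T)$-saturated sets, contradicting the connectedness of $\hat T\simeq\partial(G,\calf)/L^2(T)$ (Corollary~\ref{cor_partition}). That connectedness input is what you are missing; without it, I do not see how to turn ``$\call_x$ finite'' into a contradiction.
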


\begin{rk}\label{rk_one_band}
In particular, every point is contained in at least one band.
\end{rk}

\begin{proof} It is easy to see that if $\call_x$ contains two different special points, then it 
contains a $\Sigma$-leaf line with both endpoints in $\partial_\infty (G,\calf)$.
So assume that $\call_x$ contains at most one special point. 
Let $v\in R$ be such that $x\in\calk_v$.
By Corollary \ref{cor_omega}, 
 $\call_x$ contains a $\Sigma$-leaf line if  $x\in \Omega_v$, so we can as well assume that $x\notin \Omega_v$.

\indent We will first assume that $\call_x$ does not contain any special point. 
Assume towards a contradiction that $\call_x$ contains no $\Sigma$-leaf semi-line whose extremity lies in $\partial_{\infty} (G,\calf)$. 
We are going to produce a partition $\partial (G,\calf)=A\dunion B$ contradicting Corollary \ref{cor_partition}.
Since $\call_x$ is locally finite (Corollary \ref{cor_loc_fini}) and does not contain any infinite ray by hypothesis, König's lemma \cite{Kon27} implies that $\call_x$ is finite.
By Remark~\ref{rk_voisinage_fini}, there exists a neighbourhood $U$ of $x$ in $\calk_v$ such that for all $x'\in U$, the complete $\Sigma$-leaf $\call_{x'}$ is finite, and $p_R(\call_{x'})\subseteq p_R(\call_x)$. 

Denote by $\hat U\subseteq \Sigma$ the union of all complete $\Sigma$-leaves through a point in $U$.
Its closure is compact, and up to choosing $U$ small enough, we can assume that the closure of $\hat U$ does not contain any special vertex of $\Sigma$.
We claim that $\Sigma\setminus\hat U$ is disconnected. 

Indeed, since $x\notin \Omega_v$, there exists $a\neq b\in \Omega_v$ (in particular, $a,b\notin \hat U$) such that $x\in [a,b]_{\calk_v}$.
Assume that there is a path $\gamma$ joining $a$ to $b$ avoiding $\hat U$.
Without loss of generality, one can assume that $\gamma$ is a concatenation of segments contained in base trees and of $\Sigma$-leaf segments contained in a band.
Choose $\gamma$ of minimal combinatorial length. If $p_R(\gamma)$ is reduced to a point, $\gamma$ cannot avoid $\hat U$, a contradiction.
Since $p_R(\gamma)$ is a loop, it thus has a backtracking point, and has a subpath of the form $\gamma_1.\gamma_2.\gamma_3$ with 
$\gamma_1,\gamma_3$ leaf segments in a band $B_e$, and $\gamma_2$ in a base tree. This subpath can be replaced by $\gamma'_2$ in the other base tree
of $B_e$, thus shortening $\gamma$ ($\gamma'_2$ still avoids $\hat U$ because one can choose it so that every leaf through a point in $\gamma'_2$
meets $\gamma_2$).
This contradiction proves that  $\Sigma\setminus \hat U$ is disconnected.

Write $\Sigma\setminus \hat U=\Sigma_A\dunion \Sigma_B$ a decomposition into two non-empty clopen sets.
Consider $R_A=p_R(\Sigma_A)$ and $R_B=p_R(\Sigma_B)$. 
Note that $R_A\cup R_B$ contains $R\setminus p_R(\call_x)$ and that $R_A\cap R_B$ is contained in the finite tree $p_R(\call_x)$.
This implies that $R_A\setminus p_R(\call_x)$ and $R_B\setminus p_R(\call_x)$ are unions of connected components of $R\setminus p_R(\call_x)$
and that $\partial_\infty R_A\cap \partial_\infty R_B=\es$.
Define  $A\subseteq\partial R$ as the union of $\partial_\infty R_A$ together with $p_R(\Sigma_A\cap V_\infty(\Sigma))$ (where $V_\infty(\Sigma)$ denotes the set of special points of $\Sigma$),
and $B$ in a similar fashion.

The sets $A$ and $B$ are disjoint and partition $\partial R$.
They are saturate under $L^2(T)$  in view of Lemma \ref{lem_L2}, because there is no leaf of $\Sigma$ joining $\Sigma_A$ to $\Sigma_B$.
To check that $A$ is closed, consider a sequence $a_n\in A$ converging to $c\in \partial R$.
In the case where $c\notin p_R(\call_x)$, then for $n$ large enough, $a_n$ and $c$ are in the closure of the same connected component of $R\setminus p_R(\call_x)$, so $c\in A$.
So assume that $c\in p_R(\call_x)$. Then $c$ is a vertex of $R$. 
For $n$ large enough, the edge $e_n\subseteq R$ with origin $c$ and pointing towards $a_n$ is not contained in the finite tree $p_R(\call_x)=p_R(\hat U)$.
It follows that the bands $B_{e_n}$ are contained in $\Sigma_A$. 
All these bands 
intersect infinitely many connected components of $\calk_c\setminus \{ c_\Sigma\}$. 
Since $\hat U$ intersects only one such component, $c_\Sigma$ lies in $\Sigma_A$, so $c\in A$.
We conclude that $A$ is closed, and likewise so is $B$,
which finishes the proof of Proposition \ref{prop_infini} when $\call_x$ does not contain any special point.

\indent We finally treat the case where $\call_x$ contains exactly one special point $v_\Sigma$. Assume by contradiction that $\call_x$ 
contains no $\Sigma$-leaf semi-line whose extremity lies in $\partial_\infty R$. Being locally finite (Corollary \ref{cor_loc_fini}), each connected component $c$ of $\call_x\setminus\{ v_\Sigma\}$ is finite.
Let $B_c$ be the unique band containing $x$ and with non-empty intersection with $c$. By Lemma~\ref{cor_voisinage_fini}, there is an open ball $U_c$ around $v_\Sigma$ in $\calk_v$ such that for all $x'\in U_c$, the connected component $c'$ of $\call_{x'}$ that intersects $B_c$ is finite and $p_R(c')\subseteq p_R(c)$. Since there are only finitely many $G_{ v_\Sigma}$-orbits of components $c$, one can take $U_c=U$ independent of $c$. By Corollary \ref{finitude-1}, there is a neighbourhood $V$ of $v_\Sigma$ in $\calk_v$ such that for all $x'\in V$,
any band containing $x'$ is one of the bands $B_c$. 
It follows that for all but countably many points $x'\in U\cap V$, the complete $\Sigma$-leaf $\call_{x'}$ is bounded and contains no special point.  This is a contradiction by the argument above.
\end{proof}

\subsection{Fibers are connected}

The following result says that fibers of $p_T:\Sigma\ra \overline T$ are connected.

\begin{prop}\label{fibers-connected} 
 For all $x,x'\in \Sigma$, we have $p_T(x)=p_T(x')$ if and only if $x$ and $x'$ are in the same complete $\Sigma$-leaf.
\end{prop}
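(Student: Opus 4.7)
The ``only if'' direction is immediate from the definition of complete $\Sigma$-leaves as connected components of fibers of $p_T$. For the converse, assume $p_T(x)=p_T(x')=y\in\overline T$; my strategy is to compare the ends of $\mathcal{L}_x$ and $\mathcal{L}_{x'}$ using the machinery built in this section, and show that these leaves must coincide.

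By Proposition~\ref{prop_infini}, each complete $\Sigma$-leaf contains a $\Sigma$-leaf semi-line with extremity in $\partial_\infty(G,\calf)$. Since each complete leaf $\call$ embeds as a subtree of $R$ via $p_R$, any of its points is the basepoint of a unique oriented ray in $\call$ heading to any prescribed end; hence I obtain $\Sigma$-leaf semi-lines $l\subseteq\mathcal{L}_x$ starting at $x$ and $l'\subseteq\mathcal{L}_{x'}$ starting at $x'$, with respective extremities $\omega,\omega'\in\partial_\infty(G,\calf)$. Lemma~\ref{lem_pT} gives $\calq(\omega)=p_T(l)=y=p_T(l')=\calq(\omega')$, and Proposition~\ref{equality-Q} then yields either $\omega=\omega'$ or $(\omega,\omega')\in L^2(T)$.

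If $\omega=\omega'$, then $p_R(l)$ and $p_R(l')$ are two semi-lines in the tree $R$ with a common extremity, so they share an infinite subray; any point $v$ in this overlap satisfies $(y,v)\in l\cap l'$, forcing $\mathcal{L}_x=\mathcal{L}_{x'}$. If instead $(\omega,\omega')\in L^2(T)$ with $\omega\ne\omega'$, Lemma~\ref{lem_L2} exhibits a $\Sigma$-leaf line $L=\{y\}\times[\omega',\omega]_R$; since both $l$ and $L$ accumulate on $\omega$ (inside the tree $R$ via $p_R$), they share a point and hence lie in the same complete leaf, and symmetrically for $l'$ and $L$, so $\mathcal{L}_x=\mathcal{L}_{x'}$ as well.

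The real obstacle in this argument has already been overcome in the preparatory results: Proposition~\ref{prop_infini} (leaves are unbounded with ends in $\partial_\infty(G,\calf)$), Lemma~\ref{lem_L2} (identification of $\Sigma$-leaf lines with elements of $L^2(T)$) and Proposition~\ref{equality-Q} (fibres of $\calq$ are described by $L^2(T)$). Once these are in place, the connectedness of fibres of $p_T$ is a short formal consequence along the lines above.
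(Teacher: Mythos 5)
Your proof is correct and follows essentially the same route as the paper's: take semi-lines from $x$ and $x'$ to ends $\omega,\omega'\in\partial_\infty(G,\calf)$ via Proposition~\ref{prop_infini}, observe using Lemma~\ref{lem_pT} that $\calq(\omega)=\calq(\omega')$, and in both the case $\omega=\omega'$ and the case $(\omega,\omega')\in L^2(T)$ conclude (via Lemma~\ref{lem_L2} in the second case) that the two leaves share a common point. The paper phrases the final step for $\omega\ne\omega'$ as ``the $\Sigma$-leaf line $l''$ eventually coincides with $l$ at one end and with $l'$ at the other,'' which is the same observation you make by noting that $l$, $l'$ and $L$ all project to rays in $R$ accumulating on a common end.
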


\begin{proof}
The ``if'' statement is obvious. We let $x,x'\in\Sigma$ be such that $p_T(x)=p_{T}(x')$, and we aim to show that $x$ and $x'$ are in the same complete $\Sigma$-leaf. Let $l$ (resp. $l'$) be a $\Sigma$-leaf semi-line joining $x$ (resp. $x'$) to a point $\omega\in\partial_\infty R$ (resp. $\omega'\in\partial_{\infty}R$): this exists by Proposition \ref{prop_infini}. Then $p_T(l)=p_T(l')$.

If $\omega=\omega'$, then $p_R(l)$ and $p_R(l')$ eventually coincide. Since $p_T(l)=p_T(l')$, $l$ and $l'$ eventually coincide, 
and $x,x'$ are in the same complete $\Sigma$-leaf.

If $\omega\neq \omega'$, then $\calq(\omega)=\calq(\omega')$ by Lemma \ref{lem_pT}, so $(\omega,\omega')\in L^2(T)$. 
By Lemma \ref{lem_L2}, there is a $\Sigma$-leaf line $l''$ joining $\omega$ to $\omega'$. Lemma \ref{lem_pT} shows that $p_T(l'')=p_T(l)=p_T(l')$, so $l''$ eventually coincides with $l$ 
at one of its ends, 
and with $l'$ on its other end. This implies that $x$ and $x'$ are in the same complete $\Sigma$-leaf.
\end{proof}

\begin{rk}
Given any two points $x,y\in\Sigma$, there exists a path $\gamma$ from $x$ to $y$ which is a finite concatenation of segments contained in base trees and of leaf segments. In fact by Proposition~\ref{fibers-connected}, there always exists such a $\gamma$ whose projection to $\ol T$ has no backtracking. In particular, this enables us to refine Lemma~\ref{contractible} and deduce that for all compact subtrees $K_R\subseteq R$ and $K_T\subseteq \ol T$, the space $\Sigma\cap (K_T\times K_R)$ is simply connected when non-empty. Indeed, in the proof of that lemma, it is enough to show that, letting $C:=\Sigma\cap (K_T\times K_R)$, the set $p_R(C)$ is a connected subset of $K_R$; this follows from the above observation.  
\end{rk}

\subsection{Comparison with the core}

We now deduce from the connectedness of the fibers of $p_T$ that $\Sigma$ is precisely the core introduced in \cite{Gui_coeur} by the first author (this was noticed beforehand by the first author and Thierry Coulbois in the context of free groups in an informal discussion). We briefly recall the definition of the core of the product of two $G$-trees $T$ and $T'$, and refer the reader to \cite{Gui_coeur} for details. A \emph{quadrant} in $T\times T'$ is a product of two directions $\delta\subseteq T$ and $\delta'\subseteq T'$. Fix a basepoint $(x_0,x'_0)\in T\times T'$. A quadrant $Q$ is \emph{heavy} if there exists an infinite sequence $(g_n)_{n\in\mathbb{N}}$ such that $d_T(x_0,g_nx_0)\to +\infty$, and $d_{T'}(x'_0,g_nx'_0)\to +\infty$, and $g_n.x_0\in Q$ for all $n\in\mathbb{N}$. Otherwise it is \emph{light}. The \emph{core} of $T\times T'$ is then defined as the complement of the union of all light quadrants in $T\times T'$. 
When $T\in \ol  \calo$ is a tree with dense orbits, the core is also characterized as the smallest nonempty $G$-invariant closed connected subset of $T\times T'$ with connected fibers (see \cite[Théorème~principal]{Gui_coeur}).  

\begin{prop}\label{prop_coeur}
As a subset of $\overline T\times R$, the set $\Sigma$ is precisely the core of $\overline T\times R$.
\end{prop}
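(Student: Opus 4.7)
The plan is to invoke the characterization recalled just before the proposition: when $T$ has dense orbits, the core of $\ol T\times R$ is the smallest nonempty $G$-invariant closed connected subset of $\ol T\times R$ whose fibers under both projections are connected.

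I would first verify that $\Sigma$ satisfies all of these properties, which yields $\mathrm{core}\subseteq\Sigma$ by minimality: nonemptiness is Lemma \ref{lem_non-empty}; $G$-invariance and closedness are immediate from the definition; connectedness is Lemma \ref{contractible}; the $p_R$-fibers are the convex subtrees $K_u\subseteq\ol T$; and the $p_T$-fibers agree with the complete $\Sigma$-leaves by Proposition \ref{fibers-connected}, hence are connected.

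For the reverse inclusion $\Sigma\subseteq\mathrm{core}$, my strategy is first to show that every $\Sigma$-leaf line lies in the core, and then to invoke convexity of the $p_R$-fibers of the core in $\ol T$ to recover all of $\Sigma$. By Lemma \ref{lem_L2}, any $\Sigma$-leaf line has the form $l=\{x\}\times[\alpha,\omega]_R$ with $(\alpha,\omega)\in L^2(T)$ and $x=\calq(\alpha)$. By the definition of $L^2(T)$, one finds nonperipheral elements $g_n\in G$ with $(g_n^{-\infty},g_n^{+\infty})\to(\alpha,\omega)$ and $||g_n||_T\to 0$; arranging them to be hyperbolic in $T$, the rectangle $\mathrm{Axis}_T(g_n)\times\mathrm{Axis}_R(g_n)$ lies in the core, since the iterates $g_n^k\cdot(a_0,u_0)$ of a basepoint witness heaviness of every quadrant meeting this rectangle. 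As $n\to\infty$, $\mathrm{Axis}_R(g_n)$ contains arbitrarily large compact subsegments of $[\alpha,\omega]_R$ while $\mathrm{Axis}_T(g_n)$ collapses metrically onto $x$, so closedness of the core gives $l\subseteq\mathrm{core}$. Hence $\Omega\subseteq\mathrm{core}$, and since the $p_R$-fiber of the core at each $u\in R$ is a connected (hence convex) subtree of $\ol T$ containing $\Omega_u$, it also contains $K_u$, the convex hull of $\Omega_u$. This yields $\Sigma\subseteq\mathrm{core}$.

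The main technical obstacle is arranging $g_n$ to be hyperbolic in $T$ with $\mathrm{Axis}_T(g_n)$ accumulating onto $x$. For this one can pick $g_n$ via Lemma \ref{criterion-l2} applied to the small-BBT Grushko approximations of Lemma \ref{bbt}, so that the axis of $g_n$ in the approximating tree $R_\epsilon$ is mapped by a $1$-Lipschitz $\epsilon$-BBT map into an $O(\epsilon)$-neighbourhood of $x$ in $\ol T$; the density of $G$-orbits in $T$ then allows one to perturb $g_n$ if necessary so that $||g_n||_T>0$ and $\mathrm{Axis}_T(g_n)$ remains in that neighbourhood.
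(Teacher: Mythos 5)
Your overall strategy coincides with the paper's: for $\mathrm{core}\subseteq\Sigma$, invoke the characterization of the core as the smallest nonempty closed connected $G$-invariant set with connected fibers, and verify those properties for $\Sigma$ (nonemptiness from Lemma~\ref{lem_non-empty}, connectedness from Lemma~\ref{contractible}, connected $p_T$-fibers from Proposition~\ref{fibers-connected}, and $p_R$-fibers being the convex trees $K_u$); for $\Sigma\subseteq\mathrm{core}$, reduce by convexity of the core's fibers to showing that each $(x,u)$ with $x\in\Omega_u$ lies in the core.

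However, the specific step claiming that the rectangle $\mathrm{Axis}_T(g_n)\times\mathrm{Axis}_R(g_n)$ lies in the core ``since the iterates $g_n^k\cdot(a_0,u_0)$ witness heaviness of every quadrant meeting this rectangle'' is not correct, and the subsequent limiting argument relies on it. Take $\delta$ based at an interior point of $\mathrm{Axis}_T(g_n)$ and pointing towards the $g_n^{+\infty}$ end, and $\delta'$ based at an interior point of $\mathrm{Axis}_R(g_n)$ pointing towards the $g_n^{-\infty}$ end. This quadrant meets the rectangle, but the iterates $g_n^k(a_0,u_0)$ eventually leave $\delta'$ as $k\to+\infty$ and eventually leave $\delta$ as $k\to-\infty$: only finitely many iterates lie in $\delta\times\delta'$, so they do not witness heaviness. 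In fact the rectangle cannot, in general, be contained in the core: the core's $p_R$-fiber over a point $u\in\mathrm{Axis}_R(g_n)$ is $K_u$, and $K_u\cap\mathrm{Axis}_T(g_n)$ need not be the whole $T$-axis. The paper sidesteps this by working directly at the point $(x,u)$ and using continuity of $\calq$ for the observers' topology to match the two sides of an arbitrary quadrant $\delta\times\delta'$: since $g_i^{+\infty}\to\omega\in\overline{\delta'}$ and $\calq(g_i^{+\infty})\to\calq(\omega)=x\in\delta$, a \emph{single} end $g_{i_0}^{+\infty}$ eventually lands in $\overline{\delta'}$ with $\calq(g_{i_0}^{+\infty})\in\delta$, and then the positive powers of $g_{i_0}$ witness heaviness. (Here it is also not necessary to impose $\|g_i\|_T\to 0$.) Your argument can be repaired without this device by choosing $g_n$ hyperbolic in $T$ with $\mathrm{Axis}_T(g_n)$ contained in a shrinking ball around $x$, which for $n$ large forces the whole $T$-axis into $\delta$ and thereby eliminates the orientation mismatch; but the rectangle claim as stated is a genuine gap.
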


\begin{proof} 
  Since $\Sigma$ is closed and connected, and since fibers of $p_T$ and $p_R$ are connected,
it follows from \cite[Proposition~5.1]{Gui_coeur}  that $\Sigma$ contains the core $\calc$.
Conversely, it is enough to prove that for all $u\in R$, and all $x\in\Omega_u$, we have $(x,u)\in \calc$.
Indeed, since $K_u$ is the convex hull of $\Omega_u$, and since $\calc$ has convex fibers, this will imply that $\Sigma\subseteq \calc$.

Consider $\delta$ a direction in $\ol T$ containing $x$, and $\delta'$ a direction in $R$ containing $u$.
Since $x\in \Omega_u$, there exists $(\alpha,\omega)\in L^2_u(T)$ such that $x=\calq(\alpha)=\calq(\omega)$ and 
$u\in [\alpha,\omega]_R$.
This implies that either $\alpha$ or $\omega$, say $\omega$, lies in the closure of $\delta'$.
Consider $g_i\in G$ a sequence of nonperipheral elements such that $(g_i^{-\infty},g_i^{+\infty})$ converges to $(\alpha,\omega)$. 
One easily checks that we can choose $g_i$  hyperbolic in $T$:
apply \cite[Lemma 1.3]{Gui_coeur} with $I_i=[a_i,b_i]$ as in Lemma \ref{lem_approximating} (i.e.\ $a_i\neq \alpha$ (resp.\ $b_i\neq \omega$) converge to $\alpha$ (resp. $\omega$))
to get a semi-group
$S'$ of elements whose axes contain $I_i$, and such that $\grp{S'}=G$, which implies that some element of $S'$ is hyperbolic in $T$.
For $i$ large enough, $g_i^{+\infty}$ lies in the closure of $\delta'$.
Moreover, by continuity of $\calq$ for the  observers' topology, $\calq(g_i^{+\infty})$ lies in the closure of $\delta$ for some $i_0$ large enough.
Since $\calq(g_{i_0}^{+\infty})$ is the endpoint of the axis of $g_{i_0}$, this is enough to conclude that the powers of $g_{i_0}$ make the quadrant $\delta\times \delta'$ heavy.
Since this applies to any quadrant containing $(x,u)$, it follows by definition of the core that $(x,u)\in\calc$.
\end{proof}


\section{Pruning and preimages of $\calq$}\label{sec-pruning}

The main result of the present section is the following theorem, which is the main theorem of \cite{CH14} in the context of free groups. Its proof uses a pruning process on the band complex $\Sigma$ associated to $T$ and to a Grushko tree $R$, which was introduced in \cite{CH14} for free groups.

\begin{theo}\label{Q-preimage}
Let $T\in\overline{\mathcal{O}}$ be a tree with dense orbits. Then for all but finitely many orbits of points $x\in \ol T\cup \partial_\infty T$, the set $\mathcal{Q}^{-1}(x)$ contains at most two points.
\end{theo}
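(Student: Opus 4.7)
The plan is to translate the statement into a question about the complete $\Sigma$-leaves of the band complex $\Sigma = \Sigma(T,R)$ for some fixed Grushko tree $R$, and then apply the pruning process outlined in the introduction.

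First, a reformulation. By Proposition~\ref{equality-Q}, distinct preimages of $x$ under $\calq$ form pairs in $L^2(T)$; by Lemma~\ref{lem_L2} they are joined by $\Sigma$-leaf lines in a common complete $\Sigma$-leaf, and more generally Lemma~\ref{lem_pT} shows that every $\alpha \in \calq^{-1}(x)$ is the $\partial(G,\calf)$-extremity of a $\Sigma$-leaf semi-line in $\call_x := p_T^{-1}(x)$ (ending either at $\partial_\infty(G,\calf)$ or at a special vertex $\alpha_\Sigma$). Using Proposition~\ref{fibers-connected} to see $\call_x$ really is a single leaf, $\#\calq^{-1}(x)$ coincides with the number of \emph{infinite ends} of $\call_x$, where an infinite end is either a ray going to $\partial_\infty R$ or a semi-line ending at a special vertex. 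The theorem thus reduces to proving that only finitely many $G$-orbits of complete $\Sigma$-leaves have at least three infinite ends.

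Next, the pruning process. I would iteratively trim from $\Sigma$ all bounded terminal sub-trees of leaves that terminate at a non-special vertex, producing a decreasing sequence of subcomplexes $\Sigma \supseteq \Sigma^{(1)} \supseteq \Sigma^{(2)} \supseteq \cdots$, each of the form $\Sigma(T,R^{(i)})$ for a Grushko tree $R^{(i)}$ (to be established in the pruning section). Since pruning removes only finite pieces, it preserves the number of infinite ends of each leaf; in particular the descending intersection contains the set $\Omega$ of Definition~\ref{def_omega}. Working in $\Sigma^{(i)}$ for $i$ large enough, any leaf with $\geq 3$ infinite ends must contain a branch point $p$ whose three germs all extend to infinite rays; this $p$ lies in $\Omega$ and sits inside a base tree $\calk_v$ at the common meeting of at least three incident bands.

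Now the finiteness. I would split according to whether the branch point $p$ is the special vertex of $\calk_v$ or not. When $p$ is not special, Corollary~\ref{cor_loc_fini} says that $\call_x$ is locally finite at $p$, so the three bands through $p$ are finite in number; standard Rips-machine complexity bounds, in the style of the $\calq$-index of \cite{CH14}, then show that the number of $G$-orbits of such branch points is finite (controlled ultimately by the finite number of $G$-orbits of edges of the pruned $R^{(i)}$). When $p = v_\Sigma$ is special, Lemma~\ref{lem_finitude} plays the essential role: it bounds the number of $G_v$-orbits of pairs of bands $(B,B')$ incident on $\calk_v$ that either are not separated by $v_\Sigma$ or meet within distance $\varepsilon$, which is exactly the configuration forced when three infinite-ended germs of leaves meet at $v_\Sigma$.

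The main obstacle is precisely this second case: in the free-product setting, $\calk_v$ is typically non-compact and $v_\Sigma$ has infinite valence with infinitely many $G_v$-orbits of incident bands, a phenomenon that does not occur in the free-group argument of \cite{CH14}. Controlling branchings at special vertices cannot rely on local finiteness alone and requires the special-vertex finiteness statements of Section~\ref{sec-finiteness}, notably Lemma~\ref{lem_finitude}, as the new technical input that makes the $2N-2$-style counting go through.
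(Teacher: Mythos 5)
Your reduction to counting orbits of complete $\Sigma$-leaves with at least three ends is correct and matches the paper's reduction to Proposition~\ref{prop_3ends}, as is the idea of iterating the pruning process to pass to $\Omega$.

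However, there is a genuine gap at the heart of the argument, in the passage from ``leaves with $\geq 3$ infinite ends have a branch point $p$ in $\Omega$'' to finiteness. You invoke ``standard Rips-machine complexity bounds'' at a non-special branch point, controlled by the number of orbits of edges of $R^{(i)}$. This does not work: the pruning process only removes finite terminal branches of leaves, so after pruning (or in the limit set $\Omega$) a single complete $\Sigma$-leaf can still have a rich tree structure with branch points of its own, and the base trees $\calk_v$ of the pruned complexes $\Sigma^{(i)}$ (or of $\Omega$) remain genuine $\bbR$-trees, typically with densely many branch points; nothing a priori prevents uncountably many orbits of leaves from branching. The finiteness you need is not a local-finiteness count but a global statement, and the paper obtains it in two stages. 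First, it introduces the incidence graph $\cali$ of $\Omega$, proves it is a forest with a bounded number of orbits of branch points (Corollaries~\ref{omega-forest} and~\ref{omega-index}), and isolates the subset $\Omega_{\mathrm{int}}$ where the base trees have non-trivial transverse intervals; Lemma~\ref{thin} then disposes of all leaves not confined to a single component of $\Omega_{\mathrm{int}}$. Second, and this is the essential input your proposal omits entirely, inside each component $\Lambda$ of $\Omega_{\mathrm{int}}$ the paper identifies $\Lambda$ with a band complex $\Sigma(T_\Lambda,R_\Lambda)$ of \emph{quadratic type} (Lemmas~\ref{coeur_elagage}, \ref{no-term}, Proposition~\ref{surface_3ends}), and the finiteness of leaves with $\geq 3$ ends there is established via the $\eps$-approximation $\calk_v^{\dagger\eps}$ and Gaboriau--Levitt--Paulin's theorem on systems of isometries with independent generators (Lemmas~\ref{measure}, \ref{omega-3}, Corollary~\ref{omega-3-fini}). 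Without this measure-theoretic input from \cite{GLP94}, the count of branch points in $\Omega_{\mathrm{int}}$ is not controlled. Your emphasis on the special-vertex case as ``the main obstacle'' is also somewhat misdirected: Lemma~\ref{lem_finitude} is indeed needed in the free-product setting (e.g.\ inside the proof of Lemma~\ref{measure}), but it is a supporting finiteness lemma; the driving engine of the proof is the GLP argument, which applies to special and non-special branch points alike.

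A further imprecision: ``Working in $\Sigma^{(i)}$ for $i$ large enough'' presumes the pruning process stabilizes, but it may run forever; the paper instead works with $\Omega=\bigcap_i\Sigma^{(i)}$ directly and only invokes ``the pruning process halts'' as one alternative in Lemma~\ref{omega-disc}.
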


We will deduce Theorem \ref{Q-preimage} from the following proposition.

\begin{prop}\label{prop_3ends}
Let $T\in\overline{\mathcal{O}}$ be a tree with dense orbits, let $R$ be a Grushko tree,
and $\Sigma=\Sigma(T,R)$ the associated band complex.
\\ Then $\Sigma$ has only finitely many orbits of complete $\Sigma$-leaves with at least 3 ends.
\end{prop}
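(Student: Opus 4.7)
The plan is to iterate the pruning process developed in this section. One iteration of pruning transforms a band complex $\Sigma(T,R)$ into a new band complex $\Sigma'=\Sigma(T,R')$ for a Grushko tree $R'\in\calo$, obtained by removing all terminal $\Sigma$-leaf segments. Iterating yields a nested sequence of band complexes $\Sigma_n=\Sigma(T,R_n)$ whose intersection is precisely the set $\Omega=\bigcup\{\Sigma\text{-leaf lines}\}$ from Definition~\ref{def_omega}.

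The first step is to reduce Proposition~\ref{prop_3ends} to a finiteness statement about leaves of $\Omega$. Each end of a complete $\Sigma$-leaf $\call$ is contained in a $\Sigma$-leaf semi-line, since such an end converges either to a point of $\partial_\infty(G,\calf)$ or terminates at a special vertex of $\Sigma$; in both cases it lies in $\Omega$. Consequently $\call\cap\Omega$ is a connected subtree of $\call$ with the same number of ends, and if this number is at least three, then $\call\cap\Omega$ contains a branch point, namely a point $(x,v)\in\calk_v$ lying in at least three bands each of which extends in $\Omega$ to a $\Sigma$-leaf semi-line.

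The second step is to count $G$-orbits of such branching configurations. Branch points that coincide with a special vertex $v_\Sigma$ contribute only one $G$-orbit per $G$-orbit of infinite-valence vertex of $R$, which is finite. For a non-special branch point, at least two of the three bands meeting at it form a pair meeting away from $v_\Sigma$, so by Lemma~\ref{lem_finitude} such pairs fall into finitely many $G_v$-orbits; Lemma~\ref{lem_finitude_0} then controls the third band, giving finitely many $G_v$-orbits of triples of bands meeting at a non-special point. Since there are also finitely many $G$-orbits of vertices in $R$, this yields only finitely many orbits of ``triple of bands meeting at a common point'' configurations.

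The main obstacle is that a given triple of bands may meet at a non-degenerate subtree $I_{123}$ of $\calk_v$, along which many distinct $G_v$-orbits of points can produce distinct $G$-orbits of leaves of $\Omega$ with at least three ends (note that, since fibers of $p_T$ are constant along base trees only at single points, distinct points of $I_{123}$ correspond to distinct leaves). The pruning process is designed precisely to address this: in each pruned band complex $\Sigma_n=\Sigma(T,R_n)$, the branching configurations are encoded combinatorially by the finite quotient $R_n/G$, whose complexity (bounded by the Kurosh rank of $(G,\calf)$) remains under control throughout the iteration. Passing to the limit $\Omega=\bigcap_n\Sigma_n$ then leaves only finitely many $G$-orbits of leaves with at least three ends.
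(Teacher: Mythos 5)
Your first reduction is sound: replacing each complete $\Sigma$-leaf $\call$ by $\call\cap\Omega$ (using the fact that ends of $\call$ survive pruning, together with connectedness of fibers of $p_T|_\Omega$ from Lemma~\ref{omega-cf}) preserves the number of ends, so it suffices to bound orbits of leaves of $\Omega$ with at least three ends. This is also how the paper begins.

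The gap is in your third step, and it is not a small one. You correctly identify the real obstruction — a triple of bands may meet along a non-degenerate subtree $I_{123}\subseteq\calk_v$, producing uncountably many distinct orbits of candidate leaves — but the assertion that ``passing to the limit $\Omega=\bigcap_n\Sigma_n$ then leaves only finitely many $G$-orbits of leaves with at least three ends'' is not a proof, and in the crucial case it is circular. Precisely when the band complex is of quadratic type (no leaf has a terminal point), the pruning process stops: $\Sigma^{(n)}=\Sigma$ for all $n$ and $\Omega=\Sigma$, so ``passing to the limit'' does nothing at all. Yet this is exactly the situation where the non-degenerate triple intersections must be ruled out.

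What the paper actually does at this point is quite different. It decomposes $\Omega$ further by introducing $\Omega_{int}$ (points admitting a non-degenerate transverse interval inside $\Omega$), proves that there are only finitely many orbits of components of $\Omega_{int}$ (Lemma~\ref{omega-int-fini}), that leaves whose $\Omega$-part meets more than one component of $\Omega_{int}$ contribute only finitely many orbits (Lemma~\ref{thin}), and that each component $\Lambda$ is itself a band complex $\Sigma(T_\Lambda,R_\Lambda)$ of quadratic type (Lemmas~\ref{coeur_elagage} and~\ref{no-term}). The finiteness of $3$-ended leaves in a quadratic type band complex (Proposition~\ref{surface_3ends}) is then proved by an $\eps$-approximation $\Sigma^\eps$ with finite-tree bases, turning $\Sigma^\eps$ into a finite system of isometries with \emph{independent generators}, and invoking the Gaboriau--Levitt--Paulin bound (\cite[Proposition~6.1]{GLP94}) together with the measure estimate of Lemma~\ref{measure} to conclude that $\Sigma_3$ contains no non-degenerate interval. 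This Rips-machine-style step is the heart of the proof and is entirely absent from your argument; Lemmas~\ref{lem_finitude} and~\ref{lem_finitude_0}, which you invoke, control only the combinatorics of pairs and triples of bands, not the geometry of their intersection, and cannot on their own exclude a non-degenerate $I_{123}$.
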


\begin{proof}[Proof of Theorem \ref{Q-preimage} from Proposition \ref{prop_3ends}]
Let $x\in\overline{T}\cup\partial_\infty T$ be such that $\calq^{-1}(x)$ contains  three distinct points $\omega_1,\omega_2,\omega_3\in \partial (G,\calf)$. 
By Proposition~\ref{equality-Q}, for all $1\leq i < j\leq 3$, we have $(\omega_i,\omega_j)\in L^2(T)$. 
By Lemma~\ref{lem_L2}, there exists a complete $\Sigma$-leaf $\call_{i,j}$ containing a $\Sigma$-leaf line joining $\omega_i$ to $\omega_j$
and $p_T(\call_{i,j})=x$. By Proposition \ref{fibers-connected}, $\call_{i,j}=\call$ does not depend on $i,j$ so
$\call$ has at least 3 ends. Indeed, this is clear if the three points $\omega_i$ belong to $\partial_\infty(G,\calf)$. If some $\omega_i$ lies in $V_\infty(G,\calf)$, then $\call$ contains a special point, and $\call$ has infinitely many ends. 
Proposition~\ref{prop_3ends} concludes the proof.
\end{proof}

\subsection{The pruning process: elementary step}

\begin{figure}[htbp]
\begin{center}
\includegraphics[width=.6\textwidth]{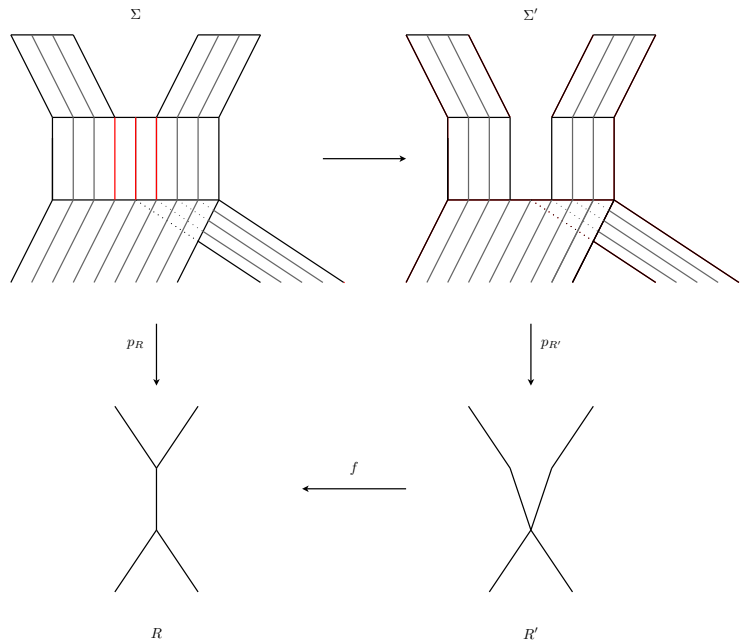}
\caption{Applying one step of the pruning process. All red leaf segments are terminal in their complete $\Sigma$-leaf.}
\label{fig-pruning}
\end{center}
\end{figure}

Let $T\in\overline{\calo}$ be a tree with dense orbits, let $R$ be a Grushko tree, and let $\Sigma:=\Sigma(T,R)$. Starting from $\Sigma$, we define a new band complex $\Sigma'$ as follows, as in \cite{CH14}	in the case of free groups. This construction is illustrated on Figure~\ref{fig-pruning}.
For each complete $\Sigma$-leaf $\call$, let $\Term(\call)$ be the union of all terminal edges $[x,y)_{\call}$ of $\call$,
where $x$ is its terminal vertex. Then $\call\setminus \Term(\call)$ is a subtree of $\call$, and it is unbounded since $\call$ is unbounded.
Let $\caly\subseteq \Sigma$ be the union  over all complete $\Sigma$-leaves $\call$ of $\Term(\call)$, and let $\Sigma'=\Sigma\setminus \caly$. 
Every point in $\caly$ is contained in exactly one band by Remark \ref{rk_one_band}.

Notice that $\caly$ is open in $\Sigma$: this is because if $x\in \calk_v$ (for some vertex $v\in R$) is contained in a single band of $\Sigma$, then there exists a neighbourhood $U$ of $x$ in $\calk_v$ such that every $x'\in U$ is contained in a single band (Corollary \ref{finitude-1}).

\begin{de}
 We say that $\Sigma'=\Sigma\setminus \caly$ is obtained by \emph{applying one step of the pruning process} to $\Sigma$. 
\end{de}

Note that $\caly$ contains no special point because such a point has infinite valence in its  complete $\Sigma$-leaf. 
It follows that for any $\Sigma$-leaf line $l\subseteq \call$,  we have $l\cap \Term(\call)=\es$ 
and $l\subseteq \Sigma'$ (regardless of whether its endpoints are in $V_\infty(G,\calf)$ or in $\partial_\infty (G,\calf)$).
\\
\\
\indent The set $\Sigma'$ is defined as a subspace of $\Sigma$. 
It has a natural structure of a band complex where each $\Sigma'$-band is a maximal subset of a band $K\times [0,1]$ of the form $K'\times [0,1]$ 
with $K'\subseteq K$ closed and connected.

 The goal of the present section is to show that the band complex $\Sigma'$ is of the form $\Sigma(T,R')$ for some Grushko tree $R'$. The motivation behind this alternative description is that it will allow us to apply to $\Sigma'$ all the results concerning band complexes. 

We let $R':=\Sigma'/{\sim}$, where $x\sim y$ whenever $p_R(x)=p_R(y)$, and $x$ and $y$ belong to the same connected component of $\Sigma'\cap p_R^{-1}(p_R(x))$, and we denote by $p_{R'}:\Sigma'\ra R'$ the quotient map.
 
The set $R'$ has a structure of a graph whose vertices are the connected components of $\Sigma'_V:=\Sigma'\cap\Sigma_V$, where $\Sigma_V$ denotes the subset of $\Sigma$ which is the union of all base trees $\calk_v$ with $v\in V(R)$. 
Its edges correspond to $\Sigma'$-bands.

The graph $R'$ also comes with a natural $G$-action. 
There exists a $G$-equivariant map $f:R'\to R$ sending vertex to vertex and edge to edge and making the following diagram commute: 
$$\xymatrix{
  \Sigma' \ar@{^(->}[r] \ar[d]^{p_{R'}} &\Sigma \ar[d]^{p_R} \\
R' \ar[r]^{f} & R
}$$

The goal of the present section is to prove the following proposition.

\begin{prop}\label{Rips-iteration}
The graph $R'$ is a Grushko tree.
 In addition, $\Sigma'$ can be identified with $\Sigma(T,R')$ in the following sense. 
\\ Denote by $q_{R'}:\Sigma(T,R')\to R'$ and $q_T:\Sigma(T,R')\to\overline{T}$ the two projections. 
There exists a homeomorphism $\Phi:\Sigma'\to\Sigma(T,R')$ that sends leaf to leaf and such that $p_{R'}=q_{R'}\circ\Phi$ and $(p_T)_{|\Sigma'}=q_T\circ\Phi$.
\end{prop}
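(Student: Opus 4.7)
The plan is to first analyze the local structure of $\Sigma'$ as a band complex in its own right, then verify that the associated quotient graph $R'$ is a Grushko tree, and finally identify $\Sigma'$ with $\Sigma(T,R')$ via the naturally defined map $\Phi$.

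First, I would describe the local structure of $\Sigma'$. Since $\mathcal{Y}$ is open in $\Sigma$ and every point of $\mathcal{Y}$ lies in exactly one band, I would analyze each band $B_e = K_e \times e$ separately. Letting $v_1, v_2$ be the endpoints of $e$, I would define $A_e^{v_i} \subseteq K_e$ as the set of $x$ such that $(x,v_i)$ is the terminal vertex of its (terminal) leaf edge contained in $B_e$. Corollary~\ref{finitude-1} shows that $A_e^{v_i}$ is open in $K_e$, so $B_e \cap \Sigma'$ decomposes as a union of sub-bands $K' \times e$ with $K' \subseteq K_e$ closed and connected. Similarly, the connected components of $\mathcal{K}_v \cap \Sigma'$ form the vertices of $R'$; Lemma~\ref{lem_finitude_0} together with compactness of $K_e$ gives finitely many $G$-orbits of such bands and components.

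Second, I would show that $R'$ is a Grushko tree. For \emph{connectedness}, every point $x \in \Sigma'$ is joined within $\mathcal{L}_x \cap \Sigma' = \mathcal{L}_x \setminus \Term(\mathcal{L}_x)$ (a connected subtree) to a leaf line of $\Sigma$ (which lies entirely in $\Omega \subseteq \Sigma'$ by Remark~\ref{rk_omega} and Proposition~\ref{prop_infini}); combined with the connectedness of $\Sigma$ (Lemma~\ref{contractible}), this shows $\Sigma'$ is path-connected, hence so is $R'$. For the \emph{absence of cycles}, one argues that $\Sigma'$ remains simply connected (the deleted set $\mathcal{Y}$ consists of open collars at terminal leaf edges and can be retracted away), and since the quotient map $p_{R'}: \Sigma' \to R'$ has connected, contractible fibers (each a connected component of a compact subtree of some $\mathcal{K}_v$ or of a band), Lemma~\ref{thm_cell-like} applies and $R'$ inherits simple connectedness. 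Edge stabilizers of $R'$ are trivial because an element fixing a $\Sigma'$-band fixes the corresponding edge $e$ of $R$ pointwise, hence is trivial since $R$ has trivial edge stabilizers. Vertex stabilizers are subgroups of stabilizers in $R$, hence peripheral or trivial. Minimality follows from the fact that $\Omega$ meets every $\mathcal{K}_v$ non-trivially (Lemma~\ref{lem_non-empty}).

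Third, I would define $\Phi : \Sigma' \to \overline{T} \times R'$ by $\Phi(x,u) := (x, p_{R'}(x,u))$. The identities $p_{R'} = q_{R'} \circ \Phi$ and $(p_T)_{|\Sigma'} = q_T \circ \Phi$ are immediate, and $\Phi$ is a continuous injection (injectivity on fibers of $p_R|_{\Sigma'}$ follows from the definition of $R'$). To show $\Phi(\Sigma') = \Sigma(T,R')$, I would verify that for each $u' \in R'$ with $f(u')=u \in R$, the fiber $\Phi(\Sigma') \cap (\overline{T} \times \{u'\})$ is exactly the compact tree $K'_{u'}$ associated to $T$ and $R'$. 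This reduces to computing $\Omega'_{u'} := \calq^2(L^2_{u'}(T))$ via the leaf structure of $\Sigma'$: by Lemma~\ref{lem_L2}, algebraic leaves of $L^2(T)$ correspond bijectively to $\Sigma$-leaf lines, all of which lie in $\Omega \subseteq \Sigma'$; a leaf line crosses $u'$ precisely when it meets the corresponding base tree component or sub-band, which is the condition $u' \in [\alpha,\omega]_{R'}$ because $R'$ was built from the connectivity structure of $\Sigma'$.

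The main obstacle is the final identification step, in particular showing that the convex hull in $\overline{T}$ of $\Omega'_{u'}$ reconstructs the entire fiber of $\Phi(\Sigma')$ over $u'$. Since $\Omega'_{u'}$ is computed from $L^2(T)$ (which depends only on $T$, not on $R$ or $R'$), while the fiber of $\Phi(\Sigma')$ is defined via the connectivity of $\Sigma'$, the delicate point is to check that pruning has not destroyed any algebraic leaves and that the connected component structure of $\Sigma' \cap p_R^{-1}(u)$ faithfully records which leaf lines pass through $u'$ as opposed to a sibling component above $u$.
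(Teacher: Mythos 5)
Your proposal has the right overall shape (local structure of $\Sigma'$, simple connectedness transfers to $R'$, then define $\Phi$), but it leaves the crucial identification step $\Phi(\Sigma')=\Sigma(T,R')$ genuinely unresolved, and you yourself flag this as ``the main obstacle.'' The paper closes this gap not by a direct computation of the trees $K'_{u'}$ via algebraic leaves of $R'$ (the route you sketch), but by invoking Proposition~\ref{prop_coeur}: the core $\calc(\ol T\times R')$ is characterized as the minimal closed, connected, $G$-invariant subset with connected fibers. The paper shows one inclusion $\Phi(\Sigma')\subseteq\Sigma(T,R')$ by hand (using Lemma~\ref{lem_bords} to realize every fiber of $p_{R'}$ as the convex hull of points in $\Omega$), and then gets the reverse inclusion essentially for free by checking that $\Phi(\Sigma')$ is closed, connected, and has connected fibers under both projections. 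Your proposal never invokes the core characterization, and the alternative you gesture at -- recomputing $\Omega'_{u'}$ directly from $L^2(T)$ and $R'$, then arguing the convex hull matches the pruned fiber -- is exactly the ``delicate point'' you cannot finish without that tool.

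There is also a smaller but real error in your verification that $R'$ is minimal. You cite Lemma~\ref{lem_non-empty}, which asserts $\Omega_u\neq\emptyset$ for $u\in R$; this is a statement about the \emph{old} tree $R$ and does not by itself exclude terminal vertices in $R'$ (a base tree component $\calk_{u'}$ of $\Sigma'$ could a priori be incident on a single $\Sigma'$-band even though $\Omega_u\neq\emptyset$). What is actually needed is the paper's Lemma~\ref{lem_bords}: every boundary segment of a $\Sigma'$-band lies in $\Omega$, and since $\Omega$ is a union of $\Sigma$-leaf lines, any such segment continues past $\calk_{u'}$ into a second $\Sigma'$-band, so no edge of $R'$ is terminal. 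This lemma is doing real work (it is not an immediate consequence of $\caly$ being a union of terminal collars), and its omission is what makes both your minimality argument and your inclusion $\Phi(\Sigma')\subseteq\Sigma(T,R')$ incomplete.
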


We first record the following observation, which will be crucial in the sequel.

\begin{lemma}\label{prop-rips}
Every $\Sigma$-leaf line $l$ is contained in $\Sigma'$, $p_{R'}$ restricts to an embedding on $l$, 
and $p_{R'}(l)$ isometrically embeds into $R$.
\qed
\end{lemma}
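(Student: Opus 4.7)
The plan is to verify the three assertions in order, using two facts collected earlier in the section: that $p_R$ restricts to an isometric embedding on every complete $\Sigma$-leaf, and the commutative relation $p_R|_{\Sigma'}=f\circ p_{R'}$ coming from the construction of $R'$ and $f$.

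First, for the inclusion $l\subseteq\Sigma'$, I would simply quote the observation already recorded in the paragraph preceding the lemma: $\caly=\Sigma\setminus\Sigma'$ is the union over all complete $\Sigma$-leaves $\call$ of $\Term(\call)$, and contains no special vertex (these have infinite valence in their leaf). Since each endpoint of a $\Sigma$-leaf line $l$ either lies in $\partial_\infty(G,\calf)$, in which case the corresponding end of $l$ is an infinite ray in $\call$ containing no terminal vertex, or is a special vertex of $\Sigma$, one obtains $l\cap\Term(\call)=\es$, hence $l\subseteq\Sigma'$.

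Next, for the injectivity and topological embedding assertion, I would use that $p_R|_l$ is injective (leaves embed in $R$ via $p_R$). Combined with $p_R=f\circ p_{R'}$ on $\Sigma'$, this at once forces $p_{R'}|_l$ to be injective. Moreover, $p_R|_l\colon l\to p_R(l)\subseteq R$ is a homeomorphism onto its image, so $(p_R|_l)^{-1}\circ f\colon p_{R'}(l)\to l$ provides a continuous left inverse to $p_{R'}|_l$, proving that $p_{R'}|_l$ is a homeomorphism onto $p_{R'}(l)$.

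Finally, for the last assertion, I would unwind the construction of $R'$ and $f$: each edge of $R'$ is a $\Sigma'$-band $K'\times[0,1]$ lying inside a band $K_e\times[0,1]$ of $\Sigma$, and $f$ maps it isometrically onto the underlying edge $e\subseteq R$, so $f$ is length-preserving on edges. Since $p_R|_l$ is an isometric embedding of $l$ into $R$ and factors as $f\circ p_{R'}|_l$, any sub-segment of the arc $p_{R'}(l)$ has length in $R'$ equal to that of its $p_R$-image in $R$, and $f$ maps it isometrically onto that image. The lemma is mostly a matter of unwinding definitions; the one point to keep in mind is that $R'$ is a priori only a graph and not yet known to be a tree (as Proposition~\ref{Rips-iteration} will assert), so the ``isometric embedding'' of $p_{R'}(l)$ into $R$ should be read along the arc $p_{R'}(l)$, which suffices thanks to the edge-by-edge isometric character of $f$.
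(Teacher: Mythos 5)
Your proof is correct and follows the same (largely implicit) reasoning the paper relies on: the paragraph preceding the lemma already establishes $l\subseteq\Sigma'$ from the absence of special points in $\caly$, and the remaining two assertions are indeed obtained exactly as you describe, by combining injectivity of $p_R$ on complete $\Sigma$-leaves, the factorization $p_R|_{\Sigma'}=f\circ p_{R'}$, and the edge-by-edge isometric nature of $f$ (the paper marks the lemma \qed, treating all three points as immediate). Your remark that, since $R'$ is only shown to be a tree later, the ``isometric embedding'' should be read along the arc $p_{R'}(l)$ is an appropriate and correct clarification of the intended meaning.
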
 

\begin{lemma}\label{lem_BY}
  Each connected component $Y$ of $\caly\cap \calk_v$ is contained in a single band $B_Y$.
\end{lemma}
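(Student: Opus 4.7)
The plan is to show that a point $z \in \calk_v$ belongs to $\caly$ if and only if it is contained in exactly one band, and then to use the local finiteness result Corollary~\ref{finitude-1} to decompose $\caly \cap \calk_v$ into clopen pieces indexed by the bands.

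First I would verify the characterization. The valence of a point $z \in \calk_v$ in its complete $\Sigma$-leaf equals the number of bands $B_e$ (with $e \ni v$) containing $z$. If $z \in \caly \cap \calk_v$, then $z$ lies on some terminal edge $[x,y)_{\call}$; since $p_R$ embeds this edge isometrically onto an edge of $R$ whose only vertex preimages are the two endpoints, and since $y$ is excluded, we must have $z = x$, the terminal vertex, which has valence $1$ in $\call$. Hence $z$ is contained in exactly one band. Conversely, if $z \in \calk_v$ is in exactly one band, then $z$ has valence $1$ in its complete $\Sigma$-leaf $\call_z$, and the unique edge of $\call_z$ emanating from $z$ is a terminal edge, so $z \in \caly$.

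For each edge $e$ of $R$ incident on $v$, set
\[
U_e := \{z \in \calk_v : z \text{ is contained in exactly one band, namely } B_e\}.
\]
Then $\caly \cap \calk_v = \bigsqcup_{e \ni v} U_e$, and it suffices to prove that each $U_e$ is open and closed in $\caly \cap \calk_v$, for then every connected component of $\caly \cap \calk_v$ lies in some $U_e$ and is therefore contained in $B_e$. Closedness follows from the fact that $B_e \cap \calk_v$ is a closed subset of $\calk_v$ (it is identified with the compact metric tree $K_e$): if $z_n \in U_e$ converges to $z \in \caly \cap \calk_v$, then $z \in B_e$, and since $z$ lies in exactly one band, that band must be $B_e$, so $z \in U_e$.

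The main step is openness, which is where Corollary~\ref{finitude-1} does the work. Given $z \in U_e$, Corollary~\ref{finitude-1} provides an open neighbourhood $V$ of $z$ in $\calk_v$ such that every band intersecting $V$ contains $z$; but the only band containing $z$ is $B_e$, so $B_e$ is the only band meeting $V$. Now for any $z' \in V \cap \caly$, the point $z'$ is in exactly one band by the characterization above, and that band must be $B_e$ since no other band meets $V$. Hence $V \cap \caly \subseteq U_e$, proving openness. This finishes the proof. The only mildly delicate point is the characterization of $\caly \cap \calk_v$, which relies on the fact that $y$ is excluded from the terminal edge $[x,y)_{\call}$ in the definition of $\Term(\call)$.
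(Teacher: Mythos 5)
Your proof is correct and follows essentially the same approach as the paper's: the key characterization (for $z\in\calk_v$, one has $z\in\caly$ iff $z$ lies in exactly one band, via the valence-$1$/terminal-vertex argument and Proposition~\ref{prop_infini}) is what the paper uses implicitly, and both arguments invoke Corollary~\ref{finitude-1} to get a neighbourhood meeting only the band containing $z$, then finish by the disjoint-open-cover/connectedness argument. The only cosmetic difference is that you prove the pieces $U_e$ both open and closed, whereas a partition into open sets already yields clopenness for free, so the closedness paragraph is redundant.
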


\begin{proof}
Denote $\caly_v:=\caly\cap\calk_v$.
  Given a band $B$, the set of points $\caly_v\cap B$ is open in $\calk_v$: indeed,
for each $y\in \caly_v\cap B$, there is a neighbourhood $V_y$ of $y$ such that the only band meeting $V_y$ is $B$ (Corollary \ref{finitude-1}).
In particular, $V_y\subseteq \caly_v$, and since every point is contained in at least one band, $V_y\subseteq B$.

As $B$ varies, the disjoint open sets $\caly_v\cap B$ cover $Y$. Connectedness of $Y$ thus ensures that $Y\subseteq \caly_v\cap B$ for some band $B$.
\end{proof}

We now record some properties of the foliated complex $\Sigma'$. 
We say that a subset of an $\bbR$-tree is a \emph{finite subtree} if its closure is the convex hull of a finite collection of points. 

\begin{lemma}\label{properties}
For each vertex $v$  in $R$, the set $\calk_v \cap \caly$ is a disjoint union of finite open subtrees of $\calk_v$ and there are only finitely many
$G_v$-orbits of them.
\\ The $p_T$-images of the endpoints of these subtrees are in $T$, not in $\overline T\setminus T$.
\end{lemma}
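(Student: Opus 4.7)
The plan is to exploit the openness of $\caly$ in $\Sigma$, the containment of components in single bands given by Lemma \ref{lem_BY}, and the finiteness results from Lemmas \ref{lem_finitude_0} and \ref{lem_finitude}. The key preliminary observation is that for a vertex $v\in R$, a point $p\in\calk_v$ lies in $\caly$ precisely when $p$ has valence $1$ in its complete $\Sigma$-leaf $\call_p$, i.e.\ when $p$ is contained in exactly one band: indeed, $p$ is a vertex of $\call_p$, so $p$ lies in a half-open terminal edge $[x,y)_{\call_p}$ of its leaf if and only if $p=x$ is itself terminal.

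Since $\caly$ is open in $\Sigma$, the set $\caly \cap \calk_v$ is open in $\calk_v$, so each of its connected components $Y$ is an open connected subtree. Fix such a $Y$. By Lemma \ref{lem_BY}, $Y$ is contained in the base $\ul{B_Y}$ of a single band $B_Y$; since $\ul{B_Y}=K_e\times\{v\}$ is compact for the metric topology ($K_e$ being a compact subtree of $\ol T$), $Y$ is already bounded.

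To prove $\ol Y$ is a finite subtree, it suffices to show that $\ol Y$ has finitely many extremal points. Each extremal point $x$ of $\ol Y$ lies in $\ol Y \setminus Y$, so $x$ belongs to at least one band $B'\neq B_Y$, and thus $x \in \ul{B_Y}\cap\ul{B'}$. If $v$ has finite valence, only finitely many bands are incident on $\calk_v$ and finiteness is immediate. If $v\in V_\infty(R)$, the special vertex $v_\Sigma$ of $\calk_v$ must be treated carefully: if $v_\Sigma\notin\ol Y$, then every such $B'$ satisfies $v_\Sigma \notin B'\cap B_Y$, so Lemma \ref{lem_finitude} ensures that for fixed $B_Y$ only finitely many $B'$ occur, and Corollary \ref{finitude-1} applied near each $x$ bounds the local picture, yielding only finitely many extremal points; if $v_\Sigma\in\ol Y$, then $v_\Sigma$ itself is one extremal point (it is not in $\caly$ since it has infinite valence in its leaf), and the remaining extremal points, being bounded away from $v_\Sigma$, are controlled as before. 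The same counting, together with the finiteness of $G_v$-orbits of pairs $(B,B')$ of bands from Lemma \ref{lem_finitude} and of orbits of bands incident on $\calk_v$, produces only finitely many $G_v$-orbits of components $Y$.

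Finally, I would show that the $p_T$-images of endpoints of $\ol Y$ lie in $T$. Since $T$ has dense orbits, $\ol T\setminus T$ consists only of valence-$1$ points of $\ol T$, so it is enough to exhibit two distinct directions in $\ol T$ at each extremal point $x$ of $\ol Y$. One direction is provided by $K_{B_Y}$ pointing into $Y$: this direction cannot also be a direction of $K_{B'}$ at $x$, for otherwise points of $Y$ arbitrarily close to $x$ would lie in $\ul{B'}$, contradicting $Y\cap\ul{B'}=\es$. The band $K_{B'}$ then supplies a second, distinct direction; in the degenerate case where $K_{B'}=\{x\}$, one uses Corollary \ref{finitude-1} to reduce to finitely many bands containing $x$ and argues that some genuinely non-degenerate neighbour band must exist at $x$ (otherwise $Y$ would extend past $x$, contradicting extremality). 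The main obstacle is the careful bookkeeping near the special vertex $v_\Sigma$ when $v\in V_\infty(R)$, where infinitely many bands meet $\calk_v$; Lemma \ref{lem_finitude} is precisely tailored to this issue, but one must still verify that each component $Y$ can touch $v_\Sigma$ in at most one extremal point.
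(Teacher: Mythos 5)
Both halves of your proposal have genuine gaps. For the finiteness of components and of their endpoints, bounding the number of bands $B'$ that meet $\ol Y$ does not, by itself, bound the number of extremal points of $\ol Y$: the bases $\ul{B'}$ are compact $\bbR$-trees that may have dense branching, so removing $\bigcup\ul{B'}$ from $\ul{B_Y}$ can a priori leave a set with infinitely many endpoints --- this is exactly why the lemma is non-trivial. Your step ``if $v_\Sigma\notin\ol Y$, then every such $B'$ satisfies $v_\Sigma\notin B'\cap B_Y$'' is also unjustified, since $\ul{B_Y}$ and $\ul{B'}$ could both contain $v_\Sigma$ even when $\ol Y$ is far from it, so Lemma~\ref{lem_finitude} does not apply as you invoke it. The paper's proof circumvents all of this by introducing a $G_v$-equivariant injective map $Y\mapsto\calb_Y$, where $\calb_Y$ is the set of bands that $Y$ separates from a fixed basepoint $v_0$; it shows $\calb_Y$ is non-empty (using that extremal points of $\calk_v$ lie in $\Omega$), finite (via Lemma~\ref{lem_finitude}), injective, and takes only finitely many values modulo $G_v$. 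This simultaneously gives finitely many orbits of components and identifies endpoints of $\ol Y$ with projections of $v_0$ onto $\ol Y$ or onto bands in $\calb_Y$, hence only finitely many of them. Your proposal omits any such injective map, and the remark that ``the same counting\ldots produces only finitely many $G_v$-orbits of components $Y$'' is not a proof.

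For the claim that the $p_T$-images of the endpoints lie in $T$, your treatment of the degenerate case $K_{B'}=\{x\}$ is wrong. You assert that if all bands through $x$ other than $B_Y$ are singletons, then ``$Y$ would extend past $x$, contradicting extremality''. This is false: the directions at $x$ in $\calk_v$ pointing out of $\ol Y$ would then be covered only by $B_Y$ and hence locally lie in $\caly$, but they lie in \emph{different} components of $\caly\cap\calk_v$ from $Y$, because $x\notin\caly$ separates them from $Y$ inside $\calk_v$. The singleton case is therefore real and requires a different argument; the paper distinguishes whether $\call_x$ meets infinitely many singletons (which forces the stabilizer of $p_T(x)$ to be non-trivial, hence $p_T(x)\in T$), or only finitely many, in which case it takes a maximal segment of singletons in the leaf through $x$ and uses connectedness of the fibers of $p_T$ (Proposition~\ref{fibers-connected}) to exhibit $p_T(x)$ as a non-terminal point of $\ol T$.
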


\begin{proof}
We already mentionned that $\caly$ is open in $\Sigma$.
To prove that $\calk_v\cap \caly$ has finitely many connected components up to the action of $G_v$, we will define a $G_v$-equivariant map 
\begin{displaymath}
\begin{array}{cccc}
\left\{\parbox{4cm}{\centering connected components of $\calk_v\cap\caly$}\right\} & \to & \left\{\parbox{4cm}{\centering nonempty finite sets of bands incident on  $\calk_v$}\right\}
\end{array}
\end{displaymath}
\noindent sending $Y$ to a finite set of bands $\calb_Y$, and prove that it is injective and only takes finitely many values up to the $G_v$-action.

Let $v_0\in\calk_v\setminus\caly$, which we choose to be equal to the special vertex $v_\Sigma$ if $G_v$ is infinite. Given a connected component $Y$ of $\calk_v\cap\caly$, we define $\calb_Y$ as the set of bands  incident on $\calk_v$ that do not intersect $Y$, and do not intersect the connected component of $\calk_v\setminus Y$ that contains $v_0$ (in other words $Y$ separates $v_0$ from every band in $\calb_Y$). 
We claim that $\calb_Y$ is non-empty. Indeed, consider $z$ an extremal point of $\calk_v$ that is separated from $v_0$ by $Y$.
Since $z$ is extremal in $\calk_v$, it lies in $\Omega$ 
(see Remark \ref{rk_omega})
and therefore, there are at least two bands of $\Sigma$ containing $z$.
One of these bands misses $Y$, and thus belongs to $\calb_Y$.

Since no two bands in $\calb_Y$ are separated by $v_0$, Lemma \ref{lem_finitude} implies that $\calb_Y$ is finite. 

Lemma \ref{lem_finitude} also implies that given a band $B$, there are only finitely many possibilities for the sets $\calb_Y$ containing $B$. 
As there are finitely many orbits of bands, when $Y$ varies, the set $\calb_Y$ only takes finitely many values up to the action of $G_v$. 

We now check injectivity, i.e. if $Y\neq Y'$, then $\calb_Y\neq \calb_{Y'}$.
Up to exchanging the roles of $Y$ and $Y'$, we can assume that the segment $[v_0,y]$ joining $v_0$ to $\overline Y$ does not intersect $\ol Y'$.
Let $x$ be an endpoint of $\overline Y$ distinct from $y$.
Since $Y$ is open in $\calk_v$ and endpoints of $\calk_v$ are not in $\caly$, this implies that $x\notin \caly$,
 so $\calb_Y$ contains a band $B$ containing $x$. Since $[v_0,y]\cup[y,x]$ does not intersect $Y'$,  we have 
 $B\notin\calb_{Y'}$, and hence $\calb_Y\neq\calb_{Y'}$. This proves injectivity,  and concludes the proof of the fact that $\calk_v\cap\caly$ has only finitely many $G_v$-orbits of connected components.

The above construction also shows that each connected component $Y$ of $\calk_v\cap\caly$ is a finite subtree of $\calk_v$, because all endpoints of $\overline{Y}$ coincide with the projection of $v_0$ to either $\overline{Y}$, or to one of the finitely many bands in $\mathcal{B}_Y$. 
This concludes the proof of the first assertion of the lemma.

There remains to check that for every endpoint $x$ of $Y$, we have $p_T(x)\in T$. We will assume that $x$ is an endpoint of $\calk_v$, otherwise the conclusion is clear. 
Let $B_Y$ be the band containing $Y$ (Lemma \ref{lem_BY}).
Then for any band $B\neq B_Y$ containing $x$, we have $B\cap \calk_v=\{x\}$. This means that $B$ is a \emph{singleton}: $B\simeq \{*\}\times [0,1]$.
If $\call_x$ intersects infinitely many singletons, then since there are only finitely many orbits of them, this implies that the stabilizer of $p_T(x)$
is non-trivial, hence $p_T(x)\in T$. So we assume that $\call_x$ intersects only finitely many singletons.
Since $B$ is contained in $\Omega$, there is a $\Sigma$-leaf line $l$  containing $B$. 
Let $[y_1,y_2]_{\call_x}$ be a maximal segment in $l$, containing $B$ and made of singletons.
Denote by $v_1,v_2$ the vertices of $R$ such that $y_i\in \calk_{v_i}$. 
Then $\calk_{v_1}$ and $\calk_{v_2}$ are not reduced to a point,
and since fibers of $p_T$ are $\Sigma$-leaves (Proposition~\ref{fibers-connected}), we have
$p_T(\calk_{v_1})\cap p_T(\calk_{v_2})=\{p_T(x)\}$. It follows that $p_T(x)$ is not an endpoint of $p_T(\calk_{v_1})\cup p_T(\calk_{v_2})$. 
Since it is not an endpoint in $\overline T$, we have $p_T(x)\in T$.
\end{proof}

\begin{lemma}\label{lem_he}
The inclusion $\Sigma'\subseteq \Sigma$ is a homotopy equivalence. In particular $\Sigma'$ is simply connected.
\end{lemma}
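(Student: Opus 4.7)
The plan is to construct an explicit strong deformation retraction of $\Sigma$ onto $\Sigma'$; the simple connectedness of $\Sigma'$ then follows from that of $\Sigma$ (Lemma \ref{contractible}).

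First I would record the structure of the connected components of $\caly$. By Lemmas \ref{lem_BY} and \ref{properties}, each such component has the form $C_i = Y_i \times [0,1)$, where $Y_i$ is a finite open subtree of some base tree $\calk_v$ and the corresponding band $B_{Y_i}$ is parameterized so that $\calk_v$ sits at $t=0$. Its closure in $\Sigma$ equals $\overline{C_i} = \overline{Y_i} \times [0,1]$, and $\overline{C_i} \cap \Sigma' = (\partial Y_i) \times [0,1] \cup \overline{Y_i} \times \{1\}$ consists of the three ``non-bottom'' sides of this rectangle. A short case analysis, using the observation that $(y,1) \notin \caly$ whenever $y \in Y_i$ (since otherwise $\call_y$ would be reduced to a single edge, contradicting Proposition \ref{prop_infini}), shows that $\overline{C_i} \cap \overline{C_j} \subseteq \Sigma'$ for $i \neq j$.

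Next, for each $C_i$ I would construct a strong deformation retraction $H^i \colon \overline{C_i} \times [0,1] \to \overline{C_i}$ onto $\overline{C_i} \cap \Sigma'$ satisfying the controlled movement bound $d(x, H^i(x,s)) \leq 2\, d(x, \overline{C_i} \cap \Sigma')$. Since $\overline{C_i}$ is a product of a compact $\mathbb{R}$-tree with an interval, and the retract is the complement of the open bottom face $Y_i \times [0,1)$, such a retraction can be produced by a tree-adapted radial projection from a base point placed below the bottom face. One then defines $H \colon \Sigma \times [0,1] \to \Sigma$ by $H = H^i$ on each $\overline{C_i} \times [0,1]$ and $H(x,s) = x$ for $x \in \Sigma'$; this is consistent since each $H^i$ restricts to the identity on $\overline{C_i} \cap \Sigma'$ and the $\overline{C_i}$ meet pairwise only in $\Sigma'$.

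The main step, and main obstacle, is verifying continuity of $H$. Away from special vertices, Corollary \ref{cor_loc_fini} together with the fact that each band contains only finitely many components $Y_i$ (by Lemma \ref{properties} and freeness of the $G_v$-action on bands, each $G_v$-orbit of components contributes at most one per band) provides a neighborhood meeting only finitely many $\overline{C_i}$, so the pasting lemma applies. Continuity at a special vertex $v_\Sigma \in \calk_v$ is more delicate since infinitely many $\overline{C_i}$ may accumulate there. However, if $x_n = (a_n, u_n) \in \overline{C_{i_n}}$ converges to $v_\Sigma$, then the nearest-point projection $\pi_n$ of $v_\Sigma$ onto $\overline{Y_{i_n}}$ in $\calk_v$ automatically lies in $\partial Y_{i_n}$ (either because $v_\Sigma \in \overline{Y_{i_n}}\setminus Y_{i_n}$ already, or because the tree projection onto a closed subtree lands on its topological boundary as seen from an exterior point). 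Hence $d(a_n, \partial Y_{i_n}) \leq 2\, d(a_n, v_\Sigma) \to 0$, so $d(x_n, \overline{C_{i_n}} \cap \Sigma') \to 0$, and the controlled movement bound forces $H(x_n, s_n) \to v_\Sigma$. Thus $H$ is continuous, giving the required strong deformation retraction and completing the proof.
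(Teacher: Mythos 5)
Your overall strategy — exhibit an explicit strong deformation retraction of $\Sigma$ onto $\Sigma'$ supported on the union of the sets $Y_i\times[0,1)$ and then invoke Lemma \ref{contractible} — is the same as the paper's, which is itself quite terse on the continuity question that you, rightly, identify as the delicate point. Your structural observations are correct: the components of $\caly$ really are of the form $Y_i\times[0,1)$ (with the fact that $(y,1)\notin\caly$ coming from Proposition \ref{prop_infini}, as you say), their closures meet pairwise only inside $\Sigma'$, and the nearest-point projection of $v_\Sigma$ onto $\overline{Y_{i_n}}$ does land in $\partial Y_{i_n}$ precisely because $Y_{i_n}$ is \emph{open} in $\calk_v$ — a nice observation that is worth keeping.

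The gap is in the controlled-movement bound $d(x,H^i(x,s))\le 2\,d(x,\overline{C_i}\cap\Sigma')$. As stated this cannot be achieved by radial projection, nor indeed by any retraction with a constant independent of the ``shape'' of $Y_i$: on a thin rectangle $[0,\eps]\times[0,1]$ any continuous retraction onto the three non-bottom sides must move the point $(\eps/2,0)$ a distance at least $1$, while its distance to the retract is only $\eps/2$, so the ratio is $\ge 2/\eps$. (Even the weaker ``Lipschitz retraction'' route fails: a retraction must carry the bottom edge to a path of length $\ge 2+\eps$ in the retract, so its Lipschitz constant is $\ge(2+\eps)/\eps$.) Thus the constant must depend on the metric shape of $\overline{Y_i}$, not just be a universal $2$.

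The argument can, however, be repaired, and there are two clean ways. Either (a) note that there are only finitely many $G$-orbits of components $Y_i$ (Lemma \ref{properties}) and that each $\overline{C_i}$ has trivial stabilizer, so one may choose the retractions $H^i$ $G$-equivariantly; then for a sequence $x_n\in\overline{C_{i_n}}\to v_\Sigma$ with $i_n$ distinct, write $x_n=g_n\xi_n$ with $\xi_n\in\overline{C_0}$ fixed up to finitely many choices and $g_n\in G_v$, observe that $d(\xi_n,v_\Sigma)=d(x_n,v_\Sigma)\to0$ (since $g_n v_\Sigma = v_\Sigma$), so $\xi_n\to v_\Sigma\in\overline{C_0}\cap\Sigma'$, and conclude $H(x_n,s)=g_nH^0(\xi_n,s)\to g_nv_\Sigma=v_\Sigma$ by the ordinary continuity of the single map $H^0$ at $v_\Sigma$. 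Or (b) replace your bound ``$\le 2\,d(x,\ldots)$'' by a modulus-of-continuity bound $\le\omega(d(x,\ldots))$ and then use the finiteness of $G$-orbits to take $\omega$ uniform over $i$. Either way the proof goes through; as written, though, the constant $2$ is an unjustified claim that would not survive scrutiny for thin $Y_i$.
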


\begin{proof} The second part of the lemma follows from the first by Lemma \ref{contractible}. 
Let $Y$ be a connected component of $\calk_v\cap\mathcal{Y}$. 
By Lemma \ref{properties}, the set $\overline{Y}$ is a closed finite tree. 
Let $B_Y$ be the unique band of $\Sigma$ containing $Y$ (Lemma~\ref{lem_BY}), and identify $Y\times [0,1]$ with the corresponding subset of $B_Y$, 
where $Y$ is identified with $Y\times\{0\}$. 
We have that $Y\times \{1\}$ is contained in $\Sigma'$. This is because
every complete $\Sigma$-leaf is infinite by Proposition \ref{prop_infini}, so for each terminal edge $[x,x')_{\call_x}\in \Term(\call_x)$,
we have $x'\notin\caly$.
Thus, there is a deformation retraction of $B_Y$ to  $B_Y\setminus (Y\times [0,1))$ with support in $Y\times [0,1)$.
Doing this for every $Y$, we get a deformation retraction from $\Sigma$ to $\Sigma'$, which proves the lemma.
\end{proof}

A \emph{boundary segment} of the band $B=K\times [0,1]$ is a leaf segment of the form $\{x\}\times [0,1]$ with $x$ a terminal point in $K$
(note that each band may have infinitely many boundary segments if $K$ is not a finite tree).


\begin{lemma}\label{lem_bords}
Every boundary segment of a band of $\Sigma'$ is contained in $\Omega$.
\end{lemma}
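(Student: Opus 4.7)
The goal is to show that every boundary segment $\{x\}\times e$ of a $\Sigma'$-band $K'\times e$ is contained in $\Omega$; equivalently, that the terminal point $x$ of $K'$ lies in $\Omega_e$. If $x$ is already a terminal point of $K_e$, this follows immediately from Remark \ref{rk_omega}. Otherwise, let $S_i\subseteq K_e$ denote the set of those $y\in K_e$ such that $(y,v_i)$ is a terminal vertex of $\call_y$, so that $K_e\setminus(S_1\cup S_2)$ is the set of $y$ for which $\{y\}\times e$ is not a terminal edge of $\call_y$. Since $K'$ is a closed connected component of $K_e\setminus(S_1\cup S_2)$ and $x$ is terminal in $K'$ but interior in $K_e$, there is a direction at $x$ in $K_e$ along which $K'$ does not extend; along this direction I find a sequence $y_n\to x$ with $y_n\in S_1\cup S_2$, and up to a subsequence I assume $y_n\in S_1$, so that $(y_n,v_1)$ is a terminal vertex of $\call_{y_n}$.

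Since $\call_{y_n}$ is unbounded by Proposition \ref{prop_infini} and $(y_n,v_1)$ has valence one in $\call_{y_n}$, there is a $\Sigma$-leaf semi-line $l_n$ starting at $(y_n,v_2)$ and ending at some $\omega_n\in\partial(G,\calf)$, whose first edge does not lie in the band $B_e$. Applying Corollary \ref{cor_limit_leaves} to the sequence $(y_n,v_2)\to(x,v_2)$ in $\calk_{v_2}$ (extracting a subsequence so that $\omega_n$ also converges to some $\omega$), I obtain a $\Sigma$-leaf semi-line $l$ starting at $(x,v_2)$, whose first edge is not in $B_e$, and whose extremity is $\omega\in\partial(G,\calf)$. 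This yields the desired extension of $\call_x$ on the $(x,v_2)$-side of the edge $\{x\}\times e$.

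For the symmetric extension on the $(x,v_1)$-side, I use that $\{x\}\times e\subseteq \Sigma'$ means the edge is not a terminal edge of $\call_x$, so $(x,v_1)$ is incident to a further edge in $\call_x$, lying in some band $B_{e''}\neq B_e$. The plan is to show that the corresponding component of $\call_x\setminus\{(x,v_1)\}$ is either unbounded or contains a special vertex of $\Sigma$, so that one can extract a $\Sigma$-leaf semi-line from $(x,v_1)$ not using $\{x\}\times e$ as its first edge. Ruling out the remaining possibility (this component being a finite tree with no special vertex) is the main obstacle, and the plan is to handle it by combining Lemma \ref{cor_voisinage_fini} (applied to the alleged finite component, constraining $(x,v_1)$ to be non-extremal in $B_{e''}\cap\calk_{v_1}$), Corollary \ref{finitude-1} (controlling the bands near $(x,v_1)$), and a second approximation argument approaching $(x,v_1)$ through points in $K_{e''}\subseteq\calk_{v_1}$. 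Once both semi-lines are at hand, concatenating them with the edge $\{x\}\times e$ produces a $\Sigma$-leaf line containing $\{x\}\times e$, and Corollary \ref{cor_omega} concludes that $x\in\Omega_e$, so that $\{x\}\times e\subseteq\Omega$.
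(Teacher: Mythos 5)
Your proof is genuinely incomplete, and you say so yourself: the second half, producing a $\Sigma$-leaf semi-line from $(x,v_1)$ that avoids $B_e$, is only a ``plan'' with the hard case (a finite component with no special point) explicitly left open. That step carries all the content of the lemma. Moreover, the direct attack you sketch is not the right one: you aim to show that the particular component of $\call_x\setminus\{(x,v_1)\}$ through the chosen band $B_{e''}$ is unbounded or carries a special point, but there is no reason to expect this of a fixed component; what is actually needed is that \emph{some} non--$B_e$ component carries a semi-line, and the paper proves this indirectly by a second limiting argument, symmetric to the first half. Namely: pick a band $B_2\ni(x,v_1)$ with $B_2\neq B_e$ (it exists since $(x,v_1)\notin\caly$); if $(x,v_1)$ is special or extremal in $B_2\cap\calk_{v_1}$, then $x\in\Omega$ already by Remark~\ref{rk_omega}. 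Otherwise, since $(x,v_1)$ is terminal in $K'\times\{v_1\}$, some direction at $(x,v_1)$ in $B_2\cap\calk_{v_1}$ misses $K'\times\{v_1\}$; approaching $(x,v_1)$ by points $x'_i$ along this direction and outside $K'\times e$, one checks that no semi-line from $x'_i$ can start with an edge of $B_e$ (such an edge would lie in $B_e$ but outside $K'\times e$, and a short case analysis using that $x'_i$ lies in two bands rules this out), and Corollary~\ref{cor_limit_leaves} then produces the required semi-line at $(x,v_1)$ avoiding $B_e$. This second limiting construction is what your proof is missing.

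Your first half parallels the paper's first step --- the paper approaches $(x,v_1)$ by terminal vertices of leaves whose unique edge lies in $B_e$ and extracts a limit semi-line beginning with the boundary edge; you read the same data from the other base of the band --- and is essentially sound. One caveat: you apply Corollary~\ref{cor_limit_leaves} at $(x,v_2)$ to semi-lines $l_n$ whose initial edges need not lie in a common band, so if $(x,v_2)$ is a special vertex neither the corollary nor Remark~\ref{rk_limit_leaves} applies as stated. That case is easy to handle separately (the $(x,v_2)$-end of the prospective line then already lies in $V_\infty(G,\calf)\subseteq\partial(G,\calf)$), but it should be addressed; the paper sidesteps it entirely by applying the corollary at the $v_1$ side, where all initial edges lie in $B_e$ and Remark~\ref{rk_limit_leaves} applies unconditionally.
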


\begin{proof}
Consider a $\Sigma$-leaf segment $[x,y]_{\call_x}$ in a band $B$ of $\Sigma$, which becomes a boundary segment of the band $B'\subseteq B$ of $\Sigma'$.
Let $v\in R$ be such that $x\in \calk_v$.
We can assume that  $[x,y]_{\call_x}$ is not a boundary segment of $B$,  otherwise we are done by  Remark \ref{rk_omega}. 
Then, up to exchanging the roles of $x$ and $y$, there are leaf segments $[x_i,y_i]_{\call_{x_i}}\subseteq B$, with $x_i\in \calk_v\cap \caly$ converging to $x$. 
Consider $l_i$ any $\Sigma$-leaf semi-line starting from $x_i$ (it exists by Proposition \ref{prop_infini}).
Its initial segment  $[x_i,y_i]$ is contained in $B$ since $x_i$ is a terminal point in its leaf. 
By Corollary \ref{cor_limit_leaves} (and Remark \ref{rk_limit_leaves} if $x$ is the special vertex of $\calk_v$), 
there exists a $\Sigma$-leaf semi-line $l$ based at $x$ with initial segment $[x,y]_{\call_x}$.

Since $x\notin\caly$, there exists another band $B_2\neq B$ in $\Sigma$ containing $x$. 
 If $x$ is either a special vertex or an endpoint of  $B_2\cap \calk_v$, then 
$x\in \Omega$, so there is a $\Sigma$-leaf line $l'$ containing $x$. Using the semi-line $l$ constructed above, it easily follows that $[x,y]_{\call_x}\subseteq \Omega$.

 We now assume that $x$ is not a special vertex, and that there are at least two directions based at $x$ in $B_2\cap \calk_v$. 
Since  $[x,y]_{\call_x}$ is a boundary segment of $B'$, then one of  these two directions intersects $B'$ trivially.
Thus, there exists a sequence of points $x'_i\in (B_2\setminus B')\cap \calk_v$ converging to $x$. 
Let $l'_i$ be a $\Sigma$-leaf semi-line starting from $x'_i$. We claim that its initial segment $[x'_i,z_i]_{l'_i}$ is not contained in $B$.
Indeed, assuming otherwise, then $[x'_i,z_i]_{l'_i}$ is contained in $B\setminus B'$, so either $x'_i$ or $z_i$ is a terminal point of its leaf.
But $x'_i$ is not a terminal point of its leaf ($x'_i\in B_2\cap B$), and neither is $z_i$ (it is contained in the semi-line $l'_i$), a contradiction.

Since no $l'_i$ starts with an edge in the band $B$, Corollary \ref{cor_limit_leaves} implies that there exists a $\Sigma$-leaf semi-line starting from $x$, which does not start with an edge in $B$. 
Then $l\cup l'$ is a $\Sigma$-leaf line containing $[x,y]_{\call_x}$, showing that $[x,y]_{\call_x}\subseteq \Omega$.
\end{proof}

The following lemma proves the first assertion of  Proposition \ref{Rips-iteration}.

\begin{lemma}\label{is-Grushko}
The graph $R'$ is a Grushko tree. 
\end{lemma}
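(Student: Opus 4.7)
The plan is to verify each defining property of a Grushko tree for $R'$: that it is a simplicial metric tree equipped with a minimal, relatively free isometric $G$-action in which each peripheral subgroup fixes a unique vertex. The simplicial metric structure is built in: vertices of $R'$ are connected components of $\Sigma'_V$, edges are $\Sigma'$-bands of the form $K'\times[0,1]$, and edge lengths are pulled back from $R$ via $f:R'\ra R$. First I would show that $R'$ is a tree by combining Lemma~\ref{lem_he} (simple connectedness of $\Sigma'$) with the observation that the natural quotient map $p_{R'}:\Sigma'\ra R'$ is a homotopy equivalence: each $\Sigma'$-band is contractible (it is the product of a closed subtree $K'$ and $[0,1]$), and each component of $\Sigma'\cap\calk_v$ is a closed subtree of the $\bbR$-tree $\calk_v$, hence contractible.

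Next I would analyze stabilizers. Edge stabilizers are trivial, inherited from $R$: each $\Sigma'$-band is contained in some $\Sigma$-band $B_e$, and $R$ being Grushko forces $\mathrm{Stab}_G(e)=\{1\}$. For a vertex $C\subseteq\calk_v$ of $R'$, if $G_v=\{1\}$ then $\mathrm{Stab}_G(C)=\{1\}$; if $G_v$ is peripheral, then since $T$ is very small with dense orbits, $G_v$ acts on $\ol T$ with unique fixed point $v_\Sigma$. Thus the component containing $v_\Sigma$ has stabilizer $G_v$, while any other component $C$ has trivial stabilizer: any nontrivial $g\in G_v$ preserving such a $C$ would fix the projection of $v_\Sigma$ onto the closed subtree $C\subseteq\ol T$, providing a second fixed point and contradicting uniqueness. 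Uniqueness of the peripheral fixed vertex then follows immediately, since two fixed vertices would give a fixed edge, contradicting triviality of edge stabilizers.

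The subtle remaining step is minimality, which reduces to showing that every vertex $C$ of $R'$ with trivial stabilizer has valence at least $2$. Each endpoint $x$ of the subtree $C\subseteq\calk_v$ is either (i) an endpoint of $\calk_v$, in which case $x\in\Omega_v$ by Remark~\ref{rk_omega}, or (ii) an endpoint of $B'\cap\calk_v$ for some incident $\Sigma'$-band $B'$, in which case $\{x\}\times e\subseteq\Omega$ by Lemma~\ref{lem_bords}. In either case, $(x,v)$ lies on a $\Sigma$-leaf line, which crosses $(x,v)$ via two distinct $\Sigma$-bands; these give rise to two distinct $\Sigma'$-bands incident on $C$ (distinct because they are contained in distinct $\Sigma$-bands), so $C$ has valence at least $2$. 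Finite Kurosh rank follows from Lemma~\ref{properties}: there are finitely many $G_v$-orbits of components of $\calk_v\cap\caly$, hence of components of $\Sigma'\cap\calk_v$, giving finitely many $G$-orbits of vertices of $R'$; a parallel count handles edges. I expect the main obstacle to be this valence argument, which rests essentially on Lemma~\ref{lem_bords} controlling the boundary segments of $\Sigma'$-bands; without that lemma, pruning could plausibly leave trivial-stabilizer vertices of valence $1$.
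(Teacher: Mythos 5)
Your proposal is correct and follows essentially the same route as the paper: use Lemma~\ref{lem_he} to get that $R'$ is a tree, inherit triviality of edge stabilizers and peripherality of vertex stabilizers from $R$ via the map $f:R'\to R$, and invoke Lemma~\ref{lem_bords} together with the fact that $\Omega$ is a union of $\Sigma$-leaf lines to rule out terminal vertices. You correctly identify Lemma~\ref{lem_bords} as the load-bearing ingredient for minimality.

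Two small points worth flagging. First, for simple connectedness of $R'$, you assert that $p_{R'}:\Sigma'\to R'$ is a homotopy equivalence because its fibers are contractible subtrees; this is morally right but not automatic (contractible fibers alone do not give a homotopy equivalence without some extra structure, and $\Sigma'$ is not a CW complex). The paper sidesteps this by lifting an arbitrary cycle in $R'$ to a loop in $\Sigma'$, filling it by a disk using Lemma~\ref{lem_he}, and projecting back, which is more elementary and avoids the subtlety. Second, your reduction of minimality to vertices with \emph{trivial} stabilizer having valence $\geq 2$ is slightly imprecise: a cocompact action is minimal iff there are no terminal vertices at all, regardless of stabilizer. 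Your actual argument (each endpoint $x$ of $C$ lies in $\Omega_v$, so a $\Sigma$-leaf line through $(x,v)$ crosses two distinct bands of $\Sigma'$) does not use the trivial-stabilizer hypothesis and works for all vertices, so the proof is fine; only the stated reduction is too narrow. The paper phrases this more cleanly as ``no edge of $R'$ is terminal.''
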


\begin{proof}
Since $\Sigma'$ is connected, so is $R'$; we will first check that $R'$ is simply connected. Let $\gamma$ be a cycle in $R'$. Then $\gamma$ can be lift to a loop $\gamma'$ in $\Sigma'$ which is a concatenation of leaf segments and of geodesic segments contained in the trees $\calk_v$. Since $\Sigma'$ is simply connected (Lemma \ref{lem_he}), the loop $\gamma'$ can be filled by a disk. Projecting this disk to $R'$ shows that $\gamma$ can be filled by a disk. 

Recall the existence of a $G$-equivariant map $f:R'\to R$ making the following diagram commute:
$$\xymatrix{
  \Sigma' \ar@{^(->}[r] \ar[d]^{p_{R'}} &\Sigma \ar[d]^{p_R} \\
R' \ar[r]^{f} & R
}$$ 
Since each peripheral subgroup fixes a point in $\Sigma'$, it also does in $R'$.
Since $f$ maps edge to edge and $R$ has trivial edge stabilizers, so does $R'$.
Since point stabilizers in $R$ are peripheral, so are point stabilizers in $R'$.
Thus, there remains to prove that the action of $G$ on $R'$ is minimal.

By Lemma \ref{properties}, the $f$-preimage of each edge $e$ of $R$ is a finite set of edges of $R'$ (corresponding to the connected components of the intersection of $\Sigma'$ with the band $B_e$ of $\Sigma$ corresponding to $e$). Therefore $G$ acts cocompactly on $R'$.
Since every boundary segment of a band of $\Sigma'$ lies in $\Omega$ by Lemma \ref{lem_bords}, and since by definition $\Omega$ is a union of $\Sigma$-leaf lines, no edge of $R'$ is terminal.
This implies that $R'$ is minimal and concludes the proof. 
\end{proof}

\begin{proof}[Proof of Proposition \ref{Rips-iteration}]
The fact that $R'$ is a Grushko tree has already been proved in Lemma \ref{is-Grushko}.

Define $\Phi:\Sigma'\ra \ol T\times R'$ by sending $x\in\Sigma'$ to $(p_T(x),p_{R'}(x))$.
This continuous equivariant map is injective because the fibers of $p_{R'}$ embed into $\ol T$. We aim to show that $\Phi(\Sigma')=\Sigma(T,R')$: it will then be clear that $\Phi$ sends any leaf of $\Sigma'$ to a leaf of $\Sigma(T,R')$, and that
$p_{R'}=q_{R'}\circ\Phi$ and $p_{T}=q_{T}\circ\Phi$.

Let us first check that $\Phi(\Sigma')\subseteq\Sigma(T,R')$.
We first claim that $\Phi(\Omega)\subseteq\Sigma(T,R')$. Indeed, let $x\in\Omega$, 
and $l$ a $\Sigma$-leaf line containing $x$, with endpoints $\alpha,\omega$. Since $(p_{R'})_{|l}$ is an isometric embedding (Lemma \ref{prop-rips}), we have $(\alpha,\omega)\in L^2_{v'}(T)$, where $v':=p_{R'}(x)$. This means by definition that $(p_T(x),v')\in \Sigma(T,R')$, and proves our claim. 
 
In general (i.e. if $x$ is no longer assumed to belong to $\Omega$), there exist $x_1,x_2\in \Omega$ such that $p_{R'}(x_1)=p_{R'}(x_2)=p_{R'}(x)$, and $p_{T}(x)\in [p_{T}(x_1),p_{T}(x_2)]$: indeed, the preimage  $p_{R'}\m(\{v'\})$ is the convex hull of points lying in boundary segments of  $\Sigma'$-bands, and such points belong to $\Omega$ by Lemma \ref{lem_bords}. Since fibers of $q_{R'}:\Sigma(T,R')\ra R'$ are convex, this implies that $\Phi(x)\in \Sigma(T,R')$.

There remains to prove that $\Sigma(T,R')\subseteq \Phi(\Sigma')$.
In view of Proposition \ref{prop_coeur}, it suffices to check that $\Phi(\Sigma')$ is a closed connected subset of $\ol T\times R'$
with connected fibers. 
The map $q_{R'}:\Phi(\Sigma')\ra R'$ has closed fibers because $\caly$ is open, and $q_{R'}\m(e)\subseteq q_{R'}\m(v)$ for every edge $e$ incident on $v$.
It easily follows that $\Phi(\Sigma')$ is closed.
Since $\Phi(\Sigma')$ is connected, we are left checking that it has connected fibers. 

For $y\in \ol T$, the set $p_T\m(\{y\})\cap\Sigma'$ is connected (it is a leaf with its terminal edges removed), hence coincides (if non-empty) with a complete $\Sigma'$-leaf
which we denote by $\call'_y$. Then $\Phi(\Sigma')\cap (\{y\}\times R')=\Phi(\call'_y)$ is connected. 
And any point $u'\in R'$ corresponds by definition to a connected component $\calk_{u'}$ of some $\calk_u\cap \Sigma'$, 
so $\Phi(\Sigma')\cap (\ol T\times \{u'\})=\Phi(\calk_{u'})$ is connected.

This shows that $\Phi$ is a continuous, injective map onto $\Sigma(T,R')$. That it is a homeomorphism is proved by checking that the map $\Psi:\Sigma(T,R')\to\Sigma'$ defined by letting $\Psi((x,v))=(x,f(v))$ (where $f:R'\to R$ is the natural map) is an inverse of $\Phi$.
\end{proof}

\subsection{The pruning process, and analysis of $\Omega$}\label{sec-iteration}

\paragraph{The pruning process.}
Starting from an $\bbR$-tree with dense orbits $T$ and a foliated band complex $\Sigma(T,R)=\Sigma^{(0)}$, we define inductively for all $i\in\mathbb{N}$ a foliated band complex  $\Sigma^{(i)}:=(\Sigma^{(i-1)})'\subseteq \Sigma^{(i-1)}$ 
by applying one step of the pruning process to $\Sigma^{(i-1)}$. Then we define $R^{(i)}$ as the Grushko tree associated to $\Sigma^{(i)}$, so that
$\Sigma^{(i)}=\Sigma(T,R^{(i)})$ by Proposition \ref{Rips-iteration}.
It may happen that no leaf of $\Sigma^{(i)}$ has a terminal edge in which case $\Sigma^{(j)}=\Sigma^{(i)}$ for all $j\geq i$.
In the rest of this section, we also consider this case  (many statements from Sections~\ref{sec-iteration} to~\ref{sec-omega-int} are actually obvious in this case; the analysis of this important case will be made in Section~\ref{sec-surface-type}).
 
Note that an edge $e$ in a $\Sigma$-leaf $\call$ lies in  $\Sigma\setminus\Sigma^{(i)}$ if and only if 
one of the two connected components of $\call\setminus \rond e$ has depth at most $i-1$ as a tree rooted at the endpoint of $e$.
Similarly, a vertex $x$ of $\call$ lies in  $\Sigma\setminus\Sigma^{(i)}$ if and only if there is some edge $e$ incident on $x$
such that the connected component of $\call\setminus \rond e$ containing $x$ has depth at most $i-1$.

\paragraph{The set $\Omega$ is what is left when everything else is forgotten.}

We now relate the pruning process with the subset $\Omega$ of $\Sigma$ introduced in Definition~\ref{def_omega}.

\begin{lemma}\label{omega-rips}
We have $\Omega=\cap_{i\in\mathbb{N}} \Sigma^{(i)}$. Moreover, for any compact subset $K\subseteq \Sigma\setminus\Omega$, 
there exists $i\in\mathbb{N}$ such that $K\subseteq \Sigma\setminus\Sigma^{(i)}$.
\end{lemma}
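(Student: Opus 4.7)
\emph{Proof plan.} I would prove the equality $\Omega = \cap_{i \in \bbN} \Sigma^{(i)}$ first, then deduce the compactness statement. For the inclusion $\Omega \subseteq \cap_{i \in \bbN} \Sigma^{(i)}$, I would argue that any $\Sigma$-leaf line $l$ has no vertex that is terminal in its leaf: interior vertices of $l$ have a neighbor on each side along $l$, the endpoints of $l$ at infinity are not points of $\Sigma$ at all, and any endpoint of $l$ lying in $\Sigma$ is by definition a special vertex, hence of infinite valence in its leaf (Corollary~\ref{cor_loc_fini}). Thus no edge of $l$ is terminal, and a straightforward induction on $i$ shows $l \subseteq \Sigma^{(i)}$ for every $i$ (the same argument applies at each stage, since removing already-pruned material from the leaf still leaves $l$ intact).

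For the reverse inclusion, fix $x \in \cap_{i \in \bbN} \Sigma^{(i)}$. If $x$ is a special vertex, Proposition~\ref{prop_infini} provides a $\Sigma$-leaf semi-line $l_0 \subseteq \call_x$ ending in $\partial_\infty(G,\calf)$; concatenating the geodesic in the tree $\call_x$ from $x$ to $l_0$ with the remaining sub-ray of $l_0$ yields a $\Sigma$-leaf line through $x$, whence $x \in \Omega$. Otherwise $\call_x$ is locally finite at $x$ (Corollary~\ref{cor_loc_fini}), so $x$ has finitely many directions in $\call_x$. The key claim is that at least two of them are infinite. I would argue this by contradiction: a finite direction contains no special vertex (special vertices contribute infinite valence, hence infinite directions), so it is a finite locally finite subtree of some bounded depth $D$ from $x$, and a direct induction shows that after $D$ pruning steps every such direction (including its edge at $x$) has been entirely removed. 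Consequently $x$ is left with at most one direction in the leaf of $\Sigma^{(D)}$, so it becomes terminal and is pruned at step $D+1$, contradicting $x \in \cap_{i \in \bbN} \Sigma^{(i)}$. Given two infinite directions from $x$, K\"onig's lemma (in the locally finite case) or a finite geodesic from $x$ to a special vertex within the direction each produces a $\Sigma$-leaf semi-line from $x$; concatenating two such semi-lines at $x$ yields a $\Sigma$-leaf line through $x$, so $x \in \Omega$.

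For the ``moreover'' part, I would check by induction that each $\Sigma^{(i)}$ is closed in $\Sigma$: the base case follows from the openness of $\caly = \Sigma \setminus \Sigma^{(1)}$ noted in the text after Corollary~\ref{finitude-1}, and the inductive step applies the same observation to the band complex $\Sigma^{(i-1)} = \Sigma(T, R^{(i-1)})$ provided by Proposition~\ref{Rips-iteration}. The sets $\Sigma \setminus \Sigma^{(i)}$ are then open, nested increasingly, and cover $\Sigma \setminus \Omega$ by the first part, so compactness of $K$ yields the required $i$ immediately.

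The hard part will be the non-special case of the second inclusion, namely the argument that a non-special point with at most one infinite direction is eventually pruned. This calls for a careful peel-off analysis of the pruning cascade on a finite locally finite subtree, tracking how each layer becomes terminal in turn and verifying that the edge from $x$ into each finite direction is indeed removed at a precise step. The local finiteness at non-special points (Corollary~\ref{cor_loc_fini}) is essential here: without it, one could not bound the depth of a finite direction and the induction would break down.
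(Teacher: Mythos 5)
Your proof is correct and follows essentially the same route as the paper's. A few small remarks on presentation and efficiency.

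For the inclusion $\Omega\subseteq\cap_i\Sigma^{(i)}$, the paper simply invokes Lemma~\ref{prop-rips} (every $\Sigma$-leaf line survives one pruning step), which is exactly the argument you unpack about no edge of a leaf line being terminal. For the reverse inclusion, the paper argues directly from $x\notin\Omega$: then $x$ is not special, and (since there is no leaf line through $x$) exactly one component $c_0$ of $\call_x\setminus\{x\}$ is infinite, so $\call_x\setminus c_0$ has finite depth and $x$ is removed by the pruning process. Your version is the contrapositive: from $x\in\cap_i\Sigma^{(i)}$ you deduce at least two infinite directions and build a $\Sigma$-leaf line. The two are logically equivalent and both rest on the same two ingredients: local finiteness of leaves away from special points (Corollary~\ref{cor_loc_fini}), and the fact that a vertex with a finite-depth side of its leaf is eventually removed. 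The one inefficiency in your write-up is that you flag the peel-off analysis as the ``hard part'' and propose to re-derive it; the paper already records this as a one-line observation at the beginning of Section~\ref{sec-iteration} (``a vertex $x$ of $\call$ lies in $\Sigma\setminus\Sigma^{(i)}$ if and only if some component of $\call\setminus\rond e$ containing $x$ has depth at most $i-1$''), so the paper's proof gets it for free. Your ``moreover'' argument — $\Sigma^{(i)}$ closed by induction via openness of $\caly$ and Proposition~\ref{Rips-iteration}, then compactness of $K$ against the nested open cover — is exactly what the paper means by ``since $\Sigma^{(i+1)}$ is obtained from $\Sigma^{(i)}$ by removing an open set, the second assertion follows from the first.''
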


\begin{proof}
Since $\Sigma^{(i+1)}$ is obtained from $\Sigma^{(i)}$ by removing an open set, the second assertion follows from the first.

Lemma \ref{prop-rips} implies that $\Omega\subseteq\cap_{i\in\mathbb{N}}\Sigma^{(i)}$; we now prove the converse inclusion. 
Consider $x\in \Sigma\setminus \Omega$. Then $x$ is not a special vertex, so $\call_x\setminus\{x\}$ has only finitely many connected components (Corollary \ref{cor_loc_fini}).
Since there is no $\Sigma$-leaf line through $x$,
there is exactly one connected component $c_0$ of $\call_x\setminus\{x\}$ which is not a finite tree. 
Thus $\call_x\setminus c_0$ has finite depth, so $x\in \Sigma\setminus \Sigma^{(i)}$ for some $i$.
\end{proof}

\paragraph{Connectedness of the fibers of $\Omega$.}

\begin{lemma}\label{omega-cf}
The restriction of $p_T$ to $\Omega$ has connected fibers.
\end{lemma}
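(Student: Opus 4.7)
The plan is to show that for each $y\in\ol T$, the fiber $\Omega\cap p_T\m(y)$ is convex inside the complete $\Sigma$-leaf $\call=p_T\m(y)$ (which is a tree, and coincides with $p_T\m(y)$ by Proposition~\ref{fibers-connected}); since convex subsets of a tree are connected, this would give the result.

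I would fix $x,y\in\Omega\cap\call$ and $z\in[x,y]_\call$, and show that $z\in\Omega$. The case $z\in\{x,y\}$ is immediate, so I assume $z$ lies strictly between $x$ and $y$. By definition of $\Omega$, there exist $\Sigma$-leaf lines $l_x,l_y\subseteq\call$ through $x$ and $y$, with endpoints $\alpha_x,\omega_x$ and $\alpha_y,\omega_y$ in $\partial(G,\calf)$. Denote by $p_x$ (resp.\ $p_y$) the projection of $z$ onto $l_x$ (resp.\ $l_y$) in the tree $\call$. Since $p_x\in[z,x]_\call$ and $p_y\in[z,y]_\call$, and since $z$ lies strictly between $x$ and $y$, the points $p_x$ and $p_y$ lie in different components of $\call\setminus\{z\}$, unless $p_x=z$ or $p_y=z$, in which case $z$ already lies on $l_x$ or $l_y$ and we are done.

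The key step is to concatenate. From $p_x$, the line $l_x$ splits into two semi-lines ending at $\alpha_x$ and $\omega_x$. I would pick one of them, say the one ending at $\alpha_x$, and concatenate it with $[z,p_x]_\call$ to produce a leaf semi-line from $z$ to $\alpha_x$. Doing the same on the $y$-side yields a leaf semi-line from $z$ to some endpoint $\alpha_y\in\partial(G,\calf)$ of $l_y$. Because these two semi-lines leave $z$ in distinct directions (one toward $x$, the other toward $y$), their union is a leaf path in $\call$ joining $\alpha_x$ to $\alpha_y$ through $z$. Since both endpoints lie in $\partial(G,\calf)$, this is by definition a $\Sigma$-leaf line, so $z\in\Omega$, which yields convexity of $\Omega\cap\call$ and hence the lemma.

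The only subtlety I anticipate is keeping track of the various cases depending on whether $\alpha_x,\alpha_y$ lie in $\partial_\infty(G,\calf)$ or in $V_\infty(G,\calf)$; but the definition of a $\Sigma$-leaf line already unifies all these cases, so this should be routine. An alternative route via Lemma~\ref{omega-rips}, writing $\Omega\cap\call=\bigcap_i(\Sigma^{(i)}\cap\call)$ as a decreasing intersection of connected subtrees of $\call$ (each obtained from its predecessor by removing terminal edges) and using that in a tree the intersection of convex subsets is convex, would also work.
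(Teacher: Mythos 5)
Your proof is correct and takes essentially the same approach as the paper: both establish $[x,y]_\call\subseteq\Omega$ by concatenating $[x,y]_\call$ (or, in your point-by-point version, $[z,p_x]_\call\cup[z,p_y]_\call$) with suitable half-lines of $l_x$ and $l_y$ to produce a $\Sigma$-leaf line through every point of the segment, and your use of the projections $p_x,p_y$ simply spells out what the paper's ``clearly'' glosses over. The alternative route you sketch via Lemma~\ref{omega-rips} would also work, since that lemma is established just before this one in the paper.
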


\begin{proof}
Let $x,y\in \Omega$ be such that $p_T(x)=p_T(y)$. Let $\call$ be the complete $\Sigma$-leaf containing $x$ and $y$.
By definition of $\Omega$, there are $\Sigma$-leaf lines $l_x,l_y\subseteq \call$ containing
$x$ and $y$ respectively. 
The segment $[x,y]_\call$ is then clearly contained in a $\Sigma$-leaf line obtained by concatenating it with half lines of $l_x$ and $l_y$.
This shows that $[x,y]_\call\subseteq \Omega$ and concludes the proof.
\end{proof}

\paragraph{Finiteness properties of the set $\Omega$.}

\newcommand{\pI}{p^{\cali}}
\newcommand{\bpI}{\ol{p}^{\cali}}

We will now use the pruning induction to show finiteness properties of $\Omega$. The set $\Omega$ has a natural structure of a (usually disconnected) band complex where each $\Omega$-band is 
a subset of $B_e=K_e\times e$ of the form $K'\times e$ where $K'$ is a connected component of $\Omega_e$.

\begin{de}[Incidence graph]
 We define the \emph{incidence graph} $\mathcal{I}$ of $\Omega$ as $\mathcal{I}:=\Omega/{\sim}$, where $x\sim y$ if and only if $p_R(x)=p_R(y)$ and $x,y$ belong to the same connected component of $\Omega\cap(p_R^{-1}(p_R(x)))$. 
\end{de}

This has a natural structure of a graph whose vertices are the connected components of $\Omega_V:=\Omega\cap \Sigma_V$, edges corresponding to $\Omega$-bands.

Note that the definition of $\cali$ is similar to the definition of $R^{(i)}$ from $\Sigma^{(i)}$. Since $\bigcap_i \Sigma^{(i)}=\Omega$,
the idea is that $\cali$ will be a limit of the Grushko trees $R^{(i)}$: this is made precise in Lemma~\ref{lem_separation} below. This will allow to deduce finiteness properties of $\Omega$, given in Corollaries~\ref{omega-forest} and~\ref{omega-index}.

In general $\cali$ may be disconnected, and may have uncountably
many vertices and edges.
Since points in $\Omega$ are contained in a $\Sigma$-leaf line, 
all vertices of $\mathcal{I}$ have valence at least $2$. For all $i\in\mathbb{N}$, there is a natural $G$-equivariant map
$\pI_{R^{(i)}}:\cali\to R^{(i)}$, sending vertices to vertices and edges to edges: if $C$ is a connected component of $\Omega_V$,
then the map $p_{R^{(i)}}:\Sigma^{(i)}\ra R^{(i)}$ sends $C$ to a vertex which we define as $\pI_{R^{(i)}}(C)$, and it is defined on edges in a similar fashion.
Notice that the maps $\pI_{R^{(i)}}$ fail to be injective in general. However, we have the following result.

\begin{lemma}\label{lem_separation}
Given any 
finite subgraph $F\subseteq\cali$, there exists $i\in\mathbb{N}$ such that $\pI_{R^{(i)}}$ is injective in restriction to $F$.
\\
Given any finite subgraph $\overline{F}\subseteq\cali/G$, there exists $i\in\mathbb{N}$ such that the quotient map $\bpI_{R^{(i)}}:\cali/G\to R^{(i)}/G$ is injective in restriction to $\overline{F}$. 
\end{lemma}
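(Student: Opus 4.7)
The overall idea is that the map $p^{\cali}_{R^{(i)}}$ fails to be injective on a pair of distinct vertices (resp.\ edges) of $\cali$ only when the two components of $\Omega \cap \calk_v$ (resp.\ the two components of $\Omega_e$) happen to lie in the same component of $\Sigma^{(i)} \cap \calk_v$ (resp.\ of $\Sigma^{(i)} \cap B_e$); and since the components of $\Sigma^{(i)}\cap \calk_v$ are convex subtrees of $\calk_v$, this can be prevented by removing a single point of $\calk_v\setminus \Omega$ on the arc between them, via Lemma~\ref{omega-rips}.

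For the first assertion, I first observe that if $C_a\neq C_b$ are two distinct vertices of $F$ that are both components of $\Omega\cap\calk_v$ for the same $v\in R$, then the unique arc $[C_a,C_b]$ in $\calk_v$ cannot be entirely contained in $\Omega$: otherwise it would be contained in a single connected component of $\Omega\cap\calk_v$, contradicting $C_a\neq C_b$. So I can pick a separating point $y_{ab}\in [C_a,C_b]\setminus \Omega$. I do the analogous construction for each pair of distinct edges $E_a\neq E_b$ of $F$ lying over the same edge $e$ of $R$: their projections to $K_e$ are distinct components of $\Omega_e$, so the arc between them contains a point $x_{ab}\notin \Omega_e$, and $(x_{ab},m_e)\in B_e\setminus \Omega$ (using Remark~\ref{rk_omega}). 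The finite set $K$ consisting of all these separating points is a compact subset of $\Sigma\setminus \Omega$, so Lemma~\ref{omega-rips} provides $i$ with $K\subseteq \Sigma\setminus \Sigma^{(i)}$. Since components of $\Sigma^{(i)}\cap \calk_v$ and of $\Sigma^{(i)}\cap B_e$ are convex (hence arc-connected), the absence of the separating points in $\Sigma^{(i)}$ forces $C_a$ and $C_b$ (resp.\ $E_a$ and $E_b$) to lie in distinct components, giving injectivity of $p^{\cali}_{R^{(i)}}$ on $F$.

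For the second assertion I choose one lift $C_a\in\cali$ of each vertex $\bar C_a$ of $\bar F$ (and similarly for edges). To guarantee injectivity of $\bpI_{R^{(i)}}$ on $\bar F$, I need, for each pair $\bar C_a\neq \bar C_b$ and each $g\in G$, that $C_a$ and $gC_b$ lie in distinct vertices of $R^{(i)}$. The only $g$'s that matter are those sending $v_b$ to $v_a$; after fixing one such $g_0$ and writing $D=g_0 C_b$, the translates that can cause trouble are $\{kD\mid k\in G_{v_a}\}\subseteq \calk_{v_a}$. When $v_a$ has finite valence this is a finite set, handled exactly as in Part~1. When $v_a\in V_\infty(R)$, I split cases according to whether $v_\Sigma \in C_a$ and $v_\Sigma \in D$. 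If $v_\Sigma\in D$ then $D$ is $G_{v_a}$-invariant so $kD=D$, and since $\bar C_a\neq \bar C_b$ forces $v_\Sigma\notin C_a$, one separating point on $[C_a,D]$ suffices. If $v_\Sigma\in C_a$ then symmetrically $K^{(i)}_a$ (the component containing $C_a$) is $G_{v_a}$-invariant for every $i$, and $D\subseteq K^{(i)}_a$ is again equivalent to $kD\subseteq K^{(i)}_a$ for all $k$, so one separating point works.

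The remaining case, where $v_\Sigma$ lies in neither $C_a$ nor $D$, is the main technical point. I handle it in two stages. First, I pick a separating point $z_a\in \calk_{v_a}\setminus \Omega$ on the arc from $C_a$ to $v_\Sigma$; once $z_a\notin \Sigma^{(i_0)}$ the component $K^{(i_0)}_a$ containing $C_a$ no longer contains $v_\Sigma$ and is therefore contained in a single direction at $v_\Sigma$, hence bounded. Since $D$ is also bounded (it lies in a single direction at $v_\Sigma$) and since $\calk_{v_a}/G_{v_a}$ is compact with $v_\Sigma$ being the only point of infinite stabilizer, the set $S_0:=\{k\in G_{v_a}\mid kD\cap K^{(i_0)}_a\neq \es\}$ is finite. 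For each $k\in S_0$ I select a separating point $y_k\in [C_a,kD]\setminus \Omega$; adding all the $z_a$'s and $y_k$'s (over all relevant pairs $(\bar C_a,\bar C_b)$) to the finite set $K$, and applying Lemma~\ref{omega-rips} as before gives the desired $i$. The analogous argument handles pairs of edges. The hardest step is precisely this last finiteness argument: isolating the finite obstruction set $S_0$ among the infinite orbit $G_{v_a}\cdot D$ by first shrinking $K^{(i)}_a$ away from the special vertex, and then exploiting cocompactness of $\calk_{v_a}/G_{v_a}$.
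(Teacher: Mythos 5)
Your Part~1 is essentially the paper's argument: pick, for each pair of distinct vertices (resp.\ edges) of $F$ lying over a common vertex (resp.\ edge) of $R$, a separating point outside $\Omega$, and apply Lemma~\ref{omega-rips} to this finite set. In Part~2 you also make the same first reduction as the paper (translate so that the two components lie in a common base tree $\calk_{v_a}$, and observe that only $G_{v_a}$-translates of $D$ matter), and your treatment of the sub-cases where $v_\Sigma$ lies in $C_a$ or in $D$ is correct. The place where you diverge — and where there is a gap — is the case $v_\Sigma\notin C_a\cup D$. You assert that $S_0=\{k\in G_{v_a}\mid kD\cap K^{(i_0)}_a\neq\es\}$ is finite ``since $\calk_{v_a}/G_{v_a}$ is compact with $v_\Sigma$ being the only point of infinite stabilizer.'' This is a non-sequitur: cocompactness together with trivial stabilizers away from the special point does \emph{not} imply proper discontinuity of the action on $\calk_{v_a}\setminus\{v_\Sigma\}$, and nothing you cite rules out an orbit accumulating at a non-special point.

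The claim is nevertheless true, but for a different reason, which is also what makes the paper's (shorter) argument work: $G_{v_a}$ acts \emph{freely} on the set of directions at $v_\Sigma$ in $\calk_{v_a}$. Indeed, a non-trivial $h\in G_{v_a}$ fixes only $v_\Sigma$ in $\calk_{v_a}$ (arc stabilizers in $T$ are trivial, so $h$ fixes a unique point of $\overline T$); since the midpoint of $[x,hx]$ lies in the fixed-point set of $h$, that midpoint is $v_\Sigma$, so $x$ and $hx$ can never lie in a common direction at $v_\Sigma$. As $C_a$ and $D$ are connected and miss $v_\Sigma$, each lies in a single direction, and freeness gives \emph{at most one} $g\in G_{v_a}$ such that $[C_a,gD]$ avoids $v_\Sigma$. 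This is exactly the paper's observation, and it yields $|S_0|\le 1$ directly: the paper then simply takes two separating points, one on $[C_a,gD]$ for that unique $g$ and one on $[C_a,v_\Sigma]$, with no need for your intermediate $i_0$ and the boundedness of $K^{(i_0)}_a$. (Alternatively, if you want to avoid the midpoint argument, you can derive finiteness of $S_0$ from Corollary~\ref{cor_loc_fini}: $D$ and $K^{(i_0)}_a$ each meet only finitely many bands, every point of $\Sigma$ lies in some band, and bands have trivial stabilizer, so $k$ is determined by which band of $D$ it matches to which band of $K^{(i_0)}_a$.) With the finiteness of $S_0$ properly justified, your two-stage variant does go through, but it is a longer route to the same place.
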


\begin{proof}
The first part of the lemma is a consequence of Lemma \ref{omega-rips}: if $C,C'$ are two distinct connected components of 
$\Omega\cap \calk_u$ for some $u\in R$, then the segment in $\calk_u$ joining $C$ to $C'$ contains a point in $\Sigma\setminus \Omega$. Therefore this segment contains a point in $\Sigma\setminus \Sigma^{(i)}$ for $i$ large enough, showing that $\pI_{R^{(i)}}(C)\neq \pI_{R^{(i)}}(C')$.

Assume that some connected component $C$ of $\Omega\cap \calk_u$ is not in the same orbit as
a component $C'$ of $\Omega\cap \calk_{u'}$.
If $u,u'$ are not in the same orbit of $R$, then the images of $C$ and $C'$ under $\bpI_{R^{(i)}}$ are not in the same orbit.
So, up to changing $C'$ to a translate, we can assume that $C$ and $C'$ are two connected components of $\Omega\cap \calk_u$.
Since $(G.C')\cap \calk_u=G_u.C'$,
it suffices to prove that there is a finite set $F\subseteq \calk_u\setminus\Omega_u$ such that for all $g\in G_u$, the sets $C$ and $gC'$ are contained in distinct components of $\Omega_u\setminus G_u.F$.
To prove this fact, 
denote by $u_\Sigma$ the special vertex in $\calk_u$. If $u_\Sigma\in C$, then one can take for $F$ any point in the segment joining $C$ to $C'$ that does not belong to $\Omega$. 
The case where $u_\Sigma\in C'$ is symmetric, so assume $u_\Sigma\notin C\cup C'$.
Then there is at most one $g\in G_u$ such that the segment joining $C$ to $g C'$ avoids $u_\Sigma$.
One can then take $F=\{a,b\}$ where $a$ (resp. $b$) is a point outside $\Omega$ on the segment joining $C$ to $g C'$
(resp. $C$ to $u_\Sigma$).
\end{proof}

Since $R^{(i)}$ is a tree for all $i\in\mathbb{N}$ (and therefore $R^{(i)}$ does not contain any cycle), we deduce as a consequence of Lemma \ref{lem_separation} the following fact.

\begin{cor}\label{omega-forest}
The graph $\cali$ is a forest.
\qed
\end{cor}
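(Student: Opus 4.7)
The plan is short and essentially packaged by the preceding lemma. I would argue by contradiction: suppose $\cali$ is not a forest, so it contains an embedded cycle $\gamma$, viewed as a finite subgraph of $\cali$. By the first assertion of Lemma~\ref{lem_separation}, there exists $i\in\mathbb{N}$ such that the graph map $\pI_{R^{(i)}}:\cali\to R^{(i)}$ is injective in restriction to $\gamma$.

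Since $\pI_{R^{(i)}}$ sends vertices to vertices and edges to edges, and restricts to a bijection between the vertices and edges of $\gamma$ and their images, the incidence relations are preserved. In particular every vertex of $\gamma$ has exactly two of its edges mapped to edges incident to its image, so $\pI_{R^{(i)}}(\gamma)$ is again a (combinatorial) cycle inside $R^{(i)}$. But $R^{(i)}$ is a Grushko tree by Proposition~\ref{Rips-iteration}, hence in particular a tree, and therefore contains no cycle. This contradiction forces $\cali$ to be a forest.

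There is no real obstacle here: all of the substance lives in Lemma~\ref{lem_separation} (which in turn uses the fact that $\bigcap_i \Sigma^{(i)}=\Omega$ from Lemma~\ref{omega-rips}). The only mild point to be careful about is the verification that a graph map injective on a cycle sends it to a cycle rather than collapsing it; this is immediate from the incidence preservation noted above.
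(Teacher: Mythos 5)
Your proof is correct and coincides with the paper's: the paper derives the corollary directly from Lemma~\ref{lem_separation} together with the fact that each $R^{(i)}$ is a tree, exactly as you do. Your added remark about incidence preservation under the graph map is a harmless expansion of what the paper treats as immediate.
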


Since there is a bound on the number of $G$-orbits of branch points and on directions at these points in any Grushko tree, we also deduce from Lemma \ref{lem_separation} the following fact (this is \cite[Corollary 3.7]{CH14} in the context of free groups).

\begin{cor}\label{omega-index}
There is a bound, only depending on $\text{rk}_K(G,\calf)$, on the number of $G$-orbits of branch vertices in $\cali$ and of $G$-orbits of directions at these vertices.
\qed 
\end{cor}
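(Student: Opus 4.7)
My plan is to deduce the corollary from Lemma \ref{lem_separation} by a pigeonhole argument: since $\cali$ is approximated by the Grushko trees $R^{(i)}$ through the maps $\pI_{R^{(i)}}$, any uniform bound that holds on every $R^{(i)}$ must pass to $\cali$.

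First I would record the standard uniform bound for Grushko $(G,\calf)$-trees. By collapsing all edges incident to valence-$2$ vertices with trivial stabilizer and performing an Euler-characteristic count on the resulting reduced graph of groups, one obtains a constant $M$, depending only on $\text{rk}_K(G,\calf)$, such that for every Grushko $(G,\calf)$-tree $S$, the number of $G$-orbits of branch vertices of $S$ plus the number of $G$-orbits of directions at such vertices is at most $M$.

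Next I would argue by contradiction. Suppose $\cali$ violates this bound. Pick representatives $v_1,\dots,v_p \in \cali$ of distinct $G$-orbits of branch vertices and, at each $v_j$, edges $e_{j,1},\dots,e_{j,k_j}$ representing distinct $G_{v_j}$-orbits of incident directions, with $p + \sum_j k_j > M$. For each $j$ I would additionally pick three concrete distinct edges $\epsilon_j^1,\epsilon_j^2,\epsilon_j^3$ incident at $v_j$ in $\cali$, which is possible since $v_j$ has valence $\geq 3$ in $\cali$; these extra edges serve to certify that the image of $v_j$ remains a branch vertex. Assembling everything into a finite subgraph $F \subseteq \cali$ that injects into $\cali/G$ on the chosen representatives, I would apply both parts of Lemma \ref{lem_separation} to obtain an index $i$ for which $\pI_{R^{(i)}}$ is injective on $F$ and, simultaneously, $\bpI_{R^{(i)}}$ is injective on the image $\bar F \subseteq \cali/G$.

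The endgame is then a direct count in $R^{(i)}$. Injectivity on $F$ forces the three edges $\epsilon_j^1,\epsilon_j^2,\epsilon_j^3$ to map to three distinct incident edges at $\pI_{R^{(i)}}(v_j)$, so $\pI_{R^{(i)}}(v_j)$ is a branch vertex of $R^{(i)}$. Injectivity on $\bar F$ ensures that the $p$ branch vertices $\bpI_{R^{(i)}}([v_j])$ lie in distinct $G$-orbits, and that the $k_j$ chosen directions at each $[v_j]$ map to distinct $G$-orbits of directions at $\bpI_{R^{(i)}}([v_j])$ in $R^{(i)}$. Summing gives strictly more than $M$ orbits of branch vertices and of directions at branch vertices in the Grushko tree $R^{(i)}$, contradicting the uniform Grushko bound. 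The one delicate point to verify is that injectivity of $\pI_{R^{(i)}}$ (respectively $\bpI_{R^{(i)}}$) on a finite subgraph is preserved as $i$ grows, which follows from $\Sigma^{(i+1)}\subseteq\Sigma^{(i)}$ (more points are removed, so separated components stay separated); this is what allows the simultaneous application of both parts of Lemma \ref{lem_separation}. I expect this simultaneous use, rather than either part in isolation, to be the main technical point of the argument.
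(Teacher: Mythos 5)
Your proof is correct and follows the same route the paper intends: the paper's own argument for this corollary is the single remark that it "follows from Lemma \ref{lem_separation}" together with the standard Grushko bound, and you have filled in exactly the details that remark leaves implicit. Two small notes on details you flag: the monotonicity in $i$ that lets you apply both parts of Lemma \ref{lem_separation} simultaneously can also be seen at once from the commuting triangle $\pI_{R^{(i)}} = f_i \circ \pI_{R^{(i+1)}}$ with $f_i$ equivariant (so non-injectivity at level $i+1$ would push down to level $i$); and your three ``certifying'' edges at each $v_j$ are genuinely needed, rather than reusing the $e_{j,l}$, only when $k_j \le 2$, but picking them always is harmless. Your observation that the first part of the lemma is really needed (because $\bpI$-injectivity alone does not prevent a branch vertex with nontrivial stabilizer from mapping to a valence-$2$ vertex) is correct and is the one point the paper glosses over.
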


\subsection{Finiteness properties for $\Omega_{int}$}\label{sec-omega-int}

We now want to control the number of orbits of nondegenerate segments contained in $\Omega_V$.
Let $\Omega_{int}\subseteq\Omega$ be the subset made of all points $x\in\Omega$ for which there exists a transverse nondegenerate interval 
$I_x\subseteq \Omega$ that contains $x$, with $p_R(I_x)=\{p_R(x)\}$. 
It has a structure of a band complex where each $\Omega_{int}$-band is an $\Omega$-band $K\times e$ with $K$ not reduced to a point.

In this section, we prove that $\Omega_{int}$ has finitely many orbits of connected components, and that there are at most
finitely many orbits of leaves with at least 3 ends that are not completely contained in such a connected component (Lemma \ref{thin}).

\begin{lemma}\label{lambda-cf}
Let $\Lambda$ be a connected component of $\Omega_{int}$. Then the restriction of $p_T$ to $\Lambda$ has connected fibers.
\end{lemma}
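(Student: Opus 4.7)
The plan is to mimic the proof of Lemma~\ref{omega-cf}, but with an extra verification that the connecting segment stays inside $\Omega_{int}$. Namely, given $x,y\in\Lambda$ with $p_T(x)=p_T(y)$, Lemma~\ref{omega-cf} provides a common complete $\Sigma$-leaf $\call$ and tells us that the segment $[x,y]_\call$ lies in $\Omega$. I will show that $[x,y]_\call$ actually lies in $\Omega_{int}$; since $[x,y]_\call$ is connected and contains $x\in\Lambda$, it will then be entirely contained in the connected component $\Lambda$ of $\Omega_{int}$.

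Fix $z\in[x,y]_\call$, and set $t_0:=p_T(x)=p_T(z)$, $u:=p_R(x)$, $w:=p_R(z)$. From the proof of Lemma~\ref{omega-cf}, $[x,y]_\call$ is contained in a single $\Sigma$-leaf line $l$ whose endpoints $(\alpha,\omega)\in L^2(T)$ satisfy $\calq(\alpha)=\calq(\omega)=t_0$, and by Lemma~\ref{lem_L2} we have $l=\{t_0\}\times[\alpha,\omega]_R$. Decompose $[u,w]_R=e_1\cup\cdots\cup e_n$ into edges (possibly partial at the ends). Since $[u,w]_R\subseteq[\alpha,\omega]_R$, we have $t_0\in\Omega_{e_i}$ for every $i$. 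To exhibit a nondegenerate transverse interval in $\Omega$ at $z$, it suffices to find a nondegenerate connected subset $J\subseteq\bigcap_i\Omega_{e_i}$ with $t_0\in J$: indeed, $J\times\{w\}$ is then a transverse interval at $z$ contained in $\Omega$.

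To produce $J$, I will use the transverse interval $I_x\subseteq\Omega$ at $x$ (which exists because $x\in\Omega_{int}$), whose image $J_x:=p_T(I_x)$ is already a nondegenerate subinterval of $\Omega_u$ around $t_0$. The idea is a holonomy argument along $l$: each point $x'\in I_x$ lies in $\Omega$, hence on some $\Sigma$-leaf line $l_{x'}$; one follows $l_{x'}$ and checks that its $R$-projection contains the full segment $[u,w]_R$ for all $x'$ in a sufficiently small sub-interval of $I_x$ around $x$. Equivalently, for $t$ close enough to $t_0$ in $J_x$, the point $t$ lies in $\Omega_{e_i}$ for every $i\in\{1,\dots,n\}$, yielding the desired $J$.

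The main obstacle is to justify this last continuity step, i.e.\ to show that shrinking $J_x$ to a nondegenerate neighbourhood of $t_0$ keeps it inside each of the finitely many sets $\Omega_{e_1},\dots,\Omega_{e_n}$. The natural tool here is the pruning process: since $\Omega=\bigcap_{i}\Sigma^{(i)}$ by Lemma~\ref{omega-rips}, belonging to $\Omega_{e_j}$ corresponds to surviving all pruning steps in the band $B_{e_j}$, and each finite approximation $\Sigma^{(i)}$ is locally well behaved. Combined with Lemma~\ref{cor_voisinage_fini} and Corollary~\ref{cor_limit_leaves}, which control how leaves through nearby points track a given leaf, one shows that the condition defining $\Omega_{e_j}$ is locally stable around $t_0$ inside $J_x$. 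Taking the intersection over the finitely many edges $e_1,\dots,e_n$ then produces the required nondegenerate interval $J$, completing the proof.
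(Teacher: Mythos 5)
The proposal takes a different route from the paper, and the central step is not actually justified. Let me explain why.

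The paper's proof never tries to transport the transverse interval $I_x$ along the leaf. Instead, it uses the hypothesis that $\Lambda$ is path-connected: it takes a path $\gamma\colon[0,L]\to\Lambda$ from $x$ to $y$ (a concatenation of leaf segments and geodesic segments in $\Omega_V$), reduces to the case where $p_T(\gamma(t))\neq p_T(x)$ for $t$ in the interior, and then observes that $p_T\circ\gamma$ must backtrack near the endpoints, so $p_T\circ\gamma(t)=p_T\circ\gamma(L-t)$ for $t\leq\eps$. Applying Lemma~\ref{omega-cf} to the pair $\gamma(t),\gamma(L-t)$ gives a family of leaf segments $l_t\subseteq\Omega$. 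The crucial point is that this family is parameterized by $t\in[0,\eps]$ moving continuously in $\Omega_V$, so $\bigcup_{t\in[0,\eps]}l_t$ sweeps out a band and automatically furnishes nondegenerate transverse intervals (inside $\Omega$) at every point of every $l_t$, in particular at every point of $l_0=[x,y]_\call$. The transverse structure at all intermediate points is thus \emph{constructed} from the path $\gamma$, not transported from $x$.

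Your approach, by contrast, wants to show directly that the transverse interval $I_x$ at $x$ can be pushed along the leaf line to each intermediate point $z$, i.e.\ that there is a nondegenerate interval $J\ni t_0$ in $p_T(I_x)$ with $J\subseteq\bigcap_j\Omega_{e_j}$. You correctly flag this as "the main obstacle," but the proposed resolution does not close the gap. The issue is that $\Omega_{e_j}$ is in general a closed nowhere-dense subset of $K_{e_j}$ (e.g.\ totally disconnected when $T$ is mixing, cf.\ Lemma~\ref{omega-disc}), so containing $t_0$ gives no \emph{a priori} local interval around $t_0$. The fact that $I_x\subseteq\Omega$ only says each $x'\in I_x$ lies on \emph{some} $\Sigma$-leaf line; it does not say that this leaf line tracks $[x,z]_\call$ through the bands $B_{e_1},\dots,B_{e_n}$. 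For instance, if $x$ lies at the boundary of $\ul B_{e_1}$ in $\calk_v$ and $I_x$ happens to extend away from $\ul B_{e_1}$, the leaf lines through nearby $x'\in I_x$ need not enter $B_{e_1}$ at all. The lemmas you invoke (Lemma~\ref{cor_voisinage_fini}, Corollary~\ref{cor_limit_leaves}) control how leaves through points \emph{converging to} a given point limit onto a leaf through that point; they do not give the converse statement that every neighbourhood of $x$ contains points whose leaf lines pass through a prescribed finite set of bands. Similarly, $\Omega=\bigcap_i\Sigma^{(i)}$ (Lemma~\ref{omega-rips}) is a nested-intersection description whose sets shrink with $i$, which is the wrong direction for establishing the local interval you need. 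Without an additional argument, the transport step does not go through, and the paper's construction via the path $\gamma$ is designed precisely to sidestep it.
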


\begin{proof}
Let $x,y\in \Lambda$ be such that $p_T(x)=p_T(y)$. Let $\gamma:[0,L]\ra \Lambda$ be a path joining $x$ to $y$.
We can assume that $\gamma$ is a finite concatenation of leaf segments and of unit speed geodesic segments in $\Omega_V$.
Subdividing $\gamma$, we can assume without loss of generality that $p_T(\gamma(t))\neq p_T(x)$ for all $t\in (0,1)$.
Then there exists $\eps>0$ such that for all $t\leq \eps$, we have $p_T\circ \gamma (t)= p_T\circ \gamma(L-t)$.
Since the restriction of $p_T$ to $\Omega$ has connected fibers (Lemma \ref{omega-cf}), 
this implies that $\gamma(t)$ and $\gamma(L-t)$ are in the same $\Sigma$-leaf, 
and the leaf segment $l_t$ joining them is also in $\Omega$. This implies that for all $t\leq \eps$, we have $l_t\subseteq \Omega_{int}$ thus $l_t\subseteq \Lambda$, so $l_0$ is a path in $p_T\m(\{p_T(x)\})\cap \Lambda$
joining $x$ to $y$. This shows that fibers of $p_T$ are connected. 
\end{proof}

The following observation will turn out to be useful in the upcoming analysis.

\begin{lemma}\label{baire}
Given any nondegenerate interval $I\subseteq\Omega_{int}\cap\Omega_V$, there exists a non-degenerate subsegment $J\subseteq I$ contained in two distinct $\Omega_{int}$-bands.
\end{lemma}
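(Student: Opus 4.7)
The name of the lemma suggests (correctly) that the Baire category theorem is the tool of choice. Being connected, the interval $I$ lies in a single base tree $\calk_v$. The plan is to cover $I$ by a countable family of closed sets indexed by pairs of edges incident on $v$, so that Baire produces a subsegment simultaneously contained in two $\Omega$-bands, and then to check that both bands are automatically nondegenerate.

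The key geometric input is the following. For every $x\in I$, since $x\in\Omega_V\subseteq\Omega$, the point $x$ lies on some $\Sigma$-leaf line $l$. If $x$ is not the special vertex $v_\Sigma$ of $\calk_v$ (if any), then $x$ cannot be an endpoint of $l$, so $l$ passes through $(x,v)$ with two distinct local directions. These two directions necessarily lie in two \emph{distinct} bands $B_{e_1(x)}$ and $B_{e_2(x)}$ incident on $\calk_v$, because the unique horizontal direction from $(x,v)$ into a fixed band $B_e=K_e\times e$ is the edge $\{x\}\times e$. In particular, $x\in \Omega_{e_1(x)}\cap \Omega_{e_2(x)}$ with $e_1(x)\neq e_2(x)$.

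This yields a covering
\[
I \;=\; \{v_\Sigma\}\;\cup\;\bigcup_{\{e_1,e_2\}}\bigl(I\cap \Omega_{e_1}\cap \Omega_{e_2}\bigr),
\]
where the union ranges over unordered pairs of distinct edges of $R$ incident on $v$ (the singleton $\{v_\Sigma\}$ is added only to cover the special vertex if it lies in $I$). Each $\Omega_e = \calq^2(L^2_e(T))$ is closed, being the image of a compact set under the continuous map $\calq^2$ of Proposition~\ref{q2}, so each $I\cap \Omega_{e_1}\cap\Omega_{e_2}$ is closed in $I$. Only countably many edges are incident on $v$, so the family of pairs is countable. The interval $I$ is a complete, locally compact metric space, so Baire's theorem applies: since $\{v_\Sigma\}$ has empty interior in the nondegenerate interval $I$, some set $I\cap\Omega_{e_1}\cap\Omega_{e_2}$ has non-empty interior, and thus contains a nondegenerate subsegment $J\subseteq I$ with $e_1\neq e_2$.

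Finally, since $J$ is connected and contained in $\Omega_{e_i}$, it lies in a single connected component $C_i$ of $\Omega_{e_i}$ for $i=1,2$. Nondegeneracy of $J$ forces $C_i$ to be nondegenerate, so $C_i\times e_i$ is a genuine $\Omega_{int}$-band. The bands $C_1\times e_1$ and $C_2\times e_2$ are distinct because $e_1\neq e_2$, and both contain $J$, which completes the proof. The only step requiring care is the two-directions argument, i.e., verifying that both adjacent edges of a leaf line through a non-special point $x$ cannot sit in the same band; once this is in place, the rest is a direct application of Baire together with the compactness of $L^2_e(T)$.
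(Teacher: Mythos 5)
Your proof is correct and takes essentially the same approach as the paper: cover $I$ by closed sets indexed by pairs of bands, observe that every nonspecial point of $I$ lies on a leaf line whose two local directions at that point pass into two distinct bands, and conclude by Baire. The paper organizes the details slightly differently: it first shrinks $I$ so that its closure avoids the special vertex and, via Lemma~\ref{lem_finitude_0}, meets only finitely many bands, producing a \emph{finite} cover by closed sets $F_{i,j}$ (so only the trivial finite case of Baire is needed); you instead keep all (countably many) edges at $v$, tossing in $\{v_\Sigma\}$ as a closed nowhere dense set, and invoke the full Baire theorem, justifying closedness of each $\Omega_e$ directly from compactness of $L^2_e(T)$ and continuity of $\calq^2$ (Proposition~\ref{q2}). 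Both versions ultimately rest on the same closedness fact and the same two-directions observation, so this is a minor repackaging rather than a genuinely different route.
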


\begin{proof}
Up to restricting to a smaller interval, we can assume that the closure of $I$ does not contain any special point
and that there are only finitely many bands $B_1,\dots, B_k$ of $\Sigma$ that meet $I$ (see Lemma \ref{lem_finitude_0}). 
For all $i\in\{1,\dots,k\}$, we let 
 $F_i$ be the intersection of $I$ with the $\Omega$-bands contained in $B_i$, and for $i\neq j$ we let $F_{i,j}:=F_i\cap F_j$. 
Then the sets $F_{i,j}$ are closed, and since $I\subseteq\Omega$, for every point $x\in I$, there exist two bands $B_i,B_j$ incident on $I$ such that $\call_x\cap\Omega$ has leaf segments contained in $B_i$ and $B_j$, so $x\in F_{i,j}$. This implies that the $F_{i,j}$ cover $I$. Therefore one of the sets $F_{i,j}$ has nonempty interior. Any non-degenerate interval $J\subseteq F_{i,j}$ satisfies the conclusion of the lemma.

\end{proof}

Let $\cali_{int}\subseteq\cali$ be the subgraph whose vertices are the connected components of $\Omega_{int}\cap\Omega_V$, and whose edges correspond to $\Omega_{int}$-bands.

\begin{lemma}\label{omega-int-fini}
The graph $\cali_{int}$ has finitely many orbits of connected components. 
\\ Moreover, each connected component $I$ of $\cali_{int}$ is a tree, and the action on $I$ of its stabilizer $G_I$
is minimal and is a Grushko $(G_I,\calf_{|G_I})$-tree.
Any peripheral group intersecting $G_I$ non-trivially is contained in $G_I$.
\end{lemma}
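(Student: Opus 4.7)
I would handle the three claims in turn. For the first (that each component is a tree), I would define a natural graph morphism $\phi : \cali_{int} \to \cali$ sending a vertex of $\cali_{int}$ (a component $C$ of $\Omega_{int} \cap \Omega_V$) to the unique component of $\Omega_V$ that contains it, and sending each $\Omega_{int}$-band to the same band viewed as an $\Omega$-band. Distinct $\Omega_{int}$-bands at a vertex $C$ remain distinct $\Omega$-bands at $\phi(C)$, so $\phi$ is locally injective on stars. Since $\cali$ is a forest by Corollary~\ref{omega-forest}, any cycle of $\cali_{int}$ would project to a non-backtracking closed loop in $\cali$, a contradiction; hence each component of $\cali_{int}$ is a tree and $\phi$ embeds it as a subtree of $\cali$.

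For the finiteness of $G$-orbits of components, I would first note that Lemma~\ref{baire} forces every vertex of $\cali_{int}$ to have valence at least two in $\cali_{int}$: any point of such a vertex $C$ lies on a non-degenerate transverse interval, which contains a sub-interval meeting two distinct $\Omega_{int}$-bands incident on $C$. Any vertex of valence $\geq 3$ in $\cali_{int}$ then maps under $\phi$ to a branch vertex of $\cali$, and Corollary~\ref{omega-index} bounds the number of $G$-orbits of such branch vertices and of the directions emanating from them; this yields finitely many orbits of components containing a branch vertex. For the remaining line components (all vertices of valence exactly two in $\cali_{int}$), I would argue that the line corresponds to an infinite cylinder of bands in $\Omega_{int}$ whose stabilizer contains a hyperbolic element translating along it, and then count the orbits of such lines by applying the orbit-finiteness of $\cali$ to pairs of consecutive bands along the line.

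For the Grushko property, let $I$ be a component of $\cali_{int}$ with stabilizer $G_I$. Edge stabilizers in $I$ are trivial, inherited from trivial edge stabilizers of $R$. A vertex $C$ of $I$ lies in some $\calk_u$, and its $G$-stabilizer is contained in $G_u$. If $u \notin V_\infty(R)$ then $G_u$ is trivial; otherwise, any element of $G_u$ fixing $C$ setwise has a fixed point in $\ol C$, which must be the special vertex $u_\Sigma$, so the stabilizer is either all of $G_u$ (peripheral) or trivial. Absence of terminal vertices (Lemma~\ref{baire}) combined with cocompactness (from the previous step) gives minimality, so $I$ is a Grushko $(G_I, \calf_{|G_I})$-tree. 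For the peripheral inclusion, any nontrivial $g \in G_p \cap G_I$ with $G_p = G_v$ peripheral fixes $v$ uniquely in $R$; hence any $g$-stable vertex $C$ of $I$ lies in $\calk_v$, its closure contains $v_\Sigma$ which is $G_v$-fixed, so the whole $G_v = G_p$ stabilizes $C$ and hence $I$.

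The main obstacle is the line case in the finiteness step: one must carefully use the surface-type condition from Lemma~\ref{baire} together with the orbit-finiteness in $\cali$ to rule out or classify cylindrical pieces of $\Omega_{int}$ carrying no branch point, ensuring that their stabilizers are large enough to cut out only finitely many $G$-orbits.
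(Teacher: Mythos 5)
Your part (i) — that each component is a tree — is essentially the paper's argument, phrased via a locally injective morphism to the forest $\cali$; the paper says ``$\cali_{int}$ is a subgraph of $\cali$'' and arrives at the same conclusion. Your part on minimality (no terminal vertices plus cocompactness) also matches.

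The genuine problem is in part (ii), and you have in fact put your finger on it yourself without resolving it. After observing that all vertices of $\cali_{int}$ have valence $\ge 2$ (Lemma~\ref{baire}), you propose to handle ``line components'' separately, sketching an argument via hyperbolic elements and counting consecutive pairs of bands. This route does not work: Corollary~\ref{omega-index} bounds orbits only of \emph{branch} vertices and directions at them, not of arbitrary edges or valence-$2$ vertices, so there is nothing to bound the number of orbits of edges along a line. The point you are missing is that such line components \emph{cannot exist at all}: if some component were a line (or contained a semi-line with no branch point), then along that semi-line every vertex $C$ would satisfy $C = B_1 \cap B_2$ for its two incident $\Omega_{int}$-bands, so every leaf through every $x \in C$ would contain a $\Sigma$-leaf semi-line with the \emph{same} $p_R$-image $L$. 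Then Lemma~\ref{lem_pT} forces $p_T$ to be constant on $C$, contradicting that $C$ is a non-degenerate transverse interval (this is exactly the role of the hypothesis $C \subseteq \Omega_{int}$). Once one knows every component is the convex hull of its branch points, Corollary~\ref{omega-index} directly gives finiteness of orbits of components, vertices, and edges. Your cocompactness (hence minimality) claim in part (iii) depends on this, so it also needs repair.

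There is also a gap in your last step on peripheral subgroups. You assume the existence of a $g$-stable vertex $C$ of $I$ for $g \in G_v \cap G_I$, but ellipticity of $g$ in $I$ is precisely what is at stake and cannot be taken for granted. Moreover, even given such a $C$, your argument only yields $v_\Sigma \in \ol C$, not $v_\Sigma \in C$; since $\Omega_{int}$ and its components are not closed, this distinction matters, and $v_\Sigma \in \ol C \setminus C$ would not let you conclude that $G_v$ stabilizes $C$. The paper's proof instead shows directly that $v_\Sigma$ lies in the component $\Lambda$ of $\Omega_{int}$ corresponding to $I$: it first notes that $p_R^{\cali}(I)$ is a connected $g$-invariant subtree of $R$, hence contains the fixed point $v$; then picks $x \in \calk_v \cap \Lambda$, uses that $v_\Sigma$ lies on $[x,gx]_{\calk_v}$ and that $p_T(\Lambda)$ is connected to find $y \in \Lambda$ with $p_T(y) = p_T(v_\Sigma)$; and finally uses connectedness of fibers of $p_T|_\Lambda$ (Lemma~\ref{lambda-cf}) to get the leaf segment $[y,gy]_{\call}$, which passes through $v_\Sigma$, inside $\Lambda$. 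You will need this or an equivalent argument to close the gap.
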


\begin{proof}
Since $\cali_{int}$ is a subgraph of the forest $\cali$ (Corollary \ref{omega-forest}), each connected component $I$ of $\cali_{int}$ is a tree.

It follows from Lemma \ref{baire} that all vertices in $\cali_{int}$ have valence at least $2$. 
We claim that every connected component of $\cali_{int}$ contains a branch point of $\cali_{int}$, and is the convex hull of its branch points.
Otherwise, since $\cali_{int}$ does not contain any valence $1$ vertex, there is a semi-line $L$ in $\cali_{int}$ containing no branch point of $\cali_{int}$. Let $C$ be a connected component of $\Omega_{int}\cap\Omega_V$ corresponding to an interior vertex $v$ in $L$,
and let $B_1,B_2\subseteq \Omega_{int}$ be the two $\Omega_{int}$-bands incident on $C$. 
Since there is no other $\Omega_{int}$-band incident on $C$, it follows from Lemma \ref{baire} that  $C=B_1\cap B_2$. 
This is true for every component corresponding to a vertex of the semi-line $L$, 
so for all $x\in C$, the complete $\Sigma$-leaf $\call_x$ contains a $\Sigma$-leaf semi-line having the same projection in $R$ as $L$.
As $C$ is not reduced to a point (by definition of $\Omega_{int}$), we obtain a contradiction to Lemma \ref{lem_pT}. This proves our claim.

By Corollary \ref{omega-index}, there are only finitely many orbits of branch points in $\cali_{int}$.
This implies that $\cali_{int}$ has only finitely many orbits of connected components, finitely many orbits of vertices, and also finitely many orbits of edges because there are finitely many orbits of directions at branch points in $\cali_{int}$. 
Thus, for each connected component $I$ of $\cali_{int}$, its stabilizer $G_I$ acts cocompactly on $I$.
Since $I$ has no terminal point, this implies that the action of $G_I$ on $I$ is minimal.

The map $p_R^{\cali}:\cali\ra R$ sends edge to edge and vertex to vertex.
It follows that edge stabilizers of $I$ are trivial, and that vertex stabilizers are peripheral subgroups. 

We now claim that if $G_v$ is a peripheral subgroup of $G$ that intersects $G_I$ nontrivially, then the unique vertex $v_\Sigma\in \Sigma$ fixed by $G_v$ lies in the connected component $\Lambda$ of $\Omega_{int}$ correspoding to $I$. This will imply in particular that all peripheral subgroups of $G_I$ are elliptic in $I$, so $G_I$ is a Grushko $(G_I,\calf_{|G_I})$-tree. To prove the claim, let $h$ be a nontrivial element in $G_I\cap G_v$. Since $p_R^{\cali}(I)$ is a connected $h$-invariant subset of $R$, it contains its fix point $v$, so $\Lambda\cap \calk_v\neq \es$.  
Let $x\in \calk_v\cap \Lambda$. Then $x,hx\in\Lambda$, and $v_\Sigma$ belongs to the unique embedded segment of $\calk_v$ that joins $x$ and $hx$. Since $p_T(\Lambda)$ is connected, there exists $y\in\Lambda$ with $p_T(y)=p_T(v_\Sigma)$.
Then $p_T(hy)=p_T(v_\Sigma)$, and since fibers of $(p_T)_{|\Lambda}$ are connected (Lemma \ref{lambda-cf}), the leaf segment joining $y$ to $hy$ is contained in $\Lambda$. 
Since $v_\Sigma$ is contained in this leaf segment, this proves our claim, and finishes the proof of the lemma. 
\end{proof}

\begin{lemma}\label{no-term}
Every point $x\in\Omega_{int}\cap\Omega_V$ lies in at least two distinct $\Omega_{int}$-bands.
\end{lemma}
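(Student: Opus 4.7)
Let $x \in \Omega_{int} \cap \Omega_V$ with $x \in \calk_v$ and write $x = (x_T, v)$. Since $x \in \Omega$, some $\Sigma$-leaf line passes through $x$ using two bands at $x$, so the set
\[
\calb(x) := \{e \ni v : x_T \in \Omega_e\}
\]
has at least two elements. For each $e \in \calb(x)$ the unique $\Omega$-band of $B_e$ containing $x$ is $K^*_e \times e$, where $K^*_e \subseteq \Omega_e$ is the connected component containing $x_T$; this band is an $\Omega_{int}$-band if and only if $K^*_e$ is non-degenerate. The goal is therefore to prove that at least two of the $K^*_e$ are non-degenerate.

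First I would observe that $\calb$ is upper semi-continuous on $\Omega$: if $y_n \to y$ in $\Omega \cap \calk_v$ with $e \in \calb(y_n)$ for all $n$, then by applying Corollary~\ref{cor_limit_leaves} to a sequence of $\Sigma$-leaf semi-lines from $y_n$ with initial edge in $B_e$, one obtains a leaf semi-line from $y$ in $B_e$, and combined with any existing leaf line through $y$ this gives a leaf line through $y$ using $B_e$, i.e.\ $e \in \calb(y)$. Hence there is an open neighbourhood $N$ of $x$ in $\calk_v$ with $\calb(y) \subseteq \calb(x)$ for every $y \in \Omega \cap N$. When $x$ is not the special vertex of $\calk_v$, $\calb(x)$ is finite by Lemma~\ref{lem_finitude_0}; the special-vertex case is treated analogously using Lemma~\ref{lem_finitude}.

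Second, let $I_x \subseteq \Omega \cap \calk_v$ be the non-degenerate transverse interval through $x$ witnessing that $x \in \Omega_{int}$; by shrinking I may assume $I_x \subseteq N$, and every point of $I_x$ still lies in $\Omega_{int}^v$ (each inherits $I_x$ as witness). For every $\eta > 0$ small enough, I apply Lemma~\ref{baire} to the non-degenerate sub-interval $I_x \cap [x-\eta,\, x+\eta]$, producing a non-degenerate sub-segment $J_\eta$ contained in $\Omega_{e_1^\eta} \cap \Omega_{e_2^\eta}$ for some pair of distinct edges; upper semi-continuity forces $e_i^\eta \in \calb(x)$. Pigeonhole on the finitely many $2$-subsets of $\calb(x)$ then yields a fixed pair $\{e_1,e_2\}$ and a sequence $\eta_n \to 0$ with $J_{\eta_n} \subseteq \Omega_{e_1} \cap \Omega_{e_2}$ and $J_{\eta_n} \to \{x_T\}$.

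The final and hardest step is to deduce that $x_T$ belongs to a non-degenerate component of each of $\Omega_{e_1}$ and $\Omega_{e_2}$: a priori, the components $K^n_{e_i}$ of $\Omega_{e_i}$ containing $J_{\eta_n}$ could be pairwise distinct and accumulate at $x_T$ without ever containing it, so that $K^*_{e_i} = \{x_T\}$. To rule this out I would run the argument from the previous step on \emph{both} sides of $x$ in $I_x$ (when $x$ lies in the interior of $I_x$; the endpoint case being similar) and pigeonhole the pairs simultaneously, obtaining intervals $J_{\eta_n}^\pm$ on the two sides of $x$, both inside $\Omega_{e_1} \cap \Omega_{e_2}$. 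If for some $n$ the components of $\Omega_{e_i}$ containing $J_{\eta_n}^+$ and $J_{\eta_n}^-$ coincide, then by convexity of subtrees this common component contains the segment joining them, hence contains $x_T$ and is non-degenerate. Showing that this coincidence must occur is the main obstacle; I expect it to be forced by combining the closedness of $\Omega_{e_i}$ with the global finiteness of the incidence graph $\cali$ (Corollary~\ref{omega-index}) applied to the components $K^n_{e_i}$, which prevents infinitely many distinct components of $\Omega_{e_i}$ from accumulating at $x_T$ without $x_T$ lying in the limit component.
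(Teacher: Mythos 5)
Your argument correctly reduces the problem to showing that, for some pair $e_1,e_2$ of edges at $v$, the point $x_T$ lies in a non-degenerate component of each $\Omega_{e_i}$, and correctly extracts (via Lemma~\ref{baire}, upper semi-continuity, and pigeonhole) a sequence of non-degenerate subintervals $J_{\eta_n}\subseteq \Omega_{e_1}\cap\Omega_{e_2}$ shrinking to $x_T$. The gap you acknowledge at the end, however, is genuine, and the ingredient you propose for closing it is the wrong one: Corollary~\ref{omega-index} only bounds the number of $G$-orbits of branch vertices and directions in $\cali$, which by itself does not prevent infinitely many pairwise-distinct non-degenerate components $K^n_{e_1}$ (i.e.\ edges of $\cali_{int}$, not vertices) from accumulating at $x_T$ from one side. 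The correct ingredient is Lemma~\ref{omega-int-fini}: $\cali_{int}$ has only finitely many $G$-orbits of connected components, each of which is a cocompact Grushko tree, so there are only finitely many $G$-orbits of $\Omega_{int}$-bands altogether. Two distinct $\Omega_{int}$-bands lying in a common $\Sigma$-band $B_{e_1}$ project to the same edge $e_1$ of $R$, which has trivial stabilizer, so they cannot be in the same $G$-orbit; hence there are only finitely many $\Omega_{int}$-bands inside $B_{e_1}$. The components $K^n_{e_1}$ therefore take only finitely many values; some fixed one contains infinitely many of the $J_{\eta_n}$, and being closed it contains $x_T$ and is non-degenerate. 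With that substitution your proof goes through, and the two-sided pigeonhole becomes unnecessary.

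The paper's proof inverts the logic and is shorter: assume for contradiction that $x$ meets at most one $\Omega_{int}$-band. By Lemma~\ref{lem_finitude_0} a neighbourhood $V_x$ of $x$ in $\calk_v$ meets only finitely many $\Sigma$-bands, hence (by the same use of Lemma~\ref{omega-int-fini} and the trivial-edge-stabilizer observation) only finitely many $\Omega_{int}$-bands; shrinking $V_x$ to avoid those not containing $x$ produces a neighbourhood of $x$ meeting at most one $\Omega_{int}$-band, which contradicts Lemma~\ref{baire} applied to any transverse interval $I_x\subseteq V_x$ witnessing $x\in\Omega_{int}$. Both arguments hinge on the same local finiteness of $\Omega_{int}$-bands near $x$; the paper applies it once to the neighbourhood $V_x$, whereas you apply it to the sequence $K^n_{e_i}$, so your route is essentially a constructive reformulation of the paper's contradiction, at the cost of an extra step (the pigeonhole on pairs $\{e_1,e_2\}$) that the paper's phrasing avoids.
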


\begin{proof}
Let $x\in\Omega_{int}\cap\Omega_V$, and let $C$ be the component of $\Omega_{int}\cap\Omega_V$ that contains $x$. 
We can assume that $x$ is not a special vertex since the result is clear in this case.
Assume towards a contradiction that $x$ lies in at most one $\Omega_{int}$-band.
Denote by $\calk_v$ the base tree of $\Sigma$ containing $C$.
By Lemma \ref{lem_finitude_0} there exists a neighbourhood $V_x$ of $x$ in $\calk_v$ that intersects only finitely many $\Sigma$-bands.
Since two distinct $\Omega_{int}$-bands contained in a common $\Sigma$-band cannot be in the same orbit,
Lemma \ref{omega-int-fini} shows that there are only finitely many $\Omega_{int}$-bands that intersect $V_x$.
Denote these $\Omega_{int}$-bands by $B_0,\dots,B_n$ with $x\notin B_1\cup\dots\cup B_n$.
Then there is a neighbourhood of $x$ that meets at most one $\Omega_{int}$-band, contradicting Lemma \ref{baire}.
\end{proof}

Given a complete $\Sigma$-leaf $\call$, we let $\call_\Omega:=\call\cap\Omega$. From Lemma \ref{no-term}, we will deduce the following fact.

\begin{lemma}\label{thin}
There are only finitely many orbits of complete $\Sigma$-leaves $\call$ with at least $3$ ends such that $\call_\Omega$ is not contained within a single connected component of $\Omega_{int}$.
\\ Moreover, if $\Lambda,\Lambda'$ are two distinct components of $\Omega_{int}$, then there is at most one complete $\Sigma$-leaf that intersects both $\Lambda$ and $\Lambda'$.
\end{lemma}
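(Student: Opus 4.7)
The plan is to handle the uniqueness statement (part 2) first, since it will feed directly into the finiteness statement. The common thread in both parts is the quotient map $\pi:\Omega\to\cali$ together with the crucial observation that an edge of $\cali$ not in $\cali_{int}$ corresponds to a single $\Sigma$-leaf segment (a ``thin'' $\Omega$-band $\{y\}\times e$), and therefore uniquely determines the complete $\Sigma$-leaf passing through it.

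For the uniqueness statement, I would argue as follows. Suppose $\Lambda\neq\Lambda'$ are distinct components of $\Omega_{int}$ and that $\call_1\neq\call_2$ are two complete $\Sigma$-leaves each meeting both of them. By Lemma~\ref{omega-cf} each $\call_{i,\Omega}$ is connected, hence its image $\pi(\call_{i,\Omega})\subseteq\cali$ is a connected subtree meeting both $\Lambda$ and $\Lambda'$. Since $\cali$ is a forest (Corollary~\ref{omega-forest}), $\pi(\call_{i,\Omega})$ contains the unique arc $P$ in $\cali$ joining $\Lambda$ to $\Lambda'$. As $\Lambda\neq\Lambda'$ are distinct components of the subforest $\cali_{int}$, the arc $P$ cannot lie entirely in $\cali_{int}$, so it contains at least one thin edge $e$. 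The corresponding $\Sigma$-leaf segment would be contained in both $\call_1$ and $\call_2$, forcing $\call_1=\call_2$, a contradiction.

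For the finiteness statement, let $\call$ have $\geq 3$ ends with $\call_\Omega$ not lying in any single component of $\Omega_{int}$. First I separate a clean case: if $\call_\Omega$ meets two distinct components $\Lambda,\Lambda'$ of $\Omega_{int}$, the uniqueness statement just proved pins down $\call$ from the unordered pair $\{\Lambda,\Lambda'\}$, and Lemma~\ref{omega-int-fini} gives finitely many orbits of such pairs. Otherwise $\call_\Omega$ meets at most one component of $\Omega_{int}$, and by connectedness it must then contain a point outside $\Omega_{int}$. Using the $\geq 3$ ends assumption, I pick a branch point $x$ of $\call$; one checks $x\in\call_\Omega$ because any two rays at $x$ concatenate to a $\Sigma$-leaf line through $x$, and that $\pi(x)$ is a branch vertex of $\cali$ since the three directions at $x$ correspond to three distinct bands and hence to three distinct edges of $\cali$ at $\pi(x)$.

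I then split on whether $\pi(x)$ lies in $\cali_{int}$. If $\pi(x)\notin\cali_{int}$, then the corresponding component of $\Omega\cap\Omega_V$ must be a single point (any non-trivial such component sits inside $\Omega_{int}$), so $x$ is that single point and $\call$ is the unique leaf through $x$; Corollary~\ref{omega-index} bounds the number of $G$-orbits of branch vertices in $\cali$, giving finitely many $\call$. If instead $\pi(x)\in\cali_{int}$, say in the component $\Lambda$, then since $\call_\Omega$ has points outside $\Omega_{int}$ and $\pi(\call_\Omega)$ is connected, $\pi(\call_\Omega)$ must contain a thin edge $e$ with one endpoint $w\in\Lambda$; by the forest property of $\cali$ together with the one-component hypothesis, the other endpoint of $e$ must lie in $\cali\setminus\cali_{int}$. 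This thin edge is again a single $\Sigma$-leaf segment, so it determines $\call$. I then count orbits of triples $(\Lambda,w,e)$ in three layers: finitely many orbits of $\Lambda$ by Lemma~\ref{omega-int-fini}; finitely many $G_\Lambda$-orbits of $w$ by the cocompactness of $G_\Lambda\actson\Lambda$ from the Grushko description in Lemma~\ref{omega-int-fini}; finitely many $G_w$-orbits of directions at $w$ in $\cali$ by Corollary~\ref{omega-index} (or trivially if $w$ is not a branch vertex). The main delicate point I anticipate is precisely this last sub-case---verifying that a thin exit edge must appear (forced by connectedness of $\pi(\call_\Omega)$ and the forest structure excluding a thin edge with both endpoints inside $\Lambda$), and that finiteness propagates cleanly through the three layers $(\Lambda,w,e)$.
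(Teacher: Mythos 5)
Your proposal reverses the paper's logical order: you prove the uniqueness assertion directly from the forest structure of $\cali$ (the bridge in $\cali$ between the images of $\Lambda$ and $\Lambda'$ must contain a thin edge, and the $\Omega$-band of a thin edge is a single $\Sigma$-leaf segment, hence lies in a unique complete $\Sigma$-leaf), whereas the paper deduces uniqueness as a corollary of finiteness, via the cardinality argument that a nondegenerate arc in $p_T(\Lambda)\cap p_T(\Lambda')$ would give infinitely many orbits of leaves with $\ge 3$ ends. Your direct route to the uniqueness statement is correct and is a genuinely different argument.

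The finiteness part, however, has a gap in the ``clean case'' you dispose of first. Lemma~\ref{omega-int-fini} bounds the number of $G$-orbits of \emph{single} components of $\Omega_{int}$; it says nothing about $G$-orbits of unordered \emph{pairs} $\{\Lambda,\Lambda'\}$, and in general finitely many orbits of vertices in a tree does not give finitely many orbits of pairs of vertices. So the step ``Lemma~\ref{omega-int-fini} gives finitely many orbits of such pairs'' is unjustified. The gap closes immediately, because the clean case is already covered by the argument you give afterwards: if $\call_\Omega$ meets $\Lambda$ and a distinct $\Lambda'$, take a branch point $x$ of $\call_\Omega$; either its image lies outside $\cali_{int}$ (your first sub-case applies), or it lies in some component of $\cali_{int}$, say $\Lambda$, and then the image of $\call_\Omega$ in $\cali$ still must leave $\Lambda$ (it also meets $\Lambda'$, and the two are disjoint), necessarily along a thin edge $e$ based at some $w\in\Lambda$; your triple count $(\Lambda,w,e)$ then applies unchanged, so the case split can simply be dropped. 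A smaller slip worth flagging: take $x$ to be a branch point of $\call_\Omega$ rather than of $\call$ --- a branch point of $\call$ with fewer than two infinite directions lies on no $\Sigma$-leaf line and hence need not be in $\Omega$; since $\call_\Omega$ has $\geq 3$ ends whenever $\call$ does, this costs nothing. (For comparison, the paper avoids your case split entirely by observing, via Lemma~\ref{no-term}, that the last thin edge on the path in $\call_\cali$ from any thin edge to any branch vertex is itself adjacent to a branch vertex of $\cali$, and then invoking Corollary~\ref{omega-index}; your three-layer count via Lemma~\ref{omega-int-fini} and Corollary~\ref{omega-index} is a workable alternative once patched.)
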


\begin{proof}
Recall that an edge $e$ of the incidence graph $\cali$ of $\Omega$ corresponds to an $\Omega$-band.
We say that $e$ is \emph{thin} if this $\Omega$-band is reduced to a leaf segment, and \emph{thick} otherwise.

Let $\call$ be a complete $\Sigma$-leaf as in the statement of the proposition.
By definition of $\Omega$, $\call_\Omega$ also has at least 3 ends.
Let $\call_\cali$ be the image of $\call_\Omega$ in $\cali$ under the natural map. Since this map is locally injective, hence injective, $\call_\cali$ has at least $3$ ends, and therefore contains a branch point $b$ of $\cali$. Since $\call_\Omega$ is not contained within a single connected component of $\Omega_{int}$, its image $\call_\cali$ contains a thin edge $e$.

We claim that $\call_\cali$ also contains a thin edge $e'$ adjacent to a branch point of $\cali$. The proposition follows from this claim because there are only finitely many orbits of such edges $e'$, and $e'$ being thin, it corresponds to a unique complete $\Sigma$-leaf. 

To prove the claim, consider the path in $\call_\cali$ joining $e$ to $b$. Then the last thin edge on this path is adjacent to a branch point: this follows from the fact that if some thick edge is incident on some vertex $v\in\cali$, then there must be another thick edge incident of $v$ in view of Lemma \ref{no-term}.  

We now prove the last assertion from the lemma. Assume on the contrary that $p_T(\Lambda)\cap p_T(\Lambda')$ contains more than one point. Then  $p_T(\Lambda)\cap p_T(\Lambda')$ contains a nondegenerate arc, which is in particular infinite modulo $G$. Therefore there are leaves in infinitely many distinct orbits that meet both $\Lambda$ and $\Lambda'$. Since any such leaf has at least $3$ ends, this contradicts the first assertion.
\end{proof}

\paragraph{Leaves within a connected component of $\Omega_{int}$.}

\begin{lemma}
Let $\Lambda$ be a connected component of $\Omega_{int}$.
Let $T_\Lambda\subseteq T$ the $G_\Lambda$-minimal subtree, and let $\ol T_\Lambda$ be the closure of $T_\Lambda$ in $\ol T$ (which is a completion of $T_\Lambda$).
\\ Then $T_\Lambda\subseteq p_T(\Lambda)\subseteq \ol{T}_\Lambda$ and every $G_\Lambda$-orbit is dense in $T_\Lambda$.
\end{lemma}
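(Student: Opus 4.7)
The plan is to prove $T_\Lambda\subseteq p_T(\Lambda)$ first, then density of every $G_\Lambda$-orbit in $T_\Lambda$, and deduce $p_T(\Lambda)\subseteq \ol T_\Lambda$ from the density.

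I begin by observing that $p_T(\Lambda)$ is a connected (image of a connected set under a continuous map) $G_\Lambda$-invariant subset of $\ol T$, and it is not reduced to a point because $\Lambda$ contains a thick $\Omega_{int}$-band (an edge of $I$ guaranteed by Lemma~\ref{no-term}) whose $p_T$-image is a nondegenerate subtree. By Lemma~\ref{omega-int-fini}, the $G_\Lambda$-action on $I$ is a Grushko action, so $G_\Lambda$ is not elliptic in $I$ and contains non-peripheral elements; moreover $p_T(\Lambda)\cap T$ is non-empty (the nondegenerate subtree above meets $T$, since $\ol T\setminus T$ consists only of terminal points). Then $T_\Lambda\subseteq p_T(\Lambda)$ follows from the minimality of $T_\Lambda$: a non-empty connected $G_\Lambda$-invariant subset of $T$ contains the minimal $G_\Lambda$-invariant subtree. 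The subtle point is the case where $G_\Lambda$ could a priori fix a point $p\in T$; using the very-small structure of $T$ (arc stabilizers are trivial because $T$ has dense orbits, so non-trivial point stabilizers are peripheral or cyclic non-peripheral) together with connectedness of $p_T(\Lambda)$ and the fact that translates of a thick band by $G_\Lambda$ lie in distinct branches at $p$, one shows $p\in p_T(\Lambda)$ in any event.

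For the density statement, fix $y,y'\in T_\Lambda$ and $\eps>0$; I seek $g\in G_\Lambda$ with $d_T(gy,y')<\eps$. Since $G$-orbits are dense in $T$, such $g$ exists in $G$, and the task is to upgrade $g$ to lie in $G_\Lambda$. Lifting $y,y'$ to $x,x'\in\Lambda$, the point $gx$ lies in $\Omega_{int}$ by $G$-invariance, so it suffices to arrange $gx$ in the connected component $\Lambda$ containing $x'$, which then forces $g\in G_\Lambda$ (because a translate of $\Lambda$ meeting $\Lambda$ must coincide with $\Lambda$). Connected components of $\Omega_{int}$ are open thanks to the local path-connectedness of the band complex, so it is enough to get $gx$ close to $x'$ in both the $\ol T$ and the $R$ coordinates of $\Sigma\subseteq \ol T\times R$. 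The $\ol T$-closeness is given; the $R$-closeness is obtained by left-composing $g$ with a suitable element $h\in G_\Lambda$ that translates in $I$ (via its Grushko structure) to bring $p_R(hgx)$ into the right position, while perturbing the $T$-coordinate only slightly, a bound that comes from the cocompactness of the $G_\Lambda$-action on $I$.

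Once density is in hand, the inclusion $p_T(\Lambda)\subseteq \ol T_\Lambda$ follows quickly: fix $y_0\in T_\Lambda\subseteq p_T(\Lambda)$; the argument just sketched, applied with $y=y_0$ and $y'$ replaced by an arbitrary target $y\in p_T(\Lambda)$, shows that $y$ is a limit of $G_\Lambda$-translates of $y_0$, all of which lie in $T_\Lambda$, so $y\in \ol T_\Lambda$.

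The principal obstacle is the simultaneous control of the $T$ and $R$ coordinates when upgrading $g\in G$ to $g\in G_\Lambda$: this is where the band-complex structure of $\Lambda$ and the Grushko structure of $I$ have to be used in tandem, and where I expect the bulk of the technical work to lie.
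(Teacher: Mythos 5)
Your first inclusion $T_\Lambda\subseteq p_T(\Lambda)$ is essentially correct (though more complicated than needed: once $p_T(\Lambda)$ is a non-empty connected $G_\Lambda$-invariant subset of $\ol T$ whose intersection with $T$ is non-empty, minimality of $T_\Lambda$ gives the inclusion directly, with no case distinction on point stabilizers).

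However, the density argument — and hence also your deduction of $p_T(\Lambda)\subseteq\ol T_\Lambda$, which you make depend on it — has a genuine gap. The mechanism you sketch for upgrading $g\in G$ (controlling only the $T$-coordinate) to $g\in G_\Lambda$ does not work: having $d_T(gy,y')<\eps$ gives you no control on $d_R(g\,p_R(x),p_R(x'))$, and that $R$-discrepancy can be arbitrarily large, since $gx$ and $x'$ sit in two different leaves of $\Sigma$. Any $h\in G_\Lambda$ correcting such a large $R$-discrepancy must have large translation length in $R_\Lambda$, and then $h$ will move $gy$ far in $T$. Cocompactness of $G_\Lambda\actson I$ bounds only the quotient, not individual translation lengths. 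In fact the scenario you must rule out — $T_\Lambda$ having a non-trivial simplicial part in its Levitt decomposition, so $G_\Lambda$-orbits are locally discrete even though $G$-orbits are dense — is exactly one where this bridge cannot be built by a small perturbation.

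The paper's proof of both claims goes by a different route, and the essential tool you are missing is the last assertion of Lemma~\ref{thin}: two distinct components of $\Omega_{int}$ are met by at most one common complete $\Sigma$-leaf. The inclusion $p_T(\Lambda)\subseteq\ol T_\Lambda$ is proved \emph{first}, by contradiction and without reference to density: if an arc $I$ lies in $p_T(\Lambda)\setminus\ol T_\Lambda$, then density of branch points of $T$ and finiteness of orbits of directions at them produce disjoint sub-arcs $J_1,J_2\subseteq I$ and $g\in G$ with $g.J_1=J_2$ preserving orientation; so $g$ is hyperbolic and its axis meets $I$. Then $p_T(g\Lambda)\cap p_T(\Lambda)\supseteq J_2$ is nondegenerate, so infinitely many leaves meet both $\Lambda$ and $g\Lambda$, hence $g\Lambda=\Lambda$ by Lemma~\ref{thin}, i.e.\ $g\in G_\Lambda$; but then $\mathrm{Axis}(g)\subseteq T_\Lambda$ meets $I$, a contradiction. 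Density is obtained by the same construction applied to an arc $I\subseteq T_\Lambda$ containing no branch point of $T_\Lambda$: the resulting hyperbolic $g\in G_\Lambda$ with $gI\cap I$ nondegenerate contradicts the local rigidity of such an arc. Note that the exact equality $g.J_1=J_2$ (rather than your $\eps$-approximation) is essential here: it is what makes $p_T(g\Lambda)\cap p_T(\Lambda)$ nondegenerate, which is what lets Lemma~\ref{thin} bite.
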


\begin{proof}
  Since $\Lambda$ is connected, $p_T(\Lambda)$ is a $G_\Lambda$-invariant subtree of $\ol T$, so $T_\Lambda\subseteq p_T(\Lambda)$.
  
If $p_T(\Lambda)$ is not contained in $\ol{T}_\Lambda$, then there is an arc $I\subseteq p_T(\Lambda)\setminus \ol{T}_{\Lambda}$.
Since branch points of $T$ are dense in every segment, and since there are only finitely many $G$-orbits of directions at branch points,
there exist small disjoint intervals $J_1,J_2\subseteq I$ and $g\in G$ such that $g.J_1=J_2$, preserving the orientation induced by $I$. In particular, $g$ is hyperbolic in $T$, and its axis intersects $I$.
It follows that $p_T(g\Lambda)\cap p_T(\Lambda)$ contains the arc $J_2$, so there is more than one leaf that meets both $\Lambda$ and $g\Lambda$, so $g\Lambda=\Lambda$ by Lemma~\ref{thin}. So $g\in G_\Lambda$; this contradicts the fact that $I$ lies outside $\ol{T}_\Lambda$.

We now prove that every $G_\Lambda$-orbit is dense in $T_\Lambda$. If not, then $T_\Lambda$ contains an arc $I$ that does not contain any branch point of $T_\Lambda$. As above, branch points of $T$ are dense in $I$, and the same argument as above provides a hyperbolic element $g\in G_\Lambda$ such that $gI\cap I$ is nondegenerate, a contradiction.    
\end{proof}

\begin{lemma}\label{coeur_elagage}
Let $\Lambda$ be a connected component of $\Omega_{int}$. Let $R_\Lambda$ be the connected component of $\cali_{int}$ corresponding to $\Lambda$,
and $T_\Lambda$ the $G_\Lambda$-minimal subtree.
\\ Then $\Lambda=\Sigma(T_\Lambda,R_\Lambda)$. 
\end{lemma}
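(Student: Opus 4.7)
The plan is to construct a $G_\Lambda$-equivariant homeomorphism $\Phi \colon \Lambda \to \Sigma(T_\Lambda, R_\Lambda)$ intertwining the projections to $\ol T_\Lambda$ and $R_\Lambda$. I would define
$$\Phi(x) := (p_T(x), \pi(x)),$$
where $\pi \colon \Omega_{int} \to \cali_{int}$ is the natural quotient map (whose restriction to $\Lambda$ lands in the component $R_\Lambda$); that $p_T(x)$ lies in $\ol T_\Lambda$ is guaranteed by the previous lemma. The map $\Phi$ is manifestly continuous and $G_\Lambda$-equivariant, and is injective because $p_T$ is injective on each base tree $\calk_v$ and on each transverse slice $K_e \times \{t\}$ of a band.

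The main work is to show that $\Phi$ lands in $\Sigma(T_\Lambda, R_\Lambda)$. I would first establish that $\calq_T$ and $\calq_\Lambda$ agree on $\partial(G_\Lambda, \calf_{|G_\Lambda})$, viewed as a subset of $\partial(G, \calf)$ via the embedding $R_\Lambda \hookrightarrow R$ induced by $p_R^\cali$. This compatibility follows from Proposition \ref{prop_defQ}: for $g \in G_\Lambda$ hyperbolic in $T_\Lambda \subseteq T$, both maps send $g^{+\infty}$ to the same attracting fixed point in $\ol T_\Lambda \subseteq \ol T$, and for peripheral points both maps send the boundary point to the unique fixed point of the corresponding peripheral subgroup (the analysis of peripheral subgroups given in Lemma \ref{omega-int-fini} ensures these coincide). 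As a consequence, the natural map $\partial^2(G_\Lambda, \calf_{|G_\Lambda}) \hookrightarrow \partial^2(G, \calf)$ carries $L^2(T_\Lambda)$ into $L^2(T)$. Given $x \in \Lambda$, I would then use Lemma \ref{no-term} to extend a $\Sigma$-leaf segment through $x$ in both directions while remaining in $\Omega_{int}$, producing a $\Sigma$-leaf line $l \subseteq \Lambda$ with endpoints $(\alpha,\omega) \in L^2(T)$ coming from endpoints $(\alpha',\omega') \in \partial^2(G_\Lambda, \calf_{|G_\Lambda})$ of $\pi(l) \subseteq R_\Lambda$. By Lemma \ref{lem_pT} and the compatibility above, $\calq_\Lambda(\alpha') = \calq_\Lambda(\omega') = p_T(x)$, so $(\alpha',\omega') \in L^2(T_\Lambda)$ by Proposition \ref{equality-Q}, and Lemma \ref{lem_L2} applied inside $\Sigma(T_\Lambda, R_\Lambda)$ places $\Phi(x) = (p_T(x), \pi(x))$ on a leaf line of $\Sigma(T_\Lambda, R_\Lambda)$, hence inside that band complex.

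To conclude, I would appeal to the core characterization from Proposition \ref{prop_coeur}: $\Sigma(T_\Lambda, R_\Lambda)$ equals the core of $\ol T_\Lambda \times R_\Lambda$, which is the smallest non-empty closed connected $G_\Lambda$-invariant subset with connected fibers. The image $\Phi(\Lambda)$ is non-empty, $G_\Lambda$-invariant, and connected (as the continuous image of the connected set $\Lambda$). Its fibers over $\ol T_\Lambda$ are connected by Lemma \ref{lambda-cf}, while its fibers over $R_\Lambda$ are either entire components $C$ of $\Omega_{int} \cap \Omega_V$ (at vertices) or connected slices of $\Omega_{int}$-bands (at interior points of edges). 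Once one verifies that $\Phi(\Lambda)$ is closed in $\ol T_\Lambda \times R_\Lambda$, the core characterization yields $\Phi(\Lambda) \supseteq \Sigma(T_\Lambda, R_\Lambda)$, and combined with the previous paragraph this gives equality.

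The hard part will be the second step: in particular, the careful tracking of the compatibility between the Levitt--Lustig maps $\calq_T$ and $\calq_\Lambda$, and the extraction of $\Sigma$-leaf lines staying entirely inside $\Omega_{int}$ via Lemma \ref{no-term} (rather than merely inside $\Omega$). Closedness of $\Phi(\Lambda)$ also needs attention since $\Omega_{int}$ is not a priori closed in $\Sigma$; one likely needs to exploit the local structure of base trees and bands controlled by the finiteness results from Section \ref{sec-finiteness}, together with the convergence of the pruning process to $\Omega$ given by Lemma \ref{omega-rips}.
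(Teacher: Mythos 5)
Your proposal follows the same structure as the paper's proof: define the product map $\Phi=(p_T,\pi)$, show $\Phi(\Lambda)$ contains the core of $\ol T_\Lambda\times R_\Lambda$ via closedness, connectedness and connectedness of fibers (Lemma \ref{lambda-cf}), show $\Phi(\Lambda)\subseteq\Sigma(T_\Lambda,R_\Lambda)$ by producing a bi-infinite $\Sigma$-leaf line through each point of $\Lambda$ via Lemma \ref{no-term}, and conclude by Proposition \ref{prop_coeur}. Your version is correct and merely more explicit than the paper about the compatibility between the Levitt--Lustig maps $\calq_T$ and $\calq_{T_\Lambda}$ on $\partial(G_\Lambda,\calf_{|G_\Lambda})$, a point the paper treats as immediate.
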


\begin{proof} 
Consider the product map $\Phi=(p_T,\pi):\Lambda\ra \ol T_\Lambda\times R_\Lambda$ (where $\pi:\Lambda\to R_\Lambda$ is the natural projection), and let $\Lambda':=\Phi(\Lambda)$. The map $\Phi$ is continuous and injective. 


The fibers of $\pi$ are connected by definition, and the fibers of $(p_{T})_{|\Lambda}$ are connected by Lemma~\ref{lambda-cf}. Fibers of $\pi$ being closed, one easily deduces that $\Lambda'$ is closed. Since $\Lambda'$ is connected, it follows from \cite{Gui_coeur} that $\Lambda'$ contains the core $\calc(\ol T_\Lambda\times R_\Lambda)$. 

Conversely, let $(x,u)\in\Lambda'$, and let $z\in \Lambda$ be a preimage of $(x,u)$. Let $\call_z$ be the complete $\Sigma$-leaf through $z$. Then $\call_z\cap \Lambda$ contains a bi-infinite line through $z$ (Lemma~\ref{no-term}), and this line projects isometrically to $R_\Lambda$ (in particular its endpoints belong to $\partial(G_\Lambda,\calf_{|{G_\Lambda}})$). This shows that $x\in L^2_u(R_\Lambda)$, hence $(x,u)\in\Sigma(T_\Lambda, R_\Lambda)$.

We have thus proved that $\calc(\ol T_\Lambda\times R_\Lambda)\subseteq\Lambda'\subseteq\Sigma(T_\Lambda,R_\Lambda)$. This concludes the proof by Proposition~\ref{prop_coeur} (which uses the fact that $T_\Lambda$ has dense orbits).
\end{proof}

\subsection{Band complexes of quadratic type and analysis of $\calq$-preimages}\label{sec-surface-type}

In this subsection, we study connected components of $\Omega_{int}$.
We focus on such a component $\Lambda$.
We know that leaves of $\Lambda$ have no terminal point (Lemma~\ref{no-term}) and Proposition~\ref{surface_3ends} will show that $\Lambda$ is of quadratic type in the sense below.
In particular, such a component has only finitely many orbits of leaves with at least 3 ends.

By Proposition \ref{coeur_elagage}, $\Lambda=\Sigma(T_\Lambda,R_\Lambda)$ is
the band complex corresponding to the $\bbR$-tree $G_\Lambda\actson T_\Lambda$ with respect to the Grushko tree $R_\Lambda$.
We thus use generic notations and study any band complex $\Sigma(T,R)$ whose leaves have no terminal point.
Note that equivalently, this assumption means that the pruning process does not affect $\Sigma(T,R)$.

\begin{prop}\label{surface_3ends}
  Let $T\in\overline{\calo}$ be a tree, let $R$ be a Grushko tree, and let $\Sigma:=\Sigma(T,R)$. 
  Then the following statements are equivalent. 
\begin{itemize}
\item The pruning process does not affect $\Sigma$. 
\item Leaves of the band complex $\Sigma$ have no terminal point.
\item All but finitely many orbits of complete $\Sigma$-leaves are bi-infinite lines.
\end{itemize}
In this case, no non-degenerate interval in a base tree $\calk_v$ is contained in 3 bands.
\end{prop}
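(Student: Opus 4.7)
The plan is to establish the three equivalences cyclically and then deduce the final ``no interval in $3$ bands'' assertion as an easy corollary. The equivalence (i) $\Leftrightarrow$ (ii) will follow at once from the definition of the pruning step, since the set $\caly$ removed in one pruning is by construction the union over all complete $\Sigma$-leaves of their terminal edges, so $\caly=\es$ if and only if no leaf contains a vertex of valence $1$.

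For (iii) $\Rightarrow$ (ii), I would suppose for contradiction that some leaf has a terminal vertex $x\in \calk_v$. Being terminal, $x$ lies in a unique band $B$, and Corollary~\ref{finitude-1} then provides a neighbourhood $U$ of $x$ in $\calk_v$ every band of which contains $x$, hence equals $B$. By Remark~\ref{rk_one_band} every point of $U$ lies in exactly the single band $B$, and is therefore terminal in its own leaf; since $p_R$ is injective on each leaf, distinct points of $U$ produce distinct complete $\Sigma$-leaves, yielding uncountably many $G$-orbits of non-line leaves and contradicting~(iii).

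The main direction is (ii) $\Rightarrow$ (iii). Under (ii) one has $\Omega=\Sigma$ by Lemma~\ref{omega-rips}, so every leaf lies in $\Omega$ and a leaf with $\geq 3$ ends must contain a vertex of valence $\geq 3$ (each direction supports an infinite ray since there are no terminal vertices, and distinct directions give distinct ends by $p_R$-injectivity). Lemma~\ref{thin} reduces counting to orbits of such leaves contained entirely in a single connected component $\Lambda$ of $\Omega_{int}$, of which Lemma~\ref{omega-int-fini} provides only finitely many orbits. Proposition~\ref{coeur_elagage} identifies each $\Lambda$ with its own band complex $\Sigma(T_\Lambda, R_\Lambda)$ with leaves again having no terminal vertex, and I would proceed by induction on the Kurosh rank of $G_\Lambda$. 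The induction closes cleanly whenever $G_\Lambda \subsetneq G$ has strictly smaller Kurosh rank; the main obstacle is the case where $G_\Lambda=G$ (equivalently where $\Omega_{int}$ is a single $G$-invariant component equal to $\Sigma$), in which the inductive measure fails to decrease. Here a direct counting argument is required, combining Lemma~\ref{lem_finitude_0} (finitely many bands through any non-special point), Lemma~\ref{lem_finitude} iterated to bound $G_v$-orbits of triples of bands sharing a common point away from $v_\Sigma$, and Corollary~\ref{omega-index}, so as to bound the $G_v$-orbits of points of $\calk_v$ lying in $\geq 3$ bands, hence the orbits of leaf branch vertices.

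Once the equivalences are established, the ``no interval in $3$ bands'' assertion will follow immediately from~(iii): if a non-degenerate interval $I\subseteq \calk_v$ were contained in three bands, every $x\in I$ would have valence $\geq 3$ in $\call_x$, and by~(ii) each of the three directions at $x$ supports an infinite ray, giving $\call_x$ at least three ends; since distinct points of $I$ yield distinct leaves by injectivity of $p_R$, the uncountable interval $I$ would produce uncountably many $G$-orbits of leaves with $\geq 3$ ends, contradicting~(iii).
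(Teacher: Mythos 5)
Your proofs of the equivalences (i) $\Leftrightarrow$ (ii) and (iii) $\Rightarrow$ (ii) are correct and essentially match the paper's. The gap is in (ii) $\Rightarrow$ (iii), and it is a serious one.

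Your reduction via Lemma~\ref{thin}, Lemma~\ref{omega-int-fini} and Proposition~\ref{coeur_elagage} to a single connected component $\Lambda=\Sigma(T_\Lambda,R_\Lambda)$ of $\Omega_{int}$ is fine, and the idea of inducting on the Kurosh rank of $G_\Lambda$ is natural. But, as you acknowledge, the induction is vacuous in the case $G_\Lambda=G$ (and the base case of low Kurosh rank is also not trivial), and the ``direct counting argument'' you propose to close it does not work. Lemma~\ref{lem_finitude_0}, Lemma~\ref{lem_finitude}, and Corollary~\ref{omega-index} give you finitely many orbits of (pairs of, or singular) \emph{bands}, but they do not bound the orbits of \emph{points} lying in three bands: the common intersection $B_1\cap B_2\cap B_3\cap \calk_v$ of a single triple of bands could a priori be a nondegenerate interval, which would immediately produce uncountably many $\geq 3$-ended leaves. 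What is really needed — and this is the heart of the paper's proof — is to rule out precisely such intervals. The paper does this first (Lemma~\ref{omega-3}), by approximating $\Sigma$ by the finite-tree band complexes $\Sigma^{\dagger\eps}$, showing the resulting system of partial isometries has independent generators (because $T$ has trivial arc stabilizers), and invoking the Gaboriau--Levitt--Paulin inequality together with Lemma~\ref{measure} (the ``at most one band'' region has measure going to $0$). Only then does Corollary~\ref{omega-3-fini} convert ``no interval in three bands'' into finiteness of $\Sigma_3/G$, via the projection trick you would need to replicate. Without some substitute for this measure-theoretic step, the proposal does not establish (ii) $\Rightarrow$ (iii).

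Note also that you reverse the logical order of the paper on the final assertion: the paper proves ``no interval in three bands'' \emph{first} (it is the engine that drives (iii)), whereas you propose to deduce it from (iii). That deduction is itself valid — an interval in three bands would, under (ii), give uncountably many $\geq 3$-ended leaves, since $p_R$-injectivity on leaves separates the points of the interval into distinct leaves — but it is moot until (iii) is independently proved.
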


\begin{de}[\textbf{\emph{Band complex of quadratic type}}]\label{dfn_surface}
Given a tree $T\in\overline{\calo}$ and a Grushko tree $R$, we say that the band complex $\Sigma=\Sigma(T,R)$ is \emph{of quadratic type} if it satisfies the equivalent assertions from Proposition~\ref{surface_3ends}. 
\end{de}

\begin{rk} The word \emph{quadratic} is reminiscent of Makanin's algorithm where quadratic generalized equations are those where each segment is contained in two bands. 
In \cite{CH14}, the authors call these band complexes \emph{pseudo-surface}, and \emph{of surface type} in \cite{CHR11}.
We prefer the word quadratic to avoid any confusion, in particular with measured foliations on surfaces. \end{rk}

A tree $T\in\overline{\calo}$ with dense orbits is \emph{of quadratic type} if there exists a Grushko tree $R$ such that $\Sigma(T,R)$ is of quadratic type. We also say in this case that $T$ is of quadratic type with respect to $R$. 

In particular, if for some Grushko tree $R'$, the pruning process applied to the band complex $\Sigma(T,R')$ eventually halts,
then $T$ is of quadratic type (with respect to some $R''$).

\paragraph*{Proof of Proposition \ref{surface_3ends}.}
 The equivalence between the first two assertions is clear from the definition of the pruning process. The third assertion implies the second: indeed, if some leaf of $\Sigma$ has a terminal point, then there is a whole interval of points that are terminal in their leaf, hence uncountably many leaves are not bi-infinite lines. Thus, in order to prove the equivalence between the three assertions, we only need to show that the second assertion of the proposition implies that
there are only finitely many orbits of leaves with at least 3 ends (Corollary \ref{omega-3-fini}).

The proof is similar to \cite[Section~4]{CH14} for free groups; it goes as follows.
The main point will be to prove that there is no non-degenerate interval $I$ of $\Sigma$ contained in 3 bands.
To prove this, we are going to approximate $\Sigma$ by a band complex $\Sigma^\epsilon\subseteq \Sigma$ whose bases are finite trees (defined in the next paragraph). In the terminology of \cite{GLP94}, the band complex $\Sigma^\epsilon$ yields a finite system of isometries on a finite tree.
If $\Sigma$ contains such an interval, then the system of isometries will have an interval of comparable length contained in the domain of at least 3 isometries.
Since every point of $\Sigma$ lies in two bands, in the approximation, the measure of the set of points in the domain of at most one partial isometry
will go to zero (Lemma \ref{measure}). But a result by Gaboriau--Levitt--Paulin \cite{GLP94} shows that in such a system of isometries (with independent generators), the measure of the set of points in the domain of at least 3 partial isometries is bounded from above by the measure of
the set of points in the domain of at most one partial isometry. This will give a contradiction.
\\

Given an $\bbR$-tree $K$, and $\eps>0$, define $K^{\dg \eps}$ as the set of all $x\in K$ such that 
$x$ is the midpoint of a segment of length $2\eps$ contained in $K$.
Note that if $K$ is a compact $\bbR$-tree, then for all $\eps>0$, $K^{\dg \eps}$ is a finite tree. Indeed, otherwise, the tree $K^{\dg \eps}$ would contain 
infinitely many terminal points $x_n\in K^{\dg \eps}$. By definition of $K^{\dg \eps}$, there exists $y_n\in K$ at distance $\eps$ from $x_n$ such that
$[x_n,y_n]\cap K^{\dg \eps}=\{x_n\}$. The points $y_n$ are at mutual distance at least $2\eps$
so the sequence $(y_n)_{n\in\bbN}$ has no converging subsequence in the metric topology, contradicting the compactness of $K$. We define 
$$\Sigma^{\eps}:=\bigcup_{u\in R} \calk_u^{\dg \eps}\subseteq \Sigma.$$
The set $\Sigma^{\eps}$ has the structure of a band complex whose base trees are the sets $\calk_v^{\dg \eps}$ for $v$ vertex in $R$,
and whose bands are of the form $K_e^{\dg\eps}\times e$ for $e$ edge of $R$.
Notice that $\eps>0$ can be chosen small enough so that $\calk_u^{\dg\eps}\neq\emptyset$ for all $u\in R$. We will always assume that this holds in what follows.

Let $v\in R$ be a vertex. Let $K_0\subseteq \calk_v$ be a fundamental domain for the action of $G_v$ on $\calk_v$. 

\begin{lemma}\label{measure}
Assume that leaves of $\Sigma$ have no terminal point.
\\ Let $J_\eps \subseteq \calk_v$ be the set of points $x\in \calk_v^{\dg\eps}$ 
that are contained in at most one $\Sigma^{\eps}$-band. 
\\
Then the Lebesgue measure of $J_\eps\cap K_0$ converges to $0$ as $\eps$ goes to $0$.
\end{lemma}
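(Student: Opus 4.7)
The plan is to apply the dominated convergence theorem by establishing pointwise convergence: I will show $\mathbf{1}_{J_\eps}(y) \to 0$ for almost every $y \in K_0$ as $\eps \to 0$. Since $K_0$ is compact (hence of finite Lebesgue measure) and $\mathbf{1}_{J_\eps} \leq 1$, this yields $\mu(J_\eps \cap K_0) \to 0$. Pointwise convergence reduces to the following claim: for almost every $y \in K_0$, there exist two bands $K_{e_1}, K_{e_2}$ of $\Sigma$ each containing an open neighbourhood of $y$ in $\calk_v$. Indeed, for such a $y$, some $2\eps_0(y)$-segment of $\calk_v$ centred at $y$ lies in both $K_{e_1}$ and $K_{e_2}$, so $y \in K_{e_1}^{\dg\eps} \cap K_{e_2}^{\dg\eps}$ for every $\eps < \eps_0(y)$, and therefore $y \notin J_\eps$.

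To establish the claim, first discard the special vertex of $\calk_v$ (at most one point of $K_0$) and the countable set of branch points of $\calk_v$ lying in $K_0$. Every remaining point $y$ has a small open interval neighbourhood $U_y \subseteq \calk_v$, and by Lemma~\ref{lem_finitude_0}, after possibly shrinking $U_y$, only finitely many $\Sigma$-bands $B_1, \dots, B_n$ meet $U_y$. Since each $B_j \cap \calk_v$ is a convex subtree of $\calk_v$ and $U_y$ is a convex arc, the intersection $B_j \cap U_y$ is a closed sub-interval of $U_y$, whose relative boundary in $U_y$ consists of at most two points. The union $F_{U_y}$ of these finitely many boundaries is therefore finite. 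Covering $K_0$ minus the discarded points by countably many such neighbourhoods $U_1, U_2, \dots$ (using separability of $K_0$) and setting $E := K_0 \setminus \bigl(\bigcup_i F_{U_i} \cup \{\text{branch points}\} \cup \{\text{special vertex}\}\bigr)$, we obtain $\mu(K_0 \setminus E) = 0$.

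For $y \in E$, pick some $U_i$ with $y \in U_i$. Any $\Sigma$-band containing $y$ must meet $U_i$, so it is one of the bands $B_1, \dots, B_n$ associated to $U_i$; since $y \notin F_{U_i}$, the point $y$ lies in the relative interior (in $U_i$) of $B_j \cap U_i$ for each $j$ with $y \in B_j$. The hypothesis that leaves of $\Sigma$ have no terminal point means exactly that every point of $\Sigma$ lies in at least two bands, because at a point $x$ of a complete leaf $\call$ the number of edges of $\call$ meeting $x$ equals the number of bands of $\Sigma$ through $x$. Hence $y$ lies in the interior of at least two of the $B_j \cap U_i$, which gives the desired claim and completes the argument modulo dominated convergence.

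The main obstacle is the local analysis in the second paragraph: one must verify that $B_j \cap U_y$ is genuinely an interval whose relative boundary has only two points, rather than some more exotic closed subset. This is secured by the assumption that $y$ is not a branch point (so $U_y$ is a genuine one-dimensional arc of $\calk_v$), combined with the convexity of each band in $\overline{T}$ and the local finiteness statement of Lemma~\ref{lem_finitude_0}; the remainder of the argument is an application of dominated convergence together with a standard null-set book-keeping.
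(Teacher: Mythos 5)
Your proof takes a genuinely different route from the paper's: the paper shows that every point of $J_\eps\cap K_0$ lies in one of a uniformly bounded (independent of $\eps$) number of $G_v$-orbits of explicit ``bridge'' intervals, each of length at most $2\eps$, giving the quantitative bound $\mu(J_\eps\cap K_0)\le 2\eps\cdot N$. You instead argue that $\mathbf 1_{J_\eps}(y)\to 0$ pointwise a.e.\ and invoke dominated convergence. The pointwise part of your argument is sound in its essentials: after discarding the (countably many, hence Lebesgue-null) branch points of $\calk_v$, the special vertex, and a countable set of band-boundary points, a.e.\ $y$ lies in the relative interior of $\ul B\cap U_y$ for at least two bands $B$ (using that every point lies in at least two $\Sigma$-bands, because leaves have no terminal point), so $y\in\ul B^{\dg\eps}$ for two distinct bands once $\eps$ is small enough, hence $y\notin J_\eps$.

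However, there is a real gap in the final step. You write ``Since $K_0$ is compact (hence of finite Lebesgue measure)'' --- but this implication is \emph{false} for compact $\bbR$-trees. A compact $\bbR$-tree can have infinite $1$-dimensional Hausdorff measure: take $[0,1]$ and at the point $1/n$ attach a segment of length $1/n$, for every $n\geq 1$; the result is compact but has total length $\sum 1/n=\infty$. The paper itself stresses that the trees $K_e$ (and hence $\calk_v$ and $K_0$) may have dense branch points, so they are exactly the kind of $\bbR$-tree for which compactness gives no control of the measure, and nothing in the paper establishes $\mu(K_0)<\infty$. Without it, the constant function $1$ is not an integrable dominating function, and neither dominated convergence nor continuity from above (which is what your argument really needs, applied to the sets $\{y: \eps_0(y)\le \eps\}$) is available. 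Note that one cannot escape by restricting to $\calk_v^{\dg\eps_1}\cap K_0$ for a fixed $\eps_1$, which does have finite measure, since $J_\eps$ lives in the \emph{larger} set $\calk_v^{\dg\eps}$, whose measure is not controlled as $\eps\to 0$. The paper's explicit covering of $J_\eps\cap K_0$ by finitely many $2\eps$-short bridges sidesteps the issue entirely and is the key idea your proof is missing.
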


\begin{proof}
Let $v_\Sigma\in \calk_v$ be the vertex fixed by $G_v$ if $G_v$ is non-trivial (i.e.\ the special vertex if $G_v$ is infinite), 
and $v_\Sigma\in \calk_v$ an arbitrary basepoint otherwise. Without loss of generality, we take for $K_0$ a union of closures of components
of $\calk_v\setminus \{v_\Sigma\}$.

 Let $\calb$ be the collection of
  all bands of $\Sigma$ that meet $\calk_v$; for each band $B\in \calb$, we let $\ul B:=B\cap \calk_v$. 

We claim that if $\eps$ is small enough, the following holds.
  Let $x\in \Sigma^{\eps}\cap \calk_v$ be a point that is a terminal vertex
  in $\call_x\cap\Sigma^{\eps}$.  Then $x$ lies either in the bridge
  $[v_\Sigma,\ul B^{\dg\eps}]$ for some band $B$ such that $v_\Sigma\in \ul B\setminus \ul B^{\dg\eps}$,
  or in the bridge $[\ul B^{\dg\eps},\ul B'^{\dg\eps}]$  for some bands $B,B'\in\calb$ satisfying $\ul B\cap \ul B'\neq \es$ and $v_\Sigma\notin \ul B$.

This claim implies the lemma. Indeed, any such bridge is an interval of length at most $2\eps$.
Moreover, the number of bridges of the first kind contained in $K_0$ is bounded by the number of $G_v$-orbits of bands
incident on $\calk_v$: indeed, given a bridge $[v_\Sigma,\ul B^{\dg\eps}]$ contained in $K_0$, 
one has $v_\Sigma\notin \ul B^{\dg\eps}$ so $\ul B^{\dg\eps}\subseteq K_0$, and any two distinct bands such that $\ul B^{\dg\eps}\subseteq K_0$ are in distinct orbits.

To take care of bridges of the second kind, it suffices to observe that there are only finitely many $G_v$-orbits of pairs of bands 
$B,B'$ with $\ul B\cap \ul B'\neq \es$ and $v_\Sigma\notin B$ (Lemma~\ref{lem_finitude}).

We now prove the claim. By Lemma \ref{lem_finitude}, there exists $\eps>0$ such that 
$d(\ul B,\ul B')>2\eps$ as soon as $\ul B\cap \ul B'=\es$. 
Write $x$ as the midpoint of a segment $[a,b]\subseteq \calk_v$ of length $2\eps$.  Since leaves of $\Sigma$ have no terminal point, 
$a$ lies in two distinct bands, and since $x$ is terminal
in $\call_x\cap \Sigma^\eps$, 
 at least one of these bands $B$ is such that $x\notin \ul B^{\dg\eps}$, and in particular $b\notin \ul B$. 
Arguing symetrically, we deduce that there exist two bands $B,B'\in\calb$ with $a\in \ul B\setminus \ul B'$, 
$b\in \ul B'\setminus \ul B$, and such that $x\notin \ul B^{\dg\eps}\cup \ul B'^{\dg\eps}$. One easily checks that this implies that $\ul B^{\dg\eps}\cap \ul B'^{\dg\eps}=\emptyset$ and $x$ lies in the bridge $[\ul B^{\dg\eps},\ul B'^{\dg\eps}]$.  Notice also that $\ul B\cap \ul B'\neq \emptyset$ in view of our choice of $\eps$. 
If $v_\Sigma\notin \ul B\cap \ul B'$, then up to exchanging the roles of $B$ and $B'$, we see that $x$ lies in a bridge of the second kind.
If $v_\Sigma\in \ul B\cap \ul B'$, the fact that $[v_\Sigma,\ul B^{\dg\eps}]\cup [v_\Sigma,\ul B'^{\dg\eps}]$ contains 
$[\ul B^{\dg\eps},\ul B'^{\dg\eps}]$ shows that $x$ in a bridge of the first kind.
\end{proof}

We denote by $\Sigma_3$ the subset of $\Sigma$ made of points that belong to at least $3$ distinct bands of $\Sigma$. 

\begin{lemma}\label{omega-3}
Assume that no leaf of $\Sigma$ has a terminal point. 
\\ Then for every vertex $v\in R$, the set $\Sigma_3\cap\calk_v$ contains no nondegenerate interval.
\end{lemma}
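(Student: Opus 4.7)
The plan is to argue by contradiction following the strategy outlined just before the statement: approximate $\Sigma$ by the finite-tree band complex $\Sigma^\eps$, pass to a system of isometries on a fundamental domain $K_0\subseteq \calk_v$, and apply the Gaboriau--Levitt--Paulin inequality relating the measures of points in many bands versus few bands, playing it against Lemma~\ref{measure}.

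Concretely, suppose for contradiction that there is a nondegenerate interval $I\subseteq \Sigma_3\cap\calk_v$. First I would localize $I$: by Lemma~\ref{lem_finitude_0} (applied, if necessary, to a smaller subinterval avoiding the special vertex $v_\Sigma$), only finitely many bands of $\Sigma$ meet $I$, so by a Baire-type argument identical to the one used in Lemma~\ref{baire} one extracts a nondegenerate subinterval $J\subseteq I$ and three pairwise distinct bands $B_1,B_2,B_3$ such that $J\subseteq \ul{B_i}$ for each $i=1,2,3$. For every sufficiently small $\eps>0$ the ``thinning'' $J^{\dg\eps}$ is still a nondegenerate subinterval of $J$; since it is contained in $\ul{B_i}$ and lies at distance at least $\eps$ from the endpoints of $\ul{B_i}\cap J$, an easy check shows $J^{\dg\eps}\subseteq \ul{B_i}^{\dg\eps}$ for each $i$. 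Thus $J^{\dg\eps}$ lies in at least three distinct bands of $\Sigma^\eps$.

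Next, I would choose a fundamental domain $K_0$ for the $G_v$-action on $\calk_v$ exactly as in the proof of Lemma~\ref{measure}. Up to translating $J^{\dg\eps}$ by an element of $G_v$, a nondegenerate subinterval $J_0\subseteq J^{\dg\eps}\cap K_0$ of length bounded below by a positive constant $\delta>0$ (independent of $\eps$ for $\eps$ small) lies in $K_0$ and is contained in at least three bands of $\Sigma^\eps$. The band complex $\Sigma^\eps$, once restricted to $K_0$ and to the corresponding fundamental domains of the other base trees, has the structure of a finite system of partial isometries on a finite union of finite trees, the generators being the holonomies induced by the bands of $\Sigma^\eps$. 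Since the underlying tree $R$ is a Grushko tree, hence has trivial edge stabilizers and is a tree, these generators are independent in the sense of \cite{GLP94}: reducing a nontrivial word in them corresponds to a reduced edge-path in $R$, which cannot be a loop. The Gaboriau--Levitt--Paulin inequality (\cite{GLP94}) then gives
\[
\mathrm{Leb}\bigl(\{x\in K_0 \mid x\text{ lies in}\ge 3\ \text{bands of }\Sigma^\eps\}\bigr)\;\le\;\mathrm{Leb}(J_\eps\cap K_0),
\]
where $J_\eps$ is as in Lemma~\ref{measure}. The left-hand side is bounded below by $\delta>0$ for every small $\eps$, while by Lemma~\ref{measure} the right-hand side tends to $0$ as $\eps\to 0$. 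This contradiction concludes the proof.

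The main obstacle is the dictionary step: checking that the passage from the $G$-equivariant band complex $\Sigma^\eps$ to a finite system of isometries with independent generators on $K_0$ (and its analogues at the other vertices of $R/G$) really does satisfy the hypotheses of \cite{GLP94}, so that the inequality can be invoked verbatim. Once this is in place, the contradiction with Lemma~\ref{measure} is immediate, and the remainder of the argument consists of routine measure-theoretic bookkeeping.
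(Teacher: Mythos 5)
Your overall strategy matches the paper's: pass from $\Sigma$ to the finite approximation $\Sigma^\eps$, reduce to a finite system of partial isometries with independent generators, and play the Gaboriau--Levitt--Paulin inequality against Lemma~\ref{measure}. But the step you correctly identify as the main obstacle — independence of the generators — is where your sketch has a genuine gap, and the ingredient you propose to close it is not the right one.

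Your justification for independence is that ``reducing a nontrivial word in them corresponds to a reduced edge-path in $R$, which cannot be a loop.'' In a tree a non-backtracking edge path indeed never returns to its starting vertex, but this only tells you that the holonomy element $g\in G$ carrying a lift $\Tilde I\subseteq\calk_v^{\dg\eps}$ to $g\Tilde I\subseteq\calk_{gv}^{\dg\eps}$ is nontrivial. This by itself does not contradict anything: the defining condition of independence is that no reduced word restricts to the identity on a nondegenerate interval of the quotient $K^\eps=\bigl(\cup_v\calk_v^{\dg\eps}\bigr)/G$, and $\Tilde I$, $g\Tilde I$ automatically have the same image there because they lie in the same $G$-orbit. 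The missing observation, which the paper uses and which does not follow from the fact that $R$ is a tree, is that $\Tilde I$ and $g\Tilde I$ are joined by a chain of leaf segments of $\Sigma^\eps$ and therefore project to the \emph{same} nondegenerate arc of $\ol T$ under $p_T$; hence $g$ would fix a nondegenerate arc in $T$, contradicting the triviality of arc stabilizers in a dense-orbit tree (\cite[Proposition 4.17]{Hor14-1}). That last fact is the real engine of the argument, and omitting it leaves the independence claim unproved.

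A second, more technical, omission: before one can even speak of a system of partial isometries on $K^\eps$, one has to handle bands of $\Sigma^\eps$ that contain a special vertex — such a band $B$ need not inject into $\Sigma^\eps/G$, since $B\cap gB\neq\emptyset$ for some peripheral $g$. The paper circumvents this by subdividing each such band along the leaf segment through its special point, which is needed before the $\phi_i$ are well-defined partial isometries on finite trees. Your sketch passes directly to ``the corresponding fundamental domains of the other base trees'' without addressing this, which should at least be flagged.

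The remaining steps — localizing via Lemma~\ref{lem_finitude_0}, the Baire argument to land a subinterval $J$ in three fixed bands, the verification $J^{\dg\eps}\subseteq\ul{B_i}^{\dg\eps}$, and the final measure estimate — are sound and amount to unwinding what the paper does in slightly different order.
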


\begin{proof}
We construct from $\Sigma^{\eps}$ a system of partial isometries on $$K^{\eps}:=\left(\cup_{v\in V(R)}\calk_v^{\dg\eps}\right)\!\Big/G$$
which is a finite union of finite trees. 
We cannot do this directly because a band $B$ of $\Sigma^\eps$ may fail to inject in the quotient by $G$ if $B\cap gB\neq \es$
for some  peripheral $g\in G$. This happens only if $B$ contains a special point of $\Sigma$.
For this reason, we subdivide
each band $B$ of $\Sigma^{\eps}$ containing a special vertex $v$ 
 along the leaf segment $l_{v}$ of $B$ containing $v$. The band $B$ contains at most two special vertices (one at each end),
and $B\setminus l_v$ has finitely many connected components because $B$ being a band of $\Sigma^\eps$, it is a product of a finite tree by $[0,1]$.
Denote by $\calb'$ the set of subdivided bands in $\Sigma^\eps$,
and by $B'_1,\dots,B'_n$ some representatives of the $G$-orbits of these bands, with a chosen orientation (i.e.\ an identification with $K\times [0,1]$ for some finite tree $K$).
Now each $B'_i$ embeds under the quotient map to $\Sigma^\eps/G$,
and determines a partial isometry $\phi_{i}$ of $K^\eps$. 

We claim that this finite system of isometries on $K^\eps$ has independent generators,
in the sense of \cite[Section 5]{GLP94}, i.e.\ there exists no nontrivial reduced word  $\phi_{i_r}^{\eps_r}\dots \phi_{i_1}^{\eps_1}$ in the partial isometries and their inverses 
that is defined and restricts to the identity on a non-degenerate interval.
Indeed, consider a word $w=\phi_{i_r}^{\eps_r}\dots \phi_{i_1}^{\eps_1}$ and an interval $I\subseteq K^\eps$ such that 
$w$  restricts to the identity on $I$.
Up to replacing $I$ by a smaller interval, we can assume that for all $k\leq r$, the set $\phi_{i_k}^{\eps_k}\dots \phi_{i_1}^{\eps_1}(I)$
does not contain the projection of a special vertex. Let now $\Tilde I\subseteq \Sigma^\eps$ be a lift of $I$, and let $B'\in\calb'$ be a lift of the band labeled $\phi_1^{\eps_1}$. Then there exists a unique $g_1\in G$ such that $I\subseteq g_1B'$. By applying this fact finitely many times, we see that there is a unique way to lift the word $w$ to the holonomy of a sequence of bands in $\Sigma^\eps$  joining $\Tilde I$ to some other lift $g\Tilde I$. The corresponding leaf segments do not backtrack in $R$,
so $g\neq 1$. 
Since $\Tilde I$ and $g\Tilde I$ have the same image in $T$, this implies that $g$ fixes an arc in $T$, a contradiction 
since $T$ has trivial arc stabilizers \cite[Proposition 4.17]{Hor14-1}. 

If $\Sigma_3$ contained a nondegenerate interval, then by Lemma \ref{measure} we could choose $\epsilon>0$ small enough so that the measure of the set of points of $K^{\eps}$ contained in the domains of at least $3$ partial isometries is strictly greater than the measure of the set of points contained in at most one such domain. This contradicts the fact that generators are independent by \cite[Proposition 6.1]{GLP94}.
\end{proof}

\begin{cor}\label{omega-3-fini}
Assume that no leaf of $\Sigma$ has a terminal point. 
\\ Then the set $\Sigma_3/G$ is finite. 
\end{cor}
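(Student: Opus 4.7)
The plan is to deduce Corollary \ref{omega-3-fini} from the pointwise finiteness given by Lemma \ref{omega-3}, combined with the local finiteness results of Section \ref{sec-finiteness}. Since an interior point of a band $K_e \times \rond e$ belongs to that band alone, we have $\Sigma_3 \subseteq \Sigma_V = \bigcup_{v\in V(R)} \calk_v$. As $V(R)/G$ is finite, it is enough to show that for each fixed vertex $v$, the quotient $(\Sigma_3 \cap \calk_v)/G_v$ is finite.

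The key observation is this: for any three distinct bands $B_1, B_2, B_3$ incident on $\calk_v$, the set $\ul{B_1} \cap \ul{B_2} \cap \ul{B_3}$ is a subtree of $\calk_v$. If it contained a non-degenerate arc, then every point of that arc would lie in at least three bands, i.e.\ in $\Sigma_3 \cap \calk_v$, contradicting Lemma \ref{omega-3}. Hence this triple intersection is either empty or a single point. Consequently, every non-special point $x \in \Sigma_3 \cap \calk_v$ is the unique triple intersection of any three distinct bands through it (such a triple exists because $x \in \Sigma_3$, and the total set of bands through $x$ is finite by Lemma \ref{lem_finitude_0}).

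When $G_v$ is finite, $\calk_v$ is compact and only finitely many bands are incident on it, so $\Sigma_3 \cap \calk_v$ is a finite set of points and we are done. When $G_v$ is infinite, the special point $v_\Sigma$ is itself in $\Sigma_3$ (it lies in infinitely many bands) and contributes one $G_v$-orbit. For any other point $x \in \Sigma_3 \cap \calk_v$, the chosen triple $(B_1, B_2, B_3)$ satisfies $v_\Sigma \notin \ul{B_1} \cap \ul{B_2}$. By Lemma \ref{lem_finitude}, there are only finitely many $G_v$-orbits of pairs $(B_1, B_2)$ of bands incident on $\calk_v$ with $\ul{B_1} \cap \ul{B_2} \neq \emptyset$ and $v_\Sigma \notin \ul{B_1} \cap \ul{B_2}$. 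Fixing a representative of each such orbit, the compact subtree $\ul{B_1} \cap \ul{B_2} \subseteq \calk_v$ avoids the special vertex, so by Corollary \ref{cor_loc_fini} it meets only finitely many bands $B_3$. This yields finitely many $G_v$-orbits of admissible triples, and since each triple determines its intersection point $x$, finitely many $G_v$-orbits of non-special points in $\Sigma_3 \cap \calk_v$.

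The only point requiring care — and the reason finiteness is not immediate from Lemma \ref{omega-3} alone — is that in the free product setting a base tree $\calk_v$ may be incident on infinitely many bands whenever $G_v$ is infinite. Thus the quantitative input needed to pass from ``at most countably many triple intersection points'' to ``finitely many orbits of such points'' must come from the $G_v$-cocompactness of pairs of bands given by Lemma \ref{lem_finitude}, together with the local finiteness of bands away from the special vertex.
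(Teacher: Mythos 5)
Your proof is correct and follows essentially the same strategy as the paper's: associate to each point of $\Sigma_3 \cap \calk_v$ a finite package of bands through it, show via Lemma~\ref{omega-3} that this data determines the point, and deduce finiteness of orbits from Lemma~\ref{lem_finitude} together with local finiteness. Where the paper associates to $x$ the full set $\calb_x$ of bands through $x$ avoiding a basepoint $v_0$ and then asserts (without spelling out the details) that $\calb_x$ takes finitely many values modulo $G_v$, you instead attach a single triple $(B_1,B_2,B_3)$ with $v_\Sigma\notin\ul{B_1}\cap\ul{B_2}$ and make the count of orbits fully explicit via Lemma~\ref{lem_finitude} plus Corollary~\ref{cor_loc_fini}; this is a clean and slightly more detailed reworking of the same argument rather than a genuinely different route.
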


\begin{proof}
It is enough to prove that for every vertex $v\in R$, the set $(\Sigma_3\cap\calk_v)/G_v$ is finite. Let $v_0$ be a point in $\calk_v$, which we choose to be equal to the special vertex $v_\Sigma$ in case $G_v$ is infinite.
We associate to every point $x\in \calk_v\cap \Sigma_3$ the collection $\mathcal{B}_x$ of all bands of $\Sigma$ that contain $x$ but do not contain $v_0$. 
Lemma \ref{omega-3} shows that there is always one band in $\mathcal{B}_x$ that does not meet the interval $[v_0,x)$, so $x$ is the projection of $v_0$ to the intersection of all bands in $\calb_x$ (in particular the collection $\mathcal{B}_x$ determines $x$). 
In addition, the set $\mathcal{B}_x$ only takes finitely many values modulo $G_v$ as $x$ varies in $\calk_v\cap \Sigma_3$. This implies that $(\calk_v\cap \Sigma_3)/G_v$ is finite. 
\end{proof}

As explained above, Proposition \ref{surface_3ends} immediately follows from Corollary \ref{omega-3-fini}. The last sentence in Proposition~\ref{surface_3ends} is proved in Lemma~\ref{omega-3}.

\subsection{Finiteness of 3-ended leaves: end of the proof}\label{sec-proof}

\begin{proof}[Proof of Proposition \ref{prop_3ends}]
In view of Lemma \ref{thin}, it is enough to bound the number of orbits of $\Sigma$-leaves with at least 3 ends such that $\call_\Omega$ is contained in $\Omega_{int}$.
By Lemma~\ref{omega-int-fini}, $\Omega_{int}$ has only finitely many orbits of connected components. Let $\Lambda$ be one of these connected components. By Lemma~\ref{coeur_elagage}, we have $\Lambda=\Sigma(T_{\Lambda},R_{\Lambda})$ for some Grushko $G_{\Lambda}$-tree $R_{\Lambda}$, and Lemma~\ref{no-term} and Proposition \ref{surface_3ends} imply that this band complex is of quadratic type. This concludes the proof since a band complex of quadratic type has
finitely many orbits of leaves with at least 3 ends by definition.
\end{proof}

\section{Splitting}\label{sec-splitting}

We now define another inductive process, the splitting induction, that is useful for studying trees of quadratic type: this generalization of the Rauzy--Veech induction was introduced in \cite[Section 4]{CHR11} in the context of free groups. Our main motivation for this is Corollary~\ref{coiffeur} from the next section, which states in particular that if $T\in\ol\calo$ is a mixing tree, then we can build a sequence of Grushko trees $R^{(i)}$ so that the diameters of the base trees of $\Sigma(T,R^{(i)})$ converge to $0$; this will be key when analysing the lamination dual to an arational tree. Throughout the present section, we let $T$ be a tree with dense orbits of quadratic type with respect to some Grushko tree $R$ (see Definition \ref{dfn_surface}), and we let $\Sigma:=\Sigma(T,R)$. We would first like to make the following observation that will be used repeatedly over the section.

\begin{lemma}\label{obs-germs}
Let $T$ be a tree with dense orbits of quadratic type with respect to a Grushko tree $R$, and let $\Sigma:=\Sigma(T,R)$. Let $d$ be a connected component of $\calk_v\setminus\{x\}$ where
$x$ is a point in some base tree $\calk_v$. Then there exists a neighbourhood $U$ of $x$ in $\calk_v$ such that $d\cap U$ meets exactly two bands of $\Sigma$, and $d\cap U$ is contained in these two bands.
\end{lemma}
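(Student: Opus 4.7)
The plan is to use Corollary~\ref{finitude-1} to choose an open neighbourhood $U$ of $x$ in $\calk_v$ such that every band of $\Sigma$ meeting $U$ contains $x$, and then to show that exactly two bands $B_1,B_2$ containing $x$ satisfy $\ul{B_i}\cap d\neq\emptyset$, with $d\cap U\subseteq \ul{B_1}\cap\ul{B_2}$. Let $\calb_d$ denote the collection of such bands; everything reduces to proving $|\calb_d|=2$.

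For the upper bound $|\calb_d|\le 2$, I would argue by contradiction from the last assertion of Proposition~\ref{surface_3ends}, which forbids any non-degenerate interval of $\calk_v$ from being contained in three bands. Given three distinct $B_1,B_2,B_3\in\calb_d$ and points $y_i\in\ul{B_i}\cap d$, the subtree $\ul{B_i}$ contains both $x$ and $y_i$, hence contains the arc $[x,y_i]$. Because $d$ is a single direction at $x$, each arc $[y_i,y_j]$ lies in $d$ and avoids $x$, so the median $m_{ij}$ of $x,y_i,y_j$ lies in $d$ and the pairwise intersections $[x,y_i]\cap[x,y_j]=[x,m_{ij}]$ are non-degenerate. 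Taking the common intersection of the three arcs yields a non-degenerate arc $[x,m]\subseteq d$ contained in $\ul{B_1}\cap\ul{B_2}\cap\ul{B_3}$, the desired contradiction.

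For the lower bound and the concluding statement, I pick any $y\in d\cap U$: since $\Sigma$ is of quadratic type no leaf of $\Sigma$ has a terminal vertex (Proposition~\ref{surface_3ends}), so $y$ lies in at least two bands, each of which meets $U$ and therefore contains $x$ by the choice of $U$, and each of which belongs to $\calb_d$ since it contains $y\in d$. Writing $\calb_d=\{B_1,B_2\}$, the same observation shows $y\in\ul{B_1}\cap\ul{B_2}$ for every $y\in d\cap U$, so $d\cap U\subseteq\ul{B_1}\cap\ul{B_2}$ and no third band meets $d\cap U$, while both $B_1$ and $B_2$ meet $d\cap U$ since $\ul{B_i}$ is a subtree of $\calk_v$ through $x$ reaching into $d$.

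The main subtlety will be justifying the non-degeneracy of the common initial segment $[x,m]$ in the median step; this rests on the standard $\bbR$-tree fact that the direction $d$ is a connected component of $\calk_v\setminus\{x\}$, so pairwise arcs between points of $d$ automatically avoid $x$. I expect no case split between the special-vertex and non-special-vertex situations for $x$: Corollary~\ref{finitude-1} supplies $U$ uniformly, and the median step bounds $|\calb_d|$ without relying on the finiteness of bands through $x$ given by Lemma~\ref{lem_finitude_0}.
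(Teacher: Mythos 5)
Your argument is correct and is essentially the paper's proof: pick $U$ via Corollary~\ref{finitude-1}, then rule out a third band by exhibiting a nondegenerate common initial segment of $[x,y_1],[x,y_2],[x,y_3]$ inside $d$ and invoking the last assertion of Proposition~\ref{surface_3ends}, and finally use ``no terminal leaf vertex'' to get the lower bound and the containment $d\cap U\subseteq \ul{B_1}\cap\ul{B_2}$. Your explicit median computation is just a spelled-out version of the paper's one-line observation that the three segments $[x,y_i]$ have a nondegenerate common initial piece, so the two proofs coincide in substance.
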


\begin{proof}
Let $U$ be an open neighbourhood of $x$ in $\calk_v$ such that every band of $\Sigma$ that meets $U$ contains $x$: this exists by Corollary~\ref{finitude-1}. If $U\cap d$ meets three bands $B_1,B_2,B_3$ of $\Sigma$, then each band $B_i$ contains a nondegenerate segment of the form $[x,y_i]_{\calk_v}$ with $y_i\in U\cap d$. The intersection of the three segments $[x,y_i]$ is a nondegenerate segment contained in $U\cap d$ and meeting three bands, contradicting the fact that $\Sigma$ is of quadratic type. Therefore $U\cap d$ meets at most two bands $B_1,B_2$. Since every point in $U\cap d$ is contained in two bands, this implies that $U\cap d$ is contained in both $B_1$ and $B_2$.
\end{proof}

\subsection{Splitting germs}

\begin{figure}[htbp]
  \centering
  \includegraphics{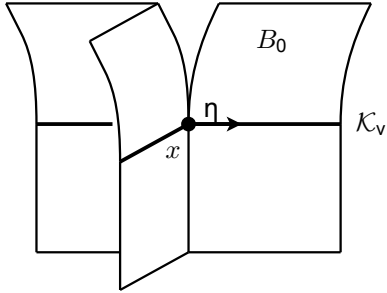}
  \caption{A splitting germ $\eta$.}
  \label{fig_split} 
\end{figure}

Given a vertex $v\in R$, a \emph{splitting germ} in $\calk_v$ (see Figure \ref{fig_split}) is a germ of segment $\eta\subseteq \calk_v$ 
which can be represented by a segment $[x,y)_{\calk_v}$ contained in a band $B_0$
and 
such that $x$ is an endpoint of the base $\ul B_0$ but not of $\calk_v$ 
(here $\ul B_0$ denotes the intersection of $B_0$ with the base tree $\calk_v$). 
We call $x$ the \emph{base point} of the germ $\eta$.

\begin{rk}\label{rk_branch_leaf}
If $\Sigma$ is of quadratic type, then $x$ is a branch point in its leaf (i.e.\ it is contained in at least 3 bands).
Indeed, since $x$ is not terminal in $\calk_v$, there is a direction $\eta'\neq \eta$ in $\calk_v$.
Since $\Sigma$ is of quadratic type, there are two bands $B_1,B_2$ containing $\eta'$, and these bands
are distinct from $B_0$ because $x$ is terminal in $\ul B_0$.
\end{rk}

\subsubsection{Existence of splitting germs} Our goal is now to prove the existence of splitting germs in trees of quadratic type.

\begin{prop}\label{existence-splitting}
Let $T\in\overline{\calo}$ be a tree with dense orbits of quadratic type with respect to $R$.
Then $\Sigma$ contains a splitting germ. 
\end{prop}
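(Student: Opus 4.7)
The plan is to argue by contradiction. Suppose $\Sigma=\Sigma(T,R)$ contains no splitting germ: for every band $B$ incident on any base tree $\calk_v$, every endpoint of the base $\ul B=B\cap\calk_v$ is also a terminal point of $\calk_v$. I want to extract enough rigidity from this assumption to contradict the fact that $T$ has dense orbits.

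First I would translate the assumption into a local statement. Since $\Sigma$ is of quadratic type, leaves contain no terminal point (Proposition~\ref{surface_3ends}), so every point of every base tree $\calk_v$ lies in at least two bands, and by Lemma~\ref{obs-germs} each germ of direction at a point of $\calk_v$ is locally contained in exactly two bands. Under the no-splitting-germ assumption, every terminal point $y$ of $\calk_v$ has a unique direction, which sits locally in exactly two bands $B_1,B_2$, and $y$ is an endpoint of both $\ul B_1$ and $\ul B_2$. Moreover, $y$ lies in no other band, since any third band $B_3$ would have $\ul B_3$ either containing the direction at $y$ (contradicting Lemma~\ref{obs-germs}) or reducing to $\{y\}$, forcing $y$ to be an extra endpoint of $\ul B_3$ violating the no-splitting-germ condition at an interior point of $\calk_v$.

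Next I would observe that the holonomy homeomorphism carried by any band $B=\ul B\times[0,1]$ sends endpoints of one base to endpoints of the other, and hence, by the no-splitting-germ assumption, sends terminal points of $\calk_{v_0}$ to terminal points of $\calk_{v_1}$. Thus the set of terminal points of all base trees is invariant under band holonomy. It follows that every bi-infinite $\Sigma$-leaf either visits only terminal points of the base trees it crosses, or only interior (non-terminal) points. This partitions the bi-infinite leaves, hence partitions their $p_T$-images, into two $G$-invariant pieces of $\ol T$.

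The main obstacle is turning this partition into a contradiction. My plan is to use the finiteness results of Section~\ref{sec-finiteness} (in particular Corollary~\ref{omega-3-fini} and the finiteness of orbits of bands) together with the fact that the "terminal points" carry a dynamical system of bounded combinatorial complexity: there are only finitely many $G$-orbits of bands, and along each band the holonomy matches up endpoints, so the set of terminal points of base trees carries a finite-type Markov structure. From the convex-hull description $\calk_v=\mathrm{conv}(\Omega_v)$, and the fact that every terminal point of $\calk_v$ is attained as an endpoint of exactly two bases $\ul B_i$, I would aim to show that each $\calk_v$ is a finite tree (the convex hull of finitely many $G_v$-orbits of terminal points), and hence that $\Sigma$ itself is a finite-type band complex in the classical Rips-machine sense. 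Passing to the space of leaves, this would force $T$ to be simplicial, contradicting that $T$ has dense orbits.

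The hardest part will be the final rigidity step: making precise the passage from "only finitely many $G$-orbits of terminal points of base trees" to "the base trees are finite trees, and $T$ is simplicial". An alternative route, which I would pursue in parallel, is to exploit density of branch points in $T$ directly: since $T$ has dense orbits, one locates an interior point $x$ of some $\calk_v$ whose $\Sigma$-leaf has at least three ends (producing a point in at least three bands at an interior point), then reruns the counting argument $2d(x)=\sum_B k_B$ with $k_B\ge 2$ forced by the no-splitting-germ assumption, and derives a contradiction from the incompatibility between the combinatorics at $x$ and Lemma~\ref{obs-germs}. Either way, the contradiction provides the required splitting germ.
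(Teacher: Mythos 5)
Your opening observation is correct and does match the first step of the paper's argument: under the no-splitting-germ assumption every endpoint of a base $\ul B\subseteq\calk_v$ is a terminal point of $\calk_v$, and (using quadratic type via Lemma~\ref{obs-germs}) the terminal points of base trees are permuted by band holonomy. But both of your proposed continuations have genuine gaps.

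The main route aims to upgrade ``terminal points are holonomy-invariant'' to ``each $\calk_v$ is a finite tree, hence $\Sigma$ is a classical Rips band complex, hence $T$ is simplicial.'' This does not follow. The base trees $\calk_v$ are compact $\bbR$-trees whose branch points may be dense and whose terminal points may form a Cantor set; there is no reason for there to be finitely many $G_v$-orbits of terminal points, and the ``finite-type Markov structure'' you invoke is not available. Moreover, even if the $\calk_v$ were finite trees, a band complex of quadratic type on finite base trees can perfectly well be dual to a nonsimplicial $\bbR$-tree (interval exchanges are the basic example), so the last step ``finite bases $\Rightarrow T$ simplicial'' is also false. Your alternative route does not close either: for an interior point $x\in\calk_v$ lying in $k\ge 3$ bands, the count $2\,d(x)=\sum_B k_B$ with $k_B\ge 2$ only gives $d(x)\ge k$, which is perfectly consistent and produces no contradiction with Lemma~\ref{obs-germs}.

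What is missing is the paper's key device: rather than working with terminal points of the $\calk_v$ themselves, one works with the $\eps$-truncations $\calk_v^{\dg\eps}$, which \emph{are} finite trees with finitely many orbits of endpoints. The real content is then the claim that for all small $\eps$, every $m\in\calk_v^{\dg\eps}$ in a band $B$ in fact lies in $\ul B^{\dg\eps}$; this requires choosing $\eps$ below the minimal ``depth'' of singular points inside the bands containing them, which in turn rests on Lemma~\ref{singular} (finitely many orbits of singular points, each lying in $\ul B^{\dg\eps_0}$ for some uniform $\eps_0$). From this the family $\{\calk_v^{\dg\eps}\}_v$ is holonomy-invariant, so each of the finitely many orbits of endpoints of $\calk_v^{\dg\eps}$ yields a leaf with nontrivial stabilizer, hence a point of $T$ with nontrivial stabilizer. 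Varying $\eps$ over an interval produces uncountably many such orbits, contradicting the finiteness of orbits of stabilized points in $T$ (\cite[Corollary 4.5]{Hor14-1}). Without the $\eps$-truncation and the singular-point control, the holonomy-invariance you established does not produce any finiteness to exploit, and the argument cannot be completed.
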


The idea of the proof of Proposition~\ref{existence-splitting} is the following. If $\Sigma$ contained no splitting germ, then the set $\cup_{v\in V(R)}\partial\calk_v$ made of points that are terminal in their base tree would be invariant under holonomy of leaves in $\Sigma$. The key point will be to show that if $\eps>0$ is chosen small enough, then the set of terminal points in the approximating trees $\calk_v^{\dg\eps}$ (defined as in Section~\ref{sec-surface-type}) is also invariant under holonomy. Since the trees $\calk_v^{\dg\eps}$ contain only finitely many orbits of terminal points, invariance under the holonomy thus implies that the $p_T$-images of these points have nontrivial stabilizer. As $\eps>0$ can take  uncountably many values, this yields a contradiction.

\begin{figure}[htbp]
  \centering
  \includegraphics{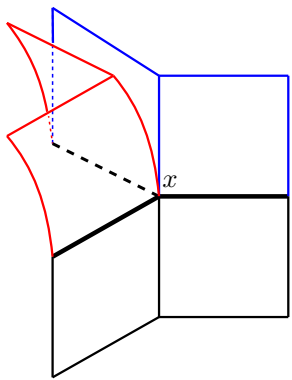}
  \caption{A singular point $x$.}
  \label{fig-singular} 
\end{figure}

Given a vertex $v\in R$, a point $x\in\calk_v$ is \emph{singular} if there exists a band $B$ incident on $\calk_v$, 
and a non-degenerate segment $I=[x,x']_{\calk_v}\subseteq\calk_v$, 
such that $B\cap I=\{x\}$. Notice in particular that base points of splitting germs are singular. Also note that an extremal point $x\in\calk_v$ is singular only if there is a band $B$ with $B\cap\calk_v=\{x\}$. An example of a singular point which is not of any of the above two forms is depicted on Figure~\ref{fig-singular}.

\begin{lemma}\label{singular}
Under the hypotheses of Proposition \ref{existence-splitting}, there are only finitely many orbits of singular points in $\Sigma$.
\end{lemma}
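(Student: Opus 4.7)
The plan is to prove that every singular point of $\Sigma$ lies in $\Sigma_3$, after which Corollary~\ref{omega-3-fini} immediately yields the desired finiteness. Note that the hypothesis of that corollary is satisfied here, since by Proposition~\ref{surface_3ends} the quadratic type assumption on $\Sigma$ is equivalent to the statement that no leaf of $\Sigma$ has a terminal point.

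Given a singular point $x\in\calk_v$, with witness band $B$ and non-degenerate segment $I=[x,x']_{\calk_v}$ satisfying $B\cap I=\{x\}$, I would consider the connected component $d$ of $\calk_v\setminus\{x\}$ containing (the interior of) $I$. Applying Lemma~\ref{obs-germs} to this direction produces a neighbourhood $U$ of $x$ in $\calk_v$ such that $d\cap U$ meets exactly two bands $B_1,B_2$, and is contained in both. Shrinking $U$ if necessary using Corollary~\ref{finitude-1}, one can further assume that every band of $\Sigma$ meeting $U$ contains $x$; in particular, both $B_1$ and $B_2$ contain $x$. Since $U\cap d\cap I$ is a nondegenerate initial sub-arc of $I$ contained in $B_1\cap B_2$, both $B_1$ and $B_2$ meet $I$ outside $\{x\}$, whereas $B\cap I=\{x\}$ by hypothesis. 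Hence $B,B_1,B_2$ are three pairwise distinct bands all containing $x$, so $x\in\Sigma_3$, as required.

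The key ingredient is really the quadratic type assumption, which enters through Lemma~\ref{obs-germs} to cap at two the number of bands covering a germ of direction at $x$: the witness band $B$ is thereby forced to be a ``third'' band, producing the branching. Without this input, there would be no reason for a singular point to be a branch point in its leaf, and the reduction to Corollary~\ref{omega-3-fini} would fail. Once the local picture from Lemma~\ref{obs-germs} is in place, however, the argument is entirely local and presents no real obstacle.
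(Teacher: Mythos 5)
Your proof is correct, and it takes a genuinely different route from the paper's. You reduce the finiteness of singular orbits to the already-established finiteness of $\Sigma_3/G$ (Corollary~\ref{omega-3-fini}) by showing that every singular point lies in three distinct bands: the witness band $B$ (which meets the segment $I$ only in $\{x\}$) and the two bands $B_1,B_2$ produced by Lemma~\ref{obs-germs} along the direction of $I$, which necessarily meet $I$ away from $x$ and hence differ from $B$. The paper proceeds more directly: fixing a basepoint $v_0\in\calk_v$ and a component $K$ of $\calk_v\setminus\{v_0\}$, it assigns to each singular $x\in K$ a band $B_x$ not containing $v_0$ onto which $x$ projects from $v_0$, and shows this assignment is injective; finiteness then follows from the local finiteness of bands (Lemma~\ref{lem_finitude_0}). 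Your route is shorter given the available machinery and makes explicit the pleasant fact that singular points are triple points, but it imports the weight of Corollary~\ref{omega-3-fini} (which rests on the Gaboriau--Levitt--Paulin inequality); the paper's argument is self-contained and structurally parallel to the proof of Corollary~\ref{omega-3-fini} itself, which associates to each $x\in\Sigma_3$ the collection $\calb_x$ of bands avoiding $v_0$ and containing $x$. One could reasonably argue the paper duplicates a projection-onto-bands argument that your reduction avoids.
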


\begin{proof}
Let $v\in R$ be a vertex, and let $v_0\in\calk_v$ be a basepoint, which we choose to be the special point of $\calk_v$ if $G_v$ is infinite. Let $K\subseteq \mathcal{K}_v$ be a connected component of $\calk_v\setminus\{v_0\}$. 

We claim that for every singular point $x\in K$, there exists a band 
$B_x$ incident on $\calk_v$ such that $x$ is the projection of $v_0$ onto $B_x$. The lemma follows from this claim since there are only finitely many orbits of connected components in $\calk_v\setminus\{v_0\}$, and for such a component $K$, there are finitely many bands incident on $K$ that do not contain $v_0$, and clearly $B_x\neq B_y$ if $x\neq y$ are two different singular points in $K$.

\begin{figure}[htbp]
  \centering
  \includegraphics{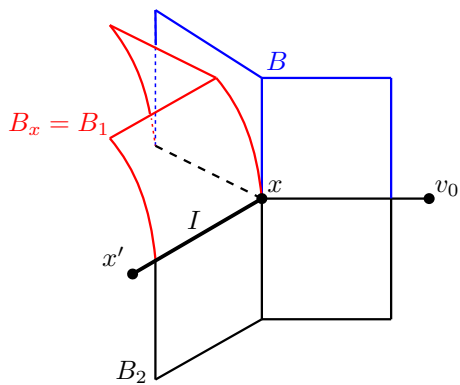}
  \caption{Existence of the band $B_x$ such that $x$ is the projection of $v_0$ onto $B_x$.}
  \label{fig-fs} 
\end{figure}

We now prove our claim, this is illustrated on Figure~\ref{fig-fs}. Since $x$ is singular, there exists a band $B$ incident on $\calk_v$, and a non-degenerate interval $I=[x,x']_{\calk_v}$, 
such that $I\cap B=\{x\}$. If the projection of $v_0$ onto $B$ is $x$, we are done, so we assume otherwise. Then $B\cap [v_0,x]_{\calk_v}$ is nondegenerate, and $I\cap [v_0,x]_{\calk_v}=\{x\}$. Let $U$ be an open neighbourhood of $x$ such that every band that meets $U$ contains $x$ (this exists by Corollary \ref{finitude-1}). 
Since no point in $\Sigma$ is a terminal vertex in its complete $\Sigma$-leaf,  
the intersection $I\cap U$ is covered by two distinct bands $B_1$ and $B_2$ (and these bands contain $x$). 
Moreover, since $I\cap B=\{x\}$, $B_1$ and $B_2$ are distinct from $B$.
If both $B_1$ and $B_2$ met a nondegenerate subsegment of $[v_0,x]_{\calk_v}$, then this subsegment would meet the three bands 
$B_1$, $B_2$ and $B$, contradicting the fact that $\Sigma$ is of quadratic type. Therefore, one of the two bands $B_1$ or $B_2$ satisfies our claim. 
\end{proof}

\begin{proof}[Proof of Proposition \ref{existence-splitting}]
Assume towards a contradiction that there is no splitting germ in $\Sigma$.  
This means that for every vertex $v\in R$, and every base of band $\ul B\subseteq \calk_v$, 
every endpoint of $\ul B$ is an endpoint of $\calk_v$ (this does not imply $\ul B=\calk_v$ as there can be branch points of $\calk_v$
which are not branch points in $\ul B$).
Without loss of generality, we can assume that no base of band of $\Sigma$ is reduced to a point and in particular that extremal points of base
trees $\calk_v$ are not singular.

Let $v\in R$ be a vertex. By Lemma \ref{singular}, there are finitely many $G_v$-orbits of singular points in $\calk_v$, and these points are nonextremal in the bands that contain them because there is no splitting germ in $\Sigma$ by assumption.  Since every point $x$ is contained in finitely many bands modulo $G_x$ (Lemma \ref{lem_finitude_0}),
there exists $\eps_0>0$ such that for every singular point $x$, and every band $B$ containing $x$, we have $x\in\underline{B}^{\dg\eps_0}$
(recall that $x\in\underline{B}^{\dg\eps_0}$ means that $x$ is the midpoint of a segment of length $2\eps_0$ contained in $\ul B$). 
Let $0<\eps<\eps_0$ be  chosen small enough so that the distance between any two singular points in $\Sigma$ is strictly greater than $2\eps$.

We claim that for every point $m\in\calk_v^{\dg\eps}$, and every band $B$ that contains $m$, we have $m\in \ul B^{\dg\eps}$.
Indeed, if $m$ is singular, this follows from the definition of $\eps_0$. Otherwise, the fact that $T$ is of quadratic type implies that there are exactly two bands $B\neq B'$ that contain $m$.
Since $m\in\calk_v^{\dg\eps}$, there exists a segment $[a,b]_{\calk_v}$ 
of length $2\eps$ whose midpoint is $m$.
If $[a,b]_{\calk_v}$ does not contain any singular point, then 
both $B$ and $B'$ cover the interval $[a,b]_{\calk_v}$, so $m\in\underline{B}^{\dg\eps}\cap\underline{B'}^{\dg\eps}$ and we are done. Otherwise, by our choice of $\eps>0$, the interval $[a,b]_{\calk_v}$ contains a unique singular point $x$, and without loss of generality, we can assume that $x\in [a,m]_{\calk_v}$. We have $[x,b]_{\calk_v}\subseteq B\cap B'$, otherwise one of the intersections $B\cap [a,b]_{\calk_v}$ or $B'\cap [a,b]_{\calk_v}$ would have a terminal point $x'\neq x$, and $x'$ would be a second singular point in $[a,b]_{\calk_v}$. By definition of $\eps_0$,  there exist subsegments $J\subseteq \underline{B}$ and $J'\subseteq\underline{B'}$ of length  $2\eps_0$ and whose midpoint is $x$. 
Then $[b,x]_{\calk_v}\cup J\subseteq\underline{B}$, showing that $m\in\underline{B}^{\dg\eps}$, and similarly $m\in\underline{B'}^{\dg\eps}$. This proves our claim.

We now fix $\eps$ small enough so that the  above claim holds for every vertex $v\in R$, and we prove that the family of subsets $\calk_v^{\dg\eps}$, indexed by $v$, is invariant under the holonomy of $\Sigma$.
Indeed, let $x\in \calk_v^{\dg\eps}$ for some vertex $v$ in $R$, and consider a band $B$ containing $x$, joining $\calk_v$ to some $\calk_u$.
 Let $[x,y]_{\call_x}$ be the $\Sigma$-leaf segment of $B$ through $x$. We denote by $\ul B_v$ and $\ul B_u$ the two bases of $B$.
Since the claim holds, $\ul B_v$ contains a segment of length $2\eps$ centered at $x$, hence $y\in\ul B_u^{\dg\eps}$, and $y\in \calk_u^{\dg\eps}$.
This proves the desired invariance under holonomy.

Now let $\partial\calk_u^{\dg\eps}$ be the set of endpoints of $\calk_u^{\dg\eps}$.
Since  $\partial\calk_u^{\dg\eps}/G_u$ is finite for every vertex $u\in R$, so is $(\bigcup_{v\in R} \partial\calk_v^{\dg\eps})/G$.
Since $\calk_u^{\dg\eps}\setminus \partial\calk_u^{\dg\eps}=\bigcup_{\eps'>\eps} \calk_u^{\dg\eps'}$,
the set $\bigcup_{v\in R} \partial\calk_v^{\dg\eps}$ is invariant under holonomy,
i.e.\ for all $x\in \partial\calk_u^{\dg\eps}$, and all vertices $v\in R$ such that $\call_x$ intersects $\calk_v$, we have
$\call_x\cap \calk_v\subseteq \partial\calk_v^{\dg\eps}$.
Since $\call_x$ is infinite (Proposition \ref{prop_infini}),
and  $(\bigcup_{v\in R} \partial\calk_v^{\dg\eps})/G$ is finite, this implies that the stabilizer of $\call_x$ is infinite, and so is the stabilizer of $p_T(x)$.
Since this holds for every small enough $\eps>0$, we get uncountably many points in $T$ with infinite stabilizer.
Since by \cite[Corollary 4.5]{Hor14-1} there are only finitely many orbits of points with non-trivial stabilizer, we get a contradiction.
\end{proof}

\begin{figure}[htbp]
  \centering
  \includegraphics{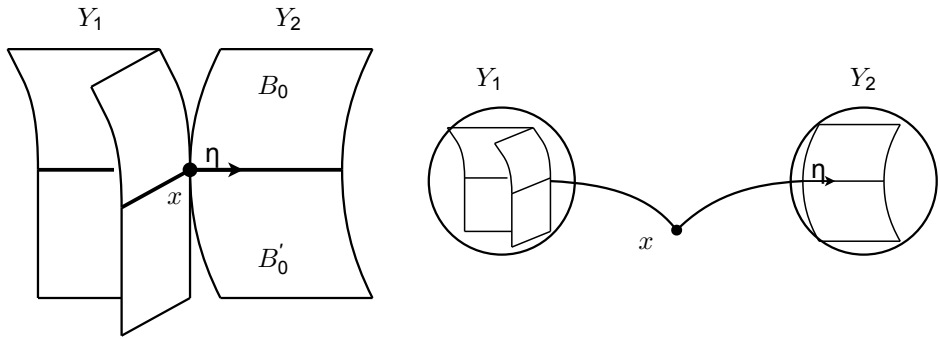}
  \caption{A degenerate splitting germ $\eta$, and the corresponding $(G,\calf)$-free splitting $S$.}
  \label{fig_degenerate}
\end{figure}

\subsubsection{Degenerate splitting germs} Let $v\in R$ be a vertex, and let $\eta$ be a splitting germ, based at a point $x\in\calk_v$. 
We say that the splitting germ $\eta$ is \emph{degenerate} if $x$ is extremal in the two bands containing $\eta$ (see Figure \ref{fig_degenerate} for a degenerate splitting germ; the splitting germ on Figure \ref{fig_split} is non-degenerate).

\begin{lemma}\label{lem_non-degenerate}
If $\Sigma$ contains a degenerate splitting germ, 
then $T$ is compatible with a $(G,\calf)$-free splitting. 
\end{lemma}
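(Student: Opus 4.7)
The plan is to use the degenerate splitting germ $\eta$ based at $x\in\calk_v$ to single out the point $y:=p_T(x)\in \ol T$ whose $G$-orbit will serve as the set of ``cut points'' giving rise to a simplicial $(G,\calf)$-free splitting $S$, and to build an equivariant refinement $T''$ of $T$ that collapses onto $S$. Let $e_0, e_1$ denote the edges of $R$ such that $B_0=K_{e_0}\times e_0$ and $B_1=K_{e_1}\times e_1$.

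First, I would analyze the local picture at $y$. By Lemma \ref{obs-germs}, the germ $\eta$ at $x$ is covered by exactly two bands, namely $B_0$ and $B_1$; by the degeneracy hypothesis, $x$ is extremal in both $\ul B_0$ and $\ul B_1$. Consequently, $y$ is an extremal point of both $K_{e_0}$ and $K_{e_1}$, and these two subtrees of $\ol T$ lie in a common direction $\delta$ at $y$; moreover, no other base tree of $\Sigma$ contains a sub-germ of $\delta$ at $y$. I would then set $Y_0\subseteq \ol T$ to be the connected component of $\ol T\setminus\{y\}$ lying in the direction $\delta$, together with $\{y\}$, and argue that distinct $G$-translates of $Y_0$ can meet only at points of $G\cdot y$.

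From here I would build $S$ as the simplicial tree whose edges correspond to $G\cdot y$ and whose vertex set consists of the two $G$-orbits of ``pieces'' of $\ol T$ separated by this orbit (translates of $Y_0$ on one side, closures of complementary components on the other), with incidence given in the obvious way. The refinement $T''$ is obtained by inserting a $G$-equivariant family of positive-length edges at each point of $G\cdot y$, splitting each into two; there are natural equivariant morphisms $T''\to T$ (collapsing the new edges) and $T''\to S$ (collapsing each vertex piece to a point), establishing compatibility of $T$ with $S$. The splitting $S$ is free and relative to $\calf$: its edge stabilizers are trivial because they stabilize arcs of $T''$ mapping to arcs of $T$, and $T$ has trivial arc stabilizers by \cite[Proposition 4.17]{Hor14-1}; peripheral ellipticity in $S$ follows from that in $T$, as a subgroup fixing a point of $T$ is sent to a single vertex of $S$.

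The hardest step will be the second one: showing that $G$-translates of $Y_0$ intersect only along $G\cdot y$. The local picture near $y$ is directly given by the degeneracy of $\eta$ together with Lemma~\ref{obs-germs}, but one needs to rule out global pathologies --- for instance that $gY_0$ and $hY_0$ overlap along a non-degenerate arc for some $g\ne h$. Such an overlap would be fixed pointwise by a non-trivial element, again contradicting triviality of arc stabilizers in $T$; making this rigorous requires a careful control of how $\Sigma$-leaves cross back and forth between $Y_0$ and its complement, and in particular of how the holonomy of $\Sigma$ behaves across the orbit $G\cdot y$.
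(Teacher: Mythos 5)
Your plan is to work directly in $\ol T$, cutting along the single point $y=p_T(x)$ and using translates of the resulting direction $Y_0$ as the pieces of a transverse decomposition. This is genuinely different from the paper's argument, which cuts the band complex $\Sigma$ along the orbit $G.x$, uses simple connectedness of $\Sigma$ (Lemma~\ref{contractible}) to see that $x$ is a global cut point, and only afterwards pushes the resulting bipartite tree into $\ol T$ via $p_T$. Unfortunately, the shortcut does not work, and the step you flag as ``the hardest'' is in fact false.

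The claim that distinct $G$-translates of $Y_0$ can meet only at points of $G\cdot y$ fails in the setting of this lemma, because $T$ has dense $G$-orbits (a standing hypothesis in Section~\ref{sec-splitting}). Since the orbit $G\cdot y$ is dense, $Y_0$ contains points $gy$ in its interior; choosing such a $g$ with $g\delta$ pointing into $Y_0$ produces a translate $gY_0$ that overlaps $Y_0$ along a non-degenerate arc while missing $y$. Thus $gY_0\cap Y_0\not\subseteq G\cdot y$, and your family $\{gY_0\}_{g\in G}$ is not a transverse family. The appeal to trivial arc stabilizers does not repair this: an overlap $gY_0\cap Y_0$ need not be pointwise fixed by any element, so there is no contradiction. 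The reason the paper's pieces $p_T(Y_i)$ do enjoy the transverse-family property is exactly that they come from cutting $\Sigma$ along $G.x$: if $z\in p_T(Y_i)\cap p_T(Y_j)$ then the leaf $p_T^{-1}(z)$ must cross some $g.x$, forcing $z\in G.y$, and then connectedness plus countability of $G.y$ forces the intersection to be a single point. That argument is not available when you replace ``component of $\Sigma\setminus G.x$'' by ``component of $\ol T\setminus\{y\}$.'' There is also a smaller false assertion in your local analysis: the germ $\delta$ at $y$ certainly \emph{is} contained in other base trees, e.g.\ $K_{v'}\supseteq K_{e_0}$ for $v'$ the other endpoint of $e_0$ contains $\delta$, since $K_{v'}$ is the convex hull of $\Omega_{v'}\supseteq\Omega_{e_0}$. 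Finally, your justification that the inserted edges of $T''$ have trivial stabilizer (``they stabilize arcs of $T''$ mapping to arcs of $T$'') is not correct as written, since the new edges collapse to single points of $T$; the degeneracy hypothesis must be invoked to show that the stabilizer of the distinguished edge fixes the germ $\eta$, which is exactly what the paper does.

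===
Verdict: The proposal contains a genuine gap and a wrong approach: the key transversality claim fails because $G\cdot y$ is dense, the local claim about base trees is false, and the edge-stabilizer argument is incorrect as stated. The paper avoids all of this by cutting $\Sigma$ rather than $\ol T$, where simple connectedness gives the required cut-point structure.
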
  

\begin{proof}
This is illustrated in Figure~\ref{fig_degenerate}. Assume that there is a degenerate splitting germ $\eta\subseteq \calk_v$ based at a point $x\in \calk_v$ for some vertex $v\in R$.
Since $x$ is a local cut point of $\Sigma$, and $\Sigma$ is simply connected, $x$ is a global cut point.
Let $(Y_i)_{i\in I}$ be the family made of the closures of the connected components of $\Sigma\setminus G.x$. We define a bipartite simplicial graph $S$ having one vertex for each $Y_i$ and one vertex for each point in $G.x$, where $Y_i$ is joined by an edge to $g.x$ whenever $g.x\in Y_i$. Our goal is to show that some collapse of $S$ is a $(G,\calf)$-free splitting compatible with $T$.

Since each point in $G.x$ is a global cut point, $S$ is a tree.
Some edges of $S$ may have non-trivial stabilizer (this happens only if $x$ is the special vertex of $\calk_v$),
but the edge $e$ joining $x$ to the component $Y_i$ containing $\eta$ has trivial stabilizer because $\eta$ is terminal in $Y_i\cap \calk_v$
and the stabilizer of $e$ fixes $\eta$.
Let $S'$ be the tree obtained from $S$ by collapsing every edge outside $G.e$.
Then $S'$ is a free splitting of $G$, and peripheral subgroups are elliptic so it is a $(G,\calf)$-free splitting.
Since $S'$ has a single orbit of edges and no terminal point, the action of $G$ is minimal.

In order to complete the proof of the lemma, we are thus left checking that $S'$ is compatible with $T$. We first observe that if $i\neq j$, then $p_T(Y_i)\cap p_T(Y_j)$ contains at most one point. 
   
We now assign a subtree $T_u\subseteq T$ for each vertex $u\in S$ as follows:
if $u$ corresponds to the component $Y_i$, we define $T_u=p_T(Y_i)\subseteq T$;
if $u$ corresponds to $g.x$, we define $T_u=\{p_T(g.x)\}$.  Then for all vertices $u\neq u'$ in $S$, the intersection $T_u\cap T_{u'}$ contains at most one point. The following lemma thus completes the proof.
\end{proof} 

\begin{lemma} Let $S$ be a simplicial $(G,\calf)$-tree. For each vertex $u\in S$, let $T_u\subseteq T$ be a subtree, with $T_{gu}=gT_u$ for all $g\in G$. Assume that for all $u\neq u'$, the intersection $T_u\cap T_{u'}$ contains at most one point,
and is non-empty if $u$ is a neighbour of $u'$.
\\
Then $S$ is compatible with $T$.
\end{lemma}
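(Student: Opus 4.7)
The plan is to construct a common refinement of $S$ and $T$ via a graph-of-actions construction. I would first build an $\mathbb{R}$-tree $\hat T$ as follows: take the disjoint union $\bigsqcup_{u\in V(S)}T_u$, and for each edge $e=[u,u']$ of $S$ attach an edge joining the attaching points $p_e^u:=T_u\cap T_{u'}\in T_u$ and $p_e^{u'}:=T_u\cap T_{u'}\in T_{u'}$ (both well-defined single points by our hypotheses on neighbours). The resulting $G$-tree $\hat T$ admits an obvious equivariant collapse map $\pi_S:\hat T\to S$ obtained by collapsing each $T_u$ to the corresponding vertex of $S$; this is one of the two collapse maps required for compatibility.

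Next I would define an equivariant morphism $f:\hat T\to T$ by sending each $T_u\subseteq \hat T$ to $T_u\subseteq T$ via the inclusion, and collapsing each added edge coming from an edge $e=[u,u']$ of $S$ to the common attaching point in $T$. This is well-defined precisely because the two attaching points on $T_u$ and $T_{u'}$ coincide as a single point of $T$. The map $f$ is clearly a morphism in the sense of $\mathbb{R}$-trees: it is isometric on each $T_u$ and collapses each added edge.

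The core step, which I expect to be the main obstacle, is verifying that $f$ does not fold: at every point of $\hat T$, distinct germs of directions should map to distinct germs in $T$. Folding could only occur at a gluing vertex $p\in T_u\subseteq\hat T$. If a direction in $T_u$ at $p$ agreed (as a direction in $T$) with a direction coming from some attached $T_{u'}$ via an $S$-edge incident at $p$, then $T_u$ and $T_{u'}$ would share a nondegenerate segment emanating from $p$, contradicting $|T_u\cap T_{u'}|\le 1$. Similarly, if two distinct vertex trees $T_{u'}$ and $T_{u''}$ both attached at $p$ gave the same outgoing direction in $T$, then $|T_{u'}\cap T_{u''}|\ge 2$; here one really needs the hypothesis applied to the (possibly non-adjacent) pair $u',u''$, which is the point where that full strength of the assumption is used.

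Finally, since $\hat T$ is an $\mathbb{R}$-tree and $f$ is a non-folding morphism, a standard argument (a non-folding morphism between $\mathbb{R}$-trees is locally injective, and by simple connectedness has no self-identifications along any segment) implies that $f$ descends to an isometric embedding of the quotient tree $\hat T/{\sim}$, obtained by collapsing all the added $S$-edges to their attaching points, into $T$. The image is the $G$-invariant subtree $\bigcup_{u\in V(S)}T_u\subseteq T$, which is non-empty, and connected because $S$ is connected and adjacent vertex trees meet; by minimality of $T\in\overline{\calo}$ it equals $T$. Thus $f$ is a collapse map onto $T$, and $\hat T$ is the desired common refinement, proving that $S$ is compatible with $T$.
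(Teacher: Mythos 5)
Your construction is exactly the one the paper uses: build $\hat T$ from the disjoint union of the vertex trees $T_u$ together with the edges of $S$, glue the endpoints of each $S$-edge $e=[u,u']$ to the copies of the single point $T_u\cap T_{u'}$, observe the obvious collapse $\hat T\to S$, then collapse the $S$-edges to get a tree $T'$ and map $T'\to T$ via the inclusions $T_u\subseteq T$. The paper compresses the final step to one line — ``since $T_u\cap T_{u'}$ is reduced to a point for each $u\neq u'$, this implies that $f$ is an isometry'' — whereas you spell out why: the induced map $T'\to T$ is locally injective (a coincidence of germs at a gluing point would force two of the subtrees $T_u$ to overlap in a nondegenerate arc, contradicting the transversality hypothesis applied to possibly non-adjacent pairs), and a locally injective morphism of $\mathbb{R}$-trees is globally injective. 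That expansion is correct and in fact makes the step more convincing than the paper's terse justification. So this is the same argument, carried out with more care at the one place the paper is cryptic.
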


\begin{proof}
Let $\hat T$ be the $\bbR$-tree obtained from the disjoint union of the vertex trees $T_u$
and of the edges of $S$, glued as follows:
if $e=[u,u']$ is an edge of $S$, we let $\{x_e\}=T_u\cap T_{u'}$,
and we glue the endpoints of $e$ to the copies of $x_e$ in $T_u$ and $T_{u'}$, respectively.

Clearly, there is a collapse map from $\hat T$ to $S$. 
Moreover, let $T'$ be the tree obtained from $\hat T$ by collapsing all the edges coming from $S$.
The inclusion maps $T_u\subseteq T$ induce a map $f:T'\ra T$ which is isometric
in restriction to each $T_u$. Since $T_u\cap T_{u'}$ is reduced to a point for each $u\neq u'$,
this implies that $f$ is an isometry. By minimality, $f$ is onto, and $\hat T$ is a common refinement of $S$ and $T$.
\end{proof}

\subsection{The splitting process}\label{sec_process}

From now on, we assume that $T$ is not compatible with any $(G,\calf)$-free splitting and we define a splitting procedure on the band complex $\Sigma$. 

\begin{figure}[htbp]
  \centering
  \includegraphics[width=\linewidth]{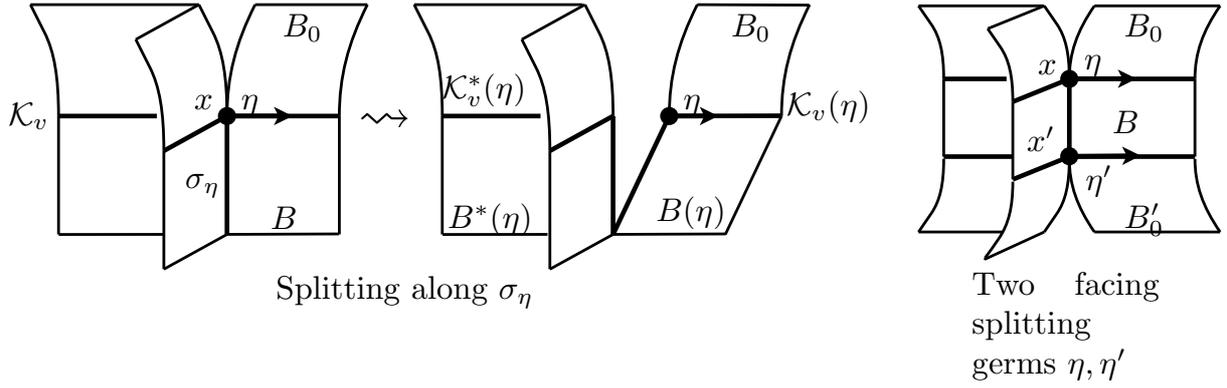}
  \caption{The splitting process.}
  \label{fig_splitting_process}
\end{figure}

Let $\eta$ be a non-degenerate splitting germ based at some point $x\in\calk_v$: recall that this means that one of the two bands $B_0$ containing 
 $\eta$, 
is such that $x$ is an endpoint of 
$B_0\cap \calk_v$, but $x$ is not an endpoint of $B\cap \calk_v$ where $B$ is the other band containing $\eta$.
Note that $B_0$ and $B$ are uniquely defined by these conditions, and we say that $B$ is 
the band \emph{split} by $\eta$.
We define the \emph{splitting leaf segment} $\sigma_\eta$ as $B\cap \call_x$, i.e.\ the leaf segment of $B$ starting from $x$.

Notice that since there are only finitely many orbits of singular points (Lemma \ref{singular}) and finitely many orbits of directions at each of these points, there are only finitely many
orbits of splitting germs.
Since each germ splits a unique band, and since band stabilizers are trivial, this implies that for each band $B$, there are only finitely many germs that split $B$.
Similarly, for each vertex $v$ of $R$, there are only finitely many $G_v$-orbits of splitting germs contained in $\calk_v$.

\subsubsection{A band complex $\Sigma'_\Gamma$ defined by splitting}

Given a $G$-invariant collection $\Gamma$ of splitting germs, we now explain how to define a new band complex $\Sigma'_{\Gamma}$ by cutting the bands of $\Sigma$ split by germs $\eta\in\Gamma$ along the corresponding splitting leaf segments (see Figure \ref{fig_splitting_process}). 
To define this operation, we will need to make the assumption that $\Gamma$ has no pair of facing splitting germs in the following sense: there is no pair of splitting germs $\eta,\eta'\in \Gamma$ 
contained in the two opposite bases of some band, and such that $\eta$ and $\eta'$ have the same image in $T$. We will then prove, using the fact that $T$ is not compatible with any $(G,\calf)$-free splitting, that the collection $\Gamma_{\max}$ made of all splitting germs of $\Sigma$ contains no pair of facing splitting germs (Lemma~\ref{no-facing} below). This will allow us to cut all bands in $\Sigma$ simultaneously, and we will say that the band complex $\Sigma':=\Sigma'_{\Gamma_{\max}}$ is obtained from $\Sigma$ by \emph{applying one step of the splitting process}.
To manage the details of the construction around points with non-trivial stabilizer, and with all splitting germs simultaneously, 
we make this construction more formal, and  proceed in several stages.
 The whole construction is illustrated on Figure~\ref{fig_splitting_partition}.

\begin{figure}[htbp]
  \centering
    \includegraphics[scale=0.95]{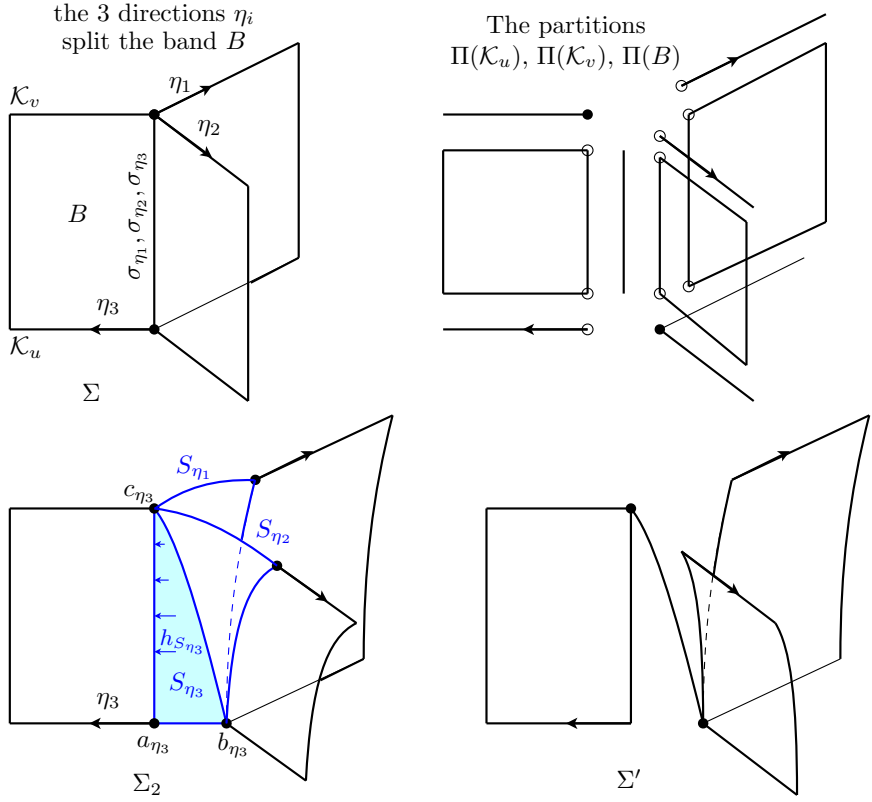}
  \caption{$\Sigma$, the partitions, $\Sigma_2$, and $\Sigma'$}
  \label{fig_splitting_partition}
\end{figure}

From now on, we let $\Gamma$ be a $G$-invariant collection of splitting germs that contains no pair of facing splitting germs. 
Each germ $\eta\in \Gamma$ yields a partition of each base tree $\calk_v$ in the following way.
Let $x_\eta$ be the base point of $\eta$.
If $\eta$ is contained in $\calk_v$, we write $\calk_v=\calk_v(\eta)\dunion \calk_v^*(\eta)$
where $\calk_v(\eta)$ denotes the connected component of $\calk_v\setminus \{x_\eta\}$ containing $\eta$ (this is an open subtree of $\calk_v$),
and $\calk^*_v(\eta)=\calk_v\setminus \calk_v(\eta)$ is its complement (a closed subtree, containing the base point $x_\eta$).
If $\eta$ is not contained in $\calk_v$, we associate the trivial partition.
Similarly, if $\eta$ splits the band $B$, we have a partition $B=B(\eta)\dunion B^*(\eta)$
where $B(\eta)$ is the connected component of $B\setminus \sigma_\eta$ whose closure contains a representative of $\eta$,
and $B^*(\eta)=B\setminus B(\eta)$.
If $\eta$ does not split $B$, then we associate the trivial partition.

Let $\Pi(\calk_v)$  (resp.\ $\Pi(B)$) be the partition of $\calk_v$ (resp. of $B$) induced by all partitions associated to all germs in $\Gamma$.
As an intersection of convex sets, each set of the partition $\Pi(\calk_v)$ is a subtree.
Similarly, each set in $\Pi(B)$ is the product of a subtree of a base of $B$ by an interval. Consider first 
$$\Sigma_1=\left(\coprod_{K\in \Pi(\calk_v), v\in V(R)} \ol K \right)\dunion \left(\coprod_{C\in \Pi(B), \text{B band}} \ol C \right)\dunion
\left(\coprod_{\eta \in \Gamma} S_\eta \right)$$
where $\ol K,\ol C$ denote the closures of $K$ and $C$ in $\Sigma$, 
and where for each splitting germ $\eta\in\Gamma$,   $S_\eta$ is a triangle $(a_\eta,b_\eta,c_\eta)$. The triangle $S_\eta$ comes with an affine map $h_{S_\eta}:S_\eta\to \sigma_\eta$, sending $[a_\eta,b_\eta]$ to $x_\eta$, and $c_\eta$ to the other endpoint of $\sigma_\eta$.

For each $\eta\in \Gamma$, let $B$ be the band split by $\eta$, and let $C_\eta\in \Pi(B)$ be the unique subset whose closure contains the germ $\eta$,
and $C^*_\eta\in \Pi(B)$ the unique subset containing the splitting segment $\sigma_\eta$ (note that $C^*_\eta$ may be reduced to a leaf segment, but not $C_\eta$).
The sets $C_\eta$ and $C^*_\eta$ are disjoint, and $\ol C_\eta\cap\ol C^*_\eta=\sigma_\eta$. The following observation follows from our assumption that $\Gamma$ has no facing germs. 
\\
\\
\noindent \textbf{Observation.} The ordered pair ($\ol C_\eta$,$\ol C^*_\eta$) uniquely determines $\eta$.
\\
Indeed, $\sigma_\eta$ can be recovered as $\sigma_\eta=\ol C_\eta \cap \ol C^*_\eta$;
denote by $\calk_{v_1},\calk_{v_2}$ the two base trees intersecting $\ol C_\eta$,
and by $x_i$ the point of intersection of $\calk_{v_i}$ with $\sigma_\eta$;
since $x_i$ is a terminal point of $\calk_{v_i}\cap \ol C_\eta$ 
there are only two possibilities left for $\eta$, being either based at $x_1$ or $x_2$ (and contained in $\overline{C}_\eta$). 
Since there is no pair of facing splitting germs, this leaves only one possibility for $\eta$.
\\
\\
\indent 
We now define $\Sigma_2$ from $\Sigma_1$ by making the following identifications. For each $\eta\in\Gamma$, we glue the segment $[a_\eta,c_\eta]$ of $S_{\eta}$ to the copy of $\sigma_\eta$ in $\ol C_\eta$, in such a way that $x_\eta$ is identified with $a_\eta$.
Similarly, we glue the  segment $[b_\eta,c_\eta]$ of $S_\eta$ to the copy of $\sigma_\eta$ in $\ol C^*_\eta$, identifying $x_\eta$ with $b_\eta$. The third side $[a_\eta,b_\eta]$ of $S_\eta$ will remain a free face, i.e.\ it will not be glued to anything else.
After this gluing operation, each band $B$ of $\Sigma$ has been replaced by a union of bands $\overline{C}$ and triangles $S_\eta$, and this union is homotopy equivalent to $B$ (via the natural map that is the identity on each band $\overline{C}$ and  restricts to $h_{S_\eta}$ on each triangle $S_\eta$). 
Finally, for each band $B$ of $\Sigma$ incident on $\calk_v$, and each $C\in \Pi(B)$, the intersection $C\cap \calk_v$ is contained in a unique set $K \in\Pi(\calk_v)$
and we identify the copy of $\ol C\cap\calk_v$ in $\ol C$
to the corresponding subset of $\ol K$. We denote by $\Sigma_2$ the obtained complex. Let $h:\Sigma_2\to\Sigma$ be the map that restricts to the inclusion map on each $\overline{K}$ with $K\in\Pi(\calk_v)$ and on each $\overline{C}$ with $C\in\Pi(B)$, and which 
restricts to $h_{S_\eta}$ on each triangle $S_{\eta}$. 

\begin{lemma}\label{sigma-2}
The space $\Sigma_2$ is simply connected.
\end{lemma}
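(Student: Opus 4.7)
The strategy is to show that the map $h : \Sigma_2 \to \Sigma$ (which was introduced in the construction just above, restricting to the inclusion on each $\overline{K}$ and each $\overline{C}$, and to the collapse map $h_{S_\eta}$ on each triangle $S_\eta$) is a homotopy equivalence. Since $\Sigma$ is simply connected by Lemma~\ref{contractible}, this will imply that $\Sigma_2$ is simply connected.

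The key geometric observation is that $h$ has contractible fibers. Indeed, over a point $y \in \Sigma$ not lying on any translate of a splitting segment $\sigma_\eta$, the fiber is a single point (after identifying the finitely many $\overline{K}$ or $\overline{C}$ containing $y$ through the gluings). Over an interior point of a splitting segment $\sigma_\eta$, the fiber is a single arc: the two copies of $y$ coming from $\overline{C}_\eta$ and $\overline{C}^*_\eta$ are joined by the transverse segment $h_{S_\eta}^{-1}(\{y\}) \subseteq S_\eta$. Over the base point $x_\eta$ of a splitting germ, the fiber is the free side $[a_\eta,b_\eta]$ of $S_\eta$ (together with its two endpoints glued to the copies of $x_\eta$ in $\overline{C}_\eta$ and $\overline{C}^*_\eta$), and this gets possibly joined with similar arcs coming from other germs sharing the base point $x_\eta$; in all cases this fiber is a finite tree. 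Intuitively, $\Sigma_2$ is obtained from $\Sigma$ by replacing each orbit of splitting segments by a ``pillow'' (the triangle $S_\eta$ attached to the cut band along two of its sides), and contracting each pillow onto $\sigma_\eta$ recovers $\Sigma$.

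To make this rigorous I will mimic the proof of Lemma~\ref{contractible}, reducing to compact subcomplexes. First I will verify that $\Sigma_2$ is connected, which is immediate since $\Sigma$ is connected and $h$ is continuous and surjective. To prove simple connectedness, given any loop $\gamma \subseteq \Sigma_2$, the image $h(\gamma) \subseteq \Sigma$ is contained in a compact simply connected subspace of the form $\Sigma \cap (K_T \times K_R)$ provided by Lemma~\ref{contractible}. I then take $\tilde{C} \subseteq \Sigma_2$ to be the preimage of this subspace under $h$, together with every triangle $S_\eta$ whose image $\sigma_\eta$ meets the subspace. Only finitely many such triangles appear: the splitting segments are leaf segments of bands $B_e$ with $e$ running in a compact subtree of $R$, and within each band there are only finitely many $G_v$-orbits of splitting germs (coming from Lemma~\ref{singular} and Lemma~\ref{lem_finitude_0}), so only finitely many triangles meet the compact piece. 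The space $\tilde{C}$ is thus compact, metrizable and finite-dimensional, and the restriction $h : \tilde{C} \to \Sigma \cap (K_T \times K_R)$ is a continuous surjection with contractible fibers. Applying Lemma~\ref{thm_cell-like} yields that $h|_{\tilde{C}}$ is a homotopy equivalence, so $\tilde{C}$ is simply connected, and therefore $\gamma$ bounds a disk in $\tilde{C} \subseteq \Sigma_2$.

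The main technical point to verify will be local contractibility of $\tilde{C}$, which requires care at points where several splitting segments accumulate (in particular at branch points of base trees). This should follow from the local structure of $\Sigma$ at such points, together with the fact that the triangles $S_\eta$ are glued to the rest of $\tilde{C}$ along contractible subsets (two sides of each) and are themselves contractible; so a sufficiently small neighbourhood in $\tilde{C}$ of any point $x$ deformation retracts onto a neighbourhood in $\Sigma$ of $h(x)$, which is itself locally contractible as explained in the proof of Lemma~\ref{contractible}.
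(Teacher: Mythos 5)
Your overall strategy matches the paper's: reduce to compact pieces, show that $h$ has contractible fibers, and invoke Lemma~\ref{thm_cell-like}. However, there is a genuine gap in your finiteness claim, and a secondary issue with the choice of compact piece.

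The gap: you assert that only finitely many triangles $S_\eta$ have $\sigma_\eta$ meeting $C=\Sigma\cap(K_T\times K_R)$, citing Lemma~\ref{singular} (finitely many orbits of singular points) and Lemma~\ref{lem_finitude_0}. But finitely many \emph{orbits} does not imply finitely many germs. Suppose $w\in K_R$ is a vertex of $R$ with infinite stabilizer $G_w$, and the special point $v_\Sigma\in\calk_w$ lies in $K_T$. If some splitting germ $\eta$ is based at $v_\Sigma$, then every $G_w$-translate $g\eta$ is also a splitting germ based at $g v_\Sigma = v_\Sigma$, and these translates are pairwise distinct because band stabilizers are trivial. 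Each of the corresponding splitting segments $\sigma_{g\eta}$ passes through $v_\Sigma\in C$, so all infinitely many triangles $S_{g\eta}$ meet $C$. In that case your set $\tilde C$ is not compact and Lemma~\ref{thm_cell-like} does not apply directly. The fiber $h^{-1}(v_\Sigma)$ itself is non-compact: it contains the free sides $[a_{g\eta},b_{g\eta}]$ for all $g\in G_w$. The paper's proof explicitly isolates this phenomenon (``If $C_2$ is not compact, then $C$ contains a special point $v$...'') and handles it by first passing to the compact subset $C'_2\subseteq h^{-1}(C)$ obtained by deleting the infinitely many hanging half-open segments, applying Lemma~\ref{thm_cell-like} to $h\colon C'_2\to C$ (where fibers remain contractible after removing the hanging segments), and then observing that $C_2$ is obtained from $C'_2$ by attaching whiskers, which does not affect $\pi_1$. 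You would need to incorporate a step of this sort.

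A secondary issue: by adding \emph{entire} triangles $S_\eta$ to $h^{-1}(C)$, your map $h|_{\tilde C}$ is no longer a surjection onto $C$, but onto the larger set $C\cup\bigcup_\eta\sigma_\eta$. You would then need to argue separately that this larger set is simply connected. This is probably fixable (the extra $\sigma_\eta$'s that stick out of $C$ attach along a single endpoint and are whiskers), but the cleaner route — the one the paper follows — is to work with $C_2=h^{-1}(C)$ itself, whose image under $h$ is exactly $C$, and face the compactness issue head-on as described above.
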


\begin{proof}
We first claim that $h$ has contractible fibers. Indeed, for every $x\in\Sigma$, either 
\begin{itemize}
\item $h^{-1}(x)$ is a point, or  
\item $x$ belongs to the interior of a splitting leaf segment $\sigma$ on a band $B$; then, denoting by $C_x\in\Pi(B)$ the subset of the partition of $B$ that contains $x$, and by $\{x_i\}_{i\in I}$ the other copies of $x$ in $\Sigma_2$ (in the closures of other subsets of the partition $\Pi(B)$), the preimage $h^{-1}(x)$ is a cone with center $x$ over the points $x_i$ (indeed, the above observation implies that for each $i\in I$ there is a unique triangle $S_\eta$ that contains both $x$ and $x_i$), or
\item $x$ is a basepoint of a splitting germ in some $\calk_v$; then, denoting by $K_x\in\Pi(\calk_v)$ the subset of the partition that contains $x$, and by $\{x_i\}$ the other copies of $x$ in $\Sigma_2$ (in the closures of other subsets of the partition $\Pi(\calk_v)$), the preimage $h^{-1}(x)$ is a cone with center $x$ over the points $x_i$. 
\end{itemize}

Consider two finite trees $K_T\subseteq \ol T$, $K_R\subseteq R$,
and let $C:=\Sigma\cap (K_T\times K_R)$.
Let $C_2=h\m(C)$. Then $C$ and $C_2$ are metrizable, finite-dimensional, locally contractible, and $C$ is compact. If $C_2$ is compact, then Lemma~\ref{thm_cell-like} applies and says that $h$ is a homotopy equivalence between
$C_2$ and $C$.
If $C_2$ is not compact, then $C$ contains a special point $v$, and $h\m(v)$ consists of a compact set together with 
infinitely many hanging segments of the form $[a_{\eta_i},b_{\eta_i})$, 
with $b_{\eta_i}$ identified with $v$, and $[a_{\eta_i},b_{\eta_i})$ open in $C_2$ (notice that it is important here to have assumed that $K_T$ is a finite subtree and not just a compact subtree).
We define $C'_2\subseteq C_2$ as the complement of all those hanging segments, a compact set.
Since $h_{|C'_2}$ still has contractible fibers, $h$ induces a homotopy equivalence between $C'_2$ and $C$.
Since $C$ is simply connected by Lemma \ref{contractible}, so are $C'_2$ and $C_2$.

Let now $\gamma:S^1\ra \Sigma_2$ be a loop. Then one can homotope $\gamma$ so that $p_T\circ h\circ\gamma(S^1)$
is a finite subtree of $T$, and $p_R\circ h\circ\gamma(S^1)$
is a finite subtree of $R$. In particular, $h\circ\gamma(S^1)$ is contained in a simply connected compact set $C$ as above,
and $\gamma(S^1)$ is contained in $C_2=h\m(C)$. Since $C_2$ is simply connected, this concludes the proof of Lemma \ref{sigma-2}.
\end{proof}

We then define the deformation retract $\Sigma_3\subseteq \Sigma_2$ by collapsing the free edge $[a_\eta,b_\eta]$ of each triangle $S_\eta$ onto its two other edges
$[a_\eta,c_\eta]\cup [b_\eta,c_\eta]$.
In particular $\Sigma_3$ is  simply connected.  
Notice also that for each leaf $l\subseteq \Sigma$, the preimage $h\m(l)$ is connected, and so is $h\m(l)\cap \Sigma_3$. 

The space $\Sigma_3$ has a structure of a band complex in which the base trees are the images of $\ol K$ for $K\in \Pi(\calk_v)$
and the bands are the images of $\ol C$ for $C\in \Pi(B)$.
It may happen that some base tree $\ol K$ is reduced to a point $\{x\}$. This happens exactly when every germ of segment at $x$ in $\calk_v$ is a splitting germ.
We claim that there is no band joining two base trees reduced to a point. Indeed, let $C=[x,y]$ be such a singleton in $\Sigma_2$, and $B$ the band of $\Sigma$ such that
$C\in \Pi(B)$. Let $\eta$ be a germ in $B$ containing $x$. The germ $\eta'$ at $y$ facing $\eta$ is not a splitting germ, so the base tree containing $y$ in $\Sigma_3$ is not reduced to a point. 

We finally define $\Sigma'_\Gamma\subseteq\Sigma_3$ by removing all base trees $\ol K$ reduced to a point and on which only one singleton is incident, together with the corresponding singletons (recall that $\Gamma$ stands for our initial collection of splitting germs).
Notice from the construction that no point of $\Sigma'_\Gamma$ is terminal in its complete $\Sigma'_\Gamma$-leaf.

\subsubsection{The space $\Sigma'_\Gamma$ is isomorphic to some $\Sigma(T,R'_\Gamma)$.}

We denote by $h_{\Gamma}:\Sigma'_\Gamma\to\Sigma$ the restriction of $h$ to $\Sigma'_\Gamma$. We then let $\pi_T=p_T\circ h_{\Gamma}:\Sigma'_{\Gamma}\ra \ol T$ and  $\pi_R=p_R\circ h_{\Gamma}:\Sigma'_{\Gamma}\ra R$. We define $R'_{\Gamma}:=\Sigma'_{\Gamma}/{\sim}$, where $x\sim y$ if $\pi_R(x)=\pi_R(y)$ and $x,y$ are in the same connected component
of ${\pi_R}\m(\{\pi_R(x)\})$. This has a natural structure of a graph, whose vertices are the connected components of $\Sigma'_\Gamma\cap {\pi_R}^{-1}(V(R))$. We denote by $p_{R'}:\Sigma'_{\Gamma}\ra R'_{\Gamma}$ the quotient map. 
The map $h_\Gamma$ yields a natural $G$-equivariant map $f_\Gamma:R'_\Gamma\to R$, which sends vertex to vertex and edge to edge. By construction, the following diagrams commute:
$$\xymatrix{
  \Sigma'_\Gamma \ar@{->}[r]^{h_{\Gamma}} \ar[d]^{p_{R'}} 
                 &\Sigma \ar[d]^{p_R} \\ 
R'_\Gamma \ar[r]^{f_\Gamma} & R
}\quad
\xymatrix{
  \Sigma'_\Gamma \ar@{->}[r]^{h_\Gamma} \ar[d]^{\pi_{T}} 
  &\Sigma \ar[d]^{p_T} \\ 
T \ar@{=}[r] & T
}
$$

\begin{lemma}\label{splitting-core}
The graph $R'_\Gamma$ is a Grushko tree. In addition, we have $\Sigma'_\Gamma=\Sigma(T,R'_\Gamma)$.
\end{lemma}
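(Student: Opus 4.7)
The plan is to follow the same strategy as in Proposition~\ref{Rips-iteration}, which treats the analogous statement for the pruning process. The argument splits naturally into two parts: first showing that $R'_\Gamma$ is a Grushko tree, and then producing a natural homeomorphism onto $\Sigma(T,R'_\Gamma)$.

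For the first part, I would proceed as in Lemma~\ref{is-Grushko}. Connectedness of $R'_\Gamma$ follows from connectedness of $\Sigma'_\Gamma$, which in turn follows because $\Sigma_3$ deformation retracts from the simply connected $\Sigma_2$ (Lemma~\ref{sigma-2}), and $\Sigma'_\Gamma$ is obtained from $\Sigma_3$ only by removing isolated singletons together with their attached pieces; simple connectedness of $R'_\Gamma$ then follows by lifting loops to the simply connected $\Sigma'_\Gamma$ and projecting disks. The map $f_\Gamma\colon R'_\Gamma\to R$ sends vertex to vertex and edge to edge, so triviality of edge stabilizers and peripherality of point stabilizers are inherited from $R$. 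Cocompactness holds because each partition $\Pi(\calk_v)$ has only finitely many $G_v$-orbits of pieces (there are only finitely many orbits of splitting germs since there are finitely many orbits of singular points by Lemma~\ref{singular} and finitely many directions at each), and likewise for the partitions $\Pi(B)$. The key point—minimality, i.e.\ the absence of terminal edges—should reduce to showing that every boundary segment of a band of $\Sigma'_\Gamma$ is contained in $\Omega(\Sigma)$. The new boundary segments are exactly the splitting leaf segments $\sigma_\eta$, and by Remark~\ref{rk_branch_leaf} the base point $x_\eta$ is a branch point in its $\Sigma$-leaf contained in three bands $B_0,B,B'$, so concatenating $\sigma_\eta\subseteq B$ with a $\Sigma$-leaf semi-line through $x_\eta$ sitting in $B'$ (given by Proposition~\ref{prop_infini}), and a further semi-line at the other endpoint of $\sigma_\eta$, yields a $\Sigma$-leaf line through $\sigma_\eta$; the original boundary segments of $\Sigma$-bands that survive in $\Sigma'_\Gamma$ are in $\Omega(\Sigma)$ by the quadratic-type assumption (no leaf of $\Sigma$ has a terminal point).

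For the second part, I would define $\Phi\colon\Sigma'_\Gamma\to\ol T\times R'_\Gamma$ by $\Phi(x)=(\pi_T(x),p_{R'}(x))$. Injectivity is immediate since fibers of $p_{R'}$ embed into $\ol T$ via $\pi_T$ by construction. To show $\Phi(\Sigma'_\Gamma)\subseteq\Sigma(T,R'_\Gamma)$, I would first verify it for $x\in h_\Gamma\m(\Omega(\Sigma))$: through such a point there is a $\Sigma$-leaf line $l$ with endpoints in some $(\alpha,\omega)\in L^2(T)$; the preimage $h_\Gamma\m(l)$ contains a line that projects injectively and isometrically to $R'_\Gamma$ (because $h_\Gamma$ is injective on leaves away from splitting segments, and splitting segments are crossed transversally by lines of $\Sigma$), which proves $(\pi_T(x),p_{R'}(x))\in\Sigma(T,R'_\Gamma)$. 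The general case then follows from convexity of fibers of $q_{R'}\colon\Sigma(T,R'_\Gamma)\to R'_\Gamma$ together with the fact that each fiber of $p_{R'}$ is the convex hull of points lying on such lines (its extremal points project to boundary segments of $\Sigma'_\Gamma$-bands, which we have just argued lie in $\Omega(\Sigma)$). For the converse inclusion, I would invoke Proposition~\ref{prop_coeur}: it suffices to check that $\Phi(\Sigma'_\Gamma)$ is a closed connected subset of $\ol T\times R'_\Gamma$ with connected fibers (both projections), since $\Sigma(T,R'_\Gamma)$ is then the core. Connected fibers of $q_{R'}$ in the image hold by definition of $R'_\Gamma$, and connected fibers of $q_T$ come from connectedness of $\Sigma$-leaves together with the observation that cutting along splitting leaf segments preserves connectedness of $\pi_T$-fibers (each cut is undone by the leaf structure on the complementary sides, the facing germs assumption ensures that no extra identifications are forced).

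The main obstacle I anticipate is the cocompactness/minimality verification: one must carefully track how bases of bands are subdivided and argue that no new terminal edges appear. In particular, the case where a piece $\ol K\in\Pi(\calk_v)$ is reduced to a point and only one singleton band is attached has to be handled by explicitly removing these singleton pairs (as done in the construction), and one must verify that this removal does not disconnect $\Sigma'_\Gamma$ nor kill the homotopy type in an essential way—this is why the deformation $\Sigma_2\to\Sigma_3$ and the careful definition of $\Sigma'_\Gamma$ were set up. Once cocompactness and the absence of terminal edges are established, minimality follows and $R'_\Gamma$ is a Grushko tree.
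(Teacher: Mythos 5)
Your strategy matches the paper's: both parts proceed as in Proposition~\ref{Rips-iteration}, and both halves of your argument are in the right order of ideas. There are, however, two places where your justification differs from the paper's, one of which is a genuine (if fixable) misstep.

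For the Grushko-tree part, you reduce minimality to showing that boundary segments of $\Sigma'_\Gamma$-bands lie in $\Omega(\Sigma)$, in analogy with Lemma~\ref{lem_bords}. The paper argues more directly: if a base tree $K$ of $\Sigma'_\Gamma$ is not a point, any germ $\eta$ in $K$ lies (via Lemma~\ref{obs-germs}, since $\Sigma$ is quadratic) in two $\Sigma$-bands, which descend to two distinct $\Sigma'_\Gamma$-bands incident on $K$; and if $K$ is a single point, the removal step in the construction of $\Sigma'_\Gamma$ from $\Sigma_3$ guarantees at least two singletons attach to it. Your route also works but is less economical; note moreover that in the quadratic case $\Omega(\Sigma)=\Sigma$ (every leaf is locally finite with no terminal point, hence carries a bi-infinite path through any of its points), so the membership statement you wish to prove is automatic, and the concatenation argument you sketch is not needed.

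For the identification $\Sigma'_\Gamma=\Sigma(T,R'_\Gamma)$, the real issue is the parenthetical ``splitting segments are crossed transversally by lines of $\Sigma$.'' This is false: a splitting semi-line is itself contained in $\Sigma$-leaf lines (it runs \emph{along} the splitting leaf segments), so $h_\Gamma$ is not injective on every $\Sigma$-leaf line, and the existence of a line in $h_\Gamma\m(l)$ through the given point of $\Sigma'_\Gamma$ does not follow for the stated reason. The paper sidesteps this entirely by working with $\Sigma'_\Gamma$-leaves rather than $\Sigma$-leaves: since no point of $\Sigma'_\Gamma$ is terminal in its complete $\Sigma'_\Gamma$-leaf (this is exactly what the passage from $\Sigma_3$ to $\Sigma'_\Gamma$ arranges), every point $z\in\Sigma'_\Gamma$ lies on a bi-infinite $\Sigma'_\Gamma$-leaf line, which isometrically embeds in $R'_\Gamma$ and whose $h_\Gamma$-image gives endpoints $\alpha,\omega$ with $\calq(\alpha)=\calq(\omega)=\pi_T(z)$, proving $\Phi(\Sigma'_\Gamma)\subseteq\Sigma(T,R'_\Gamma)$ directly with no detour through $\Omega(\Sigma)$ or through convexity of fibers. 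Your inclusion $\Sigma(T,R'_\Gamma)\subseteq\Phi(\Sigma'_\Gamma)$ via Proposition~\ref{prop_coeur} (closed, connected, connected fibers) is exactly the paper's argument and is correct.
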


\begin{proof}
We first observe that the graph $R'_\Gamma$ is simply connected, hence a tree: this follows from the simple connectedness of $\Sigma'_\Gamma$, by a proof similar to the proof of Lemma \ref{is-Grushko}. 
Since $f_\Gamma$ maps vertex to vertex and edge to edge, edge stabilizers of $R'_\Gamma$ are trivial and point stabilizers are peripheral. Since peripheral groups of $(G,\calf)$ 
fix a point in $\Sigma'_\Gamma$, they also fix a point in $R'_\Gamma$. 
Since the $f_\Gamma$-preimage of any edge in $R$ is a finite union of edges in $R'_\Gamma$, the quotient graph $R'_\Gamma/G$ is compact, so to prove minimality of $R'_\Gamma$, it is enough to check that $R'_\Gamma$ has no terminal vertex.
If a base tree $K$ of $\Sigma'_\Gamma$ is not reduced to a point, consider any germ $\eta$ in $K$, and view it as a germ in some $\calk_v\subseteq \Sigma$. 
Then $\eta$ is contained in two distinct bands of $\Sigma$ which yield two bands in $\Sigma'_\Gamma$ containing $\eta$. 
If on the other hand $K=\{x\}$, then because it has not been removed when defining $\Sigma'_\Gamma$ from $\Sigma_3$, there are at least two singletons in $\Sigma_3$ incident on $K$, which both belong to $\Sigma'_\Gamma$. 
This proves that $R'_\Gamma$ is a Grushko tree. 

We now prove  that 
 $\Sigma'_\Gamma$ coincides with $\Sigma(T, R'_\Gamma)$.
As noted above, for each complete $\Sigma$-leaf $l\subseteq \Sigma$, the preimage $h_\Gamma\m(l)$ is connected
which implies that fibers of $\pi_T$ are connected. Fibers of $p_{R'}$ are connected by construction, so 
the image of $\Sigma'_\Gamma$ in $\ol T\times R'_\Gamma$ under $(\pi_T, p_{R'})$ is a closed connected subset with connected fibers hence contains the core $\calc(\ol T\times R'_\Gamma)$, which is equal to $\Sigma(T,R'_\Gamma)$ by Proposition~\ref{prop_coeur}.

Conversely, let $(x,u)\in \ol T\times R'_\Gamma$ be a point in the image of $\Sigma'_\Gamma$, and let $z\in\Sigma'_\Gamma$ be a preimage of $(x,u)$. Let $\call_z$ be the complete $\Sigma'_\Gamma$-leaf through $z$ in $\Sigma'_\Gamma$. Since no point of $\Sigma'_\Gamma$ is terminal in its complete $\Sigma'_\Gamma$-leaf, it follows that $\call_z$ contains a bi-infinite line $l$ through $z$, and this line isometrically embeds in $R'_\Gamma$; we denote its endpoints by $\alpha,\omega$. Its image in $R$ under $f'_\Gamma$ is $[\alpha,\omega]_R$, and this is also the image of $h_{\Gamma}(l)\subseteq\Sigma$ under $p_R$. It follows that $\calq(\alpha)=\calq(\omega)=x$, 
so $x\in \Omega_u$. Therefore $(x,u)\in\Sigma(T,R'_\Gamma)$. This completes the proof of the lemma.  
\end{proof}

\subsubsection{Splitting all germs at once}

We recall that $\Gamma_{max}$ denotes the collection of all splitting germs in $\Sigma$.

\begin{lemma}\label{no-facing}
The collection $\Gamma_{max}$ contains no pair of facing splitting germs.
\end{lemma}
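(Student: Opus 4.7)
The plan is to argue by contradiction: assuming that $\eta,\eta'\in\Gamma_{\max}$ are facing splitting germs, I will construct a non-trivial $(G,\calf)$-free splitting compatible with $T$, contradicting the standing assumption of Section~\ref{sec_process}. This adapts the proof of Lemma~\ref{lem_non-degenerate}, where a single degenerate splitting germ gave a global cut point in $\Sigma$; here the cut will be the $G$-orbit of the leaf segment $\sigma=\{x\}\times e$ shared by the two splitting germs. Setup: let $x\in\calk_v$, $y\in\calk_{v'}$ be the base points of $\eta,\eta'$, and $B=K_e\times e$ (with $e=[v,v']$ in $R$) the band containing both germs in opposite bases. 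Set $z:=p_T(x)=p_T(y)\in K_e$; the facing condition means $\eta$ and $\eta'$ correspond to the same germ $\eta_T$ at $z$ in $K_e\subseteq\ol T$. Write $K_e=K^+\cup K^-$ for the closures of the two components of $K_e\setminus\{z\}$, with $K^+$ the side containing $\eta_T$; split $B$ correspondingly into the two half-bands $B^\pm=K^\pm\times e$ meeting along $\sigma$. By Lemma~\ref{lem_non-degenerate} and the standing assumption, both germs are non-degenerate, so there exist bands $B_0\neq B$ and $B_0'\neq B$ containing $\eta,\eta'$ respectively, with $x$ an endpoint of $\ul B_0\subseteq\calk_v$ and $y$ an endpoint of $\ul B_0'\subseteq\calk_{v'}$.

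I would then form $\hat\Sigma$ from $\Sigma$ by $G$-equivariantly cutting open along $G\cdot\sigma$ (replacing $B$ by $B^+\sqcup B^-$, each attached to $\calk_v,\calk_{v'}$ only on the corresponding sub-bases $K^\pm\times\{v\},K^\pm\times\{v'\}$). The natural quotient map $\hat\Sigma\to\Sigma$ should be a homotopy equivalence, by an argument analogous to the proof of Lemma~\ref{sigma-2}, so $\hat\Sigma$ remains simply connected. Next, following the construction in Lemma~\ref{lem_non-degenerate}, I would build a bipartite simplicial $G$-graph $S$: one vertex per connected component of $\hat\Sigma\setminus(G\cdot x\cup G\cdot y)$, one vertex per point in $G\cdot x\cup G\cdot y$, with an edge joining the cut point $p$ to each component to which it is adjacent. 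Simple connectedness of $\hat\Sigma$ forces $S$ to be a tree. The edge of $S$ joining $x$ to the component $Y$ containing $B_0$ (which also contains $B^+$ and, via $B^+$, the symmetric part near $y$ and $B_0'$) has trivial stabilizer: any stabilizing element setwise fixes $B_0$ and hence fixes the edge of $R$ corresponding to $B_0$, which has trivial stabilizer since $R$ is Grushko. Collapsing all edges outside the $G$-orbit of this edge yields a minimal non-trivial $(G,\calf)$-free splitting $S'$.

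Finally, compatibility of $S'$ with $T$ should follow as in the closing paragraph of the proof of Lemma~\ref{lem_non-degenerate}: assign to each component-vertex of $S$ the $p_T$-image in $T$ of the corresponding subset of $\hat\Sigma$, and to each point-vertex the singleton $\{z\}$ (or a $G$-translate). Distinct components of $\hat\Sigma\setminus(G\cdot x\cup G\cdot y)$ have $p_T$-images meeting in at most one point by the cut construction, so the general compatibility lemma at the end of the proof of Lemma~\ref{lem_non-degenerate} applies and yields that $S'$ is compatible with $T$, delivering the required contradiction.

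The hard part will be showing that $\hat\Sigma\setminus(G\cdot x\cup G\cdot y)$ actually decomposes into more than one $G$-orbit of components; that is, that the ``$\eta$-side'' at $x$ and the ``$\neg\eta$-side'' really lie in different global components. Locally the cut is clean: using that $x$ is an endpoint of $\ul B_0$ and of $K^+\times\{v\}$ (but not of $K^-\times\{v\}$), the germ direction $\eta$ in $\calk_v$ is the only one attaching $B_0$ and $B^+$ at $x$, with a symmetric picture at $y$. The subtle global point is to rule out bridging by other bands containing $x$ or $y$ non-extremally in their bases whose leaves might connect the two sides, and to handle the case where $x$ or $y$ is a special vertex with infinite stabilizer; the use of simple connectedness of $\hat\Sigma$, together with the specific geometry forced by the facing of $\eta$ and $\eta'$ (both germs have the same image $\eta_T$ in $K_e$, which is what makes the seam $\sigma$ a coherent $G$-equivariant object to cut along), is what I expect to carry the argument through.
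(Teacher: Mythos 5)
The paper's proof is far shorter and avoids any new geometric construction: after noting $\eta'\notin G\cdot\eta$ (if $\eta'=g\eta$ then $g$ stabilizes the band split by both germs, which has trivial stabilizer), it runs the splitting process with $\Gamma=G\cdot\eta$, observes via Lemma~\ref{splitting-core} that $\Sigma'_\Gamma=\Sigma(T,R'_\Gamma)$, notes that $\eta'$ has become a \emph{degenerate} splitting germ in $\Sigma'_\Gamma$, and then invokes Lemma~\ref{lem_non-degenerate} directly. All of the delicate cut-and-reglue work you are trying to redo by hand is absorbed into the already-established machinery.

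Your construction of $\hat\Sigma$ has a real gap. Doubling the interior of $\sigma$ while leaving $\calk_v,\calk_{v'}$ and the endpoints $x,y$ single does not produce a simply connected complex, so the analogy with Lemma~\ref{sigma-2} fails: the quotient map $\hat\Sigma\to\Sigma$ has fiber equal to two points over every interior point of $\sigma$, and a two-point set is not contractible, so Lemma~\ref{thm_cell-like} gives no homotopy equivalence. Concretely, in the model case where $\Sigma$ consists of the single band $B$ together with its two base trees, your $\hat\Sigma$ is $B^+\sqcup B^-$ glued precisely along $\{x,y\}$, which is homotopy equivalent to a circle. A correct cut must also split the base trees $\calk_v,\calk_{v'}$ at $x,y$ and reglue via the triangles $S_\eta$ of Section~\ref{sec_process} — which is exactly what the splitting process does. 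Beyond this, your self-identified ``hard part'' (that removing $G\cdot x\cup G\cdot y$ disconnects $\hat\Sigma$) is left unproven, and it is not merely a detail: since $\eta$ and $\eta'$ are non-degenerate, $x$ and $y$ are \emph{not} local cut points of $\Sigma$, and the degenerate structure that produces a cut point only appears after the intermediate splitting step you are trying to skip.
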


\begin{proof}
Assume towards a contradiction that there are two facing splitting germs $\eta,\eta'$ in a band $B$ of $\Sigma$. Then $\eta'\notin G.\eta$: indeed, if $\eta'=g\eta$, then $g$ would stabilize $B$ because $B$ is the unique band split by
$\eta$ and is also the unique band split by $\eta'$. Hence we would have $g=1$ and $\eta=\eta'$, a contradiction.
Thus the collection $\Gamma:=G.\eta$ contains no pair of facing germs, so $\Sigma'_{\Gamma}$ is well-defined. In view of Lemma \ref{splitting-core}, we have $\Sigma'_{\Gamma}=\Sigma(T,R_{\Gamma})$. Now it is easy to see that $\eta'$ is a degenerate splitting germ in $\Sigma'_{\Gamma}$. By Lemma \ref{lem_non-degenerate}, this implies that $T$ is compatible with a $(G,\calf)$-free splitting, contrary to our hypothesis.
\end{proof}

\begin{de}
We say that the band complex $\Sigma':=\Sigma'_{\Gamma_{\max}}$
  is obtained from $\Sigma$ by \emph{applying one step of the splitting process}.
\end{de}

Notice that the complex $\Sigma'$ obtained from this construction is again of quadratic type, and it does not contain any degenerate splitting germ because $\Sigma'=\Sigma(T,R')$ for some Grushko $(G,\calf)$-tree $R'$, and $T$ is not compatible with any $(G,\calf)$-free splitting. Therefore, we can iterate the construction to obtain a sequence of band complexes $\Sigma^{(i)}$, where $\Sigma^{(0)}:=\Sigma$, and for each $i\in\mathbb{N}$, the band complex $\Sigma^{(i+1)}$ is obtained from $\Sigma^{(i)}$ by applying one step of the splitting process.
We denote by $h_{\Sigma^{(i)}}:\Sigma^{(i)}\ra \Sigma$  and by $f_{R^{(i)}}:R^{(i)}\ra R$ the corresponding maps.

\begin{figure}[htbp]
  \centering
  \includegraphics{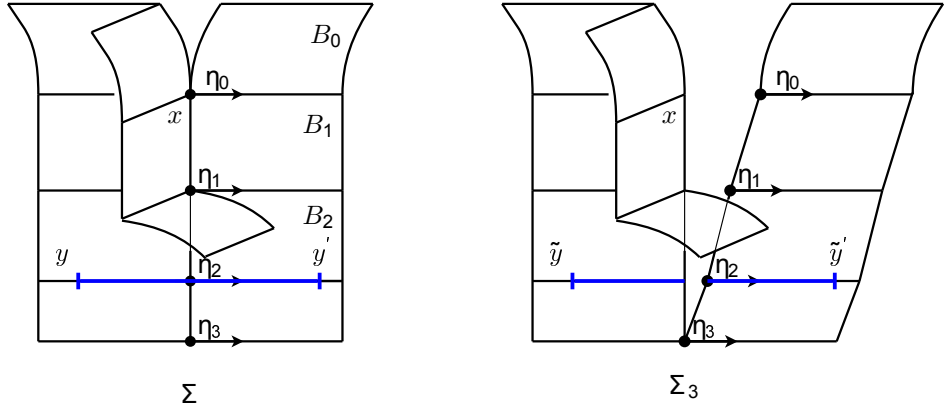}
  \caption{Virtual splitting germs, and a segment $[y,y']$ that gets separated in $\Sigma^{(3)}$.  The direction $\mu$ shows that $\Sigma$ is not clean.}
  \label{fig_virtual}
\end{figure}
 
\subsection{Clean band complexes}

The goal of the present section is to show that, up to replacing $\Sigma$ by $\Sigma^{(i)}$ (i.e.\ splitting for long enough), we can ensure that $\Sigma$ satisfies a few additional properties. These will only be used in Section~\ref{sec-11-surf}. Consider a splitting germ $\eta_0$ in $\Sigma$, $B_1$ the band split by $\eta_0$, and $\eta_1$ the germ facing $\eta_0$ in $B_1$.
For $i\geq 1$, one can then define inductively  $B_{i+1}$ as the band containing $\eta_i$ and distinct from $B_i$, 
and $\eta_{i+1}$ as the germ facing $\eta_i$ in $B_{i+1}$ (see Figure \ref{fig_virtual}).
Then for all $i\geq 1$, $\eta_i$ becomes a splitting germ in $\Sigma^{(i)}$.
We say that $\eta_i$ is a \emph{virtual splitting germ} of level $i$. 
The $\Sigma$-leaf semi-line defined by the concatenation of the leaf segments joining the base points of $\eta_0,\eta_1,\dots,\eta_i\dots$ is called the \emph{splitting semi-line} at $\eta_0$.
The following lemma easily follows.

\begin{lemma}\label{splitting-iterate}
Let $[y,y']_{\calk_v}$ be a segment and $i_0$ be such that $\eta_{i_0}$ is contained in $(y,y')$. 
Then for all $i> i_0$, and for any preimages $\Tilde y,\Tilde y'$ of $y,y'$ in $\Sigma^{(i)}$,
$p_{R^{(i)}}(\Tilde y)\neq p_{R^{(i)}}(\Tilde y')$.\qed
\end{lemma}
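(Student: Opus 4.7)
My plan is to argue by induction on $i$, with all the content in the base case $i=i_0+1$. For the inductive step, the natural map $h:\Sigma^{(i+1)}\to\Sigma^{(i)}$ sends each base tree of $\Sigma^{(i+1)}$ into a single base tree of $\Sigma^{(i)}$ (since a base tree of $\Sigma^{(i+1)}$ is by construction the closure of a piece of a partition $\Pi(\calk^{(i)}_u)$). Thus if preimages $\Tilde y,\Tilde y'$ in $\Sigma^{(i)}$ lie in a common base tree, so do their images in $\Sigma^{(i_0+1)}$, reducing us to the case $i=i_0+1$.

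So suppose for contradiction that preimages $\Tilde y,\Tilde y'$ of $y,y'$ in $\Sigma^{(i_0+1)}$ lie in a common base tree $\calk^{(i_0+1)}_w$, which is the closure of some $K\in\Pi(\calk^{(i_0)}_u)$. Then both $y,y'$ belong to the image of $\calk^{(i_0)}_u$ in $\calk_v$ under $h_{\Sigma^{(i_0)}}$ (this map being injective on each base tree because base trees of $\Sigma^{(j+1)}$ embed into those of $\Sigma^{(j)}$). Identifying $\calk^{(i_0)}_u$ with this subtree of $\calk_v$, convexity gives $[y,y']\subseteq\calk^{(i_0)}_u$, so $x_{i_0}\in\calk^{(i_0)}_u$, with both directions $d_y$ and $d_{y'}$ toward $y$ and $y'$ present. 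The key local claim at $x_{i_0}$ is: among the base trees of $\Sigma^{(i_0)}$ whose image in $\calk_v$ contains $x_{i_0}$, exactly one (the ``central'' one) contains two or more directions at $x_{i_0}$; the others are ``cut-off'' base trees that are terminal at $x_{i_0}$. This comes from the inductive structure of the splitting process: at each stage $j<i_0$, splitting germs at (preimages of) $x_{i_0}$ each cut off one direction, always leaving at most one base tree with multiple directions at $x_{i_0}$. Applying this to $\calk^{(i_0)}_u$ (which has the two directions $d_y,d_{y'}$) and to the base tree carrying $\eta_{i_0}$ as a splitting germ of $\Sigma^{(i_0)}$ (whose base point is non-terminal, hence the base tree contains at least two directions at $x_{i_0}$), both must be the central base tree, so they coincide.

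Since $\Gamma_{\max}$ contains all splitting germs of $\Sigma^{(i_0)}$, in particular $\eta_{i_0}$, the partition $\Pi(\calk^{(i_0)}_u)$ refines the cut induced by $\eta_{i_0}$, which splits $\calk^{(i_0)}_u$ at $x_{i_0}$ into the open component $\calk^{(i_0)}_u(\eta_{i_0})$ in the direction of $\eta_{i_0}$ and its complement $\calk^{(i_0)*}_u(\eta_{i_0})$. By hypothesis $\eta_{i_0}\subseteq(y,y')$, so its direction equals $d_y$ or $d_{y'}$; say $d_{\eta_{i_0}}=d_{y'}$. Then $y'\in\calk^{(i_0)}_u(\eta_{i_0})$ and $y\in\calk^{(i_0)*}_u(\eta_{i_0})$, so $y$ and $y'$ lie in distinct pieces of $\Pi(\calk^{(i_0)}_u)$, whence $\Tilde y$ and $\Tilde y'$ lie in distinct base trees of $\Sigma^{(i_0+1)}$, contradicting the assumption. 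The main obstacle is the local structural claim identifying $\calk^{(i_0)}_u$ with the base tree carrying $\eta_{i_0}$; everything else is essentially bookkeeping about how the splitting operation refines base trees.
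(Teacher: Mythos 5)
Your proof is correct. The paper leaves this lemma unproved (it is marked with a bare \qed, as "easily following" from the definition of virtual splitting germs), so there is no paper argument to compare against, but your argument is a faithful filling-in. The one genuinely nontrivial ingredient is exactly the one you isolate: among the base trees of $\Sigma^{(i_0)}$ whose image in $\calk_v$ contains $x_{i_0}$, at most one carries more than one direction at $x_{i_0}$. This holds because at each splitting stage the piece of $\Pi(\calk)$ containing $x_{i_0}$ in its interior is the unique intersection of the closed sides $\calk^*(\eta)$ over all splitting germs $\eta$ based at $x_{i_0}$, while every other piece whose closure meets $x_{i_0}$ lies in a single open component $\calk(\eta)$ and so is terminal at $x_{i_0}$; since base trees of the next stage only shrink, the count can never exceed the initial one. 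With this in hand, both $\calk^{(i_0)}_u$ and the base tree of $\Sigma^{(i_0)}$ carrying the splitting germ $\eta_{i_0}$ are forced to be this unique "central" tree, and the final contradiction is clean: any single piece $K\in\Pi(\calk^{(i_0)}_u)$ lies in one component of $\calk^{(i_0)}_u\setminus\{x_{i_0}\}$, so $\ol K$ could contain both $y$ and $y'$ only if $x_{i_0}\in\{y,y'\}$, which is excluded by $\eta_{i_0}\subseteq(y,y')$.
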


\begin{de}[\textbf{\emph{Liftable leaves}}]\label{def-unsplit}
A $\Sigma$-leaf segment $[x,y]_\Sigma$ is \emph{liftable} if for all $i\in\mathbb{N}$, there exists a $\Sigma^{(i)}$-leaf segment $[x_i,y_i]\in\Sigma^{(i)}$ that maps isometrically to $[x,y]_\Sigma$ under the natural map from $\Sigma^{(i)}$ to $\Sigma$. Otherwise $[x,y]_\Sigma$ is \emph{unliftable}.\\
An algebraic leaf $(\alpha,\omega)\in L^2(T)$ is \emph{liftable} (in the splitting process) if $[\alpha,\omega]_\Sigma$ is liftable.
\end{de}

\begin{rk} 
Consider an algebraic leaf $(\alpha,\omega)\in L^2(T)$.
If $[\alpha,\omega]_\Sigma$ lifts to $\Sigma^{(i)}$, then its lift is necessarily $[\alpha,\omega]_{\Sigma^{(i)}}$,
so $(\alpha,\omega)$ is liftable if and only if $[\alpha,\omega]_{\Sigma^{(i)}}$ embeds isometrically under
$h_{\Sigma^{(i)}}:\Sigma^{(i)}\ra \Sigma$.
\end{rk}

\begin{de}[\textbf{\emph{Clean band complexes}}]\label{clean}
Let $T$ be a tree of quadratic type, not compatible with any free splitting, let $R$ be a Grushko tree, and let $\Sigma:=\Sigma(T,R)$. 
\\ We say that $\Sigma$ is \emph{clean} if the following two conditions hold:
    \begin{itemize}
\item For all $i$, $\Sigma^{(i)}$ has the same number of orbits of splitting germs and splitting semi-lines as $\Sigma$. 
In particular, if a leaf segment of $\Sigma$  meets no splitting semi-line, it is liftable.
\item For every splitting semi-line $\sigma$ and every $x\in\sigma$, every transverse direction at $x$ can be pushed to infinity along $\sigma$ in the following sense: for all $y\in\sigma$, there exists a transverse direction $\mu_y$ at $y$ in $\Sigma$, such that $p_T(\mu_y)=p_T(\mu)$.
%
%
\end{itemize}
\end{de}


In Figure~\ref{fig_virtual}, the band complex $\Sigma$ on the left is not clean
since the red direction $\mu$ cannot be pushed to infinity along the splitting semi-line. The goal of the present section is to prove the following fact.

\begin{prop}\label{cleanify}
There exists $i\in\mathbb{N}$ such that $\Sigma^{(i)}$ is clean.
\end{prop}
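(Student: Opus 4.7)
The plan is to show that a combinatorial invariant of $\Sigma^{(i)}$ stabilizes under iteration of the splitting process, and that stabilization forces both cleanness conditions. Let $s_i$ denote the number of $G$-orbits of splitting germs in $\Sigma^{(i)}$, and $n_i$ the number of $G$-orbits of splitting semi-lines; since each splitting germ is the basepoint of a unique splitting semi-line, one has $n_i\leq s_i$, so I focus on bounding $s_i$.

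First I set up a facing-germ correspondence. For each splitting germ $\eta$ in $\Sigma^{(i)}$ based at $x\in\calk_v$ with split band $B_1$, let $\eta'$ be the germ at the opposite endpoint $y$ of the splitting leaf segment $\sigma_\eta=B_1\cap\call_x$. I will verify that $\eta'$ is a splitting germ in $\Sigma^{(i+1)}$: after cutting $B_1$ along $\sigma_\eta$, the point $y$ becomes extremal in the base of one of the two new sub-bands at $y$ but not in the other, precisely because $\eta$ is non-degenerate by Lemma~\ref{lem_non-degenerate} and the standing assumption that $T$ is incompatible with any $(G,\calf)$-free splitting. The assignment $\eta\mapsto\eta'$ descends to an injection of $G$-orbits, giving $s_i\leq s_{i+1}$. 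A uniform upper bound on $s_i$ comes from Lemma~\ref{singular} together with the observation that basepoints of splitting germs project under $p_T$ to a bounded collection of $G$-orbits of points of $T$, controlled by the finite orbit data of branch points of $T$ (\cite[Corollary~4.5]{Hor14-1}). Hence $s_i$ stabilizes for $i\geq i_0$ for some $i_0$, at which point the facing-germ map becomes a bijection on orbits, and this extends to a bijection on orbits of splitting semi-lines.

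This yields the first cleanness condition at $\Sigma^{(i_0)}$; the liftability of any leaf segment avoiding all splitting semi-lines then follows from Lemma~\ref{splitting-iterate}, since virtual splitting germs along the stable family of splitting semi-lines are the only new singular data that can be introduced by further iteration. For the second condition, suppose that $\mu$ is a transverse direction at a point $x$ of a splitting semi-line $\sigma$ in $\Sigma^{(i_0)}$ that cannot be pushed along $\sigma$ past some first failure point $z$. Then the band $B_\mu$ carrying $\mu$ from $x$ terminates at $z$ in the $\mu$-direction; since $z$ is the basepoint of a virtual splitting germ in $\sigma$, this termination exhibits $z$ as a singular point with an additional splitting germ transverse to $\sigma$ appearing in $\Sigma^{(k)}$ for some $k>i_0$, contradicting the stability of $s_i$.

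The main obstacle is establishing the uniform upper bound on $s_i$: since the trees $R^{(i)}$ become arbitrarily fine under splitting, one must argue that the singular data producing splitting germs is constrained intrinsically by $T$ rather than by the growing combinatorics of $R^{(i)}$. The pushing-failure argument for condition~2 is also delicate, as one must carefully trace how a transverse termination propagates to a genuinely new orbit of splitting germs at a later level rather than being absorbed into an existing orbit under the facing-germ bijection.
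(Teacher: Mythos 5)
Your argument for the first cleanness condition follows the paper's approach: stabilize the number $s_i$ of orbits of splitting germs and derive the conclusion. That part is essentially right, though your justification of the uniform upper bound on $s_i$ (via $p_T$-images of basepoints) is looser than the paper's, which bounds $s_i$ by first observing that the number of orbits of singular leaves is non-increasing and then counting orbits of directions in $T$ at $p_T$-images of such leaves.

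For the second cleanness condition there is a genuine gap, and the difficulty you flagged at the end is exactly where it lies. Your picture is that if $\mu$ cannot be pushed along $\sigma$, then ``the band $B_\mu$ carrying $\mu$ from $x$ terminates at $z$.'' But in a band complex of quadratic type this termination does not happen: by Lemma~\ref{obs-germs}, every transverse germ at a non-special point is covered by exactly two bands, so (as in the proof of Lemma~\ref{tournicota}) the chain of germs projecting to the fixed direction $d=p_T(\mu)$ forms a \emph{bi-infinite} line in $\cald$. A failure of the pushing property therefore does not come from the chain of bands terminating; it comes from the corresponding path of basepoints in $\call$ eventually diverging from $\sigma$. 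Because there is no termination, there is no singular point $z$ created by this mechanism, and no new splitting germ to compare against $s_{i_0}$. Even setting this aside, the conclusion that the alleged germ ``contradicts the stability of $s_i$'' would require showing it lies in a \emph{new} $G$-orbit, which is not argued; it could equally well be a facing-germ descendant of an existing orbit.

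The paper handles condition~2 with a second, separate stabilization: it introduces, for each splitting semi-line $\sigma$, the finite set $\mathrm{Dir}_\Sigma(\sigma)$ of directions in $T$ realizable transverse to $\sigma$, shows that $\mathrm{Dir}_{\Sigma^{(i)}}(\sigma_i)$ is non-increasing in $i$, and then derives the pushing property from its eventual constancy. Making $\mathrm{Dir}_\Sigma(\sigma)$ finite requires two preparatory facts you do not touch: the intersection $\sigma\cap\calc_\call$ with the core of the leaf stabilizer is compact (using triviality of arc stabilizers in $T$), and, after replacing $\Sigma$ by a further $\Sigma^{(i)}$, splitting semi-lines avoid special points (so that Lemma~\ref{tournicota} gives actual finiteness of $\mathrm{Dir}_\Sigma(x)$ rather than just finiteness modulo $G_v$). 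These steps are not cosmetic; without them the counting on which both the paper's argument and yours rely is not available.
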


We will start with the following lemma.

\begin{lemma}\label{tournicota}
Let $T$ be a tree of quadratic type with respect to $R$.
\\ Let $x\in\calk_v$ be a non-special point. Then the valence of $x$ in $\calk_v$ is finite.
\\ If $x$ is a special point, then the number of $G_v$-orbits of directions at $x$ in $\calk_v$ is finite. 
\end{lemma}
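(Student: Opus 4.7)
My plan is to derive a contradiction by assuming $x \in \calk_v$ is non-special with infinite valence in $\calk_v$, and producing an element $g \in G$ that is both hyperbolic in $R$ (hence non-peripheral) and elliptic in $T$ (fixing $p_T(x)$), which contradicts that very small trees with dense orbits have peripheral point stabilizers (\cite[Prop.~4.17]{Hor14-1}). The main tools are Lemma~\ref{lem_finitude_0} (only finitely many bands contain a non-special point), Lemma~\ref{obs-germs} (in a quadratic-type band complex, each direction at $x$ in $\calk_v$ has a germ contained in exactly two bands), Proposition~\ref{fibers-connected} (fibers of $p_T$ are leaves, so points in a common leaf share the same $p_T$-value), and Lemma~\ref{prop-rips} ($p_R$ embeds each leaf isometrically into $R$).

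Assuming infinite valence, I pigeonhole the assignment $d\mapsto\{B_a,B_b\}$ over the $\binom{k}{2}$ unordered pairs to obtain a pair $(B,B')$ carrying infinitely many directions at $x$, so that $\underline{B}\cap\underline{B'}$ has infinite valence at $x$. Transporting via the holonomy isometry $h_B$, the image $x^B=h_B(x)\in\calk_w$ inherits infinite valence in $\calk_w$. Iterating, and choosing at each step a band from the pigeonholed pair distinct from the one just used (so the resulting leaf path does not backtrack), I produce an infinite sequence $x_0=x, x_1, x_2, \ldots$ of points in the common leaf $\call_x$, each of infinite valence in its base tree, all with $p_T(x_i)=p_T(x)$. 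Whenever an iterate $x_i=(v_i)_\Sigma$ happens to be a special vertex, the next iterate $x_{i+1}$ must be non-special: otherwise the equality $p_T((v_i)_\Sigma)=p_T(w_\Sigma)$ would force two distinct maximal peripheral subgroups $G_{v_i}$ and $G_w$ to share their unique fixed point in $T$, contradicting $G_{v_i}\cap G_w=\{1\}$ (edge stabilizers are trivial in $R$) combined with the fact that in a very small tree with dense orbits, distinct maximal peripheral subgroups have distinct fixed points.

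Since non-special iterates therefore occur infinitely often and the $G$-action on vertex orbits of $\Sigma$ is cocompact, two non-special terms $x_i$ and $x_j$ lie in the same $G$-orbit, say $x_j=gx_i$; the leaf path $[x_i,x_j]\subseteq\call_x$ is injective by construction, so its isometric image in $R$ is a geodesic segment translated by $g$ by a positive amount, and $g$ is hyperbolic in $R$, hence non-peripheral. But $g$ fixes $p_T(x_i)=p_T(x)\in T$, the desired contradiction. The special case of the lemma ($x=v_\Sigma$ with $G_v$ infinite, counting directions modulo the $G_v$-action) reduces to the non-special case after one holonomy step: Lemma~\ref{lem_finitude} provides finitely many $G_v$-orbits of pairs of bands through $v_\Sigma$, and since each $G_v$-orbit of directions in a fixed pair $(B,B')$ has size at most $2$ (only an involution can swap $B$ and $B'$, as band stabilizers are trivial), infinitely many $G_v$-orbits of directions would give some specific pair with infinitely many directions; transporting via $h_B$ then lands at a non-special point of infinite valence in $\calk_w$, contradicting the non-special case. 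The main subtlety I expect is justifying the non-backtracking choice carefully throughout the iteration, which ultimately rests on the fact that the pigeonholed pair $(B,B')$ always contains at least one band distinct from the previously used one.
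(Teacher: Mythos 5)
Your proposal has several genuine gaps, the most serious of which makes the strategy unworkable as stated.

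First, the intended contradiction at the end does not exist. You produce a non-peripheral $g$ that fixes $p_T(x)\in T$, and you cite \cite[Prop.~4.17]{Hor14-1} as saying very small trees with dense orbits have peripheral point stabilizers. That proposition only gives \emph{triviality of arc stabilizers}; point stabilizers in a dense-orbit tree can perfectly well be non-peripheral (arational surface trees have a non-peripheral cyclic point stabilizer at the vertex corresponding to the unused boundary curve). Nothing in the hypotheses of Lemma~\ref{tournicota} excludes this. This is precisely why the paper's proof does not aim for a hyperbolic-elliptic contradiction: the remark right after the statement observes that the interesting case is when $p_T(x)$ has \emph{infinite stabilizer}, and the proof instead accumulates singular points to contradict Lemma~\ref{singular}.

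Second, the step ``since the $G$-action on vertex orbits of $\Sigma$ is cocompact, two non-special terms $x_i$ and $x_j$ lie in the same $G$-orbit'' does not follow. Cocompactness on vertices of $\Sigma$ means finitely many orbits of base trees $\calk_v$, not finitely many orbits of points: uncountably many points in a single $\calk_v$ can pairwise lie in distinct orbits, and your $x_i$ go to infinity along the leaf, so there is no reason for two of them to be equivalent. The paper faces the analogous difficulty and resolves it with a specific geometric input: the relevant points lie in the convex hull $\calh$ of the $G_\call$-orbit of $x$ inside the leaf $\call$, and $\calh/G_\call$ is compact because $G_\call$ is a point stabilizer of $T$ with finite Kurosh rank. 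Your iterates $x_i$ are not confined to any such convex hull.

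Third, even granting $gx_i=x_j$ with $g\neq 1$, hyperbolicity of $g$ in $R$ does not follow merely from $g$ moving $p_R(x_i)$ a positive distance; a peripheral (elliptic) element can move a point by a positive amount. One would have to know the projected path lies on an axis of $g$, which your construction does not guarantee.

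The paper's actual argument is structurally different and worth comparing: it encodes all germs along the leaf as a graph $\cald$, uses the quadratic-type hypothesis (via Lemma~\ref{obs-germs}) to show each component of $\cald$ is a line, pigeonholes using finiteness of orbits of directions in $T$, and then locates a new singular point between consecutive germs to build an accumulating sequence of singular points --- contradicting Lemma~\ref{singular}. This contradiction is robust to the existence of non-peripheral point stabilizers, which is exactly where your approach breaks down.
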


\begin{rk}
The conclusion is obvious if $p_T(x)$ has finite valence in $\ol T$, i.e.\ if $p_T(x)$ has finite stabilizer.
\end{rk}

\begin{proof}
Let $x\in\calk_v$, and let $\call$ be the complete $\Sigma$-leaf that contains $x$. We define a \emph{germ} at a point $y\in\call$ to be a germ of direction based at $y$ in the base tree $\calk_u$ containing $y$. If $d$ is such a germ, we define its diameter as the diameter of the connected component of $\calk_u\setminus\{y\}$ containing $d$. Let $\cald$ be the set of all germs at all points in $\call$. We equip $\cald$ with a graph structure, by putting an edge between two germs $d$ and $d'$ whenever they are contained in a common band. Then two germs belong to the same connected component of $\cald$ if and only if they have the same $p_T$-image. Since $\Sigma$ is of quadratic type, using Lemma~\ref{obs-germs},
we see that each connected component of $\cald$ is a line. Since there are finitely many orbits of directions in $T$, there are finitely many $G_\call$-orbits of such lines.

If the lemma is false, then there exists a sequence of germs $d_i$ at $x$ in $\calk_v$ whose diameters decrease to $0$: this follows from compactness of $\calk_v$ if $v$ is not in $V_\infty(R)$, and from compactness of $\calk_v/G_v$ in general. Since there are finitely many $G_\call$-orbits of lines in $\cald$, up to a subsequence, there exists a line $\lambda\subseteq\cald$ and $(g_i)_{i\in\mathbb{N}}\in G_\call^{\mathbb{N}}$ such that $d_i\in g_i.\lambda$ for all $i\in\mathbb{N}$. In other words $g_i^{-1}d_i\in \lambda$, and these germs have the same image in $T$.
Their basepoints $g_i^{-1}x$ are pairwise distinct: indeed, otherwise the two corresponding germs $g_i^{-1}d_i$ and $g_j^{-1}d_j$ would be contained in the same base tree, and therefore be equal since they have the same $p_T$-image; this would contradict the fact that the diameters of these germs decrease to $0$. Up to passing to a subsequence, we can assume that the directions $g_i^{-1}d_i$ go monotonously to one end of the line $\lambda$.

\begin{figure}[htbp]
\centering
\includegraphics{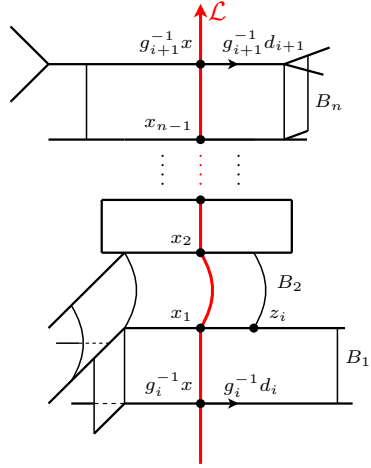} 
\caption{Finding a singular point $z_i$ `between' $g_i^{-1}d_i$ and $g_{i+1}^{-1}d_{i+1}$. Here, $x_{j_i}=x_1$.}
\label{fig-tournicota}
\end{figure}

We are going to find a singular point `between' $g_i^{-1}d_i$ and $g_{i+1}^{-1}d_{i+1}$ for all $i\in\mathbb{N}$; this is illustrated on Figure~\ref{fig-tournicota}. More precisely, let $B_{1},\dots,B_{n}$ be the sequence of bands corresponding to the segment joining $g_i^{-1}d_i$ to $g_{i+1}^{-1}d_{i+1}$ in $\lambda$, and let $x_{0}=g_i\m x, x_1,\dots,x_{n}=g_{i+1}\m x$ be the points of $\call$ in the bases of the bands $B_{j}$. Since $\text{diam}(g_{i+1}^{-1}d_{i+1})<\text{diam}(g_i^{-1}d_i)$, the holonomy of leaves cannot send the set of extremal points of $g_{i+1}^{-1}d_{i+1}$ to the set of extremal points of $g_i^{-1}d_i$; therefore, there exists a singular point $z_i$ and an index $j_i\in [0, n]$ such that $z_i,x_{j_i}$ lie in 
the same base tree of $\Sigma$ and $0<d_\Sigma(z_i,x_{j_i})\le \text{diam}(g_i^{-1}d_i)$. 
 
We claim that the points $y_i:=x_{j_i}$ belong to finitely many $G_\call$-orbits as $i$ varies. Indeed, they all belong to the convex hull $\calh$ in $\call$ of the $G_\call$-orbit of $x$. 
Since $G_\call$ coincides with a point stabilizer in $T$, it has finite Kurosh rank (\cite{Hor14-1})
so $\calh/G_\call$ is compact, which proves our claim. 

Thus, up to passing to a subsequence, there exist $y_0\in\call$ and $h_i\in G_\call$ such that $y_{i}=h_iy_0$. Let $\calk_u$ be the base tree of $\Sigma$ containing $y_0$.
The points $h_i^{-1}z_i$ are singular points in $\calk_u$, they are distinct from $y_0$ and converge to $y_0$. This contradicts that there are only finitely $G_u$-orbits of singular points in $\calk_u$ by Lemma \ref{singular}.   
\end{proof}

We make the following definition.

\begin{de}\label{de-core}
Let $\call$ be a complete $\Sigma$-leaf, and $G_\call$ be its stabilizer.
The \emph{core} $\calc_\call$ of the stabilizer of $\call$ is the convex hull in $\call$ of $\partial(G_\call,\calf_{|\call})$. 
\end{de}

For example, if $G_\call$ is isomorphic to $\mathbb{Z}$ and nonperipheral, then $\calc_\call$ will be a line (while $\call$ may contain branch points). Notice on the other hand that if $G_\call$ is peripheral, then $\calc_\call$ is at most a point.  We take the convention that $\calc_\call$ is empty if the leaf $\call$ has trivial stabilizer. 

\begin{proof}[Proof of Proposition \ref{cleanify}]
We first observe that the number of orbits of splitting germs cannot decrease along the splitting process. In addition, it is bounded: indeed, the number of orbits of \emph{singular leaves}, i.e.\ leaves that contain a singular point, is non-increasing, hence bounded, and the number of orbits of splitting germs is bounded by the number of orbits of directions at points in $T$ that are images of singular leaves. By replacing $\Sigma$ by $\Sigma^{(i)}$ with $i\in\mathbb{N}$ large enough, we can therefore assume that the number of orbits of splitting germs is stabilized. Then every splitting semi-line of $\Sigma^{(1)}$ comes from a splitting semi-line of $\Sigma$. The first assertion in Definition \ref{clean} is then satisfied  (to check the `in particular' statement, notice that if a leaf segment $I$ does not intersect any splitting semi-line, then it lifts to $\Sigma^{(1)}$, and the lift again does not intersect any splitting semi-line). Iterating this argument shows that $I$ lifts to all $\Sigma^{(i)}$.

We claim that every splitting semi-line $\sigma$ has compact intersection with the core $\calc_\call$ of the stabilizer of the complete $\Sigma$-leaf $\call$ that contains it. Otherwise, we can find $x\in\sigma$, and a sequence $(g_i)_{i\in\mathbb{N}}\in G_\call^{\mathbb{N}}$ going to infinity, such that $g_i.x\in\sigma$. Let $\eta_i$ be the virtual splitting germ at $g_i.x$, then $p_T(\eta_i)=p_T(\eta_0)$ for all $i\in\mathbb{N}$. By Lemma \ref{tournicota}, up to passing to a subsequence, we can assume that $\eta_i=g_i\eta_0$, or $\eta_i=g_ia_i\eta_0$ with $a_i\in G_x$ if $x$ is a special point. Since arc stabilizers in $T$ are trivial, we have $g_i=1$ or $g_i=a_i^{-1}$, contradicting the fact that $g_ix$ goes to infinity in $\sigma$. 

We also observe that for all sufficiently large $i\in\mathbb{N}$, no splitting semi-line in $\Sigma^{(i)}$ contains a special point. Indeed, each splitting semi-line contains at most finitely many special points because $\sigma\cap\calc_\call$ is compact. Notice that for all $i\in\mathbb{N}$, the splitting semi-line $\sigma_i\subseteq\Sigma^{(i)}$ corresponding to $\sigma$ projects via  $h_{\Sigma^{(i)}}:\Sigma^{(i)}\to\Sigma$ to the complement in $\sigma$ of its initial segment of length $i$. Therefore, by choosing $i\in\mathbb{N}$ large enough, $\sigma_i$ contains no special point. Since there are finitely many orbits of splitting semi-lines, there exists $i\in\mathbb{N}$ such that no splitting semi-line in $\Sigma^{(i)}$ contains a special point. 

Let $\sigma$ be a splitting semi-line in $\Sigma$. For $x\in\sigma$, we let $\text{Dir}_\Sigma(x)$ be the set of directions $d$ in $T$ such that there exists a transverse direction $\eta$ at $x$ with $p_T(\eta)=d$. Notice that $\text{Dir}_\Sigma(x)$ is finite for every $x\in\Sigma$ which is not special by Lemma \ref{tournicota}. We then let $\text{Dir}_\Sigma(\sigma)$ be the union of $\text{Dir}_\Sigma(x)$ over all $x\in\sigma$. We first claim that $\text{Dir}_\Sigma(\sigma)$ is finite. Indeed, $\sigma$ only meets finitely many singular points, because all points of $\sigma\setminus\calc_\call$ are in distinct orbits. Therefore, there exists a semi-line $\sigma_0=[x_0,\xi]\subseteq\sigma$ such that $\text{Dir}_\Sigma(x)$ does not depend on $x$ for $x\in\sigma_0$. Therefore $\text{Dir}_\Sigma(\sigma)$ is the finite union of all $\text{Dir}_\Sigma(x)$ with $x\in\sigma\setminus\sigma_0$ and of $\text{Dir}_\Sigma(\sigma_0)$. 

In addition, $\text{Dir}_{\Sigma^{(i)}}(\sigma_i)\subseteq\text{Dir}_\Sigma(h_{\Sigma^{(i)}}(\sigma_i))\subseteq\text{Dir}_\Sigma(\sigma)$. Thus, up to changing $\Sigma$ to some $\Sigma^{(i_0)}$ for $i_0$ large enough, for all splitting semi-lines $\sigma$ and all $i$, $\text{Dir}_{\Sigma^{(i)}}(\sigma_i)=\text{Dir}_\Sigma(\sigma)$.  

Let now $d\in\text{Dir}_\Sigma(\sigma)$. Then $d\in\text{Dir}_{\Sigma^{(i)}}(\sigma_i)$. So there is a representative $\mu_i\subseteq\Sigma^{(i)}$ based at a point $x_i\in\sigma_i$, and $h_{\Sigma^{(i)}}(\mu_i)$ shows that $\mu$ can be pushed to infinity along $\sigma$. Therefore, the second assertion in Definition \ref{clean} is also satisfied, showing that $\Sigma^{(i)}$ is clean.
\end{proof}

\section{The case of mixing trees}\label{sec-10}

A tree $T\in\overline{\calo}$ is \emph{mixing} if for all nondegenerate segments $I,J\subseteq T$, there exists a finite collection $g_1,\dots,g_k$ of elements in $G$ such that $J\subseteq g_1I\cup\dots\cup g_kI$.  The following lemma is due to Coulbois--Hilion \cite[Proposition 5.14]{CH14} for free groups.
We consider $R$ a Grushko tree, and we carry all notations $\Sigma,\Omega,\dots$ from Section \ref{sec-pruning}.

\begin{lemma}\label{omega-disc}
Let $T\in\overline{\calo}$ be mixing. 
\\ Then either $\Omega\cap \calk_v$ is totally disconnected for every vertex $v\in R$,
or the pruning process stops, and in particular, $T$ is of quadratic type.
\end{lemma}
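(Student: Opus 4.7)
The plan is to prove the contrapositive: assume that $\Omega \cap \calk_v$ is not totally disconnected for some vertex $v$, and deduce that the pruning process stops. Since $\Omega \cap \calk_v$ contains a nondegenerate transverse arc $I$, we have $I \subseteq \Omega_{int}$ by definition. Let $\Lambda$ be the connected component of $\Omega_{int}$ containing $I$. By Lemma~\ref{coeur_elagage}, $\Lambda$ is identified with $\Sigma(T_\Lambda, R_\Lambda)$, where $T_\Lambda$ is the $G_\Lambda$-minimal subtree of $T$, and $p_T(I) \subseteq T_\Lambda$ is a nondegenerate arc of $T$.

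The key step uses mixing to show that $G_\Lambda = G$ and $T_\Lambda = T$, so that $\Omega_{int}$ is a single connected component. By mixing applied to $p_T(I)$, every nondegenerate arc $J \subseteq T$ is covered by finitely many $G$-translates of $p_T(I)$, each contained in a $G$-translate of $T_\Lambda$, hence $T = G \cdot T_\Lambda$. On the other hand, Lemma~\ref{thin} ensures that for distinct components $g\Lambda \neq g'\Lambda$, the subtrees $g T_\Lambda$ and $g' T_\Lambda$ share at most one point. Suppose $G_\Lambda$ were a proper subgroup of $G$: by connectedness of $T$ there would exist $g \notin G_\Lambda$ with $T_\Lambda \cap g T_\Lambda$ reduced to a single point $p$. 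Taking a nondegenerate arc $J$ through $p$ with $J \cap T_\Lambda$ and $J \cap g T_\Lambda$ both nondegenerate, and refining the mixing cover of $J$ into a chain of translates with nondegenerate consecutive overlaps, one would force via Lemma~\ref{thin} all covering pieces to lie in a single $G_\Lambda$-coset, contradicting the requirement to cover pieces in both $T_\Lambda$ and $g T_\Lambda$. Hence $G_\Lambda = G$ and $T_\Lambda = T$, so $\Omega_{int}$ consists of the single component $\Lambda = \Sigma(T, R_\Lambda)$.

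Once $\Omega_{int} = \Lambda$, Lemma~\ref{no-term} gives that no leaf of $\Lambda$ has a terminal point, so $\Lambda$ is of quadratic type by Proposition~\ref{surface_3ends}; in particular $T$ is of quadratic type. To conclude that the pruning process applied to the original $\Sigma = \Sigma(T, R)$ halts, I would argue that $\Omega = \Lambda$: any complete $\Sigma$-leaf $\call$ meeting $\Lambda$ must be contained in $\Lambda$, since $\Lambda \cap \call$ is a connected $\Lambda$-leaf (Lemma~\ref{lambda-cf}), and if it were a proper subtree of $\call$ its boundary points would be terminal in the $\Lambda$-leaf, contradicting the quadratic type. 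Since $T_\Lambda = T$, every complete $\Sigma$-leaf projecting to a point of $T$ meets $\Lambda$; handling the residual leaves with $p_T$-image in $\overline{T}\setminus T$ by the finiteness statement in Lemma~\ref{thin}, and using Lemma~\ref{omega-rips} together with the bounded complexity of Grushko trees, one deduces that $\Sigma^{(i_0)} = \Omega$ for some finite $i_0$.

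The main obstacle is the indecomposability-style refinement of the mixing cover in the second paragraph, namely extracting a chain of translates with nondegenerate consecutive overlaps so that Lemma~\ref{thin} rules out the cover jumping between different $G_\Lambda$-cosets. The final step, that pruning halts in finite time rather than only in the limit, also requires a careful combinatorial argument leveraging the finite number of orbits of pruned pieces and the bounded complexity of Grushko trees for $(G,\calf)$.
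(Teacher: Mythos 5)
Your approach diverges significantly from the paper's and contains two gaps that you yourself flag; the second of these is the critical one and is not a detail one can defer. The paper's proof introduces the depth function $\phi(x) := d_{\call_x}(x,\call_x\cap\Omega)$ and shows it is bounded on $\Sigma$. This is exactly what controls the number of pruning steps, because each step of pruning decreases $\sup\phi$ by one as long as something is removed. Boundedness of $\phi$ is obtained from mixing together with connectedness of the fibers of $p_T$ (Proposition~\ref{fibers-connected}): for any interval $J=[a,b]\subseteq\calk_u$ with $p_T(a),p_T(b)\in T$, after subdividing one finds $g$ with $p_T(J)\subseteq p_T(gI)$, and then every $x\in J$ lies at leafwise distance $d_R(u,gv)$ from $\call_x\cap gI\subseteq\Omega$; combined with Lemma~\ref{properties}, which says $\caly$ is a finite union of orbits of finite trees with $p_T$-endpoints in $T$, and the observation that $\sup_{\Sigma}\phi=\sup_{\caly}\phi$, this yields the boundedness directly.

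Your proposal instead tries to establish the structural fact that $\Omega_{int}$ has a single component with $G_\Lambda=G$ and $T_\Lambda=T$, and then deduce that pruning halts. Even granting the first part (and your chain-refinement step is a real gap: mixing gives a covering of $J$ by translates of $p_T(I)$, but it does not by itself produce a chain of translates with nondegenerate consecutive overlaps, and one cannot directly apply Lemma~\ref{thin} without that), the final step does not follow. Knowing $\Omega=\cap_i\Sigma^{(i)}$ and that $\Omega$ is a single connected component of quadratic type says nothing about whether the intersection is attained in finite time; the invocation of ``bounded complexity of Grushko trees'' does not work, since the number of orbits of edges in $R^{(i)}$ can grow along the pruning process (the $f$-preimage of an edge of $R$ is a finite \emph{set} of edges of $R'$). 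The one piece of information that forces finite halting is a global bound on the distance from an arbitrary point of $\Sigma$ to $\Omega$ inside its leaf, and that is exactly what the paper's function $\phi$ provides. Without $\phi$ (or an equivalent depth bound) your argument cannot close.
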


\begin{proof}
Assume that there exists a nondegenerate interval $I\subseteq \Omega\cap \calk_v$ for some vertex $v\in R$, and let us prove that the pruning process halts after finitely many steps, which implies that $T$ is of quadratic type. 
Denote by $\Sigma^{(1)}\supset \Sigma^{(2)}\supset\dots$ the subsets of $\Sigma$ given by the pruning process,
and recall (Lemma~\ref{omega-rips}) that $\Omega=\cap_{i\geq 1} \Sigma^{(i)}$.
Define  a $G$-invariant function $\phi:\Sigma\ra \bbR_+\cup\{+\infty\}$ by $$\phi(x):=d_{\call_x}(x,\call_x\cap\Omega).$$ 
We claim that $\phi$ is bounded on $\Sigma$ (in particular $\phi(x)<+\infty$ for all $x\in \Sigma$). 
This will imply that the  pruning process halts after finitely many steps since
 $\sup_{\Sigma^{(i+1)}}\phi=(\sup_{\Sigma^{(i)}}\phi)-1$ as long as $\Sigma^{(i+1)}\neq \Sigma^{(i)}$
and $\sup_{\Sigma^{(i)}}\phi<+\infty$. 

We first claim that for each vertex $u\in R$,  the map $\phi$ is bounded on any interval 
 $J=[a,b]\subseteq \calk_u$, with $p_T(a),p_T(b)$ lying in $T$ (and not in $\overline T\setminus T$).  Indeed, since $T$ is mixing, up to subdividing $J$ into finitely many subintervals,
 we can assume that there exists $g\in G$ such that $p_T(J)\subseteq p_T(g I)$.
 Since fibers of $p_T$ are connected (Proposition~\ref{fibers-connected}), for each point $x\in J$, the distance in $\call_x$ from $x$ to $\call_x\cap g I$ is equal to
$d_R(u,g.v)$. Since $gI\subseteq \Omega$, the restriction of $\phi$ to $J$ is bounded. 

By Lemma \ref{properties}, the set $\caly$ (made of all points in $\Sigma$ that are erased by applying one step of the pruning process) is a finite union of orbits of open finite trees whose endpoints are in $T$ (and not in $\overline T\setminus T$).
It follows that $\phi$ is bounded on $\caly$. This implies that $\phi$ is bounded on $\Sigma$: 
for any point $x\in \Sigma\setminus \Omega$, there is a point $y\in \Term(\call_x)\subseteq \caly$ such that $\phi(y)=d_{\call_x}(y,\call_x\cap\Omega)\geq d_{\call_x}(x,\call_x\cap \Omega)=\phi(x)$.
\end{proof}

In the remainder of this section, we let $T\in\ol\calo$ be a mixing tree that is not compatible with any $(G,\calf)$-free splitting. Let $R$ be a Grushko tree, let $\Sigma:=\Sigma(T,R)$, and for all $i\in\mathbb{N}$, let $\Sigma^{(i)}$ and $R^{(i)}$ be the complex and the tree obtained by first iterating the pruning process until it halts (this may never happen), then applying the splitting process. Denote by $h_{\Sigma^{(i)}}:\Sigma^{(i)}\to\Sigma$ and $p_{R^{(i)}}:\Sigma^{(i)}\to R^{(i)}$ the corresponding maps.

\begin{lemma}\label{lem_separation_splitting}
Let $T\in\overline{\calo}$ be a mixing tree, and assume that $T$ is not compatible with any $(G,\calf)$-free splitting. 
Let $v\in R$ be a vertex, and let $[y,y']\subseteq \calk_v$ be a nondegenerate arc, with $y,y'\in\Omega$. For all $i\in\mathbb{N}$, let $y_i$ (resp. $y'_i$) be a point in $\Sigma^{(i)}$ that maps to $y$ (resp. $y'$) under $h_{\Sigma^{(i)}}$.
\\ Then there exists $i\in\mathbb{N}$ such that $p_{R^{(i)}}(y_i)\neq p_{R^{(i)}}(y'_i)$. 
\end{lemma}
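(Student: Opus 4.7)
The proof splits according to whether the arc $[y,y']$ is contained in $\Omega$, corresponding respectively to separation happening during the pruning phase or during the splitting phase.

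\textbf{Separation by pruning.} Suppose first that the open arc $(y,y')$ contains some point $z\notin\Omega$. By Lemma~\ref{omega-rips}, there is a pruning step $i$ such that $z\in\Sigma\setminus\Sigma^{(i)}$. Since $y,y'\in\Omega\subseteq\Sigma^{(i)}$ and $z$ is an interior cut point of $[y,y']\subseteq\calk_v$, the preimages $y_i,y'_i$ lie in distinct connected components of $\Sigma^{(i)}\cap p_R^{-1}(\{v\})$, and therefore project to distinct vertices of $R^{(i)}$.

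\textbf{Separation by splitting.} Suppose instead $[y,y']\subseteq\Omega$. Then $\Omega\cap\calk_v$ contains a nondegenerate arc, so by Lemma~\ref{omega-disc} (via mixing of $T$) the pruning process halts after finitely many steps. Replacing $\Sigma,R$ by the outcome of pruning, we may assume $\Sigma=\Sigma(T,R)$ is of quadratic type and that $[y,y']$ sits inside a single base tree $\calk_u$ (otherwise we are again in the pruning case). The goal is now to produce a virtual splitting germ based at some interior point of $(y,y')$ with germ direction pointing into the arc; Lemma~\ref{splitting-iterate} then yields the required $i$. By Proposition~\ref{existence-splitting}, $\Sigma$ contains a splitting germ $\eta_0$ at some basepoint $x_0$. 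Mixing of $T$ implies density of $G$-orbits, so $Gp_T(x_0)$ meets the nondegenerate arc $p_T((y,y'))$ (nondegenerate because $\Sigma$-leaves embed into $R$, giving $p_T(y)\neq p_T(y')$) in an infinite set. Within this set, exclude the countably many images that fall on special points of $T$, on branch points of $\calk_u$, or on bases of singleton bands of $\Sigma$; pick a $g$ avoiding these exclusions, and set $z_0\in(y,y')$ to be the unique preimage of $gp_T(x_0)$ under $p_T$.

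By Proposition~\ref{fibers-connected}, $z_0$ lies in the same $\Sigma$-leaf as $gx_0$, hence is a branch point of its leaf by Remark~\ref{rk_branch_leaf}; so $z_0$ is contained in at least three bands of $\Sigma$, none of them singletons, while having valence exactly $2$ in $\calk_u$, with two directions $d_y$ and $d_{y'}$ pointing towards $y$ and $y'$ respectively. A local combinatorial argument at $z_0$ now closes the proof: by Lemma~\ref{obs-germs}, each of $d_y,d_{y'}$ lies in exactly two bands, so the degrees (number of directions at $z_0$ in each band's base) over the $\geq 3$ non-singleton bands at $z_0$ sum to $4$. The only configuration consistent with at least three non-singleton bands is three bands with degree pattern $(1,1,2)$, since the alternative pattern $(1,1,1,1)$ would realize a degenerate splitting germ in direction $d_y$ or $d_{y'}$, contradicting Lemma~\ref{lem_non-degenerate} combined with the hypothesis that $T$ admits no compatible $(G,\calf)$-free splitting. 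Hence the degree-$1$ band containing $d_y$ has $z_0$ terminal in its base, while the degree-$2$ band containing $d_y$ does not, so the germ at $z_0$ in direction $d_y$ is a non-degenerate splitting germ contained in $(y,y')$, and Lemma~\ref{splitting-iterate} concludes.

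\textbf{Main obstacle.} The delicate point is to secure a splitting germ at $z_0$ whose direction points towards $y$ or $y'$ rather than sideways. This is managed by combining the freedom afforded by mixing (to place $z_0$ with valence $2$ in $\calk_u$ and avoid the pathological singleton-band configuration) with the absence of degenerate splitting germs (from the free-splitting incompatibility of $T$).
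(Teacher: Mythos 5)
Your Case~1 (separation by pruning) is correct and agrees with the paper's first reduction, and your closing combinatorial degree argument at $z_0$ would be correct \emph{if} its hypothesis held. The gap is in establishing that hypothesis. You write: ``By Proposition~\ref{fibers-connected}, $z_0$ lies in the same $\Sigma$-leaf as $gx_0$, hence is a branch point of its leaf by Remark~\ref{rk_branch_leaf}.'' This inference is invalid. Remark~\ref{rk_branch_leaf} only says that the \emph{basepoint} of a splitting germ (here $gx_0$) is a branch point of its leaf; it says nothing about other points of that leaf. Being a branch point — that is, lying in at least $3$ bands — is a local property and does not propagate along the leaf. In a band complex of quadratic type the singular points fall into finitely many $G$-orbits (Lemma~\ref{singular}), whereas you pick $z_0$ from the unrestricted dense orbit $Gp_T(x_0)$ inside $p_T((y,y'))$; generically such a $z_0$ lies in exactly two bands, with both directions $d_y,d_{y'}$ shared by the same two bands, so no splitting germ exists at $z_0$ and your degree-$4$ count over ``at least three bands'' never gets going. (A secondary concern: $Gp_T(x_0)$ is only countable and you then excise a countable set of pathological translates, so it is not evident that anything remains.)

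The mechanism you are missing is propagation of a splitting germ \emph{along} a leaf segment, rather than landing one \emph{at} a chosen point of the arc. The paper uses mixing to build a product region $I_x\times[u,gv]_R\subseteq\Sigma$ whose far end sits in $g.[y_1,y_2]\subseteq g.(y,y')$, and considers the leaf segment $[x,x']_{\call_x}$ with $x'\in g.[y_1,y_2]$. If the splitting leaf segment $\sigma_\eta$ lies in $[x,x']$, the \emph{virtual} splitting germs along the splitting semi-line reach $g.(y,y')$ and Lemma~\ref{splitting-iterate} concludes. Otherwise, one passes to the first non-extremal point $x''\in(x,x']$ in its base tree: the germ at $x''$ facing $\eta$'s $p_T$-direction is a genuine, non-degenerate splitting germ (non-degeneracy comes from $T$ not being compatible with a free splitting, which is where your degree-type argument morally belongs) whose splitting leaf segment is contained in $[x'',x']$, reducing to the first case. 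Your proof skips the splitting semi-line and this propagation step entirely, and that is precisely where it needs repair.
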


\begin{figure}[htb]
  \centering
 \includegraphics{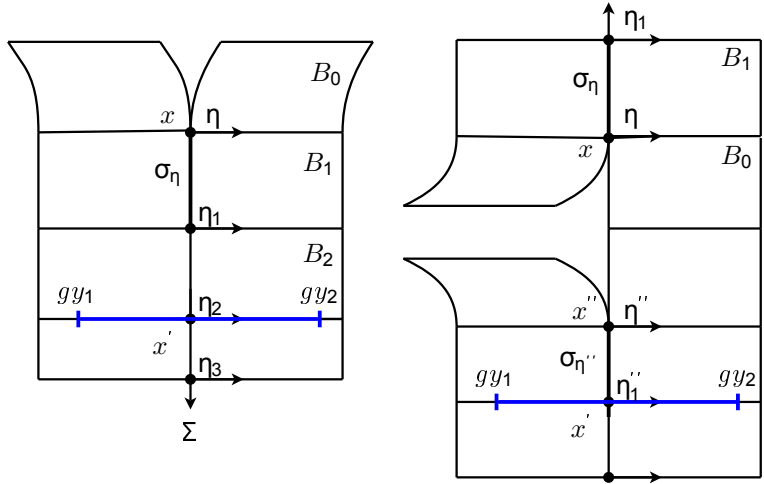}
  \caption{The two cases in the proof of Lemma \ref{lem_separation_splitting}}
  \label{fig_cantor}
\end{figure}

\begin{proof}
If 
the pruning process does not stop, Lemma \ref{omega-disc} shows that $y$ and $y'$ belong  to different connected components of $\Omega$. The conclusion then follows from Lemma~\ref{lem_separation}. 
Therefore, we can assume that the pruning process stops and does not affect the segment $[y,y']$. 
Thus, we reduce to the case where $\Sigma(T,R)$ is of quadratic type.

Let $\eta$ be a splitting germ in $\Sigma$ based at a point $x\in\calk_u$ for some vertex $u\in R$: this exists by Proposition \ref{existence-splitting}. 
Let  $[y_1,y_2]\subseteq [y,y']$ be a non-degenerate subsegment with $y_1, y_2$ distinct from $y$ and $y'$.
Since $T$ is mixing, there exists a segment $I_x\subseteq\calk_u$ representing $\eta$, and $g\in G$ such that  $p_T(I_x)\subseteq p_T(g.[y_1,y_2])$. 
Since fibers of $p_T$ are connected, we deduce that $I_x\times [u,g.v]_R\subseteq\Sigma$. We denote by $x'\in g.[y_1,y_2]$ the unique point in $I_x\times \{g.v\}$ in the same $\Sigma$-leaf as $x$. There are two cases to consider, illustrated on Figure~\ref{fig_cantor}.
\\
\\
\underline{\textbf{Case 1}}: The splitting leaf segment $\sigma_{\eta}$ is contained in $[x,x']_{\call_x}$. 
\\ Since $I_x\times [u,g.v]_R\subseteq\Sigma$, all the germs obtained from $\eta$ by following the holonomy along $[x,x']_{\call_x}$ 
are virtual splitting germs, 
and Lemma \ref{splitting-iterate} concludes.
\\
\\
\underline{\textbf{Case 2}}: The splitting leaf segment $\sigma_\eta$ is not contained in $[x,x']_{\call_x}$. 
\\ Let $x''$ be the point in the $\Sigma$-leaf segment $(x,x']_{\call_x}$ that is closest to $x$ and nonextremal in the base tree $\calk_{u''}$ that contains it (this exists as $x'$ is non-extremal in $\calk_{g.v}$). 
Then the germ $\eta''$ at $x''$ with the same projection as $\eta$ in $T$ 
is a splitting germ in $\Sigma$, and $\sigma_{\eta''}\subseteq [x'',x']_{\call_x}$. The conclusion then follows from the argument from the previous paragraph.
\end{proof}


\begin{cor}\label{coiffeur}
Let $T\in\overline{\calo}$ be a mixing tree that is not compatible with any $(G,\calf)$-free splitting. Then the number $$\max_{v\in V(R^{(i)})}\text{diam}(\calk_{v}^{(i)})$$ converges to $0$ as $i$ goes to $+\infty$.\qed
\end{cor}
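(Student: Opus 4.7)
I will argue by contradiction using the separation result of Lemma~\ref{lem_separation_splitting}. Suppose the conclusion fails: there exist $\eps > 0$, a subsequence $i_n \to \infty$, and vertices $v_n \in V(R^{(i_n)})$ such that $\diam(\calk_{v_n}^{(i_n)}) \geq \eps$. Since each step of the process (pruning or splitting) replaces a base tree by subtrees of itself, each $\calk_{v_n}^{(i_n)}$ is a subtree of some base tree of $\Sigma$. Using finiteness of the $G$-orbits of vertices in $R$, after $G$-translation and a subsequence extraction I may assume that every $\calk_{v_n}^{(i_n)}$ lies in a fixed base tree $K^* := \calk_{v^*}$ of $\Sigma$.

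For each $n$, I choose a pair of extremal points $y_n, y_n' \in \calk_{v_n}^{(i_n)}$ with $d(y_n, y_n') \geq \eps$; being extremal in $\calk_{v_n}^{(i_n)}$, they lie in $\Omega$ by Remark~\ref{rk_omega}. Applying a further $G_{v^*}$-translation to position the midpoint of $[y_n, y_n']$ in a fixed compact $G_{v^*}$-fundamental domain $F \subseteq K^*$, I arrange that the $y_n, y_n'$ remain in a common compact neighbourhood $N \subseteq K^*$. After a subsequence extraction, $y_n \to y_\infty$ and $y_n' \to y_\infty'$ in $N$ with $d(y_\infty, y_\infty') \geq \eps$. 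I then verify $y_\infty, y_\infty' \in \Omega_{v^*}$: when $T$ is of quadratic type this is automatic since $\Omega_{v^*} = K^*$; when the pruning iterates indefinitely, closedness of each $\Sigma^{(i)} \cap \calk_{v^*}$ in $K^*$ combined with $\Omega = \bigcap_i \Sigma^{(i)}$ (Lemma~\ref{omega-rips}) gives $y_\infty, y_\infty' \in \Omega_{v^*}$.

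With $y_\infty \neq y_\infty'$ both in $\Omega_{v^*}$, applying Lemma~\ref{lem_separation_splitting} to $[y_\infty, y_\infty']$ (valid because $T$ is not compatible with any $(G,\calf)$-free splitting) yields an index $i_0$ such that all lifts of $y_\infty, y_\infty'$ to $\Sigma^{(i_0)}$ lie in distinct base trees. On the other hand, for $n$ large with $i_n \geq i_0$, the arc $[y_n, y_n'] \subseteq \calk_{v_n}^{(i_n)}$ is contained in a single base tree $\calk_{u_n}^{(i_0)}$ of $\Sigma^{(i_0)}$. Since there are finitely many $G_{v^*}$-orbits of base trees of $\Sigma^{(i_0)}$ contained in $K^*$, and only finitely many $G_{v^*}$-translates of each such base tree meet the compact set $N$ (by triviality of arc stabilizers in the dense-orbit tree $T$), I extract a further subsequence making $\calk_{u_n}^{(i_0)}$ a fixed base tree $K^\dagger$; by closedness of $K^\dagger$ in $\overline{T}$, the limits satisfy $y_\infty, y_\infty' \in K^\dagger$, providing lifts of $y_\infty, y_\infty'$ in the same base tree of $\Sigma^{(i_0)}$, a contradiction.

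The main obstacle will be executing two extractions with compatible $G_{v^*}$-translations: the first places midpoints in $F$ so that $y_n, y_n'$ have metric limits in a compact region, and the second stabilizes the ambient base tree at level $i_0$ within that same region. This requires a properness statement for the $G_{v^*}$-action on $N$ away from the special vertex, which follows from triviality of arc stabilizers of $T$. A secondary subtlety arises at special (infinite-valence) vertices, where $\calk_{v_n}^{(i_n)}$ is only $G_{v_n}$-cocompact and may fail to have finite metric diameter; in that case diameter must be interpreted modulo the stabilizer, and one can bypass unbounded $d(y_n, y_n')$ by selecting arcs of length exactly $\eps$ inside $\calk_{v_n}^{(i_n)}$ and then extending to $\Omega$-endpoints in a uniform compact region before extracting.
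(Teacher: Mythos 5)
Your strategy matches the one the $\qed$ is implicitly invoking: derive a contradiction with Lemma~\ref{lem_separation_splitting} by extracting, via compactness, a pair of distinct points $y_\infty,y_\infty'\in\Omega$ lying in a common base tree $\calk_{v^*}$ and admitting, for every $i$, lifts to a common base tree of $\Sigma^{(i)}$. The observations that base trees of $\Sigma^{(i)}$ embed isometrically into base trees of $\Sigma$, and that extremal points of a base tree lie in $\Omega$, are correct and needed. However, the compactness step has a genuine gap, and the subtlety is not the one you raise. Your worry about unbounded $d(y_n,y_n')$ is a red herring: each $\calk_v^{(i)}$ embeds isometrically into $\calk_{v^*}$, hence has diameter at most $\diam(\calk_{v^*})$, and this is finite even when $G_{v^*}$ is infinite, because $G_{v^*}$ acts cocompactly on $K_{v^*}$ and fixes $v^*_\Sigma$, so every point of $K_{v^*}$ is at uniformly bounded distance from $v^*_\Sigma$. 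The real difficulty is that when $G_{v^*}$ is infinite, $\calk_{v^*}$ is \emph{not locally compact} at $v^*_\Sigma$: there are infinitely many directions there, so bounded sequences need not have convergent subsequences. Placing the midpoints $m_n$ in a compact fundamental domain $F$ therefore does not confine $y_n,y_n'$ to a compact set; in particular, if $m_n\to v^*_\Sigma$, the points $y_n,y_n'$ may escape into infinitely many distinct directions, and your $N$ fails to be compact.

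This can be repaired. One route is to separate cases according to whether $v^*_\Sigma\in[y_n,y_n']$ for infinitely many $n$; if so, take $y_\infty':=v^*_\Sigma$ (which lies in $\Omega$, being a special vertex), translate the endpoint at distance $\geq\eps/2$ from $v^*_\Sigma$ into a compact $G_{v^*}$-fundamental domain of $\Omega_{v^*}$, and pass to a limit there. A cleaner route that bypasses translations is a K\"onig-lemma argument: for each level $j$, there are finitely many $G_{v^*}$-orbits of base trees of $\Sigma^{(j)}$ mapping into $K^*:=\calk_{v^*}$ with diameter $\geq\eps$, and the maps $\Sigma^{(j+1)}\to\Sigma^{(j)}$ send base trees into base trees; one thus obtains a nested chain of closed convex images $h(K^{(j)})\subseteq K^*$ of diameter $\geq\eps$, from whose intersection the pair $y,y'\in\Omega$ is extracted, using compactness when the stabilizers become finite, and $G_{v^*}$-cocompactness of $\Omega_{v^*}$ together with $v^*_\Sigma\in\bigcap_j h(K^{(j)})$ otherwise. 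Lastly, your justification for eventual constancy of $\calk_{u_n}^{(i_0)}$ via triviality of arc stabilizers is not quite the right invocation; the clean point is that images in $K^*$ of distinct base trees of $\Sigma^{(i_0)}$ intersect in sets with empty interior, so the base tree whose image contains the open middle half of $[y_\infty,y_\infty']$ is unique, forcing $\calk_{u_n}^{(i_0)}$ to be eventually constant.
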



\section{Arational trees}\label{sec-arat}

\subsection{Review}

\paragraph*{Definition.} Recall that a \emph{free factor} $H$ of $(G,\calf)$ is a vertex stabilizer of a $(G,\calf)$-free splitting,
and that $H$ is \emph{proper} if $H$ is nonperipheral (in particular nontrivial) and $H\neq G$.
We denote by $T_H$ the $H$-minimal subtree of $T$.

\begin{de}[\emph{\textbf{Arational tree}}]
A tree $T\in\overline{\mathcal{O}}$ is \emph{arational} if $T\in\partial \mathcal{O}$ and for every proper $(G,\mathcal{F})$-free factor $H\subseteq G$, $H\actson T_H$ is a Grushko $(H,\mathcal{F}_{|H})$-tree.
\end{de} 

We will denote by $\mathcal{AT}$ the subspace of $\overline\calo$ made of arational trees. We mention that every arational tree $T$ has dense orbits, and is in fact mixing \cite{Rey12,Hor14-3}. 

\begin{figure}[htb]
  \centering
\includegraphics{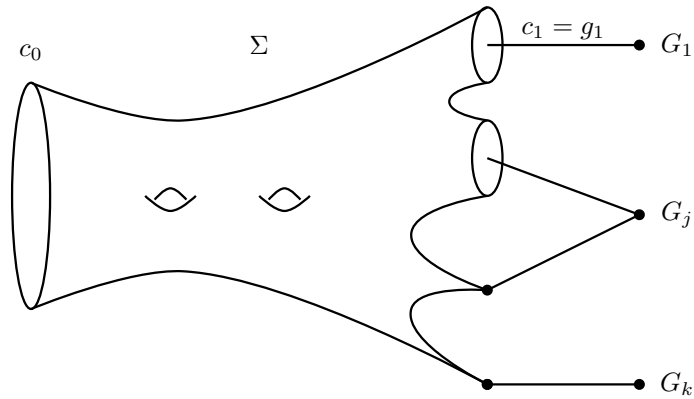}
  \caption{A geometric $(G,\calf)$-graph of groups.}
  \label{fig-arat-surf}
\end{figure}

\paragraph*{Arational surface trees.} We now review the definition of a special class of arational trees (see Figure \ref{fig-arat-surf}). A \emph{geometric $(G,\mathcal{F})$-graph of groups} is a graph of groups $\mathcal{G}$ with fundamental group isomorphic to $G$, obtained in the following way. Let $g\ge 0$, let $\Sigma$ be a compact, connected $2$-orbifold of genus $g$ with conical singularities, with nonempty boundary.  One of the vertex groups of $\mathcal{G}$ is the fundamental group of the orbifold $\Sigma$, and the other vertex groups are the $G_i$'s. To define edges,
consider a set $\calb$ consisting of the whole set of conical singularities of $\calo$, together with a subset of the set of boundary components.
We view the elements of $\calb$ as cyclic subgroups of $\pi_1(\Sigma)$.
To each cyclic group $C\in \calb$ we assign a peripheral subgroup $G_{i_C}$ and an embedding $\phi_C:C\ra G_{i_C}$ (allowed to be onto).
For each $C\in \calb$, we put an edge joining $\Sigma$ to $G_{i_C}$, with edge group $C$, and with monomorphisms given by the inclusion and $\phi_C$.
Boundary components of $\Sigma$ not in $\calb$ have no edge attached. They represent nonperipheral conjugacy classes; we say that they are
\emph{unused}. Note there has to be at least one unused boundary component since otherwise, 
$\pi_1(\calg)$ is freely indecomposable relative to the subgroups $G_i$.

\begin{de}[\emph{\textbf{Arational surface tree}}]\label{dfn_arat-surf}
A tree $T\in\ol\calo$ is an \emph{arational surface tree} if it splits as a graph of actions over a geometric $(G,\mathcal{F})$-graph of groups with a single unused boundary curve, so that the action corresponding to the vertex associated to the orbifold $\Sigma$ is dual to an arational measured foliation on $\Sigma$. 
\end{de}

It was proved in \cite[Section 4.1]{Hor14-3} that arational surface trees are indeed arational. Conversely, every arational tree in $\overline{\calo}$ is either relatively free, or else is an arational surface tree \cite{Rey12,Hor14-3}. Notice that any arational surface tree has (up to conjugacy) a unique nonperipheral elliptic subgroup, which is cyclic and generated by the unused boundary curve of the corresponding orbifold.  

\subsection{Arational trees and alignment-preserving maps} A map $T\to T'$ between two $\mathbb{R}$-trees is \emph{alignment-preserving} if it sends segments to segments (in this case we say that $T'$ is a \emph{collapse} of $T$, or that $T$ is a \emph{refinement} of $T'$). Notice that if $T$ is a tree with dense $G$-orbits, then any collapse of $T$ also has dense $G$-orbits.

\begin{lemma}\label{arational-collapse}
Let $T,\ol T\in\ol\calo$, such that there is an alignment-preserving map $\pi:T\to\ol T$.
\\ Then $T\in\mathcal{AT}$ if and only if $\ol T\in\mathcal{AT}$.
\end{lemma}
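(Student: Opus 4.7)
My strategy is to show that, under the hypotheses of the lemma, the $G$-equivariant alignment-preserving map $\pi:T\to\ol T$ is a bijection; from this both directions of the equivalence follow at once, since arationality is a combinatorial property of the $G$-tree (dense orbits, together with Grushko action on the minimal subtree of every proper free factor) that is preserved by $G$-equivariant alignment-preserving bijections. Note that $\pi$ is automatically surjective by minimality of $\ol T$.

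For the direction $T\in\mathcal{AT}\Rightarrow\ol T\in\mathcal{AT}$, I use that arational trees are mixing (as recalled in the review subsection). The key observation is the following general dichotomy: if $T$ is mixing and $\pi:T\to\ol T$ is alignment-preserving and equivariant, then $\pi$ is either injective or constant. Indeed, if $\pi$ collapses some nondegenerate segment $I\subseteq T$ to a point $p$, then by mixing of $T$, any nondegenerate segment $J\subseteq T$ satisfies $J\subseteq g_1 I\cup\dots\cup g_k I$ for some $g_i\in G$, and hence $\pi(J)\subseteq\{g_1 p,\dots,g_k p\}$. Since $\pi$ sends segments to segments, $\pi(J)$ is connected, so it must reduce to a single point, and $\pi$ would be constant. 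The non-degeneracy of $\ol T$ rules this out, so $\pi$ is injective, hence an equivariant alignment-preserving bijection, and $\ol T$ inherits arationality from $T$.

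For the converse $\ol T\in\mathcal{AT}\Rightarrow T\in\mathcal{AT}$, I again aim to show that $\pi$ is a bijection, by ruling out nontrivial refinements of $\ol T$ within $\ol\calo$. First, I argue that $T$ has dense orbits: otherwise, the Levitt decomposition of $T$ contains a simplicial edge $e$ with trivial or cyclic nonperipheral stabilizer $H_e$, and $\pi(e)$ is either a nondegenerate segment inducing a simplicial part in $\ol T$ (contradicting dense orbits of $\ol T$), or a single point $p\in\ol T$ whose stabilizer contains $H_e$. Arationality of $\ol T$ severely constrains $G_p$: in the relatively free case, $G_p$ is peripheral, but peripheral subgroups fix a unique point in any tree of $\ol\calo$, contradicting that $e$ is nondegenerate; in the arational surface case, $H_e$ would have to be contained in the cyclic group generated by the unused boundary curve $c$, and creating a nondegenerate $H_e$-invariant fiber over the fix-point of $c$ requires an $\langle c\rangle$-equivariant bipartition of the transverse directions at this point, which is forbidden by the structure of the dual tree to the arational foliation on the orbifold (cf.~Definition~\ref{dfn_arat-surf}). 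An analogous analysis of $\pi^{-1}(p)$ for $p$ lying in the dense-orbit part of $\ol T$ yields injectivity of $\pi$, hence $T\cong\ol T$ as $G$-trees and $T$ is arational.

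The main obstacle lies in the backward direction, specifically in the arational surface case, where one must verify that no $\langle c\rangle$-equivariant bipartition of the directions at the fix-point of the unused boundary curve exists; this rests on the strong minimality properties of arational foliations on $2$-orbifolds, which force the boundary monodromy to act transitively (up to finite orbits) on the nearby transverse directions.
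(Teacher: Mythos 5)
Your forward direction ($T\in\mathcal{AT}\Rightarrow\ol T\in\mathcal{AT}$) is correct and is a pleasant repackaging of the paper's argument: both use mixing of $T$, but you apply it directly to conclude that $\pi$ is injective (if $\pi$ collapses one segment, it collapses all of them, so $\pi$ is constant — impossible), whereas the paper argues by contradiction, produces a free factor $B\subseteq A$ with $\ol T_B$ a point, and observes that the family $\{\pi^{-1}(gx)\}$ is a nontrivial transverse family contradicting mixing. Your version is somewhat shorter and does avoid the detour through free factors, though it silently relies on the (true, but worth stating) fact that arationality is preserved by a $G$-equivariant alignment-preserving bijection.

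The backward direction is where you diverge from the paper and where you have a genuine gap. You set out to prove that $\pi$ is a bijection — a stronger statement that is not needed and that you cannot easily obtain at this stage, because the natural tool for showing injectivity of $\pi$ (as in the forward direction) is mixing of $T$, which is precisely what you do not yet know. Concretely: (a) in your ``dense orbits'' step, the subcase where the Levitt simplicial edge $e$ has \emph{trivial} stabilizer and $\pi(e)$ is nondegenerate does not obviously produce a simplicial part of $\ol T$ — a nondegenerate segment with trivial pointwise stabilizer is entirely compatible with $\ol T$ having dense orbits, and translates of $\pi(e)$ can accumulate on $\pi(e)$ even though translates of $e$ did not accumulate on $e$; (b) the arational-surface subcase invokes a structural fact about ``$\langle c\rangle$-equivariant bipartitions of transverse directions'' that you neither state precisely nor justify; (c) the ``analogous analysis of $\pi^{-1}(p)$'' giving injectivity is asserted, not argued, and would seem to require mixing of $T$.

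The paper proves this direction much more cheaply, by contraposition at the level of minimal subtrees: if $T\notin\mathcal{AT}$, pick a proper free factor $A$ with $T_A$ not a Grushko $(A,\calf_{|A})$-tree; then $\ol T_A$ is an alignment-preserving image of $T_A$, and one checks directly that such an image of a minimal very small non-Grushko tree is again non-Grushko (a dense-orbits piece of $T_A$ is either preserved or collapsed to a point with non-peripheral stabilizer; a non-trivial arc stabilizer of $T_A$ becomes a non-trivial arc stabilizer or a non-peripheral point stabilizer in $\ol T_A$). Hence $\ol T\notin\mathcal{AT}$. No bijectivity of $\pi$, no case analysis on the Levitt decomposition, and no appeal to the orbifold structure is needed.
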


\begin{proof}
First assume that $T\notin\mathcal{AT}$. Then there exists a proper $(G,\calf)$-free factor $A$ such that $T_A$ is not a Grushko $(A,\calf_{|A})$-tree. Then $\ol T_A$ (which is a collapse of $T_A$) is not a Grushko $(A,\calf_{|A})$-tree either, so $\ol T\notin \mathcal{AT}$.

Assume now that $T\in\mathcal{AT}$. Assume towards a contradiction that $\ol T\notin\mathcal{AT}$. Then there exists a proper $(G,\calf)$-free factor $A$ such that $A\actson \ol T_A$ is not Grushko. Since $T\in\mathcal{AT}$, the $A$-minimal subtree $T_A$ is a Grushko $(A,\calf_{|A})$-tree, and it maps onto $\ol T_A$. Therefore we can find a proper $(G,\calf)$-free factor $B\subseteq A$ such that $\ol T_B$ is reduced to a point $x\in\ol T$. Since $B$ does not fix any point in $T$, the subtree $\pi\m(x)$ is not reduced to a point (and not equal to $T$). Since $gB\cap B=\emptyset$ for all $g\notin\text{Stab}(x)$, 
the tree $T$ is not mixing, a contradiction. Therefore $\ol T\in\mathcal{AT}$.
\end{proof}

\subsection{Arationality and dual laminations}

Arational trees can also be characterized in terms of their dual laminations in the following way.  

\begin{lemma}\label{arat-lam}
A tree $T\in\partial\mathcal{O}(G,\mathcal{F})$ is arational if and only if no leaf of $L^2(T)$ is carried by a proper $(G,\mathcal{F})$-free factor.
\end{lemma}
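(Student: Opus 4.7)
I prove both implications separately, using the Levitt--Lustig map $\calq_T$ in the forward direction and a direct argument with translation lengths in the reverse direction. The idea in both directions is to relate the dual lamination $L^2(T)$ with the dual lamination $L^2(T_H)$ of the minimal subtree of a proper $(G,\calf)$-free factor $H$.

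\smallskip

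\noindent\emph{Forward direction.} Assume $T$ is arational. Since $T\in\mathcal{AT}$, we recall from \cite{Hor14-3} that $T$ has dense orbits, so the Levitt--Lustig map $\calq_T:\partial(G,\calf)\to\hat{T}$ of Proposition~\ref{prop_defQ} is defined, and by Proposition~\ref{equality-Q} a pair $(\alpha,\omega)\in\partial^2(G,\calf)$ lies in $L^2(T)$ iff $\calq_T(\alpha)=\calq_T(\omega)$. Let $H$ be a proper $(G,\calf)$-free factor; up to replacing a leaf by a translate it suffices to show that no pair $(\alpha,\omega)\in\partial^2(H,\calf_{|H})$ lies in $L^2(T)$. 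I will show that $\calq_T$ is injective on $\partial(H,\calf_{|H})\subseteq\partial(G,\calf)$. Pick a Grushko $(G,\calf)$-tree $R$; its $H$-minimal subtree $R_H$ is a Grushko $(H,\calf_{|H})$-tree, and is convex in $R$. Choose a $G$-equivariant map $f:R\to T$, and compose $f|_{R_H}$ with the closest-point projection $T\to T_H$ to obtain an $H$-equivariant map $\bar f:R_H\to T_H$; this map stays within bounded distance of $f|_{R_H}$ by $H$-cocompactness. Because $T$ is arational, $T_H$ is a Grushko $(H,\calf_{|H})$-tree, so Lemma~\ref{identification} gives that $\bar f$ induces a homeomorphism $\partial R_H\to\partial T_H$. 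Unwinding the definition of $\calq_T$ in Proposition~\ref{prop_defQ} with the map $f$ and a ray (or vertex) living in $R_H$ shows that $\calq_T(\alpha)$ coincides with the image of $\alpha\in\partial R_H=\partial(H,\calf_{|H})$ under this boundary homeomorphism (using that $V_\infty$ vertices map to their unique fixed points, which are also unique in $T$ by triviality of arc stabilizers for trees with dense orbits, and that rays into $\partial_\infty$ converge since they stay close to $T_H$). Composing with the natural inclusion $\partial T_H\hookrightarrow\hat T$ yields the desired injectivity, and hence $L^2(T)\cap\partial^2(H,\calf_{|H})=\emptyset$.

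\smallskip

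\noindent\emph{Reverse direction.} Assume $T$ is not arational, so there exists a proper $(G,\calf)$-free factor $H$ with $T_H$ not a Grushko $(H,\calf_{|H})$-tree. First, one verifies that $T_H$ lies in $\overline{\calo(H,\calf_{|H})}$: arc stabilizers and tripod stabilizers in $T_H$ under $H$ are intersections with $H$ of the corresponding stabilizers in $T$, hence trivial or cyclic root-closed, and I claim a subgroup of $H$ is peripheral in $H$ if and only if it is peripheral in $G$ (using that $H$ is a vertex group of a $(G,\calf)$-free splitting, so any $G$-conjugate of some $G_i$ meeting $H$ nontrivially must, by ellipticity of $H$ in the Bass--Serre tree, be contained in a conjugate of $H$). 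Hence $T_H\in\partial\calo(H,\calf_{|H})$, and Lemma~\ref{lamination-nonempty} yields a leaf $(\alpha,\omega)\in L^2(T_H)\subseteq\partial^2(H,\calf_{|H})\subseteq\partial^2(G,\calf)$. For each $\varepsilon>0$ there exist nonperipheral-in-$H$ elements $h_n\in H$ with $\|h_n\|_{T_H}\le\varepsilon$ and $(h_n^{-\infty},h_n^{+\infty})\to(\alpha,\omega)$ in $\partial^2(H,\calf_{|H})$. The peripheral equivalence above gives that $h_n$ is also nonperipheral in $G$; clearly $\|h_n\|_T=\|h_n\|_{T_H}\le\varepsilon$; and because $R_H$ is convex in a Grushko $(G,\calf)$-tree $R$, the line $[\alpha,\omega]_R=[\alpha,\omega]_{R_H}$ is contained in $R_H$, and an application of Lemma~\ref{lem_approximating} transfers the convergence of boundary pairs from $\partial^2(H,\calf_{|H})$ to $\partial^2(G,\calf)$. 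Therefore $(\alpha,\omega)\in L^2_\varepsilon(T)$ for all $\varepsilon>0$, hence $(\alpha,\omega)\in L^2(T)$, producing a leaf carried by $H$.

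\smallskip

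\noindent\emph{Main obstacle.} The forward direction's delicate step is the identification of $\calq_T|_{\partial(H,\calf_{|H})}$ with the Grushko-tree boundary homeomorphism from Lemma~\ref{identification}, which requires carefully choosing the defining map $f$ so that it restricts nicely to $R_H\to T_H$ and verifying that the projection from $T$ to $T_H$ does not disturb the limit behavior of rays. The reverse direction's delicate step is the verification of the peripheral equivalence and the continuity of the boundary inclusion $\partial(H,\calf_{|H})\hookrightarrow\partial(G,\calf)$; both rely on $H$ being a free factor (so $R_H$ is convex in $R$) and on the structure of peripheral subgroups in a free splitting.
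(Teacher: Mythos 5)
Your proof is correct and follows essentially the same route as the paper's: in the forward direction you use that $T_H$ is Grushko to show $\calq_T$ is injective on $\partial(H,\calf_{|H})$ and then invoke the characterization of $L^2(T)$ via fibers of $\calq_T$; in the reverse direction you use Lemma~\ref{lamination-nonempty} to produce a leaf of $L^2(T_H)$ and transfer it to $L^2(T)$. Your version is somewhat more explicit (e.g.\ using translation lengths directly in the reverse direction instead of the paper's appeal to a commutative diagram, and spelling out the verification that $T_H\in\overline{\calo(H,\calf_{|H})}$ and that peripherality in $H$ agrees with peripherality in $G$), but the key ideas and the lemmas invoked are the same.
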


\begin{proof}
If $T$ is not arational, there is a proper $(G,\calf)$-free factor $H\subseteq G$ such that 
 $T_H$ is not a Grushko $(H,\calf_H)$-tree. Fix a Grushko $(G,\calf)$-tree $R$. Then the minimal $H$-invariant subtree $R_H$ is a Grushko $(H,\calf_{|H})$-tree. By Lemma \ref{lamination-nonempty}, the dual lamination of $T_H$ is a nonempty subset of $\partial^2 R_H$. Using the fact that the following diagram commutes, we deduce that every leaf in $L^2(T_H)$ yields a leaf of $L^2(T)$ carried by the proper $(G,\calf)$-free factor $H$. 
 
$$\xymatrix{
  L^2(T_H) \ar@{^(->}[r] \ar@{^(->}[d] &\partial^2(H,\calf_{|H})\ar@{^(->}[d] \\
L^2(T)\ar@{^(->}[r] & \partial^2(G,\calf)
}$$ 
 
Conversely, assume that $T$ is arational, and consider a proper $(G,\calf)$-free factor $H\subseteq G$.
Since the minimal $H$-invariant subtree $T_H\subseteq T$ is a Grushko tree, 
the natural map $\partial R_H\ra \partial T_H$ is injective, and agrees with the restriction of $\calq$.
It follows that $\calq_{|\partial R_H}$ is injective, so by  Lemma \ref{fibreQ}, 
$L^2(T)\cap \partial^2 R_H=\es$, i.e.\ no algebraic leaf of $T$ is carried by $H$.
\end{proof}

\subsection{Action of $\calz$-factors on arational trees}

We recall that a \emph{$\mathcal{Z}$-splitting} of $(G,\calf)$ is a minimal, simplicial $(G,\calf)$-tree in which all peripheral subgroups are elliptic, and in which all edge stabilizers are either trivial, or cyclic and nonperipheral. By analogy with free factors, we define a \emph{$\calz$-factor} of $(G,\calf)$ to be a subgroup of $G$ that arises as a vertex stabilizer in some $\calz$-splitting of $(G,\calf)$. Arationality of a tree $T$ tells us that for every proper $(G,\calf)$-free factor $A$, the action $A\actson T_A$ is simplicial, and we want to extend this to $\calz$-factors. 

\begin{prop}\label{arat-Z}
Let $A\subseteq G$ be a proper $\calz$-factor of $(G,\calf)$, and let $T\in\mathcal{AT}$. 
\\ Then $A\actson T_A$ is simplicial.
\end{prop}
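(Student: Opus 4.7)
The plan is to argue by contradiction using the characterization of arationality given by Lemma~\ref{arat-lam}: I would assume that $A \actson T_A$ is not simplicial, and construct a leaf of $L^2(T)$ carried by a proper $(G,\calf)$-free factor, contradicting arationality of $T$.

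First, fix a $\calz$-splitting $S$ of $(G,\calf)$ with $A = G_{v_A}$ the stabilizer of a vertex $v_A \in S$, and consider the Levitt decomposition of the minimal subtree $T_A$ viewed as an $(A,\calf_{|A})$-tree. Non-simpliciality of $A \actson T_A$ produces a nontrivial dense-orbit vertex subtree $Y \subseteq T_A$ whose stabilizer $A_0 \subseteq A$ has finite Kurosh rank and acts minimally with dense orbits on $Y$. By Lemma~\ref{lamination-nonempty} applied within $\overline{\calo}(A_0,\calf_{|A_0})$, the dual lamination $L^2(Y)$ is nonempty in $\partial^2(A_0,\calf_{|A_0})$, and via the natural embedding $\partial^2(A_0,\calf_{|A_0}) \hookrightarrow \partial^2(G,\calf)$ one obtains a sublamination of $L^2(T)$ carried by $A_0$.

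Second, I would assemble a $\calz$-splitting $\widehat{S}$ of $(G,\calf)$ exhibiting $A_0$ as a vertex group, by blowing up $v_A \in S$ using the Bass--Serre tree of the Levitt decomposition of $T_A$. Since $T \in \overline{\calo}$, arc stabilizers of $T$ are trivial or cyclic nonperipheral, so the edges produced by the blow-up have stabilizers of the allowed type for a $\calz$-splitting; combined with the original edges of $S$ (themselves trivial or cyclic nonperipheral), this shows $\widehat{S}$ is indeed a $\calz$-splitting, so $A_0$ is a $\calz$-factor of $(G,\calf)$.

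The main obstacle is to upgrade this $\calz$-factor statement to a free-factor statement, since Lemma~\ref{arat-lam} applies only to free factors. To complete the argument, I would split into cases according to the classification of arational trees as either relatively free or arational surface trees. In the relatively free case, arc stabilizers of $T$ are trivial, so the Levitt decomposition of $T_A$ has trivial edge stabilizers and $A_0$ is automatically an $(A,\calf_{|A})$-free factor; a careful analysis of the cyclic edges of $S$ adjacent to $v_A$, using that their generators are hyperbolic in $T$ (as nonperipheral elements), then produces a proper $(G,\calf)$-free splitting containing $A_0$ (possibly enlarged by nearby edge stabilizers that still carry the same leaf, in view of Lemma~\ref{lem_porte_ferme}) in a vertex group. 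In the arational surface case, the structure from Definition~\ref{dfn_arat-surf} gives an explicit description of possible $\calz$-splittings of $(G,\calf)$ in terms of essential simple curves on the underlying orbifold, and the fact that the only nonperipheral elliptic in $T$ is the unused boundary curve tightly constrains how cyclic edges of $\widehat{S}$ can occur, again yielding a proper free factor carrying the leaf. In both cases, Lemma~\ref{arat-lam} contradicts the arationality of $T$, completing the proof.
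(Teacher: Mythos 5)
Your first steps parallel the paper's: non-simpliciality of $A \actson T_A$ plus the Levitt decomposition (which has trivial edge stabilizers since $T$, being arational, has dense orbits and hence trivial arc stabilizers) give an $(A,\calf_{|A})$-free factor $A_0 \subseteq A$ with $A_0\actson T_{A_0}$ of dense orbits and nontrivial, and the embedding $\partial^2(A_0,\calf_{|A_0})\hookrightarrow\partial^2(G,\calf)$ yields a leaf of $L^2(T)$ carried by $A_0$. (The case split between relatively free and surface is unnecessary at this stage.) Up to that point you are essentially reconstructing the paper's subgroup $B$.

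The genuine gap is in the promotion from ``$\calz$-factor'' to ``proper free factor.'' Your plan is to blow up $v_A$ in the $\calz$-splitting $S$ and, via ``careful analysis of the cyclic edges adjacent to $v_A$'' (using that their generators are hyperbolic in $T$), produce a proper $(G,\calf)$-free factor containing $A_0$. But there is no reason $A_0$ should be contained in any proper $(G,\calf)$-free factor. Having the cyclic edge generator act hyperbolically on $T$ does not let you erase the cyclic edge of $\widehat S$; the blow-up produces another $\calz$-splitting, not a free one. Consider $G=F_2=\grp{a,b}$ with $\calf=\emptyset$: the HNN $\calz$-splitting over $\grp{a}$ dual to the curve $a$ on a once-punctured torus has vertex group $A=\grp{a,bab^{-1}}$, a rank-$2$ $\calz$-factor not contained in any proper free factor of $F_2$ (proper free factors of $F_2$ are cyclic). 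In a hypothetical run of your argument with $A_0=A$, there is simply no proper free factor to land in, and Lemma~\ref{arat-lam} gives no contradiction. The paper's proof handles exactly this: it forms the transverse family $\{gT_{A_0}\}$ via Corollary~\ref{Reynolds-famille2} (which ultimately rests on Proposition~\ref{Reynolds}), notes it is a transverse covering by mixing, and analyzes its skeleton $U$. When the edge stabilizers of $U$ are trivial, one does get a free splitting and the contradiction you want; but when they are nontrivial (the case your proposal cannot rule out), the paper shows in the relatively free case that $U$ reduces to a point, forcing $A_0=G$, which contradicts $A_0\subseteq A\subsetneq G$ — and in the surface case reduces to a statement about powers of the unused boundary curve being contained in a proper free factor. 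Without the transverse-covering/skeleton machinery (or an equivalent replacement for Proposition~\ref{Reynolds}), the second case is unaddressed in your proposal, and the proof is incomplete.
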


Notice however that $T_A$ might fail to be a Grushko $(A,\calf_{|A})$-tree when $T$ is arational surface, because there are $\calz$-factors of $(G,\calf)$ that contain the unused boundary curve of $T$. Our proof of Proposition \ref{arat-Z} relies on the following statement which does not use arationality, 
and whose proof is based on an argument of Reynolds \cite[Lemma 3.10]{Rey11}.

\begin{prop}\label{Reynolds}
Let $T\in\overline{\mathcal{O}}$ be a tree with trivial arc stabilizers. 
Let $H\subseteq G$ be a subgroup and $H\actson S$ 
an action on a simplicial tree.
Consider two distinct vertices $v_a, v_b\in S$, and $A\subseteq H_{v_a}$, $B\subseteq H_{v_b}$ be two subgroups of $H$ fixing $v_a, v_b$ respectively, 
and having finite Kurosh rank as subgroups of $(G,\calf)$.
\\ Assume that both $A\actson T_A$ and $B\actson T_B$ have dense orbits. 
\\ Then for all nondegenerate segments $I\subseteq T_A\cap T_B$, and all edges $e\subseteq [v_a,v_b]$, there exists $g\in (A\cup B)\cap G_e$ such that $gI\cap I$ is nondegenerate.
\end{prop}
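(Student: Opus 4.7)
The plan is to follow the Reynolds strategy from \cite[Lemma 3.10]{Rey11}, combining density of orbits of $A$ on $T_A$ and of $B$ on $T_B$ with the finite combinatorial data of the actions on the simplicial tree $S$, and using triviality of arc stabilizers in $T$ to rule out degeneracies. First, I would produce an abundance of short elements. For any $\eps>0$, density of $A$-orbits on $T_A$ and the hypothesis $I\subseteq T_A$ yield elements $a\in A$ with arbitrarily small translation length $||a||_T$ whose axis contains a subsegment of $I$ of length greater than $||a||_T$; since arc stabilizers of $T$ are trivial, such an $a$ is hyperbolic and $aI\cap I$ is automatically nondegenerate. A symmetric statement holds for $B$.

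Next, I would analyze the action on $S$. Every $a\in A$ fixes $v_a$, so $a\in G_e$ amounts to requiring that $a$ also fix the unique path in $S$ from $v_a$ to the far endpoint $w_2$ of $e$ (and symmetrically for $b\in B$ with respect to $w_1$, the endpoint of $e$ closer to $v_a$). The key step is to refine the supply of short elements from Step~1 so that some of them lie in $G_e$. I would do this inductively along the finite chain of edges of $[v_a,w_2]$: at each vertex, I pass to the subgroup of $A$ that fixes the current initial segment, and argue that this subgroup still acts on $T_A$ with an orbit accumulating in $I$. Triviality of arc stabilizers in $T$ is used here to prevent this refinement from collapsing to a subgroup acting elliptically with fixed point outside $I$: if that happened, the remaining short elements would share a common fixed point in $T$ near $I$, forcing a nondegenerate arc to be fixed by a nontrivial element, a contradiction.

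The step I expect to be the main obstacle is precisely this refinement: bridging the gap between the ``analytic'' density of orbits on $T_A$ and the ``combinatorial'' requirement that the element lies in a specific edge stabilizer of $S$. I would likely handle it by contradiction: if neither $A\cap G_e$ nor $B\cap G_e$ contains a nontrivial element with $gI\cap I$ nondegenerate, then every short element of $A$ must move $e$, and symmetrically every short element of $B$ must move $e$. Using the finite Kurosh rank of $A$ and $B$ (which bounds the combinatorial complexity of how these groups can act on the finite segment $[v_a,v_b]$), together with a pigeonhole argument on the finitely many orbits of edges of $S$ through which a given short element can fail to fix $e$, I would obtain two short elements $a\in A$ and $b\in B$ whose actions on $[v_a,v_b]$ are coordinated so that a suitable combination places a controlled nontrivial element into $A\cap G_e$ or $B\cap G_e$ (using the inclusions $A\subseteq H_{v_a}$ and $B\subseteq H_{v_b}$ to keep the combination inside $A\cup B$). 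The conclusion $gI\cap I$ nondegenerate then follows from the smallness of the translation length of this combined element and the initial control over the axes.
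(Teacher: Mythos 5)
Your proposal diverges significantly from the paper's argument and has a genuine gap at the step you yourself flag as the main obstacle. The paper's proof proceeds by contradiction too, but through a quite different mechanism: it covers $I$ by translates of small finite subtrees $K_A\subseteq T_A$ and $K_B\subseteq T_B$, builds a system of partial isometries on $K_A\cup K_B\cup I$ from elements of $A$ and $B$, and invokes Gaboriau--Levitt--Paulin's criterion (measure of bases exceeds total volume, so generators are not independent) to extract a reduced word $w$ that fixes a nondegenerate arc in $T$. The structure of the partial isometries forces $w$ to represent an alternating product $u_1v_1\cdots u_kv_k$ with $u_i\in A\setminus\{1\}$, $v_i\in B\setminus\{1\}$, each $u_i,v_i$ mapping a nondegenerate subarc of $I$ into $I$; the contradiction hypothesis then places each $u_i\in A\setminus G_e$ and $v_i\in B\setminus G_e$, and a ping-pong argument in $S$ shows this alternating product is nontrivial, contradicting trivial arc stabilizers in $T$.

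Your Step~3 does not work as written. First, a ``suitable combination'' of a short $a\in A$ and a short $b\in B$ lies in $\langle A,B\rangle$, not in $A\cup B$, so it cannot certify the desired element $g\in(A\cup B)\cap G_e$; the inclusions $A\subseteq H_{v_a}$, $B\subseteq H_{v_b}$ do not save this. Second, the pigeonhole on ``finitely many orbits of edges'' fails: an element $a\in A$ moving the edge $e$ corresponds to a nontrivial coset $aG_e\cap A$ inside $G_{e'}\cap A$ for some earlier edge $e'$ on $[v_a,e]$, and there is no reason for the number of such cosets (hence of ``types of action on $[v_a,v_b]$'') to be finite, since the edge stabilizers $G_{e'}$ in $S$ can be large. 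Finally, your Step~2 refinement---passing to the subgroup of $A$ fixing a longer initial segment of $[v_a,v_b]$---need not preserve density of orbits on $T_A$, and trivial arc stabilizers in $T$ do not rule out that this subgroup is trivial or elliptic with fixed point far from $I$; indeed, the entire point of the proposition is to control this interaction, and your argument presupposes the very control it is meant to establish. The GLP machinery and the ping-pong in $S$ in the paper's proof are precisely what replace these missing steps, and they are not derivable from your outline.
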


Before proving Propositions \ref{Reynolds} and \ref{arat-Z}, we start by mentioning a few consequences of Proposition~\ref{Reynolds}. In the case where edge stabilizers in $S$ are cyclic or peripheral, we deduce the following result.

\begin{cor}\label{cor-Reynolds}
Let $T\in\overline{\mathcal{O}}$ be a tree with trivial arc stabilizers. 
Let $H<G$ be a subgroup and $H\actson S$ 
an action on a simplicial tree whose edge stabilizers are either cyclic or peripheral (possibly trivial).
Consider two distinct vertices $v_a, v_b\in S$, and $A\subseteq H_{v_a},B\subseteq H_{v_b}$ be two subgroups of $H$ fixing $v_a, v_b$, and having finite Kurosh rank in $(G,\calf)$.
\\
If both $A\actson T_A$ and $B\actson T_B$ have dense orbits, then $T_A\cap T_B$ contains at most one point.
\end{cor}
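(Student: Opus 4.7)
My plan is to argue by contradiction: assume that $T_A\cap T_B$ contains a nondegenerate segment $I$, and derive that some element of $H$ must fix a nondegenerate arc of $T$, contradicting the trivial arc stabilizer hypothesis. The main tool is of course Proposition \ref{Reynolds}, which, applied to $I$ and to any edge $e$ on the path $[v_a,v_b]$ in $S$, produces an element $g\in (A\cup B)\cap G_e$ with $gI\cap I$ nondegenerate. The key point will then be to use the structural restriction on $G_e$ (cyclic or peripheral) to force $g$ to fix an arc.

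If $G_e$ is peripheral and $g$ is nontrivial, then $g$ is peripheral and hence elliptic in $T$ (every peripheral subgroup is elliptic in a very small tree of $\overline{\calo}$). The standard fact that an elliptic isometry of an $\bbR$-tree whose translate of a segment overlaps the segment in a nondegenerate interval must fix an arc then applies and yields the desired contradiction. If $G_e$ is cyclic and $g$ is nontrivially elliptic, the same argument works. The delicate subcase is when $G_e$ is cyclic and $g$ is hyperbolic in $T$: then $g I\cap I$ nondegenerate forces the axis $L$ of $g$ to meet $I$ in a nondegenerate segment. Assuming (up to symmetry) that $g\in A$, we have $L\subseteq T_A$, and $L\cap T_B$ contains a nondegenerate arc. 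Using that $B$ acts on $T_B$ with dense orbits and trivial arc stabilizers (so that the translation-length spectrum of $B$ in $T$ accumulates at zero on elements whose axis meets any prescribed arc), I would produce a hyperbolic $b\in B$ with $\|b\|_T$ much smaller than $|L\cap I|$ and axis overlapping $L$ very long. The standard commutator-axis argument in trees with trivial arc stabilizers then yields that $[g,b]$ fixes a nondegenerate arc of $T$ (provided $g$ and $b$ do not commute, which can be enforced by perturbing $b$ inside $B$, since $B$ with dense orbits on a nondegenerate tree is nonelementary), again a contradiction.

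The one remaining issue is that Proposition \ref{Reynolds} is vacuous when all edges along $[v_a,v_b]$ have trivial stabilizer. I plan to handle this case directly by density: pick hyperbolic $a\in A$ and $b\in B$ with very small translation lengths in $T$ and axes meeting $I$ nondegenerately; if they do not commute, then $[a,b]$ fixes an arc of $T$ by the commutator argument; if they do commute, they act by translation on a common line of $T$, and the fact that $[v_a,v_b]$ consists entirely of edges with trivial stabilizer in $S$ (so that collapsing the other edges exhibits $H$ as a free product separating $A$ from $B$) forces $\langle a,b\rangle$ to be trivial through a Bass--Serre theoretic argument, a contradiction. The principal obstacle is executing the commutator argument in the hyperbolic--cyclic subcase cleanly while avoiding the degenerate possibility that $g$ and the auxiliary element commute, which requires the nonelementarity of $B$ acting on $T_B$.
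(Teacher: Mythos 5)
Your proposal takes a genuinely different route from the paper's, and it has a gap. The paper's proof is a one-liner: \emph{before} invoking Proposition~\ref{Reynolds}, it shrinks $I$ to a subsegment $I'$ so that $gI'\cap I'$ is degenerate for every nontrivial $g\in G_e$ (possible precisely because $G_e$ is cyclic or peripheral, combined with trivial arc stabilizers in $T$); Proposition~\ref{Reynolds} (whose conclusion implicitly produces a \emph{nontrivial} $g$, as is clear from its proof which ends with a ping-pong showing $g\neq 1$) is then contradicted outright. You instead apply Proposition~\ref{Reynolds} to the unshrunk $I$ and try to derive a contradiction from the existence of $g$ afterwards, which forces the case analysis and creates the following problems.

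First, the ``standard fact'' you invoke in the elliptic case --- that an elliptic isometry whose translate of a segment overlaps the segment nondegenerately must fix an arc --- is false. For instance, the involution $x\mapsto -x$ of $\mathbb{R}$ satisfies $gI=I$ for $I=[-1,1]$, yet fixes only $\{0\}$. What is true, and what the paper exploits, is that if the unique fixed point $p$ of $g$ lies outside $I$ then $gI\cap I$ is at most a point; so one must shrink $I$ away from $p$ first, i.e.\ the shrinking step is essential and cannot be replaced by your ``standard fact''. Second, your treatment of the trivial-edge-stabilizer case starts from a misreading: with the intended nontrivial-$g$ interpretation, Proposition~\ref{Reynolds} applied to any edge with $G_e=\{1\}$ is not vacuous --- it is impossible to satisfy, so a nondegenerate $I$ cannot exist and the corollary follows immediately. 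Your separate free-product/commutator argument for that case is therefore unnecessary (and would itself require care, e.g.\ nonelementarity of the actions). Finally, the commutator-axis argument in the hyperbolic subcase is considerably heavier machinery than the situation calls for; it is not obviously wrong, but the overlap-length and noncommutativity bookkeeping it requires is exactly the sort of thing the shrinking step avoids. In short: the missing idea is to prepare $I$ before applying Proposition~\ref{Reynolds} rather than reasoning about the produced $g$ afterwards.
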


\begin{proof}
Assume by contradiction that there exists a nondegenerate interval $I\subseteq T_A\cap T_B$. Consider an edge $e\subseteq [v_a,v_b]_S$.  Since $G_e$ is cyclic or peripheral, one can change $I$ to a subsegment such that
for all $g\in G_e\setminus \{1\}$, the intersection $gI\cap I$ contains at most one point. This contradicts Proposition~\ref{Reynolds}.
\end{proof}

We mention two more corollaries of Proposition~\ref{Reynolds}. A \emph{transverse family} in a tree  $T\in\overline{\mathcal{O}}$ is a $G$-invariant collection of subtrees of $T$ such that the intersection between any two subtrees in the collection contains at most one point. 

\begin{cor} \label{Reynolds-famille}
Let $T\in\overline{\mathcal{O}}$ be a tree with trivial arc stabilizers, and $G\actson S$ an action
on a simplicial tree whose edge stabilizers are either cyclic or peripheral (possibly trivial). For each vertex $v\in S$ with stabilizer $G_v$, let $T_v\subseteq T$ be the minimal subtree of $G_v$.
\\
Then the collection of all $T_v$ such that $G_v\actson T_v$ has dense orbits is a transverse family of $T$.
\end{cor}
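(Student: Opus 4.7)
The plan is to apply Corollary \ref{cor-Reynolds} directly to pairs of distinct vertex-stabilizer minimal trees appearing in the collection. Set $\mathcal{T} := \{T_v \subseteq T \,:\, v \in V(S),\ G_v \actson T_v \text{ has dense orbits}\}$. I will check $G$-invariance, then use Corollary \ref{cor-Reynolds} as a black box to establish transversality.

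First I would verify $G$-invariance: for any $g \in G$ and any vertex $v$, we have $G_{gv} = gG_vg^{-1}$ and $T_{gv} = gT_v$, and the action of $G_{gv}$ on $T_{gv}$ is conjugate to that of $G_v$ on $T_v$, so one has dense orbits iff the other does. Hence $g\mathcal{T} = \mathcal{T}$ as a set.

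Next I would establish the transversality condition. Let $T_v$ and $T_{v'}$ be two distinct elements of $\mathcal{T}$; in particular $v \neq v'$ in $V(S)$, for otherwise $T_v = T_{v'}$. I would then apply Corollary \ref{cor-Reynolds} with $H = G$, with the simplicial tree $S$ and vertices $v_a = v$, $v_b = v'$, taking $A = G_v$ and $B = G_{v'}$. The required hypotheses read off directly from the statement: the tree $T$ has trivial arc stabilizers (assumed), $S$ has cyclic or peripheral edge stabilizers (assumed), $v \neq v'$, and $G_v \actson T_v$ and $G_{v'} \actson T_{v'}$ both have dense orbits. Corollary \ref{cor-Reynolds} then delivers $|T_v \cap T_{v'}| \leq 1$, which is precisely the transverse family condition.

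The one precondition of Corollary \ref{cor-Reynolds} that is not immediate from the hypotheses of the corollary to be proved is that $G_v$ and $G_{v'}$ have finite Kurosh rank in $(G,\calf)$; this is the main point I expect to need some justification. It should follow from the fact that a subgroup $H \subseteq G$ acting with dense orbits on the minimal subtree of a very small $(G,\calf)$-tree carries an action in the compactified outer space of $(H, \calf_{|H})$, which presupposes $(H, \calf_{|H})$ admits Grushko trees and hence that $H$ has finite Kurosh rank relative to $\calf_{|H}$ (this is implicit in the approximation by Grushko trees used in \cite[Theorem 5.3]{Hor14-1}). With that justification in place, the proof is essentially a one-line invocation of Corollary \ref{cor-Reynolds}.
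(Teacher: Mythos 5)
Your proposal takes the same structure as the paper's proof — a direct application of Corollary \ref{cor-Reynolds} — and you correctly identify that the only hypothesis requiring separate justification is the finite Kurosh rank of $G_v$ and $G_{v'}$. However, the justification you sketch for that point is circular. You argue that since $G_v\actson T_v$ has dense orbits, $T_v$ should be realizable in the compactified outer space of $(G_v,\calf_{|G_v})$ and hence approximable by Grushko trees, which would presuppose that $(G_v,\calf_{|G_v})$ admits Grushko trees and thus that $G_v$ has finite Kurosh rank. But you are using the existence of Grushko trees for $(G_v,\calf_{|G_v})$ as evidence that $G_v$ has finite Kurosh rank, when in fact the former is only available once the latter is known; the dense-orbits hypothesis on its own does not visibly control the Kurosh rank.

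The finiteness of the Kurosh rank of $G_v$ has nothing to do with $T$ or with dense orbits. It comes purely from the structure of the simplicial tree $S$: since $G\actson S$ is a minimal action on a simplicial tree in which all peripheral subgroups are elliptic and edge stabilizers are cyclic or peripheral, the vertex stabilizers have finite Kurosh rank by an accessibility-type argument. This is exactly what the paper invokes, citing \cite[Lemma 1.12]{Gui_actions}. With that substitution your proof is otherwise correct, and the remaining steps ($G$-invariance of the family, reduction to $v\neq v'$, and reading off transversality from Corollary \ref{cor-Reynolds}) are all sound, if somewhat more spelled out than the paper's one-line proof.
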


\begin{proof}
This is an immediate consequence of Corollary~\ref{cor-Reynolds}. The fact that $G_v$ has finite Kurosh rank follows for instance 
from \cite[Lemma 1.12]{Gui_actions}.
\end{proof}

\begin{cor} \label{Reynolds-famille2}
Let $S,T$ be as in the above corollary, let $v\in S$ be a vertex, and let $A\subseteq G_v$ be a $(G_v,\calf_{|G_v})$-free factor. Assume that $A\actson T_A$ has dense orbits.
\\
Then the collection of all $G$-translates of $T_A$ is a transverse family of $T$.
\end{cor}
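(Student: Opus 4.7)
The plan is to verify that for every $g\in G$, either $gT_A=T_A$ or $T_A\cap gT_A$ contains at most one point, by reducing to Corollary~\ref{cor-Reynolds} via a two-case analysis according to whether $g$ lies in $G_v$. Note first that each translate $gT_A$ coincides with the minimal invariant subtree $T_{gAg^{-1}}$ of the conjugate subgroup $gAg^{-1}$, and the latter inherits a dense-orbit action from $A\actson T_A$. Moreover, $A$ has finite Kurosh rank as a subgroup of $(G,\calf)$: indeed, $G_v$ has finite Kurosh rank in $(G,\calf)$ by \cite[Lemma 1.12]{Gui_actions} applied to $G\actson S$ (since edges of $S$ have cyclic or peripheral stabilizers), and $A$, being a free factor of $(G_v,\calf_{|G_v})$, inherits finite Kurosh rank. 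The same holds for each conjugate $gAg^{-1}$.

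If $g\notin G_v$, then $v$ and $gv$ are distinct vertices of $S$, so I would apply Corollary~\ref{cor-Reynolds} directly to the action $G\actson S$, taking $v_a:=v$, $v_b:=gv$, and subgroups $A\subseteq G_v$, $gAg^{-1}\subseteq G_{gv}$. Both minimal subtrees have dense orbits, so the corollary yields $|T_A\cap gT_A|\leq 1$.

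If $g\in G_v$, the previous argument collapses since both subgroups lie in $G_v$. To handle this case, I would fix an auxiliary $(G_v,\calf_{|G_v})$-free splitting $S'$ in which $A$ is a vertex stabilizer (this exists by the hypothesis that $A$ is a $(G_v,\calf_{|G_v})$-free factor). Let $v_A$ denote the associated vertex of $S'$, whose $G_v$-stabilizer is exactly $A$. If $g\in A$, then $gAg^{-1}=A$ and $gT_A=T_A$, and we are done. Otherwise $gv_A\neq v_A$ in $S'$; since edges of the free splitting $S'$ have trivial stabilizer, I can then apply Corollary~\ref{cor-Reynolds} to $G_v\actson S'$, with $v_a:=v_A$, $v_b:=gv_A$, and subgroups $A$, $gAg^{-1}$, again concluding $|T_A\cap gT_A|\leq 1$.

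The only real subtlety is setting up the auxiliary tree $S'$ to recover a pair of \emph{distinct} vertices in the case $g\in G_v$; once this reduction is in place, Corollary~\ref{cor-Reynolds} applies uniformly in both cases with no further work.
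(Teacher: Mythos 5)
Your proof is correct and follows essentially the same two-case strategy as the paper's own argument: for $g\notin G_v$ one invokes the transversality result on $S$ directly with $v_a=v$, $v_b=gv$, and for $g\in G_v\setminus A$ one passes to an auxiliary $(G_v,\calf_{|G_v})$-free splitting $S'$ in which $A$ is a vertex stabilizer and applies the same result there. The only cosmetic difference is that you route both cases through Corollary~\ref{cor-Reynolds} rather than quoting Proposition~\ref{Reynolds} verbatim; this is harmless (indeed slightly cleaner), since the extra step that Corollary~\ref{cor-Reynolds} packages is exactly what is needed. Your additional bookkeeping — identifying $gT_A$ with $T_{gAg^{-1}}$, verifying finite Kurosh rank of $A$ via \cite[Lemma~1.12]{Gui_actions} and the fact that a $(G_v,\calf_{|G_v})$-free factor inherits it, and handling $g\in A$ explicitly — fills in details the paper leaves implicit, and is all correct.
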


\begin{proof}
If $g\in G\setminus G_v$, then one can apply Proposition \ref{Reynolds} to $B=A^g$ and get that $T_A\cap gT_A$ is at most a point.
If $g\in G_v\setminus A$, apply Proposition \ref{Reynolds} to a free splitting $S'$ of $H=G_v$ in which $A$ is a free factor to get
that $T_A\cap gT_A$ is at most a point. 
\end{proof}

\begin{proof}[Proof of Proposition \ref{Reynolds}]
Assume towards a contradiction that there exists a nondegenerate segment $I\subseteq T_A\cap T_B$ and an edge $e\subseteq [v_a,v_b]$ of $S$ such that for all $g\in (A\cup B)\cap G_e$, the intersection $gI\cap I$ contains at most one point. Since $T_A$ and $T_B$ have dense orbits, one can find finite subtrees $K_A\subseteq T_A$ and $K_B\subseteq T_B$ of volume strictly smaller than $|I|/2$ such that $I$ is covered by finitely many $A$-translates of $K_A$, and $I$ is also covered by finitely many $B$-translates of $K_B$ (this follows for instance from Corollary \ref{bbt}, using the fact that $A$ and $B$ have finite Kurosh rank). Moreover, one can assume that $K_A,K_B$ and $I$ are pairwise disjoint. 
We now build a system of partial isometries on $K_A\cup K_B\cup I$ in the following way. Subdivide $I$ into finitely many non-overlapping subsegments $I_1,\dots, I_k$ such that for all $i\in\{1,\dots,k\}$, there exists $a_i\in A$ with $a_iI_i\subseteq K_A$. For all $i\in\{1,\dots,k\}$, let $\phi_i$ be the partial isometry from $I_i$ to $a_iI_i$ induced by $a_i$. Similarly, we subdivide $I$ into finitely many non-overlapping subsegments $I'_1,\dots,I'_l$ and get partial isometries $\psi_j$ from $I'_j$ to a subset $b'_j I'_j\subseteq K_B$ for some $b'_j\in B$. The volume of $K_A\cup K_B\cup I$ is strictly smaller than $2|I|$, while the total length of the bases of the partial isometries $\phi_i$ and $\psi_j$ equals $2|I|$. Therefore, by \cite[Proposition 6.1]{GLP94}, our system of partial isometries does not have independent generators: this means that one can find a cyclically reduced word $w$ on the partial isometries $\phi_i^{\pm 1}$ and $\psi_j^{\pm 1}$ which fixes a nondegenerate segment of $I$. 
Since the segments $I_i$ do not overlap, any maximal subword of $w$ on the letters  $\{\phi_1^{\pm 1},\dots,\phi_k^{\pm 1}\}$ 
has length $2$ and is of the form
$\phi_i\m \phi_j$ with $j\neq i$, and $a_i\m a_j\in A\setminus\{1\}$  (a similar argument applies for $B$).
This implies that the element $g\in\langle A,B\rangle$ represented by $w$ is of the form $g=u_1v_1u_2\dots u_k v_k$ with $u_i\in A\setminus\{1\}$ and $v_i\in B\setminus\{1\}$ for all $i\in\{1,\dots,k\}$ (except possibly that $u_1=1$ or $v_k=1$), and the $u_i$'s and $v_i$'s all map a non-degenerate segment of $I$ into $I$. 
In particular, we have $u_i\in A\setminus G_e$ and $v_i\in B\setminus G_e$ for all $i\in\{1,\dots,k\}$.
A ping-pong argument in the tree $S$ then shows that $g\neq 1$.
On the other hand, since the partial isometry defined by $w$ fixes a non-degenerate segment in $I$, the element $g$ fixes an arc in $T$,
which contradicts that $T$ has trivial arc stabilizers.
\end{proof}

Before proving Proposition~\ref{arat-Z}, we recall some more terminology. A \emph{transverse covering} in a tree $T$ is a transverse family made of nondegenerate subtrees such that every segment in $T$ can be covered by finitely many subtrees from the family. The \emph{skeleton} of a transverse covering is the bipartite simplicial tree whose vertices are given by the collection of all nondegenerate trees $Y$ in the family,  together with the collection of all points that belong to several distinct trees in the family, the vertex  $v_x$ (corresponding to a point $x$) being joined by an edge $\eps_{x,Y}$ to the vertex $v_Y$ (corresponding to a subtree $Y$) whenever $x\in Y$, see \cite{Gui_limit}. If the $G$-action on $T$ is minimal, then the $G$-action on the skeleton of any transverse covering of $T$ is always minimal \cite[Lemma~4.9]{Gui_limit}. 

\begin{proof}[Proof of Proposition \ref{arat-Z}]
Up to enlarging $A$, we can assume that $A=G_v$ is the full stabilizer of a vertex $v$
in a $\calz$-splitting $S$ of $(G,\calf)$.
Assume by contradiction that $A\actson T_A$ is not simplicial.
By considering the Levitt decomposition of $T_{A}$ as a graph of actions with dense orbits (which has trivial edge stabilizers), we can find an $(A,\mathcal{F}_{A})$-free factor $B\subseteq A$ such that $B\actson T_{B}$ has dense orbits, with $T_{B}$ non-trivial.

By Corollary \ref{Reynolds-famille2}, the family $\{gT_{B}\}_{g\in G}$ is a transverse family,
and hence it is a transverse covering because $T$ is mixing. 
Let $U$ be the skeleton of this transverse covering. Note that no edge $\eps_{x,Y}$ in $U$ has trivial stabilizer since otherwise, minimality $U$ would imply that $B$ is contained in a proper $(G,\calf)$-free factor, contradicting the arationality of $T$. 

Assume first that $T$ is relatively free. 
Then we claim that each vertex $v_x$ is a terminal vertex of $U$.
Since $U$ is minimal, this will imply that $U$ is reduced to a point, i.e.\ $T_B=T$ and $B=G$, a contradiction.
To prove the claim, let $\eps_{x,Y},\eps_{x,Y'}$ be two edges of $U$ incident on $v_x$, and assume without loss of generality that $Y=T_B$.
Let $g\in G$ be such that $gY=Y'$. Then since edge stabilizers of $U$ are non-trivial, $G_x\cap B$ 
and $G_x\cap B^g$ are two non-trivial subgroups of the peripheral group $G_x$. 
In particular, $G_x$ intersects nontrivially $A$ and $A^g$. Since $A=G_v$ is a vertex stabilizer in the $\Z$-splitting $S$, it follows that $g\in A$: 
indeed, the group $G_x$ fixes a vertex $p\in S$,
and $G_x\cap A$ fixes the segment $[p,v]_S$, so $p=v$ since no non-trivial peripheral element fixes an edge of $S$.
Similarly, $G_x\cap A^g$ being non-trivial shows that $p=gv$, so $g$ fixes $v$ i.e.\ $g\in A$.
Since $B$ is an $(A,\calf_{A})$-free factor and $g\in A$, a similar argument now implies that $g\in B$.
Since $B$ preserves $Y=T_B$, we get $Y'=Y$ and  $\eps_{x,Y}=\eps_{x,Y'}$ which proves our claim.

If $T$ is not relatively free, then it is arational surface.
The argument above applies as soon as 
the stabilizer of the vertex $v_x\in U$ is peripheral.
If not, then $G_{v_x}=\grp{b}$ is a cyclic group corresponding to the special boundary component.
Since edge stabilizers of $U$ are non-trivial, every edge $\eps_{x,Y}$ contains a power $b^k$ of $b$.
Since $U$ is minimal, this implies that $b^k$ is an edge stabilizer of a $\calz$-splitting of $G$,
and therefore $b^k$ is contained in a proper $(G,\calf)$-free factor \cite[Lemma~5.11]{Hor14-1}, contradicting that $T$ is arational.
\end{proof}

\section{Reconstructing $L^2(T)$ from one leaf when $T$ is arational}\label{sec-11}

\subsection{Peritransitive closure of a set of algebraic leaves}

\begin{de}\label{dfn_peritransitively_closed}
  An algebraic lamination $L\subseteq\partial^2(G,\calf)$ is said to be
  \emph{transitively closed} (also called \emph{diagonally closed} in
  \cite{CHR11}) if for all $\alpha,\beta,\gamma\in\partial (G,\calf)$,
  \begin{equation}\label{eq1}
    (\alpha,\beta),(\beta,\gamma)\in L\Rightarrow (\alpha,\gamma)\in L\text{ if $\alpha\neq \gamma$.} 
  \end{equation}
  It is \emph{peripherally closed} if for every peripheral group
  $G_v$, all $g,g'\in G_v\setminus\{1\}$, and all
  $\alpha,\alpha'\in\partial (G,\calf)$,
  \begin{equation}
    (\alpha,g\alpha)\in L, (\alpha',g'\alpha')\in L \Rightarrow (\alpha,\alpha')\in L\text{ if }\alpha\neq \alpha'.\label{eq2}
  \end{equation}
\end{de}

\begin{de}[\textbf{Peritransitive closure}]\label{dfn_peritransitive_closure}
  Given a subset $X\subseteq\partial^2(G,\calf)$, we denote by
  $\overline{\mathcal{P}}(X)$ the smallest transitively closed and
  peripherally closed algebraic lamination that contains $X$. We call
  it the \emph{peritransitive closure} of $X$.
\end{de}

\begin{lemma}\label{lem_peritransitively}  
  Let $T\in\ol\calo$ be a tree with dense orbits. Then $L^2(T)$ is peritransitively closed.
\end{lemma}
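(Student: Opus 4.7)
The plan is to deduce both closure properties from the identification of $L^2(T)$ with the (nondiagonal) fibers of the Levitt--Lustig map $\calq$. Recall from Proposition~\ref{equality-Q} that for $\alpha\neq\omega$ in $\partial(G,\calf)$, we have $(\alpha,\omega)\in L^2(T)$ if and only if $\calq(\alpha)=\calq(\omega)$. The fact that $L^2(T)$ is closed, $G$-invariant, and flip-invariant is immediate from its definition, so it only remains to verify the transitive and peripheral closure conditions.

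Transitive closure is immediate: if $(\alpha,\beta),(\beta,\gamma)\in L^2(T)$, then $\calq(\alpha)=\calq(\beta)=\calq(\gamma)$, so for $\alpha\neq\gamma$ one has $(\alpha,\gamma)\in L^2(T)$ by Proposition~\ref{equality-Q}.

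The peripheral closure is the more substantive step. Given $(\alpha,g\alpha)\in L^2(T)$ with $g\in G_v\setminus\{1\}$, $G_v$ peripheral, Proposition~\ref{equality-Q} combined with $G$-equivariance of $\calq$ gives $\calq(\alpha)=\calq(g\alpha)=g.\calq(\alpha)$, so $\calq(\alpha)\in\widehat{T}$ is fixed by $g$. The key claim is then that \emph{any} nontrivial peripheral element $g\in G_v$ has a unique fixed point in $\widehat{T}=\overline{T}\cup\partial_\infty T$, namely the unique point $x_v\in T$ fixed by $G_v$ (whose existence follows from the fact that $T\in\overline{\calo}$, and whose uniqueness follows from triviality of arc stabilizers in trees with dense orbits, see \cite[Proposition~4.17]{Hor14-1}). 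Applying this to both $g$ and $g'$ yields $\calq(\alpha)=x_v=\calq(\alpha')$, and Proposition~\ref{equality-Q} gives $(\alpha,\alpha')\in L^2(T)$.

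The main (minor) obstacle is verifying this uniqueness claim for the fixed point of $g$ in $\widehat{T}$. The element $g$ cannot fix a point $\xi\in\partial_\infty T$, because together with $x_v$ this would force $g$ to fix pointwise the geodesic ray $[x_v,\xi)\subseteq T$, contradicting triviality of arc stabilizers. If $g$ fixed a point $y\in\overline{T}$ with $y\neq x_v$, then $g$ would be an isometry of $\overline{T}$ fixing both endpoints of the arc $[x_v,y]\subseteq\overline{T}$, hence fixing the arc pointwise. Using the footnote observation recalled in the introduction that $\overline{T}\setminus T$ consists only of terminal points of $\overline{T}$, the interior of $[x_v,y]$ lies inside $T$, so $g$ would again fix a nondegenerate arc of $T$, a contradiction. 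Therefore $\calq(\alpha)=x_v$ as claimed, and the proof is complete.
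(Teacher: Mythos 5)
Your proof is correct and follows the same approach as the paper's: both closure properties are read off from the identification of $L^2(T)$ with the fibers of $\calq$ (Proposition~\ref{equality-Q}), and peripheral closure comes from the uniqueness of the fixed point of a nontrivial peripheral element. You are a bit more careful than the paper in pinning down why that fixed point cannot lie in $\partial_\infty T$ or in $\overline T\setminus T$ (the paper's proof just asserts that $\calq(\alpha)$ is the fixed point $x_v\in T$ without spelling this out); that extra step is sound and closes a small gap, but the overall argument is the same.
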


\begin{proof}
Since $L^2(T)$ corresponds to the fibers of the map $\calq$, it is transitively closed.

To prove that it is peripherally closed, consider $(\alpha,g\alpha)\in L^2(T), (\alpha',g'\alpha')\in L^2(T)$ with $g,g'\in G_v$.
Then $g$ fixes $\calq(\alpha)=\calq(g\alpha)$, and since $g$ fixes no arc in $T$, 
$\calq(\alpha)$ is the unique fixed point $x_v$ of $G_v$ in $T$. The same argument also shows that $\calq(\alpha')=x_v$.
By Proposition \ref{equality-Q}, $(\alpha,\alpha')\in L^2(T)$ if $\alpha\neq \alpha'$.  
\end{proof}

\begin{rk}
Notice that if an algebraic lamination $L$ is peritransitively closed, then 
\begin{eqnarray*}
(\alpha,g\alpha)\in L \text{ for some } g\in G_v\setminus\{1\} \Rightarrow (\alpha,h\alpha)\in L\text{ for all }h\in G_v\setminus\{1\} 
\end{eqnarray*}
as follows from (\ref{eq2}) applied to $\alpha'=h\alpha$ and $g'=hgh^{-1}$. 
If $G_v$ is infinite, this implies the following condition:
\begin{equation}\renewcommand{\theequation}{\addtocounter{equation}{-1}\ref{eq2}'}
(\alpha,g\alpha)\in L \text{ for some } g\in G_v\setminus\{1\} \Rightarrow (\alpha,v)\in L\text{ for all }h\in G_v\setminus\{1\} 
\end{equation}
because $L$ is closed and $(\alpha,g_i\alpha)$ converges to $(\alpha,v)$ for any sequence of distinct elements $g_i\in G_v\setminus\{1\}$. 
Conversely, if $(\alpha,v)\in L$, then $G$-invariance of $L$ implies that $(g\alpha,v)\in L$ for all $g\in G_v$, so if $L$ is transitively closed, we have $(\alpha,g\alpha)\in L$ for all $g\in G_v$. 

Therefore, if all the groups $G_i$ were infinite, we could replace (\ref{eq2}) by (\ref{eq2}') in Definition~\ref{dfn_peritransitively_closed}.
The reason for our more mysterious requirement (\ref{eq2}) is that we also need to deal with the case of finite peripheral groups.
\end{rk}

\subsection{Main result and strategy of the proof}

 The goal of the present section is to prove the following theorem, which allows to reconstruct the dual lamination of an arational tree from a single leaf.

\begin{theo}\label{lamination-arational}
Let $T\in\mathcal{AT}$, and let $l_0\in L^2(T)$ be a leaf that is not carried by any subgroup of $G$ that is elliptic in $T$. Then $L^2(T)=\overline{\mathcal{P}}(l_0)$.
\end{theo}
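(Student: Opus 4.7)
The plan is to prove both inclusions. The direction $\overline{\mathcal{P}}(l_0) \subseteq L^2(T)$ is immediate: by Lemma~\ref{lem_peritransitively}, $L^2(T)$ is peritransitively closed and it contains $l_0$.

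For the reverse inclusion, I would fix a Grushko tree $R$ and work with the band complex $\Sigma = \Sigma(T,R)$. By Lemma~\ref{lem_L2}, algebraic leaves in $L^2(T)$ correspond bijectively to $\Sigma$-leaf lines; denote by $L_0$ the $\Sigma$-leaf line associated with $l_0$. The first step would be to show that $\overline{\mathcal{P}}(l_0)$ contains every algebraic leaf whose $\Sigma$-leaf line shares a complete $\Sigma$-leaf with $L_0$: two such lines either share an endpoint in $\partial_\infty(G,\calf)$, in which case transitive closure applies directly, or they both contain a special vertex $v_\Sigma$, in which case peripheral closure applied to the stabilizer $G_v$ produces the required linking leaves. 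Iterating, every leaf line lying in the complete $\Sigma$-leaf through $L_0$ yields an algebraic leaf in $\overline{\mathcal{P}}(l_0)$, and by $G$-invariance the same holds for any complete $\Sigma$-leaf in the $G$-orbit of the one through $L_0$.

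The next step is to propagate between different $G$-orbits of complete $\Sigma$-leaves, and this is where the band complex analysis developed in Sections~\ref{sec-pruning} and~\ref{sec-splitting} enters. Since $T$ is arational it is mixing, and by Lemma~\ref{arat-lam} it is not compatible with any $(G,\calf)$-free splitting (otherwise the corresponding vertex stabilizer would carry a leaf of $L^2(T)$). After applying finitely many pruning steps one may assume the complex is of quadratic type, and then by Corollary~\ref{coiffeur} the splitting process produces Grushko trees $R^{(i)}$ such that the diameters of base trees of $\Sigma^{(i)} := \Sigma(T,R^{(i)})$ tend to $0$. A target leaf $l=(\alpha,\omega) \in L^2(T)$ corresponds to leaf lines $L^{(i)} \subseteq \Sigma^{(i)}$, and for $i$ large, the small diameter of base trees allows one to approximate $L^{(i)}$ by increasingly long finite concatenations of $G$-translates of $L_0^{(i)}$, linked through common points in shrinking base trees via the peritransitive operations established in the previous paragraph. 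Passing to the limit and using the topological closedness of $\overline{\mathcal{P}}(l_0)$ yields $l \in \overline{\mathcal{P}}(l_0)$.

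The main obstacle is the arational surface case. Here the subtree of $T$ dual to the surface foliation is preserved by the unused boundary curve, which is a nonperipheral cyclic elliptic subgroup, and the associated subcomplex of $\Sigma$ requires separate handling: the splitting analysis must be applied inside this subcomplex, and leaves carried by the boundary curve must be generated using peripheral closure. The hypothesis that $l_0$ is not carried by any elliptic subgroup of $T$ is precisely what prevents $L_0$ from being trapped inside a peripheral block or confined to the boundary curve, thus ensuring that $L_0$ traverses enough of the band complex for the propagation argument to reach every complete $\Sigma$-leaf. Executing this carefully requires distinguishing leaf lines visiting the orbifold vertex from those concentrated in the peripheral vertex subtrees, and using peripheral closure to reconstruct the latter from the former.
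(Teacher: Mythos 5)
Your easy direction is fine, and you have correctly assembled some of the right ingredients for the hard direction (mixing implies incompatibility with free splittings, pruning and splitting make the band complex tractable, Corollary~\ref{coiffeur} gives shrinking base trees, the surface case is delicate). But there are two genuine gaps.

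First, your ``Step~1'' misapplies the closure operations. Transitive closure says $(\alpha,\beta),(\beta,\gamma)\in L \Rightarrow (\alpha,\gamma)\in L$: it requires \emph{two} leaves as input. If $L_0=(\alpha_0,\omega_0)$ and another line $L_1=(\omega_0,\omega_1)$ lie in the same complete $\Sigma$-leaf and share the endpoint $\omega_0$, you cannot conclude $L_1\in\overline{\calp}(l_0)$: you would need $L_1$ already in the set to apply transitivity, which is circular. The same remark applies to peripheral closure, whose hypothesis $(\alpha,g\alpha)\in L,\ (\alpha',g'\alpha')\in L$ needs two leaves. So the claim that $\overline{\calp}(l_0)$ automatically contains every leaf line co-inhabiting a complete $\Sigma$-leaf with $L_0$ does not follow from the closure operations alone; it is essentially the thing to be proved.

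Second, and more fundamentally, the proposal never explains why $G$-translates of $l_0$ (or $l_1$) actually chain together to span the way from $\alpha$ to $\omega$. Saying that shrinking base trees ``allow one to approximate $L^{(i)}$ by concatenations of translates of $L_0^{(i)}$'' presupposes precisely the connectivity that must be established. In the paper this is done via a Whitehead-graph argument (Lemma~\ref{Whitehead}): for a bounded subtree $F\subseteq R^{(i)}$, one defines an equivalence relation $\sim_F$ on the edges incident to $F$ generated by translates of $l_1$ and peripheral loops, and one proves $\sim_F$ has a single class by showing that a non-trivial partition would yield a Grushko tree $R'$ in which $l_1$ avoids an edge orbit, making $l_1$ carried by a proper free factor and contradicting arationality via Lemma~\ref{arat-lam}. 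That Whitehead connectivity is the engine producing the basic chains, and it must be combined with the systole of $R^{(i)}$ going to infinity (Lemma~\ref{systole}) to separate ``short'' from ``long'' edges near a basepoint. None of this appears in the proposal. Relatedly, when one iterates the splitting process a target $\Sigma$-leaf line need not lift to $\Sigma^{(i)}$; the paper introduces liftable leaves and the cleanliness hypothesis (Lemmas~\ref{lem_unsplit1} and~\ref{lem_unsplit3}, Definition~\ref{clean}) to control this, and your outline silently assumes the lifting always works, which it does not.
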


Note that the assumption on $l_0$ is empty if $T$ is relatively free. On the other hand, if $T$ is not relatively free, then $T$ is arational surface, and the assumption on $l_0$ is saying that $l_0$ should not be a periodic leaf of the form $(c^{-\infty},c^{+\infty})$ where $c\in G$ is represented by a loop in the free boundary curve of the underlying orbifold.\\

Start with a Grushko tree $R$ all whose edges have length $1$. 
Consider the band complexes $\Sigma^{(i)}$ and the corresponding Grushko tree $R^{(i)}$ 
obtained by applying the pruning process on $\Sigma=\Sigma(R,T)$, followed by the splitting process if the pruning process stops. 
Up to changing $R$ to some $R^{(i)}$, we may assume that either the pruning process does not stop,
or that the band complex $\Sigma$ is of quadratic type, and $\Sigma$ is clean for the splitting process (Definition \ref{clean} and Proposition \ref{cleanify}).

Recall from Definition \ref{def-unsplit} that an algebraic leaf $(\alpha,\omega)\in L^2(T)$ is \emph{liftable} (in the splitting process) if
the natural map $\Sigma^{(i)}\ra \Sigma$ sends $[\alpha,\omega]_{\Sigma^{(i)}}$ bijectively to $[\alpha,\omega]_{\Sigma}$; 
this definition also makes sense if the pruning process does not halt, in which case all leaves of $L^2(T)$ are liftable. 
Let $l_0\in L^2(T)$ be as in Proposition~\ref{lamination-arational}. Proposition~\ref{lamination-arational} is a consequence of the following three lemmas (if the pruning process does not halt, Lemmas~\ref{lem_unsplit1} and~\ref{lem_unsplit3} are automatic since every leaf is liftable).

\begin{lemma}\label{lem_unsplit1}
The lamination $\ol\calp(l_0)$ contains a liftable leaf $l_1\in L^2(T)$ which is not carried by any subgroup of $G$ that is elliptic in $T$. 
\end{lemma}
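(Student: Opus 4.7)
The plan splits into cases on whether the pruning process applied to $\Sigma=\Sigma(T,R)$ halts. If it does not halt, then the splitting process is never invoked and liftability in the splitting process holds vacuously for every leaf of $L^2(T)$; setting $l_1:=l_0$ suffices. I therefore focus on the case where $\Sigma$ is of quadratic type and has been iterated until it is clean in the sense of Definition~\ref{clean}, and where $T$ is incompatible with any $(G,\calf)$-free splitting (so Lemma~\ref{lem_non-degenerate} applies).

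The first step is to translate liftability into a geometric condition: combining the first bullet of Definition~\ref{clean} with Lemma~\ref{splitting-iterate}, a leaf $(\alpha,\omega)\in L^2(T)$ is liftable if and only if $[\alpha,\omega]_\Sigma$ meets no splitting semi-line in the interior of a base-tree crossing. By Proposition~\ref{existence-splitting} and the finiteness of singular orbits (Lemma~\ref{singular}), there are only finitely many $G$-orbits of splitting semi-lines, and by the second bullet of Definition~\ref{clean} together with the proof of Proposition~\ref{cleanify}, their extremities all lie in $\partial_\infty(G,\calf)$ (not in $V_\infty(G,\calf)$). Call orbit representatives $\xi_1,\dots,\xi_k$.

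If $l_0=(\alpha_0,\omega_0)$ is not already liftable, then $[\alpha_0,\omega_0]_\Sigma$ contains, in the interior of some base-tree crossing, the basepoint $x$ of a (virtual) splitting germ. By Remark~\ref{rk_branch_leaf}, $x$ is a branch point of the $\Sigma$-leaf $\call$ through $l_0$, and $\call$ also contains a splitting semi-line $\sigma$ based at $x$ whose extremity is a translate of some $\xi_j$; call it $\xi$. By Lemma~\ref{lem_L2}, both $(\alpha_0,\xi)$ and $(\omega_0,\xi)$ belong to $L^2(T)$, and at least one of them crosses strictly fewer $G$-orbits of splitting semi-lines than $l_0$. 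The central technical step is to upgrade this from $L^2(T)$ to $\overline{\calp}(l_0)$: I would produce a sequence $g_n\in G$ whose translates $g_n\cdot l_0$ have geometric realizations that follow longer and longer initial portions of $\sigma$, exploiting the mixing property of arational trees (\cite{Rey12,Hor14-3}) and the density of $G$-orbits in $\partial(G,\calf)$ to arrange that consecutive translates share endpoints. Concatenating these via the transitive closure condition \eqref{eq1}, and passing to the limit using the topological closedness of $\overline{\calp}(l_0)$, yields the desired leaf ending at $\xi$.

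One then iterates the replacement; since the decreasing invariant (the finite set of orbits of splitting semi-lines crossed by the geometric realization) is bounded below, the procedure terminates in a liftable $l_1\in\overline{\calp}(l_0)$. Non-carriage by an elliptic subgroup is preserved throughout: any replacement endpoint $\xi$ lies in $\partial_\infty(G,\calf)$ and hence cannot contribute to carriage by a peripheral subgroup; in the arational surface case one must further check that no $\xi_j$ is an endpoint of the axis of the unused boundary curve, which follows from the hypothesis on $l_0$ combined with a direct analysis of the splitting dynamics near that periodic leaf. The main obstacle is clearly the limit-and-concatenation argument establishing $(\alpha_0,\xi)\in\overline{\calp}(l_0)$: this is where arationality (via mixing and orbit density) must be leveraged to coordinate transitive-closure chains of $G$-translates of $l_0$ with the geometry of the splitting semi-line, so as to ``push'' an endpoint of $l_0$ along $\sigma$ to its extremity.
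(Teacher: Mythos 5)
Your plan diverges substantially from the paper's, and the step you yourself flag as the ``main obstacle'' is a genuine gap, not a routine verification. You want to show $(\alpha_0,\xi)\in\overline{\calp}(l_0)$, where $\xi$ is the end of a splitting semi-line in the complete $\Sigma$-leaf through $l_0$, by chaining $G$-translates of $l_0$ via the transitive closure axiom and taking limits. But this is essentially the content of Lemma~\ref{lem_unsplit2} (and the Whitehead-graph argument of Lemma~\ref{Whitehead} behind it), and that whole machinery crucially \emph{assumes} as input a liftable leaf $l_1$: liftability is what lets one project $l_1$ isometrically to every $R^{(i)}$, so that the chains of translates of $l_1$ around a vertex $F_i$ can be passed to the limit. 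Running the same chaining starting only from the possibly non-liftable $l_0$ begs the question. There is also a secondary gap: you assert that the proposed replacement decreases the finite set of $G$-orbits of splitting semi-lines crossed by the realization, but replacing $[x,\omega_0]_\call$ by $\sigma$ keeps all the bad crossings of $[\alpha_0,x]_\call$ and may introduce new ones along $\sigma$; nothing in your write-up shows the iteration terminates.

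The paper's proof avoids both issues by never touching the splitting semi-line directly. It first establishes an ``isolation'' property: if $l\in L^2(T)$ is not liftable, then there is a fixed $\Sigma^{(i)}$-leaf segment $\sigma$ such that any sequence in $L^2(T)$ converging to $l$ eventually contains $\sigma$ (this is because non-liftability forces $l^{(i)}$ to pass through $B_0$ then a band $B'$ meeting $B_0$ only at the basepoint of a splitting germ). Next, using the hypothesis that $l_0$ is not carried by a subgroup elliptic in $T$, one chooses the endpoint $\alpha$ of $l_0$ so that $g\alpha\neq\alpha$ for all $g\neq 1$ (i.e.\ $\alpha\in\partial_\infty(G,\calf)$ and $\alpha$ is not some $c^{+\infty}$). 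Then one takes $g_k\in G$ with $g_k.e\to\alpha$ along $p_{R^{(i)}}(l_0)$ and defines $l_1$ as a limit of $g_k^{-1}l_0$; this lies in $\overline{\calp}(l_0)$ simply because $\overline{\calp}(l_0)$ is closed and $G$-invariant, with no appeal to transitivity or peripheral closure. Finally, liftability of $l_1$ follows by contradiction: if $l_1$ were not liftable, the isolation claim would force $g_k^{-1}l_0^{(i)}$ to contain a fixed segment for all large $k$, so $g_kg_{k_0}^{-1}$ is non-peripheral and fixes a point of $T$; hence $T$ is arational surface with $G_z=\grp{c}$, so $g_kg_{k_0}^{-1}=c^{j_k}$ and $\alpha$ is a limit of powers of $c$, contradicting the choice of $\alpha$. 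You should replace your ``push along $\sigma$'' scheme with this limit-of-translates argument, which requires only the closedness and $G$-invariance of $\overline{\calp}(l_0)$, not its transitive or peripheral closure.
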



\begin{rk}
It would actually be enough to just prove that $\ol\calp(l_0)$ contains a liftable leaf. Indeed, with a bit more work, one can show that a liftable leaf is never carried by a subgroup of $G$ that is elliptic in $T$. This is automatic if $T$ is relatively free. Otherwise $T$ is arational surface, and in this case one can show that the leaves carried by the free boundary component of the underlying orbifold are not liftable: roughly, this is because the foliation on the orbifold contains half-leaves with one endpoint on the boundary, and these will serve as splitting semilines in the splitting process. However, we will not check the details of this claim, as this is not needed in the present paper.  
\end{rk}


\begin{lemma}\label{lem_unsplit2}
The lamination $\ol\calp(l_1)$ contains all liftable leaves of $L^2(T)\cap\partial_\infty(G,\calf)^2$.
\end{lemma}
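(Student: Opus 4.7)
The plan is to combine the iterated splitting process with the mixing property of $T$ (which holds since $T$ is arational) to approximate any liftable leaf by $G$-translates of $l_1$ chained together via transitive closure.

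First, the hypothesis that $l_1$ is not carried by any subgroup elliptic in $T$ (in particular not by any infinite peripheral subgroup) forces $l_1 = (\alpha_1,\omega_1)$ to have both endpoints in $\partial_\infty(G,\calf)$. Both $l = (\alpha,\omega)$ and $l_1$ being liftable, for every $i \geq 0$ their bi-infinite representatives $\Lambda_l^{(i)}$ and $\Lambda_{l_1}^{(i)}$ exist in $\Sigma^{(i)}$ and project isometrically onto the geodesics $[\alpha,\omega]_{R^{(i)}}$ and $[\alpha_1,\omega_1]_{R^{(i)}}$, respectively. Since $T \in \mathcal{AT}$ is mixing, Corollary~\ref{coiffeur} gives that $\max_v \mathrm{diam}(\calk_v^{(i)}) \to 0$ as $i \to \infty$.

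Fix $i$ large. By mixing of $T$, applied through the cocompact $G$-action on $R^{(i)}$, any finite subsegment $J \subseteq [\alpha,\omega]_{R^{(i)}}$ can be covered by finitely many translates $g_1[\alpha_1,\omega_1]_{R^{(i)}},\dots,g_N[\alpha_1,\omega_1]_{R^{(i)}}$ with arbitrarily long consecutive overlaps. On each overlap, the bi-infinite lines $g_j\Lambda_{l_1}^{(i)}$ and $g_{j+1}\Lambda_{l_1}^{(i)}$ traverse the same bands of $\Sigma^{(i)}$, and their intersection points with every shared base tree differ by at most $\max_v \mathrm{diam}(\calk_v^{(i)})$ under $p_T$. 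Using density of $G$-orbits in $T$, one adjusts the representative $g_{j+1}$ so that these intersection points actually coincide at some base tree in the overlap; by Proposition~\ref{fibers-connected} the two lines then lie in the same complete $\Sigma$-leaf, so Lemma~\ref{lem_L2} supplies an auxiliary algebraic leaf in $L^2(T)$ connecting the endpoints of $g_j l_1$ to those of $g_{j+1} l_1$. Transitive closure then yields $(g_j\alpha_1, g_{j+1}\omega_1) \in \overline{\mathcal{P}}(l_1)$. Iterating this chaining over all $N-1$ overlaps produces a single leaf $\Tilde l_i = (g_1\alpha_1, g_N\omega_1) \in \overline{\mathcal{P}}(l_1)$ whose bi-infinite representative in $\Sigma^{(i)}$ closely follows $J$.

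Letting $J$ exhaust $[\alpha,\omega]_{R^{(i)}}$ and then $i \to \infty$, Lemma~\ref{lem_approximating} yields $\Tilde l_i \to l$ in $\partial^2(G,\calf)$; since $\overline{\mathcal{P}}(l_1)$ is closed by definition of a lamination, this will give $l \in \overline{\mathcal{P}}(l_1)$. The main difficulty is the chaining step: arranging that consecutive translates of $l_1$ actually meet at a common point of $\Sigma^{(i)}$, rather than merely projecting onto overlapping segments of $R^{(i)}$. This requires careful use of both the shrinking of base-tree diameters (Corollary~\ref{coiffeur}) and the density of $G$-orbits in $T$ to make the needed adjustments to the group elements $g_j$, and is where the assumption that $\Sigma$ is clean (which guarantees sufficiently many liftable translates persist through the splitting process) plays its role.
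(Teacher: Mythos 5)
Your approach has several genuine gaps, and it misses the key mechanism the paper uses.

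First, your opening observation is incorrect: the fact that $l_1$ is not carried by any elliptic subgroup does \emph{not} force both endpoints of $l_1$ to lie in $\partial_\infty(G,\calf)$. A leaf $(\alpha_1,\omega_1)$ with $\alpha_1\in V_\infty(G,\calf)$ is not carried by $G_{\alpha_1}$ (a peripheral group carries no algebraic leaves at all, since its minimal subtree is a point). The paper explicitly notes at the start of its proof that no assumption is made on the endpoints of $l_1$, precisely to cover this case.

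Second, and more seriously, your chaining step is circular. You arrange that $g_j\Lambda_{l_1}^{(i)}$ and $g_{j+1}\Lambda_{l_1}^{(i)}$ lie in the same complete $\Sigma$-leaf, which via Lemma~\ref{lem_L2} produces a ``bridge'' leaf $(g_j\omega_1,g_{j+1}\alpha_1)\in L^2(T)$. But to deduce $(g_j\alpha_1,g_{j+1}\omega_1)\in\ol\calp(l_1)$ by transitivity, you need the bridge leaf itself to lie in $\ol\calp(l_1)$, not merely in $L^2(T)$ — and the inclusion $L^2(T)\subseteq\ol\calp(l_1)$ is exactly what the lemma is supposed to establish. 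Transitive closure only helps once you have \emph{exact} coincidence of an endpoint, i.e.\ $g_j\omega_1 = g_{j+1}\alpha_1$, or a bridge already known to be in the peritransitive closure. Your proposed ``adjustment'' of $g_{j+1}$ via density of $G$-orbits cannot produce exact coincidence: density gives approximation, and changing $g_{j+1}$ changes both endpoints of $g_{j+1}l_1$ simultaneously with no control on where they land. Also, the covering of $[\alpha,\omega]_{R^{(i)}}$ by translates of $[\alpha_1,\omega_1]_{R^{(i)}}$ does not follow from mixing of $T$; mixing is a statement about segments in the $\bbR$-tree $T$, not about combinatorial lines in the Grushko trees $R^{(i)}$, and there is no general principle by which a single leaf line's $G$-translates cover all of $R^{(i)}$.

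The paper avoids these difficulties by a different mechanism. It introduces an equivalence relation $\sim_F$ on directions at a bounded subtree $F_i\subseteq R^{(i)}$, generated by ``being crossed by a common $G$-translate of $l_1$'' and a peripheral closure operation, and proves this relation is trivial (Lemma~\ref{Whitehead}) using arationality via Lemma~\ref{arat-lam}. This produces, at each stage $i$, a bounded-length chain of translates of $l_1$ whose consecutive projections share a long edge near $F_i$ — not a shared endpoint. Then the paper passes to a limit in $i$; the shared long edges (whose lengths go to infinity, by Lemma~\ref{systole}) force the limit endpoints $\omega_\infty^j=\alpha_\infty^{j+1}$ to coincide \emph{exactly in the limit}. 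So the transitivity/periphery requirements of $\ol\calp(l_1)$ are satisfied only at infinity, not at any finite stage; this is the idea your proposal is missing.
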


\begin{lemma}\label{lem_unsplit3}
Every leaf in $L^2(T)$ is in the peritransitive closure of the set of liftable leaves in $L^2(T)\cap\partial_\infty(G,\calf)^2$.
\end{lemma}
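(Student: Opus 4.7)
Let $\overline L$ denote the smallest peritransitively closed lamination containing every liftable leaf in $L^2(T)\cap\partial_\infty(G,\calf)^2$; the goal is to show $L^2(T)\subseteq\overline L$. I would proceed in two stages: first reduce to leaves with both endpoints in $\partial_\infty(G,\calf)$, then show every non-liftable leaf of $L^2(T)\cap\partial_\infty(G,\calf)^2$ is a limit of liftable ones.

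For the reduction, I first rule out leaves in $L^2(T)$ with both endpoints in $V_\infty(G,\calf)$: such a leaf $(v,v')$ would force the group generated by the two distinct peripheral subgroups $G_v,G_{v'}$ to fix a common point of $T$, which combined with arationality and Proposition~\ref{arat-Z} (together with the classification of elliptic subgroups in arational trees) yields a contradiction. Next, consider $(\alpha,v)\in L^2(T)$ with $\alpha\in\partial_\infty(G,\calf)$ and $v\in V_\infty(G,\calf)$. For every $g\in G_v\setminus\{1\}$ one has $(g\alpha,v)=(g\alpha,gv)\in L^2(T)$ by $G$-invariance, and since $L^2(T)$ is transitively closed (Lemma~\ref{lem_peritransitively}) this yields $(\alpha,g\alpha)\in L^2(T)\cap\partial_\infty(G,\calf)^2$; here $g\alpha\neq\alpha$ because edge stabilizers in the Grushko tree $R$ are trivial, forcing $G_v$ to act freely on directions at $v$. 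Granting the second stage, these leaves lie in $\overline L$; choosing $g_i\to\infty$ in $G_v$ makes $g_i\alpha\to v$ in the observers' topology, so $(\alpha,g_i\alpha)\to(\alpha,v)$ and closedness of $\overline L$ gives $(\alpha,v)\in\overline L$.

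For the second stage, if the pruning process never halts then all leaves are liftable and the claim follows from Lemma~\ref{lem_unsplit2}; so assume $\Sigma$ is of quadratic type and clean (Proposition~\ref{cleanify}). I claim the set of liftable leaves is dense in $L^2(T)\cap\partial_\infty(G,\calf)^2$; combined with closedness of $\overline L$, this completes the proof. By cleanness, a leaf is non-liftable exactly when its leaf line meets some splitting semi-line. Lemma~\ref{singular} provides only finitely many orbits of splitting germs, so there are countably many splitting semi-lines $\{\sigma_n\}_{n\in\bbN}$ in $\Sigma$. For each $\sigma$ with endpoint $\xi_\sigma\in\partial(G,\calf)$, let $N_\sigma\subseteq L^2(T)\cap\partial_\infty(G,\calf)^2$ denote the closed set of leaves containing a sub-semi-line of $\sigma$; it is empty when $\xi_\sigma\in V_\infty(G,\calf)$, and otherwise contained in $\{\omega=\xi_\sigma\}\cup\{\alpha=\xi_\sigma\}$. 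If each $N_\sigma$ is nowhere dense, the Baire category theorem applied to the Polish space $L^2(T)\cap\partial_\infty(G,\calf)^2$ yields a dense set of liftable leaves.

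The main obstacle is to prove each $N_\sigma$ has empty interior. Suppose an open $U\subseteq L^2(T)\cap\partial_\infty(G,\calf)^2$ is contained in the slice $\{\omega=\xi_\sigma\}$. By the very definition of $L^2(T)$, periodic leaves $(g^{-\infty},g^{+\infty})$ with $\|g\|_T\leq\eps$ are dense in $L^2(T)$ for every $\eps>0$, so $U$ contains such leaves, each satisfying $g^{+\infty}=\xi_\sigma$. Since edge stabilizers of $R$ are trivial, any two elements of $G$ hyperbolic in $R$ that share an attracting fixed point must share a common axis and so lie in a common cyclic subgroup; hence all such $g$ are powers of a single element $h$, and all periodic leaves in $U$ coincide with $(h^{-\infty},h^{+\infty})$, forcing $U=\{(h^{-\infty},h^{+\infty})\}$ by density. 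To reach a contradiction one must show that $L^2(T)$ admits no such isolated leaf; when $T$ is relatively free this follows because $\|h^k\|_T=|k|\,\|h\|_T$ cannot be made arbitrarily small, while in the arational surface case the same idea is combined with the fact that leaves of the arational measured foliation accumulate on every boundary leaf, so translates $f_n\cdot(h^{-\infty},h^{+\infty})$ by well-chosen $f_n\in G$ using mixing of $T$ provide distinct leaves converging to $(h^{-\infty},h^{+\infty})$. This perfectness of $L^2(T)$ is the step where I expect the argument to be the most delicate.
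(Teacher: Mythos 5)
Your reduction to leaves in $\partial_\infty(G,\calf)^2$ (ruling out $(v,v')$ with both endpoints in $V_\infty$, and absorbing $(\alpha,v)$ via $G$-invariance, transitive closedness and closedness) is essentially the same as Case~2 of the paper's proof and is correct. The gap is in the second stage.

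You claim that, once $\Sigma$ is clean, ``a leaf is non-liftable exactly when its leaf line meets some splitting semi-line,'' and then you let $N_\sigma$ be the set of leaves \emph{containing a sub-semi-line of} $\sigma$. These two conditions are genuinely different, and the Baire argument then targets the wrong set. Cleanness (Definition~\ref{clean}) only gives one implication: a leaf segment meeting no splitting semi-line is liftable; the converse is not claimed and is in fact false. More seriously, a leaf that fails to lift does \emph{not} contain a sub-semi-line of $\sigma$. Looking at the mechanism identified in the proof of Lemma~\ref{lem_unsplit1}: the lift $l^{(i)}$ of a non-liftable leaf passes through the base point $x$ of a splitting germ $\eta$, entering from the band $B_0$ in which $\eta$ is terminal and exiting into a band $B'$ with degenerate intersection with $\eta$ --- but neither $B_0$ nor $B'$ is the band $B$ that $\eta$ splits, and the first segment $\sigma_\eta$ of the splitting semi-line lies in $B$. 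So $l^{(i)}$ touches $\sigma$ only at the single point $x$ and branches away; it does not contain a sub-semi-line. Consequently $N_\sigma$ does not contain the non-liftable leaves, the containment $N_\sigma\subseteq\{\omega=\xi_\sigma\}\cup\{\alpha=\xi_\sigma\}$ is not useful, and the whole density conclusion fails to address the relevant set. Finally, even the perfectness step has a problem: periodic leaves $(g^{-\infty},g^{+\infty})$ with $\|g\|_T\le\eps$ are dense in $L^2_\eps(T)$, not in $L^2(T)$, and such a leaf belongs to $L^2(T)$ only if $g$ is elliptic in $T$; so ``$U$ contains such leaves'' is unjustified.

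The paper avoids all of this by arguing constructively rather than via Baire. For a leaf with both ends in $\partial_\infty$, one distinguishes whether $[\alpha,\omega]_\call\cap\calc_\call$ is compact. In the compact case one cuts $[\alpha,\omega]_\call$ into finitely many liftable leaf segments at the finitely many relevant splitting semi-lines; cleanness together with Lemma~\ref{blue} and Corollary~\ref{cor-blue} (which you do not use at all) then lets one ``re-route'' each cut point along its splitting semi-line to its endpoint $\xi_i\in\partial_\infty(G,\calf)$, producing a finite \emph{transitive} chain $[\alpha,\xi_1],[\xi_1,\xi_2],\dots,[\xi_n,\omega]$ of liftable leaves in $\partial_\infty^2$. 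The non-compact case forces $T$ to be arational surface with $G_\call=\langle c\rangle$, and the desired leaf is then a limit of leaves handled by the compact case. If you want to rescue a density-style argument, the key missing ingredient is exactly this re-routing mechanism (Corollary~\ref{cor-blue}): it lets you approximate (in fact transitively decompose) a non-liftable leaf by liftable ones without needing any Baire or perfectness input.
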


We will start by proving Lemma \ref{lem_unsplit2}, and give the additional arguments needed in the case where $T$ is of quadratic type afterwards. 

\subsection{Proof of Lemma \ref{lem_unsplit2}}

We define the \emph{systole} of a Grushko tree $R$ as the smallest translation length in $R$ of a nonperipheral element of $G$. For all $i\in\mathbb{N}$, denote by $R^{(i)}$ the Grushko tree associated to the band complex $\Sigma^{(i)}$, and recall that there are natural $G$-equivariant maps $f_i:R^{(i+1)}\ra R^{(i)}$ sending vertex to vertex and edge to edge (but $R^{(i)}$ usually has many vertices of valence 2). We set the lengths of the edges of the trees $R^{(i)}$ to $1$ so that the map $f_i$ is $1$-Lipschitz. 

\begin{lemma}\label{systole}
The systole of $R^{(i)}$ converges to $+\infty$ as $i$ goes to $+\infty$. 
\end{lemma}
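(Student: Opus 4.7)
Suppose the systoles $\sigma_i$ of $R^{(i)}$ do not diverge to $+\infty$. Passing to a subsequence, there exist nonperipheral elements $g_i\in G$ with $\|g_i\|_{R^{(i)}}\le C$ for some fixed constant $C$. The natural $G$-equivariant maps $f_i:R^{(i+1)}\to R^{(i)}$ send vertex to vertex and edge to edge between unit-length trees, hence are $1$-Lipschitz; consequently translation lengths are nondecreasing along the sequence and $\|g_i\|_{R^{(0)}}\le C$ for every $i$. Since $R^{(0)}/G$ is a finite graph with integer edge lengths, only finitely many conjugacy classes of primitive nonperipheral elements of $G$ have translation length $\le C$ in $R^{(0)}$. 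Conjugating and extracting a further subsequence, I may assume $g_i=h^n$ for a fixed primitive nonperipheral $h\in G$ and a fixed $n\ne 0$, so $\|h\|_{R^{(i)}}\le C/|n|$ for all $i$.

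\textbf{Key analytic input: the base trees shrink.} I claim that $\epsilon_i:=\max_{v\in V(R^{(i)})}\operatorname{diam}_{\overline T}\!\bigl(\calk_v^{(i)}\bigr)\to 0$. In the case where $\Sigma$ is of quadratic type (so the splitting process is being applied), this is Corollary~\ref{coiffeur}, whose hypotheses are met: arational trees are mixing, and by Lemma~\ref{arat-lam} $T$ is not compatible with any $(G,\calf)$-free splitting. In the case where the pruning process never halts, Lemma~\ref{omega-disc} yields that $\Omega\cap \calk_v$ is totally disconnected for every base tree $\calk_v$ of $\Sigma$; together with $\Sigma^{(i)}\cap\calk_v\searrow \Omega\cap \calk_v$ inside the compact tree $\calk_v$, a standard compactness argument forces the diameters of the connected components $\calk_{v,j}^{(i)}$ of $\Sigma^{(i)}\cap \calk_v$ (which, by Proposition~\ref{Rips-iteration}, are exactly the base trees of $\Sigma^{(i)}$ that project to $v$) to shrink uniformly to $0$.

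\textbf{Lipschitz approximation and disposal of the hyperbolic case.} For each $i$, choose $G$-equivariantly a point $\phi_i(v)\in\calk_v^{(i)}\subseteq \overline T$ for every vertex $v$ of $R^{(i)}$, and extend affinely over each unit-length edge. For an edge $e=[u,v]$ of $R^{(i)}$ the band tree $K_e^{(i)}$ lies in $\calk_u^{(i)}\cap \calk_v^{(i)}$, so $d_{\overline T}(\phi_i(u),\phi_i(v))\le 3\epsilon_i$; hence $\phi_i$ is $3\epsilon_i$-Lipschitz. Specializing to $h$,
$$\|h\|_T \,\le\, 3\epsilon_i\,\|h\|_{R^{(i)}} \,\le\, \frac{3C\epsilon_i}{|n|} \xrightarrow[i\to\infty]{} 0,$$
so $\|h\|_T=0$ and $h$ is elliptic in $T$. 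If $T$ is relatively free arational, every nonperipheral element is hyperbolic in $T$, a contradiction that concludes the proof.

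\textbf{The remaining obstacle: the arational surface case.} If $T$ is arational surface (Definition~\ref{dfn_arat-surf}), then by \cite{Rey12,Hor14-3} the only nonperipheral elliptic elements in $T$ are powers of the unused boundary curve $c$, so $h=c^k$ for some $k$, and one must show directly that $\|c\|_{R^{(i)}}\to\infty$. For this I would analyse the unique leaf $\call_p^{(i)}$ of $\Sigma^{(i)}$ lying above $p:=\operatorname{Fix}_T(c)$: it is locally finite (Corollary~\ref{cor_loc_fini}), $\grp{c}$-invariant, and its isometric image in $R^{(i)}$ contains the axis of $c$. Using that $\Sigma$ is clean (Proposition~\ref{cleanify}) and that splitting germs propagate along leaves, one checks that each step of the splitting process strictly refines a fundamental domain of the axis of $c$ on $\call_p^{(i)}$, producing at least one new vertex on it and hence strictly increasing $\|c\|_{R^{(i+1)}}$. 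Since $\|c\|_{R^{(i)}}$ takes integer values and is nondecreasing, this forces $\|c\|_{R^{(i)}}\to\infty$, contradicting the bound $\|h\|_{R^{(i)}}\le C/|n|$. This last step, controlling how splitting interacts with the axis of a nonperipheral elliptic element, is the main technical obstacle of the proof.
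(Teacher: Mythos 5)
The key step in your argument—``Since $R^{(0)}/G$ is a finite graph with integer edge lengths, only finitely many conjugacy classes of primitive nonperipheral elements of $G$ have translation length $\le C$ in $R^{(0)}$''—is false in the setting of this paper. A Grushko $(G,\calf)$-tree $R^{(0)}$ is $G$-cocompact but it is not locally finite whenever some peripheral group $G_i$ is infinite. In that case the axis of a nonperipheral element can turn at a vertex of infinite valence in infinitely many essentially different ways, giving infinitely many nonconjugate nonperipheral elements with the same (small) translation length. Concretely, if $G=A*\grp{t}$ with $A$ infinite peripheral and $R^{(0)}$ is the Bass--Serre tree of this splitting, the elements $ta^n$ ($n\in\bbZ$) are pairwise nonconjugate, nonperipheral, and all have translation length equal to the edge length. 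So your extraction of a single fixed element $h$ with bounded translation length across all $R^{(i)}$ does not go through.

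The paper's proof circumvents exactly this issue and at the same time removes your second, admittedly incomplete, step (the arational surface case). Rather than comparing all the $R^{(i)}$ to $R^{(0)}$ by conjugacy, it works with the subsets $Y_i\subseteq R^{(i)}$ of points moved by at most $M$ by some nonperipheral element, observes $f_i(Y_{i+1})\subseteq Y_i$, and assembles the component stabilizers into a nested sequence of proper free factor systems $\calh_i$. The descending chain condition on $(G,\calf)$-free factor systems forces $\calh_i$ to stabilize at some $\calh$, and cocompactness of the minimal $H$-subtrees $R^{(i)}_H$ for $H\in\calh$ (bounded by $M$, hence eventually stabilizing under $f_i$) produces a single nonperipheral $h\in H$ with $\|h\|_{R^{(i)}}$ eventually constant. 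The payoff is that this $h$ lies in a \emph{proper $(G,\calf)$-free factor} $H$, so $\|h\|_T=0$ directly contradicts arationality of $T$ (in both the relatively free and the arational surface case). Your approach, even granting the false finiteness claim, only yields $h$ nonperipheral elliptic in $T$, so you are forced into a separate (and in your write-up only sketched) analysis of powers of the unused boundary curve; in the paper this never arises.

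Two smaller remarks. Your re-derivation of $\max_v \diam(\calk_v^{(i)})\to 0$ in the non-halting pruning case via ``compactness of $\calk_v$'' is not valid as stated, since $\calk_v$ is only $G_v$-cocompact, not compact, when $v$ has infinite valence; but this is moot because Corollary~\ref{coiffeur} already covers both the pruning and splitting regimes. Finally, your observation that translation lengths are nondecreasing in $i$ because $f_i$ is $1$-Lipschitz is correct and agrees with the paper's setup.
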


\begin{proof}
Assume towards a contradiction that there exists $M>0$ such that for all $i\in\mathbb{N}$, 
the systole of $R^{(i)}$ is bounded from above by $M$.
Let $Y_i\subseteq R^{(i)}$ be the (non-empty) set of all 
points in $R^{(i)}$ that are moved by at most $M$ by some non-peripheral element of $G$.
Note that $f_i(Y_{i+1})\subseteq Y_{i}$. Since the number of orbits of edges of $R^{(i)}$ goes to infinity, 
$Y_{i}\neq R^{(i)}$ for $i$ large enough.

We claim that the set $\calh_i$ of stabilizers of the connected components of $Y_i$ is a system of non-peripheral free factors.
Indeed, if $z$ lies in a connected component $ Z$ of $Y_i$, 
and if a non-peripheral element $g\in G$ satisfies $d_{R^{(i)}}(z,gz)\leq M$, then the axis of $g$ is contained in $Z$, showing that the stabilizer of $Z$ is non-peripheral.
By collapsing to a point all the connected components of $Y_i$ in $R^{(i)}$, 
one gets a $(G,\calf)$-free splitting $\ol R^{(i)}$ in which each group in $\calh_i$ is a point stabilizer, which proves our claim.
Since $f_i(Y_{i+1})\subseteq Y_i$, the map $f_i$ induces a $G$-equivariant map $\ol R^{(i+1)}\ra \ol R^{(i)}$, in particular the collection of all point stabilizers of $R^{(i+1)}$ is a free factor system of $(G,\calf)$ which is contained in $\calh_i$ (i.e. every free factor that fixes a point in $R^{(i+1)}$ is contained in a free factor in $\calh_i$). Since there is a bound on the length of a decreasing chain of free factor systems of $(G,\calf)$, 
the free factor system $\calh_i$ is independent of $i$ for $i$ large enough.
We denote by $\calh$ this system of proper free factors.

Fix $H\in \calh$, and consider  $R_H^{(i)}\subseteq R^{(i)}$ the minimal $H$-invariant subtree. 
The number of $H$-orbits of edges in $R_H^{(i)}$ cannot tend to infinity, since otherwise, it would contain for $i$ large enough
a segment of length $>M$ with no branch point of $R^{(i)}$, contradicting that $R_H^{(i)}\subseteq Y_i$. Since $R_H^{(i)}\subseteq f_i(R_H^{(i+1)})$ for all $i\in\mathbb{N}$, this implies that for all $i$ large enough, $f_i$ induces an isometry $R_H^{(i+1)}\ra R_H^{(i)}$.
It follows that there exists a (non-peripheral) element $h\in H$ and $l\leq M$ such that for all $i$ large enough, $||h||_{R^{(i)}}=l$.

From the construction of the graph $R^{(i)}$ from the band complex $\Sigma^{(i)}$, 
this implies that there exists a vertex $v_i$ in $R^{(i)}$, a point $x_i\in \calk_{v_i}\subseteq\Sigma^{(i)}$, 
and a path in $\Sigma^{(i)}$ from $x_i$ to $h x_i$ of the form $\alpha_1\beta_1\dots\alpha_l\beta_l$, 
where each $\alpha_j$ is a leaf segment in a band of $\Sigma^{(i)}$, and each $\beta_j$ is a segment contained in some $\calk_{v_j}\subseteq\Sigma^{(i)}$. 
In particular, for all $i\in\mathbb{N}$, we have $||h||_T\le l D_i$, where $D_i:=\max_{v\in V(R^{(i)})}\text{diam}(\calk_{v})$. Since $T$ is mixing, 
Proposition~\ref{coiffeur} implies that $D_i$ converges to $0$ as $i$ goes to $+\infty$. 
It follows that $||h||_T=0$. Since $h$ is non-peripheral and contained in the proper free factor $H$, this contradicts that $T$ is arational.
\end{proof}

Let $R$ be a Grushko tree, let $F\subseteq R$ be a bounded closed nonempty subtree, such that for all $g\in G$, either $gF=F$, or else $gF\cap F=\emptyset$. These conditions imply that if $gF=F$, then $g$ is peripheral, and in addition the stabilizer $G_F$ of $F$ is either trivial, or equal to $G_v$ for some vertex $v\in R$.

Although not formulated in these terms, our next lemma will show the connectivity of some Whitehead graph of $l_1$ around $F$.
Let $T\in\mathcal{AT}$, and let $l_1\in L^2(T)$ be a liftable leaf that is not carried by any subgroup of $G$ that is elliptic in $T$. Let $E_F$ be the set of edges of $R\setminus F$ incident on $F$.
Let $\sim_F$ be the smallest equivalence relation on $E_F$ such that $e\sim_F e'$ whenever
\begin{equation}
\exists g\in G,\quad e\cup e'\subseteq g.(l_1)_R,\label{eq4}
\end{equation}
or 
\begin{eqnarray} 
\exists g,g'\in G_F\setminus\{1\}, \quad 
        e\sim_F ge \text{ and }
        e'\sim_F g'e'
\label{eq5} \end{eqnarray} 

\begin{rk}\label{rk-equiv}
Given two edges $e,e'\in E_F$, we define a \emph{basic chain} joining $e$ to $e'$ as a sequence of edges $e=e_1,\dots,e_n=e'\in E_F$
such that for each $i\in\{1,\dots,n-1\}$, there exists $g_i\in G$ such that  $e_i\cup e_{i+1}\subseteq g_i.(l_1)_R$. Given $e\in E_F$, a \emph{basic loop at $e$} is a basic chain for which $e_1=e$, and there exists $g\in G_F\setminus\{1\}$ such that $e_n=ge$.
We claim that if $e\sim_F e'$, then either 
\begin{itemize}
\item there is a basic chain of length $\leq |E_F/G_F|$ joining
  $e$ to $e'$, or 
\item there exist two basic loops, at $e$ and $e'$ respectively, both of length $\leq 2|E_F/G_F|$.
\end{itemize}
Indeed, assume first that in any basic chain starting from $e$, there is no pair of edges $e_i\neq e_j$ lying in the same $G_F$-orbit.
Then there is such a chain going from $e$ to $e'$ (one can never apply (\ref{eq5})),
and the first conclusion holds.
Since the symmetric argument holds for basic chains starting from $e'$, we can assume that
there is a basic chain $e=e_1,\dots,e_i,\dots,e_j$ and a basic chain 
$e'=e'_1,\dots,e'_{i'},\dots,e'_{j'}$ and $g,g'\in G_F\setminus \{1\}$ such that $e_j=ge_i$ and  $e'_{j'}=g'e_{i'}$.
Choosing the shortest such chains, one can assume $j,j'\leq |E_F/G_F|+1$.
Now one can produce a basic loop $e=e_1,\dots,e_j=ge_i, g e_{i-1},\dots,g e_1=ge$ of length $j+i-1$
joining $e$ to $ge$. Arguing similarly for $e'$ shows that the second conclusion holds in this case.
\end{rk}

Using arationality of $T$, we will now prove that the above equivalence relation is in fact trivial.

\begin{lemma}\label{Whitehead}
For all $e,e'\in E_F$, we have $e\sim_F e'$. 
\end{lemma}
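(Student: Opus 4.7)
The plan is to argue by contradiction: suppose $\sim_F$ has at least two equivalence classes $E_1, \dots, E_k$ with $k \geq 2$. The goal is to build, out of this partition, a nontrivial $(G,\calf)$-free splitting $S$ of $G$ in which (some translate of) $l_1$ is carried by a proper nonperipheral vertex stabilizer, contradicting the arationality of $T$ via Lemma~\ref{arat-lam}.

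First, I would record two structural observations. Each class $E_i$ is $G_F$-invariant: indeed, the first defining relation is $G$-equivariant in the parameter $g$, and the second only involves elements of $G_F$, so $\sim_F$ is $G_F$-equivariant on $E_F$. Condition (\ref{eq5}) further implies that all edges $e \in E_F$ admitting a nontrivial $G_F$-\emph{basic loop}, meaning $e \sim_F g e$ for some $g \in G_F\setminus\{1\}$, lie in a single class, which I will call the \emph{special class} $E_*$ (if it is nonempty). Then I would equivariantly construct a Whitehead-type refinement $R' \to R$ by adding, within each translate $gF$, a finite collection of edges with trivial stabilizer to separate the attaching points of distinct equivalence classes $gE_i$. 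When $G_F$ is trivial this is the classical Whitehead blow-up. When $G_F$ is nontrivial peripheral, $F$ contains a unique special vertex $v$ fixed by $G_F$; edges of $E_*$ are attached directly at $v$, while for each non-special class $E_i$ and each $G_F$-orbit $O \subseteq E_i$, one introduces a $G_F$-orbit of new vertices with trivial stabilizer, each joined to $v$ by an edge of trivial stabilizer, and reattaches the edges of $O$ equivariantly to these new vertices. Collapsing every edge of $R'$ outside one chosen $G$-orbit of newly added edges yields a nontrivial one-edge $(G,\calf)$-free splitting $S$.

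Finally, I would argue that $l_1$ is carried by a vertex stabilizer $H$ of $S$: any two edges of $E_F$ simultaneously contained in some translate $h(l_1)_R$ are $\sim_F$-equivalent by relation (\ref{eq4}), hence lie in the same class, and therefore $(l_1)_R$ interacts with each translate $gF$ only via edges of a single class. Consequently, no translate of $(l_1)_R$ crosses a newly added edge, so $l_1$ is elliptic in $S$; the subcase in which an endpoint of $(l_1)_R$ sits at the special vertex of some translate of $F$ is handled using the hypothesis that $l_1$ is not carried by any subgroup of $G$ elliptic in $T$. Since $S$ is a $(G,\calf)$-free splitting, $H$ is a $(G,\calf)$-free factor; it is proper since $k \geq 2$ makes $S$ nontrivial, and nonperipheral because $l_1$ is not carried by any peripheral subgroup (peripheral subgroups being elliptic in the arational $T$). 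By Lemma~\ref{arat-lam}, this contradicts the arationality of $T$. The main obstacle is carrying out the equivariant blow-up when $G_F$ is a nontrivial peripheral subgroup; condition (\ref{eq5}) is precisely what localizes all $G_F$-loop behavior into the single special class $E_*$, allowing the other classes to be peeled off from the special vertex $v$ using only edges of trivial stabilizer and thus producing a genuine $(G,\calf)$-free splitting.
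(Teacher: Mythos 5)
Your overall strategy matches the paper's: build a Whitehead-type blow-up $R'\to R$ from the partition of $E_F$ into $\sim_F$-classes, show $l_1$ is elliptic in the resulting one-edge $(G,\calf)$-free splitting, and contradict arationality via Lemma~\ref{arat-lam}. You also correctly identify the role of relation~(\ref{eq5}): edges admitting a $G_F$-basic loop all land in a single ``special'' class (the paper's set $B$), and relation~(\ref{eq4}) ensures that any translate of $(l_1)_R$ meets a translate of $F$ only through edges of a single class, which is what makes $l_1$ elliptic after the blow-up.

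However, there is a genuine error in your structural observation, and it is not a minor one because the blow-up construction depends on it. You claim ``each class $E_i$ is $G_F$-invariant'' and deduce this from the $G_F$-equivariance of $\sim_F$. Equivariance of the relation only gives that $G_F$ \emph{permutes} the classes, not that each class is preserved. In fact the truth is the opposite, and this is precisely what the paper proves: if $A_0$ is a non-special class, then $gA_0\cap A_0=\emptyset$ for every $g\in G_F\setminus\{1\}$ (otherwise an edge $e\in A_0\cap gA_0$ would give a basic loop at $e$, putting $e$ in the special class). So a non-special class has \emph{trivial} $G_F$-stabilizer. This is exactly what guarantees that the new edge $\eps$ joining the cone point of $A_1$ to the cone point of $A_0$ has trivial stabilizer, so that $R'$ is a genuine Grushko tree and collapsing produces a $(G,\calf)$-\emph{free} splitting. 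Your version of the construction (``for each non-special class $E_i$ and each $G_F$-orbit $O\subseteq E_i$\ldots'') is predicated on $E_i$ containing full $G_F$-orbits; since $E_i$ meets each $G_F$-orbit in at most one edge, the sets $O$ you describe do not exist, and as written the construction is ill-defined. If one instead took $E_i$ to genuinely be $G_F$-invariant, then the cone vertex over $E_i$ would be fixed by $G_F$ and the new edge from $v$ to it would have stabilizer $G_F$, which would \emph{not} produce a free splitting — contradicting the ``trivial stabilizer'' you assert. The fix is to replace your invariance claim by the disjointness property above, after which your argument (peeling off a non-special class, or all of them, by adding trivial-stabilizer edges) becomes the paper's proof.
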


\begin{proof}
By collapsing $F$ (and its orbit) in $R$, we reduce to the case where $F$ is a point $v$ (and the collapsed tree is still a Grushko tree), 
so from now on we assume that we are in this case. We denote by $E_v$ the set of edges in $R$ incident on $v$. Assume towards a contradiction that the lemma fails.
Let $B\subseteq E_v$ be the set of edges $e\in E_v$ such that there is a basic loop at $e$.
By definition, all edges in $B$ are equivalent, so we can assume $E_v\setminus B\neq \es$.
Let $A_0\subseteq E_v\setminus B$ be an equivalence class, and $A_1=E_v\setminus G_v.A_0$ (notice that $B\subseteq A_1$, but this inclusion might be strict).
It follows from Remark \ref{rk-equiv} and the definition of $B$ that any two edges in $A_0$ are joined by a basic chain, and therefore that no two edges in $A_0$ are in the same $G_v$-orbit (otherwise two such edges would be joined by a basic loop).
Thus, for any $g\in G_v\setminus\{1\}$, $gA_0$ and $A_0$ have no edge in common, whereas $A_1$ is $G_v$-invariant.
By construction, any turn $\{e, e'\}$ at $v$ crossed by a $G$-translate of $l_1$ is either contained in $A_1$ or in $g.A_0$ for
some $g\in G_v$.

\begin{figure}[htbp]
\centering
\includegraphics{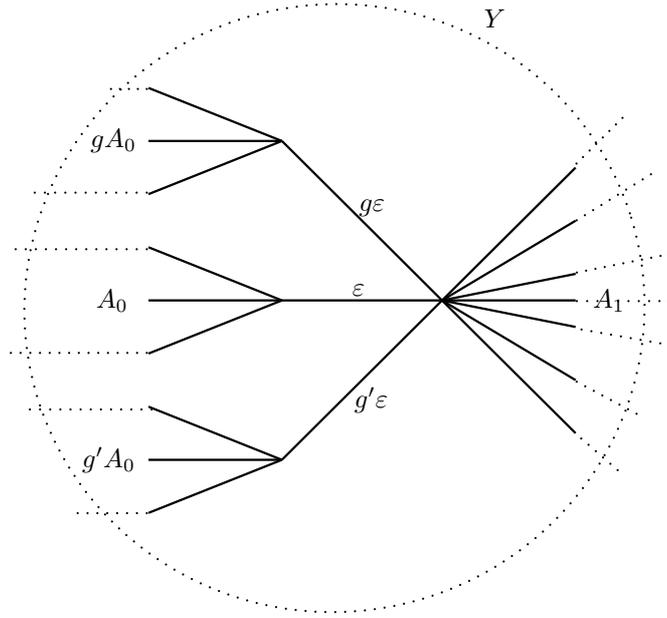}
\caption{Proof of the triviality of the equivalence relation $\sim_F$.}
\label{fig-white}
\end{figure}

Let $Y$ be the tree (represented on Figure~\ref{fig-white}) obtained from $(\dunion_{g\in G_v} g.A_0) \dunion  A_1$ by attaching a new edge $g\eps$ from $v_{A_1}$ to $gv_{A_0}$ for all $g\in G_v$,
where $v_{A_0},v_{A_1}$ denote respectively the copy of $v$ in $A_0$ and $A_1$.
Collapsing the orbit of $\eps$ yields  a natural map from $Y$ to the star of $v$ in $R$. 
There is a natural way to attach back the edges of $R$ to $Y$, which yields a Grushko tree $R'$ that collapses onto $R$. In addition, every $G$-translate of $l_1$ in $R$ lifts to a bi-infinite line in $R'$ which does not cross any edge in the orbit of $\eps$. This contradicts the fact that $l_1$ is not carried by any proper $(G,\calf)$-free factor (Lemma \ref{arat-lam}).
\end{proof}

\begin{proof}[Proof of Lemma \ref{lem_unsplit2}]
Let $(\alpha,\omega)\in L^2(T)\cap (\partial_\infty(G,\calf))^2$ be a liftable leaf. We aim to prove that $(\alpha,\omega)\in\ol\calp(l_1)$. 
Notice however that we do not make any assumption concerning the endpoints of $l_1$ (they could either be both in $\partial_\infty(G,\calf)$, or one of them could belong to $V_\infty(G,\calf)$).
%

We realize $(\alpha,\omega)$ as a $\Sigma$-leaf line $\call$, and let $x\in \Sigma$ be a point contained in $\call$. Since $(\alpha,\omega)$ is liftable, for all $i\in\mathbb{N}$, the line $\call$ naturally lifts to a $\Sigma^{(i)}$-leaf line $\call_i$, and we have $[\alpha,\omega]_{R^{(i)}}=p_{R^{(i)}}(\call_i)$. We let $\tilde x_i$ be the lift of $x$ in $\call_i$, and $x_i:=p_{R^{(i)}}(\tilde x_i)$. Notice that if $f_{R^{(i)}}:R^{(i)}\to R$ is the map obtained by composing the successive maps from $R^{(k+1)}$ to $R^{(k)}$, then $f_{R^{(i)}}(x_i)=x_0$ for all $i\in\mathbb{N}$.

Define a \emph{natural edge} of $R^{(i)}$ as a connected component in $R^{(i)}$ of the complement of the set of branch points together with $G.x_i$.
In particular, each natural edge is an open segment. We denote by $E_i$ the set of natural edges in $R^{(i)}$.
Up to passing to a subsequence, we can therefore assume that $|E_i/G|$ is constant, we denote it by $r$. 
We denote by $\lambda_1^i\le\dots\le\lambda_r^i$ the lengths of the orbits of natural edges. 
Let $r_0\in\{1,\dots,r\}$ be such that the sequence $(\lambda_{r_0}^i)_{i\in\mathbb{N}}$ is bounded, while $\lambda_{r_0+1}^{i}$ converges to $+\infty$ as $i$ goes to $+\infty$ (we also allow for $r_0=0$ in case all lengths are unbounded). Notice that $r_0<r$ because the number of $G$-orbits of edges in $R^{(i)}$ goes to $+\infty$ as $i$ goes to $+\infty$. The $r_0$ first natural edges will be called \emph{short} edges, the other natural edges will be called \emph{long} edges. 
Note that there is no claim that the maps $R^{(i)}\ra R^{(j)}$ preserve the natural edges or the decomposition into short and long edges.

Let $F_i\subseteq R^{(i)}$ be the union of all short edges containing $x_i$ in their closure (this is a connected subtree of $R^{(i)}$).
Clearly, $F_i$ is a closed subtree such that for all $g\in G$, $gF_i\cap F_i=\es$ or $gF_i=F_i$.
We claim that $F_i$ has bounded diameter.
Since short edges have bounded length, it suffices to prove that for $i$ large enough,
any segment $J\subseteq F_i$ contains at most $2r$ natural edges, necessarily short (recall that $r$ is
the number of orbits of natural edges).
If not, $J$ contains two natural edges $e_i$, $g_ie_i$  which are in the same orbit as oriented edges, with the orientation induced by some orientation of $J$.
Thus $g_i$ is hyperbolic with bounded translation length, this is a contradiction for $i$ large enough
since the systole of $R^{(i)}$ goes to infinity (Lemma \ref{systole}).

Since $F_i$ has finite diameter, the stabilizer $G_{F_i}$ of $F_i$ is either trivial or peripheral. 
We claim that $G_{F_i}$ eventually does not depend on $i$. 
Let $i_0$ and $M$ be such that for all $i\geq i_0$, short edges and $F_i$ have diameter at most $M$, 
and every long edge has length greater than $M$.
We first note that for each $i\geq i_0$, there is at most one vertex $w\in R^{(i)}$ with non-trivial stabilizer such that
$d(x_{i},w)\leq M$. Indeed, such a vertex $w$ cannot be separated from $x_{i}$ by a long edge, so lies in $F_{i}$.
But $F_{i}$ contains at most one vertex with non-trivial stabilizer since no hyperbolic element stabilizers $F_{i}$.
This proves the uniqueness of $w$ and that $G_w=G_{F_{i}}$.
Now consider $i\geq i_0$ such that $G_{F_i}$ is non-trivial (if there is no such $i$, the claim is obvious).
Then $G_{F_i}$ fixes some vertex $v_i\in F_i$, and $G_{F_i}=G_{v_i}$.
Since $F_i$ has diameter at most $M$, $d_{R^{(i)}}(x_i,v_i)\leq M$. For $j\leq i$, the image of $v_i$ in $R^{(j)}$ is a vertex $w$ with
stabilizer $G_{F_i}$, and at distance at most $M$ from $x_{j}$. It follows that $G_{F_i}=G_w=G_{F_{j}}$ and the claim follows.

We are now ready to prove that $(\alpha,\omega)\in \ol\calp(l_1)$.
Fix $i\geq i_0$, and let $e_i,e'_i$ be the long edges in $[\alpha,\omega]_{R^{(i)}}$ incident on $F_i$ (these are well-defined because $\alpha,\omega\in\partial_\infty(G,\calf)$ and $x_i\in [\alpha,\omega]_{R^{(i)}}$).
Let $E_{F_i}$ be the set of (long) natural edges incident on $F_i$, and
consider the equivalence relation $\sim_{F_i}$ on $E_{F_i}$ defined above using the leaf $l_1$.
Lemma \ref{Whitehead} implies that $e_i\sim_{F_i} e'_i$. 
By Remark \ref{rk-equiv}, either $e_i$ and $e'_i$ are joined by a basic chain or there are two basic loops at $e_i$ and $e'_i$ respectively.
Up to passing to a subsequence, we can assume that the same case occurs for all $i$.

\begin{figure}[htbp]
  \centering
  \includegraphics{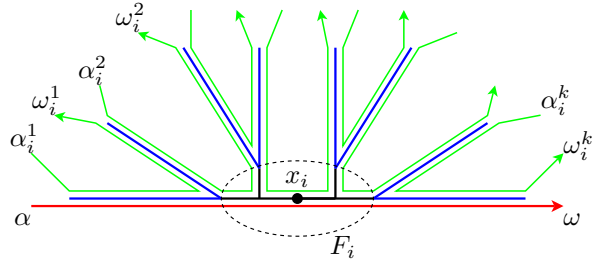}
  \caption{A basic chain of translates of $l_1$.}
  \label{fig_whitehead}
\end{figure}

In the first case, for any given $i\in\mathbb{N}$, let $l_i^1=(\alpha_i^1,\omega_i^1),\dots,l_i^k=(\alpha_i^k,\omega_i^k)$ be a finite sequence of $G$-translates of $l_1$ corresponding to a basic chain provided by Remark~\ref{rk-equiv} (up to passing to a subsequence, we can assume that $k$ is independent of $i$). 
This means that the semi-line $[x_i,\alpha_i^1]_{R^{(i)}}$ contains $e_i$, $[x_i,\omega_i^k]_{R^{(i)}}$ contains $e'_i$,
and for all $1\leq j\leq k-1$, the semi-lines $[x_i,\omega_i^j]_{R^{(i)}}$ and $[x_i,\alpha_i^{j+1}]_{R^{(i)}}$ share a long edge in $E_{F_i}$ (see Figure~\ref{fig_whitehead}). 
Up to passing to a subsequence again, we can assume that all $\alpha_i^j$ and all $\omega_i^j$ converge in $\partial(G,\calf)$ as $i$ goes to $+\infty$, and we denote the limits by $\alpha_{\infty}^j$ and $\omega_{\infty}^j$. 
Since $(l_i^1)_{R^{(i)}}\cap [\alpha,x_i]_{R^{(i)}}$ contain the long edge $e_i$, and since the projection map $p_R^i:R^{(i)}\to R$ 
is isometric in restriction to both $l_i^1$ and $[\alpha,\omega]_{R ^{(i)}}$ (because these two leaves are liftable), we deduce
that $(l_i^1)_R\cap [\alpha,x_0]_{R}$ contains arbitrarily long segments as $i$ goes to $+\infty$. This implies that $\alpha_{\infty}^1=\alpha$. A similar argument shows that $\omega_{\infty}^k=\omega$, and $\omega_{\infty}^j=\alpha_{\infty}^{j+1}$ for all $j\in\{1,\dots,k-1\}$. Up to deleting some of these points, we can assume in addition that $\alpha_{\infty}^{j}\neq\omega_\infty^j$ for all $j\in\{1,\dots,k\}$. Then the leaves $(\alpha_{\infty}^j,\omega_{\infty}^j)$ are obtained as limits of $G$-translates of $l_1$, and therefore they all belong to $\overline{\mathcal{P}}(l_1)$. This implies that $(\alpha,\omega)\in\overline{\mathcal{P}}(l_1)$.

We now assume that for all $i$, there are basic loops at both $e_i$ and $e'_i$.
Let $(\alpha_i^1,\omega_i^1),\dots,(\alpha_i ^k,\omega_i^k)$ and $(\alpha_i'^1,\omega_i'^1),\dots,(\alpha_i'^s,\omega_i'^s)$ 
be the two corresponding sequences of $G$-translates of $l_1$.
In particular, $[x_i,\alpha_i^1]_{R^{(i)}}$ contains $e_i$ and $[x_i,\omega_i^k]_{R^{(i)}}$ contains $g_i e_i$ for some $g_i\in G_v\setminus\{1\}$.
Passing to a subsequence we can take limits $\alpha_{\infty}^j$, $\omega_{\infty}^j$ as above. 
As above, $\alpha_\infty^1=\alpha$, and either $\omega_{\infty}^k=g.\alpha$ if $g_i$ eventually coincides with some $g\in G_v\setminus\{1\}$,
or  $\omega_\infty^k=v\in V_{\infty}(R)$ if $g_i$ takes infinitely many distinct values.
Thus, $\ol\calp(l_1)$ either contains a leaf of the form $(\alpha,g\alpha)$ with $g\in G_v\setminus \{1\}$ or the leaf $(\alpha,v)$
in which case it also contains all the leaves $(g\alpha,v)$ for $g\in G_v$, hence the leaf $(\alpha,g\alpha)$ for any $g\in G_v\setminus\{1\}$.
Arguing similarly with the second basic loop, we get that $\ol\calp(l_1)$ contains a leaf of the form $(\omega,g'\omega)$ for some $g'\in G_v\setminus\{1\}$.
By definition of the peritransitive closure, $(\alpha,\omega)\in \ol\calp(l_1)$.
\end{proof}

\subsection{Additional arguments for trees of quadratic type}\label{sec-11-surf}

We will now assume that $T$ is of quadratic type with respect to a Grushko tree $R$, and that $\Sigma=\Sigma(R,T)$ is clean.
We aim to prove Lemmas \ref{lem_unsplit1} and \ref{lem_unsplit3} (we mention that cleanness will actually only be used in the proof of Lemma~\ref{lem_unsplit3}).

\begin{figure}[htbp]
  \centering
  \includegraphics{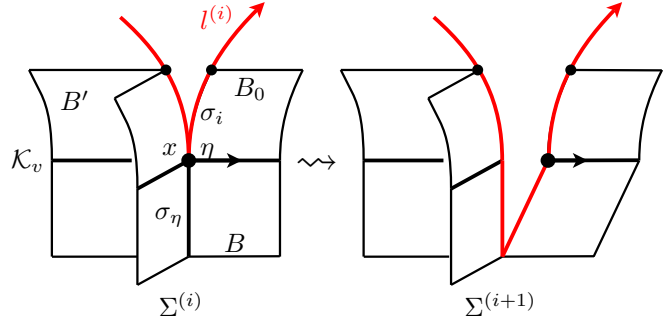}
  \caption{An unliftable leaf.}
  \label{fig_unliftable}
\end{figure}

\begin{proof}[Proof of Lemma \ref{lem_unsplit1}]

Given an algebraic leaf $l\in L^2(T)$, denote by $l^{(i)}$ (resp $l^{(0)}$) the $\Sigma^{(i)}$-leaf line (resp $\Sigma$-leaf line) corresponding to $l$. 
We first prove the following general claim. Given a leaf $l\in L^2(T)$ which is not liftable, $l$ is isolated in the following sense:
    there exists $i\in\bbN$ and a $\Sigma^{(i)}$-leaf segment $\sigma\subseteq l^{(i)}$ such that
    for any sequence $(l_k)_{k\in\bbN}$ of leaves in $L^2(T)$ converging
to $l$ and any $k$ large enough, the $\Sigma^{(i)}$-leaf line $l_k^{(i)}$ contains $\sigma$ 
(in particular $l_k^{(i)}$ is contained in the same complete $\Sigma^{(i)}$-leaf as $l$).
Indeed, since $l$ is not liftable, there exists $i\in\mathbb{N}$ such that (see Figure~\ref{fig_unliftable}):
\begin{itemize}
\item $l$ lifts isometrically to a $\Sigma$-leaf line $l^{(i)}$ in $\Sigma^{(i)}$, and
\item there is a splitting germ $\eta$ based at some point $x\in\Sigma^{(i)}$, such that $l^{(i)}$ goes through the band $B_0$ in which $\eta$ is terminal, followed by some band $B'$ with degenerate intersection with $\eta$.
\end{itemize}
Let $\sigma\subseteq B_0$ be the leaf segment containing $x$. 
For $k$ large enough, $l_k^{(i)}$ also goes through the bands $B_0,B'$ in $\Sigma^{(i)}$. Since $B_0\cap B'=\{x\}$, this implies that $\sigma\subseteq l_k^{(i)}$.  This proves the claim.

Now consider the algebraic leaf $l_0$ of the statement of the lemma, and denote by $\alpha,\omega\in \partial(G,\calf)$ the endpoints of $l_0$. We claim that up to exchanging $\alpha$ and $\omega$, we can assume that $a\alpha\neq \alpha$ for all $a\in G\setminus\{1\}$ (equivalently, $\alpha\in \partial_\infty(G,\calf)$ and that $\alpha$ is not of the form $c^{+\infty}$ for some non-peripheral element $c\in G$). 
Indeed, if $a.\alpha=\alpha$ and $b.\omega=\omega$ for some $a,b\in G\setminus\{1\}$, then $\grp{a,b}$ fixes the point $z=\calq(\alpha)=\calq(\omega)$ in $T$.
If $\grp{a,b}$ is contained in a peripheral group $G_v$, then $\alpha=v=\omega$, a contradiction. If $\grp{a,b}$ is not peripheral, then $T$ is not relatively free
so $T$ is arational surface and $G_z=\grp{c}$ is cyclic, so $(\alpha,\omega)=(c^{\pm\infty},c^{\mp\infty})$ is carried by $G_z$. This contradicts our hypothesis on $l_0$ so the claim follows.

Let now $g_k.e$ be a sequence of oriented edges in $p_{R^{(i)}}(l_0)$ converging to $\alpha$,
and let $l_1\in L^2(T)$ be a limit of a subsequence of $g_k\m l_0$. 
We claim that the leaf $l_1$ is liftable. Indeed, assume towards a contradiction that it is not.
The claim established in the first paragraph of the proof then implies that there exists $i$ such that $g_k\m l_0^{(i)}$ contains $\sigma$ for all $k\in\mathbb{N}$ larger than some $k_0\in\mathbb{N}$.
Thus, $g_k g_{k_0}\m$ is non-peripheral and fixes the point $z=p_T(l_0^{(i)})$ ($=\calq(\alpha)=\calq(\omega)$). 
In particular, $T$ is not relatively free, so as above, $G_z=\grp{c}$, for some $c$ non-peripheral.
Thus, $g_kg_{k_0}\m=c^{j_k}$, and $\alpha$ is a limit of $g_kg_{k_0}\m=a^{j_k}$, a contradiction.
\end{proof}

\begin{figure}[htbp]
\centering
\includegraphics{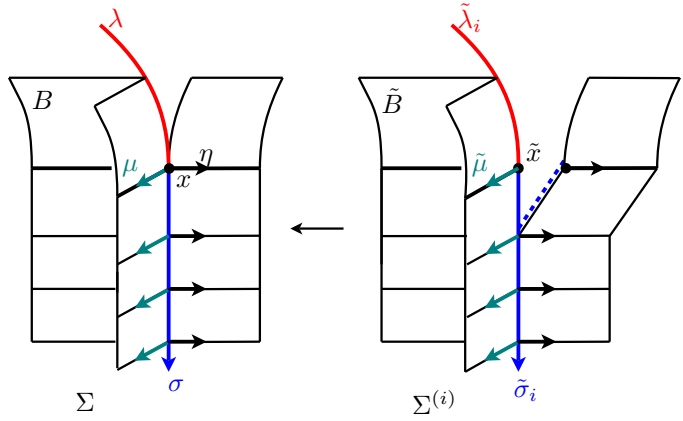}
\caption{Lifting $\sigma\cup\lambda$ to $\Sigma^{(i)}$.}
\label{fig-lift}
\end{figure}

We now turn to the proof of Lemma \ref{lem_unsplit3} saying that any leaf in $L^2(T)$ lies in the peritransitive closure of the set of liftable leaves.
Since $\Sigma$-leaves that do not contain a splitting semi-line are automatically liftable, one only has to take care of
leaves containing a splitting semi-line. Splitting semi-lines themselves lift, and in several ways.
The following lemma allows to lift the concatenation of a splitting semi-line with another liftable segment.

\begin{lemma}\label{blue}
Assume that $\Sigma$ is clean. Let $\lambda$ be a $\Sigma$-leaf segment (either finite, semi-infinite or bi-infinite) that lifts to $\tilde\lambda_i$ in $\Sigma^{(i)}$ for some $i\in \bbN$. Let $\sigma$ be a splitting semi-line in $\Sigma$, whose intersection with $\lambda$ is reduced to one extremity of $\lambda$. 
\\ Then there exists a lift $\tilde \sigma_i$ of $\sigma$ such that $\Tilde\lambda_i\cup \Tilde\sigma_i$ is a lift of $\lambda\cup\sigma$.
\end{lemma}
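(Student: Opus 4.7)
The plan is to construct the lift $\tilde\sigma_i$ of $\sigma$ band by band along the splitting semi-line, using the cleanness of $\Sigma$ as a ``parallel transport'' of the transverse direction of $\lambda$ at the meeting point $x$.

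First I would set up notation: write $\sigma$ as the concatenation of leaf segments $[x_0,x_1],[x_1,x_2],\dots$ joining the base points $x_0,x_1,\dots$ of the successive (virtual) splitting germs $\eta_0,\eta_1,\dots$, and denote by $B_{j+1}$ the band containing $[x_j,x_{j+1}]$. Let $\mu$ be the transverse direction of $\lambda$ at $x$; by the second item of the cleanness Definition~\ref{clean}, for every $y\in\sigma$ there is a transverse direction $\mu_y$ at $y$ in $\Sigma$ with $p_T(\mu_y)=p_T(\mu)$. These directions $\mu_y$ will serve as the ``markers'' telling us on which side to lift at each splitting along $\sigma$.

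The core observation is that the hypothesis $\sigma\cap\lambda=\{x\}$ forces $\mu$ to lie on the side of the first splitting germ opposite to the $\eta$-side. Concretely, in the elementary step of the splitting process applied at $\eta_0$, the band $B_1$ gets cut along the splitting leaf segment $\sigma_{\eta_0}$ and the base tree at $x_0$ is partitioned by $x_0$ into a piece containing $\eta_0$ and a piece containing $\mu$ (they must be on opposite sides, since otherwise $\lambda$ would enter the same band $B_1$ as $\sigma$ along $\sigma_{\eta_0}$, producing intersections beyond $\{x\}$). The cleanness direction $\mu_{x_j}$ shows that the same picture persists at every $x_j$: at each splitting point along $\sigma$, the direction $\mu_{x_j}$ sits on the side of $\calk_{v_j}$ opposite to $\eta_j$.

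With this set up, I would define $\tilde\sigma_i$ iteratively. Starting from the unique copy of $x$ in $\Sigma^{(i)}$ that lies on the $\mu$-side of $x_0$ (which, by the previous paragraph, is the endpoint of $\tilde\lambda_i$), I lift each leaf segment $[x_{j-1},x_j]\subseteq B_j$ to the corresponding band in $\Sigma^{(i)}$ on the $\mu_{x_j}$-side at $x_j$. By the first item of cleanness, no ``new'' splitting germs are created in $\Sigma^{(i)}$, so this process really does yield an isometric lift of the tail $[x_j,\xi)$ of $\sigma$ once $j\geq i$ (there, $\sigma$ coincides with the splitting semi-line of $\Sigma^{(i)}$), and for $j<i$ the consistency is provided precisely by the $\mu_{x_j}$-markers. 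The resulting $\tilde\sigma_i$ attaches to $\tilde\lambda_i$ at the correct copy of $x$, so $\tilde\lambda_i\cup\tilde\sigma_i$ is a leaf segment of $\Sigma^{(i)}$ that projects bijectively onto $\lambda\cup\sigma$ under $h_{\Sigma^{(i)}}$.

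The hard part will be the bookkeeping at the junction point $x$ and at each base point $x_j$: one has to verify, using that $\Sigma$ is of quadratic type (so that at any nondegenerate interval at most two bands meet, see Proposition~\ref{surface_3ends} and Lemma~\ref{obs-germs}), that the ``$\mu$-side'' is unambiguous and that lifts chosen on that side at successive $x_j$ glue to form an isometric leaf segment. The case where $x$ is not a base point $x_j$ but an interior point of some segment $[x_{j-1},x_j]$ is handled identically, since the cleanness statement provides $\mu_y$ at every $y\in\sigma$, not only at the base points.
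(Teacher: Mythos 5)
Your strategy matches the paper's: transport a marker transverse direction along $\sigma$ using cleanness, and use the propagated directions to decide which side to lift at each (virtual) splitting germ along $\sigma$. The gap is in how you set up the marker direction $\mu$. You define $\mu$ downstairs in $\Sigma$ as ``the transverse direction of $\lambda$ at $x$'' (which is not a priori unique — the band that $\lambda$ enters at $x$ may cover several directions in the base tree), and then you must show that the lift of $\sigma$ selected by the $\mu$-markers attaches to $\tilde\lambda_i$ at its extremity $\tilde x$. Your ``core observation'' about $\mu$ being opposite to $\eta_0$ only concerns the germ at $x_0$, the base of $\sigma$, which in general differs from $x$; and even when $x = x_0$, the copy of $x$ reached by your lift depends on all the cuts accumulated through step $i$, not just the first. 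You rightly flag the bookkeeping at the junction as the hard part, but the proposal as written does not close it.

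The paper avoids the compatibility problem by choosing the marker upstairs first: it takes $\tilde\mu$ to be a transverse direction at $\tilde x$ in $\Sigma^{(i)}$, transverse to the band $\tilde B$ that contains $\tilde x$ and a nondegenerate piece of $\tilde\lambda_i$, and then sets $\mu := h_{\Sigma^{(i)}}(\tilde\mu)$. Cleanness (a property of $\Sigma$) then lets $\mu$ be pushed to infinity along $\sigma$, and because $\tilde\mu$ was chosen at $\tilde x$, the unique lift $\tilde\sigma_i$ of $\sigma$ in $\Sigma^{(i)}$ along which $\tilde\mu$ can be pushed necessarily passes through $\tilde x$, so it glues onto $\tilde\lambda_i$ by construction. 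If you replace your definition of $\mu$ by this one, the ad hoc compatibility verification becomes unnecessary and the band-by-band description in your outline becomes a correct elaboration of the paper's one-line conclusion.
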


\begin{proof}
This is illustrated on Figure~\ref{fig-lift}. Let $x$ be the intersection point of $\sigma$ and $\lambda$, and let $\tilde x$ be its lift in $\Tilde\lambda_i$. Let $\tilde B$ be the unique band of $\Sigma^{(i)}$ that contains $\tilde x$ and a nondegenerate leaf segment of $\tilde \lambda_i$. Let $\tilde \mu$ be a direction transverse to $\tilde B$ at $\widetilde x$. Let $\mu$ be its projection in $\Sigma$. Since $\Sigma$ is clean, the direction $\mu$ can be pushed to infinity along $\sigma$. There is a unique lift $\tilde\sigma_i$ of $\sigma$ in $\Sigma^{(i)}$ starting from $\tilde x$, along which $\tilde\mu$ can be pushed to infinity. Then $\tilde \lambda_i\cup\tilde\sigma_i$ is a lift of $\lambda\cup\sigma$.
\end{proof}

\begin{cor}\label{cor-blue}
Assume that $\Sigma$ is clean. Let $\call$ be a complete $\Sigma$-leaf. Then the following holds.
\begin{enumerate}
\item Let $\sigma=[y,\xi]_\call$ and $\sigma'=[y',\xi']_\call$ be two splitting semi-lines contained in $\call$, with $\xi\neq\xi'$, and let $x\in\sigma$ and $x'\in\sigma'$ be such that $[x,x']_\call$ is liftable. Then $[\xi,\xi']_\call$ is liftable.
\item Let $I=[x,\alpha]_\call$ be a liftable leaf semi-line contained in $\call$, and $\sigma=[y,\xi]_\call$ be a splitting semi-line with $x\in \sigma$, with $\alpha\neq\xi$. Then $[\alpha,\xi]_\call$ is liftable.
\end{enumerate}
\end{cor}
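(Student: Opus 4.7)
Both statements will follow from Lemma \ref{blue} by choosing the input segment $\lambda$ carefully so that the hypothesis ``$\sigma \cap \lambda$ is reduced to one extremity of $\lambda$'' is satisfied. The key observation is that if $\lambda_0$ is a liftable $\Sigma$-leaf segment in $\call$, then any subsegment of $\lambda_0$ is also liftable (just restrict the lift). So we will replace the given liftable segment by a smaller liftable subsegment whose intersection with the relevant splitting semi-line reduces to one point. The correct subsegment is identified by a median calculation in the $\mathbb{R}$-tree $\call$.

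For part (2), let $a$ be the median of $\alpha, x, \xi$ in $\call$, i.e.\ the unique point such that
$[\alpha, x]_\call = [\alpha, a]_\call \cup [a, x]_\call$, $[\alpha, \xi]_\call = [\alpha, a]_\call \cup [a, \xi]_\call$, and $[x, \xi]_\call = [x, a]_\call \cup [a, \xi]_\call$, with unions non-overlapping. In particular $a$ lies on $\sigma$ (since $[x,\xi]_\call \subseteq \sigma$) and on $I = [x,\alpha]_\call$. Take $\lambda := [a,\alpha]_\call$: it is a subsegment of the liftable $I$, so it is liftable, and by the median property $\lambda \cap \sigma = \{a\}$. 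Lemma \ref{blue} then produces a lift of $\lambda \cup \sigma$, which contains a lift of $[a,\alpha]_\call \cup [a,\xi]_\call = [\alpha,\xi]_\call$; hence $[\alpha,\xi]_\call$ is liftable.

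For part (1), the plan is to apply the argument of part (2) twice, once on each side. First set $a$ to be the median of $\xi, x, x'$ in $\call$, apply Lemma \ref{blue} to $\lambda := [a, x']_\call$ (a liftable subsegment of $[x,x']_\call$) and to $\sigma$; this gives liftability of $[\xi, x']_\call$. Then set $b$ to be the median of $\xi, x', \xi'$ in $\call$, and apply Lemma \ref{blue} to $\lambda := [\xi, b]_\call$ (a subsegment of the now-liftable $[\xi, x']_\call$) and to the splitting semi-line $\sigma'$; we have $b \in \sigma'$ and $\lambda \cap \sigma' = \{b\}$ by the median property. The resulting lift contains a lift of $[\xi, b]_\call \cup [b, \xi']_\call = [\xi, \xi']_\call$.

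The only potential obstacle is a careful check of the concatenation/median identities in the tree $\call$ in degenerate cases (e.g.\ when $a = x$ or $b = x'$), but those are exactly the cases where the hypotheses of Lemma \ref{blue} already hold directly with $\lambda$ taken to be the full segment $[x,x']_\call$ (or $[\xi,x']_\call$), so they require no adjustment. Thus the entire proof is a mechanical reduction to Lemma \ref{blue}.
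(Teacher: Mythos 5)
Your overall strategy --- replace the given liftable segment by a carefully chosen subsegment and then reduce to Lemma~\ref{blue} --- is the same as the paper's; the paper does this reduction by saying ``up to changing $[x,x']_\call$ to a subsegment, we can assume that $[x,x']_\call\cap\sigma=\{x\}$ and $[x,x']_\call\cap\sigma'=\{x'\}$'', and then applies Lemma~\ref{blue}. Where you differ is in how you pick the subsegment: you use medians.

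There is a gap in the median approach, and your dismissal of the degenerate case at the end is what makes it material. You claim that when the median coincides with the extremity (say $a=x$ in part~(2)), ``the hypotheses of Lemma~\ref{blue} already hold directly with $\lambda$ taken to be the full segment.'' This is false. The hypothesis of Lemma~\ref{blue} is that $\sigma\cap\lambda$ is a \emph{single} point which is an extremity of $\lambda$. Having $a=x$ only tells you that $[x,\alpha]_\call$ and $[x,\xi]_\call$ branch immediately at $x$; it tells you nothing about the \emph{other} half of $\sigma$, namely $[y,x]_\call$. Concretely, the geodesic $I=[x,\alpha]_\call$ may leave $x$ along $\sigma$ in the direction of $y$, so that $I\cap\sigma=[x,w]_\call$ is a nondegenerate subsegment of $[y,x]_\call$. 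In that configuration the median of $\{\alpha,x,\xi\}$ is still $x$ (the three directions $[x,\alpha]$, $[x,\xi]$ branch at $x$), yet $\sigma\cap I$ is not a point, so Lemma~\ref{blue} cannot be applied with $\lambda=I$. The same failure occurs in part~(1) for the median of $\{\xi,x,x'\}$ when $[x,x']_\call$ leaves $x$ toward $y$, and in the second application for the median of $\{\xi,x',\xi'\}$.

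The correct point to use is not the median but the far endpoint $w$ of the segment $I\cap\sigma$. With $\lambda=[w,\alpha]_\call$ (a subsegment of $I$, hence still liftable), one does get $\lambda\cap\sigma=\{w\}$ and Lemma~\ref{blue} applies, producing a lift of $\lambda\cup\sigma\supseteq[\alpha,\xi]_\call$. When $I$ leaves $x$ either toward $\xi$ or transversally to $\sigma$, $w$ coincides with your median (or with $x$), so your argument is fine there; the missing case is precisely when $I$ initially follows $\sigma$ toward $y$. The paper's ``shrink $[x,x']_\call$ so that the intersections with $\sigma$ and $\sigma'$ reduce to the endpoints'' handles all of these uniformly.
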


\begin{proof}
We will only prove the first claim, the second being a reformulation of Lemma \ref{blue}. Up to changing $[x,x']_\call$ to a subsegment, we can assume that $[x,x']_\call\cap\sigma=\{x\}$ and $[x,x']_\call\cap\sigma'=\{x'\}$. If $x=x'$, we can further change $x$ so that $[x,\xi]_\call\cap [x,\xi']_\call=\{x\}$. Assume first that $x\neq x'$. Lemma~\ref{blue} applied to $\lambda=[x,x']_\call$ and $\sigma$, and to $\lambda$ and $\sigma'$, yields the result. If $x=x'$, we apply Lemma~\ref{blue} to $\lambda=\sigma'$ and $\sigma$.
\end{proof}

We are now in position to complete our proof of Lemma~\ref{lem_unsplit3}.

\begin{proof}[Proof of Lemma \ref{lem_unsplit3}]
Let $(\alpha,\omega)\in L^2(T)$. We aim to prove that $(\alpha,\omega)$ belongs to the peritransitive closure of the set of all liftable leaves
in $L^2(T)\cap\partial_\infty (G,\calf)^2$.
As mentioned above, we can assume that the complete $\Sigma$-leaf $\call$ containing $(\alpha,\omega)$ contains a splitting semi-line.
\\
\\ \textbf{Case 1}: We have $\alpha,\omega\in\partial_\infty (G,\calf)$. 
\\ Let $\call$ be the unique complete $\Sigma$-leaf that admits $\alpha$ and $\omega$ as some of its extremities. Let $G_\call$ be the stabilizer of $\call$, and let $\calc_\call$ be the core of its stabilizer (see Definition \ref{de-core}).
\\
\\
\indent \textit{Case 1.1.} The intersection $[\alpha,\omega]_\call\cap\calc_\call$ is compact. 

We observe that the $\Sigma$-leaf line $[\alpha,\omega]_\call$ is a finite concatenation of liftable leaf segments. Indeed, if some subray $[x_0,\omega]_\call$ does not intersect $\calc_\call$, then either it eventually agrees with a splitting semi-line, or else it eventually misses every splitting semi-line: this is because there are only finitely many orbits of splitting semi-lines in $\Sigma$, and therefore there are only finitely many splitting semi-lines that intersect the connected component of $\call\setminus\calc_\call$ that contains $[x_0,\omega]_\call$.

%

\begin{figure}[htbp]
\centering
\includegraphics[scale=0.9]{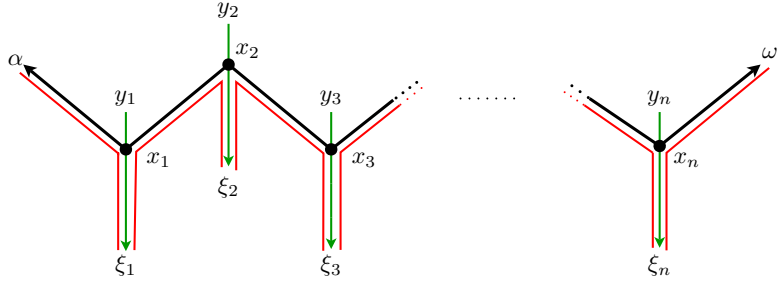} 
\caption{In Case~1.1, the leaf $(\alpha,\omega)$ is in the transitive closure of the set of red leaves, which are all liftable leaves in $L^2(T)\cap\partial_\infty (G,\calf)^2$.}
\label{fig-peritransitive}
\end{figure}

As represented on Figure~\ref{fig-peritransitive}, we write $[\alpha,\omega]_\call$ as a finite concatenation $[\alpha,x_1]_\call\cup[x_1,x_2]_\call\cup\dots\cup [x_n,\omega]_\call$, where each segment in the decomposition is liftable. We can also assume that this decomposition is minimal, i.e.\ none of the leaf segments $[x_i,x_{i+2}]_\call$ is liftable. Since $\Sigma$ is clean, this implies that each $x_i$ lies in a splitting semi-line $\sigma_i=[y_i,\xi_i]_\call$ (represented in green on the figure), with $\xi_i\in\partial_\infty (G,\calf)$. To simplify, assume first that all $\xi_i$ are distinct and distinct from $\alpha$ and $\omega$. Corollary~\ref{cor-blue} shows that $[\alpha,\xi_1]_\call,[\xi_1,\xi_2]_\call,\dots,[\xi_n,\omega]_\call$ are liftable. 
Since $\alpha,\omega$ and all $\xi_i$ belong to $\partial_\infty(G,\calf)$, we conclude that
$(\alpha,\omega)$ is in the transitive closure of liftable leaves in $L^2(T)\cap \partial_\infty(G,\calf)^2$ as required. 
 If there are repetitions, say for example $\alpha\neq\xi_1=\xi_{i_1}\neq\xi_{i_1+1}=\xi_{i_2}\neq\dots\neq\xi_{i_k}\neq\omega$ with $1\le i_1<i_2<\dots<i_k$, then we obtain a chain $[\alpha,\xi_1]_\call,[\xi_{i_1},\xi_{i_2}]_\call,\dots,[\xi_{i_k},\omega]_\call$ of liftable leaf lines. The case where $\alpha=\xi_1$ or $\omega=\xi_n$ is similar and left to the reader.   
\\
\\

\begin{figure}[htbp]
\centering
\includegraphics{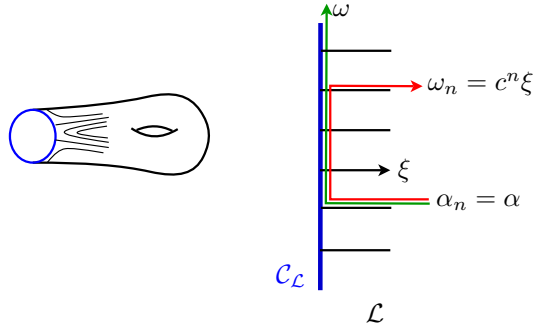}
\caption{In Case 1.2, the green leaf $(\alpha,\omega)$ is in the peritransitive closure of the red leaves $(\alpha_n,\omega_n)$, which are treated by case 1.1.}
\label{fig-lam-surface}
\end{figure}

\indent \textit{Case 1.2.} The intersection $[\alpha,\omega]_\call\cap\calc_\call$ is not compact.  

The argument is illustrated on Figure~\ref{fig-lam-surface}.
In particular, $\calc_\call$ is non-empty and not reduced to a point, so its stabilizer $G_\call$ is non-trivial and not peripheral.
It follows that $T$ is not relatively free hence arational surface, therefore the stabilizer of $\call$ is a non-peripheral infinite cyclic group $\grp{c}$ (and $\grp{c}$ is conjugate to the fundamental group of the free boundary curve  of the underlying orbifold of $T$).
In particular, $\calc_\call$ is a $c$-invariant line.
We can assume that $[\alpha,\omega]_\call$ meets a splitting semi-line $\sigma=[y,\xi]_\call$, otherwise $[\alpha,\omega]_\call$ would be liftable because $\Sigma$ is clean, and there is nothing to prove. 
By Remark \ref{rk_branch_leaf}, $\call$ contains a point of valence at least 3 (namely the origin of $\sigma$), so $\call$ is not a line.
Since $\call$ has no terminal point (Proposition \ref{surface_3ends}), $\call$ has an end $\xi$ not in $\calc_\call$.
We claim that there exist $\alpha_n$ converging to $\alpha$ and $\omega_n$ converging to $\omega$, corresponding to endpoints of $\call$, such that $[\alpha_n,\omega_n]_\call\cap\calc_\call$ is compact. 
Indeed, if $\alpha\notin\partial_\infty \calc_\call$ (as in Figure~\ref{fig-lam-surface}), then $\alpha_n=\alpha$ works, and otherwise we can take $\alpha_n=c^{\pm n}.\xi$. By Case~1.1, for all $n\in\mathbb{N}$, the leaf $[\alpha_n,\omega_n]_\call$ lies in the peritransitive closure of the set of liftable leaves in $L^2(T)\cap\partial_\infty(G,\calf)^2$. Therefore, so does $[\alpha,\omega]_\call$. 
\\
\\
\textbf{Case 2}: Either $\alpha$ or $\omega$, say $\alpha$, belongs to $V_\infty(G,\calf)$. 
\\ Then $\omega\in\partial_\infty(G,\calf)$, for otherwise the subgroup of $G$ generated by the stabilizers of $\alpha$ and $\omega$ would be elliptic in $T$, contradicting arationality. Therefore, for all $g\in G_\alpha\setminus\{1\}$, we have $(\omega,g\omega)\in L^2(T)\cap \partial_\infty (G,\calf)^2$, so it follows from the above argument that $(\omega,g\omega)$ is in the peritransitive closure of the set of liftable leaves in $L^2(T)\cap\partial_\infty (G,\calf)^2$. Since $(\alpha,\omega)$ is the limit of $(g_i\omega,\omega)$ for any infinite sequence $(g_i)_{i\in\mathbb{N}}\in G_\alpha^\mathbb{N}$, we deduce that $(\alpha,\omega)$ also belongs to this peritransitive closure. 
\end{proof}

\section{Unique biduality for arational trees}\label{sec-last}

\subsection{Main result}

We will now establish our unique biduality result for arational trees. Given $T,T'\in\mathcal{AT}$, we write $T\approx T'$ if the trees $\widehat T$ and $\widehat T'$ are homeomorphic when equipped with the observers' topology (where we recall that $\widehat T=\ol T\cup\partial_\infty T$).

\begin{theo}\label{bidual-free}
Let $T,T'\in\overline{\calo}$. Assume that $T\in\mathcal{AT}$, and that $L^2(T)\cap L^2(T')\neq\emptyset$. Additionally, if $T$ is not relatively free, assume that $L^2(T)\cap L^2(T')$ contains a leaf not carried by  $\langle c\rangle$, with $c$ a representative of the unique conjugacy class of nonperipheral elliptic subgroups in $T$. \\ Then $T'\in\mathcal{AT}$, and $T\approx T'$.
\end{theo}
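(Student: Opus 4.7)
Pick a leaf $l_0\in L^2(T)\cap L^2(T')$, chosen not carried by $\langle c\rangle$ in the arational surface case. The plan has four steps.

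\emph{Step 1: $T'$ has dense orbits.} If the Levitt decomposition $S$ of $T'$ were non-trivial, then by Corollary~\ref{carry-gen-l2} the leaf $l_0\in L^2(T')$ would be carried by a proper vertex group $A$ of $S$. Since $S$ is a $\calz$-splitting of $(G,\calf)$, $A$ is a proper $\calz$-factor; by Proposition~\ref{arat-Z} the action $A\curvearrowright T_A$ is simplicial. Translating $l_0$ into $\partial^2(A,\calf_{|A})$ and relating translation lengths in $T$ and $T_A$, one shows that $l_0\in L^2(T_A)$, so Lemma~\ref{l2-carry} forces $l_0$ to be carried by a point stabilizer of $T_A$, i.e.\ by an elliptic subgroup of $T$. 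For $T$ relatively free such a subgroup is peripheral, giving a degenerate leaf, a contradiction. For $T$ arational surface the only non-peripheral elliptic subgroup is (conjugate to) $\langle c\rangle$, contradicting our choice of $l_0$.

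\emph{Step 2: inclusion of laminations.} The argument in Step~1 shows more: $l_0$ is not carried by any elliptic subgroup of $T$. So Theorem~\ref{lamination-arational} applies and gives $L^2(T)=\overline{\calp}(l_0)$. Since $T'$ has dense orbits by Step~1, Lemma~\ref{lem_peritransitively} says $L^2(T')$ is peritransitively closed; as it contains $l_0$, it contains $\overline{\calp}(l_0)=L^2(T)$.

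\emph{Step 3: the quotient map $p$ and arationality of $T'$.} Corollary~\ref{quotient-lamination} identifies $\widehat T$ with $\partial(G,\calf)/L^2(T)$ and similarly for $\widehat{T'}$. The inclusion $L^2(T)\subseteq L^2(T')$ then induces a continuous $G$-equivariant surjection $p:\widehat T\to\widehat{T'}$ with $\calq_{T'}=p\circ\calq_T$. The map $p$ is alignment-preserving in restriction to the $\mathbb{R}$-tree structure, so $T'$ is an equivariant collapse of $T\in\mathcal{AT}$. By Lemma~\ref{arational-collapse}, $T'\in\mathcal{AT}$.

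\emph{Step 4: $p$ is a homeomorphism.} If $p$ were not injective, some fiber $p^{-1}(x)$ would be non-degenerate, and thus $p$ would collapse a non-degenerate arc of $T$. Since $T'$ is arational hence mixing (Step~3 plus \cite{Rey12,Hor14-3}), one propagates this to uncountably many $G$-orbits of points $x\in T'$ with $\#p^{-1}(x)\ge 2$; as $\calq_T$ is surjective and $\calq_{T'}=p\circ\calq_T$, each such $x$ satisfies $\#\calq_{T'}^{-1}(x)\ge 3$. This contradicts Theorem~\ref{Q-preimage}. Hence $p$ is a continuous equivariant bijection between compact Hausdorff spaces, therefore a homeomorphism for the observers' topology, proving $T\approx T'$.

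\emph{Main obstacles.} The most delicate step is Step~1: it requires carefully relating leaves of $L^2(T)$ that are carried by a $\calz$-factor $A$ to leaves of $L^2(T_A)$, via the fine structure of $\calq_T$ restricted to $\partial(A,\calf_{|A})$, and this is precisely where the technical hypothesis on $\langle c\rangle$ is used. The other genuinely non-trivial ingredient is the import from \cite{BR13} in Step~4 that produces uncountably many bad fibers from a single one, combining mixing of $T'$ with the two-to-one estimate of Theorem~\ref{Q-preimage}.
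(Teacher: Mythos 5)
Your Steps 1 and 2 match the paper's proof, and your remark that one must show that the leaf $l_0$ carried by $A$ actually lies in $L^2(T_A)$ (so that Lemma~\ref{l2-carry} can be invoked for the $(A,\calf_{|A})$-tree $T_A$) is a real subtlety the paper glosses over.

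Your Step~3, however, contains an unjustified claim, and your Step~4 has a genuine cardinality gap. In Step~3 you assert that the continuous equivariant surjection $p:\widehat T\to\widehat{T'}$ is alignment-preserving and then invoke Lemma~\ref{arational-collapse} to conclude $T'\in\mathcal{AT}$. This is not a priori clear: alignment-preserving is equivalent to having convex fibers, and the fibers $p^{-1}(y)=\calq_T(\calq_{T'}^{-1}(y))$ are continuous images of totally disconnected sets, so you cannot read off convexity from continuity of $p$ alone. The paper avoids this entirely; it only deduces alignment-preservation (hence arationality of $T'$) \emph{after} showing $p$ is a homeomorphism, at which point it is automatic (Corollary~\ref{cor-equiv}).

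In Step~4 you need $p$ not injective to produce uncountably many orbits of points $x$ with $\#\calq_{T'}^{-1}(x)\ge 3$, since Theorem~\ref{Q-preimage} only forbids uncountably many orbits with preimage of size $\ge 3$ (preimage of size $2$ is allowed for all but finitely many orbits). But you only establish $\#p^{-1}(x)\ge 2$, which, combined with surjectivity of $\calq_T$, gives only $\#\calq_{T'}^{-1}(x)\ge 2$ --- no contradiction. The paper instead invokes \cite[Proposition~3.2]{BR13}, which (using that the \emph{source} $T$ is mixing, which you already know since $T\in\mathcal{AT}$) produces uncountably many $x\in\widehat{T'}$ with $\#p^{-1}(x)\ge 3$; this is strictly stronger than what your propagation argument produces, and is what is actually needed. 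Moreover, you propagate using mixing of $T'$, which you only know from the flawed Step~3; the correct version uses mixing of $T$, available from the hypothesis, and circumvents Step~3 altogether. The fix is therefore to drop Step~3, cite \cite[Proposition~3.2]{BR13} (with $T$ mixing) in Step~4 to obtain fibers of size $\ge 3$, and only then deduce $T'\in\mathcal{AT}$ from the resulting homeomorphism via Corollary~\ref{cor-equiv} and Lemma~\ref{arational-collapse}.
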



\begin{proof}
Let $l\in L^2(T)\cap L^2(T')$ be a leaf that is not carried by any subgroup of $G$ that is elliptic in $T$. 
By Theorem \ref{lamination-arational}, $L^2(T)$ is equal to the peritransitive closure of $l$. 

We first prove that $T'$ has dense orbits. Assume towards a contradiction that it does not. Then the leaf $l$ is carried by a vertex group $B$ of the Levitt decomposition of $T'$ (Corollary~\ref{carry-gen-l2}). In particular that $B$ is elliptic in a $\calz$-splitting of $(G,\calf)$. By Proposition~\ref{arat-Z}, the $B$-action on its minimal subtree $T_B$ in $T$ is simplicial. Lemma~\ref{l2-carry} applied to $T_B$ then shows that $l$ is carried by a subgroup of $G$ that is elliptic in $T_B$, hence in $T$: this is a contradiction. Therefore $T'$ has dense orbits.

 By Lemma \ref{lem_peritransitively}, $L^2(T')$ is peritransitively closed. This implies that $L^2(T)\subseteq L^2(T')$.

We claim that there exists a surjective $G$-equivariant continuous map $p:\widehat{T}\to\widehat{T'}$ that makes the following diagram commute:

$$\xymatrix{
\partial (G,\calf) \ar@{->>}[r]^{\calq_T} \ar@{->>}[dr]^{\calq_{T'}} & \widehat{T}\ar@{->>}[d]^{p}\\
&\widehat{T'} 
}$$

\noindent where $\calq_T$ and $\calq_{T'}$ are the maps $\calq$ associated to $T$ and $T'$. Indeed, since $T$ and $T'$ have dense orbits, Corollary~\ref{quotient-lamination} shows that the map $\calq_T$ induces a homeomorphism from $\partial (G,\calf)/L^2(T)$ to $\widehat{T}$, and the map $\calq_{T'}$ induces a homeomorphism from $\partial (G,\calf)/L^2(T')$ to $\widehat{T'}$. In addition, since $L^2(T)\subseteq L^2(T')$, we get a map $p:\widehat{T}\to\widehat{T'}$.

We finally check that $p$ is a homeomorphism. Assume towards a contradiction that it is not. Since $T$ is mixing, by \cite[Proposition~3.2]{BR13}, there are uncountably many points $x\in\widehat{T'}$ such that $|p^{-1}(x)|\ge 3$ (notice here that Bestvina and Reynolds are assuming indecomposability of $T$ in the statement of \cite[Proposition 3.2]{BR13}, however their argument only requires $T$ to be mixing). Since $\calq_T$ is surjective, we have $|\calq_{T'}^{-1}(x)|\ge 3$ for all these points $x$. This contradicts Theorem~\ref{Q-preimage}. 

\end{proof}

\begin{cor}\label{feng-luo}
Let $T,T'\in\overline{\calo}$, with $T\in\mathcal{AT}$ relatively free. Let $(T_n)_{n\in\mathbb{N}},(T'_n)_{n\in\mathbb{N}}\in\overline{\calo}^{\mathbb{N}}$ be sequences that converge to $T,T'$ such that for all $n\in\mathbb{N}$, we have $L^2(T_n)\cap L^2(T'_n)\neq\emptyset$.\\
Then $T'\in\mathcal{AT}$, and $T\approx T'$. 
\end{cor}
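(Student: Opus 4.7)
The plan is to apply the unique biduality statement (Theorem~\ref{bidual-free}) to $T$ and $T'$, which requires producing a leaf in $L^2(T) \cap L^2(T')$. Since $T$ is assumed relatively free, no extra hypothesis on this leaf is needed: arationality and the existence of a common leaf are enough to conclude $T' \in \mathcal{AT}$ and $T \approx T'$. So the only real work is to pass from the pointwise non-emptiness of $L^2(T_n)\cap L^2(T'_n)$ to the non-emptiness of $L^2(T)\cap L^2(T')$ in the limit, i.e.\ to apply Corollary~\ref{feuille-commune} (our substitute for the failed continuity of the intersection pairing).

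First I would verify the hypotheses of Corollary~\ref{feuille-commune}. The tree $T$ has dense orbits because it is arational (recalled in Section~\ref{sec-arat}; in fact $T$ is mixing). For the second hypothesis, I need that for all distinct $v, v' \in V_\infty(G,\calf)$, the group $\langle G_v, G_{v'}\rangle$ is not elliptic in $T$. This is where relative freeness of $T$ enters: if $\langle G_v, G_{v'}\rangle$ fixed a point in $T$, then since point stabilizers of the relatively free tree $T$ are peripheral, this subgroup would be contained in some maximal peripheral subgroup. But $G_v$ and $G_{v'}$ are themselves distinct maximal peripheral subgroups of $G$ (corresponding to distinct vertices of infinite valence), so they cannot both be contained in a single maximal peripheral subgroup. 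This contradiction shows that $\langle G_v, G_{v'}\rangle$ is not elliptic in $T$.

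With these hypotheses verified, Corollary~\ref{feuille-commune} applies to the sequences $(T_n)$ and $(T'_n)$ and yields $L^2(T)\cap L^2(T') \neq \emptyset$. Theorem~\ref{bidual-free}, in the relatively free case, then concludes immediately that $T' \in \mathcal{AT}$ and $T \approx T'$.

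There is no significant obstacle: once one notices that the relatively free hypothesis automatically forces the ellipticity condition in Corollary~\ref{feuille-commune}, the argument is essentially a two-step citation. The conceptual point worth emphasising is that this corollary is a direct manifestation of why the whole machinery built in the paper (laminations, Proposition~\ref{prop-bounded-closed}, the biduality Theorem~\ref{bidual-free}) has been arranged precisely to handle limit situations where a continuous current pairing is unavailable, and here it pays off cleanly.
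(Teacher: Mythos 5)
Your proof is correct and follows the same two-step structure as the paper's: invoke Corollary~\ref{feuille-commune} to extract a common leaf in $L^2(T)\cap L^2(T')$ in the limit, then apply Theorem~\ref{bidual-free} (whose extra hypothesis on the common leaf is void when $T$ is relatively free). The only minor difference is that you spell out the verification of the ellipticity hypothesis in Corollary~\ref{feuille-commune}, while the paper instead inserts a remark that the common leaf cannot be carried by an elliptic subgroup of $T$; both add-ons are correct and interchangeable in this context.
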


\begin{proof}
Corollary \ref{feuille-commune} implies that $L^2(T)\cap L^2(T')\neq\emptyset$. As $T$ is assumed to be relatively free, no element in $L^2(T)\cap L^2(T')$ can be carried by a subgroup that is elliptic in $T$. Corollary \ref{feng-luo} then follows from Theorem \ref{bidual-free}. 
\end{proof}

To remove the assumption that $T$ is relatively free, we need a stronger assumption on the dual laminations of $T_n$ and $T'_n$.
Recall that an algebraic leaf is simple if it is a limit of axes of simple elements (i.e.\ contained in a proper $(G,\calf)$-free factor), 
and that $L^2_{simple}(T)$ denotes the set of simple leaves in $L^2(T)$ (see Section \ref{sec_simple}). 

\begin{cor}\label{feng-luo-2}
Let $T,T'\in\overline{\calo}$, with $T\in\mathcal{AT}$. Let $(T_n)_{n\in\mathbb{N}},(T'_n)_{n\in\mathbb{N}}\in\overline{\calo}^{\mathbb{N}}$ be sequences that converge to $T,T'$ such that for all $n\in\mathbb{N}$, we have $L^2_{simple}(T_n)\cap L^2_{simple}(T'_n)\neq\emptyset$.\\
Then $T'\in\mathcal{AT}$, and $T\approx T'$. 
\end{cor}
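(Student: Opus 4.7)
The plan is to mimic the proof of Corollary~\ref{feng-luo}, substituting Corollary~\ref{rat} for Corollary~\ref{feuille-commune} in order to produce a simple common leaf, and then to appeal to Theorem~\ref{bidual-free}. The point of working with simple leaves is that, when $T$ is arational surface, the hypothesis of Theorem~\ref{bidual-free} demands a common leaf which is not carried by the unused boundary curve $\langle c\rangle$, and I will show that simple leaves cannot be so carried.

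First, I would verify the hypotheses of Corollary~\ref{rat}: arationality gives that $T$ has dense orbits (\cite{Hor14-3}), and I will argue that for distinct $v\neq v'\in V_\infty(G,\calf)$ the subgroup $\langle G_v,G_{v'}\rangle$ is not elliptic in $T$. If $T$ is relatively free, point stabilizers in $T$ are peripheral, but $\langle G_v,G_{v'}\rangle$ strictly contains a maximal peripheral subgroup and so is not itself peripheral. If $T$ is arational surface, the only nonperipheral elliptic subgroups (up to conjugacy) are cyclic of the form $\langle c\rangle$, and $\langle G_v,G_{v'}\rangle$ is generated by two distinct nontrivial peripheral subgroups, hence is neither cyclic nor peripheral. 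Applying Corollary~\ref{rat} then yields a common simple leaf $l_0\in L^2_{simple}(T)\cap L^2_{simple}(T')$.

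Second, I would check that $l_0$ satisfies the hypothesis of Theorem~\ref{bidual-free}. If $T$ is relatively free, no leaf of $L^2(T)$ is carried by a subgroup elliptic in $T$, since such a subgroup is peripheral and $\partial(G_v,\calf_{|G_v})$ reduces to a single point. If $T$ is arational surface, with $c$ generating the nonperipheral elliptic conjugacy class, I claim $c$ is nonsimple: were $c$ contained in a proper $(G,\calf)$-free factor $H$, arationality would force $H\actson T_H$ to be a Grushko $(H,\calf_{|H})$-tree, in particular relatively free; but $c$ is nonperipheral in $H$ (being nonperipheral in $G$) while elliptic in $T_H\subseteq T$, a contradiction. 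Lemma~\ref{rational} then guarantees that no simple leaf admits $c^{\pm\infty}$ as an endpoint, and since $\langle c\rangle$ is cyclic and nonperipheral any leaf carried by it is a $G$-translate of $(c^{-\infty},c^{+\infty})$; thus $l_0$ is not carried by $\langle c\rangle$. Theorem~\ref{bidual-free} then concludes $T'\in\mathcal{AT}$ and $T\approx T'$.

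The one genuinely new step compared to the proof of Corollary~\ref{feng-luo} is the verification that the unused boundary curve $c$ is nonsimple; once this is established, Lemma~\ref{rational} supplies exactly the compatibility between simpleness and non-carrying by $\langle c\rangle$ that makes the strengthened hypothesis on simple leaves the right substitute for relative freeness.
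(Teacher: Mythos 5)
Your proof is correct and follows essentially the same route as the paper: reduce to the arational-surface case, use Corollary~\ref{rat} to produce a common simple leaf, show $c$ is nonsimple and apply Lemma~\ref{rational} to rule out carrying by $\langle c\rangle$, then invoke Theorem~\ref{bidual-free}. The paper states the nonsimplicity of $c$ without proof; your argument (contradiction via arationality forcing $T_H$ to be Grushko, while $c$ is nonperipheral and elliptic) is exactly the intended justification, and your explicit check of the hypotheses of Corollary~\ref{rat} is a detail the paper leaves implicit.
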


\begin{proof}
We can assume that $T$ is arational surface since otherwise, Corollary \ref{feng-luo} applies. Let $A=\grp{c}<G$ be a representative of the unique conjugacy class
of non-peripheral elliptic subgroups in $T$.
Corollary~\ref{rat} implies that $L^2_{simple}(T)\cap L^2_{simple}(T')\neq\emptyset$. 
Since $c$ is not a simple element because $T$ is arational, Lemma~\ref{rational} says that a simple leaf 
cannot be carried by $\grp{c}$. Corollary~\ref{feng-luo-2} then follows from Theorem~\ref{bidual-free}. 
\end{proof}

\subsection{The equivalence class of an arational tree}

We will finally use the above analysis to deduce information about the geometry of the equivalence class of an arational tree. We recall that a map between two $\mathbb{R}$-trees is \emph{alignment-preserving} if it sends segments to segments. Two trees $T_1,T_2\in\overline{\calo}$ are \emph{compatible} if there exists a tree $T_0\in\overline{\calo}$ that admits $G$-equivariant alignment-preserving maps onto both $T_1$ and $T_2$. 

\begin{cor}\label{cor-equiv}
Let $T\in\mathcal{AT}$, and let $T'\in\ol\calo$. The following assertions are equivalent.
\renewcommand{\theenumi}{\roman{enumi}}
\begin{enumerate}
\item The compact trees $\hat T$ and $\hat T'$ are homeomorphic when equipped with the observers' topology.
\item The trees $T$ and $T'$ are homeomorphic when equipped with the observers' topology.
\item There exist $G$-equivariant alignment-preserving maps from $T$ to $T'$ and from $T'$ to $T$.
\item The trees $T$ and $T'$ have a common refinement in $\overline{\calo}$.
\item The trees $T$ and $T'$ are both compatible with a common tree in $\overline{\calo}$.
\end{enumerate}
\end{cor}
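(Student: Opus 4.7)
The plan is to establish a cycle of implications, with the hard work going into reducing everything to the unique biduality theorem (Theorem~\ref{bidual-free}). The trivial implications are (ii)$\Rightarrow$(iii) (a $G$-equivariant homeomorphism is in particular alignment-preserving), (iv)$\Rightarrow$(v) (a common refinement witnesses compatibility with itself), and (iii)$\Rightarrow$(iv) (each of the alignment-preserving maps exhibits $T$ (resp.\ $T'$) as trivially a common refinement of itself with $T'$ (resp.\ $T$), but really the content is that (iii) implies (ii) below, from which (iv) is then immediate).

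For (i)$\Leftrightarrow$(ii), I would use that $T$ being arational has dense orbits, and show that $T$ can be canonically recovered inside $\hat T$: for the observers' topology, a point of $\hat T$ belongs to $T$ if and only if it has at least two directions in $\hat T$ each containing a point of $\hat T$ distinct from it (equivalently, $\hat T\setminus T$ is the set of terminal points). This characterization is preserved under $G$-equivariant homeomorphism and gives (i)$\Leftrightarrow$(ii). For (iii)$\Rightarrow$(ii), given alignment-preserving $G$-equivariant $\phi\colon T\to T'$ and $\psi\colon T'\to T$, I would consider $\psi\circ\phi\colon T\to T$, which is $G$-equivariant and alignment-preserving. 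Since $T$ is arational, it is mixing, and I would argue that $\psi\circ\phi$ must be the identity: if an arc $I\subseteq T$ were collapsed, then by $G$-equivariance all translates $gI$ would be collapsed, and mixing (every arc of $T$ is covered by finitely many translates of $I$) would force every arc to be collapsed, contradicting surjectivity. Hence $\phi$ and $\psi$ are bijective alignment-preserving maps with alignment-preserving inverses, hence homeomorphisms for the observers' topology.

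The main step is (v)$\Rightarrow$(i) via Theorem~\ref{bidual-free}. Given $S\in\overline{\calo}$ compatible with both $T$ and $T'$, pick common refinements $T_1,T_2\in\overline{\calo}$ of the pairs $(T,S)$ and $(T',S)$ respectively. Collapse maps are $1$-Lipschitz on axes, so $\|g\|_{T_1}\geq\|g\|_T$ and $\|g\|_{T_1}\geq\|g\|_S$ for every $g$, giving $L^2(T_1)\subseteq L^2(T)\cap L^2(S)$; similarly $L^2(T_2)\subseteq L^2(T')\cap L^2(S)$. To produce a common leaf in $L^2(T)\cap L^2(T')$, I would use Guirardel's compatibility theory to build a common refinement $T_3\in\overline{\calo}$ of $T_1$ and $T_2$ (both of which refine the common tree $S$), obtaining $L^2(T_3)\subseteq L^2(T)\cap L^2(T')$. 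Since $T_3$ refines the non-Grushko tree $T$, it lies in $\partial\calo$, so $L^2(T_3)\neq\emptyset$ by Lemma~\ref{lamination-nonempty}. When $T$ is arational surface, one additionally needs a leaf not carried by $\langle c\rangle$; this is provided by the observation that any $L^2(T_3)$-leaf carried by $\langle c\rangle$ would force $T_3$ itself to be degenerate along the axis of $c$ in a way incompatible with it refining the arational tree $T$. Theorem~\ref{bidual-free} then gives $T'\in\mathcal{AT}$ and $T\approx T'$, which is (i).

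The main obstacle is the existence of the common refinement $T_3$ of $T_1$ and $T_2$, and more generally verifying that (v) forces $L^2(T)\cap L^2(T')\neq\emptyset$: compatibility with a common third tree does not \emph{a priori} imply a common leaf in the dual laminations. If this direct approach is problematic, a fallback would be to first show (v)$\Rightarrow$(iv) using arationality more heavily (e.g., by arguing that any $S\in\overline{\calo}$ compatible with the mixing tree $T$ must be a collapse of $T$ itself, so that $T_1=T$ and $T$ already refines $S$, reducing (v) to compatibility of $T'$ with a collapse of $T$), and then deduce a common refinement of $T$ and $T'$ in the same way.
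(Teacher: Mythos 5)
Your reduction of the problem to Theorem~\ref{bidual-free} is the right instinct, and the trivial implications are correctly identified, but your main step $(v)\Rightarrow(i)$ has a genuine gap that the paper circumvents by a slicker route.

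The obstacle you yourself flag — the existence of a common refinement $T_3$ of $T_1$ and $T_2$ — is real. Two trees in $\overline{\calo}$ that each refine a common tree $S$ need not admit a common refinement in $\overline{\calo}$; compatibility is not transitive, and ``Guirardel's compatibility theory'' does not produce $T_3$ for free. The paper avoids this entirely by applying Theorem~\ref{bidual-free} \emph{twice}, using $T''$ (your $S$) as a stepping stone: from a refinement $T_0$ of $T$ and $T''$, one gets $L^2(T_0)\subseteq L^2(T)\cap L^2(T'')$ and hence $T\approx T''$ and $T''$ arational; one then reruns the same argument with the pair $(T'',T')$ to get $T''\approx T'$. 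No refinement of $T_1$ and $T_2$ is ever needed. Your fallback — that any $S$ compatible with a mixing $T$ is a collapse of $T$ — is also not justified as stated: compatibility means a common refinement exists, which does not immediately force $S$ to be a collapse of $T$.

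Two further points. First, to apply Theorem~\ref{bidual-free} when $T$ is arational surface, you need the common leaf to not be carried by $\langle c\rangle$; the paper gets this by invoking Proposition~\ref{prop:l2simple} to produce a \emph{simple} leaf in $L^2(T_0)$, and observing that $c$ is nonsimple (so a simple leaf cannot be carried by $\langle c\rangle$, by Lemma~\ref{rational}). Your suggested argument — that a leaf carried by $\langle c\rangle$ would make $T_3$ ``degenerate along the axis of $c$'' — is too vague to close this case. Second, your $(iii)\Rightarrow(ii)$ detour via the claim that $\psi\circ\phi=\mathrm{id}$ is not used in the paper and is in fact overkill: the paper treats $(iii)\Rightarrow(iv)$ as obvious and recovers $(ii)$ from $(i)$; you would only need bijectivity plus the fact that a bijective alignment-preserving map has an alignment-preserving inverse, not the identity claim (and proving the latter would require ruling out nontrivial length-measure rescalings, which your mixing argument does not do).
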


\begin{proof}
Since $T$ is the complement of endpoints of $\hat T$,  we have $(i)\imp (ii)$.
Since every homeomorphism for the observers' topology is alignment-preserving, we have $(ii)\Rightarrow (iii)$. The implications $(iii)\Rightarrow (iv)$ and $(iv)\Rightarrow (v)$ are obvious. We now prove that $(v)\Rightarrow (i)$. Let $T''$ be a tree which is compatible with both $T$ and $T'$, and let $T_0\in\partial\calo$ be a common refinement of $T$ and $T''$. Then $L^2(T_0)\subseteq L^2(T)\cap L^2(T'')$. Since $T_0$ belongs to $\partial\calo$, it follows from Proposition~\ref{prop:l2simple} that $L^2_{simple}(T_0)\neq\emptyset$. Since the unique conjugacy class of nonperipheral cyclic subgroups that is elliptic in an arational surface tree is always nonsimple, we can apply Theorem~\ref{bidual-free} to deduce that $\hat T$ and $\hat T''$ are homeomorphic for the observers' topology. The same argument also shows that $\hat T''$ and $\hat T'$ are homeomorphic for the observers' topology, and Assertion~$(i)$ follows.
\end{proof}
 
Given a tree $T\in\partial\calo$ with dense orbits, a \emph{length measure} on $T$ is a collection of finite Borel measures $\mu_I$ on all segments $I\subseteq T$ such that for all $J\subseteq I$, we have $\mu_J=(\mu_{I})_{|J}$, and for all $I\subseteq T$ and all $g\in G$, we have $\mu_{gI}=(g_{|I})_{\ast}\mu_I$. The set $M(T)$ of projective classes of non-atomic length measures on $T$ is a finite-dimensional simplex, spanned by the set of ergodic measures on $T$: this was proved in \cite[Corollary 5.4]{Gui00} in the context of free groups, however the proof adapts to our more general setting because $\partial\mathbb{P}\calo$ is known to be finite-dimensional by \cite{Hor14-1}. Any length measure $\mu$ on $T$ determines a tree $T_{\mu}$, obtained by making Hausdorff the pseudo-metric on $T$ given by $d_{\mu}(x,y)=\mu([x,y])$, and there exists a $G$-equivariant alignment-preserving map from $T$ to $T_{\mu}$. Conversely, if $T$ admits an alignment-preserving map onto a tree $T'\in\partial\calo$, then there exists a length measure $\mu$ on $T$ such that $T'=T_{\mu}$. The map that sends a length measure $\mu\in M(T)$ to the projective length function of the tree $T_{\mu}$ is a linear injection \cite[Lemma~5.3]{Gui00}, so the image of $M(T)$ in $\partial\mathbb{P}\calo$ is a simplex $\Sigma(T)$ of the same dimension. In view of Corollary~\ref{cor-equiv}, two trees $T,T'\in\mathbb{P}\mathcal{AT}$ are equivalent if and only if there exist alignment-preserving maps from $T$ to $T'$ and from $T'$ to $T$, so it follows that the equivalence class of $T$ is the finite-dimensional simplex $\Sigma(T)$. We sum up the above discussion in the following statement.

\begin{prop}\label{simplices}
For all $T\in\mathbb{P}\mathcal{AT}$, the set of projective classes of arational $(G,\calf)$-trees that are equivalent to $T$ is a finite-dimensional simplex.\qed
\end{prop}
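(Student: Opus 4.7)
The plan is to identify the equivalence class of $T$ with the projectivized space of non-atomic length measures on $T$, following the outline in the paragraph preceding the statement.

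First, I would recall the space $M(T)$ of projective classes of non-atomic length measures on $T$, and argue that it is a finite-dimensional simplex spanned by its ergodic elements. In the free group case this is \cite[Corollary 5.4]{Gui00}; the argument carries over provided we know that the ambient boundary $\partial\mathbb{P}\calo$ is finite-dimensional. For $(G,\calf)$ this finite-dimensionality is established in \cite{Hor14-1}, so this step is essentially a translation. Next, to each $\mu\in M(T)$ I would associate the $\mathbb{R}$-tree $T_\mu$ obtained as the metric quotient of the pseudo-metric $d_\mu(x,y):=\mu([x,y])$, equipped with the induced $G$-action. By construction there is a $G$-equivariant alignment-preserving map $T\to T_\mu$, and since $T$ is arational, Lemma \ref{arational-collapse} ensures $T_\mu\in\mathcal{AT}\subseteq\partial\calo$.

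The second step is to show that the assignment $\mu\mapsto\ell_{T_\mu}$ (where $\ell_{T_\mu}$ denotes the translation length function of $T_\mu$) is linear and projectively injective, so that its image $\Sigma(T)\subseteq\partial\mathbb{P}\calo$ is again a finite-dimensional simplex of the same dimension as $M(T)$. Linearity is clear from the construction, and projective injectivity is \cite[Lemma 5.3]{Gui00}, whose proof transfers verbatim because it only uses that the quotient tree faithfully records the length measure along each segment.

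The third and final step is to identify $\Sigma(T)$ with the equivalence class of $[T]$ in $\mathbb{P}\mathcal{AT}$. One inclusion is immediate: since the map $T\to T_\mu$ is alignment-preserving, $T$ itself is a common refinement of $T$ and $T_\mu$, so by the implication $(iv)\Rightarrow(i)$ of Corollary \ref{cor-equiv}, $T_\mu\approx T$. For the reverse inclusion, suppose $T'\in\overline{\calo}$ satisfies $T'\approx T$. By Corollary \ref{cor-equiv} there exists a $G$-equivariant alignment-preserving map $\pi:T\to T'$; pulling back the length function of $T'$ along $\pi$ (i.e.\ setting $\mu([x,y]):=d_{T'}(\pi(x),\pi(y))$) defines a length measure $\mu$ on $T$. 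This measure is non-atomic because $T'$ is an $\mathbb{R}$-tree whose point stabilizers, being elliptic in a tree of $\ol\calo$, contribute no atoms to the metric; and by construction $T'$ is equivariantly isometric to $T_\mu$, so $[T']\in\Sigma(T)$.

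The main obstacle, assuming the general theory of laminations and the earlier Corollary \ref{cor-equiv} are in hand, is purely the finite-dimensional simplex structure on $M(T)$. Granting the finite-dimensionality of $\partial\mathbb{P}\calo$ from \cite{Hor14-1}, this follows from routine adaptation of Guirardel's proof in \cite{Gui00}; all other steps are formal consequences of Corollary \ref{cor-equiv} and Lemma \ref{arational-collapse}.
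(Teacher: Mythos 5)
Your proposal follows exactly the route the paper sketches in the paragraph just before the proposition: identify the equivalence class of $T$ with the simplex $\Sigma(T)$ of projective length functions arising from non-atomic length measures on $T$, using \cite{Gui00} together with the finite-dimensionality of $\partial\mathbb{P}\calo$ from \cite{Hor14-1} for the simplex structure, and Corollary \ref{cor-equiv} for the two-way identification. The one spot where your reasoning is off is the justification of non-atomicity of the pulled-back measure $\mu([x,y]):=d_{T'}(\pi(x),\pi(y))$: ellipticity of point stabilizers in $T'$ is not the relevant point, and an alignment-preserving map can in principle have a jump at a point (the map still sends the point to a single point, but the measure records the gap). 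The correct reason is $G$-invariance together with density of $G$-orbits in $T$: an atom at $z$ would propagate along the dense $G$-orbit of $z$, producing infinitely many atoms of equal positive mass inside any fixed segment $I$, contradicting finiteness of $\mu_I$. This is a local fix; otherwise your argument, including the explicit use of Lemma \ref{arational-collapse} to place $T_\mu$ in $\mathcal{AT}$, is precisely what the paper intends.
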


\bibliographystyle{alpha}
\bibliography{GH1-bib}

 \begin{flushleft}
 Vincent Guirardel\\
 Institut de Recherche Math\'ematique de Rennes\\
 Universit\'e de Rennes 1 et CNRS (UMR 6625)\\
 263 avenue du G\'en\'eral Leclerc, CS 74205\\
 F-35042  RENNES C\'edex\\
 \emph{e-mail:}\texttt{vincent.guirardel@univ-rennes1.fr}\\[8mm]
 \end{flushleft}

\begin{flushleft}
Camille Horbez\\
CNRS\\ 
Laboratoire de Math\'ematique d'Orsay\\
Universit\'e Paris-Sud et CNRS (UMR 8628), Universit\'e Paris-Saclay\\ 
F-91405 ORSAY\\
\emph{e-mail:}\texttt{camille.horbez@math.u-psud.fr}
\end{flushleft}

\end{document}